\documentclass[12pt]{amsart}
\usepackage{amssymb}
\usepackage{longtable}
\usepackage{hyperref}
\usepackage{tikz-cd}
\usepackage[total={6.25truein,9truein},centering]{geometry}

\allowdisplaybreaks


\numberwithin{equation}{subsection}

\newtheorem{theorem}[equation]{Theorem}
\newtheorem{lemma}[equation]{Lemma}
\newtheorem{proposition}[equation]{Proposition}
\newtheorem{corollary}[equation]{Corollary}

\newtheorem{theoremintro}{Theorem}
\newtheorem{propositionintro}[theoremintro]{Proposition}

\newtheorem{corollaryintro}[theoremintro]{Corollary}

\theoremstyle{definition}
\newtheorem{remark}[equation]{Remark}
\newtheorem{definition}[equation]{Definition}
\newtheorem{example}[equation]{Example}
\newtheorem{question}{Question}
\newtheorem*{acknowledgments}{Acknowledgments}

\theoremstyle{remark}


\newcommand{\FF}{\mathbb{F}}
\newcommand{\ZZ}{\mathbb{Z}}

\newcommand{\NN}{\mathbb{N}}
\newcommand{\TT}{\mathbb{T}}
\newcommand{\GG}{\mathbb{G}}
\newcommand{\EE}{\mathbb{E}}
\newcommand{\LL}{\mathbb{L}}
\newcommand{\MM}{\mathbb{M}}
\newcommand{\CC}{\mathbb{C}}

\newcommand{\KK}{\mathbb{K}}

\newcommand{\bA}{\mathbf{A}}

\newcommand{\bFF}{\mathbf{F}}

\newcommand{\cD}{\mathcal{D}}
\newcommand{\cE}{\mathcal{E}}
\newcommand{\cF}{\mathcal{F}}
\newcommand{\cG}{\mathcal{G}}

\newcommand{\cM}{\mathcal{M}}
\newcommand{\cN}{\mathcal{N}}

\newcommand{\cW}{\mathcal{W}}

\newcommand{\rd}{\mathrm{d}}

\newcommand{\rF}{\mathrm{F}}
\newcommand{\rG}{\mathrm{G}}
\newcommand{\rH}{\mathrm{H}}
\newcommand{\rJ}{\mathrm{J}}
\newcommand{\rN}{\mathrm{N}}
\newcommand{\rP}{\mathrm{P}}
\newcommand{\rQ}{\mathrm{Q}}

\newcommand{\fg}{\mathfrak{g}}
\newcommand{\fG}{\mathfrak{G}}
\newcommand{\fh}{\mathfrak{h}}

\newcommand{\bsalpha}{\boldsymbol{\alpha}}
\newcommand{\bsa}{\boldsymbol{a}}
\newcommand{\bsA}{\mathbf{A}}
\newcommand{\bsbeta}{\boldsymbol{\beta}}
\newcommand{\bsb}{\boldsymbol{b}}
\newcommand{\bsc}{\boldsymbol{c}}
\newcommand{\bsdelta}{\boldsymbol{\delta}}
\newcommand{\bsd}{\boldsymbol{d}}
\newcommand{\bsD}{\boldsymbol{D}}
\newcommand{\bse}{\boldsymbol{e}}
\newcommand{\bsepsilon}{\boldsymbol{\epsilon}}
\newcommand{\bsf}{\boldsymbol{f}}
\newcommand{\bsg}{\boldsymbol{g}}

\newcommand{\bsgamma}{\boldsymbol{\gamma}}
\newcommand{\bsh}{\boldsymbol{h}}
\newcommand{\bsi}{\boldsymbol{i}}
\newcommand{\bskappa}{\boldsymbol{\kappa}}

\newcommand{\bslambda}{\boldsymbol{\lambda}}
\newcommand{\bsm}{\boldsymbol{m}}
\newcommand{\bsmu}{\boldsymbol{\mu}}
\newcommand{\bsn}{\boldsymbol{n}}
\newcommand{\bsnu}{\boldsymbol{\nu}}
\newcommand{\bss}{\boldsymbol{s}}

\newcommand{\bsvartheta}{\boldsymbol{\vartheta}}
\newcommand{\bsu}{\boldsymbol{u}}
\newcommand{\bsU}{\boldsymbol{U}}
\newcommand{\bsv}{\boldsymbol{v}}
\newcommand{\bsw}{\boldsymbol{w}}
\newcommand{\bsx}{\boldsymbol{x}}
\newcommand{\bsxi}{\boldsymbol{\xi}}

\newcommand{\bsy}{\boldsymbol{y}}

\newcommand{\bsz}{\boldsymbol{z}}
\newcommand{\bspr}{\boldsymbol{pr}}

\DeclareMathOperator{\Der}{Der}
\DeclareMathOperator{\diag}{diag}
\DeclareMathOperator{\DR}{DR}
\DeclareMathOperator{\Exp}{Exp}

\DeclareMathOperator{\Log}{Log}

\DeclareMathOperator{\GL}{GL}
\DeclareMathOperator{\Lie}{Lie}
\DeclareMathOperator{\Mat}{Mat}

\DeclareMathOperator{\End}{End}

\DeclareMathOperator{\Hom}{Hom}

\DeclareMathOperator{\rank}{rank}
\DeclareMathOperator{\Res}{Res}

\newcommand{\oK}{\mkern2.5mu\overline{\mkern-2.5mu K}}

\newcommand{\bzero}{\mathbf{0}}
\newcommand{\Id}{\mathrm{I}}
\newcommand{\inn}{\mathrm{in}}
\newcommand{\si}{\mathrm{si}}
\newcommand{\sr}{\mathrm{sr}}
\newcommand{\sep}{\mathrm{sep}}

\newcommand{\tbsD}{\widetilde{\boldsymbol{D}}}
\newcommand{\tC}{\widetilde{C}}
\newcommand{\tbslambda}{\widetilde{\boldsymbol{\lambda}}}
\newcommand{\tpi}{\widetilde{\pi}}

\newcommand{\tPhi}{\widetilde{\Phi}}

\newcommand{\trho}{\widetilde{\rho}}
\newcommand{\tU}{\widetilde{U}}
\newcommand{\tV}{\widetilde{V}}

\newcommand{\Ga}{\GG_{\mathrm{a}}}

\newcommand{\iso}{\stackrel{\sim}{\longrightarrow}}
\newcommand{\power}[2]{{#1 [\![ #2 ]\!]}}
\newcommand{\laurent}[2]{{#1 (\!( #2 )\!)}}
\newcommand{\Span}[1]{\operatorname{Span}_{#1}}

\newcommand{\dnorm}[1]{\lVert #1 \rVert}
\newcommand{\inorm}[1]{{\lvert #1 \rvert}_{\infty}}

\newcommand{\pd}{\partial}

\newcommand{\tr}{{\mathsf{T}}}
\newcommand{\twistop}[2]{\langle #1 \mathbin{|} #2 \rangle}
\newcommand{\mayeq}{\stackrel{?}{=}}
\newcommand{\assign}{\mathrel{\vcenter{\baselineskip0.5ex \lineskiplimit0pt
                     \hbox{\scriptsize.}\hbox{\scriptsize.}}}%
                     =}
\newcommand{\rassign}{=%
                     \mathrel{\vcenter{\baselineskip0.5ex \lineskiplimit0pt
                     \hbox{\scriptsize.}\hbox{\scriptsize.}}}%
                     }

\begin{document}

\title[Hyperderivatives of periods and quasi-periods]{Hyperderivatives of periods and quasi-periods \\ for Anderson $t$-modules}

\author[C.~Namoijam]{Changningphaabi Namoijam}
\address{Department of Mathematics, National Tsing Hua University, Hsinchu City 30042, Taiwan
R.O.C.}
\email{namoijam@math.nthu.edu.tw}

\author[M.~A.~Papanikolas]{Matthew A. Papanikolas}
\address{Department of Mathematics, Texas A{\&}M University, College Station,
TX 77843, U.S.A.}
\email{papanikolas@tamu.edu}

\thanks{This project was partially supported by NSF Grant DMS-1501362. The first author was supported by MoST Grant 109-2811-M-007-554.}

\subjclass[2020]{Primary 11G09; Secondary 11J93, 33E50}

\date{December 27, 2021}

\begin{abstract}
We investigate periods, quasi-periods, logarithms, and quasi-logarithms of Anderson $t$-modules, as well as their hyperderivatives.  We develop a comprehensive account of how these values can be obtained through rigid analytic trivializations of abelian and $\bA$-finite $t$-modules.  To do this we build on the exponentiation theorem of Anderson and investigate quasi-periodic extensions of $t$-modules through Anderson generating functions.  By applying these results to prolongation $t$-modules of Maurischat, we integrate hyperderivatives of these values together with previous work of Brownawell and Denis in this framework.
\end{abstract}

\keywords{Drinfeld modules, Anderson $t$-modules, $t$-motives, dual $t$-motives, periods, quasi-periods, logarithms, quasi-logarithms, rigid analytic trivializations, hyperderivatives, prolongations}

\maketitle

\tableofcontents


\section{Introduction} \label{S:Intro}

\subsection{Motivating problems from transcendence}
Over the past decades, Drinfeld modules and Anderson $t$-modules have provided an abundant supply of quantities of interest in transcendental number theory over the rational function field $K=\FF_q(\theta)$ in one variable over a finite field, where $q$ is the positive power of a prime number $p$.  Periods, quasi-periods, logarithms, and quasi-logarithms associated to $t$-modules defined over the algebraic closure~$\oK$ of~$K$ give rise to questions about transcendence and algebraic independence that were first systematically investigated by Yu~\cite{Yu85}--\cite{Yu97}.

For example, taking $K_{\infty} = \laurent{\FF_q}{1/\theta}$ to be the completion of $K$ with respect to its $\infty$-adic absolute value $\inorm{\,\cdot\,}$ and $\KK$ to be the completion of an algebraic closure of $K_{\infty}$, the \emph{Carlitz period}
\begin{equation} \label{E:Carlitzpidef}
  \tpi \assign -(-\theta)^{q/(q-1)} \prod_{i=1}^{\infty} \Bigl( 1 - \theta^{1-q^i} \Bigr)^{-1} \in K_{\infty}\bigl( (-\theta)^{1/(q-1)} \bigr)
\end{equation}
is the fundamental period of the Carlitz module and lies within the separable closure of $K_{\infty}$ within~$\KK$.  It was established by Wade~\cite{Wade41} that $\tpi$ is transcendental over~$\oK$.  The Anderson-Thakur power series
\begin{equation} \label{E:Omegadef}
  \Omega \assign (-\theta)^{-q/(q-1)} \prod_{i=1}^{\infty} \biggl( 1 - \frac{t}{\theta^{q^i}} \biggr) \in \TT_{\theta}^{\times}
\end{equation}
converges in the Tate algebra $\TT_{\theta} \subseteq \power{\KK}{t}$ of power series that are regular on the closed disk of radius $|\theta|_{\infty}$ in $\KK$ with respect to the Gauss norm $\dnorm{\,\cdot\,}_{\theta}$.  It discernibly evaluates as
\begin{equation} \label{E:Omegaeval}
  \Omega|_{t=\theta} = -\frac{1}{\tpi}.
\end{equation}
Furthermore, if we define \emph{Frobenius twisting} for $n \in \ZZ$ as the assignment $f \mapsto f^{(n)} : \laurent{\KK}{t} \to \laurent{\KK}{t}$ given by $\sum a_i t^i \mapsto \sum a_i^{q^n} t^i$, then $\Omega$ satisfies the difference equation
\begin{equation} \label{E:Omegarat}
  \Omega^{(-1)} = (t-\theta)\Omega.
\end{equation}
This difference equation furnishes $\Omega$ as the \emph{rigid analytic trivialization} of the Carlitz module (see \S\ref{subS:AndersonThm}), and the evaluation \eqref{E:Omegaeval} manifests an early glimpse at the connections between rigid analytic trivializations and periods of $t$-modules.

Now one can consider the family of hyperdifferential operators $\pd_{\theta}^j : \FF_q[\theta] \to \FF_q[\theta]$ defined $\FF_q$-linearly by $\pd_{\theta}^j(\theta^n) \assign \binom{n}{j} \theta^{n-j}$, where $\binom{n}{j}$ is the usual binomial coefficient but modulo $p$, and which extend uniquely to operators
\[
  \pd_{\theta}^j : K_{\infty}^{\sep} \to K_{\infty}^{\sep}, \quad j \geqslant 0.
\]
In contrast to characteristic~$0$, we can apply these hyperdifferential operators to our transcendental quantities themselves and ask in the case of the Carlitz module if
\[
  \tpi,\, \pd_{\theta}^1(\tpi),\, \pd_{\theta}^2(\tpi), \ldots
\]
are algebraically related over~$\oK$.  Denis~\cite{Denis93}--\cite{Denis97} showed that certain subsets of these quantities are $\oK$-linearly independent, and later that the first $p$ are algebraically independent~\cite{Denis00}.  More recently Maurischat~\cite{Maurischat21b} proved that these values are in fact all algebraically independent over~$\oK$, using his construction of \emph{prolongations} of tensor powers of the Carlitz module~\cite{Maurischat18} and applying techniques of rigid analytic trivializations from~\cite{ABP04}, \cite{P08}.

Other investigations by Anderson, Brownawell, Chang, Denis, Thakur, Yu, and many others have produced transcendence results on function field $\Gamma$-values~\cite{ABP04}, \cite{BP02}, \cite{CPTY10}, \cite{CPY10}, \cite{Sinha97}, \cite{Thakur91}, \cite{Thakur96};  Drinfeld logarithms and quasi-logarithms~\cite{Brownawell93}--\cite{Brownawell98}, \cite{CP11}, \cite{CP12}, \cite{DavidDenis02}, \cite{Denis97}, \cite{Denis06}, \cite{P08}, \cite{Pellarin20}, \cite{Yu85}, \cite{Yu86}, \cite{Yu90}, \cite{Yu97}; zeta values and multiple zeta values~\cite{Chang14}--\cite{ChangMishiba21}, \cite{CPY11}--\cite{ChangYu07}, \cite{Green19a}, \cite{GreenNgoDac20a}, \cite{LP13}, \cite{Yu91}, \cite{Yu97}; and of particular interest to the present paper, hyperderivatives of Drinfeld logarithms and quasi-logarithms \cite{Brownawell98}--\cite{BrownawellDenis00}, \cite{Denis93}--\cite{Denis97}, \cite{Maurischat18}.

\begin{question} \label{Q:one}
For a $t$-module defined over~$K^{\sep}$, to what extent can one determine algebraic relations over~$\oK$ among hyperderivatives of its periods, quasi-periods, logarithms, and quasi-logarithms with respect to~$\theta$?
\end{question}

Brownawell and Denis~\cite{Brownawell98}--\cite{BrownawellDenis00} first investigated this question in the case of logarithms and quasi-logarithms of a Drinfeld module.  If we let $\bA \assign \FF_q[t]$ be the polynomial ring in~$t$ over~$\FF_q$ and take $\KK[\tau]$ to be the ring of twisted polynomials in the $q$-th power Frobenius $\tau : x \mapsto x^q$ on $\KK$, then we define a Drinfeld module to be the $\FF_q$-algebra homomorphism $\phi : \bA \to \KK[\tau]$ determined by the value
\[
  \phi_t = \theta + b_1\tau + \cdots + b_r \tau^r, \quad b_r \neq 0.
\]
Its exponential series $\exp_{\phi}(z) \in \power{\KK}{z}$ is $\FF_q$-linear, entire, and surjective on~$\KK$, and it satisfies the functional equation
\[
  \exp_{\phi}(a(\theta) z) = \phi_a(\exp_{\phi}(z)), \quad \forall\, a \in \bA.
\]
Its kernel is a free and finitely generated $\bA$-submodule $\Lambda_\phi$ of~$\KK$ of rank~$r$, and we choose generators $\lambda_1, \dots, \lambda_r$.  Using the theory of biderivations (see \S\ref{subS:biderivations}), one can construct entire $\FF_q$-linear power series $\rF_1(z), \dots, \rF_{r-1}(z) \in \power{\KK}{z}$ so that
\[
  \rF_i(\theta z) = \theta \rF_i(z) + \exp_{\phi}(z)^{q^i}, \quad 1 \leqslant i \leqslant r-1,
\]
and these form a maximal system of \emph{quasi-periodic functions} for $\phi$.  The quantities
\[
  \rF_i(\lambda_j), \quad 1 \leqslant i \leqslant r-1,\ 1 \leqslant j \leqslant r,
\]
comprise the \emph{quasi-periods} of $\phi$, and moreover, there is an extension of $t$-modules,
\[
  0 \to \Ga^{r-1} \to X \to \phi \to 0,
\]
which forms a quasi-periodic extension of $\phi$ (see \S\ref{subS:biderivations} or~\cite{Brownawell93}, \cite{Gekeler89a}, \cite{Goss94}, \cite{Yu90}, for more details).  Of particular note is that the period lattice of $X$ is generated by vectors whose coordinates consist of the periods and quasi-periods of $\phi$.  More generally, if $y \in \KK$, then $\rF_i(y)$ is a \emph{quasi-logarithm} associated to~$y$ (see \eqref{E:quasilogdef}).

When $\phi$ is defined over $K_{\infty}^{\sep}$, Brownawell and Denis defined for each $n \geqslant 0$ a $t$-module $\trho_n : \bA \to \Mat_{(n+1)r}(K_{\infty}^{\sep}[\tau])$ that sits in a short exact sequence,
\[
  0 \to \Ga^{(n+1)r-1} \to \trho_n \to \phi \to 0,
\]
with the following properties.
\begin{enumerate}
\item[(i)] Periods and logarithms of $K_{\infty}^{\sep}$-valued points on $\trho_n$ are given in terms of the first~$n$ hyperderivatives of periods, quasi-periods, logarithms, and quasi-logarithms of $K_{\infty}^{\sep}$-valued points on $\phi$.
\item[(ii)] The operation of $\bA$ on the power of $\Ga$ in $\trho_n$ is not through scalar multiplication, in contrast to any quasi-periodic extension, and as a result the Brownawell-Denis $t$-module $\trho_n$ is not isomorphic to any quasi-periodic extension of~$\phi$.
\end{enumerate}
As such, if $\phi$ is defined over $K^{\sep}$, the quasi-periodic extensions of $\phi$ and the hyperderivative $t$-modules of Brownawell and Denis produce quantities that should not be inherently algebraically related over $\oK$.  This was confirmed for $\oK$-linear relations by Brownawell~\cite{Brownawell01} using Yu's Theorem of the Sub-$t$-module~\cite{Yu97}.

For questions of algebraic independence, current methods revolve around techniques in~\cite{ABP04}, \cite{P08}, which require $t$-modules that have rigid analytic trivializations and so are abelian in the sense of Anderson~\cite{And86} or $\bA$-finite in the sense of \cite{ABP04}, \cite{HartlJuschka20}.  Unfortunately quasi-periodic extensions and Brownawell-Denis $t$-modules are not well-suited for these purposes because they are neither abelian nor $\bA$-finite.  However, results in~\cite{CP11}, \cite{CP12}, employing work of Anderson (see~\cite{Goss94}) and Pellarin~\cite{Pellarin08}, demonstrate that the rigid analytic trivialization of a Drinfeld module can be obtained through Anderson generating functions and that the specialization of this matrix yields the full complement of its periods and quasi-periods.  Work in~\cite{BP02} on special $\Gamma$-values demonstrates also that quasi-periods are incorporated in evaluations of rigid analytic trivializations of Sinha's soliton $t$-modules~\cite{Sinha97}.

To be more precise, for our Drinfeld module $\phi$ defined over $\KK$ and $y \in \KK$, we define the \emph{Anderson generating function} $\cG_y$ by the infinite series,
\begin{equation}
  \cG_y \assign \sum_{n=0}^\infty \exp_\phi \biggl( \frac{y}{\theta^{n+1}} \biggr) t^n \in \TT.
\end{equation}
As a function of $t$, $\cG_y$ converges on the closed unit disk of $\KK$, and thus is in the Tate algebra $\TT$ of the closed unit disk. As an example, for the Carlitz module the Anderson-Thakur function
\begin{equation} \label{E:omegadef}
  \omega \assign \frac{1}{(t-\theta)\Omega}
\end{equation}
turns out to be the Anderson generating function associated to~$\tpi$. In general $\cG_y$ satisfies several important properties (see \S\ref{subS:AGF}), and in particular,
\begin{equation} \label{E:AGFfneqintro}
  \theta \cG_y + b_1 \cG_y^{(1)} + \cdots + b_r \cG_y^{(r)} = t\cG_y + \exp_\phi(y)
\end{equation}
and
\begin{equation} \label{E:AGFResintro}
  \Res_{t=\theta}(\cG_y) = -y.
\end{equation}
Furthermore, as observed by Pellarin~\cite{Pellarin08} (see also \cite{Gekeler89b}, \cite{Goss94}), for each $1\leqslant i \leqslant r-1$ we have the relation with the quasi-logarithms of~$y$ given by
\begin{equation} \label{E:AGFquasiintro}
  \cG_y^{(i)}(\theta) = \rF_{i}(y).
\end{equation}
For the generators $\lambda_1, \dots, \lambda_r$ of the period lattice of $\phi$, we form the $r \times r$ matrix with entries in $\TT_\theta$,
\begin{equation} \label{E:Upsilonintro}
  \Upsilon \assign \begin{pmatrix}
  \cG_{\lambda_1}^{(1)} & \cdots & \cG_{\lambda_r}^{(1)} \\
  \vdots & & \vdots \\
  \cG_{\lambda_1}^{(r)} & \cdots & \cG_{\lambda_r}^{(r)}
  \end{pmatrix}.
\end{equation}
Using \eqref{E:AGFfneqintro}--\eqref{E:AGFquasiintro}, the entries of the specialization $\Upsilon|_{t=\theta}$ consist of linear combinations of the periods $\lambda_j$ and quasi-periods $\rF_{i}(\lambda_j)$ over the field of definition of~$\phi$ for $1 \leqslant i \leqslant r-1$ and $1\leqslant j \leqslant r$.  Thus if $\phi$ is defined over $\oK$, then the $\oK$-span of the entries of $\Upsilon|_{t=\theta}$ is the same as the $\oK$-span of the periods and quasi-periods of~$\phi$.  Likewise, for $y \in \KK$, the specialization
\[
  \bigl( \cG_y^{(1)}, \dots, \cG_y^{(r)} \bigr)\big|_{t=\theta}
\]
consists of $\oK$-linear combinations of $y$ and the quasi-logarithms associated to $y$.

For a $t$-module $\phi : \bA \to \Mat_d(\KK[\tau])$ of dimension~$d$ (see \S\ref{subS:modules} for precise definitions), one can associate two algebraic structures to it, called its \emph{$t$-motive} $\cM_{\phi}$ and \emph{dual-$t$-motive} $\cN_{\phi}$.  The former is a left $\KK[t,\tau]$-module, where $\KK[t,\tau]$ is the polynomial ring in $t$ and~$\tau$, subject to the relation that $\tau f = f^{(1)}\tau$ for any $f \in \KK[t]$, and the latter is a left $\KK[t,\sigma]$-module, where $\KK[t,\sigma]$ is likewise the polynomial ring in $t$ and $\sigma = \tau^{-1}$ so that $\sigma f = f^{(-1)}\sigma$.  (See \S\ref{subS:modules}--\S\ref{subS:motives} for precise definitions of $t$-modules and $t$-motives.)  The $t$-motive $\cM_{\phi}$ is free and finitely generated over $\KK[\tau]$ of rank~$d$, and likewise the dual $t$-motive $\cN_{\phi}$ is free and finitely generated over $\KK[\sigma]$ of rank~$d$.  If $\cM_{\phi}$ is free and finitely generated over $\KK[t]$, then $\cM_{\phi}$ and $\phi$ are said to be \emph{abelian}; if $\cN_{\phi}$ is free and finitely generated over $\KK[t]$, then $\cN_{\phi}$ and $\phi$ are said to be \emph{$\bA$-finite}.  In the case that $\phi$ is $\bA$-finite, we can choose a $\KK[t]$-basis $n_1, \dots, n_r \in \cN_{\phi}$, which determines a matrix $\Phi \in \Mat_r(\KK[t])$ with the property that
\begin{equation} \label{E:Phidefintro}
  \sigma \bsn = \Phi \bsn, \quad \bsn = (n_1, \dots, n_r)^{\tr}.
\end{equation}
If we can find a matrix $\Psi \in \GL_r(\TT)$ so that
\begin{equation} \label{E:RATintro}
  \Psi^{(-1)} = \Phi \Psi,
\end{equation}
where on the left we take Frobenius twists entry-wise, then $\phi$ is said to be \emph{rigid analytically trivial}. It was proved by Anderson (see \cite{And86}) that $\phi$ is rigid analytically trivial if and only if $\phi$ is uniformizable, i.e, the exponential function of $\phi$ is surjective. In~\cite{CP12} it was shown for a Drinfeld module~$\phi$ that the matrix $\Upsilon$ in \eqref{E:Upsilonintro} is nearly a rigid analytic trivialization (see \eqref{E:PsiinvDrinfeld}), which turns out to be a special case of a more general phenomenon for $t$-modules by applying a theorem of Hartl and Juschka~\cite{HartlJuschka20} (see Theorem~\ref{T:isodual}).

\begin{question} \label{Q:two}
From the standpoint of transcendence theory, rigid analytic trivializations are well-suited for the techniques of Frobenius difference equations investigated in~\cite{ABP04}, \cite{P08}. For a general uniformizable, abelian, and $\bA$-finite $t$-module~$\phi$, how do we determine explicit descriptions of periods, quasi-periods, logarithms, and quasi-logarithms in terms of systems of Frobenius difference equations, and especially in terms of a rigid analytic trivialization for~$\phi$?
\end{question}

As we have seen, this kind of description for Drinfeld modules is relatively straightforward, but as we discover the situation for general $t$-modules is more complicated.  Our results for periods and logarithms are governed by a fundamental exponentiation theorem of Anderson (Theorem~\ref{T:Exponentiation}) as well as explicit descriptions of rigid analytic trivializations in terms of a general theory of Anderson generating functions that we develop.  We describe these investigations in more detail in the next two sections, and fully in~\S\ref{S:Exp}--\S\ref{S:Quasi}.  Questions for periods and logarithms were also investigated by Juschka~\cite{Juschka10}, and results on periods in these directions have been obtained by Maurischat~\cite{Maurischat22}. A full account of Anderson's exponentiation theory was made recently by Hartl and Juschka~\cite{HartlJuschka20}.

Indeed there are subtle connections between the $t$-motive and dual $t$-motive of a $t$-module that control these identities, particularly through Theorem~\ref{T:isodual} of Hartl and Juschka.  Much can be proved for general abelian $\bA$-finite $t$-modules, and we obtain a complete picture in the case that $\phi$ is \emph{almost strictly pure}.  If $\phi$ is defined by
\[
 \phi_t = \rd\phi_t + B_1 \tau + \cdots + B_{\ell} \tau^{\ell}, \quad B_i \in \Mat_d(\KK),
\]
where $\rd\phi_t$ denotes simply the constant term in this expression, then $\phi$ is \emph{strictly pure} if $\det(B_{\ell}) \neq 0$ and $\phi$ is \emph{almost strictly pure} if the top coefficient of $\phi_{t^s}$ is invertible for some $s \geqslant 1$.  Almost strictly pure $t$-modules are pure in the sense of~\cite{And86} (see Remark~\ref{R:ASPimpliespure}), and they account for many $t$-modules of general interest, such as Drinfeld modules, tensor powers of the Carlitz module, tensor products of Drinfeld modules, etc., and they are automatically both abelian and $\bA$-finite.  Denis~\cite{Denis92} also showed that almost strictly pure $t$-modules carry canonical height functions that serve as analogues of the N\'eron-Tate height on abelian varieties.

Returning to the situation of Question~\ref{Q:one}, we consider that positive answers to Question~\ref{Q:two} demonstrate that quasi-periods and quasi-logarithms that are defined using nonabelian and non-$\bA$-finite quasi-periodic $t$-modules actually exist in the world of rigid analytic trivializations of abelian and $\bA$-finite $t$-modules.  Perhaps hyperderivatives of all of these quantities, which also are found in the nonabelian and non-$\bA$-finite $t$-modules of Brownawell and Denis, themselves also reside in the setting of rigid analytic trivializations.

\begin{question} \label{Q:three}
For a general uniformizable, abelian, and $\bA$-finite $t$-module defined over $K^{\sep}$, is it possible to identify the $\oK$-vector space spanned by the hyperderivatives with respect to~$\theta$ of its periods, quasi-periods, logarithms, and quasi-logarithms in terms of the rigid analytic trivialization of an abelian, $\bA$-finite, and uniformizable $t$-module?
\end{question}

In~\S\ref{S:Hyper} we obtain a satisfactory answer to this question for all uniformizable, abelian, and $\bA$-finite $t$-modules, which we recount in \S\ref{subS:hyperintro} below.  Our investigations begin with the \emph{prolongation $t$-modules} of Maurischat~\cite{Maurischat18}, whose rigid analytic trivializations are given in terms of hyperderivatives \emph{with respect to $t$} of the functions comprising the rigid analytic trivialization of the given $t$-module.  We provide a complete description of the quasi-periodic extensions of Maurischat's prolongations, and there is considerable interplay among the hyperderivatives of the constituent functions with respect to~$t$ evaluated at $t=\theta$ and hyperderivatives of the values themselves with respect to~$\theta$.

One memorandum is in order regarding the scope of this volume.  Although we have so far framed this work in terms of motivating problems from transcendental number theory, the aims of this project have been primarily to develop a comprehensive foundation on periods, quasi-periods, logarithms, and quasi-logarithms of Anderson $t$-modules, together with their hyperderivatives with respect to~$\theta$, in terms of abelian and $\bA$-finite $t$-modules.  Our vision has been focused on providing researchers explicit recipes for these values to make them amenable to current transcendence techniques.  To this end we address only Questions~\ref{Q:two} and~\ref{Q:three}, and aside from a passing result in Corollary~\ref{C:linindep}, there are no new transcendence results in this article.  However, as a realization of this principle, in~\cite{NPhD}, \cite{N21}, the first author has built on these techniques to determine all $\oK$-algebraic relations among hyperderivatives of periods, quasi-periods, logarithms, and quasi-logarithms of a Drinfeld module defined over $K^{\sep}$.

In what follows we survey our findings toward Question~\ref{Q:two} in \S\ref{subS:Andintro}--\S\ref{subS:RATintro}, and the complete investigations are contained in \S\ref{S:Exp}--\S\ref{S:Quasi}.  In \S\ref{subS:hyperintro} we summarize the main results of \S\ref{S:Hyper} and in particular obtain a positive answer to Question~\ref{Q:three}.

\subsection{Anderson's exponentiation theorem} \label{subS:Andintro}
We establish some terminology, which we review fully in \S\ref{subS:motives}, \S\ref{subS:dualtomodule}--\S\ref{subS:tframes}. For a matrix $C \in \Mat_{k \times \ell}(\laurent{\KK}{t})$ and $n \in \ZZ$, we define $C^{(n)}$ to be the matrix obtained by taking the $n$-th Frobenius twist of each entry of~$C$.  For $f = \sum_i c_i \tau^i \in \KK[\tau]$, we set $f^* \assign \sum_i c_i^{1/q^i} \sigma^i$, and as such the resulting map $\KK[\tau] \to \KK[\sigma]$ is an anti-isomorphism of rings.  We denote its inverse also by `$*$.' We extend this definition to a matrix $B = (b_{ij}) \in \Mat_{k \times \ell}(\KK[\tau])$ by setting $B^* \assign (b_{ij}^*)^{\tr} \in \Mat_{\ell \times k}(\KK[\sigma])$.

Let $\phi : \bA \to \Mat_d(\KK[\tau])$ be an $\bA$-finite $t$-module. Taking $\cN_{\phi} \assign \Mat_{1\times d}(\KK[\sigma])$, we make $\cN_{\phi}$ into a left $\KK[t,\sigma]$-module by setting
\[
  a \cdot \bsn \assign \bsn \phi_a^{*}, \quad a \in \bA,\ \bsn \in \cN_{\phi}.
\]
The module $\cN_{\phi}$ is the \emph{dual $t$-motive} of $\phi$. For a vector $\bsn = \sum \bsa_i \sigma^i \in \cN_{\phi}$, we set
\[
  \varepsilon_0(\bsn) \assign \bsa_0^{\tr}, \quad \varepsilon_1(\bsn) \assign \biggl( \sum_i \bsa_i^{(i)} \biggr)^{\tr}.
\]
As $\phi$ is assumed to be $\bA$-finite, the dual $t$-motive $\cN_{\phi}$ contains a $\KK[t]$-basis $\bsn_1, \dots, \bsn_r$ together with a matrix $\Phi \in \Mat_r(\KK[t])$ representing multiplication by~$\sigma$, as in \eqref{E:Phidefintro}.  The matrix $\Phi$ together with the $\KK$-linear isomorphism $\iota : \Mat_{1\times r}(\KK[t]) \to \cN_{\phi}$ defined by $\bsh \mapsto \bsh \cdot \bsn$ form a \emph{$t$-frame} of $\phi$.  Of particular utility are the identities for each $a \in \bA$ (see Lemma~\ref{L:epsilontframes}),
\[
  \rd\phi_{a} \bigl( \varepsilon_0(\iota(\bsalpha)) \bigr) = \varepsilon_0( \iota (a\cdot \bsalpha)), \quad
  \phi_a \bigl( \varepsilon_1(\iota(\bsalpha)) \bigr) = \varepsilon_1( \iota(a \cdot \bsalpha)).
\]
Anderson observed that there is a unique bounded $\KK$-linear map
\begin{equation}
  \cE_0: \bigl( \Mat_{1\times r}(\TT_\theta), \dnorm{\,\cdot\,}_{\theta} \bigr) \to \bigl( \KK^d, \inorm{\,\cdot\,} \bigr)
\end{equation}
so that $\cE_{0}|_{\Mat_{1\times r}(\KK[t])} = \varepsilon_0 \circ \iota$ (see Lemma~\ref{L:E0}), and we set $\cE_{1} \assign \varepsilon_1\circ \iota$.  The following result of Anderson is fundamental for our project (restated later as Theorem~\ref{T:Exponentiation}).

\begin{theoremintro}[{Anderson, see \cite[Thm.~2.5.21]{HartlJuschka20}}] \label{TI:Exponentiation}
Let $\phi : \bA \to \Mat_d(\KK[t])$ be an $\bA$-finite $t$-module with $t$-frame $(\iota, \Phi)$. Fix $\bsh \in \Mat_{1 \times r}(\KK[t])$, and suppose there exists $\bsg \in \Mat_{1 \times r}(\TT_\theta)$ such that
\[
\bsg^{(-1)}\Phi - \bsg = \bsh.
\]
Then
\[
\Exp_\phi\bigl( \cE_0(\bsg + \bsh) \bigr) = \cE_1(\bsh).
\]
\end{theoremintro}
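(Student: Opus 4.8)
The plan is to connect the analytic object $\Exp_\phi$ with the algebraic data of the dual $t$-motive through the two functionals $\cE_0$ and $\cE_1$, exploiting that $\cE_0$ is characterized as the unique bounded $\KK$-linear extension of $\varepsilon_0\circ\iota$ and that the hypothesis $\bsg^{(-1)}\Phi - \bsg = \bsh$ furnishes the precise intertwining needed. First I would observe that, since $\Exp_\phi$ is the $\FF_q$-linear entire function attached to $\phi$ with $\rd\Exp_\phi = \Id$, it converges on all of $\KK^d$ and satisfies $\Exp_\phi(\rd\phi_a(z)) = \phi_a(\Exp_\phi(z))$ for every $a \in \bA$; in particular it makes sense to evaluate it at $\cE_0(\bsg+\bsh)$, provided one first checks that $\cE_0(\bsg+\bsh)$ is a well-defined element of $\KK^d$, which follows because $\bsh \in \Mat_{1\times r}(\KK[t])$ and $\bsg \in \Mat_{1\times r}(\TT_\theta)$ both lie in the domain $\Mat_{1\times r}(\TT_\theta)$ of $\cE_0$ (on the polynomial part $\cE_0$ agrees with $\varepsilon_0\circ\iota$).

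The core of the argument is an approximation/telescoping step. Using the relation $\bsg = \bsg^{(-1)}\Phi - \bsh$ and iterating, one writes $\bsg$ (up to controlled error) as a finite sum $\bsg \equiv -\sum_{i=0}^{N-1}\bsh^{(-i)}\Phi^{(-i)}\cdots\Phi^{(-1)} + (\text{tail})$, where the tail is $\bsg^{(-N)}\Phi^{(-N+1)}\cdots\Phi^{(-1)}$ and tends to $0$ in $\dnorm{\,\cdot\,}_\theta$ as $N\to\infty$ by the boundedness built into $\TT_\theta$ and the structure of $\Phi$ over $\KK[t]$. Each partial sum $\bsg_N$ of this series lies in $\Mat_{1\times r}(\KK[t])$, so $\cE_0(\bsg_N) = \varepsilon_0(\iota(\bsg_N))$, and one computes $\cE_0(\bsg_N + \bsh)$ directly. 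The key algebraic input is the pair of identities from Lemma~\ref{L:epsilontframes}, namely $\rd\phi_a(\varepsilon_0(\iota(\bsalpha))) = \varepsilon_0(\iota(a\cdot\bsalpha))$ and $\phi_a(\varepsilon_1(\iota(\bsalpha))) = \varepsilon_1(\iota(a\cdot\bsalpha))$; combined with the fact that multiplication by $\sigma$ on $\cN_\phi$ is represented by $\Phi$, these let one match $\Exp_\phi$ applied to the $\varepsilon_0$-side against the $\varepsilon_1$-side term by term. Concretely, applying $\Exp_\phi$ to $\cE_0(\bsg_N+\bsh)$ and using the functional equation together with the twist-compatibility of $\varepsilon_0$ versus $\varepsilon_1$ (the twist $\bsa_i \mapsto \bsa_i^{(i)}$ in the definition of $\varepsilon_1$ is exactly what is produced by repeatedly applying the $\sigma$-action), the telescoping sum collapses and one is left with $\cE_1(\bsh)$ plus an error term coming from the tail.

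Finally I would pass to the limit $N\to\infty$: continuity of $\cE_0$ (it is bounded, hence continuous) gives $\cE_0(\bsg_N+\bsh) \to \cE_0(\bsg+\bsh)$ in $\inorm{\,\cdot\,}$, and continuity of $\Exp_\phi$ on $\KK^d$ gives $\Exp_\phi(\cE_0(\bsg_N+\bsh)) \to \Exp_\phi(\cE_0(\bsg+\bsh))$; meanwhile the error terms vanish, yielding $\Exp_\phi(\cE_0(\bsg+\bsh)) = \cE_1(\bsh)$. I expect the main obstacle to be the bookkeeping in the telescoping step: one must track how the Frobenius twists in $\Phi^{(-i)}$ interact with the two different twist conventions implicit in $\varepsilon_0$ and $\varepsilon_1$, and verify that the tail genuinely goes to zero in the Gauss norm $\dnorm{\,\cdot\,}_\theta$ (this is where the hypothesis $\bsg \in \TT_\theta$, rather than merely $\bsg\in\TT$, is essential, since the radius $|\theta|_\infty$ is calibrated so that the twisted products contract). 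Everything else — the existence and continuity of $\Exp_\phi$ and $\cE_0$, and the functorial identities for $\varepsilon_0,\varepsilon_1$ — is available from the results cited earlier in the excerpt.
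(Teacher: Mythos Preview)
Your approach has a genuine gap at its core. The claimed convergence of the tail $\bsg^{(-N)}\Phi^{(-N+1)}\cdots\Phi^{(-1)}$ to zero in $\dnorm{\,\cdot\,}_\theta$ is false in general. Negative Frobenius twisting replaces each coefficient $a$ by $a^{1/q^N}$, which drives $|a|$ \emph{toward}~$1$, not toward~$0$; in particular $\dnorm{\bsg^{(-N)}}$ does not tend to zero, and in fact $\bsg^{(-N)}$ need not even lie in $\TT_\theta$ (membership in $\TT_\theta$ requires $q^i|a_i|\to 0$, and taking $q^N$-th roots of $|a_i|$ destroys this decay). Meanwhile the product $\Phi^{(-N+1)}\cdots\Phi^{(-1)}$ has entries whose $t$-degree grows linearly in $N$. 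So the iteration you set up does not converge, and the approximation by polynomial partial sums $\bsg_N$ is not available by this route.

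There is a second, deeper gap: even granting a good approximation $\bsg_N\in\Mat_{1\times r}(\KK[t])$, you have no mechanism that turns $\Exp_\phi\bigl(\varepsilon_0(\iota(\bsg_N+\bsh))\bigr)$ into $\varepsilon_1(\iota(\bsh))$ plus a controllable error. The identities in Lemma~\ref{L:epsilontframes} intertwine $\varepsilon_0$ with $\rd\phi_a$ and $\varepsilon_1$ with $\phi_a$, but $\Exp_\phi$ is transcendental and there is no termwise telescoping that collapses one side to the other. The paper's argument (sketched in the remark following Theorem~\ref{T:Exponentiation}) proceeds quite differently: one truncates $\bsg$ by $t$-degree, setting $\bsb_n \assign t^{-n-1}\bigl(\bsg_{>n}^{(-1)}\Phi - \bsg_{>n}\bigr)\in\Mat_{1\times r}(\KK[t])$, and verifies that $\{\varepsilon_1(\iota(\bsb_n))\}_{n\geqslant 0}$ is a \emph{convergent $t$-division sequence} above $\varepsilon_1(\iota(\bsh))$ in the sense of Definition~\ref{def:tconvergent}. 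One then shows $\lim_n (\rd\phi_t)^{n+1}\varepsilon_1(\iota(\bsb_n)) = \cE_0(\bsg+\bsh)$ and invokes Theorem~\ref{T:Exptdivision}, which characterizes $\Exp_\phi$ precisely as the map whose value on such a limit recovers the base point of the division sequence. This appeal to the $t$-division description of $\Exp_\phi$ is the essential idea your proposal is missing.
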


If $\phi$ has a rigid analytic trivialization $\Psi \in \GL_r(\TT)$ as in~\eqref{E:RATintro}, and hence $\phi$ is also uniformizable, then in fact $\Psi$ is invertible over~$\TT_\theta$ by~\cite[Prop.~4.4.12]{ABP04}.  It follows from the equation $\Psi^{(-1)} = \Phi\Psi$ that if $\bsg$ is any row of $\Psi^{-1}$, then $\bsg^{(-1)} \Phi - \bsg = 0$, and we find that the period lattice $\Lambda_{\phi} \assign \ker \Exp_\phi$ satisfies
\begin{equation}
   \Lambda_{\phi} = \cE_0\bigl( \Mat_{1\times r}(\bA) \cdot \Psi^{-1} \bigr).
\end{equation}
Indeed the right-hand containment follows from Theorem~\ref{TI:Exponentiation}, and Anderson further proved the opposite containment (see Theorem~\ref{T:PeriodLattice}).  In addition to providing a way to capture elements of the period lattice of $\phi$ in terms of its rigid analytic trivialization, by finding appropriate vectors $\bsg$, $\bsh$, one can use Theorem~\ref{TI:Exponentiation} to provide formulas for arbitrary logarithms as well (see Remark~\ref{R:E0Logarithms}).

In order to apply Theorem~\ref{TI:Exponentiation} to obtain explicit formulas for periods and logarithms of $\phi$, there are two issues.
\begin{itemize}
\item[(i)] For given $\bsy$, $\bsalpha \in \KK^d$ with $\Exp_\phi(\bsy) = \bsalpha$, we seek $\bsg$, $\bsh$ as in Theorem~\ref{TI:Exponentiation} so that $\cE_0(\bsg + \bsh) = \bsy$ and $\cE_1(\bsh) = \bsalpha$.  Of course by Theorem~\ref{TI:Exponentiation}, the former equation implies the latter.
\item[(ii)] We need formulas for the map $\cE_0$ for advantageous descriptions of $\cE_0(\bsg + \bsh)$.
\end{itemize}
The first of these issues is addressed in the next section and in \S\ref{S:Quasi} using the theory of Anderson generating functions.  The second is resolved by Proposition~\ref{P:epsiota}.  Although the conditions in Proposition~\ref{P:epsiota} initially appear restrictive, we show in Remark~\ref{R:basechangered} that any $t$-module can be put in this desired form, perhaps after changes of bases over $\KK[\sigma]$ and $\KK[t]$ on $\cN_{\phi}$.  If $\phi$ is in this form, then we find positive integers $\ell_1, \dots, \ell_m$ that sum to $d$ ($m$ represents the number of Jordan blocks in $\rd\phi_t$ and the $\ell_i$'s are their dimensions), so that for $\bsg = (g_1, \dots, g_r) \in \Mat_{1\times r}(\TT_{\theta})$,
\begin{equation} \label{EI:E0}
  \cE_0(\bsg)
  = \left. \begin{pmatrix}
  \pd_{t}^{\ell_1 -1}(g_1) \\ \vdots \\
  \pd_{t}^1 (g_1) \\
  g_1 \\ \vdots \\
  \pd_{t}^{\ell_m-1}(g_m) \\ \vdots \\ \pd_{t}^{1}(g_m) \\ g_m
  \end{pmatrix} \right|_{t=\theta}.
\end{equation}
Thus $\cE_0$ is constructed through hyperderivatives of functions with respect to~$t$, evaluated at $t=\theta$.  Moreover, since Theorem~\ref{TI:Exponentiation} provides identities for periods and logarithms of $\phi$ through $\cE_0$, we see that these quantities already involve hyperderivatives, though with respect to $t$ and not yet with respect to $\theta$. This connection with hyperderivatives does not occur for Drinfeld modules, as they are $1$-dimensional. For tensor powers of the Carlitz module, this phenomenon was already observed by Anderson and Thakur~\cite{AndThak90} and was made explicit by Maurischat~\cite{Maurischat18}.  At the end of \S\ref{subS:E0calc} we provide the explicit descriptions of $\cE_0$ for Drinfeld modules, tensor powers of the Carlitz module, and strictly pure $t$-modules.

\subsection{The de Rham module and rigid analytic trivializations} \label{subS:RATintro}
The theory of biderivations, quasi-periodic functions, and the de Rham isomorphism for Drinfeld modules was developed by Anderson, Deligne, Gekeler, and Yu (see \cite{Brownawell93}, \cite{Gekeler89a}, \cite{Gekeler90}, \cite{Goss94}, \cite{Yu90}), and later this theory was extended by Brownawell and the second author to general $t$-modules~\cite{BP02} and in a different construction by Hartl and Juschka~\cite[Thm.~2.5.51]{HartlJuschka20}.

As opposed to Anderson's exponentiation theorem from the previous section, which centered on the dual $t$-motive of a $t$-module, the quasi-periodic theory associated to a $t$-module is built instead from its $t$-motive.  As for Drinfeld modules, the general theory of quasi-periods and quasi-logarithms can be realized through the appropriate extension of Anderson generating functions.  Ultimately through a result of Hartl and Juschka (Theorem~\ref{T:isodual}), we unify the theories of rigid analytic trivializations and quasi-periodic extensions of abelian and $\bA$-finite $t$-modules.

We let $\phi : \bA \to \Mat_d(\KK[\tau])$ be a uniformizable abelian $t$-module, and let $\cM_{\phi}$ be its $t$-motive.  Similar to the dual $t$-motive, we set $\cM_{\phi} \assign \Mat_{1\times d}(\KK[\tau])$, and we define the operation of $\KK[t]$ on $\cM_{\phi}$ by setting
\[
  a \cdot \bsm \assign \bsm \phi_a, \quad a \in \bA,\ \bsm \in \cM_{\phi}.
\]
A \emph{$\phi$-biderivation} is an $\FF_q$-linear function $\bsdelta : \bA \to \tau \cM_{\phi}$ such that
\[
  \bsdelta_{ab} = a(\theta)\bsdelta_b + \bsdelta_a \phi_b, \quad \forall\, a,\, b \in \bA.
\]
The $\KK$-vector space of biderivations is denoted $\Der(\phi)$, and each $\phi$-biderivation is uniquely determined by its value~$\bsdelta_t \in \Mat_{1\times d}(\KK[\tau]\tau)$.  If there exists $\bsu \in \Mat_{1\times d}(\KK[\tau])$ so that $\bsdelta_t = \bsu \phi_t - \theta \bsu$, then $\bsdelta$ is said to be an \emph{inner biderivation}, and it is \emph{strictly inner} if $\bsu \in \Mat_{1\times d}(\KK[\tau]\tau)$.  We let $\bsdelta^{(\bsu)}$ denote the inner biderivation associated to this~$\bsu$.  The $\KK$-subspaces of inner and strictly inner biderivations are denoted $\Der_{\inn}(\phi) \supseteq \Der_{\si}(\phi)$, and moreover the quotient
\[
  \rH_{\DR}^{1}(\phi) \assign \frac{\Der(\phi)}{\Der_{\si}(\phi)}
\]
is called the \emph{de Rham module} of $\phi$.  The de Rham module carries the structure of a $\KK[t]$-module, and its dimension as a $\KK$-vector space is~$r$.  For more details, see \S\ref{subS:biderivations}.

For each $\bsdelta \in \Der(\phi)$, there is a unique $\FF_q$-linear power series $\rF_{\bsdelta}(\bsz) \in \power{\KK}{z_1, \dots, z_d}$ so that
\begin{equation}
  \rF_{\bsdelta}(\rd \phi_a \cdot \bsz) = a(\theta) \rF_{\bsdelta}(\bsz) + \bsdelta_a \Exp_{\phi}(\bsz), \quad \forall\, a \in \bA.
\end{equation}
Much as for Drinfeld modules, using the theory in~\cite{BP02}, one can construct quasi-periodic extensions of $\phi$ whose exponential functions are given in terms of these quasi-periodic functions. \emph{Quasi-periods} of $\phi$ are the values $\rF_{\bsdelta}(\bslambda)$ for $\bslambda \in \Lambda_{\phi}$, and more generally \emph{quasi-logarithms} are the values $\rF_{\bsdelta}(\bsy)$ for $\bsy \in \KK^d$. If $\bsdelta = \bsdelta^{(\bsu)}$ is an inner biderivation as above, then
\begin{equation} \label{E:innerfneqintro}
  \rF_{\bsdelta^{(\bsu)}}(\bsz) = \bsu \Exp_{\phi}(\bsz) - \rd \bsu \cdot \bsz,
\end{equation}
and so quasi-periods and quasi-logarithms of inner biderivations are expressible simply in terms of periods and logarithms of $\phi$ themselves.

As is the case for Drinfeld modules, quasi-periods and quasi-logarithms for $\phi$ can be captured in terms of Anderson generating functions.  For $\bsy \in \KK^d$ we define $\cG_{\bsy} \in \TT^d$ by setting
\begin{equation}
  \cG_{\bsy} \assign \sum_{n=0}^\infty \Exp_\phi \bigl( (\rd\phi_t)^{-n-1} \bsy \bigr) t^n,
\end{equation}
and also as for Drinfeld modules, they satisfy properties that extend \eqref{E:AGFfneqintro}--\eqref{E:AGFResintro} (see Propositions~\ref{P:AGFres} and~\ref{P:AGFprops}).  The extension of \eqref{E:AGFquasiintro} requires some notation.  For matrices $\bsbeta = B_0 + B_1\tau + \cdots + B_{\ell}\tau^{\ell} \in \Mat_{m\times n}(\KK[\tau])$ and $M \in \Mat_{n\times s}(\TT)$, we set
\begin{equation}
  \twistop{\bsbeta}{M} \assign B_0 M + B_1 M^{(1)} + \cdots + B_{\ell} M^{(\ell)} \in \Mat_{m \times s}(\TT).
\end{equation}
The extension of \eqref{E:AGFquasiintro} is that for $\bsdelta \in \Der(\phi)$ and $\bsy \in \KK^d$
\begin{equation} \label{E:AGFquasidimintro}
  \twistop{\bsdelta_t}{\cG_{\bsy}}|_{t=\theta} = \rF_{\bsdelta}(\bsy).
\end{equation}
See Proposition~\ref{P:quasigen} for more details.  (A word about the utility of the $\twistop{\cdot}{\cdot}$ notation is that the left-hand entry is often taken from $\bsm \in \cM_{\phi}$.  For $f \in \KK[t]$, there is a need to distinguish $f \cdot \twistop{\bsm}{\cG_{\bsy}}$ and $\twistop{f \cdot \bsm}{\cG_{\bsy}}$, which this notation provides but that `$f\cdot \bsm(\cG_{\bsy})$' leaves ambiguous. For example, see Corollary~\ref{C:AGFtwistedmult}.)

We then define $\Upsilon \in \GL_r(\TT_{\theta})$ as in~\eqref{E:Upsilonintro} by choosing a $\KK[t]$-basis $\bsm_1, \dots, \bsm_r$ of $\cM_{\phi}$ and an $\bA$-basis $\bslambda_1, \dots, \bslambda_r$ of $\Lambda_{\phi}$, and then setting
\begin{equation}
  \Upsilon \assign \begin{pmatrix}
    \twistop{\tau \bsm_1}{\cG_{\bslambda_1}} & \cdots & \twistop{\tau \bsm_1}{\cG_{\bslambda_r}} \\
    \vdots & & \vdots\\
    \twistop{\tau \bsm_r}{\cG_{\bslambda_1}} & \cdots & \twistop{\tau \bsm_r}{\cG_{\bslambda_r}}
  \end{pmatrix}.
\end{equation}
As an element of $\GL_r(\TT_{\theta})$, its value at $t=\theta$ is non-zero, and $\Upsilon|_{t=\theta}$ is also the matrix representing the de Rham map in the following corollary (restated later as Corollary~\ref{C:deRhamiso}), which then must be an isomorphism.  This extends results of Gekeler~\cite{Gekeler89a} for Drinfeld modules (see also~\cite{Goss94}), and provides a specialized version of a related result in for Hodge-Pink structures for $t$-modules due to Hartl and Juschka~\cite{HartlJuschka20}.

\begin{corollaryintro}[de Rham isomorphism] \label{CI:deRhamiso}
Let $\phi : \bA \to \Mat_d(\KK[\tau])$ be a uniformizable abelian $t$-module.  The map
\[
  \DR : \rH^1_{\DR}(\phi) \to \Hom_{\bA}(\Lambda_{\phi},\KK),
\]
defined by $\DR([\bsdelta]) = \rF_{\bsdelta}|_{\Lambda_{\phi}}$, is an isomorphism.
\end{corollaryintro}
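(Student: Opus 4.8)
The plan is to pin down explicit $\KK$-bases of the two sides, to recognize the matrix of $\DR$ in these bases as the specialization $\Upsilon|_{t=\theta}$, and to deduce bijectivity from the (previously established) fact that $\Upsilon \in \GL_r(\TT_\theta)$. First I would verify that $\DR$ is a well-defined $\KK$-linear map into $\Hom_\bA(\Lambda_\phi,\KK)$. Linearity is immediate from the $\KK$-linearity of $\bsdelta \mapsto \rF_\bsdelta$. For well-definedness, a strictly inner biderivation $\bsdelta^{(\bsu)}$ has $\bsu \in \Mat_{1\times d}(\KK[\tau]\tau)$, so $\rd\bsu = 0$ and \eqref{E:innerfneqintro} collapses to $\rF_{\bsdelta^{(\bsu)}}(\bsz) = \bsu\,\Exp_\phi(\bsz)$, which vanishes on $\Lambda_\phi = \ker\Exp_\phi$. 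That the image lies in $\Hom_\bA(\Lambda_\phi,\KK)$ follows by evaluating the functional equation $\rF_\bsdelta(\rd\phi_a\cdot\bsz) = a(\theta)\rF_\bsdelta(\bsz) + \bsdelta_a\Exp_\phi(\bsz)$ at $\bsz = \bslambda \in \Lambda_\phi$: since $\Lambda_\phi$ is stable under $\rd\phi_a$ (because $\Exp_\phi\circ\rd\phi_a = \phi_a\circ\Exp_\phi$) with induced $\bA$-action $a\cdot\bslambda = \rd\phi_a\bslambda$, one gets $\rF_\bsdelta(a\cdot\bslambda) = a(\theta)\rF_\bsdelta(\bslambda)$.

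Next I would fix the bases. On the source, $\bsdelta \mapsto \bsdelta_t$ identifies $\Der(\phi)$ with $\tau\cM_\phi$ and carries $\Der_{\si}(\phi)$ onto $(t-\theta)\cdot(\tau\cM_\phi)$, since $\bsdelta^{(\bsu)}_t = \bsu\phi_t - \theta\bsu = (t-\theta)\cdot\bsu$; hence $\rH^1_{\DR}(\phi) \cong \tau\cM_\phi/(t-\theta)(\tau\cM_\phi)$ (cf.\ \S\ref{subS:biderivations}). Because $\tau\colon\cM_\phi\to\tau\cM_\phi$ is injective and, up to Frobenius twisting of $\KK[t]$-coefficients, carries the basis $\bsm_1,\dots,\bsm_r$ onto a generating set, $\tau\bsm_1,\dots,\tau\bsm_r$ is a $\KK[t]$-basis of $\tau\cM_\phi$, so the biderivations $\bsdelta^{(1)},\dots,\bsdelta^{(r)}$ with $\bsdelta^{(i)}_t = \tau\bsm_i$ represent a $\KK$-basis of $\rH^1_{\DR}(\phi)$. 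On the target, $\Lambda_\phi = \bigoplus_{j=1}^r \bA\cdot\bslambda_j$ makes $f \mapsto (f(\bslambda_1),\dots,f(\bslambda_r))$ a $\KK$-linear isomorphism $\Hom_\bA(\Lambda_\phi,\KK)\iso\KK^r$. Against these bases, \eqref{E:AGFquasidimintro} gives
\[
  \DR\bigl([\bsdelta^{(i)}]\bigr)(\bslambda_j) = \rF_{\bsdelta^{(i)}}(\bslambda_j) = \twistop{\tau\bsm_i}{\cG_{\bslambda_j}}\big|_{t=\theta} = \Upsilon_{ij}\big|_{t=\theta},
\]
so $\DR$ is represented, up to transpose, by the matrix $\Upsilon|_{t=\theta}$.

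It remains to conclude. Since $\Upsilon \in \GL_r(\TT_\theta)$, we have $\det\Upsilon \in \TT_\theta^\times$; specialization at $t=\theta$ is a $\KK$-algebra homomorphism $\TT_\theta\to\KK$ and hence sends $\TT_\theta^\times$ into $\KK^\times$, so $\det(\Upsilon|_{t=\theta}) = (\det\Upsilon)|_{t=\theta} \neq 0$. Therefore $\Upsilon|_{t=\theta}\in\GL_r(\KK)$ and $\DR$ is an isomorphism. The real weight of the argument sits in the inputs imported from earlier: the identity \eqref{E:AGFquasidimintro} expressing twisted evaluations of Anderson generating functions as quasi-logarithms, and --- the genuinely hard point --- the invertibility of $\Upsilon$ over $\TT_\theta$, which is where uniformizability of $\phi$ and the comparison with a rigid analytic trivialization via Theorem~\ref{T:isodual} are used. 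Within the proof of the corollary itself, the only spot demanding care is the identification $\rH^1_{\DR}(\phi) \cong \tau\cM_\phi/(t-\theta)(\tau\cM_\phi)$ together with the verification that $\tau\bsm_1,\dots,\tau\bsm_r$ remains a $\KK[t]$-basis of $\tau\cM_\phi$, where one must track the failure of $\tau$ to be $\KK$-linear.
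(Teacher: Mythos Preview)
Your proof is correct and follows essentially the same route as the paper: verify well-definedness and $\bA$-linearity, identify the matrix of $\DR$ in the bases $\{[\bsdelta^{(i)}]\}$ and $\{\bslambda_j\}$ as $\Upsilon|_{t=\theta}$ via Proposition~\ref{P:quasigen}, and conclude invertibility from $\Upsilon \in \GL_r(\TT_\theta)$ (Proposition~\ref{P:Upsiloninvertible}). One small correction to your closing commentary: the invertibility of $\Upsilon$ over $\TT_\theta$ does not invoke Theorem~\ref{T:isodual}; it is proved directly from the difference equation for $\det\Upsilon$ using only that $\phi$ is uniformizable and abelian.
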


In the case of a Drinfeld module, the matrix $\Upsilon$ was used to determine a rigid analytic trivialization for $\phi$, and we find this connection to hold more generally.  If we assume that $\phi$ is uniformizable, abelian, and $\bA$-finite, then Hartl and Juschka construct another dual-$t$-motive $\cM_{\phi}^{\wedge}$ arising from the $t$-motive $\cM_{\phi}$, and they prove that there is an isomorphism of dual-$t$-motives,
\begin{equation} \label{E:xiintro}
  \xi : \cM_{\phi}^{\wedge} \to \cN_{\phi}.
\end{equation}
See Theorem~\ref{T:isodual} for complete details.  If we let $V \in \GL_r(\KK[t])$ denote the matrix representing $\xi$ with respect to the $\KK[t]$-basis of $\cM_{\phi}^{\wedge}$ induced by $\bsm_1, \dots, \bsm_r$ and some fixed $\KK[t]$-basis $\bsn_1, \dots, \bsn_r$ of $\cN_{\phi}$, then it follows that the matrix
\begin{equation} \label{E:PsiUpsilonVintro}
  \Psi \assign \bigl( \Upsilon^{\tr} V \bigr)^{-1} \in \GL_r(\TT_{\theta})
\end{equation}
is a rigid analytic trivialization for $\phi$ (see Proposition~\ref{P:PsiUpsilonV}).  Furthermore, for $\bsalpha$, $\bsy \in \KK^d$ with $\Exp_{\phi}(\bsy) = \bsalpha$, if we take
\begin{equation} \label{E:gyintro}
  \bsg_{\bsy} \assign -\bigl(\twistop{\tau \bsm_1}{\cG_{\bsy}}, \dots, \twistop{\tau \bsm_r}{\cG_{\bsy}} \bigr) \cdot V \in \Mat_{1 \times r}(\TT_{\theta})
\end{equation}
then there exists $\bsh_{\bsalpha} \in \Mat_{1\times r}(\KK[t])$ so that
\[
  \bsg_{\bsy}^{(-1)}\Phi - \bsg_{\bsy} = \bsh_{\bsalpha}.
\]
The explicit description of $\bsh_{\bsalpha}$ can be found in Lemma~\ref{L:gyha}.  Of particular interest is that now $\bsg_{\bsy}$ and $\bsh_{\bsalpha}$ can serve as inputs for Theorem~\ref{TI:Exponentiation}.  We note that for $\bsy = \bslambda_j$, the row vector $\bsg_{\bslambda_j}$ is the negative of the $j$-th row of $\Psi^{-1}=\Upsilon^{\tr}V$.

With only a little extra effort, we can pay attention to the field of definition of the $t$-module $\phi$ as well as its biderivations, quasi-periodic functions, and the isomorphism $\xi$ from~\eqref{E:xiintro}.  The above considerations then lead to the following theorem (which synthesizes Corollary~\ref{C:perquasiperspans} and Theorem~\ref{T:quasispancomplete}).

\begin{theoremintro} \label{TI:perquasiomnibus}
Let $\phi: \bA \to \Mat_d(\oK[\tau])$ be a uniformizable, abelian, and $\bA$-finite $t$-module of rank~$r$ defined over~$\oK$, and suppose we have chosen $\oK[t]$-bases $\{\bsm_1, \dots, \bsm_r\}$ and $\{\bsn_1, \dots, \bsn_r\}$ for $\cM_{\phi,\oK}$ and $\cN_{\phi,\oK}$ respectively.  Then for the rigid analytic trivialization $\Psi \in \GL_r(\TT_{\theta})$ for $\phi$ in~\eqref{E:PsiUpsilonVintro}, the following hold.
\begin{enumerate}
\item[(a)] If $\bsg_1, \dots, \bsg_r \in \Mat_{1\times r}(\TT_{\theta})$ are the rows of $\Psi^{-1} = \Upsilon^{\tr}V$, then letting $\tbslambda_j \assign \cE_{0}(\bsg_j)$ for $1\leqslant j \leqslant r$,
\[
  \Lambda_{\phi} = \bA \cdot \tbslambda_1 + \dots + \bA \cdot \tbslambda_r.
\]
\item[(b)] If $\bsdelta_1, \dots, \bsdelta_r \in \Der(\phi,\oK)$ represent a $\oK$-basis of $\rH_{\DR}^1(\phi,\oK)$, then
\[
  \Span{\oK} \bigl( \rF_{\bsdelta_i}(\tbslambda_j) : 1 \leqslant i, j \leqslant r \bigr)
  = \Span{\oK} \bigl( (\Upsilon^{\tr})|_{t=\theta} \bigr) = \Span{\oK} \bigl( (\Psi^{-1})|_{t=\theta}\bigr).
\]
\item[(c)] Suppose that $\bsy \in \KK^d$ and $\bsalpha \in \smash{\oK}^d$ are chosen so that $\Exp_{\phi}(\bsy) = \bsalpha$.  Then
\begin{align*}
  \Span{\oK} \bigl(1, \rF_{\bsdelta_1}(\bsy), \dots, \rF_{\bsdelta_r}(\bsy) \bigr) &=
  \Span{\oK} \bigl( \{1\} \cup  \{\rF_{\bsdelta}(\bsy) : \bsdelta \in \Der(\phi,\oK) \} \bigr)\\
   &= \Span{\oK} \bigl( \{1\} \cup \{ \bsg_{\bsy}|_{t=\theta}\} \bigr),
\end{align*}
where $\bsg_{\bsy} \in \Mat_{1\times r}(\TT_\theta)$ is given as in \eqref{E:gyintro}.
\end{enumerate}
\end{theoremintro}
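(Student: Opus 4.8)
The plan is to treat the three parts in order, leaning on the machinery already assembled in the paper: Anderson's exponentiation theorem (Theorem~\ref{TI:Exponentiation}), the period lattice description (Theorem~\ref{T:PeriodLattice}), the Hartl--Juschka isomorphism $\xi$ from~\eqref{E:xiintro}, the rigid analytic trivialization $\Psi = (\Upsilon^{\tr}V)^{-1}$ (Proposition~\ref{P:PsiUpsilonV}), the Anderson-generating-function formula~\eqref{E:AGFquasidimintro} for quasi-logarithms, and the de Rham isomorphism (Corollary~\ref{CI:deRhamiso}). The key identity underlying everything is that, for $\bsy \in \KK^d$ with $\Exp_\phi(\bsy) = \bsalpha$, the vector $\bsg_{\bsy}$ of~\eqref{E:gyintro} satisfies $\bsg_{\bsy}^{(-1)}\Phi - \bsg_{\bsy} = \bsh_{\bsalpha}$ for an explicit $\bsh_{\bsalpha} \in \Mat_{1\times r}(\KK[t])$ (Lemma~\ref{L:gyha}), which is precisely the hypothesis needed to feed $\bsg_{\bsy}$ into Theorem~\ref{TI:Exponentiation}; and that when $\bsy = \bslambda_j$ is a period we may take $\bsh_{\bsalpha} = 0$, so that $\bsg_{\bslambda_j}$ is the negative of the $j$-th row of $\Psi^{-1} = \Upsilon^{\tr}V$.

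For part~(a), I would argue as follows. Because $\Psi$ is a rigid analytic trivialization and $\phi$ is uniformizable, $\Psi \in \GL_r(\TT_\theta)$ by~\cite[Prop.~4.4.12]{ABP04}, so each row $\bsg_j$ of $\Psi^{-1}$ lies in $\Mat_{1\times r}(\TT_\theta)$; from $\Psi^{(-1)} = \Phi\Psi$ one gets $\bsg_j^{(-1)}\Phi - \bsg_j = 0$, hence $\bsh = 0$ is an admissible input, and Theorem~\ref{TI:Exponentiation} gives $\Exp_\phi(\cE_0(\bsg_j)) = \cE_1(0) = \bzero$, so $\tbslambda_j = \cE_0(\bsg_j) \in \Lambda_\phi$. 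This shows $\bA\cdot\tbslambda_1 + \dots + \bA\cdot\tbslambda_r \subseteq \Lambda_\phi$. For the reverse inclusion I would invoke Anderson's period-lattice theorem (Theorem~\ref{T:PeriodLattice}), which says $\Lambda_\phi = \cE_0(\Mat_{1\times r}(\bA)\cdot\Psi^{-1})$; since $\cE_0$ is $\KK$-linear and restricts to $\varepsilon_0\circ\iota$ on $\Mat_{1\times r}(\KK[t])$, and since multiplication by $a \in \bA$ on the rows of $\Psi^{-1}$ corresponds under $\cE_0$ to the $\rd\phi_a$-action via the identity $\rd\phi_a(\varepsilon_0(\iota(\bsalpha))) = \varepsilon_0(\iota(a\cdot\bsalpha))$, the $\bA$-module generated by the $\tbslambda_j$ is exactly the image $\cE_0(\Mat_{1\times r}(\bA)\cdot\Psi^{-1}) = \Lambda_\phi$. (The $\KK[t]$-basis $\{\bsn_j\}$ of $\cN_{\phi,\oK}$ used to define $V$, together with the base-change remarks in Remark~\ref{R:basechangered}, guarantee the $t$-frame is in the form where $\cE_0$ has the shape~\eqref{EI:E0}, but this is not strictly needed for the abstract identity.)

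For parts~(b) and~(c) the common engine is~\eqref{E:AGFquasidimintro}: for $\bsdelta \in \Der(\phi,\oK)$ and $\bsy \in \KK^d$, $\rF_{\bsdelta}(\bsy) = \twistop{\bsdelta_t}{\cG_{\bsy}}|_{t=\theta}$. For part~(c), I would first note that, by~\eqref{E:innerfneqintro}, quasi-logarithms of inner biderivations are $\oK$-linear combinations of $1$ and the coordinates of $\bsy$ and $\bsalpha$ — but $\bsalpha \in \oK^d$, so modulo $\oK$ these contribute nothing new beyond what the $\bsy$-coordinates already give. Since $\rH^1_{\DR}(\phi,\oK)$ is $r$-dimensional over $\oK$ with $\Der_{\si}(\phi)$ the inner part, a general biderivation's quasi-logarithm is, modulo $\Span{\oK}(1)$, an $\oK$-combination of $\rF_{\bsdelta_1}(\bsy), \dots, \rF_{\bsdelta_r}(\bsy)$, giving the first equality. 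For the second equality, I would express the chosen $\bsm_1, \dots, \bsm_r$ in terms of the biderivations: by the construction relating $\cM_\phi$ to $\Der(\phi)$ (Anderson's correspondence, $\bsm \mapsto$ the biderivation $t \mapsto \tau\bsm$), the entries $\twistop{\tau\bsm_i}{\cG_{\bsy}}|_{t=\theta}$ are precisely quasi-logarithms $\rF_{\bsdelta}(\bsy)$ for biderivations $\bsdelta$ spanning $\rH^1_{\DR}$ modulo strictly inner ones, and these entries assemble (after multiplying by $V \in \GL_r(\oK[t])$, which only changes things by $\oK$-combinations and does not alter the $\oK$-span at $t=\theta$) into $\bsg_{\bsy}|_{t=\theta}$. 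Hence $\Span{\oK}(\{1\} \cup \{\bsg_{\bsy}|_{t=\theta}\})$ equals $\Span{\oK}(\{1\} \cup \{\rF_{\bsdelta}(\bsy)\})$. Part~(b) is the specialization $\bsy = \bslambda_j$ over all $j$: here $\bsalpha = \bzero$ so there is no ``$1$'', the entries $\twistop{\tau\bsm_i}{\cG_{\bslambda_j}}$ are exactly the entries of $\Upsilon$, so $\Upsilon^{\tr}|_{t=\theta}$ has the quasi-periods $\rF_{\bsdelta_i}(\tbslambda_j)$ (after the $V$-twist, an $\oK$-change of basis) as its $\oK$-span, and this coincides with $\Span{\oK}((\Psi^{-1})|_{t=\theta})$ since $\Psi^{-1} = \Upsilon^{\tr}V$ and $V|_{t=\theta} \in \GL_r(\oK)$.

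The main obstacle I anticipate is bookkeeping the field of definition carefully throughout: one must check that the de Rham classes $[\bsdelta_i]$ can be represented by biderivations valued in $\cM_{\phi,\oK}$, that the Hartl--Juschka isomorphism $\xi$ and hence $V$ can be taken with $\oK[t]$-entries and $V|_{t=\theta} \in \GL_r(\oK)$, and that the correspondence between the $\oK[t]$-basis $\{\bsm_i\}$ of $\cM_{\phi,\oK}$ and biderivations spanning $\rH^1_{\DR}(\phi,\oK)$ is defined over $\oK$ — so that replacing ``$\KK$'' by ``$\oK$'' in all the spanning statements is legitimate. The transcendental functions $\cG_{\bsy}$ and $\Upsilon$ live in $\TT_\theta$, not over $\oK$, but their \emph{specializations at $t=\theta$} land in $\KK$ and the relations among them are $\oK$-linear because all the structural matrices ($\Phi$, $V$, $\rd\phi_t$, $\bsdelta_t$) are defined over $\oK$; making this rigorous is the one genuinely delicate point, and it is where Corollary~\ref{C:perquasiperspans} and Theorem~\ref{T:quasispancomplete} (which this theorem synthesizes) do their real work. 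Everything else is a matter of assembling identities~\eqref{E:AGFquasidimintro}, \eqref{E:innerfneqintro}, Theorem~\ref{TI:Exponentiation}, Theorem~\ref{T:PeriodLattice}, and the relation $\Psi^{-1} = \Upsilon^{\tr}V$ in the right order.
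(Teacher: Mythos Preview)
Your proposal is correct and takes essentially the same approach as the paper: part~(a) from Theorem~\ref{T:PeriodLattice} and the difference equation satisfied by the rows of $\Psi^{-1}$, parts~(b)--(c) from the quasi-logarithm identity $\rF_{\bsdelta}(\bsy)=\twistop{\bsdelta_t}{\cG_{\bsy}}|_{t=\theta}$ together with $\Psi^{-1}=\Upsilon^{\tr}V$ and $V\in\GL_r(\oK[t])$. Two minor points to tighten: in~(c) the first equality needs only that \emph{strictly} inner biderivations satisfy $\rF_{\bsdelta^{(\bsu)}}(\bsy)=\bsu(\bsalpha)\in\oK$ (your remark about general inner biderivations and $\bsy$-coordinates is a red herring here), and in~(b) the entries of $\Upsilon|_{t=\theta}$ are $\rF_{\bsepsilon_i}(\bslambda_j)$ for the basis $\{\bslambda_j\}$ used to \emph{define} $\Upsilon$, not $\{\tbslambda_j\}$---but since both are $\bA$-bases of $\Lambda_\phi$ and $\rF_{\bsdelta}|_{\Lambda_\phi}$ is $\bA$-linear (with $\bA$ acting via $a\mapsto a(\theta)\in\oK$), the $\oK$-spans coincide.
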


Unlike in the case of \eqref{E:AGFquasiintro}, parts (b) and~(c) of this theorem do not necessarily account for the $\oK$-span of \emph{all} coordinates of logarithms and quasi-logarithms of $\phi$.  In dimensions $> 1$, the functional equation~\eqref{E:innerfneqintro} for inner biderivations yields only those coordinates of periods and logarithms that are \emph{tractable}, i.e., those that lie at the bottom of a Jordan block of $\rd\phi_t$ when $\rd\phi_t$ is in Jordan normal form.  Thus our answer so far to Question~\ref{Q:two} is not yet complete.  However, to settle this question and account for \emph{all} coordinates of periods and logarithms using these techniques, we apply this theory to Maurischat's prolongation $t$-modules, where these missing coordinates will appear.  See the next section and \S\ref{S:Hyper} for details.

One subtle and vexing point is that although Theorem~\ref{TI:Exponentiation} applies to the vectors $\bsg_{\bsy}$ and $\bsh_{\bsalpha}$, it is not known a priori that $\cE_0(\bsg_{\bsy} + \bsh_{\bsalpha}) = \bsy$.  In particular in the theorem, it would seem reasonable that $\cE_0(\bsg_j) = \bslambda_j$, but all we know for sure is that $\cE_0(\bsg_1), \dots, \cE_0(\bsg_r)$ form an $\bA$-basis of the period lattice.  The reason for this inexactness is that for a general $t$-module the definition of $V$ can be quite complicated.  Already for Drinfeld modules, $V$ arises somewhat mysteriously (see \eqref{E:VDrinfeld} and \cite[p.~133]{CP12}, \cite[p.~111]{HartlJuschka20}, \cite[p.~97]{Juschka10}). For almost strictly pure $t$-modules, the story has a satisfactory conclusion, and we provide an explicit description of $V$ in Corollary~\ref{C:VXBY}.  Moreover, the following identities hold.

\begin{propositionintro} \label{PI:Expgyha}
Let $\phi : \bA \to \Mat_d(\KK[\tau])$ be a uniformizable almost strictly pure $t$-module.  Let $\bsalpha$, $\bsy\in \KK^d$ be chosen so that $\Exp_{\phi}(\bsy) = \bsalpha$, and let $\bsg_{\bsy}$ and $\bsh_{\bsalpha}$ be given as in \eqref{E:gyintro}.  Then
\begin{enumerate}
\item[(a)] $\cE_0( \bsg_{\bsy}+\bsh_{\bsalpha}) = \bsy$,
\item[(b)] $\cE_1(\bsh_{\bsalpha}) = \bsalpha$.
\end{enumerate}
\end{propositionintro}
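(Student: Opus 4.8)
The plan is to leverage the explicit description of $V$ for almost strictly pure $t$-modules (Corollary~\ref{C:VXBY}) together with the formula \eqref{EI:E0} for $\cE_0$ and the defining functional equations for Anderson generating functions. First I would reduce to the case where $\phi$ is in the normalized form of Proposition~\ref{P:epsiota}, so that the formula \eqref{EI:E0} for $\cE_0$ in terms of hyperderivatives with respect to~$t$ evaluated at $t=\theta$ is available; the passage back to a general almost strictly pure $\phi$ is handled by the base-change considerations of Remark~\ref{R:basechangered}, since the statement is invariant under the relevant changes of basis over $\KK[\sigma]$ and $\KK[t]$. Once in this form, part~(b) should follow almost immediately: we have $\bsg_{\bsy}^{(-1)}\Phi - \bsg_{\bsy} = \bsh_{\bsalpha}$ by the construction preceding Lemma~\ref{L:gyha}, so Theorem~\ref{TI:Exponentiation} applies and gives $\Exp_\phi(\cE_0(\bsg_{\bsy}+\bsh_{\bsalpha})) = \cE_1(\bsh_{\bsalpha})$. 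Thus (a) $\Rightarrow$ (b) granted that $\Exp_\phi$ is injective on the relevant region --- but more directly, (b) is really the assertion $\cE_1(\bsh_{\bsalpha}) = \bsalpha$, which I would prove by unwinding the explicit formula for $\bsh_{\bsalpha}$ from Lemma~\ref{L:gyha} and applying the identity $\phi_a(\varepsilon_1(\iota(\bsalpha))) = \varepsilon_1(\iota(a\cdot\bsalpha))$ together with $\cE_1 = \varepsilon_1\circ\iota$; here the explicit shape of $V$ from Corollary~\ref{C:VXBY} and the residue formula $\Res_{t=\theta}$-type identities for $\cG_{\bsy}$ (Proposition~\ref{P:AGFres}, the higher-dimensional analogue of \eqref{E:AGFResintro}) feed in.

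The substance of the proposition is part~(a), the assertion $\cE_0(\bsg_{\bsy}+\bsh_{\bsalpha}) = \bsy$. My approach here is to compute both sides coordinate-by-coordinate using \eqref{EI:E0}. By definition $\bsg_{\bsy} = -(\twistop{\tau\bsm_1}{\cG_{\bsy}}, \dots, \twistop{\tau\bsm_r}{\cG_{\bsy}})\cdot V$, so each coordinate of $\cE_0(\bsg_{\bsy})$ is a hyperderivative $\pd_t^k$ of a $\KK[t]$-linear combination of the functions $\twistop{\tau\bsm_i}{\cG_{\bsy}}$, evaluated at $t=\theta$. Using the Leibniz rule for $\pd_t^k$ and the fundamental functional equation \eqref{E:AGFfneqintro}--type relation for $\cG_{\bsy}$ from Proposition~\ref{P:AGFprops} (which expresses $\twistop{\phi_t}{\cG_{\bsy}}$ in terms of $t\cG_{\bsy}$ and $\Exp_\phi(\bsy) = \bsalpha$), I expect the $\twistop{\tau\bsm_i}{\cG_{\bsy}}$ contributions to telescope against the $\bsh_{\bsalpha}$ term --- the latter being precisely engineered, via the explicit form of $V$, to supply the "boundary" terms from the hyperderivative expansion. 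The key algebraic input is that for almost strictly pure $\phi$, the matrix $V$ from Corollary~\ref{C:VXBY} has the form relating the $t$-motive basis to the dual $t$-motive basis via the top coefficient $B_\ell$ (or the top coefficient of $\phi_{t^s}$), and this is exactly what makes the hyperderivative bookkeeping close up. I would also need the identities $\rd\phi_a(\varepsilon_0(\iota(\bsalpha))) = \varepsilon_0(\iota(a\cdot\bsalpha))$ to match the Jordan-block structure of $\rd\phi_t$ with the stacking in \eqref{EI:E0}.

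The main obstacle I anticipate is the hyperderivative-with-respect-to-$t$ bookkeeping in part~(a): one must track how $\pd_t^k$ interacts with Frobenius twists (via the twisting operator $\twistop{\cdot}{\cdot}$), with multiplication by elements of $\KK[t]$ coming from $V$, and with the specialization $t=\theta$, all simultaneously. The subtlety flagged in the paper --- that it is \emph{not} a priori clear that $\cE_0(\bsg_{\bsy}+\bsh_{\bsalpha}) = \bsy$ for general $t$-modules because $V$ is mysterious --- makes clear that the almost strictly pure hypothesis is doing essential work, and the proof must genuinely use the explicit $V$ of Corollary~\ref{C:VXBY} rather than any formal manipulation. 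A secondary difficulty is ensuring the reduction to Proposition~\ref{P:epsiota} form is compatible with the almost-strict-purity hypothesis (i.e., that purity is preserved under the base changes of Remark~\ref{R:basechangered}), which should follow from Remark~\ref{R:ASPimpliespure} but needs to be checked. Once the coordinatewise computation is set up correctly, I expect it to reduce to a finite, if intricate, verification matching the $j$-th hyperderivative layer of \eqref{EI:E0} against the corresponding layer of $\bsy$, with the $\bsh_{\bsalpha}$ term accounting exactly for the discrepancy between $\twistop{\tau\bsm_i}{\cG_{\bsy}}$ and its "integrated" version.
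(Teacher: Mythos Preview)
Your proposal identifies several of the right ingredients---the explicit $V$ from Corollary~\ref{C:VXBY}, the normalized form from Proposition~\ref{P:epsiota}, and the residue identity for $\cG_{\bsy}$---but it is missing the key simplifying step that makes both of the paper's proofs go through, and without it your ``telescoping'' expectation will not materialize.

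The gap is this: you propose to apply \eqref{EI:E0} directly to the entries of $\bsg_{\bsy} = -\twistop{\tau\bsm}{\cG_{\bsy}}^{\tr}V$. But those entries are $\KK[t]$-linear combinations of the functions $\twistop{\tau\bsm_i}{\cG_{\bsy}} \in \TT_\theta$, whose values and $t$-hyperderivatives at $t=\theta$ are quasi-logarithms $\rF_{\bsdelta}(\bsy)$ and their derivatives---transcendental quantities that cannot cancel against the polynomial entries of $\bsh_{\bsalpha}$. The functional equation for $\cG_{\bsy}$ alone does not remove these twisted terms in the almost strictly pure case; it happens to suffice in the strictly pure case (Example~\ref{Ex:strictlypure2}), which may be what you have in mind, but the passage through the nontrivial change-of-basis matrices $X$, $Y$ in Corollary~\ref{C:VXBY} spoils this in general.

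The paper instead first rewrites $\bsg_{\bsy}+\bsh_{\bsalpha} = \bsg_{\bsy}^{(-1)}\Phi = -\twistop{\bsm}{\cG_{\bsy}}^{\tr}\cdot V^{(-1)}\Phi$ via the difference equation itself (see \eqref{E:galphatwistPhi}), and then invokes the elementary but crucial observation that $\cE_0(\bsf\Phi) = 0$ whenever $\bsf$ is a $\TT_\theta$-limit of polynomial vectors, since $\iota(\bsf_n\Phi)\in\sigma\cN_\phi$. This kills every Frobenius-twisted contribution in $\twistop{\bsm}{\cG_{\bsy}}$ and reduces the computation to $-\cE_0(\langle\rd\bsm \mid (\rd\phi_t - t\Id_d)^{-1}\bsy\rangle^{\tr}\cdot V^{(-1)}\Phi)$, now involving only the constant matrix $\rd\bsm$ and the \emph{untwisted} $\cG_{\bsy}$, whose principal part at $t=\theta$ encodes $\bsy$ (Remark~\ref{R:AGFprincipalpart}). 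From here one finishes either algebraically using $V^{(-1)}\Phi = X^{\tr}C^{\tr}Y$ from Corollary~\ref{C:VXBY}(b), or via the structure of the upper-left $m\times m$ block of $V^{(-1)}\Phi$ given by Lemma~\ref{L:VPhi}. Your plan to work directly with $\twistop{\tau\bsm_i}{\cG_{\bsy}}$ bypasses both reductions and leaves no mechanism to eliminate the quasi-logarithm values. Also, once (a) is established, (b) is immediate from Theorem~\ref{TI:Exponentiation} since $\Exp_\phi(\bsy)=\bsalpha$; no separate unwinding of $\bsh_{\bsalpha}$ is needed.
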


We close out \S\ref{S:Quasi} by giving precise accounts of these constructions and results for Drinfeld modules, tensor powers of the Carlitz module, and strictly pure $t$-modules in~\S\ref{subS:quasiperexamples}.  We also work out an almost strictly pure example in detail to highlight some of the different behaviors that can occur.

\subsection{Hyperderivatives of fundamental quantities} \label{subS:hyperintro}
In \S\ref{S:Hyper} we apply these previous techniques on exponentiation, rigid analytic trivializations, and the de Rham module to \emph{prolongation $t$-modules} of Maurischat~\cite{Maurischat18} so as to answer Question~\ref{Q:three}.  We fix a uniformizable, abelian, and $\bA$-finite $t$-module $\phi : \bA \to \Mat_d(K^{\sep}[\tau])$, defined by $\phi_t = \rd\phi_t + B_1 \tau + \cdots + B_{\ell} \tau^{\ell}$.  For $n \geqslant 0$, the $n$-th prolongation of $\phi$ is the $t$-module
\[
\rP_n\phi : \bA \to \Mat_{(n+1)d}(K^{\sep}[\tau])
\]
defined by
\begin{equation} \label{E:Pnphiintro}
({\rP_n\phi})_t  = \begin{pmatrix}
    \rd \phi_t &  0    & \cdots    & 0\\
    -\Id_d & \ddots & \ddots&\vdots\\
    \vdots & \ddots&\ddots&0\\
    0& \cdots & -\Id_d & \rd \phi_t
    \end{pmatrix}
    +\diag(B_1)\tau + \dots + \diag(B_\ell) \tau^\ell,
\end{equation}
where $\diag(B_i)$ is the block diagonal matrix with $n+1$ copies of $B_i$ down the diagonal.  Maurischat obtains this $t$-module by considering extensions of both the $t$-motive and dual $t$-motive of~$\phi$ using higher derivations and hyperderivatives (see \S\ref{subS:Prolongations}).  We have $\rP_0 \phi = \phi$, and for each $0 \leqslant h \leqslant n$, we have a natural exact sequence of $t$-modules,
\[
  0 \to \rP_{n-h-1}\phi \to \rP_n \phi \to \rP_h \phi \to 0.
\]
The exponential function for $\rP_n\phi$ is straightforward in that if $\bsz_0, \dots, \bsz_n$ represent vectors of variables for each of the $n+1$ blocks of $\rP_n\phi$, then
\[
  \Exp_{\rP_n\phi} \begin{pmatrix} \bsz_0 \\ \vdots \\ \bsz_n \end{pmatrix}
  = \begin{pmatrix} \Exp_{\phi}(\bsz_0) \\ \vdots \\ \Exp_{\phi}(\bsz_n) \end{pmatrix},
\]
and thus $\rP_n \phi$ shares its exponential function with the direct sum $\phi^{\oplus n}$.  However, if $\Upsilon$, $\Psi \in \GL_r(\TT_{\theta})$ are the matrices given in~\S\ref{subS:RATintro}, the former given in terms of Anderson generating functions and the latter being a rigid analytic trivialization for~$\phi$, then the corresponding matrices for $\rP_n \phi$ are more interesting.  Indeed Maurischat proved (see Theorem~\ref{T:prolongprops}) that
\[
  \Psi_{\rP_n\phi} \assign d_{t,n+1}[\Psi] \assign \begin{pmatrix}
  \Psi & \pd_t^1(\Psi) & \cdots & \pd_t^n (\Psi) \\
  0 & \Psi & \ddots & \vdots \\
  \vdots & \ddots & \ddots & \pd_t^1(\Psi) \\
  0 & \cdots & 0 & \Psi
  \end{pmatrix},
\]
where $\pd_t^j(\Psi)$ is obtained by applying $\pd_t^j$ entry-wise to~$\Psi$.  Moreover, we find that
\[
  \Upsilon_{\rP_n\phi} \assign d_{t,n+1}[\Upsilon^{\tr}]^{\tr} \assign \begin{pmatrix}
\Upsilon & 0 & \cdots & 0 \\
\pd_t^1(\Upsilon) & \Upsilon & \ddots & \vdots \\
\vdots & \ddots & \ddots & 0 \notag \\
\pd_t^n(\Upsilon) & \cdots & \pd_t^1(\Upsilon) & \Upsilon
  \end{pmatrix},
\]
is the corresponding system of Anderson generating functions for~$\rP_n\phi$ (see Proposition~\ref{P:UpsilonVpro}).  Thus both $\Psi_{\rP_n\phi}$ and $\Upsilon_{\rP_n\phi}$ are given in terms of hyperderivatives with respect to~$t$ of Anderson generating functions on~$\phi$.

On the other hand, the quantities in Question~\ref{Q:three} involve hyperderivatives with respect to~$\theta$, and connecting these two worlds together requires careful analysis using the chain rule for hyperderivatives.  One particularly useful construction, inspired by Brownawell and Denis~\cite{BrownawellDenis00} for Drinfeld modules, is that for any $\phi$-biderivation $\bsdelta \in \Der(\phi,\oK)$ and $s \geqslant 1$, there exists $\bsdelta^s \in \Der(\phi,\oK)$ so that $\bsdelta$ and $\bsdelta^s$ are in the same de Rham class and that $\deg_{\tau} \bsdelta^s_t \geqslant s$.  By taking $s$ large enough so that $q^s > n$, the hyperderivative calculations simplify by the $p$-th power rule for hyperderivatives (see Proposition~\ref{P:Hyperprops}), as we can restrict to using only $\pd_{\theta}^1, \dots, \pd_{\theta}^n$.  We obtain the following result that equates these two spaces of hyperderivatives with respect to~$t$ and~$\theta$ for quantities arising from~$\rP_n\phi$ (restated as Theorem~\ref{T:rathyperquasi0}).

\begin{theoremintro} \label{TI:rathyperquasi0}
Let $\phi$ be a uniformizable, abelian, and $\bA$-finite Anderson $t$-module defined over $K^\sep$ of rank $r$ and dimension $d$. Let $\bsy \in (K_{\infty}^{\sep})^d$ satisfy $\Exp_\phi(\bsy) \in (K^\sep)^d$, and let $\{\bsdelta_1, \dots, \bsdelta_r\}$ represent a $\oK$-basis of $\rH^1_{\DR}(\phi,\oK)$ defined over $K^\sep$.  For $n \geqslant 0$, the following hold.
\begin{enumerate}
\item[(a)] For the Anderson generating function $\cG_{\bsy} \in \TT^d$ for $\phi$ associated to $\bsy$, we have
\[
\Span{\oK} \biggl( \{1\} \cup \bigcup_{u=0}^n \bigcup_{v=1}^r \Bigl\{ \twistop{(\bsdelta_v)_t}{\pd_t^u(\cG_{\bsy})} \big|_{t=\theta} \Bigr\} \biggr)
= \Span{\oK} \biggl( \{1\} \cup \bigcup_{u=0}^n \bigcup_{v=1}^r \Bigl\{ \pd_\theta^u \bigl( \rF_{\bsdelta_v}(\bsy) \bigr) \Bigr\} \biggr).
\]
\item[(b)] Moreover, if $\{\bslambda_1, \dots, \bslambda_r\}$ is an $\bA$-basis of $\Lambda_\phi$, then
\[
\Span{\oK} \Bigl( d_{t, n+1}[\Psi]^{-1}\big|_{t=\theta} \Bigr)
= \Span{\oK} \biggl( \bigcup_{u=0}^n \bigcup_{v=1}^{r} \bigcup_{\ell=1}^r \Bigl\{ \pd_\theta^u \bigl(\rF_{\bsdelta_v}(\bslambda_\ell) \bigr) \Bigr\} \biggr),
\]
where $(\iota, \Phi,\Psi)$ is a rigid analytic trivialization for $\phi$.
\end{enumerate}
\end{theoremintro}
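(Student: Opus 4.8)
The plan is to reduce Theorem~\ref{TI:rathyperquasi0} to the ``one-step'' statements of Theorem~\ref{TI:perquasiomnibus} applied to the prolongation $t$-module $\rP_n\phi$, and then to translate between hyperderivatives with respect to~$t$ (which govern the rigid analytic trivialization $\Psi_{\rP_n\phi} = d_{t,n+1}[\Psi]$ and the generating-function matrix $\Upsilon_{\rP_n\phi} = d_{t,n+1}[\Upsilon^{\tr}]^{\tr}$) and hyperderivatives with respect to~$\theta$ (which appear on the right-hand sides). First I would record that $\rP_n\phi$ is again uniformizable, abelian, and $\bA$-finite, and defined over $K^{\sep}$ since $\phi$ is; by Maurischat's theorem (Theorem~\ref{T:prolongprops}) it has rigid analytic trivialization $d_{t,n+1}[\Psi]$, and by Proposition~\ref{P:UpsilonVpro} its Anderson-generating-function matrix is $d_{t,n+1}[\Upsilon^{\tr}]^{\tr}$. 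Since $\Exp_{\rP_n\phi}$ is the diagonal $\Exp_\phi^{\oplus(n+1)}$ and the exponential of $\bsy$ lands in $(K^{\sep})^d$, the vector $(\bsy, \bzero, \dots, \bzero)^{\tr}$ (or, for part~(b), the vectors built from $\bslambda_\ell$) is a legitimate input for Theorem~\ref{TI:perquasiomnibus} on $\rP_n\phi$. Applying part~(c) of that theorem to $\rP_n\phi$ gives that the $\oK$-span of $1$ together with the entries of the relevant $\bsg$-vector evaluated at $t=\theta$ equals the span of $1$ together with the quasi-logarithm values $\rF_{\bsdelta}(\cdot)$ for biderivations $\bsdelta \in \Der(\rP_n\phi,\oK)$; applying part~(b) handles the period-lattice/RAT case. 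This is the structural backbone.

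The next step is to identify the two sides of each asserted equality concretely. On the ``$t$-side'': by the block-triangular shape of $d_{t,n+1}[\Upsilon^{\tr}]^{\tr}$ and $d_{t,n+1}[\Psi]^{-1}$, the entries of these matrices evaluated at $t=\theta$ are exactly the values $\pd_t^u(\Upsilon^{\tr})|_{t=\theta}$, resp. the entries of $\pd_t^u$ of $\Psi^{-1}=\Upsilon^\tr V$, for $0\le u\le n$ — and by \eqref{E:AGFquasidimintro} and its prolongation these are precisely the $\twistop{(\bsdelta_v)_t}{\pd_t^u(\cG_{\bsy})}|_{t=\theta}$ appearing in~(a) and the hyperderivative-in-$t$ analogues in~(b). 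So the ``$t$-side'' of my assertion matches the ``Anderson-generating-function side'' of Theorem~\ref{TI:perquasiomnibus} applied to $\rP_n\phi$, once I check that a $\oK$-basis of $\rH^1_{\DR}(\rP_n\phi,\oK)$ can be built from the de Rham classes of $\phi$ together with the ``prolongation'' biderivations; this is where I invoke the description of quasi-periodic extensions of prolongations promised in \S\ref{S:Hyper}, together with the biderivation-degree-raising trick ($\bsdelta \leadsto \bsdelta^s$ with $\deg_\tau \bsdelta^s_t \ge s$) so that hyperderivatives interact cleanly with $p$-th powers. On the ``$\theta$-side'': here I must show that the quasi-logarithm $\rF_{\bsdelta}(\bsy_{\rP_n\phi})$ for a biderivation $\bsdelta$ of $\rP_n\phi$ is, up to $\oK$-linear combinations, a hyperderivative $\pd_\theta^u(\rF_{\bsdelta_v}(\bsy))$ of a quasi-logarithm of $\phi$. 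This is the content of the careful chain-rule analysis: the defining functional equation $\rF_{\bsdelta}(\rd\phi_a\cdot\bsz) = a(\theta)\rF_{\bsdelta}(\bsz) + \bsdelta_a\Exp_\phi(\bsz)$, when prolonged, becomes a system in $\pd_\theta^0, \dots, \pd_\theta^n$ of $\rF_{\bsdelta_v}$ that one solves triangularly — using that $\pd_\theta^u$ commutes appropriately with the $\FF_q$-linear power series $\rF_{\bsdelta_v}$ evaluated at $\bsy \in (K_\infty^{\sep})^d$ because $\Exp_\phi(\bsy)\in(K^{\sep})^d$, and that by taking $q^s>n$ only $\pd_\theta^1,\dots,\pd_\theta^n$ are needed (the $p$-th power rule, Proposition~\ref{P:Hyperprops}).

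I would organize the proof so that (a) is proved first: unwind Theorem~\ref{TI:perquasiomnibus}(c) for $\rP_n\phi$, match the left-hand span to $\{1\}\cup\{\twistop{(\bsdelta_v)_t}{\pd_t^u(\cG_{\bsy})}|_{t=\theta}\}$ via Proposition~\ref{P:UpsilonVpro} and \eqref{E:AGFquasidimintro}, match the right-hand span to $\{1\}\cup\{\pd_\theta^u(\rF_{\bsdelta_v}(\bsy))\}$ via the chain-rule bookkeeping, and note the ``$\{1\}$'' is preserved because the tractable/inner-biderivation coordinates only contribute linear combinations of $\bsy$ and logarithms whose $\pd_\theta$-hyperderivatives are again in the span. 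Then (b) follows the same template with Theorem~\ref{TI:perquasiomnibus}(a),(b): the rows of $d_{t,n+1}[\Psi]^{-1} = d_{t,n+1}[\Upsilon^\tr V]$ evaluated at $t=\theta$ span the same space as the $\rF_{\bsdelta}(\tbslambda)$ over biderivations and period-lattice generators of $\rP_n\phi$, which in turn is the $\pd_\theta^u(\rF_{\bsdelta_v}(\bslambda_\ell))$ span after the chain-rule translation; here I do not even need the ``$\{1\}$'' since periods have no constant contribution. The main obstacle I anticipate is precisely the chain-rule/translation step — making rigorous the claim that the quasi-logarithms and periods of the \emph{prolongation} $t$-module are, coordinate by coordinate, exactly the $\theta$-hyperderivatives of the corresponding quantities of $\phi$, with no extra terms escaping the $\oK$-span. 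The degree-raising device $\bsdelta\leadsto\bsdelta^s$ and the choice $q^s>n$ are what make this tractable, but verifying that the resulting system of functional equations is genuinely triangular (so that each $\pd_\theta^u(\rF_{\bsdelta_v}(\bsy))$ is recovered from lower-order data plus one new prolongation-biderivation value) is the delicate heart of the argument; everything else is assembly from Theorem~\ref{TI:perquasiomnibus}, Theorem~\ref{T:prolongprops}, Proposition~\ref{P:UpsilonVpro}, and the hyperderivative product/chain rules in Proposition~\ref{P:Hyperprops}.
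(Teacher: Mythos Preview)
Your proposal is correct and follows essentially the same route as the paper: reduce to Theorem~\ref{TI:perquasiomnibus} applied to $\rP_n\phi$, use the degree-raising trick $\bsdelta_v \leadsto \bsdelta_v^s$ with $q^s > n$, and translate between $\pd_t$- and $\pd_\theta$-hyperderivatives via the $p$-th power rule. The paper's proof makes the ``chain-rule/translation step'' you flag as delicate completely explicit by proving the pointwise identity $\pd_\theta^u(\cG_{\bsy}^{(j)}|_{t=\theta}) = \pd_t^u(\cG_{\bsy}^{(j)})|_{t=\theta}$ for $j \geqslant s$ (via Lemma~\ref{L:chainrule}(b) and Proposition~\ref{P:Hyperprops}(b)), which lets one compute spans directly rather than solving a triangular system of functional equations---but this is a presentational difference, not a substantive one.
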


As mentioned in \S\ref{subS:RATintro}, the quantities $\rF_{\bsdelta_1}(\bsy), \dots, \rF_{\bsdelta_r}(\bsy)$ incorporate quasi-logarithms related to strictly reduced $\phi$-biderivations as well as the $\oK$-span of the tractable coordinates of~$\bsy$.  Theorem~\ref{TI:rathyperquasi0} then addresses the hyperderivatives of these quantities with respect to~$\theta$.  However, a full answer to Question~\ref{Q:three} should account for hyperderivatives of the non-tractable coordinates as well.  By analyzing the structure of $\rH_{\DR}^1(\rP_n\phi,\oK)$ and its associated quasi-periodic functions, we find, for $n$ sufficiently large, that hyperderivatives of \emph{all coordinates} can be obtained.  The fundamental principle is that non-tractable coordinates of $\phi$ can be explicitly related to tractable coordinates of $\rP_n\phi$ once $n$ is large enough.  For tensor powers of the Carlitz module this was first observed by Maurischat~\cite{Maurischat18}.

This result is broken down into two steps, the first for almost strictly pure $t$-modules (Theorem~\ref{T:hypernontract}) and the second for general $t$-modules (Theorem~\ref{T:hypernontractgen}).  We combine them together here, thus together with Theorem~\ref{TI:rathyperquasi0} we obtain a complete answer to Question~\ref{Q:three}.  See also Corollary~\ref{C:Jordannontract}.

\begin{theoremintro} \label{TI:hypernontract}
Let $\phi$ be a uniformizable, abelian, and $\bA$-finite Anderson $t$-module defined over $K^\sep$ of rank $r$ and dimension $d$. Let $\bsy \in (K_{\infty}^{\sep})^d$ satisfy $\Exp_\phi(\bsy) \in (K^\sep)^d$, and let $\bsdelta_1, \dots, \bsdelta_{r-m}$, $\bsgamma_1, \dots, \bsgamma_m$ represent a $\oK$-basis of $\rH^1_{\DR}(\phi,\oK)$ defined over $K^\sep$ so that $\bsgamma_1, \dots, \bsgamma_m$ provide a basis of inner biderivations modulo strictly inner ones as in~\eqref{E:basisSRphi1}.  Let $j \geqslant 0$, and choose $n \geqslant 0$ so that $(\rd\phi_t - \theta\Id_d)^{n-j}=0$.
\begin{enumerate}
\item[(a)]  Letting $\cG_{\bsy} \in \TT^d$ be the Anderson generating function for $\phi$ associated to~$\bsy$, we have
\begin{align*}
  \Span{\oK} \bigl( \pd_{\theta}^{j} (\bsy) \bigr) &\subseteq
  \Span{\oK} \biggl( \{1\} \cup \bigcup_{u=0}^n \bigcup_{v=1}^{r-m} \bigcup_{i=1}^m \Bigl\{ \twistop{(\delta_v)_t}{\pd_t^u(\cG_{\bsy})} \big|_{t=\theta}, \twistop{(\gamma_i)_t}{\pd_t^u(\cG_{\bsy})} \big|_{t=\theta} \Bigr\} \biggr) \\
  &=
  \Span{\oK} \biggl( \{1\} \cup \bigcup_{u=0}^n \bigcup_{v=1}^{r-m}\bigcup_{i=1}^{m} \Bigl\{ \pd_\theta^u \bigl( \rF_{\delta_v}(\bsy), \pd_\theta^u \bigl( \rF_{\gamma_i}(\bsy) \bigr) \Bigr\} \biggr).
\end{align*}
\item[(b)] In particular, if $\bslambda_1, \dots, \bslambda_r$ denote an $\bA$-basis of $\Lambda_{\phi}$.  Then for any $\bslambda \in \Lambda_{\phi}$,
\[
  \Span{\oK} \bigl( \pd_{\theta}^{j}(\bslambda) \bigr) \subseteq \Span{\oK}\Bigl( d_{t, n+1}[\Psi_{ \phi}]^{-1}\big|_{t=\theta} \Bigr) = \Span{\oK} \biggl( \bigcup_{u=0}^n \bigcup_{v=1}^{r} \bigcup_{\ell=1}^r \Bigl\{ \pd_\theta^u \bigl(\rF_{\bsdelta_v}(\bslambda_{\ell}) \bigr) \Bigr\} \biggr),
\]
where $\Psi_{\phi}$ is a rigid analytic trivialization for~$\phi$.
\end{enumerate}
\end{theoremintro}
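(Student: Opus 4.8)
The plan is to reduce Theorem~\ref{TI:hypernontract} to Theorem~\ref{TI:rathyperquasi0} by relating the non-tractable coordinates of $\phi$ to tractable coordinates of the prolongation $t$-module $\rP_n\phi$ for $n$ large enough, and then invoking the de Rham theory (Corollary~\ref{CI:deRhamiso}) together with the explicit descriptions of $\Upsilon_{\rP_n\phi}$ and $\Psi_{\rP_n\phi}$ from Proposition~\ref{P:UpsilonVpro} and Theorem~\ref{T:prolongprops}. First I would make the reduction from general $t$-modules to almost strictly pure ones: by Remark~\ref{R:basechangered} we may put $\phi$ in the normal form where $\rd\phi_t$ is in Jordan form, and the passage from the almost strictly pure case (Theorem~\ref{T:hypernontract}) to the general case (Theorem~\ref{T:hypernontractgen}) uses that every $t$-module embeds into, or is built out of, strictly pure pieces via $\phi_{t^s}$ having invertible top coefficient for suitable $s$; I would track how the de Rham module and the Anderson generating functions behave under this operation, noting that $\cG_{\bsy}$ for $\phi$ and $\phi_{t^s}$ are related by a twisted change of variable in~$t$.

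The core of the argument is the following observation, already implicit in Maurischat~\cite{Maurischat18} for tensor powers of the Carlitz module: if $(\rd\phi_t - \theta\Id_d)^{n-j} = 0$, then applying $\pd_\theta^j$ to a coordinate of $\bsy$ that sits at position $i$ inside a Jordan block of $\rd\phi_t$ can be re-expressed, via the functional equation $\Exp_\phi(\rd\phi_t \cdot \bsz) = \phi_t(\Exp_\phi(\bsz))$ and the chain rule for hyperderivatives (Proposition~\ref{P:Hyperprops}), in terms of a \emph{tractable} coordinate of the prolongation $\rP_n\phi$ applied to the corresponding lift $\widetilde{\bsy}$ of $\bsy$. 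Concretely, the block structure of $(\rP_n\phi)_t$ in~\eqref{E:Pnphiintro}, with the $-\Id_d$ shifts below the diagonal, is exactly engineered so that the bottom coordinate of the $(n+1)$-st block of a point on $\rP_n\phi$ records the full jet $(\bsy, \pd_\theta^1(\bsy), \dots, \pd_\theta^n(\bsy))$ up to lower-triangular $\oK$-linear combinations. Thus I would: (1) write down the lift $\widetilde{\bsy}$ of $\bsy$ to a point on $\rP_n\phi$ whose coordinates are the hyperderivatives $\pd_\theta^u(\bsy)$; (2) identify the relevant tractable coordinates of $\rP_n\phi$ and apply Theorem~\ref{TI:rathyperquasi0}(a) to $\rP_n\phi$, $\widetilde{\bsy}$, and a $\oK$-basis of $\rH^1_{\DR}(\rP_n\phi,\oK)$; (3) translate the biderivations of $\rP_n\phi$ back to biderivations of $\phi$ using the exact sequence $0 \to \rP_{n-h-1}\phi \to \rP_n\phi \to \rP_h\phi \to 0$ and the structure of $\rH^1_{\DR}(\rP_n\phi,\oK)$, so that the quasi-logarithms $\rF_{\bsdelta}(\widetilde{\bsy})$ for $\rP_n\phi$ become $\pd_\theta^u(\rF_{\bsdelta_v}(\bsy))$ and $\pd_\theta^u(\rF_{\bsgamma_i}(\bsy))$ for $\phi$. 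Here the split into the $\bsdelta_v$'s (strictly reduced part) and the $\bsgamma_i$'s (inner part, capturing the tractable coordinates of $\bsy$ itself via~\eqref{E:innerfneqintro}) is precisely the decomposition in~\eqref{E:basisSRphi1}, and the $p$-th power rule lets us replace each $\bsdelta_v$ by $\bsdelta_v^s$ with $q^s > n$ so that only $\pd_\theta^0, \dots, \pd_\theta^n$ survive.

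For part~(b) I would specialize~(a) to $\bsy = \bslambda$ a period. Then $\Exp_\phi(\bsy) = \bzero$, the inner biderivation contributions $\rF_{\bsgamma_i}(\bslambda)$ reduce by~\eqref{E:innerfneqintro} to $-\rd\bsu \cdot \bslambda$, i.e. to $\oK$-linear combinations of tractable coordinates of $\bslambda$, and the left-hand span $\Span{\oK}(\pd_\theta^j(\bslambda))$ is absorbed into the right side; I would then match the resulting span against $d_{t,n+1}[\Psi_\phi]^{-1}|_{t=\theta}$ using the identity $\Psi_{\rP_n\phi} = d_{t,n+1}[\Psi_\phi]$ from Theorem~\ref{T:prolongprops} together with Theorem~\ref{TI:rathyperquasi0}(b) applied to $\rP_n\phi$, whose $\bA$-basis of $\Lambda_{\rP_n\phi}$ consists of the jets of the $\bslambda_\ell$. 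The main obstacle I anticipate is step~(3): carefully bookkeeping the isomorphism between $\rH^1_{\DR}(\rP_n\phi,\oK)$ and the space built from $\rH^1_{\DR}(\phi,\oK)$ and its hyperderivative-twisted copies, and verifying that under this isomorphism the quasi-periodic function $\rF_{\bsdelta}$ on $\rP_n\phi$ evaluated at the jet $\widetilde{\bsy}$ really does produce $\pd_\theta^u(\rF_{\bsdelta_v}(\bsy))$ with the correct lower-triangular error terms — this is where the interplay between hyperderivatives with respect to~$t$ (which govern the prolongation structure) and hyperderivatives with respect to~$\theta$ (which appear in the conclusion) must be reconciled via the chain rule, and it is the technical heart of~\S\ref{S:Hyper}. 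The containment (rather than equality) in~(a) and~(b) for $\pd_\theta^j(\bsy)$ reflects that non-tractable coordinates only become accessible once $n$ is large enough relative to~$j$, which is exactly the hypothesis $(\rd\phi_t - \theta\Id_d)^{n-j} = 0$.
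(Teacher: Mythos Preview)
Your proposal has two genuine gaps.

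First, the lift you propose, $\widetilde{\bsy} = (\bsy, \pd_\theta^1(\bsy), \dots, \pd_\theta^n(\bsy))$, does not in general satisfy $\Exp_{\rP_n\phi}(\widetilde{\bsy}) \in (K^{\sep})^{(n+1)d}$, which is required to invoke Theorem~\ref{TI:rathyperquasi0}. By Proposition~\ref{P:Exppro} the exponential of $\rP_n\phi$ is block-diagonal, so $\Exp_{\rP_n\phi}(\widetilde{\bsy})$ has $u$-th block $\Exp_\phi(\pd_\theta^u(\bsy))$; this lies in $(K_\infty^{\sep})^d$ but there is no reason for it to lie in $(K^{\sep})^d$ when $u \geqslant 1$. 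You are conflating $\rP_n\phi$ with the Brownawell--Denis module $\rho_n$ of~\S\ref{subS:BD}, whose exponential \emph{does} carry jets to jets (Corollary~\ref{C:BrDelogs}), but $\rho_n$ is neither abelian nor $\bA$-finite and so is not amenable to these techniques. The paper instead uses the trivial lift $(\bsy)_0 = (\bsy, 0, \dots, 0)$, for which $\Exp_{\rP_n\phi}((\bsy)_0) = (\bsalpha, 0, \dots, 0) \in (K^{\sep})^{(n+1)d}$ plainly. The hyperderivative structure then enters not through the point but through its Anderson generating function: by Proposition~\ref{P:qpfunctionpro} one has $\fG_{(\bsy)_0} = (\cG_{\bsy}, \pd_t^1(\cG_{\bsy}), \dots, \pd_t^n(\cG_{\bsy}))$, so that pairing biderivations of $\rP_n\phi$ against $\fG_{(\bsy)_0}$ produces the quantities $\twistop{(\delta_v)_t}{\pd_t^u(\cG_{\bsy})}$.

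Second, your reduction from general $\phi$ to the almost strictly pure case is not valid: the condition that $\phi_{t^s}$ have invertible top coefficient for some $s$ is the \emph{definition} of almost strictly pure, not a property shared by all abelian $\bA$-finite $t$-modules. The paper treats the two cases by genuinely different methods. In the almost strictly pure case (Theorem~\ref{T:hypernontract}) one has $\bsy = \cE_{0,\phi}(\bsg_{\bsy} + \bsh_{\bsalpha})$ by Proposition~\ref{P:Expgyha}, and the explicit formula~\eqref{E:E0} writes each coordinate of $\bsy$ as $\pd_t^k(\beta_i)|_{t=\theta}$ for entries $\beta_i$ of $\bsg_{\bsy}+\bsh_{\bsalpha}$, which lands in the desired span. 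In the general case (Theorem~\ref{T:hypernontractgen}) the paper bypasses Proposition~\ref{P:Expgyha} entirely: it constructs the inner biderivations $\bsepsilon_{w,i} \in \Der_0(\rP_n\phi)$ of~\eqref{E:epsilonwidef} and computes directly from~\eqref{E:quasiinner} that $\bFF_{\bsepsilon_{w,i}}((\bsy)_0) = \bss_{d_i-w}\rQ^{-1}(\bsalpha-\bsy)$, so that all coordinates of $\bsy$ are recovered from these quasi-logarithms (equation~\eqref{E:perlogquasipro}); then $(\bsepsilon_{w,i})_t$ is decomposed in terms of $((\gamma_i)_t)_w$ and lower-order pieces $((\beta_{w,j})_t)_j$ to show that each $\bFF_{\bsepsilon_{w,i}}((\bsy)_0)$ lies in the span of the $\twistop{(\delta_v)_t}{\pd_t^u(\cG_{\bsy})}|_{t=\theta}$ and $\twistop{(\gamma_i)_t}{\pd_t^u(\cG_{\bsy})}|_{t=\theta}$. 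Part~(b) in the paper likewise avoids any almost-strictly-pure hypothesis, appealing directly to Theorem~\ref{T:PeriodLattice}(b) and Proposition~\ref{P:epsiota}.
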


These identities can be simplified and refined further by considering quasi-periodic functions on $\rP_n\phi$ directly.  For $\bsy \in (K_{\infty}^{\sep})^d$, if we let $(\bsy)_0 \in (K_{\infty}^{\sep})^{(n+1)d}$ be the block vector starting with~$\bsy$ and with $0$'s in its remaining entries, then we show (see Lemma~\ref{L:gyhapro}) that
\[
  \bsg_{(\bsy)_0} = \bigl( \bsg_{\bsy}, \pd_t^1(\bsg_{\bsy}), \dots, \pd_t^{n}(\bsg_{\bsy}) \bigr) \in \Mat_{1 \times (n+1)d}(\TT_\theta).
\]
Moreover, if $\bsdelta_1, \dots, \bsdelta_r$ represent a $\oK$-basis of $\rH_{\DR}^1(\phi,\oK)$ defined over $K^{\sep}$, then (see Corollary~\ref{C:rathpqgyhapr0})
\[
  \Span{\oK} \bigl( 1, \bsg_{\bsy}|_{t=\theta}, \pd_t^1(\bsg_{\bsy})|_{t=\theta}, \dots, \pd_t^n(\bsg_{\bsy})|_{t=\theta} \bigr) = \Span{\oK} \biggl( \{1\} \cup \bigcup_{u=0}^n \bigcup_{v=1}^r \Bigl\{ \pd_\theta^u \bigl( \rF_{\bsdelta_v}(\bsy) \bigr) \Bigr\} \biggr).
\]
Utilizing the trick of Brownawell and Denis, we fix $s$ so that $q^s > n$ and replace $\bsdelta_1, \dots, \bsdelta_r$ by $\bsdelta_1^s, \dots, \bsdelta_r^s$ chosen appropriately with the property that $\deg_{\tau} (\bsdelta^s_j)_t \geqslant s$.  We then obtain the following formula (see Corollary~\ref{C:hyperQPFAGF}): for $0 \leqslant u \leqslant n$ and $1 \leqslant v \leqslant r$,
\begin{equation}
  \pd_{\theta}^u \bigl( \rF_{\bsdelta_v^s}(\bsy) \bigr) =  \Bigl( \bigl\langle (\bsdelta_v^s)_t^{[u]} \bigm| \cG_{\bsy} \bigr\rangle + \bigl\langle (\bsdelta_v^s)_t^{[u-1]} \bigm| \pd_t^1(\cG_{\bsy}) \bigr\rangle + \dots + \bigl\langle (\bsdelta_v^s)_t \bigm| \pd_t^{u}(\cG_{\bsy}) \bigr\rangle \Bigr) \Big|_{t=\theta},
\end{equation}
where $(\bsdelta^s_{v})_t^{[u]} \in \Mat_{1\times d}(K^{\sep}[\tau])$ is obtained by applying the hyperdifferential operator $\pd_{\theta}^u$ to the coefficients of $(\bsdelta^s_v)_t$.  This formula distills in a simple way the connections between hyperderivatives of quasi-logarithms with respect to~$\theta$ and hyperderivatives of Anderson generating functions with respect to~$t$.

These concerns are worked out in more detail for Drinfeld modules, tensor powers of the Carlitz module, and strictly pure $t$-modules in~\S\ref{subS:Examplespro}.

\begin{acknowledgments}
The authors thank Dale Brownawell, Chieh-Yu Chang, Nathan Green, Urs Hartl, Andreas Maurischat, Federico Pellarin, and Jing Yu for many helpful discussions during the preparation of this manuscript. The authors further thank the referee for several suggestions that improved the clarity of arguments and exposition.
\end{acknowledgments}

\newpage
\section{Preliminaries} \label{S:Prelim}

\subsection{Notation}
The following notation will be used throughout the paper:

\begin{longtable}{p{3.75truecm}@{\hspace{5pt}$=$\hspace{5pt}}p{10truecm}}
$\FF_q$ & finite field with $q=p^n$ elements, $p$ a prime. \\
$K$ & $\FF_q(\theta)$, the rational function field in $\theta$ over $\FF_q$. \\
$K_\infty$ & $\laurent{\FF_q}{1/\theta}$, the completion of $K$ with respect to $\inorm{\,\cdot\,}$. \\
$\KK$ & the completion of an algebraic closure of $K_\infty$. \\
$\oK$ & the algebraic closure of $K$ inside $\KK$.  \\
$\bA$ & $\FF_q[t]$, the polynomial ring in $t$ over $\FF_q$, $t$ independent from $\theta$. \\
$\TT$ & the Tate algebra of $\power{\KK}{t}$ on the closed unit disk.\\
$\TT_\theta$ & the Tate algebra of $\power{\KK}{t}$ on the closed disk of radius $\inorm{\theta}$.\\
$F^{\sep}$ & a separable algebraic closure of a field $F$.\\
$\Mat_{m \times n}(R)$ & for a ring $R$, the left $R$-module of $m \times n$ matrices.\\
$\Mat_d(R)$ & $\Mat_{d\times d}(R)$.\\
$R^d$ & $\Mat_{d \times 1}(R)$.\\
$M^{\tr}$ & the transpose of a matrix $M$.\\
$\diag_d(M_1, \dots, M_s)$ & for square matrices $M_1, \dots, M_s$, whose dimensions sum to~$d$, the $d \times d$ block diagonal matrix with diagonal blocks $M_1, \dots, M_s$. \\
$\Span{L}(V)$ & for a subfield $L \subseteq \KK$ and a set of vectors $V$ with entries in $\KK$, the $L$-subspace of $\KK$ spanned by the entries of elements of $V$.\\
$(\bsm)_u$ & for $\bsm \in \Mat_{1\times d}(\power{\KK}{\tau})$ and $0 \leqslant u \leqslant n$, the row vector $(0, \dots, 0, \bsm, 0, \dots, 0) \in \Mat_{1\times (n+1)d}(\power{\KK}{\tau})$ where $\bsm$ occupies entries $du+1$ to $du+d$ and all other entries are~$0$.
\end{longtable}

Let $\FF_q$ be a finite field with $q$ elements, where $q$ is a power of a fixed prime $p$.  Let $K = \FF_q(\theta)$ be the rational function field in a single variable $\theta$ over $\FF_q$.  We let $K_\infty = \laurent{\FF_q}{1/\theta}$ be the completion of $K$ at its infinite place, and we let $\KK$ denote the completion of an algebraic closure of $K_{\infty}$.  We let $\inorm{\,\cdot\,}$ denote the absolute value on $\KK$, normalized so that $\inorm{\theta} = q$.  Finally we let $\oK$ denote the algebraic closure of $K$ in $\KK$.

We let $t$ be another variable independent from $\theta$, and we let $\bA = \FF_q[t]$ denote the polynomial ring in $t$.  We fix the canonical $\FF_q$-algebra homomorphism $\bA \to K$ taking $t \mapsto \theta$, thus making $\KK$ into an $\bA$-field (e.g., see~\cite[Ch.~3--4]{Goss}).  We let $\TT$ be the Tate algebra of rigid analytic functions on the closed unit disk of $\KK$,
\[
  \TT = \biggl\{ \sum a_i t^i \in \power{\KK}{t} : \inorm{a_i} \to 0 \biggr\},
\]
and we let $\TT_{\theta}$ denote the Tate algebra of functions converging on the disk of radius $\inorm{\theta}$,
\[
  \TT_{\theta} = \biggl\{ \sum a_i t^i \in \power{\KK}{t} : q^i \cdot \inorm{a_i} \to 0 \biggr\}.
\]
Furthermore, we let $\EE \subseteq \TT_{\theta} \subseteq \TT$ denote the ring of entire power series that converge on all of $\KK$ and whose coefficients lie in a finite extension of $K_{\infty}$, and we let $\MM$ be the fraction field of $\EE$, which is the field of meromorphic functions on $\KK$. For $f= \sum a_i t^i \in \TT$, we define the norm
\[
\dnorm{f} \assign \sup\limits_{i} \inorm{a_i}.
\]
For $f \in \TT_{\theta}$, we also define the norm
\[
\dnorm{f}_{\theta} \assign \sup\limits_{i} \Bigl( q^{i} \cdot \inorm{a_i}\Bigr).
\]
For more background on rigid analytic functions and Tate algebras, see \cite[\S 2]{FresnelvdPut}.

For an integer $n \in \ZZ$, we define Frobenius twisting $f \mapsto f^{(n)} : \laurent{\KK}{t} \to \laurent{\KK}{t}$ by setting for $f = \sum a_i t^i \in \laurent{\KK}{t}$,
\[
  f^{(n)} = \sum a_i^{q^n} t^i.
\]
Frobenius twisting is an automorphism of $\laurent{\KK}{t}$, and it induces automorphisms of $\oK(t)$, $\TT$, and $\EE$.  For a matrix $M = (m_{ij})$ with entries in $\laurent{\KK}{t}$, we define the twist $M^{(n)}$ entrywise by setting $(M^{(n)})_{ij} \assign m^{(n)}_{ij}$.  For matrices $M$, $N$ of appropriate sizes, we have $(MN)^{(n)} = M^{(n)} N^{(n)}$.

For an $\FF_q$-subalgebra $R \subseteq \KK$, we let $\tau : R \to R$ denote the $q$-th power Frobenius map, and we let $R[\tau]$ be the $R$-subalgebra of $\FF_q$-linear endomorphisms generated by $\tau$.  As such $R[\tau]$ is the ring of twisted polynomials in $\tau$ over $R$, subject to the relation, $\tau c = c^q \tau$ for any $c \in R$.  Moreover we let $\Mat_{m \times n}(R)[\tau] = \Mat_{m \times n}(R[\tau])$ denote the left $R[\tau]$-module of $m \times n$ matrices with entries in $R[\tau]$, which is also the same as the left $R[\tau]$-module of polynomials in $\tau$ with coefficients in $\Mat_{m \times n}(R)$.  In this way $\Mat_n(R)[\tau] = \Mat_{n\times n}(R)[\tau]$ forms a noncommutative ring subject to the relation,
\[
  \tau M = M^{(1)} \tau, \quad M \in \Mat_n(R).
\]
Moreover, the left $R$-module of column vectors $R^n=\Mat_{n \times 1}(R)$ is then a left $\Mat_n(R)[\tau]$-module by setting for $\beta = B_0 + B_1 \tau + \dots + B_{\ell} \tau^{\ell} \in \Mat_{n}(R)[\tau]$ and $\bsx \in R^n$,
\begin{equation} \label{E:tautwists}
  \beta \cdot \bsx \assign \beta(\bsx) \assign B_0\bsx + B_1 \bsx^{(1)} + \dots + B_{\ell} \bsx^{(\ell)}.
\end{equation}
We also set $\rd \beta \assign B_0$.  Similarly we can also form modules of power series in $\tau$ with matrix coefficients, $\power{\Mat_{m \times n}(R)}{\tau}$, and as such $\power{\Mat_n(R)}{\tau}$ forms a twisted power series ring.

Similarly if $\tau : R \to R$ is an automorphism, then we take $\sigma = \tau^{-1}$ on $R$ and we form the rings $R[\sigma]$ and $\Mat_n(R)[\sigma]$, where
\[
  \sigma M = M^{(-1)} \sigma, \quad M \in \Mat_n(R).
\]
Likewise, for $\gamma = C_0 + C_1 \sigma + \dots + C_\ell \sigma^{\ell} \in \Mat_n(R)[\sigma]$, we set $\rd \gamma \assign C_0$.

We define
\[
\twistop{\cdot}{\cdot} : \Mat_{m\times n}(\KK[\tau]) \times \Mat_{n\times s}(\TT) \to \Mat_{m \times s}(\TT)
\]
by setting for $\bsbeta = B_0 + B_1 \tau + \dots + B_{\ell} \tau^{\ell} \in \Mat_{m \times n}(\KK[\tau])$ and $M \in \Mat_{n \times s}(\TT)$,
\begin{equation} \label{E:twistop}
  \twistop{\bsbeta}{M} \assign B_0 M + B_1 M^{(1)} + \cdots + B_{\ell} M^{(\ell)}.
\end{equation}
The pairing $\twistop{\cdot}{\cdot}$ is biadditive, $\KK$-linear in the left entry, and $\bA$-linear in the right entry.  For appropriately sized matrices it is associative in that
\begin{equation} \label{E:twistopassoc}
  \twistop{\bsalpha \bsbeta}{\bsf} = \twistop{\bsalpha}{\twistop{\bsbeta}{\bsf}}.
\end{equation}
We note that if $\bsx \in \KK^n$, then $\twistop{\bsbeta}{\bsx} = \bsbeta(\bsx)$ defined in~\eqref{E:tautwists}, but we will see that this additional notation is helpful for distinguishing different operations of $\KK[t]$ on $t$-motives and on $\TT$ (e.g., see Corollary~\ref{C:AGFtwistedmult}).

\subsection{Anderson \texorpdfstring{$t$}{t}-modules} \label{subS:modules}
Let $R$ be an $\FF_q[\theta]$-subalgebra of $\KK$.  An \emph{Anderson $t$-module} over $R$ is an $\FF_q$-algebra homomorphism
\[
\phi: \bA \to \Mat_d(R[\tau])
\]
such that if $\phi_t = B_0 + B_1 \tau +\dots + B_\ell \tau^\ell$, $B_i \in \Mat_d(R)$,
then $\rd \phi_t = B_0 = \theta \Id_d + N$, where $\Id_d$ is the $d \times d$ identity matrix and $N$ is a nilpotent matrix. We say that $d$ is the \emph{dimension} of $\phi$, and a \emph{Drinfeld module} is a non-trivial Anderson $t$-module of dimension~$1$. The map $\phi$ is uniquely determined by its value $\phi_t$.  In this way $\phi$ defines an $\bA$-module structure on $R^d$ by setting
\[
  a * \bsx = \phi_a(\bsx), \quad a \in \bA,\ \bsx \in R^d.
\]
A morphism of $\mu : \phi \to \psi$ of $t$-modules is given by a matrix $\mu \in \Mat_{e \times d}(R[\tau])$, where $d$ is the dimension of $\phi$ and $e$ is the dimension of $\psi$, such that
\begin{equation}\label{E:morp}
  \mu \phi_a = \psi_a \mu, \quad \forall\, a \in \bA.
\end{equation}
Through an abuse of notation, we will let $\phi(R)$ denote $R^d$ together with the $\bA$-module structure induced by $\phi$, i.e., $\phi(R)$ denotes the `$R$-valued points of $\phi$.' For general information on $t$-modules, the reader is directed to Anderson's original paper~\cite{And86} or to other expository sources~\cite{BPrapid}, \cite{Goss}, \cite{Thakur}.

When $R = \KK$, Anderson~\cite{And86} showed that for a $t$-module $\phi$ as above, there exists a unique power series $\Exp_\phi = \sum_{i \geqslant 0} C_i \tau^i \in \power{\Mat_d(\KK)}{\tau}$ so that $C_0 = I_d$ and that for any $a \in \bA$, $\Exp_{\phi} \cdot\, {\rd} \phi_a = \phi_a \cdot \Exp_{\phi}$.  If we let $\bsz = (z_1, \dots, z_d)^{\tr} \in \KK[z_1, \dots, z_d]^d$ consist of independent variables over $\KK$, then the \emph{exponential series of $\phi$},
\begin{equation} \label{E:Expdef}
  \Exp_{\phi}(\bsz) = \sum_{i=0}^{\infty} C_i \bsz^{(i)} \in \power{\KK}{z_1, \dots, z_d}^d,
\end{equation}
defines an entire function $\Exp_{\phi} : \KK^d \to \KK^d$.  For any $a \in \bA$, we have the identity of power series,
\[
\Exp_{\phi}(\rd \phi_a \bsz) = \phi_a(\Exp_{\phi}(\bsz)).
\]

\begin{remark} \label{R:ExpoverL}
If $\phi$ is defined over a field $L$ with $K\subseteq L \subseteq \KK$, then $\Exp_{\phi}(\bsz) \in \power{L}{z_1, \dots, z_d}^d$. Indeed this is due to Anderson~\cite[Prop.~2.1.4, Lem.~2.1.6]{And86} and noted by Goss~\cite[Lem.~5.9.3]{Goss}. Goss's exposition in particular pays attention to the field of definition of the matrices $C_i$. It is important to point out that Anderson and Goss consider the case that $\phi$ is defined over a finite extension $L$ of $K_{\infty}$, but their arguments work equally well for any field of definition $L$ with $K \subseteq L \subseteq \KK$.
\end{remark}

If the function $\Exp_{\phi} : \KK^d \to \KK^d$ is surjective, then $\phi$ is said to be \emph{uniformizable}.  Drinfeld modules are always uniformizable, but a general $t$-module of dimension $\geqslant 2$ need not be.  As formal power series, the inverse of $\Exp_{\phi}(\bsz)$ is the \emph{logarithm series of~$\phi$}, $\Log_{\phi}(\bsz) \in \power{\KK}{z_1, \dots, z_d}^d$, which as a function on $\KK^d$ may have only a finite radius of convergence.  The kernel
\begin{equation}
\Lambda_{\phi} = \ker (\Exp_{\phi}) \subseteq \KK^d
\end{equation}
is a free, finitely generated, and discrete $\bA$-submodule through the action of $\rd \phi(\bA)$.  The $\bA$-module $\Lambda_\phi$ is called the \emph{period lattice} of $\phi$, and if $\phi$ is uniformizable, then we have an isomorphism of $\bA$-modules given by $\KK^d/\Lambda_{\phi} \cong \phi(\KK)$.

\subsection{\texorpdfstring{$t$}{t}-motives and dual \texorpdfstring{$t$}{t}-motives} \label{subS:motives}
In this section we define Anderson $t$-motives and dual $t$-motives.  These objects have gone through many iterations since Anderson's original definition of $t$-motives in~\cite{And86}.  Our definitions are given precisely below, but for more information on $t$-motives, the reader is directed to \cite{BPrapid}, \cite[Ch.~5]{Goss}, \cite{HartlJuschka20}.  For additional information on dual $t$-motives, one can consult \cite{ABP04}, where they were defined, and also \cite{BPrapid}, \cite{HartlJuschka20}.

We let $L$ be a field with $K \subseteq L \subseteq \KK$.  We define the ring $L[t,\tau]$ to be the ring of polynomials in $t$ and $\tau$, subject to the noncommuting relations,
\[
  tc=ct, \quad t\tau= \tau t, \quad \tau c = c^q \tau, \quad \forall\, c \in L.
\]
Similarly, if $L$ is perfect, we define the polynomial ring $L[t,\sigma]$ using the relations,
\[
  tc=ct, \quad t\sigma= \sigma t, \quad \sigma c = c^{1/q} \sigma, \quad \forall\, c \in L.
\]
Just as $\tau$ plays the role of the $q$-th power Frobenius endomorphism on $L$, $\sigma$ represents the inverse of $\tau$, and in fact we can embed these rings compatibly as subrings of $L[t,\tau,\sigma]$, in which $\sigma = \tau^{-1}$.  Furthermore, for any $f \in L[t]$, it follows that
\[
  \tau f = f^{(1)} \tau, \quad \sigma f = f^{(-1)} \sigma.
\]

We now fix an Anderson $t$-module $\phi : \bA \to \Mat_d(L[\tau])$ over $L$.  The \emph{$t$-motive} $\cM_{\phi}$ associated to $\phi$ is defined as follows.  We set $\cM_{\phi} \assign \Mat_{1 \times d}(L[\tau])$ and give it the unique structure of a left $L[t,\tau]$-module: $\cM_{\phi}$ is already a left $L[\tau]$-module, and for $L[t]$-module structure we set
\begin{equation}\label{E:tactionmotive}
t \cdot m \assign m \phi_t, \quad \forall\, m \in \cM_{\phi}.
\end{equation}
For any $a \in \bA$ and $m \in \cM_{\phi}$, it follows that $a \cdot m = m \phi_a$.  For any $m \in \cM_{\phi}$, straightforward computation reveals that
\[
  (t-\theta)^d \cdot m \in \cM_{\phi}\tau,
\]
and so, when $L$ is perfect, $\cM_{\phi}$ is a $t$-motive in the sense of \cite{BPrapid}, \cite[Ch.~5]{Goss}.  Morphisms of $t$-motives are simply morphisms of left $L[t,\tau]$-modules.  If $\mu : \phi \to \psi$ is a morphism of $t$-modules, then $\mu$ induces a morphism of $t$-motives $\mu^{\dagger} : \cM_{\psi} \to \cM_{\phi}$ given by
\begin{equation} \label{E:biderdagger}
  \mu^{\dagger}(m) = m \mu,
\end{equation}
and in this way the functor from $t$-modules to $t$-motives is contravariant.  The number $d = \rank_{L[\tau]} \cM_{\phi}$ is called the \emph{dimension} of $\cM_{\phi}$.  If $\cM_{\phi}$ is free and finitely generated as a left $L[t]$-module, then $r = \rank_{L[t]} \cM_{\phi}$ is called the \emph{rank} of $\cM_{\phi}$.  In this latter case, $\cM_{\phi}$ is a $t$-motive in the sense of Anderson~\cite{And86}, and $\phi$ itself is called an \emph{abelian $t$-module}.

We now assume that $L$ is a perfect field. Before defining dual $t$-motives we recall the anti-isomorphism $* : L[\tau] \to L[\sigma]$, as defined by Ore~\cite[\S 1.7]{Goss}, \cite{Ore33}, \cite{Poonen96}.  For $f = \sum_{i=0}^\ell c_i \tau^i \in L[\tau]$, we set
\[
  f^* \assign \sum_{i=0}^{\ell} c_i^{1/q^i} \sigma^i = \sum_{i=0}^{\ell} \sigma^i c_i.
\]
One easily checks that this provides an isomorphism of $\FF_q$-vector spaces, and for $f$, $g \in L[\tau]$ we have
\[
  (f g)^* = g^* f^*.
\]
To extend the $*$-operation to matrices, we define $* : \Mat_{k \times \ell} (L[\tau]) \to \Mat_{\ell \times k}(L[\sigma])$ in the following way.  If $B = (b_{ij}) \in \Mat_{k\times \ell}(L[\tau])$, then
\[
  B^* \assign \bigl( b_{ij}^* \bigr)^{\tr} = \bigl( b_{ji}^* \bigr).
\]
In this way if $B \in \Mat_{k \times \ell}(L[\tau])$ and $C \in \Mat_{\ell \times m}(L[\tau])$, then one checks that
\[
  (BC)^* = C^* B^*.
\]
We denote the inverse of $* : \Mat_{k \times \ell} (L[\tau]) \to \Mat_{\ell \times k}(L[\sigma])$ also by $* : \Mat_{\ell \times k}(L[\sigma]) \to \Mat_{k \times \ell}(L[\tau])$, and trivially we have $(B^*)^* = B$ for all $B \in \Mat_{k\times \ell}(L[\tau])$.

For our same $t$-module $\phi$ above, we now define its \emph{dual $t$-motive} $\cN_{\phi}$.  We set $\cN_{\phi} \assign \Mat_{1\times d}(L[\sigma])$, and we give $\cN_{\phi}$ the structure of a left $L[t,\sigma]$-module.  The module $\cN_{\phi}$ clearly has the structure of a left $L[\sigma]$-module, and we give it an $L[t]$-module structure by setting for $n \in \cN_{\phi}$,
\begin{equation}\label{E:tactiondual}
  t \cdot n \assign n \phi_t^*.
\end{equation}
Similar to the $t$-motive of $\phi$, we find that $(t-\theta)^d \cN_{\phi} \subseteq \sigma \cN_{\phi}$, and so $\cN_{\phi}$ is a dual $t$-motive in the sense of \cite[\S 4]{ABP04}.  Morphisms of dual $t$-motives are simply morphisms of left $L[t,\sigma]$-modules, and if $\mu : \phi \to \psi$ is a morphism of $t$-modules, then we obtain a morphism $\mu_{\dagger} : \cN_{\phi} \to \cN_{\psi}$ given by
\[
  \mu_{\dagger}(n) = n \mu^*.
\]
The functor from $t$-modules to dual $t$-motives is thus covariant.  The \emph{dimension} of $\cN_{\phi}$ is $d = \rank_{L[\sigma]}\cN_{\phi}$.  If $\cN_{\phi}$ is free and finitely generated as an $L[t]$-module, then $r = \rank_{L[t]} \cN_{\phi}$ is called the \emph{rank} of $\cN_{\phi}$, and in this case $\cN_{\phi}$ and $\phi$ itself are said to be \emph{$\bA$-finite}.

If an Anderson $t$-module $\phi$ over $L=\KK$ is uniformizable and $\cM_{\phi}$ is abelian, then Anderson~\cite[\S 2]{And86} showed that
\[
  \rank_{\KK[t]} \cM_{\phi} = \rank_{\bA} \Lambda_{\phi}.
\]
Likewise, if $\phi$ is uniformizable and $\cN_{\phi}$ is $\bA$-finite, then Anderson showed (see \cite[\S 2.5]{HartlJuschka20})
\[
  \rank_{\KK[t]} \cN_{\phi} = \rank_{\bA} \Lambda_{\phi}.
\]
Thus if $\phi$ is uniformizable, abelian, and $\bA$-finite, then all three of these quantities are the same and are called the rank of~$\phi$.

\begin{remark}
Up until now it was not known if an Anderson $t$-module is abelian if and only if it is $\bA$-finite~\cite{HartlJuschka20}. However, during the time this article has been under consideration for publication, it has been announced by Maurischat~\cite{Maurischat21a} that this is indeed the case. Taking Maurischat's work into account, henceforth throughout the paper one can replace `abelian and $\bA$-finite' with simply `abelian.' We have chosen to keep the terminology `abelian and $\bA$-finite' for emphasis.

Prior to Maurischat's work, the equivalence between abelian and $\bA$-finite was already known for Drinfeld modules and for \emph{strictly pure} and \emph{almost strictly pure} Anderson $t$-modules (see Hartl and Juschka~\cite[\S 2.5.2]{HartlJuschka20} and also \S\ref{subS:E0calc}, \S\ref{subS:Quasi} of the present paper).
\end{remark}

The notation $\cM_{\phi}$ and $\cN_{\phi}$ does not include the dependence on the base field $L$, but in future sections, where we usually take $L = K^{\sep}$, $L = \oK$, or $L = \KK$, this dependence will be clear from the context.

\subsection{Hyperderivatives and hyperdifferential operators}
For a field $F$, a transcendental variable $\theta$ over $F$, and $j \geqslant 0$, we recall the $F$-linear map $\pd_{\theta}^j: F[\theta]\to F[\theta]$ defined by setting $\pd_{\theta}^j(\theta^m) = \binom{m}{j} \theta^{m-j}$, where $\binom{m}{j} \in \ZZ$ is the usual binomial coefficient.  The map $\pd_{\theta}^j$ is called the \emph{$j$-th hyperdifferential operator with respect to~$\theta$} or the \emph{$j$-th hyperderivative}.  These operators satisfy several identities, notably the product rule
\begin{equation} \label{E:productrule}
\pd_\theta^j(fg) = \sum\limits_{i=0}^j \pd_\theta^i(f)\pd_\theta^{j-i}(g),
\end{equation}
and composition rule
\begin{equation} \label{E:comprule}
\pd_\theta^i(\pd_\theta^j(f))= \binom{i+j}{j}\pd_\theta^{i+j}(f).
\end{equation}
The product rule induces the unique extension $\pd_{\theta}^j : F(\theta) \to F(\theta)$, in which case the rule $\pd_{\theta}^j(\theta^m) = \binom{m}{j} \theta^{m-j}$ is valid even for $m < 0$ (by taking as usual $\binom{-\ell}{j} = (-1)^j \binom{\ell + j - 1}{j}$ for $\ell > 0$).  Similarly for any place $v$ of $F(\theta)$ there is a unique extension $\pd_{\theta}^j : F(\theta)_v^{\sep} \to F(\theta)_v^{\sep}$.  See \cite[\S 4]{Conrad00}, \cite[\S 7]{Brownawell99}, \cite[\S 2]{Jeong11}, \cite[Ch.~2]{PLogAlg}, for more details.  There are other formulas for hyperderivatives that we will use, summarized in the following proposition.

\begin{proposition}[{see Brownawell~\cite[\S 7]{Brownawell99}, \cite[Lem.~2.1]{BrownawellDenis00}, Jeong~\cite[\S 2]{Jeong11}}] \label{P:Hyperprops}
Let $F$ be a field, and let $v$ be a place of $F(\theta)$.  The hyperdifferential operators with respect to $\theta$, $\pd_\theta^j : F(\theta)_v^{\sep} \to F(\theta)_v^{\sep}$, $j \geqslant 0$, satisfy the following.
\begin{enumerate}
\item [(a)] For $f_1, \dots, f_s \in F(\theta)_v^{\sep}$ and $j \geqslant 0$,
\[
\pd_\theta^j(f_1 \cdots f_s) = \sum_{\substack{k_1, \dots, k_s \geqslant 0 \\ k_1 + \dots + k_s=j}} \pd_\theta^{k_1}(f_1) \cdots \pd_\theta^{k_s}(f_s).
\]
\item [(b)] If the characteristic of $F$ is $p > 0$, then for $f \in F(\theta)_v^{\sep}$, $n \geqslant 0$, and $j \geqslant 1$,
\[
\pd_\theta^j\bigl( f^{p^n} \bigr) =
\begin{cases}
      \bigl(\pd_\theta^k(f)\bigr)^{p^n} & \text{if $j=kp^n$,} \\
      0 & \text{if $p^n \nmid j$.}
\end{cases}
\]
\end{enumerate}
\end{proposition}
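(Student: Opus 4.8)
The statement to prove is Proposition~\ref{P:Hyperprops}, which collects two classical identities for hyperderivatives: the multinomial product rule (a) and the $p$-power rule (b). Since the proposition is explicitly attributed to Brownawell and Jeong, the natural approach is to reduce (a) to the two-term product rule~\eqref{E:productrule} by induction on the number of factors, and to reduce (b) to a direct computation on monomials followed by a density/continuity argument to pass from $F[\theta]$ to $F(\theta)_v^{\sep}$. The plan is first to establish both identities on the polynomial ring $F[\theta]$, then extend to $F(\theta)$ via the quotient rule, and finally extend to the separable closure $F(\theta)_v^{\sep}$ using the fact that each $\pd_\theta^j$ has a unique extension there (as recalled just before the proposition statement), together with the multiplicativity of that extension.

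For part (a), I would argue by induction on $s$. The base case $s=1$ is trivial and $s=2$ is exactly~\eqref{E:productrule}. For the inductive step, write $f_1\cdots f_s = (f_1\cdots f_{s-1})f_s$ and apply~\eqref{E:productrule} to get $\pd_\theta^j(f_1\cdots f_s) = \sum_{i=0}^j \pd_\theta^i(f_1\cdots f_{s-1})\pd_\theta^{j-i}(f_s)$; then apply the inductive hypothesis to each $\pd_\theta^i(f_1\cdots f_{s-1})$, expanding it as a sum over compositions $k_1+\dots+k_{s-1}=i$, and observe that reindexing by setting $k_s = j-i$ yields precisely the sum over all $(k_1,\dots,k_s)$ with $k_1+\dots+k_s = j$. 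This is a routine bookkeeping argument; no obstacle arises, and it works verbatim over $F[\theta]$, over $F(\theta)$, and over $F(\theta)_v^{\sep}$ since~\eqref{E:productrule} holds in all of these.

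For part (b), the key computation is on a single monomial: in characteristic $p$, $\pd_\theta^j((\theta^m)^{p^n}) = \pd_\theta^j(\theta^{mp^n}) = \binom{mp^n}{j}\theta^{mp^n - j}$, and by Lucas's theorem $\binom{mp^n}{j} \equiv 0 \pmod p$ unless $p^n \mid j$, in which case writing $j = kp^n$ gives $\binom{mp^n}{kp^n} \equiv \binom{m}{k} \pmod p$ and $\theta^{mp^n - kp^n} = (\theta^{m-k})^{p^n}$, so the value equals $(\binom{m}{k}\theta^{m-k})^{p^n} = (\pd_\theta^k(\theta^m))^{p^n}$. By $\FF_p$-linearity (more precisely $F$-linearity, using that raising to the $p^n$ power is additive in characteristic $p$ and that the $p^n$-th power map on scalars commutes with the operators appropriately), this extends to all polynomials, establishing (b) on $F[\theta]$. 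To pass to $F(\theta)$, use part (a)/the quotient rule: if $f = g/h$ then $f^{p^n} = g^{p^n}/h^{p^n}$, and one computes $\pd_\theta^j(f^{p^n})$ from the polynomial case. To pass to $F(\theta)_v^{\sep}$, note that any $\alpha$ in this field is separable algebraic over $F(\theta)$; the unique extension of $\pd_\theta^j$ commutes with the $p^n$-th power map because both sides of the claimed identity define sequences of operators satisfying the same recursion determined by the minimal polynomial, or more directly because $\alpha \mapsto \alpha^{p^n}$ is a ring endomorphism under which the extended hyperderivatives must transform as in the polynomial case by uniqueness.

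**Main obstacle.** The only genuinely delicate point is the extension of part~(b) from $F(\theta)$ to the separable closure $F(\theta)_v^{\sep}$: one must be careful that the unique extension of $\pd_\theta^j$ really does satisfy the $p$-power rule, since a priori uniqueness only pins down the operators by the Leibniz rule and the values on $F(\theta)$. The clean way around this is to observe that the operators $\beta \mapsto (\pd_\theta^{j/p^n}(\beta^{1/p^n}))^{p^n}$ (interpreted as $0$ when $p^n \nmid j$) — or rather, the comparison of $\pd_\theta^j \circ (\text{$p^n$-power})$ with $(\text{$p^n$-power})\circ \pd_\theta^{j/p^n}$ — both restrict correctly on $F(\theta)$ and both are compatible with the Leibniz/composition structure, so uniqueness of the extension forces them to agree on $F(\theta)_v^{\sep}$. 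Everything else is formal manipulation. Given that the result is standard and fully referenced to Brownawell~\cite[\S 7]{Brownawell99}, \cite[Lem.~2.1]{BrownawellDenis00} and Jeong~\cite[\S 2]{Jeong11}, the expected write-up is short, essentially citing those sources for the extension step and recording the monomial computation and the inductive bookkeeping for completeness.
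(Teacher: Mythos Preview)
The paper does not prove this proposition at all: it is stated with attributions to Brownawell~\cite[\S 7]{Brownawell99}, \cite[Lem.~2.1]{BrownawellDenis00}, and Jeong~\cite[\S 2]{Jeong11}, and the text moves directly to Definition~\ref{def:dmatrices} with no proof environment. So there is no ``paper's own proof'' to compare against; the result is simply imported from the cited references.

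Your sketch is a correct and standard way to establish both parts. Part~(a) via induction on the two-term Leibniz rule~\eqref{E:productrule} is exactly how this identity is derived, and your Lucas-theorem computation for part~(b) on monomials is the usual argument. Your caution about extending (b) to $F(\theta)_v^{\sep}$ is well-placed, and the uniqueness-of-extension reasoning you outline is how the cited sources handle it. Since the paper expects the reader to take this on faith from the references, your write-up goes beyond what the paper itself supplies.
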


\begin{definition}\label{def:dmatrices}
For $f \in F(\theta)_v^{\sep}$ and $n \geqslant 1$, we define the \emph{$d$-matrix with respect to $\theta$}, $d_{\theta, n} [f] \in \Mat_n(F(\theta)_v^{\sep})$ to be the upper-triangular $n \times n$ matrix
\[
d_{\theta, n}[f] \assign \begin{pmatrix}
f & \pd_\theta^1(f) & \cdots & \cdots & \pd_\theta^{n-1}(f) \\
  & f & \pd_\theta^1(f) &  & \vdots \\
  &   & \ddots & \ddots &\vdots \\
  &   &   & \ddots & \pd_\theta^1(f)\\
  &   &   &  & f
  \end{pmatrix}.
\]
It is a straightforward consequence of the product rule~\eqref{E:productrule} that the map
\[
  d_{\theta,n} : F(\theta)_v^{\sep} \to \Mat_n \bigl( F(\theta)_v^{\sep} \bigr)
\]
is an $F$-algebra homomorphism, and in particular $d_{\theta,n}[g] \cdot d_{\theta,n}[f] = d_{\theta,n}[gf]$.  We also set
\[
\pd_{\theta,n}[f] \assign d_{\theta,n}[f] \begin{pmatrix} 0 \\ \vdots \\ 0 \\ 1 \end{pmatrix}
= \begin{pmatrix} \pd_{\theta}^{n-1}(f) \\ \vdots \\ \pd_{\theta}^{1}(f) \\ f \end{pmatrix}
\in \bigl( F(\theta)_v^{\sep} \bigr)^n.
\]
We see immediately that $d_{\theta,n}[g] \cdot \pd_{\theta,n}[f] = \pd_{\theta,n}[gf]$.
\end{definition}

It will also be convenient to work with \emph{partial hyperderivatives}, which for independent variables $\theta_1, \dots, \theta_m$ over $F$ are defined by $F$-linear maps
\[
  \pd_{\theta_i}^j : F(\theta_1, \dots \theta_m) \to F(\theta_1, \dots, \theta_m), \quad j \geqslant 0,
\]
so that $\pd_{\theta_k}^j (\theta_k^m) = \binom{m}{j} \theta_k^{m-j}$ and, when $k' \neq k$, $\pd_{\theta_{k'}}^j (\theta_k^m) = 0$.  In this way $\pd_{\theta_k}^j \circ \pd_{\theta_{k'}}^{j'} = \pd_{\theta_{k'}}^{j'} \circ \pd_{\theta_k}^j$, and these operators extend in the natural way to completions and separable closures.  See \cite[Ch.~2]{Okugawa} for more details.  We can also define $d$-matrices with respect to each variable as in Definition~\ref{def:dmatrices}, and in this case each $d_{\theta_i,n}$ is an $F$-algebra homomorphism.

For our purposes we will pay particular attention to the two-variable case,
\[
  \pd_{\theta}^j,\ \pd_{t}^j : F(\theta,t) \to F(\theta,t),
\]
for independent variables $\theta$ and $t$.  For $f(\theta,t) \in F(\theta,t)$, we set $f|_{t=\theta} = f(\theta,\theta)$, and we say that $f$ is regular at $t=\theta$ if $f|_{t=\theta}$ exists in $F(\theta)$.  If $f$ is regular at $t =\theta$, then we have the standard identity
\begin{equation} \label{E:Taylorseries}
  f = f(\theta,t) = \sum_{i=0}^{\infty} \pd_t^i(f)|_{t=\theta} \cdot (t-\theta)^i \in \power{F(\theta)}{t-\theta}.
\end{equation}
The operators $\pd_{t}^j$ extend in the natural way to $\TT$ and $\TT_{\theta}$, and the operators $\pd_{\theta}^j$ extend to $\TT \cap \power{K_{\infty}^{\sep}}{t}$ and $\TT_{\theta} \cap \power{K_{\infty}^{\sep}}{t}$.  Functions $f \in \TT_{\theta}$ are regular at $t=\theta$ and also satisfy \eqref{E:Taylorseries} (see~\cite[\S 1.2]{BosserPellarin08}, \cite[Cor.~2.7]{US98}). We state some additional rules we will use (see \cite[Ch.~2]{PLogAlg} for more general classes of identities).  The first is a straightforward application of Taylor series, but the second requires some clarification, though it is essentially a special case of a multivariable chain rule.

\begin{lemma}[{cf.\ \cite[\S 2.4]{PLogAlg}}] \label{L:chainrule}
Let $f \in \TT_{\theta} \cap \power{K_{\infty}^{\sep}}{t}$.  Then for $j \geqslant 0$, the following hold.
\begin{enumerate}
\item[(a)] $\displaystyle \pd_\theta^j(f)\big|_{t=\theta} = \sum_{i=0}^j (-1)^i \cdot \pd_{\theta}^{j-i} \bigl( \pd_t^i(f)\big|_{t=\theta} \bigr)$.
\item[(b)] $\displaystyle \pd_\theta^j \bigl( f|_{t=\theta} \bigr) = \sum_{i=0}^j \bigl(\pd_\theta^{j-i} \circ \pd_t^i(f) \bigr) \big|_{t =\theta}$.
\end{enumerate}
\end{lemma}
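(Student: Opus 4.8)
The plan is to prove both parts by first reducing to the case of a single monomial $f = c\,t^k$ (with $c \in K_\infty^{\sep}$ and $k\geqslant 0$) and then verifying the identity by the product rule~\eqref{E:productrule}. The reduction is available because $\pd_\theta^j$, $\pd_t^i$, and the evaluation $g\mapsto g|_{t=\theta}$ are all non-expanding for $\dnorm{\,\cdot\,}_\theta$ on $\TT_\theta\cap\power{K_\infty^{\sep}}{t}$, so that both sides of (a) and both sides of (b) depend $\FF_q$-linearly and continuously on $f$; since the finite truncations $\sum_{k\leqslant N}c_k t^k$ of $f=\sum_k c_k t^k$ converge to $f$ in $\dnorm{\,\cdot\,}_\theta$, it is enough to treat monomials.

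For (b), with $f=c\,t^k$: the product rule expands the left-hand side as $\pd_\theta^j(c\,\theta^k)=\sum_{b=0}^j\binom{k}{b}\pd_\theta^{j-b}(c)\,\theta^{k-b}$, while the $i$-th term on the right-hand side equals $\bigl(\pd_\theta^{j-i}\circ\pd_t^i(f)\bigr)|_{t=\theta}=\binom{k}{i}\pd_\theta^{j-i}(c)\,\theta^{k-i}$, so summing over $i$ from $0$ to $j$ reproduces the left-hand side term for term; the only input is the identity $\pd_t^b(t^k)|_{t=\theta}=\pd_\theta^b(\theta^k)$. This last identity is the content of the ``multivariable chain rule'' alluded to in the statement, and the ``clarification'' it needs is simply that $\pd_\theta^{j-i}\circ\pd_t^i$ is to be formed on $f$ as a function of the two independent variables $\theta$ and $t$ \emph{before} one sets $t=\theta$.

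For (a), I would work from the Taylor expansion~\eqref{E:Taylorseries}, writing $f=\sum_{i\geqslant0}b_i(t-\theta)^i$ with $b_i=\pd_t^i(f)|_{t=\theta}$. A short computation with the binomial theorem gives $\pd_\theta^c\bigl((t-\theta)^i\bigr)=(-1)^c\binom{i}{c}(t-\theta)^{i-c}$, so applying $\pd_\theta^j$ term by term and using the product rule produces an expansion of $\pd_\theta^j(f)$ in powers of $t-\theta$ in which, upon setting $t=\theta$, only the diagonal contributions ($c=i$) survive, leaving exactly $\sum_{i=0}^j(-1)^i\pd_\theta^{j-i}(b_i)$. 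One can bypass the convergence bookkeeping here by instead reducing to $f=c\,t^k$ and checking $\sum_{i=0}^j(-1)^i\pd_\theta^{j-i}\bigl(\binom{k}{i}c\,\theta^{k-i}\bigr)=\pd_\theta^j(c)\,\theta^k$ directly; this comes down to the elementary rearrangement $\binom{k}{i}\binom{k-i}{j-i-a}=\binom{k}{j-a}\binom{j-a}{i}$ together with $\sum_i(-1)^i\binom{j-a}{i}=0$ for $a<j$.

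The individual computations are routine binomial manipulations. The one point that genuinely needs care---and the reason the statement flags (b) as requiring ``clarification'' and (a) as a consequence of Taylor's formula---is the passage from monomials to an arbitrary $f\in\TT_\theta\cap\power{K_\infty^{\sep}}{t}$, equivalently the justification for differentiating the relevant series term by term. This rests on the standard non-expansiveness of $\pd_\theta^j$, $\pd_t^i$, and evaluation at $t=\theta$ for $\dnorm{\,\cdot\,}_\theta$, together with the density of the finite truncations---facts about hyperdifferential operators recorded in the references cited just before the statement. I do not expect any further obstacle of substance.
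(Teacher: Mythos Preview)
Your argument is correct. For part~(a), your primary approach via the Taylor expansion~\eqref{E:Taylorseries} is exactly the paper's: apply $\pd_\theta^j$ to the expansion, use the product rule on $b_i(t-\theta)^i$, and observe that only the diagonal terms survive at $t=\theta$. Your alternative monomial reduction for (a) is a legitimate bypass of the term-by-term differentiation step and gives the same result.

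Where you genuinely diverge is in part~(b). The paper does not treat~(b) independently; instead it applies~(a) to each summand $\pd_\theta^{j-i}\circ\pd_t^i(f)$ on the right-hand side of~(b), uses the composition rule~\eqref{E:comprule} to rewrite $\pd_t^k(\pd_t^i(f))$ as $\binom{i+k}{k}\pd_t^{i+k}(f)$, reindexes, and collapses the resulting double sum via $\sum_{k=0}^i(-1)^k\binom{i}{k}=[i=0]$. Your route---reduce to $f=c\,t^k$ by continuity and verify the product-rule expansion term by term---is more elementary and self-contained, and it makes the ``chain rule'' interpretation of~(b) transparent. The paper's route is slicker once~(a) is in hand, since it avoids redoing any monomial computation and shows that~(b) is formally equivalent to~(a) plus the composition rule. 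Both approaches rely on the same analytic input (continuity of the operators and of evaluation at $t=\theta$ for $\dnorm{\,\cdot\,}_\theta$), which you correctly identify as the only point needing care.
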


\begin{proof}
If we apply $\pd_\theta^j$ to both sides of~\eqref{E:Taylorseries}, we obtain
\[
  \pd_\theta^j(f) = \sum_{i=0}^{\infty}\sum_{k=0}^j \pd_{\theta}^{j-k} \bigl( \pd_t^i(f)\big|_{t=\theta} \bigr) \cdot (-1)^k \binom{i}{k} (t-\theta)^{i-k}.
\]
Substituting $t=\theta$, the only non-zero terms occur when $k=i$, and we obtain~(a).  We can then apply~(a) to the right-hand side of the proposed identity in~(b),
\[
  \sum_{i=0}^j \bigl( \pd_\theta^{j-i} \circ \pd_t^i(f) \bigr) \big|_{t =\theta} = \sum_{i=0}^j \sum_{k=0}^{j-i} (-1)^k \cdot\pd_{\theta}^{j-i-k} \bigl( \pd_t^k(\pd_t^i(f))\big|_{t=\theta} \bigr).
\]
Applying~\eqref{E:comprule} and reindexing the sum ($i \leftarrow i+k$), we obtain
\[
\sum_{i=0}^j \bigl( \pd_\theta^{j-i} \circ \pd_t^i(f) \bigr) \big|_{t =\theta} =
\sum_{i=0}^j \sum_{k=0}^i (-1)^k \binom{i}{k} \cdot \pd_{\theta}^{j-i} \bigl( \pd_t^i(f)\big|_{t=\theta} \bigr).
\]
The inner sum on the right is $0$ unless $i=0$, and~(b) follows.
\end{proof}

\newpage
\section{Exponentiation and rigid analytic trivializations} \label{S:Exp}

We recall results on rigid analytic trivializations for abelian and $\bA$-finite $t$-modules and review the exponentiation theorem of Anderson (Theorem~\ref{T:Exponentiation}). Originally proved by Anderson in unpublished work, Hartl and Juschka recorded accounts of Anderson's work in \cite{HartlJuschka20}.  See also~\cite[\S 2]{ChangGreenMishiba21}, \cite[\S 4]{GezmisP19}, \cite[\S 3.5]{GreenNgoDac20b}.  In \S\ref{subS:E0calc} we then provide a way to render Anderson's theorem amenable to explicit calculation in Proposition~\ref{P:epsiota}.

\subsection{From dual \texorpdfstring{$t$}{t}-motives to \texorpdfstring{$t$}{t}-modules} \label{subS:dualtomodule}
Let $L$ be an algebraically closed field with $K \subseteq L \subseteq \KK$, and suppose $\phi : \bA \to \Mat_d (L[\tau])$ is an $\bA$-finite $t$-module with dual $t$-motive $\cN_{\phi} = \Mat_{1\times d}(L[\sigma])$ as in \S\ref{subS:motives}.  One advantage of dual $t$-motives is the facility of passing back and forth from a $t$-module to its dual $t$-motive.

For $\bsn= \sum_{i=0}^{\ell} \bsa_i\sigma^i \in \cN_{\phi}$ with $\bsa_i \in \Mat_{1\times d}(L)$, we set
\begin{equation}\label{varepsilon0and1}
\varepsilon_0(\bsn)\assign \rd \bsn^\tr = \bsa_0^\tr, \quad
\varepsilon_1(\bsn)\assign \Biggl( \sum_{i=0}^{\ell} \bsa_i^{(i)} \Biggr)^\tr.
\end{equation}
We note that $\varepsilon_0 : \Mat_{1 \times d}(L[\sigma]) \to L^d$ is $L$-linear and that $\varepsilon_1 : \Mat_{1\times d}(L[\sigma]) \to L^d$ is $\FF_q$-linear.  The following lemma is due to Anderson, and a proof can be found in \cite{HartlJuschka20} (see also \cite[Lem.~1.1.21--22]{Juschka10}).

\begin{lemma}[{see Hartl-Juschka \cite[Prop.~2.5.8]{HartlJuschka20}}] \label{L:Nquotients}
Let $\phi : \bA \to \Mat_d(L[\tau])$ be an $\bA$-finite $t$-module, and let $a\in \bA$.
\begin{enumerate}
\item[(a)] We have the following commutative diagram with exact rows:
\[
\begin{tikzcd}
0 \arrow{r} & \cN_{\phi} \arrow{r}{\sigma(\cdot)} \arrow{d} {a(\cdot)} & \cN_{\phi} \arrow{r}{\varepsilon_0} \arrow{d}{a(\cdot)} & L^d \arrow{r} \arrow{d}{\rd\phi_a(\cdot)} &0 \\
0 \arrow{r} & \cN_{\phi} \arrow{r}{\sigma(\cdot)} & \cN_{\phi} \arrow{r}{\varepsilon_0} & L^d \arrow{r} & 0.
\end{tikzcd}
\]
\item[(b)] We have the following commutative diagram with exact rows:
\[
\begin{tikzcd}
0 \arrow{r} & \cN_{\phi} \arrow{r}{(\sigma-1)(\cdot)} \arrow{d} {a(\cdot)} & \cN_{\phi} \arrow{r}{\varepsilon_1} \arrow{d}{a(\cdot)} & L^d \arrow{r} \arrow{d}{\phi_a(\cdot)} &0 \\
0 \arrow{r} & \cN_{\phi} \arrow{r}{(\sigma-1)(\cdot)} & \cN_{\phi} \arrow{r}{\varepsilon_1} & L^d \arrow{r} & 0.
\end{tikzcd}
\]
\end{enumerate}
\end{lemma}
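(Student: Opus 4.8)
The plan is to verify, for each of the two diagrams, that (i) the rows are exact and (ii) the squares commute. The key structural observation is that $\cN_\phi = \Mat_{1\times d}(L[\sigma])$ carries commuting left and right actions: the left action of $L[\sigma]$ (entrywise multiplication) is the one appearing in the horizontal maps $\sigma(\cdot)$ and $(\sigma-1)(\cdot)$, while multiplication by $a\in\bA$ is the right action $\bsn\mapsto\bsn\phi_a^*$ of $\Mat_d(L[\sigma])$ coming from~\eqref{E:tactiondual}. Since left and right multiplications commute, the \emph{left-hand} squares of both (a) and (b) commute on the nose, and since the top and bottom rows are literally the same sequence, it suffices to prove exactness of one copy of each row together with commutativity of the right-hand squares.

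For the rows of (a), write $\bsn = \sum_{i\geqslant 0}\bsa_i\sigma^i$ with $\bsa_i\in\Mat_{1\times d}(L)$. Then $\sigma\cdot\bsn = \sum_i\bsa_i^{(-1)}\sigma^{i+1}$ raises the $\sigma$-degree, so $\sigma(\cdot)$ is injective; on the other hand $\varepsilon_0(\bsn) = \bsa_0^{\tr}$ vanishes exactly when $\bsn = \sum_{i\geqslant1}\bsa_i\sigma^i = \sigma\cdot\bigl(\sum_{i\geqslant1}\bsa_i^{(1)}\sigma^{i-1}\bigr)$, so $\ker\varepsilon_0 = \sigma\cN_\phi$, and with the obvious surjectivity of $\varepsilon_0$ the rows of (a) are exact. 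For the rows of (b), $(\sigma-1)(\cdot)$ is injective by a leading-coefficient argument: if $\bsn\neq 0$ has $\sigma$-degree $\ell$ then $\sigma\cdot\bsn$ has $\sigma$-degree $\ell+1$, so $\sigma\cdot\bsn\neq\bsn$. A direct check from~\eqref{varepsilon0and1} gives $\varepsilon_1(\sigma\cdot\bsn) = \varepsilon_1(\bsn)$, so $\varepsilon_1$ annihilates $(\sigma-1)\cN_\phi$; and the telescoping identity
\[
  \bsa\sigma^i - \bsa^{(i)} = \sum_{k=1}^{i}(\sigma-1)\bigl(\bsa^{(k)}\sigma^{i-k}\bigr) \in (\sigma-1)\cN_\phi
\]
shows that every $\bsn$ is congruent modulo $(\sigma-1)\cN_\phi$ to the constant $\sum_i\bsa_i^{(i)}$, whose transpose is $\varepsilon_1(\bsn)$. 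Since $\varepsilon_1$ separates constants, this exhibits a well-defined bijection from $\cN_\phi/(\sigma-1)\cN_\phi$ onto $\Mat_{1\times d}(L)\cong L^d$, whence $\varepsilon_1$ is surjective with kernel exactly $(\sigma-1)\cN_\phi$, proving exactness of the rows of (b).

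For the right-hand squares, write $\phi_a = \sum_j B_j\tau^j$ with $B_j\in\Mat_d(L)$ and $B_0 = \rd\phi_a$; then, unwinding the definition of $*$ on matrices, $\phi_a^* = \sum_j D_j\sigma^j$ with $D_j = (B_j^{\tr})^{(-j)}$, so in particular $D_0 = B_0^{\tr}$. Using $\sigma^i D_j = D_j^{(-i)}\sigma^i$ one finds $\bsn\phi_a^* = \sum_{i,j}\bsa_i D_j^{(-i)}\sigma^{i+j}$. Reading off the $\sigma^0$-coefficient gives $\varepsilon_0(\bsn\phi_a^*) = (\bsa_0 D_0)^{\tr} = B_0\bsa_0^{\tr} = \rd\phi_a\bigl(\varepsilon_0(\bsn)\bigr)$, which is the right square of (a). For (b), the $\sigma^k$-coefficient of $\bsn\phi_a^*$ is $\sum_{i+j=k}\bsa_i D_j^{(-i)}$; applying $\varepsilon_1$ and using that $(D_j^{(-i)})^{(i+j)} = D_j^{(j)} = B_j^{\tr}$, the twists introduced by $\sigma^i$ cancel and one obtains $\varepsilon_1(\bsn\phi_a^*) = \sum_j B_j\bigl(\varepsilon_1(\bsn)\bigr)^{(j)} = \phi_a\bigl(\varepsilon_1(\bsn)\bigr)$ by~\eqref{E:tautwists}, which is the right square of (b).

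\textbf{Main obstacle.} I expect no conceptual difficulty here; the delicate part is purely the bookkeeping of Frobenius twists, which must be tracked correctly through the anti-isomorphism $*$, through the commutation rule $\sigma^i D = D^{(-i)}\sigma^i$, and through the definitions of $\varepsilon_0$ and $\varepsilon_1$, with the verification that in the $\varepsilon_1$ computation every twist telescopes away. The only genuinely non-formal ingredient is the telescoping congruence $\bsa\sigma^i\equiv\bsa^{(i)}\pmod{(\sigma-1)\cN_\phi}$ used to pin down $\ker\varepsilon_1$; this is a standard device, and the whole argument is essentially Anderson's, as recorded in~\cite[\S2.5]{HartlJuschka20}, but reproving it keeps the exposition self-contained.
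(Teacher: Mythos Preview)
Your proof is correct and complete. The paper itself does not give a proof of this lemma; it simply attributes the result to Anderson and cites \cite[Prop.~2.5.8]{HartlJuschka20} (and \cite[Lem.~1.1.21--22]{Juschka10}) for the details. What you have written is precisely the standard direct verification one finds in those references: check exactness of the rows by elementary degree and telescoping arguments, observe that the left squares commute because the left $L[\sigma]$-action and the right $\Mat_d(L[\sigma])$-action on $\cN_\phi$ commute, and verify the right squares by unwinding $\phi_a^*$ and tracking twists through $\varepsilon_0$ and $\varepsilon_1$. Your bookkeeping of the Frobenius twists, particularly the cancellation $(D_j^{(-i)})^{(i+j)} = B_j^{\tr}$ in the $\varepsilon_1$ computation, is correct. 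So there is nothing to compare against in the paper; you have supplied the omitted argument.
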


\begin{remark}
Lemma~\ref{L:Nquotients} shows that we have induced isomorphisms
\[
  \varepsilon_0 : \frac{\cN_{\phi}}{\sigma \cN_{\phi}} \iso \Lie(\phi)(L), \quad
  \varepsilon_1 : \frac{\cN_{\phi}}{(\sigma -1 ) \cN_{\phi}} \iso \phi(L),
\]
where the first is $L[t]$-linear, and the second is $\bA$-linear.  In particular we recover the $t$-module $\phi$ from $\cN_{\phi}$ via $\varepsilon_1$.
\end{remark}

\subsection{\texorpdfstring{$t$}{t}-frames} \label{subS:tframes}
Picking $\{\bsn_1, \dots, \bsn_r\}$ to be an $L[t]$-basis of~$\cN_{\phi}$, we let $\Phi \in \Mat_r(L[t])$ be the unique matrix such that
\[
\sigma \bsn = \Phi \bsn, \quad  \bsn = (\bsn_1, \dots, \bsn_r)^\tr.
\]
By convention we will say that $\bsn \in \Mat_{d\times 1}(\cN_{\phi})$ forms an $L[t]$-basis of $\cN_{\phi}$.

We define the map
\begin{equation} \label{E:iotadef}
\iota : \Mat_{1\times r}(L[t]) \to \cN_{\phi}
\end{equation}
by setting for $\bsalpha=(\alpha_1, \dots, \alpha_r) \in \Mat_{1\times r}(L[t])$,
\[
\iota(\bsalpha) = \bsalpha \cdot \bsn = \alpha_1 \bsn_1 + \dots + \alpha_r \bsn_r.
\]
The pair $(\iota, \Phi)$ is called a \emph{$t$-frame} for $\phi$.  The following proposition is due to Anderson, but for completeness we provide a direct proof (cf.~\cite[Lem.~4.4.2]{GezmisP19}).

\begin{proposition} \label{P:tframeprops}
Given a $t$-frame $(\iota, \Phi)$ for the $\bA$-finite $t$-module $\phi : \bA \to \Mat_d(L[\tau])$, the following properties hold.
\begin{enumerate}
\item[(a)] $\det \Phi = c(t-\theta)^d$ for some $c\in L^{\times}$.
\item[(b)] $\iota(\bsalpha^{(-1)}\Phi) = \sigma \iota(\bsalpha)$ for all $\bsalpha\in \Mat_{1\times r}(L[t])$.
\item[(c)] $\iota(t\bsalpha)=t \cdot \iota(\bsalpha) = \iota(\bsalpha)\phi_t^*$ for all $\bsalpha\in \Mat_{1\times r}(L[t])$.
\end{enumerate}
\end{proposition}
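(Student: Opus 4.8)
The plan is to dispatch parts~(c) and~(b) first, since both are immediate from unwinding definitions, and then to deduce~(a) from them together with the structural inclusion $(t-\theta)^d\cN_{\phi}\subseteq\sigma\cN_{\phi}$ from \S\ref{subS:motives} and the exactness in Lemma~\ref{L:Nquotients}. For~(c): because $\{\bsn_1,\dots,\bsn_r\}$ is an $L[t]$-basis of $\cN_{\phi}$ and $t\alpha_i=\alpha_i t$ in the commutative ring $L[t]$, one has $\iota(t\bsalpha)=\sum_i(t\alpha_i)\bsn_i=t\cdot\sum_i\alpha_i\bsn_i=t\cdot\iota(\bsalpha)$, and the final equality with $\iota(\bsalpha)\phi_t^{*}$ is exactly the definition~\eqref{E:tactiondual} of the $t$-action on $\cN_{\phi}$. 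In particular $\iota$ is an isomorphism of $L[t]$-modules, a fact used below. For~(b): inside $L[t,\sigma]$ we have $\sigma f=f^{(-1)}\sigma$ for $f\in L[t]$, so $\sigma\cdot\iota(\bsalpha)=\sum_i\sigma\cdot(\alpha_i\bsn_i)=\sum_i\alpha_i^{(-1)}(\sigma\bsn_i)$; on the other hand, since $\sigma\bsn=\Phi\bsn$, associativity of the matrix action gives $\iota(\bsalpha^{(-1)}\Phi)=(\bsalpha^{(-1)}\Phi)\cdot\bsn=\bsalpha^{(-1)}\cdot(\Phi\bsn)=\sum_i\alpha_i^{(-1)}(\sigma\bsn_i)$, matching the previous expression.

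For~(a), the strategy is a determinant-divisibility argument combined with a dimension count. Using part~(b) and the fact that $\bsalpha\mapsto\bsalpha^{(-1)}$ is a bijection of $\Mat_{1\times r}(L[t])$ (as $L$ is perfect), the $L[t]$-module isomorphism $\iota$ carries the submodule $\sigma\cN_{\phi}$ onto $\Mat_{1\times r}(L[t])\Phi$; note $\sigma\cN_{\phi}$ is genuinely an $L[t]$-submodule because $\sigma t=t\sigma$ in $L[t,\sigma]$. Transporting the inclusion $(t-\theta)^d\cN_{\phi}\subseteq\sigma\cN_{\phi}$ through $\iota$ gives $(t-\theta)^d\Mat_{1\times r}(L[t])\subseteq\Mat_{1\times r}(L[t])\Phi$; applying this to the standard basis rows and stacking the results produces $H\in\Mat_r(L[t])$ with $(t-\theta)^d\Id_r=H\Phi$. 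Taking determinants yields $(t-\theta)^{dr}=\det(H)\det(\Phi)$, whence $\det\Phi\neq 0$ and $\det\Phi=c(t-\theta)^k$ for some $c\in L^{\times}$ and $0\leqslant k\leqslant dr$. To force $k=d$, I would compute $\dim_L(\cN_{\phi}/\sigma\cN_{\phi})$ two ways: by elementary-divisor (Smith normal form) theory over the PID $L[t]$, $\cN_{\phi}/\sigma\cN_{\phi}\cong\Mat_{1\times r}(L[t])/\Mat_{1\times r}(L[t])\Phi$ has $L$-dimension $\deg_t\det\Phi=k$; by Lemma~\ref{L:Nquotients}(a) and the remark following it, $\varepsilon_0$ induces $\cN_{\phi}/\sigma\cN_{\phi}\iso\Lie(\phi)(L)\cong L^d$, of dimension $d$. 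Hence $k=d$ and $\det\Phi=c(t-\theta)^d$.

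There is no deep obstacle here; the steps that require genuine care are purely bookkeeping. One must keep straight that the entries of $\bsalpha$ lie in $L[t]$ and act on $\cN_{\phi}$ on the left while $\Phi$ multiplies row vectors on the right, so the associativity used in~(b) must be checked rather than asserted. One should also confirm, before invoking Smith normal form in~(a), that $\sigma\cN_{\phi}$ is an $L[t]$-submodule and that $\iota$ intertwines it with $\Mat_{1\times r}(L[t])\Phi$ precisely as stated. Once these compatibilities are in place, the determinant identity $(t-\theta)^{dr}=\det(H)\det(\Phi)$ together with the two-sided dimension count finishes~(a) cleanly.
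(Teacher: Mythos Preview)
Your proposal is correct and follows essentially the same line as the paper's proof. Parts~(b) and~(c) are handled identically by unwinding definitions, and for~(a) both arguments identify $\sigma\cN_{\phi}$ with $\Mat_{1\times r}(L[t])\Phi$ via $\iota$ and then read off $\det\Phi$ from the structure of the quotient $\cN_{\phi}/\sigma\cN_{\phi}$. The only cosmetic difference is that the paper phrases the last step in terms of the Fitting ideal of $\cN_{\phi}/\sigma\cN_{\phi}$ over $L[t]$, whereas you split it into two pieces: first producing $H$ with $H\Phi=(t-\theta)^d\Id_r$ to force $\det\Phi=c(t-\theta)^k$, and then matching $k=d$ via Smith normal form and the dimension count from Lemma~\ref{L:Nquotients}(a). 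Your version is slightly more elementary and self-contained; the paper's is terser but assumes familiarity with Fitting ideals. Either way the content is the same.
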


\begin{proof}
Parts (b) and (c) follow directly from the definitions of $\iota$ and the $t$- and $\sigma$-actions on $\cN_{\phi}$.  Recalling that $\cN_{\phi} = \Mat_{1\times d}(L[\sigma])$, part (b) then implies that
\begin{equation} \label{E:sigmaN}
  \sigma \cN_{\phi} = \iota \bigl( \Mat_{1\times r}(L[t]) \cdot \Phi \bigr).
\end{equation}
Since $\iota$ is an isomorphism of $L[t]$-modules by~(c), it follows that the Fitting ideal of $\cN_{\phi}/\sigma \cN_{\phi}$ over $L[t]$ is generated by $\det \Phi$.  On the other hand, the definition of $\cN_{\phi}$ as a dual $t$-motive implies that $\cN_{\phi}/\sigma \cN_{\phi}$ is a $d$-dimensional $L$-vector space annihilated by a power of $t-\theta$, and the result follows.
\end{proof}

One useful combination of Lemma~\ref{L:Nquotients} and Proposition~\ref{P:tframeprops} is the following lemma, whose proof is immediate.

\begin{lemma} \label{L:epsilontframes}
Given a $t$-frame $(\iota,\Phi)$ for the $\bA$-finite $t$-module $\phi : \bA \to \Mat_d(L[\tau])$,
we have for all $\bsalpha \in \Mat_{1 \times r}(L[t])$ and $a \in \bA$,
\[
  \rd\phi_{a} \bigl( \varepsilon_0(\iota(\bsalpha)) \bigr) = \varepsilon_0( \iota (a\cdot \bsalpha)), \quad
  \phi_a \bigl( \varepsilon_1(\iota(\bsalpha)) \bigr) = \varepsilon_1( \iota(a \cdot \bsalpha)).
\]
\end{lemma}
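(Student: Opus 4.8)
The plan is to read off both identities from the commutativity of the right-hand squares in Lemma~\ref{L:Nquotients}, once we know that $\iota$ intertwines the $\bA$-actions on $\Mat_{1\times r}(L[t])$ and on $\cN_{\phi}$. So the first thing I would check is this intertwining property: Proposition~\ref{P:tframeprops}(c) already gives $\iota(t\bsalpha) = t\cdot\iota(\bsalpha)$, and since $\iota$ is $L$-linear with $\FF_q \subseteq L$, a one-line induction on $\deg_t a$ upgrades this to $\iota(a\cdot\bsalpha) = a\cdot\iota(\bsalpha)$ for every $a \in \bA$ and $\bsalpha \in \Mat_{1\times r}(L[t])$.

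With that in hand, I would invoke the commutativity of the right-hand square of the diagram in Lemma~\ref{L:Nquotients}(a), which says precisely that $\rd\phi_a\circ\varepsilon_0 = \varepsilon_0\circ(a\cdot)$ as maps $\cN_{\phi} \to L^d$. Evaluating at $\bsn = \iota(\bsalpha)$ and applying the intertwining property yields
\[
  \rd\phi_a\bigl(\varepsilon_0(\iota(\bsalpha))\bigr) = \varepsilon_0\bigl(a\cdot\iota(\bsalpha)\bigr) = \varepsilon_0\bigl(\iota(a\cdot\bsalpha)\bigr),
\]
which is the first claimed equality. The second is entirely parallel: the right-hand square of Lemma~\ref{L:Nquotients}(b) gives $\phi_a\circ\varepsilon_1 = \varepsilon_1\circ(a\cdot)$, and evaluating at $\bsn = \iota(\bsalpha)$ gives $\phi_a\bigl(\varepsilon_1(\iota(\bsalpha))\bigr) = \varepsilon_1\bigl(\iota(a\cdot\bsalpha)\bigr)$.

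I do not expect any real obstacle here: the argument is a single diagram chase resting on two facts already established in the excerpt. The only step that is not literally a citation is the promotion of the $t$-linearity of $\iota$ in Proposition~\ref{P:tframeprops}(c) to full $\bA$-linearity, and that is routine given $\FF_q \subseteq L$ and $\bA = \FF_q[t]$. This is presumably the reason the statement is recorded as having an immediate proof.
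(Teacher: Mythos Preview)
Your proof is correct and matches the paper's approach exactly: the paper states that the lemma is a ``useful combination of Lemma~\ref{L:Nquotients} and Proposition~\ref{P:tframeprops}'' with immediate proof, and your argument spells out precisely that combination. The only detail you add beyond the citations is the routine extension from $t$-linearity to $\bA$-linearity of $\iota$, which is indeed trivial.
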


For an abelian $t$-module $\phi$, we make a companion construction to the $t$-frame as follows.  Let $\{\bsm_1, \dots, \bsm_r\}$ be an $L[t]$-basis of $\cM_{\phi}$, and take $\tPhi \in \Mat_r(L[t])$ to be the unique matrix so that
\[
  \sigma \bsm = \tPhi \bsm, \quad \bsm = (\bsm_1, \dots, \bsm_r)^{\tr}.
\]
Also by convention we will say that $\bsm \in \Mat_{d \times 1}(\cM_{\phi})$ forms an $L[t]$-basis of $\cM_{\phi}$.

Likewise, let $\jmath : \Mat_{1\times r}(L[t]) \to \cM_{\phi}$ be defined by $\jmath(\bsalpha) = \bsalpha \cdot \bsm = \alpha_1 \bsm_1 + \dots + \alpha_r \bsm_r$, where $\bsalpha = (\alpha_1, \dots, \alpha_r)$.  The following proposition is the companion to Proposition~\ref{P:tframeprops}, with essentially the same proof.

\begin{proposition} \label{P:abeliantframeprops}
For an abelian $t$-module $\phi : \bA \to \Mat_d(L[\tau])$, let $\jmath : \Mat_{1\times r}(L[t]) \to \cM_{\phi}$ and $\tPhi \in \Mat_r(L[t])$ be defined as above.  The following hold.
\begin{enumerate}
\item[(a)] $\det \tPhi = c(t-\theta)^d$ for some $c\in L^{\times}$.
\item[(b)] $\jmath(\bsalpha^{(1)}\tPhi) = \tau \jmath(\bsalpha)$ for all $\bsalpha\in \Mat_{1\times r}(L[t])$.
\item[(c)] $\jmath(t\bsalpha)=t \cdot \jmath(\bsalpha) = \jmath(\bsalpha)\phi_t$ for all $\bsalpha\in \Mat_{1\times r}(L[t])$.
\end{enumerate}
\end{proposition}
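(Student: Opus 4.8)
The plan is to transcribe the proof of Proposition~\ref{P:tframeprops} with the dual $t$-motive $\cN_\phi$ replaced by the $t$-motive $\cM_\phi = \Mat_{1\times d}(L[\tau])$ and the pair $(\sigma,\Phi)$ replaced by $(\tau,\tPhi)$, where $\tPhi$ records multiplication by $\tau$ on the chosen basis, i.e.\ $\tau\bsm = \tPhi\bsm$. Parts (b) and~(c) are purely formal. For~(c): since $L[t]$ is commutative and acts on $\cM_\phi$ by $a\cdot\bsm = \bsm\phi_a$ as in~\eqref{E:tactionmotive}, one has $\jmath(t\bsalpha) = (t\bsalpha)\cdot\bsm = t\cdot(\bsalpha\cdot\bsm) = t\cdot\jmath(\bsalpha) = \jmath(\bsalpha)\phi_t$; the same manipulation (with an arbitrary scalar or an arbitrary $a\in\bA$ in place of $t$) shows $\jmath$ is an isomorphism of $L[t]$-modules. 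For~(b): the commutation rule $\tau f = f^{(1)}\tau$ for $f\in L[t]$ lets $\tau$ move past the coordinates of $\bsalpha$ at the cost of a Frobenius twist, whence $\tau\jmath(\bsalpha) = \tau(\bsalpha\cdot\bsm) = \bsalpha^{(1)}\cdot(\tau\bsm) = \bsalpha^{(1)}\cdot(\tPhi\bsm) = (\bsalpha^{(1)}\tPhi)\cdot\bsm = \jmath(\bsalpha^{(1)}\tPhi)$.

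For part~(a) I would repeat the Fitting-ideal argument. Applying~(b) to all $\bsalpha$ (and using that $\bsalpha\mapsto\bsalpha^{(1)}$ is a bijection of $\Mat_{1\times r}(L[t])$) gives $\tau\cM_\phi = \jmath\bigl(\Mat_{1\times r}(L[t])\cdot\tPhi\bigr)$, the companion of~\eqref{E:sigmaN}. Since $\jmath$ is an $L[t]$-isomorphism by~(c) and $L[t]$ is a principal ideal domain, $\cM_\phi/\tau\cM_\phi$ is isomorphic as an $L[t]$-module to the cokernel of right multiplication by $\tPhi$ on $\Mat_{1\times r}(L[t])$; hence its Fitting ideal over $L[t]$ is $(\det\tPhi)$ and its dimension over $L$ equals $\deg_t\det\tPhi$. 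On the other hand, $\cM_\phi/\tau\cM_\phi$ is visibly a $d$-dimensional $L$-vector space (directly from $\cM_\phi = \Mat_{1\times d}(L[\tau])$), so $\deg_t\det\tPhi = d$; and the defining property $(t-\theta)^d\cM_\phi\subseteq\tau\cM_\phi$ of the $t$-motive recalled in~\S\ref{subS:motives} shows $\cM_\phi/\tau\cM_\phi$ is annihilated by $(t-\theta)^d$, so $\theta$ is the only root of $\det\tPhi$. Together these force $\det\tPhi = c(t-\theta)^d$ for some $c\in L^\times$.

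I do not anticipate any real obstacle here: the whole argument rides on structural facts about $\cM_\phi$ that are already established, and the only thing to watch is keeping the left $L[\tau]$-action and the $\phi$-twisted $L[t]$-action straight, together with the Frobenius twists produced when $\tau$ is pushed past polynomial coefficients in~$t$ --- the same bookkeeping as in Proposition~\ref{P:tframeprops}. Alternatively, part~(a) could be deduced from Proposition~\ref{P:tframeprops}(a) by comparing $\cM_\phi$ and $\cN_\phi$, but the direct computation above is cleaner and self-contained.
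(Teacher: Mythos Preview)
Your proposal is correct and follows exactly the approach the paper indicates: the paper states that Proposition~\ref{P:abeliantframeprops} ``is the companion to Proposition~\ref{P:tframeprops}, with essentially the same proof,'' and your transcription of that proof---replacing $(\cN_\phi,\sigma,\Phi)$ by $(\cM_\phi,\tau,\tPhi)$ and tracking the Frobenius twist direction---is precisely what is intended. The one implicit hypothesis you use (that $\bsalpha\mapsto\bsalpha^{(1)}$ is a bijection on $\Mat_{1\times r}(L[t])$ and that $\cM_\phi/\tau\cM_\phi$ has $L$-dimension~$d$) holds because $L$ is taken algebraically closed in this section, hence perfect.
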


\subsection{Exponentiation via division towers}
We now fix an $\bA$-finite $t$-module $\phi : \bA \to \Mat_d(\KK[\tau])$ of dimension $d$.  We present here a construction of the exponential of $\phi$ via $t$-division towers of points due to Anderson.  The complete construction with proofs can be found in~\cite[\S 2.5.3]{HartlJuschka20}. For similar constructions in the context of Drinfeld modules over Tate algebras, see also~\cite[\S 4]{GezmisP19}.

For $X = (x_{ij}) \in \Mat_{m \times n}(\KK)$, we set $\inorm{X} \assign \max(\inorm{x_{ij}})$, making $\Mat_{m \times n}(\KK)$ into a complete normed vector space.

\begin{definition} \label{def:tconvergent}
For $\bsx \in \KK^d$ and a sequence $\{\bsx_n\}_{n=0}^\infty$ in $\KK^d$, consider the conditions
\begin{enumerate}
\item[(a)] $\phi_t(\bsx_{n+1}) = \bsx_n$ for all $n \geqslant 0$,
\item[(b)] $\phi_t(\bsx_0)=\bsx$,
\item[(c)] $\lim_{n \to \infty} \inorm{\bsx_n} = 0$ with respect to $\inorm{\,\cdot\,}$.
\end{enumerate}
If $\{\bsx_n\}_{n=0}^\infty$ satisfies (a) and (b), then it is called a \emph{$t$-division sequence above $\bsx$}, and if it satisfies all three conditions, then it is called a \emph{convergent $t$-division sequence above $\bsx$}.
\end{definition}

\begin{theorem}[{Anderson; see \cite[Thm.~2.5.20]{HartlJuschka20}, cf.\ \cite[Thm.~4.3.2]{GezmisP19}}] \label{T:Exptdivision}
Let $\phi: \bA \to \Mat_d(\KK[\tau])$ be an $\bA$-finite $t$-module, and let $\bsx \in \KK^d$.  There is a bijection
\[
G: \{ \bsu \in \KK^d : \Exp_\phi(\bsu) = \bsx \} \to  \{\textup{convergent $t$-division sequences above $\bsx$} \},
\]
defined by
\[
  G(\bsu) = \Bigl\{ \Exp_{\phi} \bigl( (\rd \phi_t)^{-1}(\bsu) \bigr), \Exp_{\phi} \bigl( (\rd \phi_t)^{-2}(\bsu) \bigr), \ldots \Bigr\}.
\]
Moreover, if $\{\bsx_n\}_{n=0}^\infty$ is a convergent $t$-division sequence above $\bsx$, then with respect to~$\inorm{\,\cdot\,}$,
\[
\lim_{n \to \infty}(\rd\phi_t)^{n+1} \bsx_n = \bsu,
\]
and we have
\[
\Exp_\phi(\bsu) = \bsx, \quad G(\bsu) = \{ \bsx_n \}_{n=0}^\infty.
\]
\end{theorem}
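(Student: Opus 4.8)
The plan is to verify, in order, that $G$ is well defined, that the ``moreover'' clause holds (which simultaneously yields injectivity of $G$), and that $G$ is surjective. Since every assertion in the statement is invariant under replacing $\phi$ by a conjugate $\psi$ with $\psi_a=P^{-1}\phi_aP$, $P\in\GL_d(\KK)$ (for then $\Exp_\psi=P^{-1}\circ\Exp_\phi\circ P$, while convergent $t$-division sequences and the powers of $\rd\phi_t$ transform by $P^{\pm1}$), I would first pass to a model in which $\rd\phi_t=\theta\Id_d+N$ has $N$ in Jordan form and, after a further scalar conjugation, $\inorm{B_j}\leqslant 1$ for $1\leqslant j\leqslant\ell$. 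All the analytic input then reduces to two elementary facts about $\rd\phi_t$: it is invertible, since its eigenvalues are all $\theta$ and so $\det\rd\phi_t=\theta^d\neq 0$; and $\inorm{(\rd\phi_t)^{\pm n}\bsv}\leqslant q^{\pm n}\inorm{\bsv}$ for all $n\geqslant 0$, which follows from $\inorm{\binom{\pm n}{k}}\leqslant 1$ together with $\inorm{N}\leqslant 1$.

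For well-definedness, fix $\bsu$ with $\Exp_\phi(\bsu)=\bsx$ and set $\bsx_n\assign\Exp_\phi((\rd\phi_t)^{-n-1}\bsu)$. Conditions (a) and (b) of Definition~\ref{def:tconvergent} are immediate from the functional equation $\Exp_\phi(\rd\phi_t\bsz)=\phi_t(\Exp_\phi(\bsz))$ and the invertibility of $\rd\phi_t$, while (c) holds because $(\rd\phi_t)^{-n-1}\bsu\to\bzero$ and $\Exp_\phi$ is continuous with $\Exp_\phi(\bzero)=\bzero$. For the ``moreover'' applied to this $\bsu$, I would expand
\[
  (\rd\phi_t)^{n+1}\bsx_n=\bsu+\sum_{i\geqslant 1}(\rd\phi_t)^{n+1}C_i\bigl((\rd\phi_t)^{-n-1}\bsu\bigr)^{(i)}
\]
and bound the $i$-th summand by roughly $q^{n+1}\cdot q^{-q^i(n+1)}\cdot\inorm{\bsu}^{q^i}$, which tends to $0$ as $n\to\infty$ since $q^i>1$; because $\inorm{C_i}\to 0$ and the exponents $(n+1)(1-q^i)$ are uniformly negative, a routine geometric estimate shows the entire tail tends to $\bzero$, so $(\rd\phi_t)^{n+1}\bsx_n\to\bsu$. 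As $\bsu$ is thereby recovered from $G(\bsu)$, the map $G$ is injective.

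For surjectivity, start from a convergent $t$-division sequence $\{\bsx_n\}$ above $\bsx$ and put $\bsu\assign\lim_n(\rd\phi_t)^{n+1}\bsx_n$, once that limit is known to exist. Writing $\beta\assign B_1\tau+\dots+B_\ell\tau^\ell=\phi_t-\rd\phi_t$, the relation $\phi_t(\bsx_{n+1})=\bsx_n$ gives $\rd\phi_t\bsx_{n+1}-\bsx_n=-\beta(\bsx_{n+1})$, hence
\[
  (\rd\phi_t)^{n+2}\bsx_{n+1}-(\rd\phi_t)^{n+1}\bsx_n=-(\rd\phi_t)^{n+1}\beta(\bsx_{n+1}),
\]
whose norm is $\leqslant q^{n+1}\inorm{\bsx_{n+1}}^q$. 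The key auxiliary step is that a \emph{convergent} division sequence decays geometrically: from $\bsx_{n+1}=(\rd\phi_t)^{-1}(\bsx_n-\beta(\bsx_{n+1}))$ one obtains $\inorm{\bsx_{n+1}}\leqslant q^{-1}\max(\inorm{\bsx_n},\inorm{\bsx_{n+1}}^q)$ for $n$ large enough that $\inorm{\bsx_{n+1}}\leqslant 1$, and the branch in which $\inorm{\bsx_{n+1}}^q$ dominates would force $\inorm{\bsx_{n+1}}\leqslant q^{-1}\inorm{\bsx_{n+1}}$, impossible unless $\bsx_{n+1}=\bzero$; so $\inorm{\bsx_{n+1}}\leqslant q^{-1}\inorm{\bsx_n}$ and therefore $\inorm{\bsx_n}\leqslant c\,q^{-n}$ for large $n$. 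Feeding this back, the displayed difference is $\leqslant c^q q^{(n+1)(1-q)}\to 0$, so $\{(\rd\phi_t)^{n+1}\bsx_n\}$ is Cauchy in the complete space $\KK^d$ and $\bsu$ exists. Then, combining continuity of $\Exp_\phi$, the functional equation, the rapid decay of $\Exp_\phi(\bsx_n)-\bsx_n$ against $\inorm{\phi_t^{n+1}(\bsv)}\leqslant q^{n+1}\inorm{\bsv}$ for small $\bsv$, and the telescoping identity $\phi_{t^{n+1}}(\bsx_n)=\bsx$ (apply $\phi_t$ to $\bsx_n$ exactly $n+1$ times using (a) and (b)),
\[
  \Exp_\phi(\bsu)=\lim_n\Exp_\phi\bigl((\rd\phi_t)^{n+1}\bsx_n\bigr)=\lim_n\phi_{t^{n+1}}\bigl(\Exp_\phi(\bsx_n)\bigr)=\lim_n\phi_{t^{n+1}}(\bsx_n)=\bsx.
\]
Finally $G(\bsu)=\{\bsx_n\}$ follows by the same device: $(\rd\phi_t)^{-n-1}\bsu=\lim_m(\rd\phi_t)^{m-n}\bsx_m$, so $\Exp_\phi((\rd\phi_t)^{-n-1}\bsu)=\lim_m\phi_{t^{m-n}}(\Exp_\phi(\bsx_m))=\lim_m\phi_{t^{m-n}}(\bsx_m)=\bsx_n$, again discarding the $\Exp_\phi(\bsx_m)-\bsx_m$ contribution via the geometric decay of $\inorm{\bsx_m}$.

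The main obstacle I expect is the ``automatic geometric decay'' of a convergent division sequence together with the accompanying uniform tail estimates: one must separate the linear part $\rd\phi_t$ from the $\tau$-divisible part $\beta$ of $\phi_t$ and exploit that, because every eigenvalue of $\rd\phi_t$ has absolute value $q>1$, the nonlinear terms cannot compete with the linear one except on a scale bounded away from $0$. Once the rate $q^{-n}$ is in hand, the remaining steps are standard non-archimedean geometric-series bounds; the complete details are carried out by Hartl and Juschka in~\cite[\S 2.5.3]{HartlJuschka20}.
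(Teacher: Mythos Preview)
The paper does not prove this theorem; it records Anderson's result with a citation to \cite[Thm.~2.5.20]{HartlJuschka20} and supplies only Remark~\ref{R:dphitinverse}, which verifies condition~(c) of Definition~\ref{def:tconvergent} by showing $\inorm{(\rd\phi_t)^{-n}}\to 0$. Your sketch goes well beyond this and is correct. The reduction to a model with $N$ in Jordan form and $\inorm{B_j}\leqslant 1$ is legitimate (scalar conjugation by $c\Id_d$ leaves $\rd\phi_t$ fixed while scaling $B_j$ by $c^{q^j-1}$), and the bounds $\inorm{(\rd\phi_t)^{\pm n}\bsv}\leqslant q^{\pm n}\inorm{\bsv}$ then follow as you say; combining them they are in fact equalities. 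Your ``automatic geometric decay'' step is the heart of the surjectivity argument and is right: once $\inorm{\bsx_{n+1}}\leqslant 1$, the inequality $\inorm{\bsx_{n+1}}\leqslant q^{-1}\max(\inorm{\bsx_n},\inorm{\bsx_{n+1}}^q)$ forces $\inorm{\bsx_{n+1}}\leqslant q^{-1}\inorm{\bsx_n}$, since the alternative branch yields $\inorm{\bsx_{n+1}}^{q-1}\geqslant q>1$, contradicting $\inorm{\bsx_{n+1}}\leqslant 1$.

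One point deserves to be made precise: the bound $\inorm{\phi_{t^{n+1}}(\bsv)}\leqslant q^{n+1}\inorm{\bsv}$ you invoke ``for small $\bsv$'' has an $n$-dependent threshold. From $\inorm{\phi_t(\bsv)}\leqslant q\inorm{\bsv}$ for $\inorm{\bsv}\leqslant 1$ (which follows from $\inorm{\rd\phi_t\bsv}\leqslant q\inorm{\bsv}$ and $\inorm{B_j\bsv^{(j)}}\leqslant\inorm{\bsv}^{q^j}\leqslant\inorm{\bsv}$), iteration gives $\inorm{\phi_{t^{n+1}}(\bsv)}\leqslant q^{n+1}\inorm{\bsv}$ only once $\inorm{\bsv}\leqslant q^{-n}$. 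This is harmless here: with $\inorm{\bsx_n}\leqslant c\,q^{-n}$ one has $\inorm{\Exp_\phi(\bsx_n)-\bsx_n}\leqslant M\inorm{\bsx_n}^q\leqslant Mc^q q^{-qn}$, which is eventually below $q^{-n}$, and then $q^{n+1}\cdot Mc^q q^{-qn}\to 0$. With that clarification your proof of $\Exp_\phi(\bsu)=\bsx$ and of $G(\bsu)=\{\bsx_n\}$ goes through as written.
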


\begin{remark} \label{R:dphitinverse}
Because it will be useful in \S\ref{S:Quasi} and elsewhere, we should say a few words about why the map $G$ in Theorem~\ref{T:Exptdivision} is well-defined.  Indeed if we write $\rd \phi_t = \theta \Id_d + N$, where $N$ is a nilpotent matrix, then necessarily $N^d = 0$.  Thus
\[
  (\rd\phi_t)^{-1} = \frac{1}{\theta} \biggl( \Id_d - \frac{1}{\theta}\cdot N + \frac{1}{\theta^2} \cdot N^2 - \cdots + \frac{(-1)^{d-1}}{\theta^{d-1}}\cdot N^{d-1} \biggr).
\]
It follows that for $n \geqslant 1$,
\[
  \bigl\lvert (\rd \phi_t)^{-n} \bigr\rvert_{\infty} \leqslant \max_{0 \leqslant j \leqslant d-1} \bigl\{ \inorm{\theta}^{-n-j} \cdot \inorm{N}^j \bigr\},
\]
and so $\inorm{(\rd \phi_t)^{-n}} \to 0$ as $n \to \infty$.  Thus $G(\bsu)$ satisfies Definition~\ref{def:tconvergent}(c) as desired.
\end{remark}

\subsection{Exponentiation via Frobenius difference equations} \label{subS:AndersonThm}
Here we bring together the information from the preceding parts of this section to present Anderson's exponentiation theorem, which explicitly connects the exponential map to solutions of Frobenius difference equations and rigid analytic trivializations.  Throughout this section we fix an $\bA$-finite $t$-module $\phi: \bA \to \Mat_d(\KK[\tau])$ with $t$-frame $(\iota,\Phi)$.  We first observe the following lemma.

\begin{lemma}[{Anderson; see \cite[Prop.~2.5.8]{HartlJuschka20}, cf.\ \cite[Rem.~4.4.4]{GezmisP19}}] \label{L:E0}
There exists a unique bounded $\KK$-linear map
\[
\cE_0: \bigl( \Mat_{1\times r}(\TT_\theta), \dnorm{\,\cdot\,}_{\theta} \bigr) \to \bigl( \KK^d, \inorm{\,\cdot\,} \bigr)
\]
of normed vector spaces such that $\cE_0|_{\Mat_{1\times r}(\KK[t])} = \varepsilon_0 \circ \iota$.
\end{lemma}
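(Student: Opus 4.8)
The plan is to reduce both existence and uniqueness to a single boundedness estimate for the $\KK$-linear map $\varepsilon_0\circ\iota$ on the subspace $\Mat_{1\times r}(\KK[t]) \subseteq \Mat_{1\times r}(\TT_\theta)$, and then to invoke the standard extension principle for bounded linear maps. The relevant soft facts are that $\KK[t]$ is dense in $\TT_\theta$ for $\dnorm{\,\cdot\,}_\theta$ (truncating a series $\sum a_i t^i \in \TT_\theta$ gives polynomials converging to it, since $q^i\inorm{a_i}\to 0$), that $\TT_\theta$ is complete, and that $\KK^d$ is complete. Granting the estimate, $\varepsilon_0\circ\iota$ extends uniquely to a bounded $\KK$-linear map $\cE_0$ on all of $\Mat_{1\times r}(\TT_\theta)$ with the same operator norm, and restricting back to polynomials recovers $\varepsilon_0\circ\iota$; uniqueness is then automatic, since two bounded (hence continuous) maps agreeing on a dense subspace coincide.

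To prove the estimate I would write $\bsalpha = \sum_k \bsc_k t^k \in \Mat_{1\times r}(\KK[t])$ with $\bsc_k \in \Mat_{1\times r}(\KK)$, a finite sum. By $\KK[t]$-linearity of $\iota$ and $\KK$-linearity of $\varepsilon_0$, together with iterating the identity $\varepsilon_0(\iota(a\cdot\bsbeta)) = \rd\phi_a(\varepsilon_0(\iota(\bsbeta)))$ of Lemma~\ref{L:epsilontframes} at $a = t^k$, one obtains the closed form
\[
  \varepsilon_0(\iota(\bsalpha)) = \sum_k (\rd\phi_t)^k\, B_0\, \bsc_k^{\tr}, \qquad B_0 \assign \bigl( \varepsilon_0(\bsn_1)\mid \cdots \mid \varepsilon_0(\bsn_r)\bigr) \in \Mat_{d\times r}(\KK),
\]
using that $\rd\phi_{t^k} = (\rd\phi_t)^k$ acts by genuine matrix multiplication (no Frobenius twists enter $\rd$). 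Since $\rd\phi_t = \theta\Id_d + N$ with $N^d = 0$, we have $(\rd\phi_t)^k = \sum_{j=0}^{d-1}\binom kj \theta^{k-j}N^j$, and as $\inorm{\binom kj}\leqslant 1$ and $\inorm\theta = q$, the ultrametric inequality yields $\inorm{(\rd\phi_t)^k} \leqslant M q^k$ for a constant $M$ depending only on $\phi$ (one may take $M = \max(1, \inorm{N}^{d-1})$). Since $\dnorm{\bsalpha}_\theta = \max_k q^k\inorm{\bsc_k}$, submultiplicativity of $\inorm{\,\cdot\,}$ for matrix products and the ultrametric inequality give
\[
  \inorm{\varepsilon_0(\iota(\bsalpha))} \leqslant \max_k \inorm{(\rd\phi_t)^k}\,\inorm{B_0}\,\inorm{\bsc_k} \leqslant M\inorm{B_0}\, \dnorm{\bsalpha}_\theta,
\]
which is exactly the bound needed.

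I do not expect a serious obstacle. The one step requiring care is the passage from the module-theoretic definition of $\iota$ to the closed form for $\varepsilon_0\circ\iota$: one must confirm that post-composing the $\KK[t]$-action on $\cN_\phi$ with $\varepsilon_0$ is precisely multiplication by $\rd\phi_{t^k}$. This is the content of Lemma~\ref{L:epsilontframes} (equivalently the left-hand square in Lemma~\ref{L:Nquotients}(a)), so it is already available, and everything else is routine non-Archimedean estimation together with the completion argument applied coordinatewise to the $r$-tuples with the sup norm.
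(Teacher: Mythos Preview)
Your argument is correct. The paper does not supply its own proof of Lemma~\ref{L:E0}; it states the result and attributes it to Anderson via \cite[Prop.~2.5.8]{HartlJuschka20}, so there is nothing in the paper to compare against directly. Your approach---prove a uniform bound $\inorm{\varepsilon_0(\iota(\bsalpha))} \leqslant C\,\dnorm{\bsalpha}_\theta$ on polynomials using $\rd\phi_t = \theta\Id_d + N$ with $N$ nilpotent, then extend by density of $\KK[t]$ in $\TT_\theta$ and completeness of $\KK^d$---is exactly the standard argument one expects here, and your use of Lemma~\ref{L:epsilontframes} to obtain the closed form $\varepsilon_0(\iota(\bsalpha)) = \sum_k (\rd\phi_t)^k B_0\,\bsc_k^{\tr}$ is the right bridge between the module-theoretic definition and the explicit estimate. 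One small remark: completeness of $\TT_\theta$ is not actually needed for the extension---density of $\KK[t]$ in $\TT_\theta$ together with completeness of the target $\KK^d$ suffices---but stating it does no harm.
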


For symmetry of notation, we furthermore let
\[
\cE_1 \assign \varepsilon_1 \circ \iota : \Mat_{1\times r}(\KK[t]) \to \KK^d.
\]
The following theorem is the main technique to express the exponentiation on $\phi$ in terms of solutions of Frobenius difference equations in the Tate algebra~$\TT_{\theta}$.

\begin{theorem}[{Anderson, see \cite[Thm.~2.5.21, Cor.~2.5.23]{HartlJuschka20}, cf.\ \cite[Thm.~4.4.6]{GezmisP19}}] \label{T:Exponentiation}
Let $\phi : \bA \to \Mat_d(\KK[\tau])$ be an $\bA$-finite $t$-module with $t$-frame $(\iota, \Phi)$. Fix $\bsh \in \Mat_{1 \times r}(\KK[t])$, and suppose there exists $\bsg \in \Mat_{1 \times r}(\TT_\theta)$ such that
\[
\bsg^{(-1)}\Phi - \bsg = \bsh.
\]
Then
\[
\Exp_\phi\bigl( \cE_0(\bsg + \bsh) \bigr) = \cE_1(\bsh).
\]
\end{theorem}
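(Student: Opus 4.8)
The plan is to derive the theorem from Anderson's description of $\Exp_\phi$ through convergent $t$-division towers, namely Theorem~\ref{T:Exptdivision}. Put $\bsx \assign \cE_1(\bsh) \in \KK^d$. By that theorem it is enough to exhibit a convergent $t$-division sequence $\{\bsx_n\}_{n\geqslant 0}$ above $\bsx$, in the sense of Definition~\ref{def:tconvergent}, having the additional property that
\[
  \lim_{n\to\infty}(\rd\phi_t)^{n+1}\bsx_n = \cE_0(\bsg+\bsh);
\]
then Theorem~\ref{T:Exptdivision} identifies $\cE_0(\bsg+\bsh)$ as the unique $\bsu$ with $\Exp_\phi(\bsu)=\bsx$ and $G(\bsu)=\{\bsx_n\}_{n\geqslant 0}$, which is precisely the asserted identity.

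The sequence is built from $\bsg$, $\bsh$, and the $t$-frame matrix $\Phi$. Write $\bsg=\sum_{i\geqslant 0}\bsc_i t^i$ with $\bsc_i\in\Mat_{1\times r}(\KK)$, so that $q^i\inorm{\bsc_i}\to 0$ because $\bsg\in\Mat_{1\times r}(\TT_\theta)$. For each $n$ one forms from the truncation $\bsg_{<n}\assign\sum_{i<n}\bsc_i t^i$ a point $\bsx_n\in\KK^d$ by applying $\cE_0$ (and, where necessary, $\cE_1=\varepsilon_1\circ\iota$) to $\bsg_{<n}$, $\bsh$, and suitable $\Phi$-multiples. The point of the difference equation $\bsg^{(-1)}\Phi-\bsg=\bsh$ is that, after truncation, it produces error terms of the shape $\bsbeta^{(-1)}\Phi-\bsbeta$; by Proposition~\ref{P:tframeprops}(b) these correspond under $\iota$ to elements of $(\sigma-1)\cN_\phi$, which is $\ker\varepsilon_1$ by Lemma~\ref{L:Nquotients}(b), and hence are invisible to $\cE_1$. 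Combined with the intertwining relations of Lemma~\ref{L:epsilontframes} — multiplication by $t$ corresponds to $\rd\phi_t$ via $\varepsilon_0\circ\iota$ and to $\phi_t$ via $\varepsilon_1\circ\iota$ — one checks that $\phi_t(\bsx_{n+1})=\bsx_n$ for all $n$ and $\phi_t(\bsx_0)=\cE_1(\bsh)$, i.e., conditions (a) and (b) of Definition~\ref{def:tconvergent}.

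Condition (c), that $\inorm{\bsx_n}\to 0$, is where the analytic input enters: the pieces of $\bsg$ still present in $\bsx_n$ are (up to $\Phi$-multiples, which $\cE_0$ kills) $t$-shifts of the tails $\sum_{i\geqslant n}\bsc_i t^i$, whose $\dnorm{\,\cdot\,}_\theta$-norms are at most $q^{-n}\sup_{j\geqslant n}q^j\inorm{\bsc_j}\to 0$, so the boundedness of $\cE_0$ from Lemma~\ref{L:E0} forces $\inorm{\bsx_n}\to 0$. For the limit, applying $(\rd\phi_t)^{n+1}$ — which by Lemma~\ref{L:epsilontframes} becomes multiplication by $t^{n+1}$ inside $\cE_0$ — undoes the shifts, and by continuity of $\cE_0$ together with $\cE_0|_{\Mat_{1\times r}(\KK[t])}=\varepsilon_0\circ\iota$ and the fact that $\Phi$-multiples lie in the kernel, the resulting values converge to $\cE_0(\bsg)+\cE_0(\bsh)=\cE_0(\bsg+\bsh)$. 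An appeal to Theorem~\ref{T:Exptdivision} then completes the proof.

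The principal obstacle is that the two operations the construction wants to use clash with the analytic framework: Frobenius twisting by $\sigma$ does not preserve $\TT_\theta$, so $(\cdot)^{(-1)}$ cannot be applied directly to $\bsg$ or to the truncation errors, and $\cE_1=\varepsilon_1\circ\iota$ is defined only on polynomial arguments, not on all of $\Mat_{1\times r}(\TT_\theta)$. Arranging the truncations so that every twisted quantity that occurs is a polynomial — so that $\cE_1$ and Lemma~\ref{L:epsilontframes} apply — while simultaneously keeping the $\dnorm{\,\cdot\,}_\theta$-norms under control so that $\{\bsx_n\}_{n\geqslant 0}$ is genuinely convergent, and then checking that the limit is \emph{precisely} $\cE_0(\bsg+\bsh)$ rather than merely some other preimage of $\bsx$ modulo $\Lambda_\phi$, is the technical core of the argument; everything else is the bookkeeping of Lemmas~\ref{L:Nquotients} and~\ref{L:epsilontframes}.
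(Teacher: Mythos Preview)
Your strategy is exactly the one the paper sketches in the remark following the theorem: build a convergent $t$-division sequence from truncations of $\bsg$ and appeal to Theorem~\ref{T:Exptdivision}. The paper resolves the ``technical core'' you flag by setting
\[
  \bsb_n \assign \frac{\bsg_{>n}^{(-1)}\Phi - \bsg_{>n}}{t^{n+1}}
  = \frac{\bsh + \bsg_{\leqslant n} - \bsg_{\leqslant n}^{(-1)}\Phi}{t^{n+1}} \in \Mat_{1\times r}(\KK[t]),
\]
where the first expression shows $\bsb_n$ has no negative powers of $t$ and the second that it is a Laurent polynomial, hence in fact a polynomial; then $\bsx_n \assign \varepsilon_1(\iota(\bsb_n))$ is the sequence, and one checks $\lim (\rd\phi_t)^{n+1}\varepsilon_1(\iota(\bsb_n)) = \lim (\rd\phi_t)^{n+1}\varepsilon_0(\iota(\bsb_n)) = \cE_0(\bsg+\bsh)$.
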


\begin{remark}
Anderson's argument proceeds along the following lines.  Suppose that in Theorem~\ref{T:Exponentiation} we have $\bsg = \sum_{i=0}^{\infty} \bsa_i t^i \in \Mat_{1 \times r}(\power{\KK}{t})$ with $\bsa_i \in \Mat_{1\times r}(\KK)$.  For $n \geqslant 0$, set
\[
  \bsg_{> n} \assign \sum_{i=n+1}^{\infty} \bsa_i t^i, \quad \bsg_{\leqslant n} \assign \sum_{i=0}^{n} \bsa_i t^i.
\]
Taking
\[
  \bsb_n \assign \frac{\bsg_{> n}^{(-1)} \Phi - \bsg_{>n}}{t^{n+1}}
  = \frac{\bsh + \bsg_{\leqslant n} - \bsg_{\leqslant n} \Phi^{(-1)}}{t^{n+1}} \in \Mat_{1 \times r}(\KK[t]),
\]
we see that the entries of the first expression are guaranteed to have no denominators, whereas the second has entries that are Laurent polynomials in $t$, and thus $\bsb_n \in \Mat_{1 \times r}(\KK[t])$.  Straightforward calculations using Lemma~\ref{L:Nquotients}(b) and Proposition~\ref{P:tframeprops} imply that $\{ \varepsilon_1(\iota(\bsb_n)) \}_{n=0}^{\infty}$ is a convergent $t$-division sequence above $\varepsilon_1(\iota(\bsh))$.  Furthermore, additional calculations show that with respect to $\inorm{\,\cdot\,}$,
\[
  \lim_{n\to \infty} (\rd \phi_t)^{n+1}\varepsilon_1(\iota(\bsb_n))
  = \lim_{n \to \infty} (\rd \phi_t)^{n+1} \varepsilon_0(\iota(\bsb_n))
  = \cE_0(\bsg +\bsh),
\]
and thus by Theorem~\ref{T:Exptdivision},
\[
  \Exp_{\phi}(\cE_0(\bsg+\bsh)) = \cE_1(\bsh).
\]
For complete details see \cite[Thm.~2.5.21, Cor.~2.5.23]{HartlJuschka20}, and for a similar construction see~\cite[Thm.~4.4.6]{GezmisP19}.
\end{remark}

\begin{remark} \label{R:expequivariance}
For $\bsg$ and $\bsh$ as in Theorem~\ref{T:Exponentiation}, we note that for $a \in \bA$, using the functional equation for $\Exp_{\phi}$ and Lemma~\ref{L:epsilontframes}, we have
\[
  \Exp_{\phi} \bigl( \rd \phi_a \cdot \cE_0(\bsg + \bsh) \bigr)
  = \Exp_{\phi} \bigl( \cE_0 ( a(\bsg + \bsh)) \bigr) = \phi_a \bigl( \cE_1(\bsh) \bigr) = \cE_1(a \bsh).
\]
\end{remark}

\begin{definition} \label{D:rat}
Given a $t$-frame $(\iota, \Phi)$ for the $\bA$-finite $t$-module $\phi$, suppose $\Psi \in \GL_r(\TT)$ satisfies
\begin{equation} \label{E:PsiPhi}
\Psi^{(-1)}=\Phi \Psi.
\end{equation}
Then we say that $(\iota, \Phi, \Psi)$ is a \emph{rigid analytic trivialization} of $\phi$.  By~\cite[Prop.~3.1.3]{ABP04} together with Proposition~\ref{P:tframeprops}(a), the entries of $\Psi$ are in fact entire functions in $\EE$.  Also $(\det \Psi)^{(-1)} = \det \Phi \cdot \det \Psi = c (t-\theta)^d \cdot \det \Psi$, for some $c \in \KK^{\times}$.  Since $\det \Psi \in \TT^{\times}$,
\[
  \det \Psi = \delta \gamma \cdot \Omega^d
\]
for $\gamma \in \KK$ with $\gamma^{(1-q)/q} = c$ and $\delta \in \FF_q^{\times}$, where $\Omega$ is defined in~\eqref{E:Omegadef}.  But $\Omega \in \TT_{\theta}^{\times}$, and so furthermore $\Psi \in \GL_{r}(\TT_{\theta})$.
\end{definition}

The following theorem provides the fundamental connection between uniformizability and rigid analytic trivializations, and it furnishes an explicit way to obtain the period lattice $\Lambda_{\phi}$.  Part~(a) is much in line with Anderson's original criteria for uniformizability in terms of $t$-motives (see \cite[Thm.~4]{And86}), and we demonstrate in Remark~\ref{R:PeriodLatticeRmk} how~(b) follows from the previous discussion.

\begin{theorem}[{Anderson; see \cite[Cor.~2.5.23, Thm.~2.5.32]{HartlJuschka20}, cf.\ \cite[Thm.~4.5.14]{GezmisP19}}] \label{T:PeriodLattice}
Let $\phi : \bA \to \Mat_d(\KK[\tau])$ be an $\bA$-finite $t$-module defined over $\KK$.
\begin{enumerate}
\item[(a)] $\phi$ is uniformizable if and only if it has a rigid analytic trivialization.
\item[(b)] If $(\iota, \Phi, \Psi)$ is a rigid analytic trivialization of $\phi$, then
\[
\Lambda_{\phi} = \cE_0 \bigl( \Mat_{1 \times r}(\bA) \cdot \Psi^{-1} \bigr) \subseteq \KK^d.
\]
\end{enumerate}
\end{theorem}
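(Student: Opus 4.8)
The plan is to handle the two parts in parallel: each has an elementary implication that drops out of Anderson's exponentiation theorem (Theorem~\ref{T:Exponentiation}) together with the discussion preceding it, while the converse in each case is Anderson's uniformizability criterion and will be the main obstacle.

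First I would dispose of the elementary content. For the containment $\cE_0(\Mat_{1\times r}(\bA)\cdot\Psi^{-1}) \subseteq \Lambda_\phi$ in~(b): if $\bsa \in \Mat_{1\times r}(\bA)$, then $\bsa^{(-1)} = \bsa$ because its entries have coefficients in $\FF_q$, so for $\bsg \assign \bsa\Psi^{-1} \in \Mat_{1\times r}(\TT_\theta)$ (recall $\Psi \in \GL_r(\TT_\theta)$ by Definition~\ref{D:rat}) the relation $\Psi^{(-1)} = \Phi\Psi$ gives $\bsg^{(-1)}\Phi = \bsa(\Phi\Psi)^{-1}\Phi = \bsa\Psi^{-1} = \bsg$, hence $\bsg^{(-1)}\Phi - \bsg = \bzero$; Theorem~\ref{T:Exponentiation} with $\bsh = \bzero$ then yields $\Exp_\phi(\cE_0(\bsg)) = \cE_1(\bzero) = \bzero$, i.e.\ $\cE_0(\bsa\Psi^{-1}) \in \Lambda_\phi$. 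The same computation run in reverse shows that any solution $\bsg \in \Mat_{1\times r}(\TT_\theta)$ of the homogeneous equation $\bsg^{(-1)}\Phi = \bsg$ has $(\bsg\Psi)^{(-1)} = \bsg^{(-1)}\Phi\Psi = \bsg\Psi$, so $\bsg\Psi$ is fixed by Frobenius twisting and hence lies in $\Mat_{1\times r}(\bA)$; thus $\{\bsg \in \Mat_{1\times r}(\TT_\theta) : \bsg^{(-1)}\Phi = \bsg\} = \Mat_{1\times r}(\bA)\cdot\Psi^{-1}$ exactly, and the reverse inclusion of~(b) becomes the assertion that \emph{every} $\bslambda \in \Lambda_\phi$ equals $\cE_0(\bsg)$ for some homogeneous solution $\bsg$. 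Similarly, for the implication in~(a) that a rigid analytic trivialization forces uniformizability: given $\bsalpha \in \KK^d$, use the surjectivity of $\varepsilon_1$ (Lemma~\ref{L:Nquotients}(b)) together with the bijectivity of $\iota$ to pick $\bsh \in \Mat_{1\times r}(\KK[t])$ with $\cE_1(\bsh) = \bsalpha$, then solve $\bsg^{(-1)}\Phi - \bsg = \bsh$ in $\Mat_{1\times r}(\TT_\theta)$; the substitution $\bsg = \bsv\Psi^{-1}$ converts this into the inhomogeneous twisted equation $\bsv^{(-1)} - \bsv = \bsh\Psi$, whose right-hand side has entire entries, and which therefore has an entire solution $\bsv$ by the standard argument for such equations (cf.\ \cite[\S 3]{ABP04}); since $\Psi^{-1} \in \GL_r(\TT_\theta)$, we then have $\bsg = \bsv\Psi^{-1} \in \Mat_{1\times r}(\TT_\theta)$, and Theorem~\ref{T:Exponentiation} gives $\Exp_\phi(\cE_0(\bsg + \bsh)) = \bsalpha$.

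The hard part will be the converse in each case. For~(b) this is precisely the surjectivity isolated above: given $\bslambda \in \Lambda_\phi$, produce a homogeneous solution $\bsg$ with $\cE_0(\bsg) = \bslambda$. The approach I would take is to reverse the construction in the remark following Theorem~\ref{T:Exponentiation}: a homogeneous solution $\bsg$ gives polynomial vectors $\bsb_n = (\bsg_{\leqslant n} - \bsg_{\leqslant n}\Phi^{(-1)})/t^{n+1}$, the sequence $\{\varepsilon_1(\iota(\bsb_n))\}$ is a convergent $t$-division sequence above $\bzero$, and $\lim_n (\rd\phi_t)^{n+1}\varepsilon_1(\iota(\bsb_n)) = \cE_0(\bsg)$, so by bijectivity of the map $G$ of Theorem~\ref{T:Exptdivision} this sequence is exactly $G(\cE_0(\bsg))$; one must then show that, starting instead from the convergent $t$-division sequence $G(\bslambda)$ above $\bzero$, one can reconstruct a compatible family $\{\bsb_n\}$ assembling into some $\bsg \in \Mat_{1\times r}(\TT_\theta)$. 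I expect the two delicate points to be disambiguating the reconstruction — since $\varepsilon_1 \circ \iota$ has kernel $(\sigma-1)\cN_\phi$, the preimages of the $\varepsilon_1(\iota(\bsb_n))$ must be chosen coherently in $n$ — and verifying that the resulting partial sums $\bsg_{\leqslant n}$ converge in $\TT_\theta$, which is where the discreteness of $\Lambda_\phi$ enters, via norm estimates in the spirit of Remark~\ref{R:dphitinverse}. If one prefers, this surjectivity may instead be quoted directly from \cite[Thm.~2.5.32]{HartlJuschka20}. For~(a), the converse — a uniformizable $\bA$-finite $\phi$ admits a rigid analytic trivialization — is Anderson's uniformizability criterion; I would obtain it from Anderson's original criterion for abelian $t$-motives \cite[Thm.~4]{And86} by transporting a trivialization of $\cM_\phi$ along the isomorphism $\cM_\phi^{\wedge} \to \cN_\phi$ of Theorem~\ref{T:isodual} to the $\Psi$ required by $(\iota,\Phi)$, or else quote the dual-$t$-motive version from \cite[Cor.~2.5.23, Thm.~2.5.32]{HartlJuschka20}.
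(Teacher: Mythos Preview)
Your treatment of the easy directions is essentially the paper's. For part~(a) the paper does not give a proof and simply cites Hartl--Juschka, noting that the forward direction (rigid analytic trivialization $\Rightarrow$ uniformizable) also drops out of the argument for~(b) applied to an arbitrary $\bsz$ (Remark~\ref{R:E0Logarithms}). Your direct Artin--Schreier approach to that forward direction is a reasonable alternative, but the pointer to \cite[\S 3]{ABP04} does not quite cover solvability of $\bsv^{(-1)} - \bsv = \bsh\Psi$ in $\TT_\theta$; you would need a coefficientwise argument. Your transport of the converse along Theorem~\ref{T:isodual} has a hypothesis mismatch: that isomorphism requires $\phi$ abelian, whereas the statement assumes only $\bA$-finite, so absent Maurischat's equivalence you are forced back to citing Hartl--Juschka as the paper does.

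For the hard inclusion $\Lambda_\phi \subseteq \cE_0(\Mat_{1\times r}(\bA)\cdot\Psi^{-1})$ in~(b), the paper's sketch (Remark~\ref{R:PeriodLatticeRmk}) is more concrete than your ``reverse the construction'' proposal and sidesteps both delicate points you flag. Instead of reconstructing a coherent family $\{\bsb_n\}$, it works at a single deep level: set $\bslambda_N = (\rd\phi_t)^{-N-1}\bslambda$, choose $\bsh_N \in \iota^{-1}(\Mat_{1\times d}(\KK))$ with $\cE_0(\bsh_N) = \cE_1(\bsh_N) = \Exp_\phi(\bslambda_N)$, and for $N \gg 0$ solve $\bsg_N^{(-1)}\Phi - \bsg_N = \bsh_N$ by the convergent series $\bsg_N = \sum_{i\geqslant 1}(\bsh_N\Psi)^{(i)}\Psi^{-1}$. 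Theorem~\ref{T:Exponentiation} then gives $\Exp_\phi(\cE_0(\bsg_N+\bsh_N)) = \Exp_\phi(\bslambda_N)$, and because both sides are small the local isometry of $\Exp_\phi$ near~$0$ upgrades this to the exact equality $\cE_0(\bsg_N+\bsh_N) = \bslambda_N$. Finally $\cE_1(t^{N+1}\bsh_N) = \Exp_\phi(\bslambda) = 0$, so Lemma~\ref{L:Nquotients}(b) yields $\bsm$ with $t^{N+1}\bsh_N = \bsm^{(-1)}\Phi - \bsm$, and $t^{N+1}\bsg_N - \bsm$ is a homogeneous solution with $\cE_0(t^{N+1}\bsg_N - \bsm) = \bslambda$. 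The device you are missing is that a single geometric series at level $N$ together with the local isometry of $\Exp_\phi$ replaces any need for coherent lifts across all~$n$.
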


\begin{remark} \label{R:PeriodLatticeRmk}
We sketch the proof of~(b).  By~\eqref{E:PsiPhi} we see that if $\bsg$ is a row of $\Psi^{-1}$, then $\bsg^{(-1)} \Phi = \bsg$.  Theorem~\ref{T:Exponentiation} then implies that $\Exp_{\phi} \bigl( \cE_0(\bsg) \bigr) = 0$, and thus $\cE_0(\bsg) \in \Lambda_{\phi}$.  It then follows that
\[
  \cE_0 \bigl( \Mat_{1 \times r}(\bA) \cdot \Psi^{-1} \bigr) \subseteq \Lambda_{\phi}.
\]
To prove the opposite containment, we consider $\bslambda \in \Lambda_{\phi}$, and for $n \geqslant 0$ we set
\[
  \bslambda_n \assign (\rd \phi_t)^{-n-1} \bslambda.
\]
If $\bsv \in \iota^{-1}(\Mat_{1\times d}(\KK)) \subseteq \Mat_{1\times r}(\KK[t])$, then $\varepsilon_0(\iota(\bsv)) = \varepsilon_1(\iota(\bsv))$, so for each $n \geqslant 0$ we can pick unique $\bsh_n \in \iota^{-1}(\Mat_{1 \times d}(\KK))$ so that
\[
  \varepsilon_0(\iota(\bsh_n)) = \varepsilon_1(\iota(\bsh_n)) = \Exp_{\phi}(\bslambda_n).
\]
Because $\inorm{\bslambda_n} \to 0$, it follows from Lemma~\ref{L:E0} that $\dnorm{\bsh_n}_{\theta} \to 0$, and so for $n \gg 0$, $\dnorm{\bsh_n \cdot \Psi} \leqslant \dnorm{\bsh_n} \cdot \dnorm{\Psi} < 1$.  Taking such $n \gg 0$,
\begin{equation}
  \bsg_n = \sum_{i=1}^{\infty} (\bsh_n \cdot \Psi)^{(i)} \cdot \Psi^{-1} \in \Mat_{1\times r}(\TT_{\theta}),
\end{equation}
and we have $\bsg_n^{(-1)} \Phi - \bsg_n = \bsh_n$.  Thus, by Theorem~\ref{T:Exponentiation}
\[
  \Exp_{\phi}(\cE_0(\bsg_n + \bsh_n)) = \cE_1(\bsh_n) = \Exp_{\phi}(\bslambda_n).
\]
Since $\inorm{\cE_0(\bsg_n + \bsh_n)} \to 0$ as $n \to \infty$, the fact that $\Exp_{\phi}$ is an isometric embedding on sufficiently small open balls in $\KK^d$ (see~\cite[Lem.~2.5.4]{HartlJuschka20}) implies that for $n \gg 0$, we have
\[
\cE_0(\bsg_n+\bsh_n) = \bslambda_n.
\]
Fixing such an $N \gg 0$, Remark~\ref{R:expequivariance} then implies
\[
0 = \Exp_{\phi}(\bslambda) = \Exp_{\phi} \bigl( \rd \phi_{t^{N+1}} \cdot \bslambda_{N} \bigr)
= \Exp_{\phi} \bigl( \cE_0(t^{N+1}(\bsg_N + \bsh_N)) \bigr) = \varepsilon_1(\iota(t^{N+1} \bsh_N)).
\]
But then Lemma~\ref{L:Nquotients}(b) and Proposition~\ref{P:tframeprops}(b) imply there exists $\bsm \in \Mat_{1 \times r}(\KK[t])$ such that
\[
  \iota (t^{N+1} \bsh_N) = (\sigma-1)\cdot \iota(\bsm) = \iota(\bsm^{(-1)}\Phi - \bsm),
\]
and so $t^{N+1} \bsh_N = \bsm^{(-1)} \Phi - \bsm$.  Combining this with $\bsg_N^{(-1)} \Phi - \bsg_N = \bsh_N$ we see that
\[
  ( t^{N+1} \bsg_N - \bsm)^{(-1)} \Phi - (t^{N+1} \bsg_N - \bsm) = 0,
\]
and it follows that $t^{N+1} \bsg_N - \bsm$ is in the $\bA$-linear span of the rows of $\Psi^{-1}$ by~\cite[Lem.~4.4.12]{ABP04}.  Thus $\Lambda_{\phi} \subseteq \cE_0(\bA \cdot \Psi^{-1})$.
\end{remark}

\begin{remark} \label{R:E0Logarithms}
The argument in Remark~\ref{R:PeriodLatticeRmk} works just as well for $\bslambda$ replaced by any $\bsz \in \KK^d$.  That is, if $(\iota, \Phi, \Psi)$ is a rigid analytic trivialization for $\phi$, then for any $\bsz \in \KK^d$, it is possible to find $\bsg \in \Mat_{1 \times r}(\TT_{\theta})$ and $\bsh \in \Mat_{1 \times r}(\KK[t])$ so that $\bsg^{(-1)} \Phi - \bsg = \bsh$ and
\[
   \cE_0(\bsg + \bsh) = \bsz, \quad \cE_1(\bsh) = \Exp_{\phi}(\bsz).
\]
Thus if $\phi$ is rigid analytically trivial, then exponentiation of any point in $\KK^d$ is expressible as in Theorem~\ref{T:Exponentiation}.
\end{remark}

\subsection{Calculation of \texorpdfstring{$\cE_0$}{E0} and examples} \label{subS:E0calc}
Fix an $\bA$-finite $t$-module $\phi : \bA \to \Mat_d(\KK[\tau])$ as in the previous sections, and fix a $t$-frame $(\iota,\Phi)$ for $\phi$.  Then Theorem~\ref{T:Exponentiation} makes explicit calculation of $\cE_0 : \Mat_{1 \times r}(\TT_{\theta}) \to \KK^d$ of special interest.  To facilitate this it will help to investigate the effects of changing bases over both $\KK[t]$ and $\KK[\sigma]$ on the dual $t$-motive $\cN_{\phi}$.

We let $\bss_1, \dots, \bss_d$ denote the standard basis vectors in $\cN_{\phi} = \Mat_{1\times d}(\KK[\sigma])$.  For $P \in \GL_d(\KK[\sigma])$, we produce a new $\KK[\sigma]$-basis $\bss_1 P, \dots, \bss_d P$ for $\cN_{\phi}$.  Now multiplication by~$t$ on~$\cN_{\phi}$ is represented by right multiplication by $\phi_t^*$, and so with respect to our new basis, multiplication by~$t$ is represented by right multiplication by $P \phi_t^* P^{-1}$.  In this way, changing the $\KK[\sigma]$-basis of $\cN_{\phi}$ amounts to changing the coordinates for $\phi$ to an isomorphic $t$-module ${{\rho}} : \bA \to \Mat_d(\KK[\tau])$ defined by
\begin{equation} \label{E:changecoords}
  \rho_t = \bigl( P^{*} )^{-1}\cdot \phi_t \cdot P^{*},
\end{equation}
and notably $\rho_t^* = P \phi_t^* P^{-1}$.  Naturally, $P^{*} : \rho \to \phi$ is a $t$-module isomorphism.

Likewise, if we let $\bse_1, \dots, \bse_r$ denote the standard basis vectors of $\Mat_{1 \times r}(\KK[t])$ and we take $\bsn_1, \dots, \bsn_r \in \cN_{\phi}$ to be the $\KK[t]$-basis of $\cN_{\phi}$ used to define $\iota$ in~\eqref{E:iotadef}, then
\[
  \iota(\bse_i) = \bsn_i, \quad 1 \leqslant i \leqslant r.
\]
Picking another $\KK[t]$-basis $\bsn_1', \dots, \bsn_r'$ of $\cN_{\phi}$ is equivalent to picking $B \in \GL_r(\KK[t])$ so that $\bsn' = B \bsn$, where $\bsn = (\bsn_1, \dots, \bsn_r)^{\tr}$ and $\bsn' = (\bsn_1', \dots, \bsn_r')^{\tr}$.  The resulting $t$-frame $(\iota', \Phi')$ satisfies $\iota'(\bse_i) = \bsn_i'$ for each $i$, and
\begin{equation} \label{E:tframebasechange}
 \Phi' = B^{(-1)} \Phi B^{-1}.
\end{equation}
If $\Psi$ is a rigid analytic trivialization for $\Phi$, then every rigid analytic trivialization $\Psi'$ for~$\Phi'$ has the form $\Psi'=B\Psi\gamma$ for some $\gamma \in \GL_r(\FF_q[t])$. Thus we can arrange that
\begin{equation} \label{E:ratbasechange}
  \Psi' = B \Psi.
\end{equation}

Changing $\KK[t]$-bases for $\cN_{\phi}$ does not affect the $t$-module $\phi$, its exponential $\Exp_{\phi}$, nor its period lattice $\Lambda_\phi$, but it does affect how we use Theorem~\ref{T:Exponentiation} to calculate them.  We note from~\eqref{E:sigmaN} that we have a well-defined map of $\KK$-vector spaces,
\begin{equation} \label{E:epsiotafactor}
  \varepsilon_0 \circ \iota : \frac{\Mat_{1\times r}(\KK[t])}{\Mat_{1\times r}(\KK[t]) \cdot \Phi}
  \to \KK^d
\end{equation}
such that
\[
  \varepsilon_0(\iota(\bse_i)) = (\rd \bsn_i)^{\tr}, \quad 1 \leqslant i \leqslant r.
\]
Furthermore,
\begin{equation}
  \bigl(\varepsilon_0(\iota(\bse_1)), \dots, \varepsilon_0(\iota(\bse_r)) \bigr)
  = \begin{pmatrix} \rd \bsn_1 \\ \vdots \\ \rd \bsn_r \end{pmatrix}^\tr
  = \bigl( \rd \bsn \bigr)^{\tr} \in \Mat_{d\times r}(\KK),
\end{equation}
and likewise
\begin{equation} \label{E:epsiotabasechange}
  \bigl(\varepsilon_0(\iota'(\bse_1)), \dots, \varepsilon_0(\iota'(\bse_r)) \bigr)
  = \bigl( \rd(B\cdot \bsn) \bigr)^{\tr} \in \Mat_{d \times r}(\KK).
\end{equation}
We now demonstrate how to calculate $\varepsilon_0 \circ \iota$ and $\cE_0$ when the $\KK[t]$- and $\KK[\sigma]$-bases of $\cN_{\phi}$ are particularly convenient.  We then show in Remark~\ref{R:basechangered} how to reduce to this case for general~$\phi$.

\begin{proposition}[{cf.\ \cite[Thm.~3.3.5]{ChangGreenMishiba21}}] \label{P:epsiota}
Let $\phi : \bA \to \Mat_d(\KK[\tau])$ be an $\bA$-finite $t$-module with $t$-frame $(\iota, \Phi)$.  Suppose that the following conditions hold.
\begin{enumerate}
\item[(i)] There exists $C \in \GL_r(\KK[t])$ so that
\[
  D \assign C\Phi = \begin{pmatrix} (t-\theta)^{\ell_1} & & \\ & \ddots & \\ & & (t-\theta)^{\ell_r} \end{pmatrix}
\]
is a diagonal matrix.  Furthermore, for some $m$ with $1 \leqslant m \leqslant r$, we have $\ell_1, \dots, \ell_m > 0$, $\ell_{m+1} = \cdots = \ell_r = 0$, and $\ell_1 + \cdots + \ell_m = d$.
\item[(ii)] For $1 \leqslant i \leqslant m$ and $1 \leqslant j \leqslant \ell_i$,
\[
  \rd \iota\bigl( (t-\theta)^{j-1}\cdot \bse_{i} \bigr) = \bss_{\ell_1 + \cdots + \ell_i - j+1}.
\]
\end{enumerate}
Then for $\bsalpha = (\alpha_1, \dots, \alpha_r) \in \Mat_{1\times r}(\TT_{\theta})$,
\begin{equation} \label{E:E0}
  \cE_0(\bsalpha) =
  \left. \begin{pmatrix}
  \pd_{t,\ell_1}[\alpha_1] \\ \vdots \\ \pd_{t,\ell_m}[\alpha_m]
  \end{pmatrix}\right\rvert_{t=\theta}
  = \left. \begin{pmatrix}
  \pd_{t}^{\ell_1 -1}(\alpha_1) \\ \vdots \\
  \pd_{t}^1 (\alpha_1) \\
  \alpha_1 \\ \vdots \\
  \pd_{t}^{\ell_m-1}(\alpha_m) \\ \vdots \\ \pd_{t}^{1}(\alpha_m) \\ \alpha_m
  \end{pmatrix} \right|_{t=\theta}.
\end{equation}
\end{proposition}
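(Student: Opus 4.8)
The plan is to recognize the right-hand side of~\eqref{E:E0} as a bounded $\KK$-linear map and then appeal to the uniqueness clause of Lemma~\ref{L:E0}. Write $\cF : \Mat_{1\times r}(\TT_\theta) \to \KK^d$ for the map sending $\bsalpha = (\alpha_1, \dots, \alpha_r)$ to the column vector displayed on the right of~\eqref{E:E0}; this makes sense since each $\pd_t^s$ acts on $\TT_\theta$ and every $f \in \TT_\theta$ is regular at $t=\theta$. Each coordinate of $\cF(\bsalpha)$ has the shape $\pd_t^s(\alpha_i)|_{t=\theta}$ with $0 \leqslant s \leqslant \ell_i-1$, and for $f = \sum_n a_n t^n \in \TT_\theta$ one has $\pd_t^s(f)|_{t=\theta} = \sum_{n \geqslant s} \binom{n}{s} a_n \theta^{n-s}$ (the series converging since $q^n\inorm{a_n}\to 0$), whence $\inorm{\pd_t^s(f)|_{t=\theta}} \leqslant q^{-s}\dnorm{f}_\theta \leqslant \dnorm{f}_\theta$. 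So $\cF$ is bounded and $\KK$-linear, and by the uniqueness in Lemma~\ref{L:E0} it suffices to show $\cF$ and $\varepsilon_0 \circ \iota$ agree on $\Mat_{1\times r}(\KK[t])$.

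The next step is to observe that both of these restrictions factor through $Q \assign \Mat_{1\times r}(\KK[t]) / \bigl( \Mat_{1\times r}(\KK[t]) \cdot \Phi \bigr)$. For $\varepsilon_0 \circ \iota$ this is exactly~\eqref{E:epsiotafactor}. The key simplification coming from hypothesis~(i) is that, since $C \in \GL_r(\KK[t])$, right multiplication by $C$ is a bijection of $\Mat_{1\times r}(\KK[t])$, so
\[
  \Mat_{1\times r}(\KK[t]) \cdot \Phi = \Mat_{1\times r}(\KK[t]) \cdot C\Phi = \Mat_{1\times r}(\KK[t]) \cdot D;
\]
the rows of $D$ being $(t-\theta)^{\ell_i}\bse_i$, this gives $Q \cong \bigoplus_{i=1}^{m} \KK[t]/(t-\theta)^{\ell_i}$, a $\KK$-vector space of dimension $\ell_1 + \cdots + \ell_m = d$ with basis $\{(t-\theta)^{j-1}\bse_i : 1 \leqslant i \leqslant m,\ 1\leqslant j\leqslant \ell_i\}$. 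On the other hand, for a polynomial $\bsalpha$ the vector $\cF(\bsalpha)$ does not involve $\alpha_{m+1},\dots,\alpha_r$ at all, and for $i \leqslant m$ it depends on $\alpha_i$ only through its Taylor coefficients at $t=\theta$ of orders $0,\dots,\ell_i-1$, i.e.\ only through $\alpha_i \bmod (t-\theta)^{\ell_i}$; hence $\cF|_{\Mat_{1\times r}(\KK[t])}$ factors through $Q$ as well.

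It then remains to check agreement on the basis elements $(t-\theta)^{j-1}\bse_i$. Put $k \assign \ell_1 + \cdots + \ell_i - j + 1$. Since $\varepsilon_0(\bsn) = (\rd\bsn)^\tr$, hypothesis~(ii) yields $\varepsilon_0\bigl(\iota((t-\theta)^{j-1}\bse_i)\bigr) = \bss_k^\tr$, the $k$-th standard column vector of $\KK^d$. For the other side, $\cF\bigl((t-\theta)^{j-1}\bse_i\bigr)$ has zero blocks in all indices $\neq i$, and in the $i$-th block it is $\pd_{t,\ell_i}[(t-\theta)^{j-1}]|_{t=\theta}$; as $\pd_t^s((t-\theta)^{j-1})|_{t=\theta}$ equals $1$ when $s=j-1$ and $0$ otherwise, and the $i$-th block is ordered $(\pd_t^{\ell_i-1},\dots,\pd_t^1,\pd_t^0)$, this block is the unit vector in its $(\ell_i-j+1)$-st slot, sitting in global position $\ell_1 + \cdots + \ell_{i-1} + (\ell_i - j + 1) = k$. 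Both sides equal $\bss_k^\tr$, so $\cF|_{\Mat_{1\times r}(\KK[t])} = \varepsilon_0\circ\iota$, and thus $\cF = \cE_0$.

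The argument is mostly bookkeeping: the one place to be careful is the index matching in the last step — tracking which Taylor coefficient of $\alpha_i$ at $t=\theta$ corresponds to which coordinate of $\KK^d$ — and, conceptually, checking that hypotheses~(i) and~(ii) are precisely what align the $\rd$-structure on $\cN_\phi$ with truncated Taylor expansion at $t=\theta$. The remaining ingredients (the boundedness estimate, the factorization through $Q$, and the reduction to polynomials) are routine given Lemma~\ref{L:E0}, Proposition~\ref{P:tframeprops}, and~\eqref{E:epsiotafactor}, and I do not anticipate a genuine obstacle.
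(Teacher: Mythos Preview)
Your proof is correct and follows essentially the same approach as the paper: reduce to polynomials via the uniqueness clause of Lemma~\ref{L:E0}, use hypothesis~(i) to replace $\Phi$ by $D$ in the quotient~\eqref{E:epsiotafactor}, and then match coordinates using hypothesis~(ii). The only cosmetic difference is that the paper expands a general $\alpha_i$ in its Taylor series modulo $(t-\theta)^{\ell_i}$ and applies~(ii) term by term, whereas you check agreement directly on the basis elements $(t-\theta)^{j-1}\bse_i$ of $Q$ and invoke linearity; your explicit boundedness estimate for $\cF$ is also a detail the paper leaves implicit.
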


\begin{proof}
By Lemma~\ref{L:E0} it suffices to show that $\varepsilon_0(\iota(\bsalpha))$ satisfies~\eqref{E:E0} when $\bsalpha \in \Mat_{1\times r} (\KK[t])$.  We observe that~\eqref{E:epsiotafactor} is unchanged if $\Phi$ is replaced by $C\Phi$ for $C \in \GL_r(\KK[t])$, and so by~(i),
\[
\frac{\Mat_{1\times r}(\KK[t])}{\Mat_{1\times r}(\KK[t]) \cdot \Phi}
= \frac{\Mat_{1\times r}(\KK[t])}{\Mat_{1\times r}(\KK[t]) \cdot D}
\cong \frac{\KK[t]}{(t-\theta)^{\ell_1}\KK[t]} \oplus \cdots \oplus
\frac{\KK[t]}{(t-\theta)^{\ell_m}\KK[t]}.
\]
The $m$ components on the right are generated by $\bse_1, \dots, \bse_m$.  For fixed $i$, $1 \leqslant i \leqslant m$, we see from~\eqref{E:Taylorseries} that
\[
  \alpha_i \equiv \alpha_i(\theta) + \pd_t^1(\alpha_i)|_{t=\theta} \cdot (t-\theta)
  + \cdots + \pd_t^{\ell_i-1}(\alpha_i)|_{t=\theta} \cdot (t-\theta)^{\ell_i-1} \pmod{(t-\theta)^{\ell_i}},
\]
and by~(ii) we find
\[
  \varepsilon_0 (\iota(\alpha_i \bse_i)) = \bigl( 0, \dots, 0, \pd_t^{\ell_i-1}(\alpha_i)|_{t=\theta}, \dots, \pd_t^1(\alpha_i)|_{t=\theta},\alpha_i(\theta), 0, \dots, 0 \bigr)^{\tr},
\]
where the non-zero entries are in places $\ell_{1}+ \cdots + \ell_{i-1}+1$ to $\ell_1 + \cdots + \ell_i$.  For $i > m$, (i) and~\eqref{E:epsiotafactor} imply that $\varepsilon_0(\iota(\alpha_i \bse_i)) = 0$.  Thus, $\varepsilon_0(\iota(\bsalpha)) = \varepsilon_0(\iota(\alpha_1 \bse_1)) + \cdots + \varepsilon_0(\iota(\alpha_m \bse_m))$, and the result follows.
\end{proof}

\begin{remark} \label{R:dphit}
Hypotheses~(i) and~(ii) in Proposition~\ref{P:epsiota} also imply that the matrix $\rd \phi_t$ is in Jordan normal form and consists of $m$ Jordan blocks of size $\ell_1, \dots, \ell_m$.  That is,
\begin{equation} \label{E:phitJordan}
  \rd \phi_t = \begin{pmatrix} d_{\theta,\ell_1}[\theta] & & \\ & \ddots & \\ & & d_{\theta,\ell_m}[\theta] \end{pmatrix}.
\end{equation}
Indeed by Lemma~\ref{L:Nquotients}(a) and~(ii), for $1 \leqslant i \leqslant m$ and $1\leqslant j < \ell_i$,
\begin{align*}
  \bss_{\ell_1 + \cdots + \ell_i - j + 1} \cdot (\rd \phi_t)^{\tr} &= \rd \iota\bigl( t \cdot (t-\theta)^{j-1} \bse_{i} \bigr) \\
  &= \rd \iota\bigl( ( (t-\theta)^{j} + \theta(t-\theta)^{j-1}) \bse_{i} \bigr) \\
  &= \bss_{\ell_1 + \cdots + \ell_i - j} + \theta \bss_{\ell_1 + \cdots + \ell_i - j+1}.
\end{align*}
Likewise, when $j = \ell_i$, we have $\bss_{\ell_1 + \cdots + \ell_{i-1} + 1} \cdot (\rd \phi_t)^{\tr} = \theta \bss_{\ell_1 + \cdots + \ell_{i-1} + 1}$.  The peculiar ordering in~(ii) is in part explained by the desire to put $\rd \phi_t$ in Jordan normal form.
\end{remark}

\begin{remark} \label{R:basechangered}
It remains now to demonstrate that any arbitrary $\bA$-finite $t$-module $\phi : \bA \to \Mat_d(\KK[\tau])$ can be put in the form of Proposition~\ref{P:epsiota}, at least after possibly changing the bases of $\cN_{\phi}$ over $\KK[t]$ and $\KK[\sigma]$.

Since $\KK[t]$ is a principal ideal domain, there are matrices $B$, $C \in \GL_r(\KK[t])$ so that
\[
  D \assign C \Phi B^{-1} = \begin{pmatrix} (t-\theta)^{\ell_1} & & \\ & \ddots & \\ & & (t-\theta)^{\ell_r} \end{pmatrix},
\]
as in the statement of Proposition~\ref{P:epsiota}(i) (see~\cite[\S 3.7]{JacobsonI}).  If we use $B$ to change our $\KK[t]$-basis on $\cN_{\phi}$ as in~\eqref{E:tframebasechange}, then taking $\Phi' = B^{(-1)} \Phi B^{-1}$, we see that
\[
  D = C\bigl( B^{(-1)} \bigr)^{-1} \Phi',
\]
and so the $t$-frame $(\iota',\Phi')$ satisfies~(i).  Without loss of generality we will assume that this change of basis has been made and that $(\iota,\Phi)$ itself satisfies~(i).

Now we will see that Proposition~\ref{P:epsiota}(ii) can also be satisfied after making a change of $\KK[\sigma]$-basis for $\cN_\phi$, and in fact we need only make a change of coordinates over~$\KK$.  First we let $\bsn_i \assign \iota(\bse_i)$ for $1 \leqslant i \leqslant r$.  We know already that $\rd\iota(\bse_i) = (\varepsilon_0(\iota(\bse_i)))^{\tr} = 0$ for $i > m$ by~(i).  Consider the $d$ vectors in~$\Mat_{1\times d}(\KK)$:
\begin{equation} \label{E:newbasis}
\begin{array}[t]{cccc}
\rd \bsn_1, &\rd \bsn_1 \cdot (\rd \phi_t^{*} - \theta \Id_d), & \ldots, & \rd \bsn_1 \cdot (\rd \phi_t^{*} - \theta \Id_d)^{\ell_1 - 1}, \\
\vdots & \vdots & & \vdots \\
\rd \bsn_m, &\rd \bsn_m \cdot (\rd \phi_t^{*} - \theta \Id_d), & \ldots, & \rd \bsn_m \cdot (\rd \phi_t^{*} - \theta \Id_d)^{\ell_m - 1}.
\end{array}
\end{equation}
For $1 \leqslant i \leqslant m$ and $1 \leqslant j \leqslant \ell_i$, a short calculation reveals
\[
  \rd \iota\bigl( (t-\theta)^{j-1} \cdot \bse_i \bigr) = \rd\bsn_i \cdot (\rd \phi_t^{*} - \theta \Id_d)^{j-1},
\]
and since $\{ (t-\theta)^{j-1} \cdot \bse_i : 1 \leqslant i \leqslant m,\ 1 \leqslant j \leqslant \ell_i \}$ is a $\KK$-basis of the domain of
\[
\rd \circ \iota : \frac{\Mat_{1\times r}(\KK[t])}{\Mat_{1\times r}(\KK[t]) \cdot \Phi} \iso \frac{\cN_{\phi}}{\sigma \cN_{\phi}},
\]
it follows that the vectors in~\eqref{E:newbasis} form a $\KK$-basis of $\cN_{\phi}/\sigma\cN_{\phi}$.  Let $P \in \GL_d(\KK)$ be the matrix whose rows are the vectors in~\eqref{E:newbasis} ordered in the following way:
\[
  P = \begin{pmatrix}
  \rd \bsn_1 \cdot (\rd \phi_t^{*} - \theta \Id_d)^{\ell_1 - 1} \\
  \vdots \\ \rd \bsn_1 \cdot (\rd \phi_t^{*} - \theta \Id_d) \\ \rd \bsn_1 \\
  \vdots \\
  \rd \bsn_m \cdot (\rd \phi_t^{*} - \theta \Id_d)^{\ell_m - 1} \\
  \vdots \\ \rd \bsn_m \cdot (\rd \phi_t^{*} - \theta \Id_d) \\ \rd \bsn_m
\end{pmatrix}.
\]
Now let $\rho : \bA \to \Mat_d(\KK[\tau])$ be the change of coordinates on $\phi$ given in \eqref{E:changecoords}, keeping in mind that since $P \in \GL_{d}(\KK)$, we have $P^* = P^{\tr}$.  The $t$-frame $\iota : \Mat_{1\times r}(\KK[t]) \to \Mat_{1 \times d}(\KK[\sigma])$ for $\phi$ induces a $t$-frame $\iota_{\rho} : \Mat_{1 \times r}(\KK[t]) \to \Mat_{1 \times d}(\KK[\sigma])$ given by
\[
  \iota_{\rho}(\bsalpha) = \iota(\bsalpha) P^{-1}.
\]
Indeed by~\eqref{E:changecoords},
\[
  \iota_{\rho}(t \bsalpha) = \iota(\bsalpha)\cdot \phi_t^* \cdot P^{-1} = \iota_{\rho}(\bsalpha) \rho_t^*.
\]
Now since $\iota_\rho(\bse_i) = \bsn_i \cdot P^{-1}$ for $1 \leqslant i \leqslant m$, from the definition of $P$ we have
\[
  \rd \iota_{\rho}(\bse_i) = \bss_{\ell_1 + \cdots + \ell_i}.
\]
More generally, for $1 \leqslant j \leqslant \ell_i$,
\begin{align*}
  \rd \iota_{\rho} \bigl( (t-\theta)^{j-1} \cdot \bse_i \bigr)
  &= \rd \bsn_i \cdot P^{-1} (\rd \rho_t^* - \theta \Id_d)^{j-1} \\
  &= \rd \bsn_i \cdot P^{-1} (P \rd \phi_t^* P^{-1} - \theta \Id_d)^{j-1} \\
  &= \rd \bsn_i \cdot P^{-1} \sum_{k=0}^{j-1} (-1)^k \binom{j-1}{k} \theta^{k} \cdot P \cdot \rd (\phi_t^*)^{j-1-k} \cdot P^{-1} \\
  &= \rd \bsn_i \cdot (\rd \phi_t^* - \theta \Id_d)^{j-1} \cdot P^{-1} \\
  &= \bss_{\ell_1 + \cdots + \ell_i - j+1},
\end{align*}
where again the last equality follows from the definition of $P$.  Thus we find that Proposition~\ref{P:epsiota}(ii) is satisfied for $\rho$.  Moreover, as the argument above shows if the $t$-module $\phi$ is defined over a perfect field $\FF_q \subseteq L \subseteq \KK$, then the basis $\bsn_1, \dots, \bsn_r$ can be chosen to lie in $\Mat_{1 \times r}(L[\sigma])$.
\end{remark}

\begin{remark} \label{R:tmotivebasis}
In a similar fashion to Remark~\ref{R:basechangered}, if $\phi$ is also an abelian $t$-module, then it is possible to change the $\KK[t]$-basis on the $t$-motive $\cM_{\phi}$ to a desired form.  That is, after possibly changing the $\KK[\tau]$-basis, we can find a $\KK[t]$-basis $\bsm_1, \dots, \bsm_r \in \cM_{\phi}$ so that if $\jmath: \Mat_{1\times r}(\KK[t]) \to \Mat_{1\times d}(\KK[\tau])$ and $\tPhi \in \Mat_r(\KK[t])$ are defined as in Proposition~\ref{P:abeliantframeprops}, then the following conditions hold.
\begin{enumerate}
\item[(i)] There exists $\tC \in \GL_r(\KK[t])$ so that
\[
  D \assign \tC\tPhi = \begin{pmatrix} (t-\theta)^{\ell_1} & & \\ & \ddots & \\ & & (t-\theta)^{\ell_r} \end{pmatrix}
\]
is a diagonal matrix and $\ell_1, \dots, \ell_r$ are the same as in Proposition~\ref{P:epsiota}.
\item[(ii)] For $1 \leqslant i \leqslant m$ and $1 \leqslant j \leqslant \ell_i$,
\[
  \rd \jmath\bigl( (t-\theta)^{j-1}\cdot \bse_{i} \bigr) = \bss_{\ell_1 + \cdots + \ell_{i-1} +j}.
\]
\end{enumerate}
The argument is almost exactly the same as the one in Remark~\ref{R:basechangered}, and we omit it.  Likewise if $\phi$ is defined over a field $\FF_q \subseteq L \subseteq \CC_{\infty}$ ($L$ not necessarily perfect), then the basis $\bsm_1, \dots, \bsm_r$ can be chosen in~$\Mat_{1\times r}(L[\tau])$.
\end{remark}

\begin{example} \label{Ex:Drinfeldmodule1}
\emph{Drinfeld modules.}  Let $\phi : \bA \to \KK[\tau]$ be a Drinfeld module of rank~$r$, defined by $\phi_t = \theta + b_1 \tau + \dots + b_r \tau^r$, with $b_r \neq 0$.  Then $\{1, \sigma, \ldots, \sigma^{r-1}\}$ forms a $\KK[t]$-basis of $\cN_{\phi} = \KK[\sigma]$ (cf.\ \cite[\S 1.5.6]{BPrapid}, \cite[\S 3.3]{CP12}, \cite[Lem.~5.4.1]{Goss}).  We see that with respect to this basis, multiplication by $\sigma$ on $\cN_{\phi}$ is represented by the companion matrix
\[
  \Phi = \begin{pmatrix}
    0 & 1 & \cdots & 0 \\
    \vdots & \vdots & \ddots & \vdots \\
    0 & 0 & \cdots & 1 \\
    (t - \theta)/b_r^{(-r)} & -b_1^{(-1)}/b_r^{(-r)} & \cdots & -b_{r-1}^{(-r+1)}/b_r^{(-r)}
    \end{pmatrix}.
\]
Taking
\[
  C = \begin{pmatrix}
  b_1^{(-1)} & \cdots & b_{r-1}^{(-r+1)} & b_r^{(-r)} \\
  1 & \cdots & 0 & 0 \\
  \vdots & \ddots & \vdots & \vdots \\
  0 & \cdots & 1 & 0
  \end{pmatrix},
\]
we see that
\[
C\Phi = \diag_r (t-\theta, 1, \dots, 1),
\]
and so Proposition~\ref{P:epsiota} applies.  Thus for $\bsalpha = (\alpha_1, \dots, \alpha_r) \in \Mat_{1 \times r}(\TT_{\theta})$, we have simply
\begin{equation}
  \cE_0(\bsalpha) = \alpha_1(\theta).
\end{equation}
\end{example}

\begin{example} \label{Ex:Carlitztensor1}
\emph{Carlitz tensor powers.}  For $n \geqslant 1$, we let $\phi : \bA \to \Mat_{n}(\KK[\tau])$ be the $n$-th Carlitz tensor power defined by
\[
  \phi_t = \begin{pmatrix}
   \theta & 1 & \cdots & 0 \\
    & \ddots & \ddots & \vdots \\
    & & \ddots & 1 \\
    & & & \theta
  \end{pmatrix}
  + \begin{pmatrix}
  0 & \cdots & \cdots & 0 \\
  \vdots & & & \vdots \\
  \vdots & & & \vdots \\
  1 & \cdots & \cdots & 0
  \end{pmatrix}
  \tau
\]
in~\cite[\S 1.4]{AndThak90}.  Then $\phi$ is uniformizable, abelian, and $\bA$-finite, and it has rank~$1$ and dimension~$n$.  Taking $\cN_{\phi} = \Mat_{1\times n}(\KK[\sigma])$, we find that $\bss_n = (0, \dots, 0, 1)$ generates $\cN_{\phi}$ over $\KK[t]$.  It follows that
\[
  \sigma \bss_n = (t-\theta)^n \bss_n
\]
(cf.\ \cite[\S 5.2, p.~427]{CPY19}, \cite[\S 5.8]{Goss}, \cite[\S 3.6]{PLogAlg}), and so $\Phi = (t-\theta)^n \in \Mat_1(\KK[t])$.  Proposition~\ref{P:epsiota} implies that for $\alpha \in \TT_{\theta}$,
\[
  \cE_0(\alpha) = \pd_{t,n}[\alpha]\big|_{t=\theta} = \left. \begin{pmatrix} \pd_t^{n-1}(\alpha) \\ \vdots \\ \pd_t^1(\alpha) \\ \alpha \end{pmatrix} \right|_{t=\theta}.
\]
\end{example}

\begin{example} \label{Ex:strictlypure1}
\emph{Strictly pure $t$-modules.}  Suppose that $\phi : \bA \to \Mat_d(\KK[\tau])$ is a $t$-module defined by
\[
  \phi_t = \rd \phi_t + B_1 \tau + \cdots + B_{\ell} \tau^{\ell}, \quad B_i \in \Mat_d(\KK),\ \det B_{\ell} \neq 0.
\]
We call $\phi$ a \emph{strictly pure} $t$-module, and it is pure in the sense of Anderson~\cite[\S 1.9]{And86} (see also Hartl and Juschka~\cite[\S 2.5.2, p.~112]{HartlJuschka20}).  Then $\phi$ is abelian  and $\bA$-finite (see by~\cite[Prop.~1.9.2]{And86}, \cite[Ex.~2.5.16(b)]{HartlJuschka20}), and for simplicity we assume that $\rd \phi_t$ is in Jordan normal form.  As
\[
  \bigl((B_{\ell}^{-1})^{(-\ell)}\bigr)^{\tr} \phi_t^* = \bigl((B_{\ell}^{-1})^{(-\ell)}\bigr)^{\tr} \rd \phi_t^{\tr} + \bigl((B_{\ell}^{-1})^{(-\ell)}\bigr)^{\tr} \bigl( B_1^{(-1)} \bigr)^{\tr} \sigma + \cdots + \Id_d \sigma^{\ell},
\]
it follows that $\{ \sigma^j \bss_i : 1 \leqslant i \leqslant d,\ 0 \leqslant j \leqslant \ell-1 \}$ is a $\KK[t]$-basis of $\cN_{\phi}$, and in particular $\phi$ has rank $r = \ell d$.  With respect to this basis, if we let $\gamma = (B_{\ell}^{(-\ell)})^{\tr}$, then multiplication by $\sigma$ is represented by
\begin{equation} \label{E:strictlypurePhi}
  \Phi =  \begin{pmatrix}
    0 & \Id_d & \cdots & 0 \\
    \vdots & \vdots & \ddots & \vdots \\
    0 & 0 & \cdots & \Id_d \\
    \gamma^{-1}(t\Id_d - \rd\phi_t^{\tr}) & -\gamma^{-1} (B_1^{(-1)})^{\tr} & \cdots & -\gamma^{-1} (B_{\ell-1}^{(-\ell+1)})^{\tr}
    \end{pmatrix}.
\end{equation}
If we let
\[
  \tC = \begin{pmatrix}
  (B_1^{(-1)})^{\tr} & \cdots & (B_{\ell-1}^{(-\ell+1)})^{\tr} & \gamma \\
  \Id_d & \cdots & 0 & 0 \\
  \vdots & \ddots & \vdots & \vdots \\
  0 & \cdots & \Id_d & 0
  \end{pmatrix},
\]
then
\begin{equation} \label{E:CtildePhi}
  \tC \Phi = \begin{pmatrix}
  t\Id_d - \rd\phi_t^{\tr} & 0 & \cdots & 0 \\
  0 & \Id_d & & 0 \\
  \vdots & \vdots & \ddots & \vdots \\
  0 & 0 & \cdots & \Id_d
  \end{pmatrix}.
\end{equation}
Suppose now that $\rd\phi_t$ consists of $m$ Jordan blocks of sizes $\ell_1, \dots, \ell_m$, as in~\eqref{E:phitJordan}.  If we take $\tU$, $\tV \in \Mat_{d}(\KK)$ to be block diagonal matrices $\tU = \diag_d(U_1, \dots, U_m)$, $\tV = \diag_d(V_1, \dots, V_m)$, where
\[
  U_i = \begin{pmatrix}
  0 & \cdots & 0 & 1 \\
  -1 & \cdots & 0 & (t-\theta)^{\ell_i - 1} \\
  \vdots & \ddots & \vdots & \vdots \\
  0 & \cdots & -1 & t-\theta
  \end{pmatrix},
  \quad
  V_i = \begin{pmatrix}
  1 & t-\theta & \cdots & (t-\theta)^{\ell_i-1} \\
   & 1 & \cdots & (t-\theta)^{\ell_i -2} \\
   & & \ddots & \vdots \\
   & & & 1
  \end{pmatrix}
\]
are both in $\GL_{\ell_{i}}(\KK[t])$, then a straightforward calculation reveals
\[
  \tV (t\Id_d - \rd \phi_t^{\tr}) \tU^{-1} = \begin{pmatrix} \diag_{\ell_1}((t-\theta)^{\ell_1}, 1, \dots, 1) & & \\ & \ddots & \\
  & & \diag_{\ell_m}((t-\theta)^{\ell_m}, 1, \dots, 1)
  \end{pmatrix},
\]
where we have used that
\begin{equation} \label{E:Uiinv}
  U_i^{-1} = \begin{pmatrix}
    (t-\theta)^{\ell_i-1} & -1 & \cdots & 0 \\
    \vdots & \vdots & \ddots & \vdots \\
    t-\theta & 0 & \cdots & -1 \\
    1 & 0 & \cdots & 0
    \end{pmatrix}.
\end{equation}
It is possible to reorder the coordinates further so that the right-hand side is exactly in the form of Proposition~\ref{P:epsiota}(i), but it will be easier to simply keep track of the entries with non-trivial powers of $t-\theta$.  If we let $U = \diag_r(\tU, \Id_d, \dots, \Id_d)$ and $V = \diag_r(\tV, \Id_d, \dots, \Id_d)$, then combining the preceding equation with~\eqref{E:CtildePhi}, we find
\[
  V\tC \Phi U^{-1} = \diag_r \bigl( (t-\theta)^{\ell_1}, 1, \dots, 1; \ldots; (t-\theta)^{\ell_m}, 1, \dots, 1; \Id_d; \ldots; \Id_d \bigr).
\]
In order to apply Proposition~\ref{P:epsiota}, we need to change the $\KK[t]$-basis on $\cN_{\phi}$ using $U$ as the change of basis matrix as in~\eqref{E:tframebasechange} to form a new $t$-frame $(\iota',\Phi')$.  If $\bsalpha = (\alpha_1, \dots, \alpha_r) \in \Mat_{1\times r}(\TT_\theta)$ represents an element of $\TT_{\theta} \otimes_{\KK[t]} \cN_{\phi}$ with respect to this new basis, then Proposition~\ref{P:epsiota} implies that
\begin{equation}
  \cE_{0, (\iota',\Phi')} (\bsalpha) = \left. \begin{pmatrix} \pd_{t,\ell_1} [ \alpha_1 ] \\ \pd_{t,\ell_2}[ \alpha_{\ell_1+1}] \\ \vdots \\ \pd_{t,\ell_m}[\alpha_{\ell_1 + \cdots + \ell_{m-1}+1}] \end{pmatrix} \right|_{t=\theta}.
\end{equation}
On the other hand if $\bsbeta = (\beta_1, \dots, \beta_r) \in \Mat_{1 \times r}(\TT_{\theta})$ represents an element of $\TT_{\theta} \otimes_{\KK[t]} \cN_{\phi}$ with respect to the original basis, then using~\eqref{E:Uiinv} we instead find
\begin{equation} \label{E:E0beta}
  \cE_{0, (\iota,\Phi)} (\bsbeta) =  \cE_{0,(\iota',\Phi')}(\bsbeta U^{-1}) \\
  = \left. \begin{pmatrix}
  {\displaystyle \pd_{t,\ell_1} \biggl[ \sum_{k=1}^{\ell_1} (t-\theta)^{\ell_1-k}\beta_k \biggr]} \\
  \vdots \\
  {\displaystyle \pd_{t,\ell_m} \biggl[ \sum_{k=1}^{\ell_m} (t-\theta)^{\ell_m-k}\beta_{\ell_1 + \cdots + \ell_{m-1}+k} \biggr]}
  \end{pmatrix} \right|_{t=\theta}.
\end{equation}
We observe this identity explicitly in Example~\ref{Ex:strictlypure2}, especially in \eqref{E:strictlypureE0gyha}--\eqref{E:strictlypurefinally}.
\end{example}

\newpage
\section{Biderivations and quasi-periodic extensions} \label{S:Quasi}

The theory of biderivations and quasi-periodic extensions for Drinfeld modules was initiated by Anderson, Deligne, Gekeler, and Yu (see~\cite{Gekeler89a}--\cite{Gekeler90}, \cite{Goss94}, \cite{Yu90}).  Yu~\cite{Yu90} established a rich transcendence theory revolving around quasi-periods of Drinfeld modules, and Brownawell~\cite{Brownawell93}, \cite{Brownawell96}, \cite{Brownawell01}, further investigated these considerations toward various linear independence results on quasi-periods and quasi-logarithms, using Yu's Theorem of the Sub-$t$-module~\cite{Yu97}.  Chang and the second author~\cite{CP11}, \cite{CP12}, extended these results to algebraic independence using techniques from~\cite{ABP04}, \cite{P08}.  The theory of quasi-periodic extensions and quasi-periodic functions was defined for abelian $t$-modules by Brownawell and the second author~\cite{BP02} in investigating geometric $\Gamma$-values (see also Hartl and Juschka~\cite[\S 2.5.7]{HartlJuschka20} for more general situations).  For biderivations and quasi-periodic function for Drinfeld modules over Tate algebras see~\cite{GezmisP19}.

Given that a quasi-periodic extension $\psi$ is an extension in the category of $t$-modules of an abelian $t$-module $\phi$ by a power of the additive group~$\Ga$, i.e., we have
\[
0 \to \Ga^s \to \psi \to \phi \to 0,
\]
it is necessarily the case that $\psi$ is \emph{not} itself abelian.  Thus the considerations of \S\ref{S:Exp} do not immediately seem to apply.  However, work of Anderson~(see \cite{Goss94}), Pellarin~\cite{Pellarin08}, and Chang and the second author~\cite{CP11}, \cite{CP12}, demonstrates that quasi-periods and quasi-logarithms for Drinfeld modules do occur as specializations of rigid analytic trivializations.  This phenomenon also appears in~\cite{BP02} for the abelian $t$-modules defined by Sinha~\cite{Sinha97} for special $\Gamma$-values.

One of our goals is to address Question~\ref{Q:two} in \S\ref{S:Intro} and to demonstrate how quasi-periods and quasi-logarithms for general abelian $t$-modules have an abelian theory themselves and can be realized as specializations of analytic functions as in Theorems~\ref{T:Exponentiation} and~\ref{T:PeriodLattice}.  Our main results are Corollary~\ref{C:perquasiperspans} and Theorem~\ref{T:quasispancomplete}, to which we anticipate that the transcendence methods of~\cite{ABP04}, \cite{P08}, can be applied, which is the subject of recent work of the first author~\cite{NPhD}, \cite{N21}. We note first off that these results rely crucially on a dual-$t$-motive isomorphism of Hartl and Juschka~\cite[Thm.~2.5.13]{HartlJuschka20} (see Theorem~\ref{T:isodual}).

\subsection{Biderivations for \texorpdfstring{$t$}{t}-modules and quasi-periodic functions} \label{subS:biderivations}
Fix an abelian $t$-module $\phi: \bA \to \Mat_d(\KK[\tau])$ of dimension $d$ and rank $r$ defined by
\[
\phi_t = \rd \phi_t + B_1\tau+ \dots + B_\ell\tau^\ell \in \Mat_d(\KK)[\tau],
\]
where $\rd \phi_t = \theta \Id_d + \rN_\phi$ with $\Id_d$ the $d\times d$ identity matrix and $\rN_\phi$ a nilpotent matrix.  We define the exponential function $\Exp_\phi$ and the $t$-motive $\cM_\phi = \Mat_{1 \times d}(\KK[\tau])$ for $\phi$ as in~\S\ref{subS:modules}--\S\ref{subS:motives}.  We recall some definitions and properties related to biderivations of $t$-modules.  For details the reader may refer to~\cite[\S 3]{BP02}, and in the case of Drinfeld modules see \cite{Brownawell93},  \cite{Gekeler89a}, \cite{Gekeler90}, \cite{Goss94}, \cite{Yu90}.

\begin{definition}
A \emph{$\phi$-biderivation} is an $\FF_q$-linear map $\delta: \bA \to \cM_{\phi} \tau$ satisfying
\[
\bsdelta_{ab}= a(\theta) \bsdelta_b + \bsdelta_a \phi_b
\]
for all $a$, $b \in \bA$.  The biderivation $\bsdelta$ is uniquely determined by its value $\bsdelta_t$ (see \cite[\S 3.1]{BP02}).  If $\phi$ is defined over a field $L$ and $\bsdelta_t \in \Mat_{1 \times d}(L[\tau] \tau)$, then we say $\bsdelta$ is \emph{defined over $L$}.
\end{definition}

We set
\[
\rN_\phi^\perp \assign \rN_\phi^\perp(\KK) = \{ \bsalpha \in \Mat_{1\times d}(\KK) : \bsalpha \rN_\phi = 0\}.
\]
Using that $\cM_\phi = \Mat_{1\times d}(\KK[\tau])$, let $\bsu \in \cM_\phi$ satisfy $\rd \bsu \in \rN_\phi^\perp$.  Then an \emph{inner biderivation} $\bsdelta^{(\bsu)}$ is defined by
\begin{equation} \label{E:innerdef}
\bsdelta^{(\bsu)}_a = \bsu \phi_a - a(\theta)\bsu, \quad \forall\, a \in \bA.
\end{equation}
If $\bsu \in \cM_{\phi}\tau$, then the $\phi$-biderivation $\delta^{(\bsu)}$ is said to be \emph{strictly inner}.

The set of $\phi$-biderivations forms a $\KK$-vector space, which we denote by $\Der(\phi)$.  The set of inner biderivations and the set of strictly inner biderivations are also $\KK$-vector subspaces, which we denote by $\Der_{\inn}(\phi)$ and $\Der_{\si}(\phi)$ respectively.  If $\mu : \phi \to \psi$ is a morphism of $t$-modules and $\bsdelta \in \Der(\psi)$, then there is an induced $\KK$-linear map $\mu^{\dagger} : \Der(\psi) \to \Der(\phi)$ given by
\[
  (\mu^{\dagger} \bsdelta)_a = \bsdelta_a \mu, \quad \forall\, a \in \bA,
\]
and in particular the maps $\phi^{\dagger}_a : \Der(\phi) \to \Der(\phi)$ for $a \in \bA$ make $\Der(\phi)$ into a left $\KK[t]$-module.  Thus $\Der(\,\cdot\,)$ is a contravariant functor from the category of abelian $t$-modules to the category of left $\KK[t]$-modules (see~\cite[\S 3.5]{BP02}).  Furthermore, if $\bsdelta^{(\bsu)}$ is an inner biderivation for $\psi$, then $\mu^{\dagger} \bsdelta^{(\bsu)} = \bsdelta^{(\bsu\mu)}$ is an inner biderivation for $\phi$, and it follows that $\Der_{\inn}(\,\cdot\,)$ and $\Der_{\si}(\,\cdot\,)$ are subfunctors of $\Der(\,\cdot\,)$.

The \emph{de Rham module} for $\phi$ is the left $\KK[t]$-module
\[
  \rH^1_{\DR}(\phi) \assign \Der(\phi) / \Der_{\si}(\phi),
\]
which defines a contravariant functor from abelian $t$-modules to finite $\KK[t]$-modules (see Proposition~\ref{P:biderprops}).  We further set $\Der_{0}(\phi) \assign \{\bsdelta^{(\bsu)} : \bsu \in \rN_{\phi}^\perp\} = \{\bsdelta^{(\bsu)} \in \Der_{\inn}(\phi) : \bsu \in \Mat_{1\times d}(\KK) \}$, and $\rH^1_{\sr}(\phi)\assign \Der(\phi)/\Der_{\inn}(\phi)$, the \emph{strictly reduced} $\phi$-biderivations.

As we are primarily interested in the analytic theory of quasi-periodic functions and quasi-periods, for smoother exposition we have been using $\KK$ as our base field.  However, the preceding discussion carries through for any base field~$L$ with $K \subseteq L \subseteq \KK$ over which $\phi$ is defined (see~\cite[\S 3.1]{BP02}), and $L$ need not be algebraically closed.  We let $\Der(\phi,L) \subseteq \Der(\phi)$ denote the $L$-vector space of $\phi$-biderivations that are defined over $L$, and likewise we let $\Der_{*}(\phi,L)$, $\rH^1_{\DR}(\phi,L) = \Der(\phi,L)/\Der_{\si}(\phi,L)$, etc. denote corresponding spaces defined over $L$.  The following proposition due to Brownawell and the second author summarizes fundamental relationships among these spaces.

\begin{proposition}[{\cite[\S 3.1]{BP02}}] \label{P:biderprops}
Let $L$ be a field with $K \subseteq L \subseteq \KK$.  Let $\phi : \bA \to \Mat_d(L[\tau])$ be an abelian $t$-module of dimension~$d$ and rank~$r$.  Let $\cM_{\phi,L} = \Mat_{1\times d}(L[\tau])$ be the $t$-motive of $\phi$ over $L$.  Then
\begin{enumerate}
\item[(a)] For each $\bsm \in \cM_{\phi,L}\tau$ there is a unique $\bsdelta_{\bsm} \in \Der(\phi,L)$ so that $(\bsdelta_{\bsm})_t = \bsm$, and the map
\[
  \bsm \mapsto \delta_{\bsm} : \cM_{\phi,L}\tau = \Mat_{1\times d}(L[\tau]\tau) \iso \Der(\phi,L),
\]
is an isomorphism.
\item[(b)] Under the isomorphism in~\textup{(a)},
\[
  \Der_{\inn}(\phi,L) \cong (t-\theta)(\Mat_{1\times d}(L[\tau]\tau) + \rN_{\phi}^{\perp}(L)), \quad
  \Der_{\si}(\phi,L) \cong (t-\theta)\Mat_{1\times d}(L[\tau]\tau).
\]
\item[(c)] As $L$-vector spaces, we have
\[
\Der_{\inn}(\phi,L) = \Der_0(\phi,L) \oplus \Der_{\si}(\phi,L), \quad \rH^1_{\DR}(\phi,L) \cong \Der_0(\phi,L) \oplus \rH^1_{\sr}(\phi,L).
\]
\item[(d)] We have $\dim_{L} \Der_0(\phi,L) = d - \rank \rN_{\phi}$, $\dim_{L} \rH^1_{\sr}(\phi,L) = r - d + \rank \rN_{\phi}$, and $\dim_{L} \rH^1_{\DR}(\phi,L) = r$.
\end{enumerate}
\end{proposition}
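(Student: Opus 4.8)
The plan is to convert the whole statement into linear and homological algebra over the principal ideal domain $L[t]$, exploiting the single structural fact that $\phi$ is abelian, so $\cM_{\phi,L}$ is free---hence torsion-free---of rank $r$ over $L[t]$; in particular multiplication by $t-\theta$ acts injectively on $\cM_{\phi,L}$. Everything is organized around identifying $\Der(\phi,L)$ with the $L[t]$-submodule $\cM_{\phi,L}\tau\subseteq\cM_{\phi,L}$. First I would establish (a): the cocycle identity defining a $\phi$-biderivation says precisely that $\bsdelta$ is a derivation from the polynomial ring $\bA=\FF_q[t]$ into the ($\FF_q$-central) $\bA$-bimodule $\cM_{\phi,L}\tau$ with left action $a\cdot\bsm=a(\theta)\bsm$ and right action $\bsm\cdot a=\bsm\phi_a$. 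Since $\bA$ is freely generated over $\FF_q$ by $t$, evaluation $\bsdelta\mapsto\bsdelta_t$ is the claimed isomorphism: uniqueness is the recursion $\bsdelta_{t^{n+1}}=\theta\bsdelta_{t^n}+\bsdelta_t\phi_{t^n}$ together with $\FF_q$-linearity, and existence reduces to checking that this recursion is consistent, as in \cite[\S 3.1]{BP02}. I would also record that this identification is $L[t]$-linear, where $t$ acts on $\Der(\phi,L)$ via $\phi^{\dagger}_t$, since $(\phi^{\dagger}_t\bsdelta)_t=\bsdelta_t\phi_t=t\cdot\bsdelta_t$; this is what makes the later computations legitimate.

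For (b) I would simply trace $\Der_{\si}$ and $\Der_{\inn}$ through this identification. From $(\bsdelta^{(\bsu)})_t=\bsu\phi_t-\theta\bsu=(t-\theta)\cdot\bsu$, the strictly inner biderivations are carried onto $(t-\theta)\cM_{\phi,L}\tau$, while for a general inner biderivation the $\tau^0$-coefficient of $(t-\theta)\cdot\bsu$ is $\rd\bsu\,(\rd\phi_t-\theta\Id_d)=\rd\bsu\,\rN_\phi$, so the admissibility condition $\rd\bsu\in\rN_\phi^\perp(L)$ is exactly what puts $(t-\theta)\cdot\bsu$ in $\cM_{\phi,L}\tau$; since $\{\bsu\in\cM_{\phi,L}:\rd\bsu\in\rN_\phi^\perp(L)\}=\rN_\phi^\perp(L)+\Mat_{1\times d}(L[\tau]\tau)$, (b) follows. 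Part (c) is then formal: the first identity is the splitting $\bsu=\rd\bsu+(\bsu-\rd\bsu)$ read off from (b), and directness holds because any element of $\Der_0(\phi,L)\cap\Der_{\si}(\phi,L)$ has the form $\bsdelta^{(\bsu)}=\bsdelta^{(\bsv)}$ with $\bsu$ of $\tau$-degree $0$ and $\bsv$ of $\tau$-degree $\geqslant 1$, so $(t-\theta)\cdot(\bsu-\bsv)=0$ in $\cM_{\phi,L}$ forces $\bsu=\bsv$ by torsion-freeness and then $\bsu=\bsv=0$; the second identity comes from the exact sequence $0\to\Der_{\inn}(\phi,L)/\Der_{\si}(\phi,L)\to\rH^1_{\DR}(\phi,L)\to\rH^1_{\sr}(\phi,L)\to0$, which splits over the field $L$, using $\Der_{\inn}(\phi,L)/\Der_{\si}(\phi,L)\cong\Der_0(\phi,L)$.

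The substantive step is (d), and in particular the equality $\dim_L\rH^1_{\DR}(\phi,L)=r$; the other two counts follow from it and from the identification of $\Der_0(\phi,L)$ with the left kernel $\rN_\phi^\perp(L)$ of $\rN_\phi$ (injectivity of $\bsu\mapsto\bsdelta^{(\bsu)}$ again by torsion-freeness, exactly as in (c)), which has dimension $d-\rank\rN_\phi$. For $\rH^1_{\DR}$, by (a)--(b) it is isomorphic to $\cM_{\phi,L}\tau/(t-\theta)\cM_{\phi,L}\tau$, and I would feed the exact sequence of $L[t]$-modules $0\to\cM_{\phi,L}\tau\to\cM_{\phi,L}\to L^d\to0$---with $L^d=\cM_{\phi,L}/\cM_{\phi,L}\tau$ via $\rd$, on which $t$ acts by $\rd\phi_t=\theta\Id_d+\rN_\phi$---into the long exact $\mathrm{Tor}^{L[t]}_\bullet(-,L[t]/(t-\theta))$ sequence. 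Since $\cM_{\phi,L}$ is free over $L[t]$ its first $\mathrm{Tor}$ vanishes, so this collapses to
\[
0\to\ker(\rN_\phi)\to\rH^1_{\DR}(\phi,L)\to L^{r}\to\mathrm{coker}(\rN_\phi)\to0,
\]
using $\mathrm{Tor}^{L[t]}_1(L^d,L[t]/(t-\theta))=\ker(\rN_\phi)$ and $\cM_{\phi,L}\otimes_{L[t]}L\cong L^r$; an Euler-characteristic count with $\dim_L\ker(\rN_\phi)=\dim_L\mathrm{coker}(\rN_\phi)=d-\rank\rN_\phi$ then gives $\dim_L\rH^1_{\DR}(\phi,L)=r$, whence $\dim_L\rH^1_{\sr}(\phi,L)=r-d+\rank\rN_\phi$ by (c). I expect the main obstacle to be precisely this homological bookkeeping---correctly identifying the $\mathrm{Tor}$ term and keeping the two competing $t$-actions (on $\cM_{\phi,L}\tau$ and on $L^d$) straight; the rest is routine manipulation of the identification in (a) and of torsion-freeness of $\cM_{\phi,L}$.
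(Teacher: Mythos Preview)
The paper does not give its own proof of this proposition: it is stated with attribution to \cite[\S 3.1]{BP02} and then used as input for the rest of the section. So there is no in-paper argument to compare against; I can only assess your proposal on its own merits.

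Your argument is correct. Parts (a)--(c) are the standard unwinding of definitions, and your use of torsion-freeness of $\cM_{\phi,L}$ over $L[t]$ to get directness in (c) is exactly the right mechanism. For (d), your $\mathrm{Tor}$ computation is a clean way to get $\dim_L \rH^1_{\DR}(\phi,L)=r$: the short exact sequence $0\to\cM_{\phi,L}\tau\to\cM_{\phi,L}\to\Mat_{1\times d}(L)\to 0$ is genuinely a sequence of $L[t]$-modules (since $\cM_{\phi,L}\tau$ is closed under right multiplication by $\phi_t$), freeness of $\cM_{\phi,L}$ kills the middle $\mathrm{Tor}_1$, and the identification $\mathrm{Tor}_1^{L[t]}(\Mat_{1\times d}(L),L[t]/(t-\theta))\cong\rN_\phi^\perp(L)$ is correct once one remembers that $t-\theta$ acts on the quotient by right multiplication by $\rN_\phi$. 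One small notational point: when you write $\ker(\rN_\phi)$ and $\mathrm{coker}(\rN_\phi)$, you mean the \emph{left} kernel and cokernel (row vectors annihilating $\rN_\phi$ on the right), consistent with the paper's definition of $\rN_\phi^\perp$; this is fine for the dimension count since both have dimension $d-\rank\rN_\phi$, but it is worth making explicit to avoid confusion with the usual column-vector conventions.

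The treatment in \cite{BP02} is more hands-on: it exhibits an explicit $L$-basis of $\rH^1_{\DR}(\phi,L)$ by reducing modulo $(t-\theta)$ against a chosen $L[t]$-basis of $\cM_{\phi,L}$, which has the advantage of giving concrete representatives (something the paper leans on later, e.g., in choosing $(\bsdelta_j)_t=\tau\bsm_j$). Your homological approach is more conceptual and avoids coordinates entirely, at the cost of not producing explicit biderivations. Both are valid; yours is arguably tidier for the bare dimension statement.
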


\begin{remark}
If $L$ is perfect then $\tau \cM_{\phi,L} = \cM_{\phi,L}\tau$, but if not, $\tau\cM_{\phi,L} \subsetneq \cM_{\phi,L}\tau = \Mat_{1\times d}(L[\tau]\tau)$.  As only the latter object is an $L$-vector space, we need to use $\cM_{\phi,L}\tau$ in this context.  When $L$ is perfect we will use the notation $\tau \cM_{\phi,L}$ without significant confusion.
\end{remark}

To each $\phi$-biderivation we can associate a unique quasi-periodic function, which is characterized by the following proposition.

\begin{proposition}[{\cite[Prop. 3.2.1]{BP02}}] \label{P:quasi}
For $\bsdelta \in \Der(\phi)$ there is a unique $\FF_q$-linear and entire power series
\[
\rF_{\bsdelta}(\bsz) = \sum_{i \geqslant 1} \bsc_i \cdot \bsz^{(i)} \in \power{\KK}{z_1, \dots, z_d},
\]
where $\bsc_i \in \Mat_{1 \times d}(\KK)$ and $\bsz = (z_1, \dots, z_d)^{\tr}$, such that
\begin{equation} \label{E:Fdeltafneq}
\rF_{\bsdelta}(\rd \phi_a \cdot \bsz) = a(\theta) \rF_{\bsdelta}(\bsz) + \bsdelta_a \Exp_\phi(\bsz), \quad \forall\, a \in \bA.
\end{equation}
\end{proposition}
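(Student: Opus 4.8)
The plan is to reduce the infinitely many functional equations in~\eqref{E:Fdeltafneq} to the single instance $a = t$, to solve that instance by comparing Frobenius-twisted coefficients, and then to verify convergence. Throughout one uses that $\bsdelta$ is determined by $\bsdelta_t \in \cM_{\phi}\tau = \Mat_{1\times d}(\KK[\tau]\tau)$, which is a \emph{polynomial} in $\tau$: write $\bsdelta_t = \sum_{j=1}^{s}E_j\tau^j$ with $E_j \in \Mat_{1\times d}(\KK)$, and $\Exp_{\phi}(\bsz) = \sum_{k\geqslant 0}C_k\bsz^{(k)}$ with $C_0 = \Id_d$.

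First I would show that any $\FF_q$-linear series $\rF_{\bsdelta}(\bsz) = \sum_{i\geqslant 1}\bsc_i\bsz^{(i)}$ satisfying the single equation $\rF_{\bsdelta}(\rd\phi_t\cdot\bsz) = \theta\rF_{\bsdelta}(\bsz) + \bsdelta_t\Exp_{\phi}(\bsz)$ automatically satisfies~\eqref{E:Fdeltafneq} for every $a\in\bA$. Since $a\mapsto\rd\phi_a$, $a\mapsto a(\theta)$, $a\mapsto\bsdelta_a$ and $\rF_{\bsdelta}$ are all $\FF_q$-linear, it suffices to treat $a = t^n$, which I would do by induction on $n$; the case $n = 0$ is immediate from $\rd\phi_1 = \Id_d$ and $\bsdelta_1 = 0$. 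For the step from $a$ to $ta$ one writes $\rd\phi_{ta} = \rd\phi_t\,\rd\phi_a$, applies the $a=t$ relation, then the inductive hypothesis, then the functional equation $\Exp_{\phi}(\rd\phi_a\bsz) = \phi_a(\Exp_{\phi}(\bsz))$, and finally the biderivation identity $\bsdelta_{ta} = \theta\bsdelta_a + \bsdelta_t\phi_a$; collecting terms and using $(ta)(\theta) = \theta\,a(\theta)$ gives~\eqref{E:Fdeltafneq} for $ta$.

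To solve the $a=t$ equation I would expand $\bsdelta_t\Exp_{\phi}(\bsz) = \sum_{i\geqslant 1}D_i\bsz^{(i)}$ with $D_i = \sum_{j=1}^{s}E_jC_{i-j}^{(j)}$ (setting $C_{i-j} = 0$ for $i < j$). Since $\bsz^{(i)}$ is homogeneous of degree $q^i$, matching the coefficient of $\bsz^{(i)}$ on the two sides yields, for each $i\geqslant 1$,
\[
  \bsc_i\bigl((\rd\phi_t)^{(i)} - \theta\Id_d\bigr) = D_i.
\]
Now $(\rd\phi_t)^{(i)} - \theta\Id_d = (\theta^{q^i}-\theta)\Id_d + \rN_{\phi}^{(i)}$ is a nonzero scalar matrix (here $\theta$ transcendental over $\FF_q$ gives $\theta^{q^i}\neq\theta$ for $i\geqslant 1$) plus a nilpotent matrix, hence invertible; this forces $\bsc_i = D_i\bigl((\rd\phi_t)^{(i)} - \theta\Id_d\bigr)^{-1}$, yielding both uniqueness and a formal solution. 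In particular each $\bsc_i$ is given in closed form, so no recursion among the $\bsc_i$ is involved.

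The main obstacle will be showing that $\rF_{\bsdelta}$ is entire, i.e.\ $\inorm{\bsc_i}^{1/q^i}\to 0$. Since $\Exp_{\phi}$ is entire, $\inorm{C_k}^{1/q^k}\to 0$, and bounding $\inorm{D_i}\leqslant\max_{1\leqslant j\leqslant s}\inorm{E_j}\,\inorm{C_{i-j}}^{q^j}$ then gives $\inorm{D_i}^{1/q^i}\to 0$. For the inverse, writing $\bigl((\theta^{q^i}-\theta)\Id_d + \rN_{\phi}^{(i)}\bigr)^{-1} = \sum_{k=0}^{d-1}(-1)^k(\theta^{q^i}-\theta)^{-k-1}(\rN_{\phi}^{(i)})^k$ and using $\inorm{\theta^{q^i}-\theta} = \inorm{\theta}^{q^i}$ shows $\inorm{\bigl((\rd\phi_t)^{(i)}-\theta\Id_d\bigr)^{-1}}\leqslant C^{q^i}$ for a constant $C$ depending only on $d$ and $\inorm{\rN_{\phi}}$. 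Multiplying the two estimates gives $\inorm{\bsc_i}^{1/q^i}\to 0$, so $\rF_{\bsdelta}$ converges on all of $\KK^d$. The fast growth $\inorm{\theta}^{q^i} = q^{q^i}$ of the scalar part does all the work here, and the nilpotent perturbation $\rN_{\phi}$ affects the $q^i$-th root only by a bounded factor.
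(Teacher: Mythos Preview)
Your argument is correct. Note, however, that the paper does not supply its own proof of this proposition: it is quoted directly from \cite[Prop.~3.2.1]{BP02} without argument. Your write-up reconstructs the natural proof---reduce to the single equation at $a=t$ via the biderivation identity $\bsdelta_{ta} = \theta\bsdelta_a + \bsdelta_t\phi_a$, read off each $\bsc_i$ by comparing $\bsz^{(i)}$-coefficients, and then bound $\inorm{\bsc_i}^{1/q^i}$ using the entireness of $\Exp_\phi$ and the explicit geometric-series inverse of $(\theta^{q^i}-\theta)\Id_d + \rN_\phi^{(i)}$. This is essentially the argument one finds in the cited reference, and the paper itself invokes exactly this uniqueness (``this identity determines $\rF_{\bsdelta}(\bsz)$ uniquely~\cite[Pf.~of Prop.~3.2.1]{BP02}'') in the proof of Proposition~\ref{P:quasigen}. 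One small stylistic remark: your observation that the $\bsc_i$ are given in closed form (not via a recursion among themselves) is a nice feature of this particular functional equation, since $\bsdelta_t$ is a polynomial in $\tau$ rather than a power series.
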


We note that \eqref{E:Fdeltafneq} implies immediately that the restriction
\begin{equation} \label{E:Fdeltarestricted}
  \rF_{\bsdelta}|_{\Lambda_{\phi}} : \Lambda_{\phi} \to \KK
\end{equation}
is $\bA$-linear, where $a \in \bA$ operates on $\KK$ by multiplication by $a(\theta)$.

\begin{definition}
Fix an abelian $t$-module $\phi : \bA \to \Mat_d(\KK[\tau])$, and let $\bsdelta_1, \dots, \bsdelta_k$ be $\phi$-biderivations.  Then the \emph{quasi-periodic extension} of $\phi$ associated to $\bsdelta_1, \dots, \bsdelta_k$ is the $t$-module $\psi: \bA \rightarrow \Mat_{d+k}(\KK[\tau])$ defined by
\begin{equation} \label{E:QPext}
\psi_t = \left(
\begin{array}{c|c}
  \theta \Id_k & \begin{matrix} (\bsdelta_{1})_t \\ \vdots \\ (\bsdelta_{k})_t \end{matrix} \\
\hline
  0 & \phi_t
\end{array}
\right).
\end{equation}
As such $\psi$ is an extension of $\phi$ by $\Ga^k$, and there is an exact sequence of $t$-modules,
\[
  0 \to \Ga^{k} \to \psi \to \phi \to 0.
\]
The exponential function for $\psi$ is the entire function $\Exp_\psi: \KK^{k+d} \to \KK^{k+d}$ with
\begin{equation} \label{E:QPExp}
\Exp_{\psi} \begin{pmatrix} \bsx \\ \bsz \end{pmatrix} =
\Bigl( x_{1} + \rF_{\bsdelta_1}(\bsz), \dots, x_{k}+ \rF_{\bsdelta_k}(\bsz), \Exp_{\phi}(\bsz)^{\tr} \Bigr)^{\tr},
\end{equation}
where $\bsx = (x_1, \dots, x_k)^{\tr} \in \KK^k$.  This formula readily implies that if $\phi$ is uniformizable, then $\psi$ is uniformizable.  The periods of $\psi$ are of the form $(-\rF_{\bsdelta_1}(\bslambda), \dots, -\rF_{\bsdelta_k}(\bslambda), \smash{\bslambda}^{\tr} )^{\tr}$, where $\bslambda \in \Lambda_{\phi}$ is a period of $\phi$.  For $\bsdelta \in \Der(\phi)$, we set
\[
  \eta_{\bsdelta}(\bslambda) \assign \rF_{\delta}(\bslambda) \in \KK
\]
to be the \emph{quasi-period} with respect to $\bsdelta$ associated to the period $\bslambda$ of $\phi$, and thus
\begin{equation}
  \Lambda_{\psi} = \bigl\{ (-\eta_{\bsdelta_1}(\bslambda), \dots, -\eta_{\bsdelta_k}(\bslambda), \smash{\bslambda}^{\tr} )^{\tr} : \bslambda \in \Lambda_{\phi} \bigr\}.
\end{equation}
If $\bsdelta \in \Der_{\inn}(\phi)$, say $\bsdelta = \bsdelta^{(\bsu)}$ for $\bsu \in \Mat_{1\times d}(\KK[\tau])$ with $\rd\bsu \in \rN_\phi^{\perp}$, then by~\cite[Prop.~3.2.2]{BP02}
\begin{equation}\label{E:quasiinner}
  \rF_{\bsdelta^{(\bsu)}}(\bsz) = \bsu \Exp_{\phi}(\bsz) - \rd\bsu\cdot \bsz,
\end{equation}
and so for $\bslambda \in \Lambda_{\phi}$,
\begin{equation} \label{E:innerquasiper}
  \eta_{\bsdelta^{(\bsu)}}(\bslambda) = -\rd \bsu \cdot \bslambda.
\end{equation}
In particular, if $\bsdelta^{(\bsu)} \in \Der_{\si}(\phi)$, then for all $\bslambda \in \Lambda_\phi$, we have $\eta_{\bsdelta^{(\bsu)}}(\bslambda) = 0$.  A quick calculation reveals that
\begin{equation} \label{E:quasilogdef}
  \Log_{\psi} \begin{pmatrix} \bsx \\ \bsz \end{pmatrix} =
  \Bigl( x_1 - \rF_{\bsdelta_1}\bigl(\Log_{\phi}(\bsz) \bigr), \dots, x_k - \rF_{\bsdelta_k}\bigl(\Log_{\phi}(\bsz) \bigr), \Log_{\phi}(\bsz)^{\tr} \Bigr)^{\tr}.
\end{equation}
Thus for a biderivation $\bsdelta$, if $\bsy \in \KK^d$ satisfies $\Exp_{\phi}(\bsy) = \bsz$, then we say that $\rF_{\bsdelta}(\bsy)$ is a \emph{quasi-logarithm} of $\bsz$ (associated to $\bsy$).
\end{definition}

If $\phi$ is defined over a field $L$ with $K \subseteq L \subseteq \KK$, then by Remark~\ref{R:ExpoverL} we have $\Exp_{\phi}(\bsz) \in \power{L}{z_1,\dots, z_d}^{d}$, and so the functional equation in~\eqref{E:Fdeltafneq} sets up recursions on coefficients that imply
\[
  \rF_{\bsdelta}(\bsz) \in \power{L}{z_1, \dots, z_d}.
\]
Moreover the $L$-linear map
\begin{equation} \label{E:deltatoFdelta}
  \bsdelta \mapsto \rF_{\bsdelta}(\bsz) : \Der(\phi,L) \to \power{L}{z_1, \dots, z_d}
\end{equation}
is injective~\cite[p.~113]{BP02}.

\begin{proposition} \label{P:quasiperspans}
Let $L$ be a field with $K \subseteq L \subseteq \KK$.  Let $\phi : \bA \to \Mat_d(L[\tau])$ be a uniformizable abelian $t$-module of dimension~$d$ and rank~$r$, and let $s = d-\rank \rN_{\phi}$.  Let $\bsdelta_1, \dots, \bsdelta_r \in \Der(\phi,L)$ represent an $L$-basis of $\rH^1_{\DR}(\phi,L)$ such that $\bsdelta_1, \dots, \bsdelta_{s}$ represent an $L$-basis of the image of $\Der_0(\phi,L)$ in $\rH^1_{\DR}(\phi,L)$ and $\bsdelta_{s+1}, \dots, \bsdelta_r$ represent an $L$-basis of $\rH^1_{\sr}(\phi,L)$.
\begin{enumerate}
\item[(a)] For $\bslambda \in \Lambda_{\phi}$, we have
\begin{align}
\label{E:spanLai}
\Span{L} \bigl( \rF_{\bsdelta_1}(\bslambda), \dots, \rF_{\bsdelta_{r}}(\bslambda) \bigr)
  &= \Span{L} \bigl( \rF_{\bsdelta}(\bslambda) : \bsdelta \in \Der(\phi,L) \bigr) \\
  &= \Span{L} \bigl( \rF_{\bsdelta}(\bslambda) : [\bsdelta] \in \rH^1_{\DR}(\phi,L) \bigr); \notag \\
\label{E:spanLaii}
\Span{L} \bigl( \rF_{\bsdelta_1}(\bslambda), \dots, \rF_{\bsdelta_{s}}(\bslambda) \bigr)
  &= \Span{L} \bigl( \rF_{\bsdelta}(\bslambda) : \bsdelta \in \Der_0(\phi,L) \bigr) \\
  &= \rN_{\phi}^{\perp}(L) \cdot \bslambda. \notag
\end{align}
\item[(b)] For an $\bA$-basis $\{ \bslambda_1, \dots, \bslambda_r \}$ of $\Lambda_\phi$, we have
\begin{align}
\label{E:spanLbi}
\Span{L} \bigl( \rF_{\bsdelta_i}(\bslambda_j) : 1 \leqslant i,\, j \leqslant r \bigr)
  &= \Span{L} \bigl( \rF_{\bsdelta}(\bslambda) : \bsdelta \in \Der(\phi,L),\, \bslambda \in \Lambda_{\phi} \bigr) \\
  &= \Span{L} \bigl( \rF_{\bsdelta}(\bslambda) : [\bsdelta] \in \rH^1_{\DR}(\phi,L),\, \bslambda \in \Lambda_{\phi} \bigr); \notag \\
\label{E:spanLbii}
\Span{L} \bigl( \rF_{\bsdelta_i}(\bslambda_j) : 1 \leqslant i\, \leqslant s,\, 1 &\leqslant j \leqslant r \bigr) \\
  &= \Span{L} \bigl( \rF_{\bsdelta}(\bslambda) : \bsdelta \in \Der_0(\phi,L),\, \bslambda \in \Lambda_{\phi} \bigr) \notag \\
  &= \rN_{\phi}^{\perp}(L) \cdot \Lambda_{\phi}. \notag
\end{align}
\end{enumerate}
\end{proposition}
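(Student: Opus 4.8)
The proof is linear algebra once the right dictionary is in place. The two facts that do all the work are: (1) by~\eqref{E:Fdeltarestricted} the restriction $\rF_{\bsdelta}|_{\Lambda_{\phi}}\colon\Lambda_{\phi}\to\KK$ is $\bA$-linear (with $a$ acting on $\KK$ by $a(\theta)$), so $\rF_{\bsdelta}(\rd\phi_a\bslambda)=a(\theta)\rF_{\bsdelta}(\bslambda)$; and (2) the assignment $\bsdelta\mapsto\rF_{\bsdelta}$ is $L$-linear in $\bsdelta$ by~\eqref{E:deltatoFdelta}, while $\rF_{\bsdelta^{(\bsu)}}|_{\Lambda_{\phi}}=0$ whenever $\bsdelta^{(\bsu)}\in\Der_{\si}(\phi)$ by the remark following~\eqref{E:innerquasiper}. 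Combining these, for $\bsdelta\in\Der(\phi,L)$ the value $\rF_{\bsdelta}(\bslambda)$ depends only on the class $[\bsdelta]\in\rH^1_{\DR}(\phi,L)$, which at once identifies the first and third spans in both~\eqref{E:spanLai} and~\eqref{E:spanLbi} and shows that the middle spans there are caught between the outer two. I would also, once and for all, invoke Proposition~\ref{P:biderprops}(c) to replace the chosen $\bsdelta_1,\dots,\bsdelta_s$ by genuine elements of $\Der_0(\phi,L)=\{\bsdelta^{(\bsu)}:\bsu\in\rN_{\phi}^{\perp}(L)\}$ forming an $L$-basis of it (legitimate, since they may be altered by strictly inner biderivations without changing their classes or their values on $\Lambda_{\phi}$), and note that uniformizability forces $\rank_{\bA}\Lambda_{\phi}=r$, so the $\bA$-basis $\{\bslambda_1,\dots,\bslambda_r\}$ of part~(b) indeed has $r$ elements.

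For part~(a): writing an arbitrary $\bsdelta\in\Der(\phi,L)$ as $\bsdelta=\sum_{i=1}^r c_i\bsdelta_i+\bsdelta_{\si}$ with $c_i\in L$ and $\bsdelta_{\si}\in\Der_{\si}(\phi,L)$ (possible since the $[\bsdelta_i]$ span $\rH^1_{\DR}(\phi,L)$), fact~(2) gives $\rF_{\bsdelta}(\bslambda)=\sum_i c_i\rF_{\bsdelta_i}(\bslambda)$, proving~\eqref{E:spanLai}; for~\eqref{E:spanLaii} I would use~\eqref{E:innerquasiper}, which says $\rF_{\bsdelta^{(\bsu)}}(\bslambda)=-\bsu\cdot\bslambda$ for $\bsu\in\rN_{\phi}^{\perp}(L)$, so that $\{\rF_{\bsdelta}(\bslambda):\bsdelta\in\Der_0(\phi,L)\}=\rN_{\phi}^{\perp}(L)\cdot\bslambda$ is already an $L$-subspace and equals $\Span{L}(\rF_{\bsdelta_1}(\bslambda),\dots,\rF_{\bsdelta_s}(\bslambda))$ because $\bsdelta_1,\dots,\bsdelta_s$ is an $L$-basis of $\Der_0(\phi,L)$.

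For part~(b): given $\bslambda\in\Lambda_{\phi}$, write $\bslambda=\sum_{j=1}^r\rd\phi_{a_j}(\bslambda_j)$ with $a_j\in\bA$; since $\rF_{\bsdelta}$ is $\FF_q$-linear in its argument, fact~(1) yields $\rF_{\bsdelta}(\bslambda)=\sum_{j=1}^r a_j(\theta)\rF_{\bsdelta}(\bslambda_j)$, an $L$-combination, so part~(a) applied at each $\bslambda_j$ gives~\eqref{E:spanLbi}. For~\eqref{E:spanLbii}, the identity $\bsu\cdot\rd\phi_{a}(\bslambda_j)=a(\theta)(\bsu\cdot\bslambda_j)$ for $\bsu\in\rN_{\phi}^{\perp}(L)$ (which holds because $\bsu\rN_{\phi}=0$ implies $\bsu\,\rd\phi_a=a(\theta)\bsu$) shows $\rN_{\phi}^{\perp}(L)\cdot\Lambda_{\phi}=\sum_{j=1}^r\rN_{\phi}^{\perp}(L)\cdot\bslambda_j$, and combining with~\eqref{E:spanLaii} at each $\bslambda_j$ finishes~\eqref{E:spanLbii}. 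None of the steps presents a real obstacle; the only point demanding care is the bookkeeping of the splitting $\rH^1_{\DR}(\phi,L)\cong\Der_0(\phi,L)\oplus\rH^1_{\sr}(\phi,L)$, so that the first $s$ of the chosen biderivations can be taken inside $\Der_0(\phi,L)$, after which everything reduces to the two facts above.
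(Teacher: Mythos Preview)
Your proof is correct and follows essentially the same approach as the paper: vanishing of $\rF_{\bsdelta}$ on $\Lambda_{\phi}$ for strictly inner $\bsdelta$ via~\eqref{E:innerquasiper}, $L$-linearity of $\bsdelta\mapsto\rF_{\bsdelta}$ via~\eqref{E:deltatoFdelta}, the explicit form~\eqref{E:innerquasiper} for $\Der_0$, and $\bA$-linearity of $\rF_{\bsdelta}|_{\Lambda_{\phi}}$ via~\eqref{E:Fdeltarestricted} to pass from~(a) to~(b). You spell out a few steps more explicitly than the paper does (e.g., the decomposition $\bslambda=\sum_j\rd\phi_{a_j}\bslambda_j$ and the identity $\bsu\,\rd\phi_a=a(\theta)\bsu$ for $\bsu\in\rN_{\phi}^{\perp}$), but the argument is the same.
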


\begin{remark} \label{R:tractable}
In the case that $\rd\phi_t$ is in Jordan normal form, then $\rN_{\phi}^{\perp}(L) \cdot \Lambda_{\phi}$ comprises the $L$-linear span of all of the \emph{tractable coordinates} of periods in $\Lambda_{\phi}$ in the sense of~\cite[Def.~3.3.1]{ChangMishiba21} or~\cite[\S 1]{PR03} (or \emph{last coordinate logarithms} in~\cite[\S 2]{Yu91}).
\end{remark}

\begin{proof}[Proof of Proposition~\ref{P:quasiperspans}]
For~\eqref{E:spanLai}, since $\rF_{\bsdelta}(\bslambda) = 0$ for any $\bsdelta \in \Der_{\si}(\phi)$ by \eqref{E:innerquasiper}, it follows that
\[
\Span{L} (\rF_{\bsdelta}(\bslambda) : \bsdelta \in \Der(\phi,L)) = \Span{L} (\rF_{\bsdelta}(\bslambda) : [\bsdelta] \in \rH^1_{\DR}(\phi,L)).
\]
That these equal $\Span{L} (\rF_{\bsdelta_1}(\bslambda), \dots, \rF_{\bsdelta_{r}}(\bslambda))$ follows from the $L$-linearity of~\eqref{E:deltatoFdelta}.  Similarly, \eqref{E:spanLaii} follows from our choices of $\bsdelta_1, \dots, \bsdelta_{s}$ and the definition of $\Der_0(\phi,L)$.  Finally, since $\rF_{\bsdelta}|_{\Lambda_{\phi}} : \Lambda_{\phi} \to \KK$ is $\bA$-linear by~\eqref{E:Fdeltarestricted}, the identities in~(b) follow from~(a).
\end{proof}

If we take $L= K_{\infty}^{\sep}$, then the following lemma ensures that the periods and quasi-periods of $\phi$ also have coordinates in $K_{\infty}^{\sep}$.  In the case of Drinfeld modules, it was proved by Denis~\cite[p.~6]{Denis95} (see also~\cite[Pf.~of Cor.~2]{Brownawell99}), and our proof is similar.

\begin{lemma} \label{L:separable}
Suppose that $\phi : \bA \to \Mat_d(K_{\infty}^{\sep}[\tau])$ is an abelian $t$-module defined over $K_{\infty}^{\sep}$ and that $\bsdelta$ is a $\phi$-biderivation also defined over $K_{\infty}^{\sep}$.  For any $\bsx \in \KK^{d}$ such that $\Exp_{\phi}(\bsx) \in (K_{\infty}^{\sep})^d$, we have
\[
  \bsx \in (K_{\infty}^{\sep})^d, \quad \rF_{\bsdelta}(\bsx) \in K_{\infty}^{\sep}.
\]
\end{lemma}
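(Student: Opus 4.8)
The plan is to follow the $t$-division argument of Denis and Brownawell; the one subtlety is that $K_{\infty}^{\sep}$ is not complete, which I will sidestep by keeping every computation inside a fixed finite --- hence complete --- extension of $K_{\infty}$.

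To this end, put $\bsalpha \assign \Exp_{\phi}(\bsx) \in (K_{\infty}^{\sep})^d$ and let $L_0 \subseteq K_{\infty}^{\sep}$ be the extension of $K_{\infty}$ generated by the coordinates of $\bsalpha$, the entries of the coefficient matrices of $\phi_t$, and the entries of $\bsdelta_t$. Then $L_0/K_{\infty}$ is finite and separable; in particular $L_0$ is complete, and since $\FF_q \subseteq L_0$ it is closed under $q$-th powers, i.e.\ under Frobenius twisting. By Remark~\ref{R:ExpoverL} the power series $\Exp_{\phi}(\bsz)$ has coefficient matrices with entries in $L_0$; hence so does its formal compositional inverse $\Log_{\phi}(\bsz)$, since the linear coefficient of $\Exp_{\phi}$ is $\Id_d$ and the remaining coefficients of $\Log_{\phi}$ are then explicit polynomial expressions over $L_0$ in the lower ones; and likewise $\rF_{\bsdelta}(\bsz)$ has coefficients in $\Mat_{1\times d}(L_0)$ by the discussion preceding~\eqref{E:deltatoFdelta}.

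Next comes the $t$-division. For $n \geqslant 0$ set $\bsx_n \assign (\rd\phi_t)^{-n}\bsx$, which is defined because $\rd\phi_t = \theta\Id_d + \rN_{\phi}$ is invertible. As $\rd\colon \Mat_d(\KK)[\tau] \to \Mat_d(\KK)$ is a ring homomorphism we have $\rd\phi_{t^n} = (\rd\phi_t)^n$, so the functional equation for $\Exp_{\phi}$ yields
\[
  \phi_{t^n}\bigl( \Exp_{\phi}(\bsx_n) \bigr) = \Exp_{\phi}\bigl( (\rd\phi_t)^n \bsx_n \bigr) = \Exp_{\phi}(\bsx) = \bsalpha ,
\]
so $\bsbeta_n \assign \Exp_{\phi}(\bsx_n)$ is a $\KK$-point of the fiber $\phi_{t^n}^{-1}(\bsalpha)$. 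Viewed as an $L_0$-scheme cut out by $\phi_{t^n}(\bsz) = \bsalpha$, this fiber is finite (it is a coset of the finite group $\phi[t^n]$, which is $0$-dimensional since $\Lie(\phi[t^n]) = \ker(\rd\phi_t)^n = 0$) and smooth, because the Jacobian of the system is the constant invertible matrix $(\rd\phi_t)^n$ --- the higher Frobenius-twist terms contribute nothing to the partial derivatives in characteristic $p$. Hence the fiber is \'etale over $L_0$, and since $K_{\infty}^{\sep}$ is separably closed, every $\KK$-point of it has coordinates in one finite separable extension $L_n \supseteq L_0$ of $K_{\infty}$. In particular $\bsbeta_n \in (L_n)^d$, and $L_n$ is again complete.

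Finally the small-point step. By Remark~\ref{R:dphitinverse} we have $\inorm{(\rd\phi_t)^{-n}} \to 0$, hence $\inorm{\bsx_n} \to 0$; choose $n$ large enough that $\bsx_n$ lies in a ball on which $\Exp_{\phi}$ is an isometric embedding with inverse $\Log_{\phi}$ (cf.\ \cite[Lem.~2.5.4]{HartlJuschka20}). Then $\bsx_n = \Log_{\phi}(\bsbeta_n)$, a series all of whose partial sums lie in the field $L_n$ (by the second paragraph $\Log_\phi$ has coefficients over $L_0\subseteq L_n$, and $L_n$ is closed under $q$-th powers); since $L_n$ is complete, this series converges in $L_n$, so $\bsx_n \in (L_n)^d \subseteq (K_{\infty}^{\sep})^d$ and therefore $\bsx = (\rd\phi_t)^n \bsx_n \in (K_{\infty}^{\sep})^d$. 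For the last assertion, $\bsx$ now lies in $(L')^d$ for some finite --- hence complete --- separable extension $L' \supseteq L_0$ of $K_{\infty}$, and since $\rF_{\bsdelta}(\bsz) = \sum_{i \geqslant 1} \bsc_i \bsz^{(i)}$ is entire (Proposition~\ref{P:quasi}) with $\bsc_i \in \Mat_{1\times d}(L_0) \subseteq \Mat_{1\times d}(L')$, its value $\rF_{\bsdelta}(\bsx)$ is a limit of partial sums in $L'$, so $\rF_{\bsdelta}(\bsx) \in L' \subseteq K_{\infty}^{\sep}$. The point requiring care is exactly the incompleteness of $K_{\infty}^{\sep}$: confining all coefficients and intermediate points to a single finite subextension of $K_{\infty}$ is what upgrades the evident convergence in $\KK$ to convergence inside $K_{\infty}^{\sep}$; the other ingredient is the \'etaleness (separability) of the isogenies $\phi_{t^n}$.
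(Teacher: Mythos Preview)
Your proof is correct and follows essentially the same approach as the paper's: work inside a finite (hence complete) extension of $K_{\infty}$, push $\bsx$ down via $(\rd\phi_t)^{-n}$ until it lands in the isometry ball of $\Exp_{\phi}$, and recover it with $\Log_{\phi}$. The one place you differ is in justifying $\bsbeta_n \in (K_{\infty}^{\sep})^d$: you argue that the fiber $\phi_{t^n}^{-1}(\bsalpha)$ is \'etale over $L_0$ (invertible constant Jacobian $(\rd\phi_t)^n$), whereas the paper lands $\bsbeta_n$ directly in $L$ by appealing to completeness---implicitly a Hensel/Newton argument exploiting that same invertible Jacobian to produce the unique small $L$-rational solution; these are two faces of the same fact, and your version makes that step more explicit at the cost of passing to a possibly larger $L_n \supseteq L_0$.
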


\begin{proof}
Let $\bsu = \Exp_{\phi}(\bsx)$.  Let $L \subseteq K_{\infty}^{\sep}$ be a finite extension of $K_{\infty}$ that contains all of the entries of coefficients of $\phi_t$ and $\bsdelta_t$ as well as the entries of $\bsu$.  Then $L$ is a complete local field with respect to $\inorm{\,\cdot\,}$.  Also, as formal power series $\Exp_{\phi}(\bsz) \in \power{L}{z_1,\dots, z_d}^d$, and also its inverse $\Log_{\phi}(\bsz) \in \power{L}{z_1, \dots, z_d}^d$.  Now as in Remark~\ref{R:dphitinverse}, we see that $\inorm{(\rd \phi_t)^{-n} \bsx} \to 0$ as $n \to \infty$, and so for $n \gg 0$, $(\rd\phi_t)^{-n}\bsx$ lies within the domain of convergence of $\Log_{\phi}(\bsz)$.  Also, as in \cite[Lem.~2.5.4]{HartlJuschka20}, $\Exp_\phi(\bsz)$ is an isometric embedding on a ball of sufficiently small radius, so we can assume that
\[
  \bigl\lvert \Exp_{\phi} \bigl( (\rd\phi_t)^{-n}\bsx \bigr) \bigr\rvert_{\infty} = \bigl\lvert (\rd\phi_t)^{-n}\bsx \bigr\rvert_{\infty}.
\]
Because $L$ is complete, it follows that $\Exp_{\phi}((\rd\phi_t)^{-n}\bsx) \in L^d$, and likewise
\[
  (\rd \phi_t)^{-n} \bsx = \Log_{\phi} \bigl( \Exp_{\phi} \bigl( (\rd\phi_t)^{-n}\bsx \bigr) \bigr) \in L^d.
\]
Thus, $\bsx \in L^d$, and since $\rF_{\bsdelta}(\bsz) \in \power{L}{z_1, \dots, z_d}$, the completeness of $L$ implies that $\rF_{\bsdelta}(\bsx) \in L$.
\end{proof}

\begin{definition} \label{D:strictlyquasi}
If $\bsdelta_1, \dots, \bsdelta_k$ represent $\KK$-linearly independent classes in $\rH^1_{\sr}(\phi)$, we say that the extension $\psi$ is a \emph{strictly quasi-periodic extension} of $\phi$.  The following result will be useful for exchanging biderivations for ones in more convenient forms, while maintaining the same vector spaces spanned by their quasi-periods and quasi-logarithms.
\end{definition}

\begin{proposition}[{\cite[Prop.~3.3.3]{BP02}}]\label{P:strictqpextbasis}
Let $\bsdelta_1, \dots, \bsdelta_k$ and $\bsdelta_1', \dots, \bsdelta_k'$ be $\phi$-biderivations representing classes in $\rH^1_{\sr}(\phi)$.  Then the corresponding quasi-periodic extensions $\psi$ and $\psi'$ are isomorphic as $t$-module extensions of $\phi$ by $\Ga^k$ if and only if the biderivations generate the same $\KK$-subspace of $\rH^1_{\sr}(\phi)$.
\end{proposition}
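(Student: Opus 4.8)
The plan is to translate the question into an explicit dictionary between isomorphisms of the two extensions and invertible transformations of the defining biderivation tuples, and then to finish with an elementary fact about spans of $k$-tuples of vectors in $\rH^1_{\sr}(\phi)$. Here, by an isomorphism of the extensions I mean a $t$-module isomorphism $\mu : \psi \to \psi'$ which induces the identity on the common quotient $\phi$ and carries the sub-$t$-module $\Ga^k \subset \psi$ onto $\Ga^k \subset \psi'$. First I would pin down the shape of such a $\mu$. Any such $\mu$ is a matrix in $\Mat_{k+d}(\KK[\tau])$ which is block upper triangular of the form $\mu = \left(\begin{smallmatrix} A & \nu \\ 0 & \Id_d \end{smallmatrix}\right)$ with $\nu \in \Mat_{k\times d}(\KK[\tau])$, where $A$ is the matrix of an endomorphism of the $t$-module $\Ga^k$ on which $t$ acts as $\theta\,\Id_k$. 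Writing $A = \sum_{j\geqslant 0} A_j \tau^j$, the relation $A\,(\theta\Id_k) = (\theta\Id_k)\,A$ in $\Mat_k(\KK[\tau])$ becomes $(\theta^{q^j}-\theta)A_j = 0$ via $\tau^j \theta = \theta^{q^j}\tau^j$, so $A = A_0 \in \Mat_k(\KK)$, and invertibility of $\mu$ forces $A \in \GL_k(\KK)$ (with $\mu^{-1} = \left(\begin{smallmatrix} A^{-1} & -A^{-1}\nu \\ 0 & \Id_d \end{smallmatrix}\right)$). Writing $\Delta, \Delta' \in \Mat_{k\times d}(\KK[\tau]\tau)$ for the matrices whose $i$-th rows are $(\bsdelta_i)_t$ and $(\bsdelta_i')_t$, so that $\psi_t = \left(\begin{smallmatrix}\theta\Id_k & \Delta \\ 0 & \phi_t\end{smallmatrix}\right)$ and similarly for $\psi'$, a direct block computation shows that $\mu\psi_t = \psi'_t\mu$ is equivalent to the single identity
\[
\Delta' = A\Delta + \nu\phi_t - \theta\nu .
\]

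Next I would read off this identity one row at a time. If $\bsu_i$ denotes the $i$-th row of $\nu$, then since the left-hand side of the displayed equation lies in $\Mat_{k\times d}(\KK[\tau]\tau)$, so does $\nu\phi_t-\theta\nu$; comparing constant terms gives $\rd\bsu_i \cdot \rN_{\phi} = 0$, i.e.\ $\rd\bsu_i \in \rN_{\phi}^{\perp}$, so each $\bsu_i$ determines an inner biderivation $\bsdelta^{(\bsu_i)}$ with $(\bsdelta^{(\bsu_i)})_t = \bsu_i\phi_t - \theta\bsu_i$. Since a $\phi$-biderivation is determined by its value at $t$ (Proposition~\ref{P:biderprops}(a)), the $i$-th row of the displayed identity says exactly $\bsdelta_i' = \sum_j A_{ij}\bsdelta_j + \bsdelta^{(\bsu_i)}$ in $\Der(\phi)$, hence $[\bsdelta_i'] = \sum_j A_{ij}[\bsdelta_j]$ in $\rH^1_{\sr}(\phi) = \Der(\phi)/\Der_{\inn}(\phi)$. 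Conversely, given any $A \in \GL_k(\KK)$ with $[\bsdelta_i'] = \sum_j A_{ij}[\bsdelta_j]$ for all $i$, each difference $\bsdelta_i' - \sum_j A_{ij}\bsdelta_j$ lies in $\Der_{\inn}(\phi)$ and so equals $\bsdelta^{(\bsu_i)}$ for some $\bsu_i \in \cM_{\phi}$ with $\rd\bsu_i \in \rN_{\phi}^{\perp}$; taking $\nu$ to have rows $\bsu_i$ and $\mu = \left(\begin{smallmatrix} A & \nu \\ 0 & \Id_d \end{smallmatrix}\right)$ produces an honest isomorphism of extensions. Thus $\psi \cong \psi'$ as extensions of $\phi$ by $\Ga^k$ if and only if there is $A \in \GL_k(\KK)$ with $[\bsdelta'] = A[\bsdelta]$, where $[\bsdelta] = ([\bsdelta_1],\dots,[\bsdelta_k])^{\tr}$ and $[\bsdelta'] = ([\bsdelta_1'],\dots,[\bsdelta_k'])^{\tr}$ are viewed as column vectors over $\rH^1_{\sr}(\phi)$.

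It then remains to observe that such an invertible $A$ exists precisely when $\Span{\KK}([\bsdelta_1],\dots,[\bsdelta_k]) = \Span{\KK}([\bsdelta_1'],\dots,[\bsdelta_k'])$. One direction is immediate, since an invertible $A$ preserves the span. For the converse, let $W$ be the common span, of dimension $w \leqslant k$, fix a $\KK$-basis of $W$, and let $C, C' \in \Mat_{k\times w}(\KK)$ be the coordinate matrices of the two tuples; both have rank $w$ because their rows span $W$, so their $w$ columns are linearly independent vectors in $\KK^k$, and any $A \in \GL_k(\KK)$ sending the columns of $C$ to those of $C'$ satisfies $AC = C'$, i.e.\ $A[\bsdelta] = [\bsdelta']$. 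I expect the only point requiring real care to be the first one: fixing the correct notion of isomorphism of extensions and verifying that its $\Ga^k$-component is a \emph{constant} invertible matrix — this is exactly where the $t$-module structure of $\Ga^k$ enters through $\tau^j\theta = \theta^{q^j}\tau^j$ — together with the bookkeeping that keeps $\nu\phi_t - \theta\nu$ free of a constant term; everything after the dictionary with $\rH^1_{\sr}(\phi)$ is routine.
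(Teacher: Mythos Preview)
The paper does not give its own proof of this proposition; it is simply quoted from \cite[Prop.~3.3.3]{BP02}. Your argument is correct and is essentially the natural one: you identify isomorphisms of extensions with block upper triangular $\mu = \left(\begin{smallmatrix} A & \nu \\ 0 & \Id_d \end{smallmatrix}\right)$, use $\tau^j\theta = \theta^{q^j}\tau^j$ to force $A \in \GL_k(\KK)$, translate $\mu\psi_t = \psi'_t\mu$ into $\Delta' = A\Delta + (\nu\phi_t - \theta\nu)$, and recognize the rows of $\nu\phi_t - \theta\nu$ as inner biderivations so that the relation descends to $[\bsdelta'] = A[\bsdelta]$ in $\rH^1_{\sr}(\phi)$; the final linear algebra step is routine. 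This is precisely the dictionary one expects and is in the same spirit as the cited source.
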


\subsection{Anderson generating functions} \label{subS:AGF}
Originally defined by Anderson~\cite[\S 3.2]{And86} in the context of uniformizability of $t$-modules and rigid analytic trivializations, Anderson generating functions have proved to be a useful tool for investigating periods, quasi-periods, $L$-series, motivic Galois groups, and more for Drinfeld modules and $t$-modules (e.g., see~\cite{AndThak90}, \cite{AnglesPellarin15}, \cite{CP11}, \cite{CP12}, \cite{EP14}, \cite{GezmisP19}, \cite{Goss94}, \cite{GreenNgoDac20a}, \cite{GreenP18}, \cite{HartlJuschka20}, \cite{Maurischat22}, \cite{Pellarin12}, \cite{PellarinPerkins16}, \cite{Sinha97}). In this section we investigate properties of Anderson generating functions for arbitrary $t$-modules, some of which is inspired by computations of Green~\cite[\S 6]{Green22}, Pellarin~\cite[\S 4.2]{Pellarin08}, and the second author~\cite[\S 3.3]{PLogAlg}.

Fix an Anderson $t$-module $\phi : \bA \to \Mat_d(\KK[\tau])$, and let $\bsy \in \KK^d$.  The \emph{Anderson generating function} for $\phi$ with respect to $\bsy$ is the vector of power series,
\begin{equation} \label{E:AGFdef}
  \cG_{\bsy} \assign \sum_{n=0}^{\infty} \Exp_{\phi} \bigl( (\rd \phi_t)^{-n-1} \bsy \bigr) t^n \in \TT^d.
\end{equation}
A priori we have $\cG_{\bsy} \in \power{\KK}{t}^d$, but because $\inorm{(\rd \phi_t)^{-n-1}} \to 0$ as $n \to \infty$ by Remark \ref{R:dphitinverse}, we see that its components are in $\TT$.  This first lemma shows that $\cG_{\bsy}$ can be expressed as the sum of convenient rational functions.

\begin{lemma} \label{L:AGFsum}
Let $\phi : \bA \to \Mat_d(\KK[\tau])$ be an Anderson $t$-module, and let $\bsy \in \KK^d$.  Then recalling $\Exp_{\phi}(\bsz) = \sum_{i \geqslant 0} C_i \bsz^{(i)}$ in~\eqref{E:Expdef}, we have the identity
\begin{equation} \label{E:AGFsum}
  \cG_{\bsy} = \sum_{i=0}^{\infty} C_i \Bigl( \bigl( \rd \phi_t - t\Id_d\bigr)^{-1} \Bigr)^{(i)} \bsy^{(i)},
\end{equation}
which converges in $\TT^d$.
\end{lemma}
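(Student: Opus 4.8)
The plan is to prove \eqref{E:AGFsum} by comparing, for each $n \geqslant 0$, the coefficients of $t^n$ on both sides; this sidesteps the delicate task of reordering the double sum over $n$ and $i$ directly.

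First I would check that the right-hand side of \eqref{E:AGFsum} converges in $\TT^d$. Writing $\rd\phi_t = \theta\Id_d + \rN_\phi$ with $\rN_\phi$ nilpotent, every eigenvalue of $\rd\phi_t - t\Id_d$ equals $\theta - t$, so $\det(\rd\phi_t - t\Id_d) = (\theta - t)^d$; hence by Cramer's rule the entries of $(\rd\phi_t - t\Id_d)^{-1}$ lie in $\theta^{-d}(1 - t/\theta)^{-d}\,\KK[t] \subseteq \TT$, the inclusion holding because $\inorm{1/\theta} = q^{-1} < 1$. Put $c \assign \dnorm{(\rd\phi_t - t\Id_d)^{-1}}$. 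Since Frobenius twisting acts on Gauss norms by $\dnorm{f^{(i)}} = \dnorm{f}^{q^i}$ and $\inorm{\bsy^{(i)}} = \inorm{\bsy}^{q^i}$, the $i$-th summand satisfies
\[
  \dnorm{ C_i \bigl( (\rd \phi_t - t\Id_d)^{-1} \bigr)^{(i)} \bsy^{(i)} } \leqslant \inorm{C_i}\,(c\inorm{\bsy})^{q^i} = \bigl( \inorm{C_i}^{1/q^i} \cdot c\inorm{\bsy} \bigr)^{q^i}.
\]
Anderson's theorem that $\Exp_\phi$ is entire is exactly the statement $\inorm{C_i}^{1/q^i} \to 0$, so these norms tend to $0$ and the series converges in the Banach space $\TT^d$.

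Next I would expand each summand as a power series in $t$. Set $M_i \assign (\rd\phi_t)^{(i)}$. Since twisting is a ring endomorphism fixing $t$, we have $\bigl( (\rd\phi_t - t\Id_d)^{-1} \bigr)^{(i)} = (M_i - t\Id_d)^{-1}$. The geometric series $\sum_{n \geqslant 0} M_i^{-n-1} t^n$ converges in $\Mat_d(\TT)$, because $M_i^{-n-1} = \bigl( (\rd\phi_t)^{-n-1} \bigr)^{(i)}$ and $\inorm{(\rd\phi_t)^{-n-1}} \to 0$ by Remark~\ref{R:dphitinverse}; a telescoping computation gives $(M_i - t\Id_d)\sum_{n} M_i^{-n-1} t^n = \Id_d$, whence
\[
  C_i \bigl( (\rd\phi_t - t\Id_d)^{-1} \bigr)^{(i)} \bsy^{(i)} = \sum_{n=0}^{\infty} C_i \bigl( (\rd\phi_t)^{(i)} \bigr)^{-n-1} \bsy^{(i)}\, t^n \in \TT^d.
\]

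Finally, the $\KK$-linear map $\TT^d \to \KK^d$ extracting the coefficient of $t^n$ is continuous (contractive), so it may be applied termwise to the convergent series on the right of \eqref{E:AGFsum}. Combined with the previous display, the coefficient of $t^n$ there equals $\sum_{i \geqslant 0} C_i \bigl( (\rd\phi_t)^{(i)} \bigr)^{-n-1} \bsy^{(i)} = \sum_{i \geqslant 0} C_i \bigl( (\rd\phi_t)^{-n-1}\bsy \bigr)^{(i)} = \Exp_\phi\bigl( (\rd\phi_t)^{-n-1}\bsy \bigr)$, which by \eqref{E:AGFdef} is precisely the coefficient of $t^n$ in $\cG_{\bsy}$. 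Since two elements of $\TT^d$ with equal $t$-expansions coincide, the lemma follows. The main point requiring care is the norm bookkeeping in the first paragraph — specifically, that the entireness of $\Exp_\phi$ beats the growth of $(c\inorm{\bsy})^{q^i}$ introduced by the twists — since the coefficient comparison cleanly replaces what would otherwise be a Fubini-type interchange argument.
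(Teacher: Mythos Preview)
Your proof is correct and follows essentially the same arc as the paper's: establish convergence of the right-hand side in $\TT^d$ using $\inorm{C_i}^{1/q^i}\to 0$, expand $(\rd\phi_t - t\Id_d)^{-1}$ as a geometric series in $t$, and then identify coefficients. The paper carries out the last step by directly interchanging the double sum over $i$ and $n$, appealing to an estimate of Green showing $\bigl\lvert C_i\bigl((\rd\phi_t)^{-n-1}\bsy\bigr)^{(i)}\bigr\rvert\to 0$ as $\max(i,n)\to\infty$. Your device of applying the (contractive, hence continuous) coefficient-extraction map termwise is a clean repackaging of exactly that interchange: it encodes the needed Fubini step in the Banach-space convergence already established, so no separate uniform estimate is required. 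The gain is modest but real---your argument is self-contained, whereas the paper outsources the interchange justification---and nothing is lost.
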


\begin{proof}
Since $\Exp_{\phi}(\bsz)$ is entire, it follows that $\inorm{C_i}^{1/q^i} \to 0$ as $i \to \infty$, and thus the right-hand side of~\eqref{E:AGFsum} has terms that go to~$0$ as $i \to \infty$ with respect to $\dnorm{\,\cdot\,}$ on $\TT^d$, and therefore the right-hand side converges in~$\TT^d$.  We then observe that because $\inorm{(\rd \phi_t)^{-n}} \to 0$ as $n \to \infty$ (Remark~\ref{R:dphitinverse}),
\begin{equation} \label{E:geomseries}
  \bigl( \rd \phi_t - t\Id_d\bigr)^{-1} = (\rd \phi_t)^{-1} \bigl(\Id_d - (\rd \phi_t)^{-1} t\bigr)^{-1}
  = (\rd \phi_t)^{-1} \sum_{n=0}^{\infty} (\rd \phi_t)^{-n} t^n.
\end{equation}
By substitution,
\begin{align*}
  \sum_{i=0}^{\infty} C_i \Bigl( \bigl( \rd \phi_t - t\Id_d\bigr)^{-1} \Bigr)^{(i)} \bsy^{(i)} &= \sum_{i=0}^{\infty} C_i \biggl( (\rd \phi_t)^{-1} \sum_{n=0}^{\infty} (\rd \phi_t)^{-n} t^n \biggr)^{(i)} \bsy^{(i)} \\
  &= \sum_{n=0}^{\infty} \biggl( \sum_{i=0}^{\infty} C_i \Bigl( (\rd \phi_t)^{-n-1} \bsy \Bigr)^{(i)} \biggr)t^n,
\end{align*}
which completes the proof once we justify interchanging the sum. This justification is due to Green~\cite[Pf.~of Prop.~6.3]{Green22}, whose proof transfers with only minor changes to our setting. In particular his analysis shows that
\[
\Bigl\lvert C_i \Bigl( (\rd \phi_t)^{-n-1} \bsy \Bigr)^{(i)} \Bigr| \to 0, \quad \textup{as $\max(i,n) \to \infty$},
\]
which permits interchanging the double sum by~\cite[\S 2.1.2]{Robert}.
\end{proof}

\begin{remark} \label{R:AGFconvergence}
Because $\det(\rd\phi_t - t \Id_d) = c(\theta-t)^d$, $c \in \KK^{\times}$, it follows that the terms of
\[
  \bigl(\Omega^{(-1)} \bigr)^d \cdot \cG_{\bsy} = \sum_{i=0}^{\infty} \bigl(\Omega^{(-1)} \bigr)^d \cdot C_i \Bigl( \bigl( \rd \phi_t - t\Id_d\bigr)^{-1} \Bigr)^{(i)} \bsy^{(i)}
\]
are regular on all of $\KK$.  Furthermore, since $\inorm{C_i}^{1/q^i} \to 0$, it follows that $\bigl(\Omega^{(-1)} \bigr)^d \cdot \cG_{\bsy} \in \EE^d$, and thus the entries of $\cG_{\bsy}$ are meromorphic functions in $\MM$ with possible poles only at $t = \theta$, $\theta^q$, $\theta^{q^2}, \ldots$.  Moreover, we find for each $i \geqslant 1$ that $\cG_{\bsy}^{(i)} \in \TT_{\theta^{q^{i-1}}}^d$.
\end{remark}

For meromorphic functions $\bsf = (f_1, \dots, f_d)^{\tr} \in \MM^d$ and $x \in \KK$, we define
\begin{equation} \label{E:Resdef}
\Res_{t=x} (\bsf) \assign \bigl( \Res_{t=x} (f_1), \dots, \Res_{t=x} (f_d) \bigr)^{\tr}.
\end{equation}
The following proposition presents one of the crucial properties of Anderson generating functions.

\begin{proposition}\label{P:AGFres}
Let $\phi : \bA \to \Mat_d(\KK[\tau])$ be an Anderson $t$-module, and let $\bsy \in \KK^d$.  Then
\[
  \Res_{t=\theta} (\cG_{\bsy}) = -\bsy.
\]
\end{proposition}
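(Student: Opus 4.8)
The plan is to work from the rational-function expansion of Lemma~\ref{L:AGFsum}, namely
\[
  \cG_{\bsy} = \sum_{i=0}^{\infty} C_i \bigl( (\rd\phi_t - t\Id_d)^{-1} \bigr)^{(i)} \bsy^{(i)},
\]
and to extract the residue at $t=\theta$ one summand at a time, the point being that only the $i=0$ term has a pole there. Writing $\rd\phi_t = \theta\Id_d + \rN_{\phi}$ with $\rN_{\phi}$ nilpotent so that $\rN_{\phi}^d = 0$ (as in Remark~\ref{R:dphitinverse}), the geometric series gives
\[
  (\rd\phi_t - t\Id_d)^{-1} = \bigl( (\theta-t)\Id_d + \rN_{\phi} \bigr)^{-1} = -\sum_{j=0}^{d-1} \rN_{\phi}^j (t-\theta)^{-(j+1)},
\]
so, using $C_0 = \Id_d$, the $i=0$ summand $(\rd\phi_t - t\Id_d)^{-1}\bsy$ is a vector of rational functions whose only pole is at $t=\theta$, and reading off the coefficient of $(t-\theta)^{-1}$ gives $\Res_{t=\theta}\bigl((\rd\phi_t - t\Id_d)^{-1}\bsy\bigr) = -\bsy$.

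Next I would show the remaining terms contribute nothing. For $i\ge 1$, twisting gives $\bigl((\rd\phi_t - t\Id_d)^{-1}\bigr)^{(i)} = -\sum_{j=0}^{d-1} (\rN_{\phi}^{(i)})^j (t-\theta^{q^i})^{-(j+1)}$, a rational function whose only pole is at $t=\theta^{q^i}$; since $\inorm{\theta^{q^i}} = q^{q^i} > q = \inorm{\theta}$, this lies in $\TT_{\theta}$ with $\dnorm{\,\cdot\,}_{\theta} \le \max_{0\le j\le d-1} \inorm{\rN_{\phi}}^{q^i j} q^{-q^i(j+1)}$. Multiplying by $C_i$ and $\bsy^{(i)}$ and taking $q^i$-th roots, the resulting bound has $q^i$-th root at most $\inorm{C_i}^{1/q^i}\cdot\max\bigl(q^{-1},\inorm{\rN_{\phi}}^{d-1}q^{-d}\bigr)\cdot\inorm{\bsy}$, which tends to $0$ because $\inorm{C_i}^{1/q^i}\to 0$ (entirety of $\Exp_{\phi}$, cf.\ Remark~\ref{R:AGFconvergence}). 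Thus $g \assign \sum_{i\ge 1} C_i\bigl((\rd\phi_t - t\Id_d)^{-1}\bigr)^{(i)}\bsy^{(i)}$ converges in $\TT_{\theta}^d$, and every element of $\TT_{\theta}$ is regular at $t=\theta$, so $\Res_{t=\theta}(g) = 0$.

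To finish I would glue the pieces together. On the unit disk Lemma~\ref{L:AGFsum} reads $\cG_{\bsy} = (\rd\phi_t - t\Id_d)^{-1}\bsy + g$; by Remark~\ref{R:AGFconvergence} the left side continues meromorphically to $\KK$ with poles only at $\theta, \theta^q, \theta^{q^2}, \dots$, so both sides are meromorphic on the closed disk of radius $\inorm{\theta}$ with a pole only at $t=\theta$, and the identity principle for rigid analytic functions forces the equality on that whole disk. Taking residues at $t=\theta$ then yields $\Res_{t=\theta}(\cG_{\bsy}) = -\bsy + 0 = -\bsy$. The step requiring real care is exactly this analytic-continuation bookkeeping: the defining series for $\cG_{\bsy}$ converges only on the unit disk while $\theta$ lies outside it, so the residue cannot be read off directly and one must route through the rational expansion of Lemma~\ref{L:AGFsum} and control its tail on the larger disk underlying $\TT_{\theta}$; by contrast the Laurent computation of the $i=0$ term and the ultrametric estimate on the tail are routine.
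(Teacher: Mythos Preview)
Your proof is correct and follows the same overall strategy as the paper: isolate the $i=0$ summand from Lemma~\ref{L:AGFsum}, compute its residue at $t=\theta$, and show the tail $\sum_{i\geqslant 1}$ is regular there. The difference lies in how the residue of $(\rd\phi_t - t\Id_d)^{-1}\bsy$ is extracted. You expand directly via the nilpotent geometric series $(\rd\phi_t - t\Id_d)^{-1} = -\sum_{j=0}^{d-1}\rN_\phi^j(t-\theta)^{-(j+1)}$ and read off the $(t-\theta)^{-1}$ coefficient, which is clean and coordinate-free. The paper instead first assumes $\rd\phi_t$ is in Jordan normal form, computes $(\rd\phi_t - t\Id_d)^{-1}$ block by block using $d_{\theta,\ell_i}[(\theta-t)^{-1}]$, and then reduces the general case by conjugating through~$\rQ$. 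Your route is shorter; the paper's route has the side benefit of producing the explicit principal parts of each entry of $\cG_{\bsy}$ recorded in Remark~\ref{R:AGFprincipalpart}, which are used repeatedly later (e.g., in \eqref{E:Ctensordiff}, \eqref{E:strictlypureE0gyha}, \eqref{E:Oofone1}). Your treatment of the tail and the analytic-continuation bookkeeping is also more explicit than the paper's, which simply asserts \eqref{E:GyRes} by appeal to Remark~\ref{R:AGFconvergence}. One cosmetic point: your displayed bound $\max(q^{-1},\inorm{\rN_\phi}^{d-1}q^{-d})$ need not be the actual maximum over $0\leqslant j\leqslant d-1$, but any finite bound independent of~$i$ suffices, so the argument is unaffected.
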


\begin{proof}
From \eqref{E:AGFsum}, we see that
\begin{equation} \label{E:GyRes}
  \Res_{t=\theta}(\cG_{\bsy}) = \Res_{t=\theta} \bigl( (\rd\phi_t - t\Id_d)^{-1}\bsy \bigr).
\end{equation}
Before proceeding to the general case, we begin by assuming that $\rd\phi_t$ is in Jordan normal form as in~\eqref{E:phitJordan}.  In that case,
\[
  \rd\phi_t - t\Id_d = \begin{pmatrix} d_{\theta,\ell_1}[\theta-t] & & \\ & \ddots & \\ & & d_{\theta,\ell_m}[\theta-t] \end{pmatrix},
\]
and so
\begin{equation} \label{E:dphitminustInv}
  (\rd\phi_t - t\Id_d)^{-1} = \begin{pmatrix} d_{\theta,\ell_1} \bigl[ (\theta-t)^{-1} \bigr] & & \\ & \ddots & \\ & & d_{\theta,\ell_m}\bigl[ (\theta-t)^{-1} \bigr] \end{pmatrix}.
\end{equation}
Now for $j \geqslant 0$, we have $\pd_{\theta}^{j} ( (\theta-t)^{-1}) = (-1)^j (\theta-t)^{-j-1}$, and so the off-diagonal entries of~\eqref{E:dphitminustInv} are either~$0$ or $\pm 1/(\theta-t)^{-k}$ for $k \geqslant 2$.
Thus writing $\bsy = (y_1, \dots, y_d)^{\tr}$, we have
\[
(\rd\phi_t - t\Id_d)^{-1}\bsy = \biggl( \frac{y_1}{\theta-t} + R_1(t), \dots, \frac{y_d}{\theta-t} + R_d(t) \biggr)^{\tr},
\]
where for each $j$,
\[
  R_j(t) \in \KK\cdot (t-\theta)^{-2} + \dots + \KK\cdot (t-\theta)^{-d}.
\]
(See also Remark~\ref{R:AGFprincipalpart} below.) Taking residues at $t=\theta$ we see that $\Res_{t=\theta} (\cG_{\bsy}) = -\bsy$ from~\eqref{E:GyRes}.

Now if $\rd \phi_t$ is not in Jordan normal form, we can choose a matrix $\rQ \in \GL_{d}(\KK)$ so that the resulting $t$-module $\rho : \bA \to \Mat_d(\KK[\tau])$ defined by $\rho_t = \rQ^{-1} \phi_t \rQ$ does have $\rd \rho_t$ in Jordan normal form.  Then
\begin{align*}
  \Res_{t=\theta} \bigl( (\rd\phi_t - t\Id_d)^{-1}\bsy \bigr) &= \Res_{t=\theta} \bigl( (\rQ (\rd\rho_t - t\Id_d) \rQ^{-1})^{-1}\bsy \bigr) \\
  &= \rQ \cdot \Res_{t=\theta} \bigl( (\rd \rho_t - t\Id_d)^{-1} \cdot \rQ^{-1}\bsy\bigr) \\
  &= \rQ \cdot (-\rQ^{-1} \bsy),
\end{align*}
where the last equality is the Jordan normal form case, and by~\eqref{E:GyRes} we are done.
\end{proof}

\begin{remark} \label{R:AGFprincipalpart}
When $\rd\phi_t$ is in Jordan normal form as in~\eqref{E:phitJordan}, we expand some of the details of the above proof for later use.  For any $\ell \geqslant 1$, a straightforward calculation yields
\[
  d_{\theta,\ell} \biggl[ \frac{1}{\theta-t} \biggr] = - \begin{pmatrix}
  \dfrac{1}{t-\theta} & \dfrac{1}{(t-\theta)^2} & \cdots & \dfrac{1}{(t-\theta)^{\ell}} \\
   & \dfrac{1}{t-\theta} & \ddots & \vdots \\
    & & \ddots & \dfrac{1}{(t-\theta)^2} \\
    & & & \dfrac{1}{t-\theta}
   \end{pmatrix}.
\]
For $1 \leqslant j \leqslant m$, we let $d_j \assign \ell_1 + \cdots + \ell_j$.  Thus if we write $\bsy = (y_1, \dots, y_d)^{\tr}$ and $\cG_{y} = (g_1, \dots, g_d)^{\tr}$, then by \eqref{E:GyRes} we have identities in $\laurent{\KK}{t-\theta}$,
\begin{align} \label{E:giprinparts}
  g_{d_{j-1}+1} &= -\frac{y_{d_{j}}}{(t-\theta)^{\ell_j}} - \frac{y_{d_{j}-1}}{(t-\theta)^{\ell_j-1}} - \cdots - \frac{y_{d_{j-1}+1}}{t-\theta} +  O(1), \\
  g_{d_{j-1}+2} &= -\frac{y_{d_j}}{(t-\theta)^{\ell_j-1}} - \frac{y_{d_j-1}}{(t-\theta)^{\ell_j-2}} - \cdots - \frac{y_{d_{j-1}+2}}{t-\theta} +  O(1), \notag \\
  &\phantom{=} \qquad \vdots \notag \\
  g_{d_j} &= -\frac{y_{d_j}}{t-\theta} +  O(1), \notag
\end{align}
where `$O(1)$' represents $O((t-\theta)^0)$.
\end{remark}

Another important property of Anderson generating functions is their compatibility with the $t$-module structure of $\phi$.  Recall the notation $\twistop{\bsbeta}{\bsf}$ from~\eqref{E:twistop}.

\begin{proposition} \label{P:AGFprops}
Let $\phi : \bA \to \Mat_d(\KK[\tau])$ be an Anderson $t$-module, and let $\bsy \in \KK^d$.  The following hold.
\begin{enumerate}
\item[(a)] For every $a \in \bA$, $\twistop{\phi_a}{\cG_{\bsy}} = \cG_{\rd\phi_a\cdot \bsy}$.
\item[(b)] We have $\twistop{\phi_t}{\cG_{\bsy}} = t \cG_{\bsy} + \Exp_{\phi}(\bsy)$.
\item[(c)] If $\bslambda \in \Lambda_{\phi}$, then for every $a \in \bA$, $\twistop{\phi_a}{\cG_{\bslambda}} = a(t) \cG_{\bslambda}$.
\end{enumerate}
\end{proposition}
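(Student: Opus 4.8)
The plan is to prove all three parts directly from the series definition \eqref{E:AGFdef} together with the functional equation $\Exp_{\phi}(\rd\phi_a\,\bsz) = \phi_a(\Exp_{\phi}(\bsz))$ and the definition of the twisted pairing $\twistop{\cdot}{\cdot}$ in \eqref{E:twistop}. Part~(a) is the engine; parts (b) and~(c) will follow quickly from it. The one point requiring care throughout is that $\twistop{\phi_a}{\cdot}$ acts on $\TT^d$ via Frobenius twists applied to the $t$-variable, whereas the operator $\rd\phi_t$ inside the series is a matrix over $\KK$ acting on the (scalar) coefficients; these interact correctly precisely because $t$ is fixed by Frobenius twisting.

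First I would prove~(a). Write $\phi_a = \sum_{k=0}^{N} A_k \tau^k$ with $A_k \in \Mat_d(\KK)$, and compute
\[
  \twistop{\phi_a}{\cG_{\bsy}} = \sum_{k=0}^{N} A_k\, \cG_{\bsy}^{(k)}
  = \sum_{k=0}^{N} A_k \sum_{n=0}^{\infty} \Exp_{\phi}\bigl((\rd\phi_t)^{-n-1}\bsy\bigr)^{(k)} t^n.
\]
Since $t$ is Frobenius-fixed, $\bigl(\Exp_{\phi}((\rd\phi_t)^{-n-1}\bsy)\bigr)^{(k)}$ is obtained by twisting the coefficients, so interchanging the (absolutely convergent, by Remark~\ref{R:dphitinverse}) sums gives
\[
  \twistop{\phi_a}{\cG_{\bsy}}
  = \sum_{n=0}^{\infty} \Bigl( \sum_{k=0}^{N} A_k \Exp_{\phi}\bigl((\rd\phi_t)^{-n-1}\bsy\bigr)^{(k)} \Bigr) t^n
  = \sum_{n=0}^{\infty} \phi_a\bigl(\Exp_{\phi}\bigl((\rd\phi_t)^{-n-1}\bsy\bigr)\bigr)\, t^n,
\]
using \eqref{E:tautwists} for the inner sum. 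Now apply the functional equation $\phi_a(\Exp_{\phi}(\bsz)) = \Exp_{\phi}(\rd\phi_a\cdot\bsz)$ with $\bsz = (\rd\phi_t)^{-n-1}\bsy$, and note $\rd\phi_a$ commutes with $(\rd\phi_t)^{-n-1}$ since both are polynomials in $\rd\phi_t$ (indeed $\rd$ is an $\FF_q$-algebra homomorphism $\bA \to \Mat_d(\KK)$, so $\rd\phi_a$ is a polynomial in $\rd\phi_t$). Hence the $n$-th term equals $\Exp_{\phi}\bigl((\rd\phi_t)^{-n-1}(\rd\phi_a\cdot\bsy)\bigr)$, and the sum is exactly $\cG_{\rd\phi_a\cdot\bsy}$.

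For~(b), take $a=t$ in the computation above but keep the $n=0$ term separate: with $\phi_t = \rd\phi_t + B_1\tau + \cdots + B_\ell\tau^\ell$, the same manipulation yields $\twistop{\phi_t}{\cG_{\bsy}} = \sum_{n\geqslant 0}\Exp_{\phi}\bigl(\rd\phi_t\cdot(\rd\phi_t)^{-n-1}\bsy\bigr)t^n = \sum_{n\geqslant 0}\Exp_{\phi}\bigl((\rd\phi_t)^{-n}\bsy\bigr)t^n$. The $n=0$ term is $\Exp_{\phi}(\bsy)$, and reindexing $n \leftarrow n-1$ in the remaining sum (for $n\geqslant 1$) produces $t\sum_{m\geqslant 0}\Exp_{\phi}\bigl((\rd\phi_t)^{-m-1}\bsy\bigr)t^m = t\,\cG_{\bsy}$, giving $\twistop{\phi_t}{\cG_{\bsy}} = t\,\cG_{\bsy} + \Exp_{\phi}(\bsy)$. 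For~(c), if $\bslambda\in\Lambda_\phi$ then $\Exp_{\phi}(\bslambda)=0$, so~(b) specializes to $\twistop{\phi_t}{\cG_{\bslambda}} = t\,\cG_{\bslambda}$; since the pairing is associative \eqref{E:twistopassoc} and $\bA$-linear in the right entry, an induction on $\deg_t a$ using $\phi_{ta} = \phi_a\phi_t$ gives $\twistop{\phi_a}{\cG_{\bslambda}} = a(t)\,\cG_{\bslambda}$ for all $a\in\bA$. The only genuine obstacle is justifying the interchange of the double summation in~(a): this follows from Remark~\ref{R:dphitinverse}, which gives $\inorm{(\rd\phi_t)^{-n-1}}\to 0$, hence $\dnorm{A_k\Exp_{\phi}((\rd\phi_t)^{-n-1}\bsy)^{(k)}}\to 0$ as $\max(k,n)\to\infty$ (the finitely many $k$'s cause no trouble), so convergence in $\TT^d$ permits the rearrangement by \cite[\S 2.1.2]{Robert} exactly as in the proof of Lemma~\ref{L:AGFsum}.
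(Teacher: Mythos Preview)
Your proof is correct and follows essentially the same approach as the paper's own proof: compute $\twistop{\phi_a}{\cG_{\bsy}}$ coefficient-by-coefficient using the functional equation for $\Exp_\phi$, reindex the sum for~(b), and obtain~(c) by iterating~(b). The paper's argument is terser (it does not spell out the interchange of sums or the commutativity of $\rd\phi_a$ with $(\rd\phi_t)^{-n-1}$), but the substance is the same; your added details are all correct and appropriate.
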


\begin{proof}
Each part follows from standard arguments (e.g., see~\cite[\S 4.2]{Pellarin08}).  For part~(a),
\[
  \twistop{\phi_a}{\cG_{\bsy}} = \sum_{n=0}^{\infty} \phi_a \bigl( \Exp_{\phi} \bigl( (\rd \phi_t)^{-n-1} \bsy \bigr) \bigr) t^n = \sum_{n=0}^{\infty} \Exp_{\phi} \bigl( (\rd \phi_t)^{-n-1} \rd \phi_a \bsy \bigr) t^n.
\]
Taking this same equation with $a=t$, after reindexing the sum we find
\[
  \twistop{\phi_t}{\cG_{\bsy}} = \sum_{n=0}^{\infty} \Exp_{\phi} \bigl( (\rd \phi_t)^{-n}\bsy \bigr) t^n
  = \Exp_{\phi}(\bsy) + \sum_{n=0}^{\infty} \Exp_{\phi} \bigl( (\rd \phi_t)^{-n-1}\bsy \bigr) t^{n+1},
\]
and~(b) follows.  Part~(c) follows from successive applications of~(b).
\end{proof}

For $\bsm \in \Mat_{1\times d}(\KK[\tau]) = \cM_{\phi}$ and $f \in \KK[t]$, it would seem possible that $\twistop{f \cdot \bsm}{\cG_{\bsy}} \mayeq f \cdot \twistop{\bsm}{\cG_{\bsy}}$, but this is not usually the case.  In the left expression we are multiplying $f$ against an element of the $t$-motive $\cM_{\phi}$, whereas on the right we are multiplying $f$ against a vector in~$\TT$, and these actions may not produce the same results.  However, the above proposition allows us to determine the discrepancy.

\begin{corollary} \label{C:AGFtwistedmult}
Let $\phi : \bA \to \Mat_d(\KK[\tau])$ be an Anderson $t$-module, and let $\bsy \in \KK^d$.  Let $\bsm \in \Mat_{1\times d}(\KK[\tau])$, and suppose $f = \sum_{i=0}^s c_i t^i \in \KK[t]$.
\begin{enumerate}
\item[(a)] Then
\[
  \twistop{f \cdot \bsm}{\cG_{\bsy}} = f\cdot \twistop{\bsm}{\cG_{\bsy}} + \sum_{i=1}^{s} \sum_{j=0}^{i-1} c_i t^{i-1-j} \cdot \bsm \phi_{t^j} \bigl( \Exp_{\phi}(\bsy) \bigr).
\]
\item[(b)] In particular, if $\bslambda \in \Lambda_{\phi}$, then $\twistop{f\cdot \bsm}{\cG_{\bslambda}} = f \cdot \twistop{\bsm}{\cG_{\bslambda}}$.
\end{enumerate}
\end{corollary}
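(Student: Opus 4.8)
The plan is to exploit the associativity of the pairing $\twistop{\cdot}{\cdot}$ from~\eqref{E:twistopassoc} together with its $\bA$-linearity in the right entry, thereby reducing everything to iterating Proposition~\ref{P:AGFprops}(b). First I would use~\eqref{E:tactionmotive} to write $f\cdot\bsm = \sum_{i=0}^s c_i\,\bsm\phi_{t^i}$ in $\cM_{\phi}$. Then biadditivity and $\KK$-linearity of $\twistop{\cdot}{\cdot}$ in the left entry, followed by associativity in the form $\twistop{\bsm\phi_{t^i}}{\cG_{\bsy}} = \twistop{\bsm}{\twistop{\phi_{t^i}}{\cG_{\bsy}}}$, show that it suffices to compute $\twistop{\phi_{t^i}}{\cG_{\bsy}}$ for each $i\geqslant 0$.

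The key computation is the claim, writing $\boldsymbol{E}\assign\Exp_{\phi}(\bsy)$, that
\[
  \twistop{\phi_{t^i}}{\cG_{\bsy}} = t^i\,\cG_{\bsy} + \sum_{j=0}^{i-1} t^{\,i-1-j}\,\phi_{t^j}(\boldsymbol{E}), \qquad i\geqslant 0,
\]
which I would prove by induction on $i$. The case $i=0$ is the empty-sum statement $\twistop{1}{\cG_{\bsy}} = \cG_{\bsy}$, and $i=1$ is exactly Proposition~\ref{P:AGFprops}(b). For the inductive step, $\phi_{t^{i+1}}=\phi_t\phi_{t^i}$ in $\Mat_d(\KK[\tau])$ gives $\twistop{\phi_{t^{i+1}}}{\cG_{\bsy}} = \twistop{\phi_t}{\twistop{\phi_{t^i}}{\cG_{\bsy}}}$; substituting the inductive hypothesis, pulling the scalar $t$-powers out of the right slot by $\bA$-linearity in the right entry, and then applying Proposition~\ref{P:AGFprops}(b) to the $\cG_{\bsy}$-term and the identity $\twistop{\phi_t}{\bsx}=\phi_t(\bsx)$ for $\bsx\in\KK^d$ (the remark after~\eqref{E:twistop}, cf.\ \eqref{E:tautwists}) to each term $t^{\,i-1-j}\phi_{t^j}(\boldsymbol{E})$, one reindexes the resulting sum to recover the formula for $i+1$.

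With the claim in hand I would substitute back to get
\[
  \twistop{f\cdot\bsm}{\cG_{\bsy}} = \sum_{i=0}^s c_i t^i\,\twistop{\bsm}{\cG_{\bsy}} + \sum_{i=0}^s c_i\sum_{j=0}^{i-1} t^{\,i-1-j}\,\twistop{\bsm}{\phi_{t^j}(\boldsymbol{E})},
\]
again extracting the scalars $t^i$ and $t^{\,i-1-j}$ by $\bA$-linearity in the right entry and using $\twistop{\bsm}{\phi_{t^j}(\boldsymbol{E})} = \bsm\phi_{t^j}(\boldsymbol{E})$ since $\phi_{t^j}(\boldsymbol{E})\in\KK^d$. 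The first sum is $f\cdot\twistop{\bsm}{\cG_{\bsy}}$, the $i=0$ term of the double sum is empty, and this is precisely part~(a). Part~(b) is then immediate, since $\Exp_{\phi}(\bslambda)=0$ for $\bslambda\in\Lambda_{\phi}$ annihilates the double sum; alternatively it follows at once from Proposition~\ref{P:AGFprops}(c), which gives $\twistop{\phi_{t^i}}{\cG_{\bslambda}} = t^i\cG_{\bslambda}$ directly.

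I do not anticipate a serious obstacle here; the only point requiring care — and indeed the entire content of the corollary — is to keep track of which of the two a priori different $\KK[t]$-actions is in play at each step: multiplication in the $t$-motive $\cM_{\phi}$ on the one hand, and scalar multiplication by $t$ on $\TT^d$ on the other. This bookkeeping stays clean provided one always moves $t$-powers into the right slot of $\twistop{\cdot}{\cdot}$ before invoking Proposition~\ref{P:AGFprops}.
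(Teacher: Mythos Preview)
Your proposal is correct and follows essentially the same route as the paper: both first establish by induction on Proposition~\ref{P:AGFprops}(b) the identity $\twistop{\phi_{t^i}}{\cG_{\bsy}} = t^i\cG_{\bsy} + \sum_{j=0}^{i-1} t^{i-1-j}\phi_{t^j}(\Exp_{\phi}(\bsy))$, then expand $f\cdot\bsm = \sum c_i\bsm\phi_{t^i}$ and use associativity and $\bA$-linearity of $\twistop{\cdot}{\cdot}$ to assemble the result. Your write-up is in fact more explicit about the induction step than the paper's, which simply states the key formula as following ``by induction on Proposition~\ref{P:AGFprops}(b).''
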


\begin{proof}
By induction on Proposition~\ref{P:AGFprops}(b), it follows that for $i \geqslant 0$,
\begin{equation}
  \twistop{\phi_{t^i}}{\cG_{\bsy}} = t^i \cG_{\bsy} + \sum_{j=0}^{i-1} t^{i-1-j} \phi_{t^j}\bigl( \Exp_{\phi}(\bsy) \bigr).
\end{equation}
Thus,
\begin{align*}
  \twistop{f \cdot \bsm}{\cG_{\bsy}} &= \biggl\langle \sum_{i=0}^s c_i \bsm \phi_{t^i} \biggm| \cG_{\bsy} \biggr\rangle \\
  &= \sum_{i=0}^s c_i \biggl\langle \bsm \biggm| t^i \cG_{\bsy} + \sum_{j=0}^{i-1} t^{i-1-j} \phi_{t^j}(\Exp_{\phi}(\bsy)) \biggr\rangle \\
  &= \sum_{i=0}^s c_i t^i \cdot \twistop{\bsm}{\cG_{\bsy}} + \sum_{i=1}^s \sum_{j=0}^{i-1} c_i t^{i-1-j} \bsm \phi_{t^j} \bigl( \Exp_{\phi}(\bsy) \bigr),
\end{align*}
and since the first term is $f \cdot \twistop{\bsm}{\cG_{\bsy}}$, we are done.
\end{proof}

\subsection{The de Rham pairing} \label{subS:deRhampairing}
Using Anderson generating functions as defined in \S\ref{subS:AGF}, we form a pairing
\[
  ( \cdot \, , \cdot ) : \tau \cM_{\phi} \times \Lambda_{\phi} \to \TT_{\theta}
\]
defined by
\[
  ( \bsalpha \tau,\bslambda ) = \twistop{\bsalpha \tau}{\cG_{\bslambda}} = \bigl\langle \bsalpha \bigm| \cG_{\bslambda}^{(1)} \bigr\rangle.
\]
By Remark~\ref{R:AGFconvergence}, we see that $(\Omega^{(-1)})^d \cdot \cG_{\bslambda} \in \EE^d$, and hence $\Omega^d\cdot \cG_{\bslambda}^{(1)} \in \EE^d$. Since $\Omega \in \TT_{\theta}^{\times}$ by~\eqref{E:Omegadef}, it follows that $\cG_{\bslambda}^{(1)}$ has entries in $\TT_{\theta}$, and thus this pairing is well-defined.
We verify immediately that $( \cdot\,, \cdot )$ is biadditive, and for any $a \in \bA$, Corollary~\ref{C:AGFtwistedmult} implies that
\[
  ( a \cdot \bsalpha \tau, \bslambda) = \twistop{a\cdot \bsalpha \tau}{\cG_{\bslambda}} = a(t) \cdot \twistop{\bsalpha \tau}{\cG_{\bslambda}}.
\]
Furthermore by direct calculation,
\[
( a \cdot \bsalpha \tau,\bslambda ) = ( \bsalpha \tau \phi_a,\bslambda ) = \twistop{\bsalpha \tau}{\twistop{\phi_a}{\cG_{\bslambda}}}, \quad
( \bsalpha \tau, \rd \phi_a \bslambda ) = \bigl\langle \bsalpha \tau \bigm| \cG_{\rd \phi_a\cdot \bslambda} \bigr\rangle,
\]
and so Proposition~\ref{P:AGFprops} then implies that
\[
  (a \cdot \bsalpha \tau,\bslambda ) = ( \bsalpha \tau, \rd \phi_a \bslambda ) = a(t) ( \bsalpha \tau, \bslambda),
\]
making $( \cdot\,, \cdot )$ $\bsA$-bilinear.  We check further that it is left $\KK[t]$-linear in the left entry.

Now using the identification of $\Der(\phi) \cong \tau\cM_{\phi}$ from Proposition~\ref{P:biderprops}(a), we have an induced $\bA$-bilinear pairing
\begin{equation} \label{E:predeRhampairing}
 (\cdot\, ,\cdot ) : \Der(\phi) \times \Lambda_{\phi} \to \TT_{\theta}.
\end{equation}
given by
\begin{equation}
(\bsdelta,\bslambda) = \twistop{\bsdelta_t}{\cG_{\bslambda}}.
\end{equation}
Moreover, the identification $\Der_{\si}(\phi) \cong (t-\theta)\tau\cM_{\phi}$ implies that if $\bsalpha \tau = (t-\theta)\bsbeta \tau$, then
\[
( \bsalpha \tau, \bslambda ) = ( (t-\theta)\bsbeta \tau, \bslambda)
= (t-\theta) ( \bsbeta\tau,\bslambda)
= (t-\theta) \twistop{\bsbeta \tau}{\cG_{\bslambda}}.
\]
Therefore, we have an induced $\bA$-bilinear pairing
\begin{equation}
  [ \cdot\, , \cdot ] : \rH^1_{\DR}(\phi) \times \Lambda_{\phi} \to \KK,
\end{equation}
defined by
\begin{equation}
  [\bsdelta, \bslambda ] = (\bsdelta,\bslambda)|_{t=\theta} = \twistop{\bsdelta_t}{\cG_{\bslambda}}|_{t=\theta},
\end{equation}
which we call the \emph{de Rham pairing}.  The connection with quasi-periods and quasi-logarithms is the following, inspired by calculations of Anderson~\cite[\S 2.6]{Goss94}, Gekeler~\cite[Eq.~(5.3)]{Gekeler89b}, and Pellarin~\cite[\S 4.2]{Pellarin08}.

\begin{proposition} \label{P:quasigen}
For an abelian $t$-module $\phi : \bA \to \Mat_d(\KK[\tau])$ and $\phi$-biderivation $\bsdelta \in \Der(\phi)$, the following hold.
\begin{enumerate}
\item[(a)] For any $\bsy \in \KK^d$,
\[
  \twistop{\bsdelta_t}{\cG_{\bsy}}|_{t=\theta} = \rF_{\bsdelta}(\bsy).
\]
\item[(b)] In particular, for $\bslambda \in \Lambda_{\phi}$ we have
\[
[\bsdelta, \bslambda ] = \twistop{\bsdelta_t}{\cG_{\bslambda}}|_{t=\theta} = \rF_{\bsdelta}(\bslambda) = \eta_{\bsdelta}(\bslambda).
\]
\end{enumerate}
\end{proposition}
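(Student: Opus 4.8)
The plan is to prove part~(a); part~(b) is then immediate, since by the definitions in \S\ref{subS:deRhampairing} we have $[\bsdelta,\bslambda] = (\bsdelta,\bslambda)|_{t=\theta} = \twistop{\bsdelta_t}{\cG_{\bslambda}}|_{t=\theta}$, while $\eta_{\bsdelta}(\bslambda) = \rF_{\bsdelta}(\bslambda)$ by definition, so (a) applied with $\bsy = \bslambda$ identifies all three quantities at once. For (a), I would introduce the function $G_{\bsdelta} : \KK^d \to \KK$ defined by $G_{\bsdelta}(\bsy) \assign \twistop{\bsdelta_t}{\cG_{\bsy}}|_{t=\theta}$, and verify that it has the characterizing properties of $\rF_{\bsdelta}$ from Proposition~\ref{P:quasi}, so that $G_{\bsdelta} = \rF_{\bsdelta}$ by uniqueness. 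First, $G_{\bsdelta}$ is well-defined: since $\bsdelta_t \in \Mat_{1\times d}(\KK[\tau]\tau)$ has $\tau$-degree $\geqslant 1$, the expression $\twistop{\bsdelta_t}{\cG_{\bsy}}$ is a $\KK$-linear combination of $\cG_{\bsy}^{(1)}, \cG_{\bsy}^{(2)},\dots$, each regular at $t=\theta$ by Remark~\ref{R:AGFconvergence}. Second, $\bsy\mapsto\cG_{\bsy}$ is $\FF_q$-linear, hence so is $G_{\bsdelta}$. Third, substituting the expansion $\cG_{\bsy} = \sum_{i\geqslant 0}C_i((\rd\phi_t - t\Id_d)^{-1})^{(i)}\bsy^{(i)}$ of Lemma~\ref{L:AGFsum}: writing $\bsdelta_t = \sum_{j\geqslant 1}B_j\tau^j$, each summand $B_j\cG_{\bsy}^{(j)}$ of $\twistop{\bsdelta_t}{\cG_{\bsy}}$ contributes only terms $\bsy^{(k)}$ with $k\geqslant j\geqslant 1$, so $G_{\bsdelta}(\bsz)=\sum_{k\geqslant 1}\bse_k\bsz^{(k)}$ is an entire $\FF_q$-linear power series with vanishing linear term, matching the shape demanded in Proposition~\ref{P:quasi}.

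The crux is the functional equation. Fix $a\in\bA$. By Proposition~\ref{P:AGFprops}(a), $\cG_{\rd\phi_a\bsy} = \twistop{\phi_a}{\cG_{\bsy}}$, so by the associativity~\eqref{E:twistopassoc} of $\twistop{\cdot}{\cdot}$ we get $G_{\bsdelta}(\rd\phi_a\bsy) = \twistop{\bsdelta_t\phi_a}{\cG_{\bsy}}|_{t=\theta}$. The biderivation identity applied to $at=ta$ yields $\bsdelta_t\phi_a = a(\theta)\bsdelta_t + \bsdelta_a\phi_t - \theta\bsdelta_a$, where $\theta\bsdelta_a$ denotes \emph{left} scalar multiplication. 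Applying $\twistop{\cdot}{\cG_{\bsy}}$ (which is $\KK$-linear and associative in its left argument), and using Proposition~\ref{P:AGFprops}(b) together with $\twistop{\bsdelta_a}{t\cG_{\bsy}} = t\twistop{\bsdelta_a}{\cG_{\bsy}}$ (as $t$ is fixed by twisting) and $\twistop{\bsdelta_a}{\Exp_{\phi}(\bsy)} = \bsdelta_a(\Exp_{\phi}(\bsy))$ ($\Exp_{\phi}(\bsy)$ being a constant vector), one obtains
\[
  \twistop{\bsdelta_t\phi_a}{\cG_{\bsy}} = a(\theta)\twistop{\bsdelta_t}{\cG_{\bsy}} + (t-\theta)\twistop{\bsdelta_a}{\cG_{\bsy}} + \bsdelta_a(\Exp_{\phi}(\bsy)).
\]
Since $\bsdelta_a$ has $\tau$-degree $\geqslant 1$, $\twistop{\bsdelta_a}{\cG_{\bsy}}$ is regular at $t=\theta$, so evaluating at $t=\theta$ kills the middle term and leaves $G_{\bsdelta}(\rd\phi_a\bsy) = a(\theta)G_{\bsdelta}(\bsy) + \bsdelta_a(\Exp_{\phi}(\bsy))$, which is precisely~\eqref{E:Fdeltafneq} for $\rF_{\bsdelta}$ evaluated at $\bsy$. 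As $G_{\bsdelta}$ agrees, as a function on $\KK^d$, with the entire $\FF_q$-linear function $\rF_{\bsdelta}$ characterized by these same properties and also having vanishing linear term, the uniqueness in Proposition~\ref{P:quasi} forces $G_{\bsdelta} = \rF_{\bsdelta}$. This proves (a), and (b) follows by specializing $\bsy$ to a period as noted above.

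The one place that requires vigilance is the non-commutativity in $\bsdelta_t\phi_a = a(\theta)\bsdelta_t + \bsdelta_a\phi_t - \theta\bsdelta_a$: it is tempting to fold $\bsdelta_a\phi_t - \theta\bsdelta_a$ into $\bsdelta_a(\phi_t - \theta\Id_d)$, but right-multiplying $\bsdelta_a$ by $\theta\Id_d$ pushes the scalar $\theta$ past the Frobenius twists, so this rewriting is \emph{false} and manufactures a spurious term proportional to $\theta^{q^j}-\theta$ which does not vanish at $t=\theta$ (as a sanity check, for the Carlitz module with $\bsdelta_t=\tau$ it would force $\Exp_C(y)=y$). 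The cancellation above works precisely because one keeps $\twistop{\bsdelta_a}{\twistop{\phi_t}{\cG_{\bsy}}}$ and $\twistop{\theta\bsdelta_a}{\cG_{\bsy}}=\theta\twistop{\bsdelta_a}{\cG_{\bsy}}$ separate. As an alternative route avoiding this point entirely, one can expand $\cG_{\bsy} = \sum_{n\geqslant 0}\Exp_{\phi}((\rd\phi_t)^{-n-1}\bsy)t^n$, use~\eqref{E:Fdeltafneq} with $a=t$ to get $\twistop{\bsdelta_t}{\Exp_{\phi}((\rd\phi_t)^{-n-1}\bsy)} = \rF_{\bsdelta}((\rd\phi_t)^{-n}\bsy) - \theta\rF_{\bsdelta}((\rd\phi_t)^{-n-1}\bsy)$, telescope the resulting series at $t=\theta$, and note that the tail $\theta^{N+1}\rF_{\bsdelta}((\rd\phi_t)^{-N-1}\bsy)$ tends to $0$ because $\rF_{\bsdelta}$ vanishes to order $\geqslant q$ at the origin while $\inorm{(\rd\phi_t)^{-N-1}\bsy} = O(\inorm{\theta}^{-N-1})$ by Remark~\ref{R:dphitinverse}.
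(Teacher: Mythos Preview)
Your proof is correct. The paper follows the same overall strategy---show that $\twistop{\bsdelta_t}{\cG_{\bsy}}|_{t=\theta}$ satisfies the characterizing functional equation of Proposition~\ref{P:quasi} and invoke uniqueness---but its execution is more economical: it works with the formal series $\cF_{\bsdelta}(\bsz) = \bigl\langle\bsdelta_t\bigm|\sum_{n\geqslant 0}\Exp_\phi((\rd\phi_t)^{-n-1}\bsz)\,t^n\bigr\rangle$ in $\power{\TT_\theta}{z_1,\dots,z_d}$ and observes directly from the definition that $\cF_{\bsdelta}(\rd\phi_t\bsz) - t\,\cF_{\bsdelta}(\bsz) = \bsdelta_t(\Exp_\phi(\bsz))$ (a one-step telescope in the $t$-series), then specializes at $t=\theta$. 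This yields the functional equation for $a=t$ only, which already pins down $\rF_{\bsdelta}$ uniquely by~\cite[Pf.~of Prop.~3.2.1]{BP02}; no appeal to Proposition~\ref{P:AGFprops} or to the biderivation identity $\bsdelta_t\phi_a = a(\theta)\bsdelta_t + \bsdelta_a\phi_t - \theta\bsdelta_a$ is needed. Your main argument instead routes through those ingredients to establish the functional equation for \emph{every} $a\in\bA$, which is more than required but makes the role of the biderivation axiom transparent. Your alternative route---using the known functional equation of $\rF_{\bsdelta}$ to telescope the evaluated series and bounding the tail $\theta^{N+1}\rF_{\bsdelta}((\rd\phi_t)^{-N-1}\bsy)$---is a genuinely different computation that avoids the uniqueness argument altogether; it is also correct, though the paper's version is cleaner precisely because it telescopes \emph{before} specializing at $t=\theta$, so no convergence estimate on the tail is needed.
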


\begin{proof}
Part~(b) follows immediately from (a) and the definition of $[ \cdot \, , \cdot]$.  For~(a), consider the series
\[
  \cF_{\bsdelta}(\bsz) = \biggl\langle \bsdelta_t \biggm| \sum_{n=0}^{\infty} \Exp_{\phi} \bigl( (\rd \phi_t)^{-n-1} \bsz \bigr) t^n \biggr\rangle \in \power{\TT_{\theta}}{z_1, \dots, z_d}^d.
\]
Then
\begin{align*}
  \cF_{\bsdelta}( \rd \phi_t \cdot \bsz ) - t \cF_{\bsdelta}(\bsz)
  &= \sum_{n=0}^{\infty} \bsdelta_t \bigl( \Exp_{\phi} \bigl( (\rd \phi_t)^{-n} \bsz \bigr) \bigr) t^n
  -  \sum_{n=1}^{\infty} \bsdelta_t \bigl( \Exp_{\phi} \bigl( (\rd \phi_t)^{-n} \bsz \bigr) \bigr) t^n \\
  &= \bsdelta_t \bigl( \Exp_{\phi}(\bsz)).
\end{align*}
This last expression is an element of $\power{\KK}{z_1, \dots, z_d}^d$, and so specializing at $t=\theta$, we have
\begin{equation} \label{E:cFdeltafneq}
  \cF_{\bsdelta}( \rd \phi_t \cdot \bsz )|_{t=\theta} - \theta \cF_{\bsdelta}(\bsz)|_{t=\theta} = \bsdelta_t \Exp_{\phi}(\bsz).
\end{equation}
Now since $\bsdelta_t \in \tau\cM_{\phi}$, it follows that $\cF_{\bsdelta}(\bsz)|_{t=\theta} \in \power{\KK}{z_1, \dots, z_d}$ is $\FF_q$-linear and has no linear terms.  By~\eqref{E:cFdeltafneq}, we see that $\cF_{\bsdelta}(\bsz)|_{t=\theta}$ satisfies~\eqref{E:Fdeltafneq} for $a = t$.  Since this identity determines $\rF_{\bsdelta}(\bsz)$ uniquely~\cite[Pf.~of Prop.~3.2.1]{BP02}, we have $\cF_{\bsdelta}(\bsz)|_{t=\theta} = \rF_{\bsdelta}(\bsz)$.  We are done after substituting $\bsz = \bsy$, since $\cF_{\bsdelta}(\bsz)|_{\bsz = \bsy} = \twistop{\bsdelta_t}{\cG_{\bsy}}$.
\end{proof}

Now assume further that $\phi$ is uniformizable.  Let $\{ \bsm_1, \dots, \bsm_r \}$ be a $\KK[t]$-basis of $\cM_{\phi}$, and let $\{ \bslambda_1, \dots, \bslambda_r \}$ be an $\bA$-basis of $\Lambda_{\phi}$.  With respect to these bases, the pairing $(\cdot\, ,\cdot ): \tau \cM_{\phi} \times \Lambda_{\phi} \to \TT_{\theta}$ is represented by the matrix in $\Mat_r(\TT_\theta)$,
\begin{equation} \label{E:Upsilondef}
  \Upsilon \assign \left\langle \begin{pmatrix}
  \tau \bsm_1 \\ \vdots \\ \tau \bsm_r
  \end{pmatrix} \Biggm|
  \bigl( \cG_{\bslambda_1}, \ldots, \cG_{\bslambda_r} \bigr) \right\rangle
  =
  \begin{pmatrix}
  \twistop{\tau \bsm_1}{\cG_{\bslambda_1}} & \cdots & \twistop{\tau \bsm_1}{\cG_{\bslambda_r}} \\
  \vdots & & \vdots \\
  \twistop{\tau \bsm_r}{\cG_{\bslambda_1}} & \cdots & \twistop{\tau \bsm_r}{\cG_{\bslambda_r}}
  \end{pmatrix}.
\end{equation}
From Proposition~\ref{P:abeliantframeprops}, we can pick $\tPhi \in \Mat_d(\KK[t])$ so that $\tau \bsm = \tPhi \bsm$, where $\bsm = (\bsm_1, \dots, \bsm_r)^{\tr}$, and we obtain
\begin{multline} \label{E:Upsilonfneq1}
  \Upsilon^{(-1)} = \twistop{\bsm}{(\cG_{\bslambda_1}, \dots, \cG_{\bslambda_r})} \\
  = \bigl\langle \tPhi^{-1} \cdot \tau\bsm \bigm| (\cG_{\bslambda_1}, \dots, \cG_{\bslambda_r}) \bigr\rangle
  = \tPhi^{-1} \bigl\langle \tau \bsm \bigm| (\cG_{\bslambda_1}, \dots, \cG_{\bslambda_r}) \bigr\rangle = \tPhi^{-1} \Upsilon,
\end{multline}
where in the third equality we have applied Corollary~\ref{C:AGFtwistedmult}.

\begin{lemma} \label{L:detUpsilon}
With notation as above, $\det \Upsilon \neq 0$.
\end{lemma}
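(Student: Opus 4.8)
The plan is to argue by contradiction: assuming $\det\Upsilon=0$, I will produce a nonzero period $\bsmu\in\Lambda_\phi$ whose Anderson generating function $\cG_{\bsmu}$ vanishes identically, which is impossible by Proposition~\ref{P:AGFres} since that forces $\Res_{t=\theta}\cG_{\bsmu}=-\bsmu\neq 0$.

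The first step is to upgrade the hypothesis $\det\Upsilon=0$ to a relation with coefficients in $\bA$. Let $F$ denote the fraction field of $\TT_\theta$. By Proposition~\ref{P:abeliantframeprops}(a) we have $\det\tPhi=c(t-\theta)^d$ with $c\in\KK^{\times}$, so $\tPhi$ is invertible over $F$, and~\eqref{E:Upsilonfneq1} may be rewritten as $\Upsilon=\tPhi\Upsilon^{(-1)}$. If $\det\Upsilon=0$, the kernel $K=\{\bsv\in F^r:\Upsilon\bsv=0\}$ is a nonzero $F$-subspace, and it is stable under entrywise Frobenius twisting: if $\Upsilon\bsv=0$ then $\Upsilon^{(-1)}\bsv=0$, and applying the $(\pm1)$-twist yields $\Upsilon\bsv^{(\pm1)}=0$ (using $(\Upsilon^{(-1)})^{(1)}=\Upsilon$ for the $(+1)$-twist and $\Upsilon^{(-1)}=\tPhi^{-1}\Upsilon$ for the $(-1)$-twist). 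Since the fixed field of Frobenius twisting on $F$ is $\FF_q(t)$, the standard descent for twist-stable subspaces (cf.\ the arguments of~\cite{ABP04}) shows that $K$ is spanned over $F$ by $K\cap\FF_q(t)^r$; choosing a nonzero element there and clearing denominators gives $\bsv=(\nu_1,\dots,\nu_r)^{\tr}\in\Mat_{r\times 1}(\bA)\setminus\{0\}$ with $\Upsilon\bsv=0$.

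Next I would translate this into a vanishing statement for Anderson generating functions. Because each $\nu_j$ lies in $\bA$, the pairing $\twistop{\cdot}{\cdot}$ is $\bA$-linear in its right argument, so the $i$-th coordinate of $\Upsilon\bsv=0$ reads $\twistop{\tau\bsm_i}{\sum_j\nu_j\cG_{\bslambda_j}}=0$ for $1\leqslant i\leqslant r$. Applying Proposition~\ref{P:AGFprops}(a),(c) to each $\bslambda_j\in\Lambda_\phi$ gives $\nu_j\cG_{\bslambda_j}=\twistop{\phi_{\nu_j}}{\cG_{\bslambda_j}}=\cG_{\rd\phi_{\nu_j}\bslambda_j}$, and summing (using that $\cG$ is additive in its argument, by~\eqref{E:AGFdef}) we obtain $\sum_j\nu_j\cG_{\bslambda_j}=\cG_{\bsmu}$ with $\bsmu:=\nu_1\cdot\bslambda_1+\dots+\nu_r\cdot\bslambda_r\in\Lambda_\phi$, and $\bsmu\neq 0$ because $\{\bslambda_1,\dots,\bslambda_r\}$ is an $\bA$-basis of $\Lambda_\phi$ (this is where uniformizability enters, guaranteeing $r=\rank_{\bA}\Lambda_\phi$). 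Hence $\twistop{\tau\bsm_i}{\cG_{\bsmu}}=0$ for all $i$.

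Finally I would spread this vanishing over all of $\cM_\phi$. Since $\{\bsm_1,\dots,\bsm_r\}$ is a $\KK[t]$-basis of $\cM_\phi$ and $\tau$ commutes with the $t$-action, every $\bsm\in\cM_\phi$ satisfies $\tau\bsm=\sum_i g_i\cdot(\tau\bsm_i)$ with $g_i\in\KK[t]$; Corollary~\ref{C:AGFtwistedmult}(b) (again using $\bsmu\in\Lambda_\phi$) then gives $\twistop{\tau\bsm}{\cG_{\bsmu}}=\sum_i g_i\twistop{\tau\bsm_i}{\cG_{\bsmu}}=0$. On the other hand $\twistop{\tau\bsm}{\cG_{\bsmu}}=\bigl(\twistop{\bsm}{\cG_{\bsmu}}\bigr)^{(1)}$ directly from the definition of $\twistop{\cdot}{\cdot}$, and Frobenius twisting is injective, so $\twistop{\bsm}{\cG_{\bsmu}}=0$ for every $\bsm\in\cM_\phi$; taking $\bsm$ to run over the standard basis vectors $\bss_1,\dots,\bss_d$ shows $\cG_{\bsmu}=0$, contradicting $\Res_{t=\theta}\cG_{\bsmu}=-\bsmu\neq 0$. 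The only input here not already developed in the excerpt is the descent step for twist-stable subspaces over $F$, and I expect pinning that down (and the harmless bookkeeping in passing between the $t$-motive action and $\twistop{\cdot}{\cdot}$) to be the main technical point; everything else is an immediate consequence of Propositions~\ref{P:abeliantframeprops},~\ref{P:AGFprops},~\ref{P:AGFres} and Corollary~\ref{C:AGFtwistedmult}.
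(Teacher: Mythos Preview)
Your proof is correct and follows essentially the same route as the paper's: both reduce via the twist-stability of the kernel (the paper cites this as the standard difference algebra argument from~\cite[Lem.~3.3.7, 4.1.4]{P08}) to a nonzero $\bA$-linear relation among the columns, combine the $\cG_{\bslambda_j}$ into $\cG_{\bsmu}$ for a nonzero $\bsmu\in\Lambda_\phi$, use Corollary~\ref{C:AGFtwistedmult}(b) to propagate the vanishing $\twistop{\tau\bsm_i}{\cG_{\bsmu}}=0$ over all of $\tau\cM_\phi$, and conclude $\cG_{\bsmu}=0$, contradicting Proposition~\ref{P:AGFres}. The only cosmetic difference is in the last step: the paper observes directly that $(0,\dots,0,\tau,0,\dots,0)\in\tau\cM_\phi$ picks out the twist of each coordinate of $\cG_{\bsmu}$, whereas you pass through the identity $\twistop{\tau\bsm}{\cG_{\bsmu}}=(\twistop{\bsm}{\cG_{\bsmu}})^{(1)}$ to reach the same conclusion.
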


\begin{proof}
Each column $\psi$ of $\Upsilon$ is a solution of the $\tau$-difference equation $\psi^{(1)} = \tPhi^{(1)} \psi$, and so by standard difference algebra arguments, if the columns of $\Upsilon$ are linearly independent over $\FF_q(t)$, then they are linearly independent over the fraction field~$\LL$ of~$\TT$ (cf.~\cite[Lem.~3.3.7, 4.1.4]{P08}).  Take $\bse_1, \dots, \bse_r$ to be the standard basis vectors in $\TT^{r}$ and suppose we have $a_1, \dots, a_r \in \bA$ that produce a vanishing linear combination of columns, $a_1 \Upsilon \bse_1 + \cdots + a_r \Upsilon \bse_r = 0$.  Letting $\bslambda = \rd \phi_{a_1} \bslambda_1 + \cdots + \rd \phi_{a_r} \bslambda_r$, we then have
\[
  \twistop{\tau \bsm_i}{\cG_{\bslambda}} = 0, \quad \forall\, i,\ 1 \leqslant i \leqslant r.
\]
Since $\tau \bsm_1, \dots, \tau \bsm_r$ form a $\KK[t]$-basis of $\tau\cM_{\phi}$ and since $\twistop{\,\cdot}{\cG_{\bslambda}}$ is $\KK[t]$-linear by Corollary~\ref{C:AGFtwistedmult}, it follows that for any $\bsalpha \in \tau \cM_{\phi}$, we have $\twistop{\bsalpha}{\cG_{\bslambda}} = 0$.  If the $i$-th entry of $\cG_{\bslambda}$ is non-zero, then so is the $i$-th entry of $\twistop{(0, \dots, 0, \tau, 0, \dots, 0)}{\cG_{\bslambda}}$, where the $\tau$ is placed in the $i$-th entry of the row vector on the left.  Therefore, it must be that $\cG_{\bslambda} = 0$ identically.  By Proposition~\ref{P:AGFres}, we must have $\bslambda = 0$, which implies that $a_1 = \cdots = a_r = 0$.
\end{proof}

Building on this lemma, we find that even more is true and that $\Upsilon \in \GL_r(\TT_\theta)$ (cf.\ \cite[Prop.~6.2.4]{GezmisP19}, \cite[Prop.~3.3.9]{P08}).

\begin{proposition} \label{P:Upsiloninvertible}
Let $\phi : \bA \to \Mat_d(\KK[\tau])$ be a uniformizable abelian $t$-module of rank~$r$, and let $\Upsilon \in \Mat_r(\TT_\theta)$ be defined as in~\eqref{E:Upsilondef}.  Then $\Upsilon \in \GL_r(\TT_\theta)$.
\end{proposition}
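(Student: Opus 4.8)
The plan is to leverage Lemma~\ref{L:detUpsilon}, which already establishes $\det \Upsilon \neq 0$, together with the functional equation~\eqref{E:Upsilonfneq1}, namely $\Upsilon^{(-1)} = \tPhi^{-1}\Upsilon$, and then apply the standard regularity result for solutions of Frobenius difference equations. Rewriting the functional equation in the form $\tPhi\, \Upsilon^{(-1)} = \Upsilon$, or equivalently $(\Upsilon^{(-1)})^{-1}\tPhi^{-1} = \Upsilon^{-1}$, I first want to show that $\Upsilon^{-1}$ has entries in $\TT$. Since $\det \Upsilon \neq 0$ by Lemma~\ref{L:detUpsilon}, the adjugate formula gives $\Upsilon^{-1} = (\det\Upsilon)^{-1}\operatorname{adj}(\Upsilon)$, whose entries a priori lie in the fraction field $\LL$ of $\TT$. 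The key point is to control $\det \Upsilon$. By Proposition~\ref{P:abeliantframeprops}(a) applied to the companion matrix $\tPhi$, we have $\det \tPhi = c(t-\theta)^d$ for some $c \in \KK^\times$; taking determinants in $\Upsilon = \tPhi\,\Upsilon^{(-1)}$ yields $\det\Upsilon = c(t-\theta)^d (\det\Upsilon)^{(-1)}$. Exactly as in Definition~\ref{D:rat}, this forces $\det\Upsilon = \delta\gamma\cdot\Omega^{d}$ for suitable $\gamma \in \KK$ with $\gamma^{(1-q)/q} = c$ and $\delta \in \FF_q^\times$ (up to the sign conventions there), using the defining difference equation~\eqref{E:Omegarat} for $\Omega$ and the fact that $\det\Upsilon \in \TT^\times$ by Lemma~\ref{L:detUpsilon}.

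Once $\det\Upsilon = \delta\gamma\,\Omega^d$ is known, since $\Omega \in \TT_\theta^\times$ by~\eqref{E:Omegadef} we conclude $\det\Upsilon \in \TT_\theta^\times$, and hence $\Upsilon^{-1}$ has entries in $\TT_\theta$ provided $\operatorname{adj}(\Upsilon)$ does. But the entries of $\Upsilon$ already lie in $\TT_\theta$ by construction — indeed $\cG_{\bslambda}^{(1)}$ has entries in $\TT_\theta$ by Remark~\ref{R:AGFconvergence} (since $(\Omega^{(-1)})^d\cG_{\bslambda} \in \EE^d$ gives $\Omega^d \cG_{\bslambda}^{(1)} \in \EE^d$, and $\Omega \in \TT_\theta^\times$), and the $\twistop{\tau\bsm_i}{\cdot}$ pairing with a polynomial twist-operator preserves $\TT_\theta$. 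Therefore $\operatorname{adj}(\Upsilon) \in \Mat_r(\TT_\theta)$, and dividing by the unit $\det\Upsilon \in \TT_\theta^\times$ shows $\Upsilon^{-1} \in \Mat_r(\TT_\theta)$. Combined with $\Upsilon \in \Mat_r(\TT_\theta)$, this gives $\Upsilon \in \GL_r(\TT_\theta)$, as desired.

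An alternative, perhaps cleaner route that avoids the adjugate bookkeeping: apply \cite[Prop.~3.1.3]{ABP04} (as invoked in Definition~\ref{D:rat}) directly. The matrix $\Upsilon$ satisfies $\Upsilon = \tPhi\,\Upsilon^{(-1)}$ with $\tPhi \in \Mat_r(\KK[t])$, $\det\tPhi = c(t-\theta)^d$, and $\det\Upsilon \in \TT^\times$ nonzero by Lemma~\ref{L:detUpsilon}; this is precisely the setup in which \cite[Prop.~3.1.3]{ABP04} (together with Proposition~\ref{P:abeliantframeprops}(a)) guarantees the entries of $\Upsilon^{-1}$, or rather of the solution matrix, are entire, i.e.\ lie in $\EE \subseteq \TT_\theta$. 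One must be slightly careful about whether the cited proposition is stated for $\Psi$ with $\Psi^{(-1)} = \Phi\Psi$ versus our $\Upsilon^{(-1)} = \tPhi^{-1}\Upsilon$; transposing and inverting reduces one form to the other, so this is routine. The main obstacle I anticipate is precisely this matching of conventions — ensuring the difference equation for $\Upsilon$ (or $\Upsilon^{-1}$, or $\Upsilon^{\tr}$) is put in exactly the shape required by the regularity lemma, and tracking that the relevant determinant is a nonzero element of $\TT^\times$ so that the hypotheses genuinely apply. Everything else is bounded-coefficient bookkeeping already contained in Remark~\ref{R:AGFconvergence} and Definition~\ref{D:rat}.
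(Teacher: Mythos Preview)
Your overall strategy matches the paper's: use the functional equation $\Upsilon = \tPhi\,\Upsilon^{(-1)}$ together with Lemma~\ref{L:detUpsilon} to pin down $\det\Upsilon$ as a unit of $\TT_\theta$ via comparison with $\Omega$. The paper does exactly this, transposing to $\Theta = \Upsilon^{\tr}$ and invoking the argument of \cite[Prop.~3.3.9(c)]{P08} to conclude $b\det\Theta = \gamma\cdot\Omega^{-d}$.

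However, there are two issues. The first is minor: the difference equation $\det\Upsilon = c(t-\theta)^d(\det\Upsilon)^{(-1)}$ is the \emph{reverse} of the one satisfied by $\det\Psi$ in Definition~\ref{D:rat}, so the conclusion is $\det\Upsilon \propto \Omega^{-d}$, not $\Omega^{d}$. This is harmless since $\Omega \in \TT_\theta^\times$ either way.

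The second is a genuine gap. You write ``the fact that $\det\Upsilon \in \TT^\times$ by Lemma~\ref{L:detUpsilon},'' but that lemma only gives $\det\Upsilon \neq 0$. This distinction matters: from $\det\Upsilon \in \TT_\theta$, $\det\Upsilon \neq 0$, and the scalar difference equation alone, one can only deduce that $\gamma\,\det\Upsilon\cdot\Omega^{d}$ is a twist-invariant element of $\TT_\theta$, hence lies in $\FF_q[t]$ --- but not necessarily in $\FF_q^\times$. A nonconstant $p(t) \in \FF_q[t]$ is not ruled out by this argument, and its roots (in $\overline{\FF}_q$, hence inside the disk of radius~$\inorm{\theta}$) would prevent $\det\Upsilon$ from being a unit in $\TT_\theta$. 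The argument in Definition~\ref{D:rat} genuinely uses $\det\Psi \in \TT^\times$ as a \emph{hypothesis} (part of the definition of rigid analytic trivialization) to force the twist-invariant polynomial down to $\FF_q^\times$; you cannot borrow that step here. Your alternative route via \cite[Prop.~3.1.3]{ABP04} applied to $\Xi = (\Upsilon^{\tr})^{-1}$ has the same circularity: a priori $\Xi$ only has entries in the fraction field of $\TT_\theta$, so the hypotheses of that proposition are not met until you already know $\det\Upsilon$ is a unit. The paper closes this gap by invoking the \emph{proof} (not just the statement) of \cite[Prop.~3.3.9(c)]{P08}, which works from the inputs $\Theta \in \Mat_r(\TT_\theta)$, $\det\Theta \neq 0$, and $\det\tPhi^{\tr} = c(t-\theta)^d$; you should either cite that argument or reproduce it. To your credit, you flag exactly this as ``the main obstacle,'' but the proposal as written does not resolve it.
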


\begin{proof}
By~\eqref{E:Upsilonfneq1}, we see that $\Upsilon = \tPhi \Upsilon^{(-1)}$, and if we set $\Theta = \Upsilon^{\tr}$, then $\Theta = \Theta^{(-1)}\tPhi^{\tr}$.  Since $\Theta \in \Mat_r(\TT_{\theta})$, $\det \Theta \neq 0$ by Lemma~\ref{L:detUpsilon}, and $\det (\tPhi^{\tr}) = c(t-\theta)^d$ for some $c \in \KK^{\times}$, the argument in the proof of \cite[Prop.~3.3.9(c), pp.~140--141]{P08} exactly applies to prove that
\[
  b \det \Theta = \gamma \cdot \Omega^{-d}, \quad b^{(1-q)/q} = c,\ \gamma \in \FF_q^{\times}.
\]
Since $\Omega \in \TT_{\theta}^{\times}$, we see that $\det \Upsilon = \det \Theta \in \TT_{\theta}^{\times}$.
\end{proof}

Let $\bsdelta_1, \dots, \bsdelta_r \in \Der(\phi)$ be $\phi$-biderivations chosen so that $(\bsdelta_i)_t = \tau \bsm_i$ for each $i$.  Then by Proposition~\ref{P:quasigen}(b),
\begin{equation} \label{E:Upsilonspecial}
  \Upsilon|_{t=\theta} = \begin{pmatrix} \rF_{\bsdelta_1}(\bslambda_1) & \cdots & \rF_{\bsdelta_1}(\bslambda_r) \\
  \vdots & & \vdots\\
   \rF_{\bsdelta_r}(\bslambda_1) & \cdots & \rF_{\bsdelta_r}(\bslambda_r) \end{pmatrix},
\end{equation}
and $\Upsilon|_{t=\theta}$ represents the de Rham pairing.  (Since $\rH^1_{\DR}(\phi) \cong \tau \cM_{\phi} / (t-\theta) \tau \cM_{\phi}$, it follows that $\bsdelta_1, \dots, \bsdelta_r$ represent a $\KK$-basis of $\rH^1_{\DR}(\phi)$.)  Moreover, Proposition~\ref{P:quasiperspans} implies immediately the following.

\begin{proposition} \label{P:Upsilonkbarspan}
Let $\phi : \bA \to \Mat_d(\oK[\tau])$ be a uniformizable abelian $t$-module of rank $r$ defined over $\oK$, and let $\bsm_1, \dots, \bsm_r \in \Mat_{1\times d}(\oK[\tau])$ be a $\oK[t]$-basis of $\cM_{\phi,\oK}$.  Letting $\Upsilon \in \Mat_r(\TT_\theta)$ be defined as in~\eqref{E:Upsilondef}, we have
\[
  \Span{\oK} \bigl( \Upsilon|_{t=\theta} \bigr) = \Span{\oK} \bigl( \rF_{\bsdelta}(\bslambda) : \bsdelta \in \Der(\phi,\oK),\, \bslambda \in \Lambda_{\phi} \bigr).
\]
\end{proposition}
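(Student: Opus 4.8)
The plan is to recognize that $\Upsilon|_{t=\theta}$ is nothing but the Gram matrix of the de~Rham pairing in a particular basis, and then to read off the conclusion from Proposition~\ref{P:quasiperspans}. First I would fix the data already used to build $\Upsilon$ in~\eqref{E:Upsilondef}: the $\oK[t]$-basis $\bsm_1, \dots, \bsm_r$ of $\cM_{\phi,\oK}$ and an $\bA$-basis $\bslambda_1, \dots, \bslambda_r$ of $\Lambda_\phi$. For each $i$ I introduce the $\phi$-biderivation $\bsdelta_i$ determined by $(\bsdelta_i)_t = \tau\bsm_i$ via Proposition~\ref{P:biderprops}(a). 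Since $\bsm_i$ has entries in $\oK[\tau]$, one has $\tau\bsm_i \in \Mat_{1\times d}(\oK[\tau]\tau)$ and hence $\bsdelta_i \in \Der(\phi,\oK)$; this is the one place where keeping track of the field of definition is used.

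Next I would verify that $\bsdelta_1, \dots, \bsdelta_r$ represent a $\oK$-basis of $\rH^1_{\DR}(\phi,\oK)$, by the argument already indicated after~\eqref{E:Upsilonspecial} but now over $\oK$: because left multiplication by $\tau$ is injective on $\cM_{\phi,\oK} = \Mat_{1\times d}(\oK[\tau])$ and Frobenius twisting is an automorphism of $\oK[t]$, the vectors $\tau\bsm_1, \dots, \tau\bsm_r$ form a $\oK[t]$-basis of $\tau\cM_{\phi,\oK}$, and Proposition~\ref{P:biderprops}(a),(b) identifies $\rH^1_{\DR}(\phi,\oK)$ with $\tau\cM_{\phi,\oK}/(t-\theta)\tau\cM_{\phi,\oK}$, whence the claim.

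With this in place, Proposition~\ref{P:quasigen}(b) shows that the $(i,j)$-entry of $\Upsilon|_{t=\theta}$ equals $\twistop{(\bsdelta_i)_t}{\cG_{\bslambda_j}}\big|_{t=\theta} = \rF_{\bsdelta_i}(\bslambda_j)$, exactly as in~\eqref{E:Upsilonspecial}, so that $\Span{\oK}(\Upsilon|_{t=\theta}) = \Span{\oK}(\rF_{\bsdelta_i}(\bslambda_j) : 1 \leqslant i,j \leqslant r)$. Finally I would invoke the first equality of~\eqref{E:spanLbi} in Proposition~\ref{P:quasiperspans}(b) with $L = \oK$ to identify this with $\Span{\oK}(\rF_{\bsdelta}(\bslambda) : \bsdelta \in \Der(\phi,\oK),\, \bslambda \in \Lambda_\phi)$.

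I do not expect a genuine obstacle here — the statement is a corollary — but the one point I would take care to spell out is that Proposition~\ref{P:quasiperspans} is phrased for a basis adapted to the splitting $\rH^1_{\DR} \cong \Der_0 \oplus \rH^1_{\sr}$, whereas our $\bsdelta_1, \dots, \bsdelta_r$ is an arbitrary $\oK$-basis. However, the proof there of~\eqref{E:spanLai} and of the first equality of~\eqref{E:spanLbi} uses only the $\oK$-linearity of $\bsdelta \mapsto \rF_{\bsdelta}$ together with the vanishing of $\rF_{\bsdelta}|_{\Lambda_\phi}$ on strictly inner biderivations (see~\eqref{E:innerquasiper}), neither of which requires the adapted form, so it applies verbatim to our basis. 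The rest is the bookkeeping of the field $\oK$ described above.
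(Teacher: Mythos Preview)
Your proposal is correct and follows essentially the same route as the paper, which simply states that the result is an immediate consequence of Proposition~\ref{P:quasiperspans} after the identification~\eqref{E:Upsilonspecial}. Your additional care in checking that the first equality of~\eqref{E:spanLbi} does not require the adapted basis of $\rH^1_{\DR}$ is valid and fills in a detail the paper leaves implicit.
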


By Remark~\ref{R:tractable}, after a change of basis the entries of $\Upsilon|_{t=\theta}$ consist of tractable periods of $\phi$ and strictly reduced quasi-periods of $\phi$.   By applying Yu's Theorem of the Sub-$t$-module~\cite[Thm.~3.3]{Yu97}, it is straightforward to adapt the proofs of \cite[Thm.~5.2.1, Cor.~5.2.2]{BP02} to prove the following $\oK$-linear independence result in the case that $\phi$ is \emph{simple}, i.e., $\phi$ has no proper non-trivial sub-$t$-modules.

\begin{corollary} \label{C:linindep}
Let $\phi : \bA \to \Mat_d(\oK[\tau])$ be a uniformizable abelian $t$-module of rank~$r$ defined over $\oK$, and assume further that $\phi$ is simple.  Let $\bsm_1, \dots, \bsm_r \in \Mat_{1\times d}(\oK[\tau])$ be a $\oK[t]$-basis of $\cM_{\phi,\oK}$, and let $\Upsilon \in \Mat_r(\TT_\theta)$ be defined as in~\eqref{E:Upsilondef}.  Then
\[
  \dim_{\oK} \bigl( \Span{\oK} \bigl( \Upsilon|_{t=\theta} \bigr) \bigr) = \frac{r^2}{s},
\]
where $s = [\End(\phi):\bA]$.
\end{corollary}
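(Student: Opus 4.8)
The plan is to translate the assertion into a statement about the de Rham pairing of $\phi$, bound its image from above using $\End(\phi)$, and then obtain the matching lower bound from Yu's Theorem of the Sub-$t$-module, following closely the argument of \cite[Thm.~5.2.1, Cor.~5.2.2]{BP02} alluded to in the statement. First I would observe that $\Span{\oK}(\Upsilon|_{t=\theta})$ is independent of the chosen $\oK[t]$-basis of $\cM_{\phi,\oK}$ and $\bA$-basis of $\Lambda_\phi$ (from \eqref{E:ratbasechange} together with the $\bA$-linearity of $[\,\cdot\,,\cdot\,]$ in the period variable), so that by Proposition~\ref{P:Upsilonkbarspan} the quantity to compute is $\dim_{\oK}\Span{\oK}\bigl([\bsdelta,\bslambda]:\bsdelta\in\Der(\phi,\oK),\ \bslambda\in\Lambda_\phi\bigr)$, the $\oK$-dimension of the span of the entries of the matrix of the de Rham pairing of Corollary~\ref{CI:deRhamiso}. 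Next I would record the equivariance of this pairing under $E\assign\End(\phi)$: for $u\in E$ (automatically defined over $\oK$) one has $u^\dagger\bsdelta\in\Der(\phi,\oK)$ with $(u^\dagger\bsdelta)_t=\bsdelta_t u$, and the computation proving Proposition~\ref{P:AGFprops}(a) applies verbatim with $\phi_a$ replaced by $u$ to give $\twistop{u}{\cG_{\bslambda}}=\cG_{\rd u\cdot\bslambda}$; combined with the associativity \eqref{E:twistopassoc} this yields
\[
  [u^\dagger\bsdelta,\bslambda]=[\bsdelta,\rd u\cdot\bslambda],\qquad u\in E,\ \bsdelta\in\Der(\phi,\oK),\ \bslambda\in\Lambda_\phi,
\]
where $\rd u\cdot\bslambda\in\Lambda_\phi$ since endomorphisms preserve the period lattice.

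For the upper bound, the simplicity of $\phi$ forces $\cD\assign E\otimes_\bA K$ to be a division algebra of $K$-dimension $s$, so $\Lambda_\phi\otimes_\bA K$ is a free left $\cD$-module of rank $r/s$ (in particular $s\mid r$). I would fix a $\cD$-basis $\ell_1,\dots,\ell_{r/s}$ of $\Lambda_\phi\otimes_\bA K$ with each $\ell_j\in\Lambda_\phi$, and an $\oK$-basis $\bsdelta_1,\dots,\bsdelta_r$ of $\rH^1_{\DR}(\phi,\oK)$ (Proposition~\ref{P:biderprops}(d)). For $\bslambda\in\Lambda_\phi$, clearing denominators one writes $\rd\phi_e\cdot\bslambda=\sum_j\rd u_j\cdot\ell_j$ with $e\in\bA\setminus\{0\}$ and $u_j\in E$; then the $\bA$-linearity of $[\,\cdot\,,\cdot\,]$ together with the equivariance gives $e(\theta)\,[\bsdelta_i,\bslambda]=\sum_j[u_j^\dagger\bsdelta_i,\ell_j]$, and since $E$ acts $\oK$-linearly on $\rH^1_{\DR}(\phi,\oK)$ each $[\bsdelta_i,\bslambda]$ lies in $\Span{\oK}\bigl([\bsdelta_l,\ell_j]:1\le l\le r,\ 1\le j\le r/s\bigr)$. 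Hence $\Span{\oK}(\Upsilon|_{t=\theta})$ is generated over $\oK$ by these $r^2/s$ quantities, so $\dim_{\oK}\Span{\oK}(\Upsilon|_{t=\theta})\le r^2/s$.

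For the lower bound, the deep part, I would form the quasi-periodic extension $\psi:\bA\to\Mat_{d+r}(\oK[\tau])$ of $\phi$ attached to $\bsdelta_1,\dots,\bsdelta_r$ as in \eqref{E:QPext}; it is uniformizable over $\oK$, and $\Lambda_\psi$ is a free $\bA$-module of rank $r$ whose coordinates display all the quasi-periods $\eta_{\bsdelta_i}(\bslambda)$. A hypothetical nontrivial $\oK$-linear relation among the $r^2/s$ spanning quantities $[\bsdelta_l,\ell_j]$ amounts, after recombining the biderivations, to a relation $\sum_j\eta_{\bsgamma_j}(\ell_j)=0$ with not all $\bsgamma_j=0$; lifting the $\ell_j$ to periods of $\psi$ and assembling them produces a point $\bsu$ with $\Exp_{\psi^{\oplus(r/s)}}(\bsu)=0$ whose coordinates satisfy a nontrivial $\oK$-linear relation, so Yu's Theorem of the Sub-$t$-module \cite[Thm.~3.3]{Yu97} places $\bsu$ inside a proper sub-$t$-module of $\psi^{\oplus(r/s)}$ defined over $\oK$. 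I would then analyze such sub-$t$-modules: because $\phi$ is abelian and simple one has $\Hom(\Ga^r,\phi)=\Hom(\phi,\Ga^r)=0$, which pins $\End(\psi)$ and the sub-$t$-modules of $\psi^{\oplus(r/s)}$ closely enough to $\End(\phi)$ that no such sub-$t$-module can account for a relation beyond those already forced by the $\cD$-module structure; this forces all $\bsgamma_j=0$. Thus the $r^2/s$ spanning quantities are $\oK$-linearly independent, and $\dim_{\oK}\Span{\oK}(\Upsilon|_{t=\theta})=r^2/s$, as in the adaptation of \cite[Thm.~5.2.1, Cor.~5.2.2]{BP02}.

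I expect the main obstacle to be this last step: making precise the dictionary between an explicit $\oK$-linear relation among quasi-periods and a sub-$t$-module of a power of $\psi$ — in particular handling the non-tractable coordinates of $\Lambda_\phi$, which need not themselves lie in $\Span{\oK}(\Upsilon|_{t=\theta})$ — together with the endomorphism and sub-$t$-module bookkeeping when $\cD$ is a noncommutative division algebra. Both are carried out in \cite{BP02} in the analogous situation, so the adaptation should be routine, but it is not purely formal.
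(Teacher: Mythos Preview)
Your proposal is correct and follows precisely the approach the paper indicates: the paper gives no proof of this corollary beyond the sentence preceding it, namely that one adapts the proofs of \cite[Thm.~5.2.1, Cor.~5.2.2]{BP02} via Yu's Theorem of the Sub-$t$-module~\cite[Thm.~3.3]{Yu97}. Your sketch --- reducing to the de Rham pairing via Proposition~\ref{P:Upsilonkbarspan}, establishing $\End(\phi)$-equivariance through Proposition~\ref{P:AGFprops}(a), extracting the upper bound from the $\cD$-module structure of $\Lambda_\phi\otimes_\bA K$, and deducing the lower bound by applying Yu's theorem to a power of the quasi-periodic extension~$\psi$ --- is exactly what that adaptation looks like, and your caveats about the sub-$t$-module bookkeeping are appropriate.
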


We end this section on the de Rham isomorphism for uniformizable abelian $t$-modules, which was proved by Gekeler~\cite[Thm.~5.14]{Gekeler89a} in the case of Drinfeld modules, using the theory of biderivations and quasi-periodic functions.  Anderson gave a different proof using rigid analytic trivializations and Anderson generating functions (see~\cite[\S 1.5]{Goss94}).  Our investigations provide a proof in the case of uniformizable abelian $t$-modules using a combination of these methods (cf.~\cite[Thm.~5.3.3]{GezmisP19}).  In the related context of Hodge-Pink structures for $t$-modules, see also the intrinsically same result of Hartl and Juschka~\cite[Thm.~2.5.51]{HartlJuschka20}.

\begin{corollary}[de Rham isomorphism] \label{C:deRhamiso}
Let $\phi : \bA \to \Mat_d(\KK[\tau])$ be a uniformizable abelian $t$-module.  The map
\[
  \DR : \rH^1_{\DR}(\phi) \to \Hom_{\bA}(\Lambda_{\phi},\KK),
\]
defined by $\DR([\bsdelta]) = \rF_{\bsdelta}|_{\Lambda_{\phi}}$, is an isomorphism.
\end{corollary}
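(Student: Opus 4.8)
The plan is to reduce the statement to the invertibility of the matrix $\Upsilon|_{t=\theta}$, which has already been established. First I would check that $\DR$ is a well-defined $\KK$-linear map: the restriction $\rF_{\bsdelta}|_{\Lambda_{\phi}}$ is $\bA$-linear by~\eqref{E:Fdeltarestricted}, and it vanishes identically on $\Lambda_{\phi}$ when $\bsdelta \in \Der_{\si}(\phi)$ by the consequence of~\eqref{E:innerquasiper} recorded just after it, so $\DR$ factors through the quotient $\rH^1_{\DR}(\phi) = \Der(\phi)/\Der_{\si}(\phi)$. The $\KK$-linearity of $\DR$ follows from the $\KK$-linearity of $\bsdelta \mapsto \rF_{\bsdelta}$, which is immediate from the characterizing functional equation~\eqref{E:Fdeltafneq} together with its uniqueness in Proposition~\ref{P:quasi}. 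Next I would observe that source and target are both $\KK$-vector spaces of dimension $r$: the left-hand side by Proposition~\ref{P:biderprops}(d), and the right-hand side because $\Lambda_{\phi}$ is free of rank $r$ over $\bA$, so an $\bA$-homomorphism $\Lambda_{\phi} \to \KK$ is determined by arbitrary images of an $\bA$-basis.

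It then suffices to exhibit bases of both sides in which $\DR$ has an invertible matrix. I would take a $\KK[t]$-basis $\bsm_1, \dots, \bsm_r$ of $\cM_{\phi}$ and let $\bsdelta_1, \dots, \bsdelta_r \in \Der(\phi)$ be the biderivations with $(\bsdelta_i)_t = \tau\bsm_i$; as noted in the discussion preceding~\eqref{E:Upsilonspecial} (via the isomorphism $\rH^1_{\DR}(\phi) \cong \tau\cM_{\phi}/(t-\theta)\tau\cM_{\phi}$ coming from Proposition~\ref{P:biderprops}), these represent a $\KK$-basis of $\rH^1_{\DR}(\phi)$. Fixing also an $\bA$-basis $\bslambda_1, \dots, \bslambda_r$ of $\Lambda_{\phi}$ and the dual $\KK$-basis $\bslambda_1^{*}, \dots, \bslambda_r^{*}$ of $\Hom_{\bA}(\Lambda_{\phi},\KK)$ (with $\bslambda_i^{*}(\bslambda_j) = \delta_{ij}$, extended $\bA$-linearly), one checks that $\DR([\bsdelta_i]) = \sum_{j} \rF_{\bsdelta_i}(\bslambda_j)\,\bslambda_j^{*}$, since both sides are $\bA$-linear functionals agreeing on the $\bA$-basis $\{\bslambda_k\}$. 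Hence the matrix of $\DR$ in these bases is the transpose of $\Upsilon|_{t=\theta}$ by~\eqref{E:Upsilonspecial} together with Proposition~\ref{P:quasigen}(b). In other words, $\DR$ is an isomorphism precisely when the de Rham pairing $[\cdot\,,\cdot]$ is perfect, i.e. when $\det(\Upsilon|_{t=\theta}) \neq 0$.

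Finally I would invoke Proposition~\ref{P:Upsiloninvertible}, which gives $\Upsilon \in \GL_r(\TT_{\theta})$ and hence $\det\Upsilon \in \TT_{\theta}^{\times}$. Since evaluation at $t=\theta$ is a ring homomorphism $\TT_{\theta} \to \KK$ (functions in $\TT_{\theta}$ are regular at $t=\theta$), a unit of $\TT_{\theta}$ cannot vanish at $t=\theta$; thus $\det(\Upsilon|_{t=\theta}) = (\det\Upsilon)|_{t=\theta} \neq 0$, so $\Upsilon|_{t=\theta} \in \GL_r(\KK)$ and $\DR$ is an isomorphism. The substantive content of the argument is entirely packaged into Proposition~\ref{P:Upsiloninvertible} (invertibility of $\Upsilon$ over $\TT_{\theta}$, which in turn rests on Lemma~\ref{L:detUpsilon} and the $\tau$-difference-equation estimates adapted from~\cite{P08}); granting that, the only points requiring care are the identification of the matrix of $\DR$ with $\Upsilon|_{t=\theta}$ and the elementary observation that units of $\TT_{\theta}$ are nonvanishing at $t=\theta$.
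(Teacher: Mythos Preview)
Your proposal is correct and follows essentially the same approach as the paper's proof: identify the matrix of $\DR$ with $\Upsilon|_{t=\theta}$ via Proposition~\ref{P:quasigen}(b) and~\eqref{E:Upsilonspecial}, then invoke Proposition~\ref{P:Upsiloninvertible} to conclude invertibility. You spell out a few points (well-definedness on the quotient, the dimension count, and why units of $\TT_\theta$ do not vanish at $t=\theta$) that the paper leaves implicit, but the substance is the same.
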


\begin{proof}
We recall from~\eqref{E:Fdeltarestricted} that for $\bsdelta \in \Der(\phi)$ the map $\rF_{\bsdelta}|_{\Lambda_{\phi}} : \Lambda_{\phi} \to \KK$ is $\bA$-linear.  Furthermore, for $\bslambda \in \Lambda_{\phi}$, Proposition~\ref{P:quasigen}(b) implies
\[
  \DR([\bsdelta]) = \rF_{\bsdelta}(\bslambda) = [ \bsdelta, \bslambda ].
\]
Thus the matrix $\Upsilon|_{t=\theta}$ in \eqref{E:Upsilonspecial} represents $\DR$ with respect to our chosen bases.  By Proposition~\ref{P:Upsiloninvertible}, we see that $\det(\Upsilon|_{t=\theta}) \neq 0$.
\end{proof}

\subsection{Rigid analytic trivializations, quasi-periods, and quasi-logarithms} \label{subS:RAT}
As in previous sections we fix an abelian $t$-module $\phi : \bA \to \Mat_d(\KK[\tau])$.  We saw in Theorems~\ref{T:Exponentiation} and~\ref{T:PeriodLattice} that if $\phi$ is also $\bA$-finite, then $\Exp_{\phi}(\bsz)$ and $\Lambda_{\phi}$ can be effectively described and that $\Lambda_{\phi}$ can be determined precisely if $\phi$ has a rigid analytic trivialization.  In this section we demonstrate how to use the construction of the matrix $\Upsilon \in \GL_r(\TT_{\theta})$ in \eqref{E:Upsilondef} together with a theorem of Hartl and Juschka to create a rigid analytic trivialization $(\iota, \Phi, \Psi)$ for~$\phi$.  We then modify these constructions to develop tools for characterizing the $\oK$-linear spans of tractable period coordinates, quasi-periods, and quasi-logarithms for~$\phi$.

We have seen in \S\ref{subS:motives}, that we can associate to $\phi$ both its $t$-motive $\cM_\phi$ and dual $t$-motive $\cN_{\phi}$.  Additionally there is a third object $\cM_{\phi}^{\wedge}$, defined by Hartl and Juschka~\cite[Prop.~2.4.3]{HartlJuschka20} (see below), that is also a dual $t$-motive.  Hartl and Juschka~\cite[Thm.~2.5.13]{HartlJuschka20} proved that if $\phi$ is both abelian and $\bA$-finite, then through an intricately defined isomorphism,
\[
  \cM_{\phi}^{\wedge} \cong \cN_{\phi}
\]
as dual $t$-motives (see Theorem~\ref{T:isodual}). This isomorphism provides the key to expressing quasi-periods and quasi-logarithms in terms of rigid analytic trivializations.

\begin{proposition}[{Hartl-Juschka~\cite[Prop.~2.4.3, Thm.~2.5.13]{HartlJuschka20}}] \label{P:dualmotive}
Let $\phi : \bA \to \Mat_{d}(\KK[\tau])$ be an abelian $t$-module with $t$-motive $\cM_{\phi}$.  If we take
$\cM_{\phi}^\wedge \assign \Hom_{\KK[t]}(\tau \cM_{\phi}, \KK[t]\,dt)$, then we make $\cM_{\phi}^{\wedge}$ into a left $\KK[t,\sigma]$-module by setting
\begin{equation} \label{E:sigmawedge}
(\sigma \cdot \bsmu)(\tau\bsm) \assign \bigl( \bsmu(\tau \cdot \tau\bsm) \bigr)^{(-1)}, \quad \bsmu \in \cM_{\phi}^{\wedge},\ \tau\bsm \in \tau\cM_{\phi}.
\end{equation}
Moreover, under this definition $\cM_{\phi}^{\wedge}$ is an $\bA$-finite dual $t$-motive of dimension~$d$ and rank~$r$.
\end{proposition}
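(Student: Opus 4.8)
The plan is to establish, in order, the $\KK[t]$-module structure and rank of $\cM_\phi^\wedge$, then the $\sigma$-semilinear action, and finally the two rigidity properties (finite freeness of rank~$d$ over $\KK[\sigma]$, together with the determinant condition $\det\Phi^\wedge = c(t-\theta)^d$) that make it a dual $t$-motive of dimension~$d$.

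First I would observe that $\tau\cM_\phi$ is a finitely generated torsion-free module over the PID $\KK[t]$, hence free, and that since $(t-\theta)^d\cM_\phi\subseteq\cM_\phi\tau=\tau\cM_\phi$ (noted in \S\ref{subS:motives}, using that $\KK$ is perfect) while $\cM_\phi$ has rank~$r$ by abelianness, $\tau\cM_\phi$ also has rank~$r$. Therefore $\cM_\phi^\wedge=\Hom_{\KK[t]}(\tau\cM_\phi,\KK[t]\,dt)$ is free of rank~$r$ over $\KK[t]$; this gives the rank~$r$ assertion and the $\bA$-finiteness at once. Next I would verify that the formula $(\sigma\cdot\bsmu)(\tau\bsm)=(\bsmu(\tau\cdot\tau\bsm))^{(-1)}$ is well defined and yields a left $\KK[t,\sigma]$-module: left multiplication by $\tau$ is injective on $\cM_\phi=\Mat_{1\times d}(\KK[\tau])$ (as $\KK[\tau]$ is a domain), so $\tau\bsm$ determines $\bsm$ and hence $\tau\cdot\tau\bsm$; the map $\tau\bsm\mapsto(\bsmu(\tau\cdot\tau\bsm))^{(-1)}$ is $\KK$-linear and commutes with multiplication by~$t$ — here the $(-1)$-twist precisely cancels the $q$-semilinearity of left multiplication by $\tau$ coming from $\tau c=c^q\tau$ — so it lies in $\cM_\phi^\wedge$. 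The same computation gives $\sigma(c\bsmu)=c^{1/q}\sigma\bsmu$ and $\sigma(t\bsmu)=t\,\sigma\bsmu$, which are exactly the relations in $\KK[t,\sigma]$.

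The heart of the argument is the analysis of the cokernel of the injective map $\sigma\colon\cM_\phi^\wedge\to\cM_\phi^\wedge$ (injectivity: if $\bsmu$ vanishes on the full-rank submodule $\tau^2\cM_\phi$ then $\bsmu=0$, since $\KK[t]\,dt$ is torsion-free). Writing $\Theta\colon\tau\cM_\phi\xrightarrow{\sim}\tau^2\cM_\phi$ for left multiplication by~$\tau$, the formula for $\sigma$ exhibits it as the composite of the restriction map $\mathrm{res}\colon\Hom_{\KK[t]}(\tau\cM_\phi,\KK[t]\,dt)\to\Hom_{\KK[t]}(\tau^2\cM_\phi,\KK[t]\,dt)$ with a Frobenius-semilinear $\KK[t]$-module bijection induced by $\Theta$ and the twist. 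Hence $\cM_\phi^\wedge/\sigma\cM_\phi^\wedge$ is carried by a Frobenius-semilinear, $t$-equivariant bijection onto $\mathrm{coker}(\mathrm{res})$, which by the long exact sequence attached to $0\to\tau^2\cM_\phi\to\tau\cM_\phi\to\tau\cM_\phi/\tau^2\cM_\phi\to 0$ equals $\mathrm{Ext}^1_{\KK[t]}(\tau\cM_\phi/\tau^2\cM_\phi,\KK[t]\,dt)$, and over the PID $\KK[t]$ this $\mathrm{Ext}^1$ is (non-canonically) isomorphic to $\tau\cM_\phi/\tau^2\cM_\phi$ itself. Now left multiplication by $\tau$ identifies $\tau\cM_\phi/\tau^2\cM_\phi$ with the Frobenius twist of $\cM_\phi/\tau\cM_\phi\cong\KK^d$, on which $t$ acts through $\rd\phi_t$ (a scalar $\theta$ plus a nilpotent matrix); thus this quotient has $\KK$-dimension~$d$ and is annihilated by $(t-\theta^q)^d$. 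Tracing back through the semilinear identification — the step where the $(-1)$-twist converts the annihilator $(t-\theta^q)^d$ into $(t-\theta)^d$ — we conclude that $\cM_\phi^\wedge/\sigma\cM_\phi^\wedge$ has $\KK$-dimension~$d$ and is annihilated by $(t-\theta)^d$.

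Finally I would invoke the standard difference-algebra principle (cf.\ \cite[\S 4]{ABP04}, \cite[\S 2.5]{HartlJuschka20}): a left $\KK[t,\sigma]$-module that is finite free over $\KK[t]$, on which $\sigma$ acts injectively with cokernel of finite $\KK$-dimension~$s$, is automatically finite free of rank~$s$ over $\KK[\sigma]$, and the matrix $\Phi^\wedge$ of $\sigma$ with respect to a $\KK[t]$-basis has $\det\Phi^\wedge$ generating the Fitting ideal of that cokernel. With $s=d$ and Fitting ideal $\bigl((t-\theta)^d\bigr)$, this shows $\cM_\phi^\wedge$ is a dual $t$-motive of dimension~$d$ and rank~$r$, as claimed. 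The step I expect to be the main obstacle is the Frobenius-twist bookkeeping in the previous paragraph: one must check carefully that the $(-1)$-twist built into the definition of $\sigma$ on $\cM_\phi^\wedge$ exactly undoes the Frobenius twist forced on $\tau\cM_\phi/\tau^2\cM_\phi$ by the semilinearity of multiplication by~$\tau$, so that one lands on the characteristic element $(t-\theta)^d$ rather than a twisted variant — this is precisely where the a priori puzzling choice of twist in the statement is essential.
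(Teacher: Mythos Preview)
Your final step contains a genuine gap: the ``standard difference-algebra principle'' you invoke is \emph{false} as stated. Take $N=\KK[t]^2$ with $\sigma(a,b)=\bigl(a^{(-1)}(t-\theta),\,b^{(-1)}\bigr)$. This is a left $\KK[t,\sigma]$-module, finite free of rank~$2$ over $\KK[t]$, with $\sigma$ injective and $N/\sigma N\cong\KK[t]/(t-\theta)$ of $\KK$-dimension~$1$ annihilated by $t-\theta$. But on the second factor $\sigma$ acts by $b\mapsto b^{(-1)}$, which preserves $t$-degree; hence any finite set of $\KK[\sigma]$-generators would have bounded $t$-degree, and $N$ is \emph{not} finitely generated over $\KK[\sigma]$. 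The hidden hypothesis you need is something like $\bigcap_{k}\sigma^{k}N=0$, which excludes such ``unit blocks'' in $\Phi^{\wedge}$. This does hold for $\cM_\phi^\wedge$, but proving it intrinsically and then completing the Nakayama-type surjectivity argument is nontrivial, and neither \cite[\S 4]{ABP04} nor \cite[\S 2.5]{HartlJuschka20} supplies it as a black box in the form you cite.

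The paper does not attempt a direct argument for $\KK[\sigma]$-finiteness. As the Remark immediately following the proposition explains, the paper instead invokes Theorem~\ref{T:isodual}(a): the explicit map $\xi\colon\cM_\phi^\wedge\to\cN_\phi$ is an injective $\KK[t,\sigma]$-morphism, and since $\cN_\phi=\Mat_{1\times d}(\KK[\sigma])$ is finite free over the left PID $\KK[\sigma]$, every submodule is automatically finite free. Your Ext computation of the cokernel, while correct, is also more work than the paper does: the paragraph after the proposition simply computes that $\sigma\bsmu=\tPhi^{\tr}\bsmu$ on the dual basis (see~\eqref{E:sigmamultwedge}), whence $\det\Phi^\wedge=\det\tPhi=c(t-\theta)^d$ by Proposition~\ref{P:abeliantframeprops}(a), with no twist bookkeeping required.
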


\begin{remark}
For clearer exposition we have mentioned that $\cM_{\phi}^{\wedge}$ is a dual $t$-motive here, but the most efficient way of explaining its finite generation as an $\KK[\sigma]$-module is to use \cite[Thm.~2.5.13]{HartlJuschka20}, stated below as Theorem~\ref{T:isodual}, which shows that it injects into a finitely generated $\KK[\sigma]$-module.
\end{remark}

If $\bsm = (\bsm_1, \dots, \bsm_r)^{\tr}$ forms a $\KK[t]$-basis of $\cM$, then $\tau \bsm = (\tau \bsm_1, \dots, \tau \bsm_r)^{\tr}$ is a $\KK[t]$-basis of $\tau \cM_{\phi}$, and we can take $\bsmu = (\bsmu_1, \dots, \bsmu_r)^{\tr}$ with $\bsmu_i \in \cM_{\phi}^{\wedge}$ to be the corresponding dual basis.  If $\tPhi \in \Mat_{d}(\KK[t])$ represents left multiplication by $\tau$ on $\cM_{\phi}$ with respect to~$\bsm$, then
\begin{equation}
  \tau \cdot \tau \bsm = \tau \tPhi \bsm = \tPhi^{(1)} \tau\bsm,
\end{equation}
and so $\tPhi^{(1)}$ represents left multiplication by $\tau$ on $\tau \cM_{\phi}$.  From this we determine the matrix representing left multiplication by $\sigma$ on $\cM_{\phi}^{\wedge}$.  For $\bsc$, $\bsu \in \Mat_{1\times r}(\KK[t])$, we can take $\bsc \cdot \bsmu$ and $\bsu \cdot \tau \bsm$ for arbitrary elements of $\cM_{\phi}^{\wedge}$ and $\tau \cM_{\phi}$ respectively.  Then easily
\[
  (\bsc \cdot \bsmu)(\bsu \cdot \tau \bsm) = \bsu \cdot \bsc^{\tr}\, dt,
\]
and by definition of multiplication by~$\sigma$ on $\cM_{\phi}^{\wedge}$,
\begin{multline*}
  \bigl( \sigma (\bsc \cdot \bsmu) \bigr)(\bsu \cdot \tau \bsm) = (\bsc \cdot \bsmu) \bigl( \tau (\bsu \cdot \tau \bsm) \bigr)^{(-1)}
  = (\bsc \cdot \bsmu) \bigl( \bsu^{(1)} \cdot \tPhi^{(1)} \tau \bsm \bigr)^{(-1)} \\
  = \bigl( \bsu^{(1)} \tPhi^{(1)} \bsc^{\tr} \bigr)^{(-1)} \,dt
  = \bsu \tPhi (\bsc^{(-1)})^{\tr}\, dt
  = \bsu \bigl( \bsc^{(-1)} \tPhi^{\tr} \bigr)^{\tr}\,dt.
\end{multline*}
Thus $\sigma(\bsc \cdot \bsmu) = \bsc^{(-1)} \tPhi^{\tr} \bsmu$, and so
\begin{equation} \label{E:sigmamultwedge}
  \sigma \bsmu = \tPhi^{\tr} \bsmu.
\end{equation}
That is, $\tPhi^{\tr}$ represents left multiplication by $\sigma$ on $\cM_{\phi}^{\wedge}$ with respect to~$\bsmu$.

A natural question is whether $\cM_{\phi}^{\wedge}$ is related or even isomorphic to the dual $t$-motive $\cN_{\phi}$.  This question is answered by a theorem of Hartl and Juschka that constructs an isomorphism between $\cM_{\phi}^{\wedge}$ and $\cN_{\phi}$ in the case that $\phi$ is $\bA$-finite.  Before stating their theorem, we define a residue map on $\cM_{\phi}^{\wedge}$.  Letting $z = 1/t$, there is an evident injection of $\KK[t]$-modules,
\begin{equation} \label{E:kappadef}
  \kappa : \cM_{\phi}^{\wedge} \hookrightarrow \Hom_{\laurent{\KK}{z}} \bigl( \laurent{\KK}{z} \otimes_{\KK[t]} \tau\cM_{\phi}, \laurent{\KK}{z}\,dz \bigr).
\end{equation}
For $\bsmu \in \cM_{\phi}^{\wedge}$ and $\tau\bsm \in \laurent{\KK}{z} \otimes_{\KK[t]} \cM_{\phi}\tau$, if $\kappa(\bsmu)(\tau \bsm) = \sum c_n z^n\,dz$, we set
\begin{equation} \label{E:Resinftydef}
  \Res_{\infty} \bigl( \kappa(\bsmu)(\tau \bsm) \bigr) = c_{-1}.
\end{equation}
Also, as $\det \tPhi^{(1)} \in \laurent{\KK}{z}^{\times}$, we note that the induced map
\begin{equation} \label{E:tauauto}
  \tau : \laurent{\KK}{z} \otimes_{\KK[t]} \tau \cM_{\phi} \iso \laurent{\KK}{z} \otimes_{\KK[t]} \tau \cM_{\phi}
\end{equation}
is an automorphism.  Let $\bss_1, \dots, \bss_d \in \Mat_{1\times d}(\KK[\tau])$ be the standard $\KK[\tau]$-basis of~$\cM_{\phi}$.

\begin{theorem}[{Hartl-Juschka~\cite[Thm.~2.5.13]{HartlJuschka20}}] \label{T:isodual}
Let $\phi : \bA \to \Mat_{d}(\KK[\tau])$ be an abelian $t$-module with $t$-motive $\cM_{\phi}$ and dual $t$-motive $\cN_{\phi}$.  Define
\[
\xi : \cM_{\phi}^{\wedge} \to \cN_{\phi}
\]
by
\begin{equation} \label{E:xidef}
  \xi(\bsnu) \assign -\sum_{i \geqslant 0} \Bigl( \Res_{\infty} \bigl( \kappa(\bsnu) ( \tau^{-i-1} \cdot \tau \bss_{j}) \bigr) \Bigr)_{1 \leqslant j \leqslant d} \cdot \sigma^i.
\end{equation}
Then the following hold.
\begin{enumerate}
\item[(a)] $\xi$ is an injective morphism of left $\KK[t,\sigma]$-modules.
\item[(b)] $\xi$ is an isomorphism of left $\KK[t,\sigma]$-modules if and only if $\phi$ is $\bA$-finite.
\end{enumerate}
\end{theorem}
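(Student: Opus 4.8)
The plan is to turn everything into an explicit computation after choosing $\KK[t]$-bases. Fix a $\KK[t]$-basis $\bsm=(\bsm_1,\dots,\bsm_r)^{\tr}$ of $\cM_\phi$ and let $\tPhi\in\Mat_r(\KK[t])$ represent left multiplication by $\tau$ on $\cM_\phi$, so that $\tau\bsm=\tPhi\bsm$ and $\det\tPhi=c(t-\theta)^{d}$ with $c\in\KK^{\times}$ by Proposition~\ref{P:abeliantframeprops}(a); since left multiplication by $\tau$ is a $\KK[t]$-semilinear bijection $\cM_\phi\iso\tau\cM_\phi$, the tuple $\tau\bsm$ is a $\KK[t]$-basis of $\tau\cM_\phi$, and as in~\eqref{E:sigmamultwedge} the dual basis $\bsmu=(\bsmu_1,\dots,\bsmu_r)^{\tr}$ of $\cM_\phi^{\wedge}=\Hom_{\KK[t]}(\tau\cM_\phi,\KK[t]\,dt)$ satisfies $\sigma\bsmu=\tPhi^{\tr}\bsmu$. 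In particular $\cM_\phi^{\wedge}$ is free of rank $r$ over $\KK[t]$. When $\phi$ is $\bA$-finite we also fix a $\KK[t]$-basis $\bsn$ of $\cN_\phi$ with $\sigma\bsn=\Phi\bsn$ and $\det\Phi=c'(t-\theta)^{d}$, $c'\in\KK^{\times}$.

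For part~(a), $\KK$-linearity of $\xi$ is immediate from that of $\Res_{\infty}$, of $\kappa$, and of $\bsnu\mapsto(\bsnu(\tau\bsm_k))_k$. For $\sigma$-linearity I would substitute the defining relation $(\sigma\bsnu)(x)=\bigl(\bsnu(\tau\cdot x)\bigr)^{(-1)}$ of~\eqref{E:sigmawedge} into~\eqref{E:xidef}; using the automorphism $\tau$ of~\eqref{E:tauauto} this gives $\kappa(\sigma\bsnu)(\tau^{-i-1}\cdot\tau\bss_j)=\bigl(\kappa(\bsnu)(\tau^{-i}\cdot\tau\bss_j)\bigr)^{(-1)}$, and since $\Res_{\infty}$ commutes with Frobenius twisting while the $i=0$ contribution $\Res_{\infty}\bigl(\kappa(\bsnu)(\tau\bss_j)\bigr)$ vanishes---because $\bsnu(\tau\bss_j)\in\KK[t]\,dt$, whose expansion in $z=1/t$ has only powers $z^{\leqslant-2}$---the resulting shift of index converts $\xi(\sigma\bsnu)$ into $\sigma\cdot\xi(\bsnu)$. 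For $t$-linearity one checks directly that, since $t$ and $\tau$ commute in $\KK[t,\tau]$, the $\KK[t]$-action on $\cM_\phi^{\wedge}$ passes through each $\tau^{-i-1}$ in~\eqref{E:xidef} and the residues are carried by the matrix of $\phi_t^{*}$, i.e.\ by the $t$-action on $\cN_\phi$. Injectivity then follows from the injectivity of $\kappa$: if $\xi(\bsnu)=0$ then, since $\tau^{-i-1}\cdot\tau\bss_j=\bss_j\sigma^i$, we get $\Res_{\infty}\bigl(\kappa(\bsnu)(\bss_j\sigma^i)\bigr)=0$ for all $i\geqslant0$ and $j$; multiplying by positive powers of $t$ (which act through the polynomial $\phi_{t^k}$) and by negative powers of $t$ (which act through $(\phi_{t^k})^{-1}$, still with $\KK$-coefficients) one extracts every Laurent coefficient of each $\kappa(\bsnu)(\bss_j\sigma^i)$, so $\kappa(\bsnu)=0$ and hence $\bsnu=0$.

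For part~(b), the ``only if'' direction is immediate: an isomorphism $\xi$ identifies $\cN_\phi$ with $\cM_\phi^{\wedge}$, which is finitely generated over $\KK[t]$ (being $\Hom_{\KK[t]}$ of the finitely generated $\KK[t]$-module $\tau\cM_\phi$), so $\phi$ is $\bA$-finite. For the ``if'' direction, assume $\phi$ is $\bA$-finite, so $\cN_\phi$ is free of rank $r$ over $\KK[t]$. Then $\xi$ is an injective $\KK[t]$-linear map between free rank-$r$ $\KK[t]$-modules, given in the bases $\bsmu,\bsn$ by a matrix $M\in\Mat_r(\KK[t])$ with $\det M\neq0$; it is an isomorphism precisely when $\det M\in\KK[t]^{\times}=\KK^{\times}$. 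The $\sigma$-linearity from part~(a) reads $\tPhi^{\tr}M=M^{(-1)}\Phi$, whence $(\det M)^{(-1)}=(c/c')\det M$ with $c/c'\in\KK^{\times}$; comparing Frobenius twists term by term shows $\det M$ is a nonzero scalar times a polynomial in $\FF_q[t]$. To conclude that this polynomial is constant I would use the dual $t$-motive structure: the cokernel $Q=\cN_\phi/\xi(\cM_\phi^{\wedge})$ is a $\KK[t,\sigma]$-module inheriting $(t-\theta)^{d}Q\subseteq\sigma Q$ from $\cN_\phi$ and is finite-dimensional over $\KK$ (being $\KK[t]$-torsion), and I would show $Q=0$ by exhibiting preimages: $\bA$-finiteness forces the $\sigma$-expansion of every element of $\cN_\phi$ to be ``rational'', i.e.\ eventually governed by the recursion $\sigma\bsn=\Phi\bsn$, and this rationality is exactly what lets one write an element of $\cM_\phi^{\wedge}$ whose image under~\eqref{E:xidef} is a prescribed $\bsn\in\cN_\phi$.

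The main obstacle is precisely this last step---surjectivity of $\xi$ in the $\bA$-finite case, equivalently $\det M\in\KK^{\times}$. The determinant relation alone only pins $\det M$ down up to a scalar multiple of an $\FF_q[t]$-polynomial, and support arguments for $Q$ (its ``bad part'' would live at roots in $\overline{\FF_q}\setminus\{\theta\}$, on which $(t-\theta)^d$, hence $\sigma$, acts invertibly) do not by themselves force $Q=0$; one must genuinely use how $\bA$-finiteness turns the a priori infinite $\sigma$-series produced by the residue map back into the finite $\KK[t]$-data of a functional in $\cM_\phi^{\wedge}$. Carrying this out carefully---tracking the change-of-basis matrices relating $\bss_1,\dots,\bss_d$ to $\bsm_1,\dots,\bsm_r$ over $\KK[\tau]$ and over $\KK[t]$, and the interaction of $\Res_{\infty}$ with these recursions---is the technical heart of the theorem, and is where I would follow the construction of Hartl and Juschka.
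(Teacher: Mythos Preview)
The paper does not prove this theorem; it is stated with attribution to Hartl and Juschka, and the accompanying remark only outlines what their proof involves. So there is no ``paper's own proof'' to compare against in detail. That said, the paper's remark singles out two genuinely hard points, and your proposal handles them unevenly.

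First, you never address well-definedness of $\xi$: you must show that $\Res_{\infty}\bigl(\kappa(\bsnu)(\tau^{-i-1}\cdot\tau\bss_j)\bigr)=0$ for $i\gg 0$, so that the sum in~\eqref{E:xidef} is finite and $\xi(\bsnu)$ actually lands in $\cN_\phi=\Mat_{1\times d}(\KK[\sigma])$. The paper's remark says Hartl and Juschka prove this by decomposing in terms of the associated $z$-isocrystal; it is not automatic, and none of the linearity or determinant arguments you outline make sense until this is established. This is a genuine gap in your proposal.

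Second, on surjectivity you correctly identify the obstacle and honestly concede you would defer to Hartl--Juschka. Your determinant approach---deducing from $\tPhi^{\tr}M=M^{(-1)}\Phi$ that $\det M$ lies in $\KK^{\times}\cdot\FF_q[t]$ and then trying to kill the $\FF_q[t]$-factor via a cokernel argument---does not succeed on its own, as you suspect; the paper's remark says the actual argument proceeds through ``a fairly elaborate diagram of isomorphisms'' (citing \cite[Cor.~2.5.14]{HartlJuschka20}) rather than through a direct determinant or torsion-module computation. Your treatment of the remaining ``straightforward'' assertions ($\KK$-linearity, $\sigma$-linearity, the easy direction of~(b)) is broadly in line with what the paper describes, though your injectivity sketch is loose: the identification ``$\tau^{-i-1}\cdot\tau\bss_j=\bss_j\sigma^i$'' conflates elements of $\laurent{\KK}{z}\otimes_{\KK[t]}\tau\cM_\phi$ with elements of $\cN_\phi$, and the ``multiply by negative powers of $t$'' step needs care since those multiples leave $\tau\cM_\phi$.
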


\begin{remark}
The map $\tau^{-1}$ used in \eqref{E:xidef} is the inverse of the isomorphism $\tau$ given in \eqref{E:kappadef}. It is not immediately clear that the formula in~\eqref{E:xidef} is well-defined, in that it is necessary to show that $\Res_{\infty} (\kappa(\bsnu)(\tau^{-i-1} \cdot \tau \bss_j)) = 0$ for $i \gg 0$.  Hartl and Juschka prove this by decomposing elements of $\cM_{\phi}$ in terms of its associated $z$-isocrystals, which behave like Dieudonn\'{e} modules in this context (see also~\cite[\S 5]{Taelman09}).  Except for the surjectivity in~(b), the remaining assertions of the theorem follow in a straightforward manner.  The proof of surjectivity follows from a fairly elaborate diagram of isomorphisms (see~\cite[Cor.~2.5.14]{HartlJuschka20}).  We note that we have incorporated an extra negative sign when compared to Hartl's and Juschka's formula, only for convenience so that the formulas match up with those of \cite{CP11}, \cite{CP12}.  We consider examples in \S\ref{subS:quasiperexamples}.
\end{remark}

For the remainder of the section, we assume that $\phi : \bA \to \Mat_d(\KK[\tau])$ is uniformizable, abelian, and  $\bA$-finite.  We let $\bsm = (\bsm_1, \dots, \bsm_r)^{\tr}$, $\bsn = (\bsn_1, \dots, \bsn_r)^{\tr}$, and $\bsmu = (\bsmu_1, \dots, \bsmu_r)^{\tr}$ denote $\KK[t]$-bases of $\cM_{\phi}$, $\cN_{\phi}$, and $\cM_{\phi}^{\wedge}$ respectively.  Then
\[
  \tau \bsm = \tPhi \bsm, \quad \sigma \bsn = \Phi \bsn, \quad \sigma \bsmu = \tPhi^{\tr} \bsmu.
\]
We let $\{\bslambda_1, \dots, \bslambda_r\} \subseteq \Lambda_{\phi}$ be a fixed choice of $\bA$-basis of $\Lambda_{\phi}$.  For the isomorphism $\xi : \cM_{\phi}^{\wedge} \to \cN_{\phi}$ defined in Theorem~\ref{T:isodual}, we let $V \in \GL_r(\KK[t])$ be given so that
\begin{equation} \label{E:Vdef}
  \xi(\bsmu) = \bigl( \xi(\bsmu_1), \dots, \xi(\bsmu_r) \bigr)^{\tr}
  = V \bsn.
\end{equation}
It follows that
\begin{equation} \label{E:Visomorphism}
  \tPhi^{\tr} V = V^{(-1)} \Phi.
\end{equation}
Letting $\Upsilon \in \GL_r(\TT_{\theta})$ be defined as in~\eqref{E:Upsilondef} and Proposition~\ref{P:Upsiloninvertible}, we recall that $\Upsilon^{(-1)} = \tPhi^{-1} \Upsilon$ from~\eqref{E:Upsilonfneq1}.  We then see that
\begin{multline}
  \Bigl( \bigl( \Upsilon^{\tr} V \bigr)^{-1} \Bigr)^{(-1)}
  = \Bigl( \bigl( \Upsilon^{\tr} \bigr)^{(-1)} V^{(-1)} \Bigr)^{-1}
  = \Bigl( \Upsilon^{\tr} \bigl( \tPhi^{\tr} \bigr)^{-1} V^{(-1)} \Bigr)^{-1} \\
  = \bigl( \Upsilon^{\tr} V \Phi^{-1} \bigr)^{-1}
  = \Phi \bigl(\Upsilon^{\tr} V \bigr)^{-1},
\end{multline}
and thus we have proved the following proposition.

\begin{proposition} \label{P:PsiUpsilonV}
Let $\phi : \bA \to \Mat_d(\KK[\tau])$ be an abelian, $\bA$-finite, and uniformizable $t$-module.  With notation as above, if we let
\begin{equation} \label{E:PsiUpsilonV}
\Psi \assign \bigl( \Upsilon^{\tr} V \bigr)^{-1} \in \GL_r(\TT_{\theta}),
\end{equation}
then $\Psi^{(-1)} = \Phi \Psi$, making $\Psi$ a rigid analytic trivialization for $\Phi$.
\end{proposition}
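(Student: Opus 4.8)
The statement in Proposition~\ref{P:PsiUpsilonV} is essentially already derived in the paragraph immediately preceding it, so the proof is a matter of assembling the pieces carefully. Here is how I would structure it.

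\textbf{Setup.} The plan is to start from the three fundamental functional equations governing the objects at play: $\Upsilon^{(-1)} = \tPhi^{-1}\Upsilon$ from~\eqref{E:Upsilonfneq1}, which expresses the compatibility of the Anderson generating functions with multiplication by $\tau$ on $\cM_\phi$; the relation $\tPhi^{\tr} V = V^{(-1)}\Phi$ from~\eqref{E:Visomorphism}, which is the matrix form of the statement that $\xi : \cM_\phi^\wedge \to \cN_\phi$ is a morphism of left $\KK[t,\sigma]$-modules (using $\sigma\bsmu = \tPhi^{\tr}\bsmu$ from~\eqref{E:sigmamultwedge} and $\sigma\bsn = \Phi\bsn$); and the fact, guaranteed by Proposition~\ref{P:Upsiloninvertible} together with $V \in \GL_r(\KK[t])$, that $\Upsilon^{\tr} V$ is invertible over $\TT_\theta$, so that $\Psi \assign (\Upsilon^{\tr}V)^{-1}$ makes sense as an element of $\GL_r(\TT_\theta)$.

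\textbf{Main computation.} First I would transpose the equation from~\eqref{E:Upsilonfneq1} to get $(\Upsilon^{\tr})^{(-1)} = \Upsilon^{\tr}(\tPhi^{\tr})^{-1}$; note $\tPhi$ has entries in $\KK[t]$, so Frobenius twisting and transposition commute with everything in sight. Then I would compute $\bigl((\Upsilon^{\tr}V)^{-1}\bigr)^{(-1)}$ directly: twisting commutes with inversion, so this equals $\bigl((\Upsilon^{\tr})^{(-1)}V^{(-1)}\bigr)^{-1} = \bigl(\Upsilon^{\tr}(\tPhi^{\tr})^{-1}V^{(-1)}\bigr)^{-1}$. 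Now substitute $V^{(-1)} = \tPhi^{\tr}V\Phi^{-1}$, which is just~\eqref{E:Visomorphism} solved for $V^{(-1)}$ (legitimate since $\Phi \in \GL_r(\oK(t))$ has nonzero determinant by Proposition~\ref{P:tframeprops}(a)); this yields $\bigl(\Upsilon^{\tr}(\tPhi^{\tr})^{-1}\tPhi^{\tr}V\Phi^{-1}\bigr)^{-1} = \bigl(\Upsilon^{\tr}V\Phi^{-1}\bigr)^{-1} = \Phi(\Upsilon^{\tr}V)^{-1}$. Thus $\Psi^{(-1)} = \Phi\Psi$, which is precisely~\eqref{E:PsiPhi} in Definition~\ref{D:rat}, so $(\iota,\Phi,\Psi)$ is a rigid analytic trivialization for $\phi$.

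\textbf{Remarks on obstacles.} There is no real obstacle here: the genuinely hard inputs — the invertibility of $\Upsilon$ over $\TT_\theta$ (Proposition~\ref{P:Upsiloninvertible}, resting on the $\Omega$-trick from~\cite{P08}), and the surjectivity half of the Hartl–Juschka isomorphism $\xi$ (Theorem~\ref{T:isodual}(b)) which is what makes $V$ lie in $\GL_r(\KK[t])$ rather than just $\Mat_r(\KK[t])$ — have already been established. The only point requiring a word of care is the bookkeeping with transposes and Frobenius twists: one should observe that for a matrix $M$ over $\KK[t]$ (or more generally $\laurent{\KK}{t}$) one has $(M^{\tr})^{(n)} = (M^{(n)})^{\tr}$ and $(M^{-1})^{(n)} = (M^{(n)})^{-1}$, and that transposition reverses products while twisting does not, which is exactly why~\eqref{E:Visomorphism} reads $\tPhi^{\tr}V = V^{(-1)}\Phi$ with the twist on $V$ on the right-hand side. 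I would also remark that membership of $\Psi$ in $\GL_r(\TT_\theta)$, rather than merely $\GL_r(\TT)$, is automatic once one knows $\Psi^{(-1)} = \Phi\Psi$, by the argument recorded in Definition~\ref{D:rat}; but in fact it is already immediate here since $\Upsilon \in \GL_r(\TT_\theta)$ and $V \in \GL_r(\KK[t]) \subseteq \GL_r(\TT_\theta)$.
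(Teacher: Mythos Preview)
Your proof is correct and follows exactly the same approach as the paper: the computation you give is precisely the displayed chain of equalities appearing in the paragraph immediately before the proposition, using \eqref{E:Upsilonfneq1}, \eqref{E:Visomorphism}, and Proposition~\ref{P:Upsiloninvertible} in the same way. Your additional remarks on bookkeeping with transposes and twists, and on $\GL_r(\TT_\theta)$-membership, are accurate elaborations but not needed beyond what the paper records.
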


As a companion to Proposition~\ref{P:Upsilonkbarspan}, we can combine Theorem~\ref{T:PeriodLattice} and Proposition~\ref{P:PsiUpsilonV} to obtain a more developed picture relating periods and quasi-periods to rigid analytic trivializations of the dual $t$-motive as follows.

\begin{corollary} \label{C:perquasiperspans}
Let $\phi: \bA \to \Mat_d(\oK[\tau])$ be a uniformizable, abelian, and $\bA$-finite $t$-module of rank~$r$ defined over~$\oK$, and suppose we have chosen $\oK[t]$-bases $\bsm$ and $\bsn$ for $\cM_{\phi,\oK}$ and $\cN_{\phi,\oK}$ respectively.  Then for the rigid analytic trivialization $\Psi \in \GL_r(\TT_{\theta})$ for $\phi$ in Proposition~\ref{P:PsiUpsilonV}, the following hold.
\begin{enumerate}
\item[(a)] If $\bsg_1, \dots, \bsg_r \in \Mat_{1\times r}(\TT_{\theta})$ are the rows of $\Psi^{-1} = \Upsilon^{\tr}V$, then letting $\bslambda_j \assign \cE_{0}(\bsg_j)$ for $1\leqslant j \leqslant r$,
\[
  \Lambda_{\phi} = \bA \cdot \bslambda_1 + \dots + \bA\cdot \bslambda_r.
\]
\item[(b)] If $\bsdelta_1, \dots, \bsdelta_r \in \Der(\phi,\oK)$ represent a $\oK$-basis of $\rH_{\DR}^1(\phi,\oK)$, then
\[
  \Span{\oK} \bigl( \rF_{\bsdelta_i}(\bslambda_j) : 1 \leqslant i, j \leqslant r \bigr)
  = \Span{\oK} \bigl( (\Upsilon^{\tr})|_{t=\theta} \bigr) = \Span{\oK} \bigl( (\Psi^{-1})|_{t=\theta}\bigr).
\]
\end{enumerate}
\end{corollary}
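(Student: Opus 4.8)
The plan is to assemble both parts from results already established: Proposition~\ref{P:PsiUpsilonV}, which produces the rigid analytic trivialization $\Psi=(\Upsilon^{\tr}V)^{-1}$; Theorem~\ref{T:PeriodLattice}(b), which recovers $\Lambda_\phi$ from $\Psi^{-1}$; and Propositions~\ref{P:quasiperspans} and~\ref{P:Upsilonkbarspan}, which pin down the relevant $\oK$-spans of quasi-periods in terms of $\Upsilon|_{t=\theta}$. For part~(a), I would first observe that the chosen $\oK[t]$-basis $\bsn$ of $\cN_{\phi,\oK}$ yields a $t$-frame $(\iota,\Phi)$ for $\phi$, and by Proposition~\ref{P:PsiUpsilonV} the triple $(\iota,\Phi,\Psi)$ with $\Psi=(\Upsilon^{\tr}V)^{-1}$ is a rigid analytic trivialization. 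Theorem~\ref{T:PeriodLattice}(b) then gives $\Lambda_\phi=\cE_0\bigl(\Mat_{1\times r}(\bA)\cdot\Psi^{-1}\bigr)$, and writing $\bsg_1,\dots,\bsg_r$ for the rows of $\Psi^{-1}$, every element of $\Mat_{1\times r}(\bA)\cdot\Psi^{-1}$ has the form $\sum_j a_j\bsg_j$ with $a_j\in\bA$. The one step needing care is the equivariance $\cE_0(a\cdot\bsv)=\rd\phi_a\bigl(\cE_0(\bsv)\bigr)$ for $a\in\bA$ and $\bsv\in\Mat_{1\times r}(\TT_\theta)$, where $a\cdot\bsv$ denotes the $\KK[t]$-module structure on the Tate algebra: this holds on the dense subspace $\Mat_{1\times r}(\KK[t])$ by Lemma~\ref{L:epsilontframes} together with $\cE_0|_{\Mat_{1\times r}(\KK[t])}=\varepsilon_0\circ\iota$, and extends by continuity since multiplication by $a$, the map $\rd\phi_a$, and $\cE_0$ are all bounded (this is the equivariance already invoked in Remark~\ref{R:expequivariance}). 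Combining it with $\KK$-linearity of $\cE_0$ gives $\cE_0\bigl(\sum_j a_j\bsg_j\bigr)=\sum_j\rd\phi_{a_j}(\bslambda_j)$, so $\Lambda_\phi=\bA\cdot\bslambda_1+\dots+\bA\cdot\bslambda_r$; as $\Lambda_\phi$ is $\bA$-free of rank~$r$, the $\bslambda_j$ then automatically form an $\bA$-basis.

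For part~(b), I would use that $\{\bslambda_1,\dots,\bslambda_r\}$ is an $\bA$-basis of $\Lambda_\phi$ by part~(a), so that Proposition~\ref{P:quasiperspans}(b), specifically~\eqref{E:spanLbi}, applies and gives $\Span{\oK}\bigl(\rF_{\bsdelta_i}(\bslambda_j):1\le i,j\le r\bigr)=\Span{\oK}\bigl(\rF_{\bsdelta}(\bslambda):\bsdelta\in\Der(\phi,\oK),\ \bslambda\in\Lambda_\phi\bigr)$ for any $\oK$-basis $\bsdelta_1,\dots,\bsdelta_r$ of $\rH^1_{\DR}(\phi,\oK)$, and then Proposition~\ref{P:Upsilonkbarspan} identifies the right-hand side with $\Span{\oK}(\Upsilon|_{t=\theta})=\Span{\oK}\bigl((\Upsilon^{\tr})|_{t=\theta}\bigr)$. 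To finish I would pass from $\Upsilon^{\tr}$ to $\Psi^{-1}=\Upsilon^{\tr}V$: because $\phi$ and the bases $\bsm$, $\bsn$ (hence the induced basis $\bsmu$ of $\cM_\phi^\wedge$) are defined over $\oK$ and $\oK$ is perfect, the Hartl--Juschka isomorphism $\xi$ of Theorem~\ref{T:isodual} is defined over $\oK$, so $V\in\GL_r(\oK[t])$ and $\det V\in\oK^\times$; hence $V|_{t=\theta}\in\GL_r(\oK)$, and since right multiplication by an invertible matrix over $\oK$ preserves the $\oK$-span of the entries, $\Span{\oK}\bigl((\Psi^{-1})|_{t=\theta}\bigr)=\Span{\oK}\bigl((\Upsilon^{\tr})|_{t=\theta}\bigr)$, completing the chain of equalities.

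I expect the main obstacle to be the bookkeeping around fields of definition and the interplay of the various basis choices, rather than any single hard estimate: one must verify that $\xi$, and therefore $V$, is genuinely defined over $\oK$ with $\det V$ a nonzero constant (so that specialization at $t=\theta$ keeps it invertible), and that the $\oK$-spans in Propositions~\ref{P:quasiperspans} and~\ref{P:Upsilonkbarspan} are independent of the chosen $\bA$-basis of $\Lambda_\phi$ and $\oK$-basis of $\rH^1_{\DR}(\phi,\oK)$, so that the specific basis $\bslambda_j=\cE_0(\bsg_j)$ constructed in part~(a) may be substituted back into them. The continuity argument establishing the equivariance of $\cE_0$ in part~(a) is the only genuinely analytic point and is routine.
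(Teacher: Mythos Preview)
Your proposal is correct and follows essentially the same approach as the paper: part~(a) is deduced from Theorem~\ref{T:PeriodLattice}(b) and Proposition~\ref{P:PsiUpsilonV} together with the $\bA$-equivariance of $\cE_0$, and part~(b) from the identification of $\Span{\oK}(\Upsilon|_{t=\theta})$ with the span of quasi-periods (the paper cites \eqref{E:Upsilonspecial} directly, which is the content underlying Propositions~\ref{P:quasiperspans} and~\ref{P:Upsilonkbarspan}) combined with $V\in\GL_r(\oK[t])$. Your added care about $\det V\in\oK^\times$ ensuring $V|_{t=\theta}\in\GL_r(\oK)$, and about the independence of the spans from the choice of bases, makes explicit what the paper leaves implicit.
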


\begin{proof}
As mentioned in the preceding paragraph, much of the proof follows directly from Theorem~\ref{T:PeriodLattice} and Proposition~\ref{P:PsiUpsilonV}.  The first equality in~(b) follows from \eqref{E:Upsilonspecial}, while the second follows from the facts that $\Psi^{-1} = \Upsilon^{\tr}V$ and $V \in \GL_r(\oK[t])$.
\end{proof}

\begin{remark}
Part~(a) of this corollary makes perfect sense if $\phi$ is simply defined over~$\KK$, but part~(b) becomes trivial if the field of definition $\oK$ is replaced by $\KK$.
\end{remark}

This next construction is the extension of the one in \cite[\S 4.2]{CP12} to uniformizable, abelian, and $\bA$-finite, $t$-modules.

\begin{lemma} \label{L:gyha}
Let $\phi : \bA \to \Mat_d(\KK[\tau])$ be a uniformizable, abelian, and $\bA$-finite $t$-module. Let $\bsalpha$, $\bsy \in \KK^d$ be chosen so that $\Exp_{\phi}(\bsy) = \bsalpha$, and write $\tPhi = \sum_{i=0}^h \tU_i t^i$ with $\tU_i \in \Mat_r(\KK)$.  If we let
\begin{alignat}{2} \label{E:galphadef}
\bsg_{\bsy} &\assign -\twistop{\tau \bsm}{\cG_{\bsy}}^{\tr}\cdot V &&\in \Mat_{1 \times r}(\TT_{\theta}), \\
\label{E:halphadef}
\bsh_{\bsalpha} &\assign \sum_{i=1}^h \sum_{j=0}^{i-1} t^{i-1-j} \bigl\langle \tU_i \bsm \bigm| \phi_{t^{j}}(\bsalpha) \bigr\rangle^{\tr} \cdot V && \in \Mat_{1 \times r}(\KK[t]),
\end{alignat}
then
\[
  \bsg_{\bsy}^{(-1)} \Phi - \bsg_{\bsy} = \bsh_{\bsalpha}.
\]
\end{lemma}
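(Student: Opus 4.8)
The plan is to imitate the derivation of the functional equation $\Upsilon^{(-1)} = \tPhi^{-1}\Upsilon$ in~\eqref{E:Upsilonfneq1}, but now keeping track of the correction terms that arise because $\bsy$ need not lie in $\Lambda_{\phi}$, and then to translate the resulting identity via the intertwining relation~\eqref{E:Visomorphism} among $\tPhi^{\tr}$, $V$, and $\Phi$.

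First I would introduce the column vectors $\bsw \assign \twistop{\tau\bsm}{\cG_{\bsy}}$ and $\bsv \assign \twistop{\bsm}{\cG_{\bsy}}$ in $\Mat_{r\times 1}(\TT_{\theta})$, whose entries lie in $\TT_{\theta}$ by Remark~\ref{R:AGFconvergence}, so that $\bsg_{\bsy} = -\bsw^{\tr}V$. Two facts are needed. \emph{(i)} Twisting commutes with the pairing exactly as in the first step of~\eqref{E:Upsilonfneq1}: expanding each $\bsm_i$ as a twisted polynomial and applying $(\cdot)^{(-1)}$ entrywise gives $\bsw^{(-1)} = \bsv$. \emph{(ii)} Applying $\twistop{\cdot}{\cG_{\bsy}}$ componentwise to the defining relation $\tau\bsm = \tPhi\bsm$ and invoking Corollary~\ref{C:AGFtwistedmult}(a) for each entry $\tPhi_{il} = \sum_{k=0}^{h}(\tU_{k})_{il}\,t^{k}$, together with $\Exp_{\phi}(\bsy) = \bsalpha$, yields
\[
  \bsw = \tPhi\bsv + \boldsymbol{\rho}, \qquad
  \boldsymbol{\rho} \assign \sum_{k=1}^{h}\sum_{j=0}^{k-1} t^{k-1-j}\,\twistop{\tU_{k}\bsm}{\phi_{t^{j}}(\bsalpha)} \in \Mat_{r\times 1}(\KK[t]),
\]
where one uses the associativity~\eqref{E:twistopassoc} of $\twistop{\cdot}{\cdot}$ to recognize the correction term of Corollary~\ref{C:AGFtwistedmult}(a) in this shape. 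By construction $\boldsymbol{\rho}^{\tr}V$ is precisely the vector $\bsh_{\bsalpha}$ of~\eqref{E:halphadef}.

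Then I would assemble the pieces. By \emph{(i)}, $\bsg_{\bsy}^{(-1)} = -\bsv^{\tr}V^{(-1)}$, so using~\eqref{E:Visomorphism} and $\bsv^{\tr}\tPhi^{\tr} = (\tPhi\bsv)^{\tr}$,
\[
  \bsg_{\bsy}^{(-1)}\Phi = -\bsv^{\tr}V^{(-1)}\Phi = -\bsv^{\tr}\tPhi^{\tr}V = -(\tPhi\bsv)^{\tr}V,
\]
whence, invoking \emph{(ii)},
\[
  \bsg_{\bsy}^{(-1)}\Phi - \bsg_{\bsy} = -(\tPhi\bsv)^{\tr}V + \bsw^{\tr}V = (\bsw - \tPhi\bsv)^{\tr}V = \boldsymbol{\rho}^{\tr}V = \bsh_{\bsalpha}.
\]

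The one place that requires care is step \emph{(ii)}: coaxing the correction term supplied by Corollary~\ref{C:AGFtwistedmult}(a) into exactly the form appearing in~\eqref{E:halphadef}, which is a matter of bookkeeping with the $\KK[t]$-action on $\cM_{\phi}$, the transpositions, and the definition of $\twistop{\cdot}{\cdot}$; no essential difficulty lies beyond it. As a sanity check, for $\bsy = \bslambda \in \Lambda_{\phi}$ the term $\boldsymbol{\rho}$ vanishes by Corollary~\ref{C:AGFtwistedmult}(b), and the identity degenerates to $\bsg_{\bslambda}^{(-1)}\Phi = \bsg_{\bslambda}$, consistent with~\eqref{E:Upsilonfneq1}.
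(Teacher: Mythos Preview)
Your proof is correct and follows essentially the same approach as the paper's: both compute $\bsg_{\bsy}^{(-1)}\Phi$ via~\eqref{E:Visomorphism} to obtain $-\twistop{\bsm}{\cG_{\bsy}}^{\tr}\tPhi^{\tr}V$, expand $\twistop{\tPhi\bsm}{\cG_{\bsy}}$ using Corollary~\ref{C:AGFtwistedmult}(a) to produce the correction term $\bsh_{\bsalpha}$, and subtract. Your organization with the named intermediates $\bsw$, $\bsv$, $\boldsymbol{\rho}$ is slightly cleaner than the paper's direct manipulation, but the substance is identical.
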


\begin{proof}
We note first that
\begin{multline*}
\bsg_{\bsy} = -\twistop{\tPhi \bsm}{\cG_{\bsy}}^{\tr}\cdot V
= -\biggl\langle \sum_{i=0}^h \tU_i t^i \cdot \bsm \biggm| \cG_{\bsy} \biggr\rangle^{\tr}\cdot V  \\
= -\sum_{i=0}^h \Bigl\langle \tU_i \bsm \phi_{t^i} \Bigm| \cG_{\bsy} \Bigr\rangle^{\tr}\cdot V
= -\sum_{i=0}^h \Bigl\langle \tU_i \bsm \Bigm| \twistop{\phi_{t^i}}{\cG_{\bsy}}\Bigr\rangle^{\tr}\cdot V,
\end{multline*}
where in the last equality we have applied~\eqref{E:twistopassoc}.  We then apply Corollary~\ref{C:AGFtwistedmult}(a):
\begin{align} \label{E:galphacalc}
\bsg_{\bsy} &= -\sum_{i=0}^h \biggl\langle \tU_i \bsm \biggm| t^i \cG_{\bsy} + \sum_{j=0}^{i-1} t^{i-1-j} \phi_{t^j}(\bsalpha) \biggr\rangle^{\tr}\cdot V \\
  &=-\sum_{i=0}^h \bigl\langle \tU_i \bsm \bigm| t^i \cG_{\bsy} \bigr\rangle^{\tr}\cdot V
  -\sum_{i=1}^h \sum_{j=0}^{i-1} t^{i-1-j} \bigl\langle \tU_i \bsm \bigm| \phi_{t^j}(\bsalpha) \bigr\rangle^{\tr} \cdot V. \notag
\end{align}
On the other hand,
\begin{equation} \label{E:galphatwistPhi}
  \bsg_{\bsy}^{(-1)} \Phi = -\twistop{\bsm}{\cG_{\bsy}}^{\tr}\cdot V^{(-1)} \Phi
  = -\twistop{\bsm}{\cG_{\bsy}}^{\tr} \tPhi^{\tr}V,
\end{equation}
where the first equality follows from the definition of $\twistop{\cdot}{\cdot}$ and the second from~\eqref{E:Visomorphism}.  Continuing this we see that
\[
  \bsg_{\bsy}^{(-1)} \Phi = -\bigl( \tPhi \twistop{\bsm}{\cG_{\bsy}} \bigr)^{\tr}\cdot V
  = -\sum_{i=0}^h \bigl( t^i\tU_i \twistop{\bsm}{\cG_{\bsy}} \bigr)^{\tr} \cdot V
  = -\sum_{i=0}^h \twistop{\tU_i \bsm}{t^i\cG_{\bsy}}^{\tr} \cdot V,
\]
and combining this with \eqref{E:halphadef} and~\eqref{E:galphacalc} we are done.
\end{proof}

The key purpose of Lemma~\ref{L:gyha} is to manufacture vectors of functions to which Theorem~\ref{T:Exponentiation} can apply, as we see in the following calculations (cf.~\cite[\S 4.3]{CP11}, \cite[\S 4--5]{CP12}, \cite[\S 4.1]{N21}).

\begin{definition} \label{def:xipairing}
When $\phi$ is both abelian and $\bA$-finite, Hartl and Juschka~\cite[Question 2.5.15]{HartlJuschka20} make the following construction.  The definition of $\cM_{\phi}^{\wedge} = \Hom_{\KK[t]}(\tau\cM_{\phi},\KK[t]\,dt)$ in Proposition~\ref{P:dualmotive} and the isomorphism $\xi: \cM_{\phi}^{\wedge} \to \cN_{\phi}$ in Theorem~\ref{T:isodual} provide a perfect $\KK[t]$-bilinear pairing
\[
  \{ \cdot \,, \cdot \}: \tau \cM_{\phi} \times \cN_{\phi} \to \KK[t]
\]
defined by $\{ \tau \bsa,\bsb\} = \xi^{-1}(\bsb)(\tau\bsa)/dt$.  We note that $\{ \tau\cdot \tau\bsa,\bsb\} = \{\tau\bsa,\sigma\bsb\}^{(1)}$, which induces compatibility between the $\KK[\tau]$ and $\KK[\sigma]$ actions.  For $\KK[t]$-bases $\bsm$ for $\cM_{\phi}$ and $\bsn$ for $\cN_{\phi}$ and for $\bsu$, $\bsw \in \Mat_{1 \times r}(\KK[t])$, we find by~\eqref{E:Vdef} that
\begin{equation} \label{E:xipairing}
  \{ \bsu \tau\bsm, \bsw\bsn \} = \bsw V^{-1} \bsu^{\tr}.
\end{equation}
We can extend this pairing in the evident fashion to
\[
  \{ \cdot\,, \cdot \}: \tau \cM_{\phi} \times (\TT_{\theta}\otimes_{\KK[t]} \cN_{\phi}) \to \TT_{\theta},
\]
and we have the following proposition  and corollary which connects this pairing to quasi-logarithms and quasi-period respectively.
\end{definition}

\begin{remark}
Hartl and Juschka pose the question of how to make this pairing explicit for general abelian and $\bA$-finite $t$-modules, which amounts to explicitly identifying~$V$, and they demonstrate how to do this for Drinfeld modules and strictly pure $t$-modules~\cite[Ex.~2.5.16]{HartlJuschka20} (for Drinfeld modules see also~\cite[\S 3.4]{CP12}).  In the next section we explore this question in detail for almost strictly pure $t$-modules.  In the proceeding proposition we connect it to quasi-logarithms.
\end{remark}

\begin{proposition} \label{P:quasiloggy}
Let $\phi : \bA \to \Mat_d(\KK[\tau])$ be a uniformizable, abelian and $\bA$-finite $t$-module of rank~$r$, and let $\bsm$ and $\bsn$ be $\KK[t]$-bases for $\cM_{\phi}$ and $\cN_{\phi}$ respectively.  Let $\bsalpha$, $\bsy\in \KK^d$ be chosen so that $\Exp_{\phi}(\bsy) = \bsalpha$, and let $\bsg_{\bsy} \in \Mat_{1\times r}(\TT_{\theta})$ be given as in Lemma~\ref{L:gyha}.
\begin{enumerate}
\item[(a)] For any $\bsdelta \in \Der(\phi)$, if $\bsdelta_t = \bsu \cdot \tau \bsm$, with $\bsu = \sum_{i=0}^s c_i t^i$, $c_i \in \Mat_{1\times r}(\KK)$, then
\[
  \rF_{\bsdelta}(\bsy) = - \{ \bsdelta_t, \bsg_{\bsy} \bsn \}|_{t=\theta} + \sum_{i=1}^s \sum_{j=0}^{i-1} c_i \theta^{i-1-j} \twistop{\tau \bsm}{\phi_{t^j}(\bsalpha)}.
\]
\item[(b)] In particular, taking $\bsdelta_1, \dots, \bsdelta_r \in \Der(\phi)$ with $(\bsdelta_j)_t = \tau \bsm_{j}$ for $1 \leqslant j \leqslant r$, so that $\bsdelta_1, \dots, \bsdelta_r$ represents a $\KK$-basis of $\rH_{\DR}^1(\phi)$, we have
\[
  \rF_{\bsdelta_j}(\bsy) = -\{ (\bsdelta_j)_t, \bsg_{\bsy} \bsn \}|_{t=\theta}.
\]
\end{enumerate}
\end{proposition}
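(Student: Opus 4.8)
The plan is to start from Proposition~\ref{P:quasigen}(a), which expresses $\rF_{\bsdelta}(\bsy)$ as the specialization $\twistop{\bsdelta_t}{\cG_{\bsy}}|_{t=\theta}$, and then to unwind $\twistop{\bsdelta_t}{\cG_{\bsy}}$ by separating the $t$-action on the $t$-motive $\cM_{\phi}$ from ordinary multiplication by $t$ on $\TT_{\theta}$-valued vectors. Writing $\bsdelta_t = \bsu\cdot\tau\bsm = \sum_{i=0}^s c_i\cdot(t^i\cdot\tau\bsm)$, where $t^i\cdot$ denotes the $\cM_{\phi}$-module action, associativity of $\twistop{\cdot}{\cdot}$ in~\eqref{E:twistopassoc} reduces $\twistop{\bsdelta_t}{\cG_{\bsy}}$ to a sum of terms $\twistop{\tau\bsm_k}{\twistop{\phi_{t^i}}{\cG_{\bsy}}}$. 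Applying the identity $\twistop{\phi_{t^i}}{\cG_{\bsy}} = t^i\cG_{\bsy} + \sum_{j=0}^{i-1}t^{i-1-j}\phi_{t^j}(\Exp_{\phi}(\bsy))$ from the proof of Corollary~\ref{C:AGFtwistedmult} (with $\Exp_{\phi}(\bsy)=\bsalpha$) then splits $\twistop{\bsdelta_t}{\cG_{\bsy}}$ into a main term $\bsu\cdot\twistop{\tau\bsm}{\cG_{\bsy}}$, in which $\bsu$ acts as an ordinary polynomial row vector against the column vector $\twistop{\tau\bsm}{\cG_{\bsy}}$, plus a correction term $\sum_{i=1}^s\sum_{j=0}^{i-1}t^{i-1-j}c_i\cdot\twistop{\tau\bsm}{\phi_{t^j}(\bsalpha)}$ that is already polynomial in $t$.

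Next I would identify the main term with the pairing value. By the formula~\eqref{E:xipairing}, extended $\TT_{\theta}$-linearly to $\tau\cM_{\phi}\times(\TT_{\theta}\otimes_{\KK[t]}\cN_{\phi})$ as in Definition~\ref{def:xipairing}, one has $\{\bsdelta_t,\bsg_{\bsy}\bsn\} = \bsg_{\bsy}V^{-1}\bsu^{\tr}$. Substituting $\bsg_{\bsy} = -\twistop{\tau\bsm}{\cG_{\bsy}}^{\tr}V$ from Lemma~\ref{L:gyha} gives $\{\bsdelta_t,\bsg_{\bsy}\bsn\} = -\twistop{\tau\bsm}{\cG_{\bsy}}^{\tr}\bsu^{\tr} = -\bsu\cdot\twistop{\tau\bsm}{\cG_{\bsy}}$, the last equality since the expression is $1\times 1$. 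As $\cG_{\bsy}^{(1)}$, hence $\twistop{\tau\bsm}{\cG_{\bsy}}$, and $\bsg_{\bsy}$ have entries in $\TT_{\theta}$ by Remark~\ref{R:AGFconvergence} and Lemma~\ref{L:gyha}, specialization at $t=\theta$ is a well-defined ring homomorphism, so $\{\bsdelta_t,\bsg_{\bsy}\bsn\}|_{t=\theta} = -\bsu|_{t=\theta}\cdot\twistop{\tau\bsm}{\cG_{\bsy}}|_{t=\theta}$. Combining this with the splitting of $\twistop{\bsdelta_t}{\cG_{\bsy}}$, specializing at $t=\theta$, and invoking Proposition~\ref{P:quasigen}(a) yields part~(a); note that in the correction term $\tau\bsm$ and $\phi_{t^j}(\bsalpha)$ have constant entries, so the only $t$-dependence is through the explicit powers $t^{i-1-j}$, which become $\theta^{i-1-j}$.

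Part~(b) is the special case $\bsu=\bse_j$ of part~(a): then $\bsdelta_t = \tau\bsm_j = (\bsdelta_j)_t$, we have $s=0$, the correction sum is empty, and we are left with $\rF_{\bsdelta_j}(\bsy) = -\{(\bsdelta_j)_t,\bsg_{\bsy}\bsn\}|_{t=\theta}$; that $\bsdelta_1,\dots,\bsdelta_r$ represent a $\KK$-basis of $\rH^1_{\DR}(\phi)$ was already observed following~\eqref{E:Upsilonspecial}. I expect the only real friction to be bookkeeping: keeping straight the two operations of $\KK[t]$ — multiplication inside $\cM_{\phi}$ versus scalar multiplication on $\TT_{\theta}$-valued vectors — and the transpose conventions in~\eqref{E:xipairing} and in the definition of $\bsg_{\bsy}$. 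There is no genuine analytic obstacle, as all convergence and regularity issues at $t=\theta$ are covered by the $\TT_{\theta}$-membership statements cited above.
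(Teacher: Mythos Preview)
Your proposal is correct and follows essentially the same route as the paper's proof: both hinge on the pairing formula~\eqref{E:xipairing} applied with $\bsg_{\bsy} = -\twistop{\tau\bsm}{\cG_{\bsy}}^{\tr}V$, the splitting from Corollary~\ref{C:AGFtwistedmult}(a), and Proposition~\ref{P:quasigen}(a). The only cosmetic difference is ordering: the paper first evaluates $\{\bsdelta_t,\bsg_{\bsy}\bsn\}$ and then invokes Corollary~\ref{C:AGFtwistedmult}(a) to rewrite it as $-\twistop{\bsu\cdot\tau\bsm}{\cG_{\bsy}}$ plus the correction, whereas you decompose $\twistop{\bsdelta_t}{\cG_{\bsy}}$ first and then match the main term to the pairing---same computation in reverse.
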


\begin{proof}
According to \eqref{E:xipairing} and the definition of $\bsg_{\bsy}$ in Lemma~\ref{L:gyha},
\[
  \{ \bsdelta_t, \bsg_{\bsy}\bsn \} = \{ \bsu\tau\bsm, \bsg_{\bsy}\bsn \} = \bsg_{\bsy} V^{-1} \bsu^{\tr} = -\twistop{\tau\bsm}{\cG_{\bsy}}^{\tr} \cdot \bsu^{\tr}
  = -\biggl( \sum_{i=0}^s c_i t^i \twistop{\tau \bsm}{\cG_{\bsy}} \biggr)^{\tr}.
\]
By Corollary~\ref{C:AGFtwistedmult}(a) and by noting that the resulting terms on the right-hand side are all $1\times 1$ matrices, it follows that
\[
  \{ \bsdelta_t, \bsg_{\bsy} \bsn\} = -\twistop{\bsu \cdot \tau \bsm}{\cG_{\bsy}} + \sum_{i=1}^s \sum_{j=0}^{i-1} c_i t^{i-1-j} \twistop{\tau \bsm}{\phi_{t^j}(\bsalpha)}.
\]
Part~(a) then follows upon evaluation at $t=\theta$ by Proposition~\ref{P:quasigen}(a), and part~(b) is an immediate consequence of~(a).
\end{proof}

\begin{corollary} \label{C:xipairingquasiper}
Let $\phi : \bA \to \Mat_d(\KK[\tau])$ be a uniformizable, abelian, and $\bA$-finite $t$-module, and suppose that we have chosen $\KK[t]$-bases $\bsm$ and $\bsn$ for $\cM_{\phi}$ and $\cN_{\phi}$ respectively.  Let $\bsdelta \in \Der(\phi)$ and let $\bslambda \in \Lambda_{\phi}$, and let $\bsg_{\bslambda} \assign -\twistop{\tau \bsm}{\cG_{\bslambda}}^{\tr}\cdot V \in \Mat_{1\times r}(\TT_{\theta})$.  Then
\[
\{ \bsdelta_t, \bsg_{\bslambda}\bsn \} = -\twistop{\bsdelta_t}{\cG_{\bslambda}},
\]
and in particular,
\[
  \{ \bsdelta_t, \bsg_{\bslambda}\bsn \}|_{t=\theta} = -\rF_{\bsdelta}(\bslambda).
\]
\end{corollary}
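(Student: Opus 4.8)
The plan is to recognize Corollary~\ref{C:xipairingquasiper} as the special case of Proposition~\ref{P:quasiloggy} in which the auxiliary point $\bsy$ is taken to lie in the period lattice, so that $\bsalpha = \Exp_{\phi}(\bslambda) = \bszero$ and all of the correction terms in Proposition~\ref{P:quasiloggy}(a) disappear. First I would observe that, with $\bsy = \bslambda$, the vector $\bsg_{\bslambda} = -\twistop{\tau\bsm}{\cG_{\bslambda}}^{\tr} \cdot V$ named in the corollary is exactly the vector $\bsg_{\bsy}$ of Lemma~\ref{L:gyha} specialized at $\bsy = \bslambda$, so that the hypotheses of Proposition~\ref{P:quasiloggy} are in force.

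For the first displayed identity I would give the short direct computation. Since $\phi$ is defined over the perfect field $\KK$, we have $\tau\cM_{\phi} = \cM_{\phi}\tau$, and by Proposition~\ref{P:biderprops}(a) the value $\bsdelta_t$ lies in $\tau\cM_{\phi}$; writing $\bsdelta_t = \bsu \cdot \tau\bsm$ with $\bsu = \sum_{i=0}^s c_i t^i \in \Mat_{1\times r}(\KK[t])$, formula~\eqref{E:xipairing} gives
\[
  \{ \bsdelta_t, \bsg_{\bslambda}\bsn \} = \{ \bsu\tau\bsm, \bsg_{\bslambda}\bsn \} = \bsg_{\bslambda} V^{-1} \bsu^{\tr}
  = -\twistop{\tau\bsm}{\cG_{\bslambda}}^{\tr}\, \bsu^{\tr} = -\bsu \twistop{\tau\bsm}{\cG_{\bslambda}},
\]
after the factors of $V$ cancel and one transposes the resulting scalar. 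The only remaining point is to move the polynomial $\bsu$ inside the pairing $\twistop{\,\cdot\,}{\cG_{\bslambda}}$, i.e.\ to show $\bsu\twistop{\tau\bsm}{\cG_{\bslambda}} = \twistop{\bsu\tau\bsm}{\cG_{\bslambda}} = \twistop{\bsdelta_t}{\cG_{\bslambda}}$. This is precisely Corollary~\ref{C:AGFtwistedmult}(b): for $\bslambda \in \Lambda_{\phi}$ one has $\twistop{f \cdot \bsm}{\cG_{\bslambda}} = f \cdot \twistop{\bsm}{\cG_{\bslambda}}$ for all $f \in \KK[t]$, because $\Exp_{\phi}(\bslambda) = \bszero$ kills the discrepancy terms of Corollary~\ref{C:AGFtwistedmult}(a); applying this termwise over $\bsu = \sum c_i t^i$ (with the scalars $c_i$ pulled through) yields the claim. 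Combining the two displays gives $\{ \bsdelta_t, \bsg_{\bslambda}\bsn \} = -\twistop{\bsdelta_t}{\cG_{\bslambda}}$.

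Finally, the ``in particular'' statement follows by specializing $t = \theta$: the entries of $\cG_{\bslambda}^{(1)}$ lie in $\TT_{\theta}$ (Remark~\ref{R:AGFconvergence}), so both sides are regular at $t = \theta$, and Proposition~\ref{P:quasigen}(b) identifies $\twistop{\bsdelta_t}{\cG_{\bslambda}}|_{t=\theta}$ with $\rF_{\bsdelta}(\bslambda)$, giving $\{ \bsdelta_t, \bsg_{\bslambda}\bsn \}|_{t=\theta} = -\rF_{\bsdelta}(\bslambda)$. There is no serious obstacle here; the only conceptual subtlety is the familiar distinction between the $\KK[t]$-action on the $t$-motive and multiplication inside $\TT$, which is exactly what Corollary~\ref{C:AGFtwistedmult} controls and which becomes trivial on the lattice. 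Alternatively, one can bypass the computation entirely and deduce the corollary from Proposition~\ref{P:quasiloggy}(a) by noting that every correction term there contains a factor $\phi_{t^j}(\bsalpha) = \phi_{t^j}(\bszero) = \bszero$; I would likely record this one-line remark as well.
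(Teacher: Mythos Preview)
Your proposal is correct and matches the paper's approach: the paper states Corollary~\ref{C:xipairingquasiper} without a separate proof, leaving it as an immediate consequence of the computation in the proof of Proposition~\ref{P:quasiloggy} specialized to $\bsy = \bslambda$ (so $\bsalpha = \bszero$), which is precisely what you do. Your use of Corollary~\ref{C:AGFtwistedmult}(b) in place of~(a) is exactly the simplification that the lattice condition affords, and your alternative one-line deduction from Proposition~\ref{P:quasiloggy}(a) is the intended shortcut.
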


\begin{remark}
The notation in Corollary~\ref{C:xipairingquasiper} might suggest that $\cE_0(\bsg_{\bslambda}) = \bslambda$ for each $\bslambda \in \Lambda_{\phi}$.  And since $\bsg_{\bslambda}^{(-1)} \Phi = \bsg_{\bslambda}$, it follows from Theorem~\ref{T:Exponentiation} that $\cE_0(\bsg_{\bslambda}) \in \Lambda_{\phi}$.  However, to verify if it equals $\bslambda$ exactly is a bit subtle.  Likewise, Proposition~\ref{P:quasiloggy} together with Theorem~\ref{T:Exponentiation} might indicate that $\cE_0(\bsg_{\bsy} + \bsh_{\bsalpha}) = \bsy$ and $\cE_1(\bsh_{\bsalpha}) = \bsalpha$.  We prove these identities for almost strictly pure $t$-modules in \S\ref{subS:Quasi}.  See Proposition~\ref{P:Expgyha}.
\end{remark}

Specializing now to the case that $\phi$ is defined over $\oK$, the constructions above yield precise information about the $\oK$-span of quasi-logarithms for $\bsy$ with $\Exp_{\phi}(\bsy) \in \oK^d$ and the evaluation~$\bsg_{\bsy}|_{t=\theta}$.

\begin{theorem} \label{T:quasispancomplete}
Let $\phi : \bA \to \Mat_d(\oK[\tau])$ be a uniformizable, abelian, and $\bA$-finite $t$-module defined over~$\oK$, and let $\bsm$ and $\bsn$ be $\oK[t]$-bases for $\cM_{\phi,\oK}$ and $\cN_{\phi,\oK}$ respectively.  Let $\bsdelta_1, \dots, \bsdelta_r \in \Der(\phi,\oK)$ represent a $\oK$-basis of $\rH_{\DR}^1(\phi,\oK)$.  Finally, suppose that $\bsy \in \KK^d$ and $\bsalpha \in \smash{\oK}^d$ are chosen so that $\Exp_{\phi}(\bsy) = \bsalpha$.  Then
\begin{align*}
\Span{\oK} \bigl(1, \rF_{\bsdelta_1}(\bsy), \dots, \rF_{\bsdelta_r}(\bsy) \bigr)
&= \Span{\oK} \bigl( \{1\} \cup \{ \rF_{\bsdelta}(\bsy) : \bsdelta \in \Der(\phi,\oK) \} \bigr) \\
&= \Span{\oK} \bigl( \{1\} \cup \{ \bsg_{\bsy}|_{t=\theta} \} \bigr),
\end{align*}
where $\bsg_{\bsy} \in \Mat_{1\times r}(\TT_\theta)$ is given as in Lemma~\ref{L:gyha}.
\end{theorem}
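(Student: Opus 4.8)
The plan is to prove the two claimed equalities of $\oK$-spans by establishing, on one hand, the equality between the span of the $\rF_{\bsdelta_i}(\bsy)$ and the span of all quasi-logarithms $\rF_{\bsdelta}(\bsy)$ together with~$1$, and on the other hand, the equality between this latter span and the span obtained from the evaluation $\bsg_{\bsy}|_{t=\theta}$. First I would treat the equality
\[
\Span{\oK}\bigl(1,\rF_{\bsdelta_1}(\bsy),\dots,\rF_{\bsdelta_r}(\bsy)\bigr)
= \Span{\oK}\bigl(\{1\}\cup\{\rF_{\bsdelta}(\bsy):\bsdelta\in\Der(\phi,\oK)\}\bigr).
\]
The containment $\subseteq$ is trivial. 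For $\supseteq$, the $\oK$-linearity of the map $\bsdelta\mapsto\rF_{\bsdelta}(\bsz)$ from~\eqref{E:deltatoFdelta} shows that every $\rF_{\bsdelta}(\bsy)$ with $\bsdelta\in\Der(\phi,\oK)$ is an $\oK$-linear combination of $\rF_{\bsdelta_1}(\bsy),\dots,\rF_{\bsdelta_r}(\bsy)$ \emph{plus} a contribution from $\Der_{\si}(\phi,\oK)$, since $\bsdelta_1,\dots,\bsdelta_r$ only represent a basis of the \emph{quotient} $\rH^1_{\DR}(\phi,\oK)$. The point is that for a strictly inner biderivation $\bsdelta=\bsdelta^{(\bsu)}$ with $\bsu\in\Mat_{1\times d}(\oK[\tau]\tau)$ we have by~\eqref{E:quasiinner} that $\rF_{\bsdelta^{(\bsu)}}(\bsy)=\bsu\Exp_{\phi}(\bsy)-\rd\bsu\cdot\bsy = \bsu\bsalpha$, and $\rd\bsu=0$ since $\bsu$ has no constant term. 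As $\bsalpha\in\smash{\oK}^d$ and $\bsu$ has coefficients in~$\oK$, we get $\rF_{\bsdelta^{(\bsu)}}(\bsy)\in\oK$, hence lands in $\Span{\oK}(1)$. More generally for an inner biderivation $\bsdelta^{(\bsu)}$ with $\rd\bsu\in\rN_\phi^\perp(\oK)$ but $\bsu$ possibly with constant term, \eqref{E:quasiinner} gives $\rF_{\bsdelta^{(\bsu)}}(\bsy)=\bsu\bsalpha-\rd\bsu\cdot\bsy$; since $\Der_0(\phi,\oK)$ is among the $\bsdelta_i$'s (one chooses the basis as in Proposition~\ref{P:quasiperspans}), these $\rF$-values are already accounted for. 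Putting these together, every $\rF_{\bsdelta}(\bsy)$ lies in $\Span{\oK}(1,\rF_{\bsdelta_1}(\bsy),\dots,\rF_{\bsdelta_r}(\bsy))$, giving the first equality.

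Next I would establish
\[
\Span{\oK}\bigl(\{1\}\cup\{\rF_{\bsdelta}(\bsy):\bsdelta\in\Der(\phi,\oK)\}\bigr)
= \Span{\oK}\bigl(\{1\}\cup\{\bsg_{\bsy}|_{t=\theta}\}\bigr).
\]
The inclusion $\subseteq$ follows from Proposition~\ref{P:quasiloggy}(a): for $\bsdelta\in\Der(\phi,\oK)$ with $\bsdelta_t=\bsu\cdot\tau\bsm$, $\bsu=\sum c_i t^i$, $c_i\in\Mat_{1\times r}(\oK)$, we have
\[
\rF_{\bsdelta}(\bsy)=-\{\bsdelta_t,\bsg_{\bsy}\bsn\}|_{t=\theta}+\sum_{i=1}^{s}\sum_{j=0}^{i-1}c_i\theta^{i-1-j}\twistop{\tau\bsm}{\phi_{t^j}(\bsalpha)}.
\]
Now $\{\bsdelta_t,\bsg_{\bsy}\bsn\}=\bsg_{\bsy}V^{-1}\bsu^{\tr}$ by~\eqref{E:xipairing}, and since $V\in\GL_r(\oK[t])$ and $\bsu$ has coefficients in $\oK$, the specialization $\{\bsdelta_t,\bsg_{\bsy}\bsn\}|_{t=\theta}$ is an $\oK$-linear combination of the entries of $\bsg_{\bsy}|_{t=\theta}$. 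The remaining double sum has coefficients in $\oK$ (as $\theta\in\oK$ and $c_i$ over $\oK$) times $\twistop{\tau\bsm}{\phi_{t^j}(\bsalpha)}$, which is an entry of $\oK^d$ because $\bsm$, $\phi$, and $\bsalpha$ are all over $\oK$; hence this term lies in $\Span{\oK}(1)$. Thus $\rF_{\bsdelta}(\bsy)\in\Span{\oK}(\{1\}\cup\{\bsg_{\bsy}|_{t=\theta}\})$ for all $\bsdelta\in\Der(\phi,\oK)$, giving $\subseteq$. For the reverse inclusion, I would use Proposition~\ref{P:quasiloggy}(b): taking $\bsdelta_1,\dots,\bsdelta_r$ with $(\bsdelta_j)_t=\tau\bsm_j$, we get $\rF_{\bsdelta_j}(\bsy)=-\{(\bsdelta_j)_t,\bsg_{\bsy}\bsn\}|_{t=\theta}=-\bigl(\bsg_{\bsy}V^{-1}\bse_j^{\tr}\bigr)|_{t=\theta}$, so the vector $(\rF_{\bsdelta_1}(\bsy),\dots,\rF_{\bsdelta_r}(\bsy))$ equals $-\bigl(\bsg_{\bsy}V^{-1}\bigr)|_{t=\theta}$. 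Since $V^{-1}\in\GL_r(\oK[t])$ and hence $V^{-1}|_{t=\theta}\in\GL_r(\oK)$, the entries of $\bsg_{\bsy}|_{t=\theta}$ are $\oK$-linear combinations of the $\rF_{\bsdelta_j}(\bsy)$. This gives $\supseteq$ and hence the second equality; combining with the first finishes the proof.

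The main obstacle I anticipate is bookkeeping the role of the constant~$1$ and the non-tractable coordinates correctly. Concretely, one must be careful that in Proposition~\ref{P:quasiloggy}(a) the extra term $\sum c_i\theta^{i-1-j}\twistop{\tau\bsm}{\phi_{t^j}(\bsalpha)}$ genuinely lands in $\oK$ — this is where the hypothesis $\bsalpha\in\smash{\oK}^d$ (rather than just $\bsy\in\KK^d$) is essential, and it is the reason~$1$ must be adjoined to both spans. A secondary subtlety is verifying that the chosen $\oK$-basis $\bsdelta_1,\dots,\bsdelta_r$ of $\rH^1_{\DR}(\phi,\oK)$ can be lifted to biderivations of the form $(\bsdelta_j)_t=\tau\bsm_j$ so that Proposition~\ref{P:quasiloggy}(b) applies on the nose; since $\rH^1_{\DR}(\phi)\cong\tau\cM_\phi/(t-\theta)\tau\cM_\phi$ (as noted just before Proposition~\ref{P:Upsilonkbarspan}), the images of $\tau\bsm_1,\dots,\tau\bsm_r$ do form such a basis, but one should record that changing from an \emph{arbitrary} $\oK$-basis $\bsdelta_1,\dots,\bsdelta_r$ to this particular one does not change the $\oK$-span of the $\rF_{\bsdelta_j}(\bsy)$, which again is just $\oK$-linearity of~\eqref{E:deltatoFdelta} together with the strictly-inner computation from the first paragraph. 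Everything else is a routine chain of linear-algebra identities over $\oK[t]$ specialized at $t=\theta$.
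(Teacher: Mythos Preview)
Your proposal is correct and follows essentially the same approach as the paper's proof. The paper argues more tersely: it first dispatches the first equality via the observation that $\rF_{\bsdelta^{(\bsu)}}(\bsy)=\bsu\bsalpha\in\oK$ for strictly inner $\bsdelta^{(\bsu)}$, then uses Proposition~\ref{P:quasiloggy}(b) together with~\eqref{E:xipairing} and $V\in\GL_r(\oK[t])$ to obtain directly $\Span{\oK}(\rF_{\bsepsilon_1}(\bsy),\dots,\rF_{\bsepsilon_r}(\bsy))=\Span{\oK}(\bsg_{\bsy}|_{t=\theta})$ for the specific basis $(\bsepsilon_j)_t=\tau\bsm_j$, and finally converts to the arbitrary basis $\bsdelta_1,\dots,\bsdelta_r$ using the first equality; your version unpacks both inclusions of the second equality separately via parts~(a) and~(b) of Proposition~\ref{P:quasiloggy}, which is perfectly fine and perhaps clearer.
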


\begin{proof}
For $\bsdelta = \bsdelta^{(\bsu)} \in \Der_{\si}(\phi,\oK)$, where $\bsu \in \Mat_{1 \times d}(\oK[\tau]\tau)$, it follows from~\eqref{E:quasiinner} that $\rF_{\bsdelta}(\bsy) = \bsu \bsalpha \in \smash{\oK}$.  This implies the first equality.  On the other hand, using~\eqref{E:xipairing} with Proposition~\ref{P:quasiloggy}(b), we see that if $\bsepsilon_1, \dots, \bsepsilon_r \in \Der(\phi,\oK)$ are chosen so that for each $j$ we have $(\bsepsilon_j)_t = \tau \bsm_j$, then
\[
  \rF_{\bsepsilon_j}(\bsy) = \bsg_{\bsy} V^{-1} \bse_{j}^{\tr}\big|_{t=\theta}.
\]
Since $\phi$ is defined over $\oK$, we have $V \in \GL_r(\oK[t])$, and so
\begin{equation}
  \Span{\oK} \bigl( \rF_{\bsepsilon_1}(\bsy), \dots, \rF_{\bsepsilon_r}(\bsy) \bigr)
  = \Span{\oK} \bigl( \bsg_{\bsy}|_{t=\theta} \bigr),
\end{equation}
and the result follows by converting $\bsepsilon_1, \dots, \bsepsilon_r$ to $\bsdelta_1, \dots, \bsdelta_r$ in $\rH_{\DR}^1(\phi,\oK)$.
\end{proof}

\begin{remark} \label{R:tractablequasi}
If $\rd\phi_t$ is in Jordan normal form, then as we saw in Remark~\ref{R:tractable}, it follows from \eqref{E:quasiinner} that for an inner biderivation $\bsdelta$ defined over~$\oK$, $\rF_{\bsdelta}(\bsy)$ is a $\oK$-linear combination of $1$ and the tractable coordinates of $\bsy$.  Thus the $\oK$-vector space defined in Theorem~\ref{T:quasispancomplete} consists of the $\oK$-linear span of $1$, the tractable coordinates of~$\bsy$, and the strictly reduced quasi-logarithms associated to $\bsy$.
\end{remark}

\subsection{Periods and logarithms for almost strictly pure \texorpdfstring{$t$}{t}-modules} \label{subS:Quasi}
Here we account how to use rigid analytic trivializations $(\iota,\Phi,\Psi)$ and Anderson generating functions to explicitly construct periods and logarithms for a class of Anderson $t$-modules we call \emph{almost strictly pure $t$-modules}.  We let $\phi : \bA \to \Mat_d(\KK[\tau])$ be a $t$-module defined over a field $L$ with $K \subseteq L \subseteq \KK$, with
\begin{equation} \label{E:phitB}
  \phi_t = \rd\phi_t + B_1 \tau + \dots + B_w \tau^w, \quad d\phi_t,\ B_i \in \Mat_d(L).
\end{equation}
Recall from Example~\ref{Ex:strictlypure1} that $\phi$ is strictly pure if $B_w \in \GL_d(L)$, and then in this case $\phi$ is pure, abelian, and $\bA$-finite.  On the other hand, we will say that $\phi$ is almost strictly pure if there is some $s \geqslant 1$ so that
\begin{equation} \label{E:phits}
  \phi_{t^s} = \rd\phi_{t^s} + A_1 \tau + \cdots + A_{\ell} \tau^{\ell},
\end{equation}
with $A_{\ell} \in \GL_r(L)$. Almost strictly pure $t$-modules are strictly pure as Anderson $\FF_q[t^s]$-modules, and since $\KK[t]$ is a free and finitely generated $\KK[t^s]$-module, it follows that they are also pure, abelian, and $\bA$-finite as Anderson $t$-modules. An almost strictly pure $t$-module need not be uniformizable, but throughout this section we will assume this to be true of $\phi$.

\begin{remark} \label{R:ASPimpliespure}
Goss~\cite[Rem.~2.2.3]{Goss94}, \cite[Rem.~5.5.5]{Goss}, alludes to the purity of $\phi$, but for completeness we indicate the justification for this. We recall from our discussion in Example~\ref{Ex:strictlypure1} that $\phi$ is pure as Anderson $\FF_q[t^s]$-module of rank $\ell d$. From the definition of purity~\cite[\S 1.9]{And86}, \cite[\S 5.5]{Goss}, this means that if we let $\laurent{\cM_{\phi}}{t^{-s}} \assign \cM_{\phi} \otimes_{\KK[t^s]} \laurent{\KK}{t^{-s}}$, equipped as a left $\KK[t^s,\tau]$-module with $\tau$ acting diagonally, then there is a $\power{\KK}{t^{-s}}$-lattice $W \subseteq \laurent{\cM_{\phi}}{t^{-s}}$ such that
\begin{equation} \label{E:latticeidentity1}
  \tau^{\ell d} W = (t^s)^d W.
\end{equation}
The precise definition of $W$ can be obtained by using the argument in~\cite[Ex.~2.2.2]{Goss94}, \cite[Cor.~5.5.4]{Goss}. Now
\[
  \laurent{\cM_{\phi}}{t^{-1}} \assign \cM_{\phi} \otimes_{\KK[t]} \laurent{\KK}{t^{-1}} \cong \laurent{\cM_{\phi}}{t^{-s}} \otimes_{\laurent{\KK}{t^{-s}}} \laurent{\KK}{t^{-1}},
\]
and we let $W' \assign W \otimes_{\power{\KK}{t^{-s}}} \power{\KK}{t^{-1}}$. It follows that $W'$ is a $\power{\KK}{t^{-1}}$-lattice in $\laurent{\cM_{\phi}}{t^{-1}}$. Furthermore, \eqref{E:latticeidentity1} implies that
\begin{equation} \label{E:latticeidentity2}
  \tau^{\ell d} W' = t^{sd} W',
\end{equation}
and so $\phi$ is pure of weight $s/\ell$ by~\cite[\S 1.9]{And86}, \cite[\S 5.5]{Goss}.
\end{remark}

\begin{remark}
The class of almost strictly pure $t$-modules contains many $t$-modules of common interest, including Drinfeld modules, tensor powers of the Carlitz module (see~\cite[Prop.~1.6.1]{AndThak90} or \cite[\S 3.5]{PLogAlg}), and tensor products of Drinfeld modules (through formulas in~\cite[\S 2]{Hamahata93}).  The prolongation $t$-modules of Maurischat~\cite{Maurischat18} for an almost strictly pure $t$-module are themselves almost strictly pure (see Theorem~\ref{T:prolmod}).  The methods in this section and later can also be applied to iterated extensions of almost strictly pure $t$-modules, such as the $t$-modules associated to multiple zeta values and multiple polylogarithms~\cite{Chang14}--\cite{ChangMishiba21}, \cite{CPY19}, \cite{GreenNgoDac20b}.
\end{remark}

As noted above, $\rank_{\KK[t^s]}(\cN_{\phi}) = \ell d$ as in Example~\ref{Ex:strictlypure1}, and so
\[
r = \rank_{\KK[t]}(\cN_{\phi}) = \rank_{\KK[t]}(\cM_{\phi}) = \frac{\ell d}{s},
\]
which in particular agrees with~\eqref{E:latticeidentity2}. We can assume that $\ell \geqslant w$ in \eqref{E:phitB} and \eqref{E:phits}, because by taking a high enough multiple of $s$ this inequality is assured.  We will also use the conventions that $B_0 = \rd\phi_t$ and for $i < 0$ or $i > w$, $B_i=0$.  For $i \geqslant 0$ and $1 \leqslant j \leqslant d$, we set
\[
  \bsm'_{id+j} \assign \tau^i \bss_j \in \cM_{\phi}, \quad \bsn'_{id+j} \assign \sigma^i \bss_j \in \cN_{\phi}.
\]
In this way, $\bsm' \assign (\bsm'_1, \dots, \bsm'_{\ell d})^{\tr}$ and $\bsn' \assign (\bsn'_1, \dots, \bsn'_{\ell d})^{\tr}$ are $\KK[t^s]$-bases for $\cM_{\phi}$ and $\cN_{\phi}$ respectively (cf.~Examples~\ref{Ex:strictlypure1} and~\ref{Ex:strictlypure2}).  If we fix $\KK[t]$-bases
\begin{equation} \label{E:MNbases}
\bsm = (\bsm_1, \dots, \bsm_r)^{\tr} \in (\cM_{\phi})^r, \quad
\bsn = (\bsn_1, \dots, \bsn_r)^{\tr} \in (\cN_{\phi})^r,
\end{equation}
then there are unique matrices $X$, $Y \in \Mat_{\ell d \times r}(\KK[t])$ so that
\begin{equation} \label{E:XYdef}
  \bsm' = X\bsm, \quad \bsn' = Y\bsn.
\end{equation}
Now we let $\bsmu = (\bsmu_1, \dots, \bsmu_r)^{\tr} \in (\cM_{\phi}^{\wedge})^r$ be the dual $\KK[t]$-basis of $\tau \bsm$.  The definition of $X$ then implies
\begin{equation} \label{E:mumprimes}
  \begin{pmatrix} \bsmu_1(\tau \bsm_1') & \cdots & \bsmu_1(\tau \bsm_{\ell d}') \\
  \vdots & & \vdots \\
  \bsmu_r(\tau \bsm_1') & \cdots & \bsmu_r(\tau \bsm_{\ell d}')
  \end{pmatrix}
  = \bigl( X^{(1)} \bigr)^{\tr}\, dt.
\end{equation}
Recalling $\kappa$ from~\eqref{E:kappadef}, we see that we have a commuting diagram
\[
\begin{tikzcd}
\cM_{\phi}^{\wedge} = \Hom_{\KK[t]}(\tau \cM_{\phi}, \KK[t]\,dt)
\arrow[d, hook]
\arrow[r, hook, "\kappa"]
&
\Hom_{\laurent{\KK}{z}}\bigl(\laurent{\KK}{z} \otimes_{\KK[t]} \tau \cM_{\phi}, \laurent{\KK}{z}\,dz \bigr)
\arrow[d, hook]
\\
(\cM_{\phi}^{\wedge})' = \Hom_{\KK[t^s]}(\tau \cM_{\phi}, \KK[t]\,dt)
\arrow[r, hook, "\kappa'"]
&
\Hom_{\laurent{\KK}{z^s}}\bigl(\laurent{\KK}{z} \otimes_{\KK[t]} \tau \cM_{\phi}, \laurent{\KK}{z}\,dz \bigr),
\end{tikzcd}
\]
where the vertical arrows are inclusions and $\kappa'$ is the natural embedding indicated.  We note that $(\cM_{\phi}^{\wedge})'$ is a free $\KK[t]$-module of rank $rs=\ell d$ and that we can choose a $\KK[t]$-basis $\bsmu_1', \dots, \bsmu_{\ell d}'$ so that for $1 \leqslant i, j  \leqslant \ell d$, we have $\bsmu_i'(\tau \bsm_j') = \delta_{ij}$.  It follows from \eqref{E:mumprimes} that
\begin{equation} \label{E:muprimetomu}
  \bsmu = \bigl( X^{(1)} \bigr)^{\tr} \bsmu'.
\end{equation}
If we consider the map $\xi : \cM_{\phi}^{\wedge} \to \cN_{\phi}$ in~\eqref{E:xidef}, we note that we can further define $\xi' : (\cM_{\phi}^{\wedge})' \to \cN_{\phi}$ by setting
\begin{equation} \label{E:xiprimedef}
  \xi'(\bsnu') =  -\sum_{i \geqslant 0} \Bigl( \Res_{\infty} \bigl( \kappa'(\bsnu') ( \tau^{-i-1} \cdot \tau \bss_{j}) \bigr) \Bigr)_{1 \leqslant j \leqslant d} \cdot \sigma^i.
\end{equation}
It is straightforward to check from the commutative square above and the definitions of $\xi$ and $\xi'$ that for $\bsnu \in \cM_{\phi}^{\wedge}$ we have $\xi'(\bsnu) = \xi(\bsnu)$.

\begin{proposition} \label{P:xiprimemuprime}
As an element of $\Mat_{\ell d \times d}(\KK[\sigma])$, we have
\[
  \xi'(\bsmu') = \begin{pmatrix} \xi'(\bsmu_1') \\ \vdots \\ \xi'(\bsmu_{\ell d}') \end{pmatrix}
  = \sum_{i \geqslant 0} \Bigl( B_{i+1}^{(-i)}, \dots, B_{i + \ell}^{(-i)} \Bigr)^{\tr} \cdot \sigma^i,
\]
and so
\[
  \xi'(\bsmu') = B^{\tr} \bsn',
\]
where
\begin{equation} \label{E:Bdef}
  B \assign \begin{pmatrix}
  B_1 & B_2 & \cdots & B_{\ell-1} & B_{\ell} \\
  B_2^{(-1)} & B_3^{(-1)} & \cdots & B_{\ell}^{(-1)}& \\
  \vdots & \vdots & \reflectbox{$\ddots$} & & \\
  B_{\ell-1}^{(-\ell+2)} & B_{\ell}^{(-\ell + 2)} & & &\\
  B_{\ell}^{(-\ell+1)} & & & &
  \end{pmatrix}.
\end{equation}
\end{proposition}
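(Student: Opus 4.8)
The plan is to unwind the definition \eqref{E:xiprimedef} of $\xi'$ directly. I would first observe that the identity $\xi'(\bsmu') = B^{\tr}\bsn'$ follows from the displayed $\sigma$-expansion: since $\bsn'_{id+j} = \sigma^i\bss_j$, regrouping $\sum_{i\geqslant 0}\bigl(B_{i+1}^{(-i)},\dots,B_{i+\ell}^{(-i)}\bigr)^{\tr}\sigma^i$ according to the block structure of $B$ in \eqref{E:Bdef} yields exactly $B^{\tr}\bsn'$ — here one uses the convention $B_k = 0$ for $k > w$, which is precisely what makes the right-hand side a finite sum matching the $\ell$ block rows of $B$. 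Thus it suffices to prove, for each $0\leqslant a\leqslant\ell-1$ and $1\leqslant b\leqslant d$, that
\[
  \xi'(\bsmu'_{ad+b}) = \sum_{i\geqslant 0}\bigl( B_{a+i+1}^{(-i)}\bse_b \bigr)^{\tr}\sigma^i \in \Mat_{1\times d}(\KK[\sigma]),
\]
which is the row of the claimed matrix identity indexed by $ad+b$.

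By \eqref{E:xiprimedef}, and since $\bsmu'_{ad+b}$ is the element of $(\cM_{\phi}^{\wedge})'$ dual over $\KK[t^s]$ to the basis vector $\tau\bsm'_{ad+b} = \tau^{a+1}\bss_b$ of $\tau\cM_{\phi}$, this reduces to computing $\Res_{\infty}\bigl(\kappa'(\bsmu'_{ad+b})(\tau^{-i-1}\cdot\tau\bss_c)\bigr)$ for all $i\geqslant 0$ and $1\leqslant c\leqslant d$. I would proceed in three steps. First, since $\cM_{\phi}/\tau\cM_{\phi}$ is killed by a power of $t-\theta$ and $t-\theta$ is a unit in $\laurent{\KK}{z}$, the inclusion $\tau\cM_{\phi}\hookrightarrow\cM_{\phi}$ becomes an isomorphism after $-\otimes_{\KK[t]}\laurent{\KK}{z}$; hence each $\bss_c$ defines a canonical element of $\laurent{\KK}{z}\otimes_{\KK[t]}\tau\cM_{\phi}$ and $\tau^{-i-1}\cdot\tau\bss_c = \tau^{-i}\bss_c$, with $\tau^{-1}$ the inverse of the automorphism \eqref{E:tauauto}. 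Second, I would expand $\tau^{-i}\bss_c$ at $t=\infty$ in the $\KK[t^s]$-basis $\{\tau^{e+1}\bss_f : 0\leqslant e\leqslant\ell-1,\ 1\leqslant f\leqslant d\}$ of $\tau\cM_{\phi}$ by iterating the defining relation of $\phi$: written in $\cM_{\phi}$ as $t\cdot\bss = \rd\phi_t\,\bss + \sum_{j\geqslant 1}B_j\,\tau^j\bss$ with $\bss = (\bss_1,\dots,\bss_d)^{\tr}$, and at level $t^s$ as $t^s\cdot\bss = \rd\phi_{t^s}\,\bss + \sum_{j=1}^{\ell}A_j\,\tau^j\bss$ with $A_{\ell}$ invertible, these relations let one solve for $\bss$ (respectively for $\tau^{\ell}\bss$) modulo higher order in $z = 1/t$, and iterating produces the required $z$-adic expansions. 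Third, applying $\kappa'(\bsmu'_{ad+b})$ extracts the coefficient of $\tau^{a+1}\bss_b$ and $\Res_{\infty}$ reads off the relevant term; the assertion is that the only surviving contribution is $-\bigl(B_{a+i+1}^{(-i)}\bigr)_{cb}$, all terms with $a+i+1 > w$ vanishing since $B_k = 0$ there. Reassembling over $i$ and $c$ gives the displayed formula for $\xi'(\bsmu'_{ad+b})$.

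The main obstacle is the bookkeeping in this last step: propagating the expansion at $t=\infty$ through the recursion, tracking the Frobenius twist introduced by each application of $\tau^{-1}$ (which is why the twists $B_{a+i+1}^{(-i)}$ appear), and verifying that $\Res_{\infty}$ isolates exactly the asserted coefficient while all other powers cancel. The mechanism is the same as in the strictly pure case $s=1$, for which Hartl and Juschka carry out the analogous computation in \cite[Ex.~2.5.16]{HartlJuschka20} (see also \cite[\S 3.4]{CP12} for Drinfeld modules); the point of passing to the larger module $(\cM_{\phi}^{\wedge})'$ and its basis $\bsmu'$ is merely to keep the explicit generators $\tau^{e+1}\bss_f$ available, as it is these — rather than a $\KK[t]$-basis of $\tau\cM_{\phi}$ — that occur in the formula \eqref{E:xidef} defining $\xi$. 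Once the residue identity is in hand, recognizing $\sum_{i\geqslant 0}\bigl(B_{i+1}^{(-i)},\dots,B_{i+\ell}^{(-i)}\bigr)^{\tr}\sigma^i$ as $B^{\tr}\bsn'$ is a direct unwinding of the definition of $B$ in \eqref{E:Bdef}.
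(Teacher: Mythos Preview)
Your proposal is correct and follows essentially the same approach as the paper: both compute the $z$-adic expansion of $\tau^{-i-1}\cdot\tau\bss$ in the $\KK[t^s]$-basis $\tau\bsm'$, apply the dual basis elements $\bsmu'_k$, and read off the residue at $\infty$. The paper makes your Step~2 fully explicit by proving the key expansion $\tau^{-i-1}\cdot\tau\bss = (zB_{i+1}^{(-i)} + O(z^2),\dots,zB_{i+\ell}^{(-i)} + O(z^2))\cdot\tau\bsm'$ via a Jordan-form computation of $(t\Id_d - \rd\phi_t)^{-1}$ for the base case $i=0$ and then induction on $i$, after which the residue (remembering $dt = -dz/z^2$) drops out immediately; your reference to the $t^s$-level relation with $A_\ell$ invertible is not actually needed for this computation.
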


\begin{proof}
We recall the convention that $B_i = 0$ for $i > w$ in~\eqref{E:phitB}.  We adopt further the convention that if $\alpha$, $\beta \in \Mat_{u\times v}(\laurent{\KK}{z})$, then $\alpha = \beta + O(z^m)$ indicates that each entry satisfies $\alpha_{ij} = \beta_{ij} + O(z^m)$.  For $\gamma \in \Hom_{\KK[t^s]}(\tau \cM_{\phi}, \KK[t]\,dt)$ and $\alpha \in \Mat_{u\times v}(\tau \cM_{\phi})$, we set $\gamma(\alpha) \in \Mat_{u\times v}(\KK[t]\,dt)$ so that $(\gamma(\alpha))_{ij} = (\gamma(\alpha_{ij}))$ (and similarly for $\Hom_{\laurent{\KK}{z^s}}(\laurent{\KK}{z} \otimes_{\KK[t]} \tau \cM_{\phi}, \laurent{\KK}{z}\,dz)$).

We first note that similar to Example~\ref{Ex:strictlypure1}, as elements of $(\tau\cM_{\phi})^d$ we have from \eqref{E:phitB} that (recall $\ell \geqslant w$)
\begin{equation} \label{E:bsstomprime}
  ( t \Id_d - \rd \phi_t ) \cdot \bss = ( B_1, \dots, B_{\ell} ) \cdot \tau \bsm',
\end{equation}
where $\bss = (\bss_1, \dots, \bss_d)^{\tr}$.  From this it follows that
\begin{equation} \label{E:bssintermsofz}
  \bss = \bigl( ( t\Id_d - \rd\phi_t )^{-1}B_1, \dots, (t\Id_d - \rd\phi_t)^{-1}B_{\ell} \bigr) \cdot \tau \bsm' \in \Mat_{d\times 1}(\laurent{K}{z} \otimes_{\KK[t]} \tau \cM_{\phi}).
\end{equation}
For $i \geqslant 0$, we claim that as an element of $( \laurent{\KK}{z} \otimes_{\KK[t]} \tau \cM_{\phi})^d$, we have
\begin{equation} \label{E:taubssintermsofz}
  \tau^{-i-1} \cdot \tau \bss
  = \Bigl( zB_{i+1}^{(-i)} + O(z^2), \dots, zB_{i+\ell}^{(-i)} + O(z^2) \Bigr) \cdot \tau \bsm',
\end{equation}
where we have used~\eqref{E:tauauto} to invert $\tau$.

Let $i=0$ and suppose that
\begin{equation*}\label{E:JNF}
    \rJ = \begin{pmatrix}
d_{\theta,\ell_1}[\theta] & &  \\
 & \ddots & \\
& &  d_{\theta,\ell_m}[\theta] \end{pmatrix},
\end{equation*}
where $\ell_1, \dots, \ell_m > 0$ and $\ell_1 + \cdots + \ell_m = d$, is the Jordan normal form of $\rd \phi_{t}$. If $\rQ\in \GL_d(\KK)$ such that $\rd\phi_{t}=\rQ \rJ \rQ^{-1}$, then we see that
\begin{equation}\label{E:JNFt}
(t\Id_d - \rd\phi_t)^{-1} = \rQ^{-1}(t\Id_d - \rJ)^{-1}\rQ,
\end{equation}
where
\[
  t\Id_d - \rJ = \begin{pmatrix}
  d_{\theta,\ell_1}[t-\theta] & & \\ & \ddots & \\ & & d_{\theta,\ell_m}[t-\theta].
  \end{pmatrix}.
\]
Using that $z=1/t$, this implies that
\[
  (t\Id_d - \rJ)^{-1} = \begin{pmatrix}
  d_{\theta,\ell_1} \biggl[ \dfrac{z}{1-\theta z} \biggr] & & \\
  & \ddots & \\
  & & d_{\theta,\ell_m} \biggl[ \dfrac{z}{1-\theta z} \biggr]
  \end{pmatrix}.
\]
Moreover, since for $j \geqslant 0$,
\[
  \pd_\theta^j \biggl( \frac{z}{1-\theta z} \biggr) = \frac{z^j}{(1-\theta z)^j},
\]
for any constant matrices $C_1, C_2 \in \Mat_{e}(\KK)$,
\[
 C_1 \cdot d_{\theta,e} \biggl[ \frac{z}{1-\theta z} \biggr] \cdot C_2 = zC_1C_2 + O(z^2).
\]
It then follows from \eqref{E:bssintermsofz} and \eqref{E:JNFt} that
\begin{equation} \label{E:i0case}
  \tau^{-1} \cdot \tau \bss = \bigl( zB_1 + O(z^2), \dots, zB_{\ell} + O(z^2) \bigr)\cdot \tau \bsm',
\end{equation}
which concludes the $i=0$ case of \eqref{E:taubssintermsofz}.

Now suppose that $i \geqslant 1$, and suppose that we have shown
\[
  \tau^{-(i-1)-1} \cdot \tau \bss = \bigl( zB_{i}^{(-i+1)} + O(z^2), \dots, zB_{i-1+\ell}^{(-i+1)} + O(z^2) \bigr) \cdot \tau \bsm'.
\]
Then
\begin{align*}
  \tau^{-i-1} \cdot \tau \bss &= \bigl( z B_{i}^{(-i)} + O(z^2), \dots, z B_{i-1+\ell}^{(-i)} +O(z^2) \bigr)
  \cdot \tau \bsm' \\
  &= \begin{aligned}[t]
  \bigl( zB_i^{(-i)} + O(z^2) \bigr) \bss &{}+ \bigl( zB_{i+1}^{(-i)} + O(z^2) \bigr)  \tau \bss \\
  &{}+ \cdots + \bigl( zB_{i-1+\ell}^{(-i)} + O(z^2) \bigr) \tau^{\ell-1}\bss.
  \end{aligned}
\end{align*}
Substituting in the $i=0$ case from~\eqref{E:i0case}, we obtain
\begin{align*}
  \tau^{-i-1} \cdot \tau \bss &= \begin{aligned}[t]
    \bigl(z B_i^{(-i)} + O(z^2) \bigr) & \bigl( (zB_1 + O(z^2)) \tau \bss + \cdots + (zB_{\ell} + O(z^2))\tau^{\ell} \bss \bigr) \\
    &{} + \bigl( z B_{i+1}^{(-i)} + O(z^2) \bigr) \tau \bss + \cdots + \bigl( z B_{i-1+\ell}^{(-i)} + O(z^2) \bigr) \tau^{\ell-1}\bss
    \end{aligned}
    \\
  &= \bigl( z B_{i+1}^{(-i)} + O(z^2) \bigr) \tau \bss + \cdots + \bigl( z B_{i-1+\ell}^{(-i)} + O(z^2) \bigr) \tau^{\ell-1}\bss + \bigl( z B_{i+\ell}^{(-i)} + O(z^2) \bigr) \tau^{\ell}\bss,
\end{align*}
where we have used that the first collection of terms in the first equality ultimately contains only $O(z^2)$ terms and the final $B_{i+\ell}^{(-i)}$ can be added on because in fact $B_{i+\ell}=0$ as $i \geqslant 1$.  Then \eqref{E:taubssintermsofz} follows and the claim is proved.

For $1 \leqslant k \leqslant \ell d$ and $i \geqslant 0$, we then consider the $1\times d$ vector,
\begin{align*}
\Bigl( \Res_{\infty} &\bigl( \kappa'(\bsmu_k')(\tau^{-i-1} \cdot \tau \bss_j)\bigr) \Bigr)_{1 \leqslant j \leqslant d} \\
  &= \Res_{\infty} \Bigl( \kappa'(\bsmu_k') \bigl( (\tau^{-i-1} \cdot \tau\bss)^{\tr} \bigr) \Bigr) \\
  &= \Res_{z=0} \Bigl( \kappa' (\bsmu_k') \bigl( \bigl( \bigl( z B_{i+1}^{(-i)} + O(z^2), \dots, z B_{i+\ell}^{(-i)} + O(z^2) \bigr)^{\tr} \bigr) \tau \bsm' \bigr) \Bigr) \\
  &= \Res_{z=0} \Bigl( \kappa'(\bsmu_k') \bigl( \bigl( (zB_{i+1}^{(-i)} + O(z^2))\tau \bss
  + \cdots + (z B_{i+\ell}^{(-i)} + O(z^2))\tau^{\ell}\bss \bigr)^{\tr} \bigr) \Bigr).
\end{align*}
We claim that it follows from this calculation that
\begin{align} \label{E:Resmatrix}
  R &\assign \Bigl( \Res_{\infty} \bigl( \kappa'(\bsmu_k')(\tau^{-i-1} \cdot \tau \bss_j)\bigr) \Bigr)_{\substack{1 \leqslant k \leqslant \ell d \\ 1 \leqslant j \leqslant d}}\\
  &= - \Bigl( B_{i+1}^{(-i)}, \dots, B_{i+1}^{(-i)} \Bigr)^{\tr} \in \Mat_{\ell d \times d}(\KK). \notag
\end{align}
Indeed from the preceding calculation, we have the $(k,j)$-entry,
\begin{align*}
  R_{kj} &= \Res_{z=0}\Bigl( \kappa'(\bsmu_k') \bigl( \textup{row $j$ of $(z B_{i+1}^{(-i)} + O(z^2), \dots, zB_{i+\ell}^{(-i)} + O(z^2))$} \tau\bsm' \bigr) \Bigr) \\
  &= \Res_{z=0} \biggl( \bigl( \textup{$(j,k)$-entry of $\bigl(zB_{i+1}^{(-i)}, \dots, zB_{i+\ell}^{(-i)} \bigr)$} \bigr) \biggl( -\frac{dz}{z^2} \biggr) \biggr),
\end{align*}
and \eqref{E:Resmatrix} follows.  Combining \eqref{E:xiprimedef} and \eqref{E:Resmatrix} the formula for $\xi'(\bsmu')$ in the statement of the proposition is immediate.
\end{proof}

Continuing with the notation in \eqref{E:MNbases}--\eqref{E:Bdef}, we demonstrate how to choose $V \in \GL_r(\KK[t])$ as in~\eqref{E:Vdef} so that $\xi(\bsmu) = V\bsn$.  We fix $\Phi$, $\tPhi \in \Mat_r(\KK[t])$ so that $\tau \bsm = \tPhi \bsm$, $\sigma \bsn = \Phi \bsn$, and $\sigma \bsmu = \tPhi^{\tr} \bsmu$.

\begin{corollary} \label{C:VXBY}
We continue with the notation above.
\begin{enumerate}
\item[(a)] We have $\xi(\bsmu) = V\bsn$, where
\[
  V = ( X^{(1)})^{\tr} B^{\tr} Y \in \GL_r(\KK[t]).
\]
\item[(b)] Furthermore,
\[
  V^{(-1)} \Phi = \tPhi^{\tr} V = X^{\tr}C^{\tr}Y,
\]
where
\begin{equation} \label{E:Cdef}
  C \assign \begin{pmatrix}
  t\Id_d - \rd\phi_t & 0 & \cdots & 0 & 0 \\
  0 & B_2^{(-1)} & \cdots & B_{\ell-1}^{(-1)} & B_{\ell}^{(-1)} \\
  0 & B_3^{(-2)} & \cdots & B_{\ell}^{(-2)} & \\
  \vdots & \vdots & \reflectbox{$\ddots$} & & \\
  0 & B_{\ell}^{(-\ell+1)} & & &
  \end{pmatrix}.
\end{equation}
\end{enumerate}
\end{corollary}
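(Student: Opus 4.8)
\textbf{Proof plan for Corollary~\ref{C:VXBY}.}
The plan is to assemble the identity $\xi(\bsmu) = V\bsn$ by tracking how the three different $\KK[t]$-bases involved ($\bsm$ for $\cM_\phi$, $\bsn$ for $\cN_\phi$, and $\bsmu$ for $\cM_\phi^\wedge$) relate to the intermediate $\KK[t^s]$-bases $\bsm'$, $\bsn'$ and the associated $\KK[t]$-basis $\bsmu'$ of $(\cM_\phi^\wedge)'$. For part~(a), I would start from the change-of-basis relation $\bsmu = (X^{(1)})^{\tr}\bsmu'$ from~\eqref{E:muprimetomu}, apply $\xi$, use the identity $\xi(\bsnu) = \xi'(\bsnu)$ for $\bsnu \in \cM_\phi^\wedge$ (which is $\KK[t]$-linear in $\bsnu$, since $\xi'$ restricts to $\xi$ and both are morphisms of $\KK[t,\sigma]$-modules), so that $\xi(\bsmu) = (X^{(1)})^{\tr}\xi'(\bsmu')$. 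Then I would invoke Proposition~\ref{P:xiprimemuprime}, which computes $\xi'(\bsmu') = B^{\tr}\bsn'$, and finally substitute $\bsn' = Y\bsn$ from~\eqref{E:XYdef}. This chains together to give $\xi(\bsmu) = (X^{(1)})^{\tr}B^{\tr}Y\bsn$, so $V = (X^{(1)})^{\tr}B^{\tr}Y$; invertibility of $V$ over $\KK[t]$ follows since $X$, $Y \in \GL_{\ell d \times \cdots}$ are change-of-basis matrices between free modules over the PID $\KK[t^s]$ versus $\KK[t]$, and $B$ is invertible because its anti-block-triangular structure has $B_\ell^{(-\ell+1)}$ (which is $A_\ell$ up to a twist, hence in $\GL_d$ by almost-strict-purity) along the anti-diagonal—more precisely, $V$ represents the isomorphism $\xi$ of Theorem~\ref{T:isodual}(b), which is invertible precisely because $\phi$ is $\bA$-finite.

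For part~(b), the first equality $V^{(-1)}\Phi = \tPhi^{\tr}V$ is just~\eqref{E:Visomorphism}, already recorded in the excerpt as a consequence of $\xi$ being a morphism of $\KK[t,\sigma]$-modules. So the content of~(b) is the second equality $\tPhi^{\tr}V = X^{\tr}C^{\tr}Y$ with $C$ as in~\eqref{E:Cdef}. I would compute $\tPhi^{\tr}V = \tPhi^{\tr}(X^{(1)})^{\tr}B^{\tr}Y$ and aim to show $\tPhi^{\tr}(X^{(1)})^{\tr}B^{\tr} = X^{\tr}C^{\tr}$, equivalently $B X^{(1)} \tPhi = C X$ after transposing. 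The matrix $\tPhi$ satisfies $\tau\bsm = \tPhi\bsm$, while the relation defining $X$ is $\bsm' = X\bsm$ with $\bsm'_{id+j} = \tau^i\bss_j$; applying $\tau$ to $\bsm' = X\bsm$ gives $\tau\bsm' = X^{(1)}\tau\bsm = X^{(1)}\tPhi\bsm$, and $\tau\bsm'$ itself can be re-expressed in the $\bsm'$ basis using the structural identity~\eqref{E:bsstomprime}, $(t\Id_d - \rd\phi_t)\bss = (B_1,\dots,B_\ell)\tau\bsm'$, together with the shift relations $\tau\cdot\tau^i\bss_j = \tau^{i+1}\bss_j$. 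This is exactly the kind of bookkeeping that produced~\eqref{E:CtildePhi} in Example~\ref{Ex:strictlypure1}; here $C$ is the direct analogue incorporating the full block pattern $B_i^{(-k)}$, and the top-left block $t\Id_d - \rd\phi_t$ comes from~\eqref{E:bsstomprime} while the lower anti-triangular blocks come from the "wrap-around" of $\tau^{\ell}\bss$ back into lower-index $\bsm'$ entries via the same relation twisted appropriately. I would verify the block identity by comparing the action on each of the $\ell d$ basis vectors $\bsm'_{id+j}$, splitting into the case $i < \ell - 1$ (where $\tau\cdot\tau^i\bss_j = \tau^{i+1}\bss_j$ is again a basis vector) and the case $i = \ell - 1$ (where $\tau^\ell\bss_j$ must be rewritten using~\eqref{E:bsstomprime}, producing the nontrivial blocks of $C$).

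\textbf{Main obstacle.} The genuinely delicate step is the block-matrix verification in part~(b): matching up $BX^{(1)}\tPhi = CX$ requires being careful about which Frobenius twists $B_i^{(-k)}$ appear where, and about the interplay between left-multiplication by $\sigma$ on $\cM_\phi^\wedge$ (which gives $\tPhi^{\tr}$, hence the appearance of transposes) and the $\tau$-shift structure on the $\bsm'$ basis. The indexing in the anti-triangular matrices $B$ and $C$ is error-prone, and one must check that the "off-the-end" terms $B_{i+\ell}^{(-i)}$ genuinely vanish for $i \geqslant 1$ (as already exploited in the proof of Proposition~\ref{P:xiprimemuprime}) so that the wrap-around closes up cleanly into the claimed finite block pattern. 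Everything else—part~(a) and the first equality of part~(b)—is a formal consequence of results already in hand (\eqref{E:muprimetomu}, Proposition~\ref{P:xiprimemuprime}, \eqref{E:XYdef}, \eqref{E:Visomorphism}), so the write-up will be short once the $C$-identity is pinned down.
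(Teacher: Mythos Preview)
Your plan is correct and matches the paper's proof almost exactly: part~(a) is the chain $\xi(\bsmu) = \xi'((X^{(1)})^{\tr}\bsmu') = (X^{(1)})^{\tr}B^{\tr}\bsn' = (X^{(1)})^{\tr}B^{\tr}Y\bsn$ using~\eqref{E:muprimetomu}, Proposition~\ref{P:xiprimemuprime}, and~\eqref{E:XYdef}, with invertibility of $V$ coming from Theorem~\ref{T:isodual}(b); and part~(b) hinges on the identity $B\tau\bsm' = C\bsm'$ (equivalently $BX^{(1)}\tPhi = CX$), verified via~\eqref{E:bsstomprime} and the shift structure on~$\bsm'$, exactly as you outline.

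Two small remarks. First, your initial attempt at invertibility of $V$ via ``$B$ has invertible anti-diagonal blocks'' does not work in general: since $\ell \geqslant w$ by convention and $B_i = 0$ for $i > w$, the block $B_{\ell}$ may vanish, and moreover $X$, $Y$ are $\ell d \times r$ (not square). You correctly land on the right argument (Theorem~\ref{T:isodual}(b)), so just drop the first attempt. Second, the paper additionally proves the dual-side identity $(B^{(-1)})^{\tr}\sigma\bsn' = C^{\tr}\bsn'$, yielding $(B^{(-1)})^{\tr}Y^{(-1)}\Phi = C^{\tr}Y$, and then chains both identities to obtain $V^{(-1)}\Phi = X^{\tr}C^{\tr}Y = \tPhi^{\tr}V$ directly, thereby recovering~\eqref{E:Visomorphism} rather than invoking it. Your route (prove only $BX^{(1)}\tPhi = CX$ and cite~\eqref{E:Visomorphism} for the first equality) is equally valid and slightly shorter.
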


\begin{proof}
For (a), we calculate
\[
\xi(\bsmu) = \xi'(\bsmu) = \xi' \bigl( (X^{(1)})^{\tr} \bsmu' \bigr)
= (X^{(1)})^{\tr} \xi'(\bsmu')
= (X^{(1)})^{\tr} B^{\tr} \bsn'
= (X^{(1)})^{\tr} B^{\tr} Y \bsn,
\]
where in the second equality we have used~\eqref{E:muprimetomu}, in the fourth Proposition~\ref{P:xiprimemuprime}, and in the last~\eqref{E:XYdef}.  Because $\xi$ is a $\KK[t]$-isomorphism from Theorem~\ref{T:isodual} it follows that $V = (X^{(1)})^{\tr} B^{\tr} Y$ and a fortiori that this matrix is invertible over $\KK[t]$.

Using \eqref{E:bsstomprime}, a short calculation reveals that $B\tau \bsm' = C \bsm'$, and from~\eqref{E:XYdef} it follows that $BX^{(1)}\tPhi \bsm = CX \bsm$, and thus
\[
  BX^{(1)}\tPhi = CX.
\]
Similarly we obtain $(B^{(-1)})^{\tr} \sigma \bsn' = C^{\tr}\bsn'$, and from this we see $(B^{(-1)})^{\tr} Y^{(-1)}\Phi \bsn = C^{\tr}Y \bsn$ so that
\[
  (B^{(-1)})^{\tr} Y^{(-1)}\Phi = C^{\tr}Y.
\]
Therefore using part~(a),
\[
  V^{(-1)} \Phi = X^{\tr} (B^{(-1)})^{\tr} Y^{(-1)}\Phi = X^{\tr}C^{\tr}Y = \tPhi^{\tr}(X^{(1)})^{\tr} B^{\tr} Y = \tPhi^{\tr}V.
\]
Of course the first equality in~(b) is~\eqref{E:Visomorphism}, but we recover it here as well.
\end{proof}

We now demonstrate how to use Anderson generating functions together with Anderson's Theorem~\ref{T:Exponentiation} to produce identities of exponentials, logarithms, and quasi-logarithms of arbitrary points on our almost strictly pure $t$-module~$\phi$.  We give a proof for the general case below, and an alternate proof with some restrictions in \S\ref{subS:quasiperexamples}. The previous constructions and notations from this section remain in force.

\begin{proposition} \label{P:Expgyha}
Let $\phi : \bA \to \Mat_d(\KK[\tau])$ be a uniformizable almost strictly pure $t$-module.  Let $\bsalpha$, $\bsy\in \KK^d$ be chosen so that $\Exp_{\phi}(\bsy) = \bsalpha$, and let $\bsg_{\bsy}$ and $\bsh_{\bsalpha}$ be given as in Lemma~\ref{L:gyha}.  Then
\begin{enumerate}
\item[(a)] $\cE_0( \bsg_{\bsy}+\bsh_{\bsalpha}) = \bsy$,
\item[(b)] $\cE_1(\bsh_{\bsalpha}) = \bsalpha$.
\end{enumerate}
\end{proposition}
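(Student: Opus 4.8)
The plan is to reduce the statement to an application of Theorem~\ref{T:Exponentiation} by verifying the hypothesis $\bsg_{\bsy}^{(-1)}\Phi - \bsg_{\bsy} = \bsh_{\bsalpha}$ (which is Lemma~\ref{L:gyha}, so already available), and then showing that $\cE_0(\bsg_{\bsy} + \bsh_{\bsalpha})$ is \emph{exactly} $\bsy$ rather than merely some preimage of $\bsalpha$ under $\Exp_{\phi}$. Once (a) is proved, (b) follows immediately: applying $\Exp_{\phi}$ to both sides of (a) and invoking Theorem~\ref{T:Exponentiation} gives $\cE_1(\bsh_{\bsalpha}) = \Exp_{\phi}(\cE_0(\bsg_{\bsy} + \bsh_{\bsalpha})) = \Exp_{\phi}(\bsy) = \bsalpha$. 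So the entire content is part~(a), and the subtlety flagged in the remark after Corollary~\ref{C:xipairingquasiper} is precisely that the definition of $V$ (hence of $\bsg_{\bsy}$) is only pinned down up to the ambiguity in the isomorphism $\xi$, so one cannot read off $\cE_0(\bsg_{\bsy}+\bsh_{\bsalpha})=\bsy$ formally.

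\textbf{Carrying out part (a).} First I would use the explicit description of $V$ for almost strictly pure $t$-modules from Corollary~\ref{C:VXBY}, namely $V = (X^{(1)})^{\tr} B^{\tr} Y$, to rewrite $\bsg_{\bsy} = -\twistop{\tau\bsm}{\cG_{\bsy}}^{\tr} V$ in terms of the $\KK[t^s]$-bases $\bsm'$, $\bsn'$ coming from the strictly pure structure over $\FF_q[t^s]$. The point is that in these coordinates the matrices $X$, $Y$, $B$, $C$ are explicit, and the pairing $\twistop{\tau\bsm}{\cG_{\bsy}}$ becomes computable in terms of the coordinates of $\cG_{\bsy}$ itself. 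Next I would apply the formula for $\cE_0$ in Proposition~\ref{P:epsiota} (as worked out for strictly pure $t$-modules in Example~\ref{Ex:strictlypure1}, equations~\eqref{E:E0beta}--\eqref{E:E0beta}), which expresses $\cE_0$ as hyperderivatives $\pd_t^j$ of the entries of the input evaluated at $t=\theta$. Combining this with the principal-part expansion of $\cG_{\bsy}$ at $t=\theta$ from Remark~\ref{R:AGFprincipalpart} (the identities~\eqref{E:giprinparts}), together with $\Res_{t=\theta}(\cG_{\bsy}) = -\bsy$ from Proposition~\ref{P:AGFres}, should let me identify $\cE_0(\bsg_{\bsy} + \bsh_{\bsalpha})$ on the nose. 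The role of $\bsh_{\bsalpha}$ here is to cancel the contribution of the ``non-polar'' part of $\cG_{\bsy}$ — i.e. the terms $\bsm\phi_{t^j}(\Exp_\phi(\bsy))$ in Corollary~\ref{C:AGFtwistedmult}(a) — so that only the residue data survives after applying $\cE_0$, and the residue data is exactly $-\bsy$ up to the Jordan-block bookkeeping encoded in the $U_i$ matrices of Example~\ref{Ex:strictlypure1}.

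\textbf{The main obstacle.} The hard part will be the bookkeeping in the last step: matching the entries of $\cE_0(\bsg_{\bsy}+\bsh_{\bsalpha})$ — which are certain $\pd_t^j$-combinations of the components of $\twistop{\tau\bsm}{\cG_{\bsy}}^{\tr}V$ plus the polynomial correction $\bsh_{\bsalpha}$, all evaluated at $t=\theta$ — against the components of $\bsy$ arranged according to the Jordan normal form of $\rd\phi_t$. The ordering conventions in Proposition~\ref{P:epsiota}(ii) and the shape of $U_i$, $U_i^{-1}$ in~\eqref{E:Uiinv} are where sign errors and index shifts lurk, and I expect the cleanest route is to first reduce (via the base changes of Remarks~\ref{R:basechangered} and~\ref{R:tmotivebasis}) to the case where $\rd\phi_t$ is already in Jordan normal form and the bases $\bsm$, $\bsn$ are the convenient ones, so that $X$, $Y$ become the identity (or block-permutation matrices) and $V$ reduces to $B^{\tr}$; then the computation is driven purely by~\eqref{E:giprinparts} and the geometric-series expansion~\eqref{E:geomseries}. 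A sanity check against the Drinfeld-module case (Example~\ref{Ex:Drinfeldmodule1}, where $\cE_0(\bsalpha) = \alpha_1(\theta)$) and the Carlitz tensor power case (Example~\ref{Ex:Carlitztensor1}) should confirm that the normalization — including the extra minus sign inserted into $\xi$ in Theorem~\ref{T:isodual} — is the one that makes $\cE_0(\bsg_{\bsy}+\bsh_{\bsalpha})=\bsy$ exactly, with no stray scalar.
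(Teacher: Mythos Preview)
Your overall direction is close to the paper's \emph{alternate} proof (given in \S\ref{subS:quasiperexamples}), but there are two genuine issues.

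First, the assertion that after choosing convenient bases ``$X$, $Y$ become the identity (or block-permutation matrices) and $V$ reduces to $B^{\tr}$'' is correct only for \emph{strictly} pure $t$-modules, not for almost strictly pure ones. In the almost strictly pure case the change-of-basis matrices $X$, $Y \in \Mat_{\ell d \times r}(\KK[t])$ from~\eqref{E:XYdef} genuinely contain nontrivial polynomials in $t-\theta$; see Example~\ref{Ex:ASP}, where $X$ and $Y$ have degree-$3$ entries and $V$ itself has entries of degree~$2$ in~$t$. The paper handles this by proving Lemma~\ref{L:VPhi}, which controls only the upper-left $m\times m$ block of $V^{(-1)}\Phi$ (that block is $\diag((t-\theta)^{\ell_1}, \dots, (t-\theta)^{\ell_m})$ plus higher-order terms), and that turns out to be exactly what the $\cE_0$ computation needs. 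Your sketch does not account for this complication.

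Second, and more seriously, you are missing the key simplification that both of the paper's proofs exploit: rewriting $\bsg_{\bsy} + \bsh_{\bsalpha} = \bsg_{\bsy}^{(-1)}\Phi = -\twistop{\bsm}{\cG_{\bsy}}^{\tr} V^{(-1)}\Phi$ (from~\eqref{E:galphatwistPhi}) and then using the vanishing $\cE_0(\bsf\Phi) = 0$ for $\bsf \in \Mat_{1\times r}(\TT_\theta)$ (equation~\eqref{E:E0limit}). This trick lets you strip $\twistop{\bsm}{\cG_{\bsy}}$ down to $\twistop{\rd\bsm}{\cG_{\bsy}} = \rd\bsm \cdot \cG_{\bsy}$, and then further (via Lemma~\ref{L:AGFsum}) to $\rd\bsm \cdot (\rd\phi_t - t\Id_d)^{-1}\bsy$, at which point the principal-part data from~\eqref{E:giprinparts} applies directly. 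Without this step your plan is in trouble: $\bsg_{\bsy} = -\twistop{\tau\bsm}{\cG_{\bsy}}^{\tr}V$ involves only the Frobenius twists $\cG_{\bsy}^{(j)}$ for $j\geqslant 1$, which are \emph{regular} at $t=\theta$ (Remark~\ref{R:AGFconvergence}), so the residue data of $\cG_{\bsy}$ is not visibly present in $\bsg_{\bsy}$ itself. Your intuition that ``$\bsh_{\bsalpha}$ cancels the non-polar part'' is morally right, but the mechanism by which $\bsy$ emerges is precisely the rewriting above, not a direct cancellation in $\bsg_{\bsy}+\bsh_{\bsalpha}$ as written.
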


\begin{proof}
Suppose that ${\bsf_n}$ is a sequence in $\Mat_{1\times r}(\KK[t])$ converging to $\bsf \in \Mat_{1 \times r}(\TT_{\theta})$.  For each $n$, Proposition~\ref{P:tframeprops}(b) implies $\iota(\bsf_n\Phi) = \sigma\iota(\bsf_n^{(1)})$, and so
\[
  (\varepsilon_0 \circ \iota)\bigl( \bsf_n \Phi \bigr) = 0.
\]
Since $\bsf_n \to \bsf$ in $\Mat_{1\times r}(\TT_{\theta})$ it follows for such a sequence that
\begin{equation} \label{E:E0limit}
  \cE_0\bigl( \bsf \Phi \bigr) = 0.
\end{equation}
Now by Lemma~\ref{L:gyha} combined with~\eqref{E:galphatwistPhi}, we have
\begin{align} \label{E:E0simplify}
  \cE_0 (\bsg_{\bsy} + \bsh_{\bsalpha}) &= -\cE_0 \bigl( \twistop{\bsm}{\cG_{\bsy}}^{\tr} \cdot V^{(-1)}\Phi \bigr), \\
\intertext{and using~\eqref{E:E0limit} together with Lemma~\ref{L:AGFsum} and Remark~\ref{R:AGFconvergence},}
   &= -\cE_0\bigl( \twistop{\rd\bsm}{\cG_{\bsy}}^{\tr}\cdot V^{(-1)}\Phi \bigr) \notag \\
   &= -\cE_0 \Bigl( \bigl\langle \rd \bsm \bigm| (\rd\phi_t - t\Id_d)^{-1} \cdot \bsy \bigr\rangle^{\tr} \cdot V^{(-1)}\Phi \Bigr). \notag
\end{align}
Recalling~\eqref{E:XYdef} where $\bsm' = X\bsm$, write $X = \sum_{i=0}^v X_i t^i$ with $X_i \in \Mat_{\ell d \times r}(\KK)$, and then multiply in $\cM_{\phi}$ to find
\begin{align*}
  X\cdot \bsm &= \sum_{i=0}^v X_i\, \bsm\, \phi_{t^i} \in \Mat_{\ell d \times d}(\KK[\tau]) \\
  &\Rightarrow \quad
  \rd \bsm'= \rd(X \cdot \bsm) = \sum_{i=0}^v X_i\, \rd \bsm\, \rd\phi_{t^i} \in \Mat_{\ell d \times d}(\KK).
\end{align*}
On the other hand, by direct multiplication
\[
  X\,\rd\bsm = \sum_{i=0}^v X_i\,\rd\bsm\,t^i \in \Mat_{\ell d \times d}(\KK[t]).
\]
Combining these together and using Corollary~\ref{C:VXBY}(b), we find
\begin{multline*}
-\bigl\langle \rd \bsm \bigm| (\rd\phi_t - t\Id_d)^{-1} \cdot \bsy \bigr\rangle^{\tr} \cdot V^{(-1)}\Phi
= \bsy^{\tr} \, (t\Id_d - \rd\phi_t^{\tr})^{-1} \cdot \rd\bsm^{\tr} X^{\tr}C^{\tr}Y \\
= \bsy^{\tr}\, (t\Id_d - \rd\phi_t^{\tr})^{-1} \biggl( \bigl(\rd \bsm' \bigr)^{\tr} + \sum_{i=0}^v \bigl( t^i\Id_d - \rd\phi_{t^i}^{\tr}\bigr)\,\rd\bsm^{\tr}\, X_i^{\tr}\biggr) C^{\tr} Y,
\end{multline*}
and so
\begin{multline} \label{E:E0gyhapenultimate}
  \cE_0 (\bsg_{\bsy} + \bsh_{\bsalpha}) = \cE_0\Bigl( \bsy^{\tr}\, \bigl(t\Id_d - \rd\phi_t^{\tr}\bigr)^{-1} (\rd \bsm')^{\tr}C^{\tr}Y\Bigr) \\ {}+ \cE_0\biggl( \bsy^{\tr}\, \bigl(t\Id_d - \rd\phi_t^{\tr}\bigr)^{-1} \sum_{i=0}^v \bigl( t^i\Id_d - \rd\phi_{t^i}^{\tr}\bigr)\,\rd\bsm^{\tr}\, X_i^{\tr} C^{\tr} Y\biggr).
\end{multline}
The latter term is $0$, as $D \assign \bsy^{\tr}\, (t\Id_d - \rd\phi_t^{\tr})^{-1} \sum_{i=0}^v ( t^i\Id_d - \rd\phi_{t^i}^{\tr})\,\rd\bsm^{\tr}\, X_i^{\tr} \in \Mat_{1 \times \ell d}(\KK[t])$, and
\begin{align*}
\cE_0( D C^{\tr} Y) = \varepsilon_0(\iota(D C^{\tr} Y)) = \varepsilon_0(D C^{\tr} Y\bsn) = \varepsilon_0(D C^{\tr}\bsn') = \varepsilon_0 \Bigl( D \bigl( t\Id_d -\rd\phi_t^{\tr}\bigr)\bss \Bigr) = 0.
\end{align*}
Here in the fourth equality we used \eqref{E:Cdef} and the fact that $\bsn' = (\bss,\sigma \bss, \dots, \sigma^{\ell-1}\bss)^{\tr}$, and the final equality follows from the dual $t$-motivic version of~\eqref{E:bsstomprime}.  At last we note that $(\rd\bsm')^{\tr} C^{\tr} Y\bsn = (\rd\bsm')^{\tr}C^{\tr}\bsn' \equiv (t\Id_d - \rd\phi_t^{\tr})\bss \pmod{\sigma\cN_{\phi}}$, and so from \eqref{E:E0},
\[
\cE_0(\bsg_{\bsy}+\bsh_{\bsalpha}) = \varepsilon_0\bigl( \bsy^{\tr}\bss\bigr) = \bsy,
\]
which proves~(a). Part~(b) follows then from Theorem~\ref{T:Exponentiation} and Lemma~\ref{L:gyha}.  However, for completeness we give a direct proof.  From Corollary~\ref{C:AGFtwistedmult}(a), we have
\begin{align*}
  CX\twistop{\bsm}{\cG_{\bsy}} &= C\twistop{X\bsm}{\cG_{\bsy}} - \sum_{i=1}^v\sum_{j=0}^{i-1} t^{i-1-j} CX_i \twistop{\bsm}{\phi_{t^j}(\bsalpha)}, \\
  &=
    \twistop{CX\bsm}{\cG_{\bsy}} - \begin{pmatrix} \Id_d & 0 \\ 0 & 0 \end{pmatrix} \twistop{X\bsm}{\bsalpha}
    - \sum_{i=1}^v\sum_{j=0}^{i-1} t^{i-1-j} CX_i \twistop{\bsm}{\phi_{t^j}(\bsalpha)}
  \\
  &=\twistop{CX\bsm}{\cG_{\bsy}} - \bigl( \bsalpha^{\tr}, 0, \dots, 0\bigr)^{\tr}
    - \sum_{i=1}^v\sum_{j=0}^{i-1} t^{i-1-j} CX_i \twistop{\bsm}{\phi_{t^j}(\bsalpha)},
\end{align*}
where in the last equality we have used that $X\bsm = \bsm'$.  Similarly, using $B$ from~\eqref{E:Bdef},
\[
BX^{(1)} \twistop{\tau \bsm}{\cG_{\bsy}} = \twistop{BX^{(1)}\tau \bsm}{\cG_{\bsy}} - \sum_{i=1}^v\sum_{j=0}^{i-1} t^{i-1-j} BX_i^{(1)} \twistop{\tau \bsm}{\phi_{t^j}(\bsalpha)}.
\]
Letting $E$ and $F$ denote the final double sums in these two identities, then using them together with Corollary~\ref{C:VXBY} and~\eqref{E:galphatwistPhi}, we have
\begin{align} \label{E:bshalphadecomp}
  \bsh_{\bsalpha} &= \bsg_{\bsy}^{(-1)}\Phi - \bsg_{\bsy} \\
  &= -\twistop{\bsm}{\cG_{\bsy}}^{\tr}X^{\tr}C^{\tr}Y + \twistop{\tau \bsm}{\cG_{\bsy}}^{\tr} (X^{(1)})^{\tr} B^{\tr}Y \notag \\
  &= \bigl( \bsalpha^{\tr},0,\dots, 0\bigr)Y - \twistop{CX\bsm}{\cG_{\bsy}}^{\tr}Y + E^{\tr}Y
    + \twistop{BX^{(1)}\tau\bsm}{\cG_{\bsy}}^{\tr}Y - F^{\tr}Y. \notag
\end{align}
Now by Corollary~\ref{C:VXBY} we find
\[
  \twistop{BX^{(1)} \tau \bsm}{\cG_{\bsy}}^{\tr}Y = \twistop{Y^{\tr} B X^{(1)} \tPhi \bsm}{\cG_{\bsy}}^{\tr}
  = \twistop{V^{\tr}\tPhi \bsm}{\cG_{\bsy}}^{\tr} = \twistop{CX\bsm}{\cG_{\bsy}}^{\tr}Y.
\]
Since $Y\bsn = \bsn'$, we see that
\begin{multline*}
  \sigma \bigl(F^{\tr} Y\bsn \bigr) = \sigma(F\bsn')
    = \sum_{i=1}^v\sum_{j=0}^{i-1} t^{i-1-j} \twistop{\bsm}{\phi_{t^j}(\bsalpha)}X_i^{\tr}(B^{(-1)})^{\tr}\sigma \bsn' \\
    = \sum_{i=1}^v\sum_{j=0}^{i-1} t^{i-1-j} \twistop{\bsm}{\phi_{t^j}(\bsalpha)}X_i^{\tr}C^{\tr} \bsn'
    = E^{\tr}Y\bsn.
\end{multline*}
Combining these last identities with \eqref{E:bshalphadecomp} to calculate $\iota(\bsh_{\bsalpha}) = \bsh_{\bsalpha}\bsn$ and using that $\varepsilon_1$ is trivial on $(\sigma-1)\cN_{\phi}$, we find that $\cE_1(\bsh_{\bsalpha}) = \varepsilon_1(\iota(\bsh_{\bsalpha})) = \bsalpha$.
\end{proof}

\subsection{Examples} \label{subS:quasiperexamples}
We now turn to explicit calculations by fixing an almost strictly pure $t$-module $\phi : \bA \to \Mat_d(\KK[\tau])$, maintaining the same notation as in \S\ref{subS:Quasi}.  Then later in the section we consider further specialized cases to demonstrate how various results in \S\ref{S:Quasi} are realized for common classes of $t$-modules.

So far we have not imposed any restrictions on the $\KK[t]$-bases $\bsm$ and $\bsn$, but to facilitate calculations with logarithms and quasi-logarithms, we assume that $\bsn$ is chosen as in the statement of Proposition~\ref{P:epsiota} and that $\bsm$ is chosen as in Remark~\ref{R:tmotivebasis}.  In particular, if we let $d_i \assign \ell_1 +\dots + \ell_i$ for $1 \leqslant i \leqslant m$, then
\begin{align} \label{E:specialbases}
\bsm_1 &\equiv \bss_1 \pmod{\tau \cM_{\phi}} & \bsn_1 &\equiv \bss_{d_1} \pmod{\sigma \cN_{\phi}}, \\
\bsm_2 &\equiv \bss_{d_1+1} \pmod{\tau \cM_{\phi}} & \bsn_2 &\equiv \bss_{d_2} \pmod{\sigma \cN_{\phi}}, \notag \\
{\vdots \;\;} &  {\qquad \vdots} & {\vdots \;\;} & {\qquad \vdots} \notag \\
\bsm_m & \equiv \bss_{d_{m-1}+1} \pmod{\tau \cM_{\phi}} & \bsn_m &\equiv \bss_{d_m} \pmod{\sigma \cN_{\phi}}. \notag
\end{align}
We also have $\bsn_m \equiv \bss_d \pmod{\sigma \cN_{\phi}}$, and if we let $d_0 \assign 0$, then $\bsm_1 \equiv \bss_{d_0+1} \pmod{\tau \cM_{\phi}}$.  We then write $X$ and $Y$ from~\eqref{E:XYdef} as
\begin{equation} \label{E:XYblocks}
  X = \begin{pmatrix} X' \\ X'' \end{pmatrix}, \quad
  Y = \begin{pmatrix} Y' \\ Y'' \end{pmatrix},
\end{equation}
where $X'$, $Y' \in \Mat_{d \times r}(\KK[t])$ and $X''$, $Y'' \in \Mat_{(\ell-1)d \times r}(\KK[t])$.  By the conditions of Proposition~\ref{P:epsiota}, Remark~\ref{R:tmotivebasis}, and \eqref{E:specialbases}, we find
\[
X' = \mbox{\scriptsize $\left( \begin{array}{c|c}
    \begin{matrix}
    1 & & \\
    t-\theta & & \\
    \vdots & & \\
    (t-\theta)^{\ell_1-1} & & \\
    & \ddots & \\
    & & 1 \\
    & & t-\theta \\
    & & \vdots \\
    & & (t-\theta)^{\ell_m-1}
    \end{matrix}
   & \text{\Large{$*$}}
  \end{array}
  \right)$},\
Y' = \mbox{\scriptsize $\left( \begin{array}{c|c}
    \begin{matrix}
    (t-\theta)^{\ell_1-1} & & \\
    \vdots & & \\
    t-\theta & & \\
    1 & & \\
    & \ddots & \\
    & & (t-\theta)^{\ell_m-1} \\
    & & \vdots \\
    & & t-\theta \\
    & & 1
    \end{matrix}
   & \text{\Large{$*$}}
  \end{array}
  \right)$}.
\]
Furthermore these choices of bases imply that for $1 \leqslant j \leqslant m$, the $j$-th columns of $X''$ and $Y''$ are both divisible by $(t-\theta)^{\ell_j}$.  The following lemma is central for future computations.

\begin{lemma} \label{L:VPhi}
For the choices of $\KK[t]$-bases for $\cM_{\phi}$ and $\cN_{\phi}$ in~\eqref{E:specialbases}, the upper-left $m\times m$ block of $V^{(-1)} \Phi = \tPhi^{\tr}V$ is
\[
  \begin{pmatrix}
  (t-\theta)^{\ell_1} & & \\
  & \ddots & \\
  & & (t-\theta)^{\ell_m}
  \end{pmatrix}
  +
  \biggl( O\Bigl( (t-\theta)^{\ell_k + \ell_j} \Bigr) \biggr),
\]
where $k$ and $j$ refer to the $(k,j)$-entry.
\end{lemma}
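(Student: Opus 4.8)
The plan is to compute the upper-left $m \times m$ block of $\tPhi^{\tr}V$ directly, using the factorization $V = (X^{(1)})^{\tr} B^{\tr} Y$ from Corollary~\ref{C:VXBY}(a) together with the identity $\tPhi^{\tr}V = X^{\tr}C^{\tr}Y$ from Corollary~\ref{C:VXBY}(b), where $C$ is the block matrix in~\eqref{E:Cdef}. Using the block decompositions $X = (X'; X'')^{\tr}$ and $Y = (Y'; Y'')^{\tr}$ from~\eqref{E:XYblocks}, the product $X^{\tr}C^{\tr}Y$ expands into a sum of terms, and the shape of $C$ in~\eqref{E:Cdef} — whose only nonzero entry in its first block-row/block-column is the top-left $t\Id_d - \rd\phi_t$ — isolates which blocks of $X$ and $Y$ contribute. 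Writing $\rd\phi_t = \rJ$ in Jordan normal form (allowed by our standing assumption following~\eqref{E:CtildePhi} and the set-up of \S\ref{subS:Quasi}), I would split $X^{\tr}C^{\tr}Y$ as $(X')^{\tr}(t\Id_d - \rd\phi_t^{\tr})Y' + (X'')^{\tr}\widehat{C}^{\tr}Y''$, where $\widehat{C}$ is the lower-right $(\ell-1)d \times (\ell-1)d$ block of $C$.

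The first step is to analyze $(X')^{\tr}(t\Id_d - \rd\phi_t^{\tr})Y'$ restricted to its upper-left $m \times m$ block. Here the explicit forms of $X'$ and $Y'$ displayed just before the lemma are crucial: their left $d \times m$ blocks consist of the columns $(1, t-\theta, \dots, (t-\theta)^{\ell_i-1})^{\tr}$ (resp. reversed) placed in the coordinate slots of the $i$-th Jordan block. Since $t\Id_d - \rd\phi_t^{\tr}$ is block-diagonal with blocks $d_{\theta,\ell_i}[t-\theta]$ (a bidiagonal nilpotent-plus-$(t-\theta)$ matrix, by~\eqref{E:phitJordan}), a direct matrix multiplication on each Jordan block shows that the $(k,j)$-entry with $k,j \leqslant m$ is $0$ when $k \neq j$ (the supports are in disjoint coordinate blocks) and equals $(t-\theta)^{\ell_k}$ when $k = j$ — this is essentially the same computation as $V_i(t\Id - \rd\phi_t^{\tr})U_i^{-1}$ in Example~\ref{Ex:strictlypure1}, specialized to reading off the diagonal entry. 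I would also check here that the contribution from the $*$-parts of $X'$ and $Y'$ (columns $m+1$ through $r$) does not affect the upper-left $m \times m$ block, since those are the columns being discarded.

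The second step is to show the $(X'')^{\tr}\widehat{C}^{\tr}Y''$ term contributes only $O((t-\theta)^{\ell_k+\ell_j})$ to the $(k,j)$-entry for $k, j \leqslant m$. This follows from the divisibility fact stated right after the displays: for $1 \leqslant j \leqslant m$, the $j$-th column of $X''$ and the $j$-th column of $Y''$ are each divisible by $(t-\theta)^{\ell_j}$. Hence the $(k,j)$-entry of $(X'')^{\tr}\widehat{C}^{\tr}Y''$ is (column $k$ of $X''$)$^{\tr}\widehat{C}^{\tr}$(column $j$ of $Y''$), which is divisible by $(t-\theta)^{\ell_k}\cdot(t-\theta)^{\ell_j}$ since $\widehat{C}$ has polynomial (indeed constant-twisted) entries. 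Combining the two steps gives the claimed form. The main obstacle I anticipate is bookkeeping: one must be careful that the block decomposition $C = \mathrm{diag}(t\Id_d - \rd\phi_t, \widehat{C})$ genuinely has no cross terms between the first block and the rest (which is visible from~\eqref{E:Cdef}: the first block-row and first block-column of $C$ are zero outside the top-left corner), and that the index correspondence between the $m$ "distinguished" columns of $X, Y$ and the Jordan block structure in~\eqref{E:specialbases} is tracked correctly through the transpose and twist operations. Once the indexing is pinned down, both steps are short matrix multiplications with no analytic content.
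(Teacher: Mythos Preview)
Your proposal is correct and follows essentially the same approach as the paper's proof. Both use the identity $\tPhi^{\tr}V = X^{\tr}C^{\tr}Y$ from Corollary~\ref{C:VXBY}(b), split $C$ block-diagonally as $\diag(t\Id_d - \rd\phi_t, C'')$ (your $\widehat{C}$), and then handle the two resulting summands $(X')^{\tr}(t\Id_d - \rd\phi_t^{\tr})Y'$ and $(X'')^{\tr}(C'')^{\tr}Y''$ by the exact same arguments you outline---the first giving the diagonal $(t-\theta)^{\ell_k}$ via the explicit left $d\times m$ blocks of $X'$, $Y'$, and the second contributing $O((t-\theta)^{\ell_k+\ell_j})$ by the column-divisibility of $X''$ and $Y''$.
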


\begin{remark} \label{R:VPhi}
We can specify more about the entries of $V^{(-1)}\Phi = \tPhi^{\tr}V$ as follows.  The conditions of Proposition~\ref{P:epsiota} say that the row space of $\Phi$ over $\KK[t]$ is the same as the row space of the diagonal matrix $D$ in the proposition.  As the row spaces of $V^{(-1)}\Phi$ and $\Phi$ are the same, it follows that for $1 \leqslant j \leqslant m$, the $j$-th column is divisible by $(t-\theta)^{\ell_j}$.  Using the conditions in Remark~\ref{R:tmotivebasis}, it similarly follows from analyzing $\tPhi^{\tr}V$ that its $j$-th row is divisible by $(t-\theta)^{\ell_j}$.  However, within the upper-left $m \times m$ block, the conditions of the lemma are in general stronger.
\end{remark}

\begin{proof}[Proof of Lemma~\ref{L:VPhi}]
We let $C'' \in \Mat_{(\ell-1)d \times (\ell-1)d}(\KK)$ be chosen so that in \eqref{E:Cdef},
\[
  C = \begin{pmatrix}
    t\Id_d - \rd\phi_t & 0 \\
    0 & C''
    \end{pmatrix}.
\]
Then by Corollary~\ref{C:VXBY}(b),
\begin{align} \label{E:VPhisum}
V^{(-1)}\Phi &= \bigl( (X')^{\tr}, (X'')^{\tr} \bigr)
  \begin{pmatrix} t\Id_d - \rd\phi_t^{\tr} & 0 \\ 0 & (C'')^{\tr} \end{pmatrix}
  \begin{pmatrix} Y' \\ Y'' \end{pmatrix} \\
  &= (X')^{\tr} \bigl(t\Id_d - \rd\phi_t^{\tr} \bigr) Y' + (X'')^{\tr} (C'')^{\tr} Y''. \notag
\end{align}
A straightforward calculation using the explicit descriptions of $X'$ and $Y'$ above yields that
\[
 (X')^{\tr} \bigl(t\Id_d - \rd\phi_t^{\tr} \bigr) Y'
 = \left( \begin{array}{c|c}
   \begin{matrix}
   (t-\theta)^{\ell_1} & & \\
   & \ddots & \\
   & & (t-\theta)^{\ell_m}
   \end{matrix}
   & \textup{\Large{$*$}} \\ \hline
   & \\
   \textup{\Large{$*$}} & \textup{\Large{$*$}} \\[10pt]
 \end{array}
 \right).
\]
On the other hand, as noted above for $1 \leqslant j \leqslant m$, the $j$-th columns of $X''$ and $Y''$ are divisible by $(t-\theta)^{\ell_j}$.  It follows that each entry in the $j$-th column of $(C'')^{\tr}Y''$ is divisible by $(t-\theta)^{\ell_j}$.  Likewise, we see that for $1 \leqslant k \leqslant m$, the $(k,j)$-entry of $(X'')^{\tr} (C'')^{\tr} Y''$ is divisible by $(t-\theta)^{\ell_k+\ell_j}$.  These two calculations together with the sum in~\eqref{E:VPhisum} complete the proof.
\end{proof}

\begin{proof}[Alternate proof of Proposition~\ref{P:Expgyha}]
Letting $\bsalpha$, $\bsy \in \KK^d$ be chosen so that $\Exp_{\phi}(\bsy) = \bsalpha$, we verify Proposition~\ref{P:Expgyha} directly from Lemma~\ref{L:VPhi} having first chosen bases $\bsm$ and $\bsn$ as in~\eqref{E:specialbases}.  By~\eqref{E:E0simplify}, we see that
\begin{equation} \label{E:E0simplifyagain}
\cE_0(\bsg_{\bsy} + \bsh_{\bsalpha}) = -\cE_0 \Bigl( \bigl( \rd \bsm \cdot (\rd\phi_t - t\Id_d)^{-1} \cdot \bsy \bigr)^{\tr} \cdot V^{(-1)}\Phi \Bigr).
\end{equation}
From the choice of basis $\bsm$ in~\eqref{E:specialbases}, we see that
\[
  \rd \bsm = \begin{pmatrix}
   \bss_1 \\ \bss_{d_1+1} \\ \vdots \\ \bss_{d_{m-1}+1} \\ \bzero \\ \vdots \\  \bzero
   \end{pmatrix} \in \Mat_{r \times d}(\FF_q),
\]
where $\bzero = (0, \dots, 0) \in \Mat_{1\times d}(\FF_q)$. Recalling the calculations of Remark~\ref{R:AGFprincipalpart}, if we write
\[
  \bsy = \bigl( y_{1,\ell_1}, \dots, y_{\ell_1,\ell_1}, y_{1,\ell_2}, \dots, y_{\ell_m,\ell_m} \bigr)^{\tr},
\]
then
\begin{multline*}
  \bigl(\rd \bsm \cdot ( \rd\phi_t - t\Id_d)^{-1} \cdot \bsy \bigr)^{\tr} \\
  = -\biggl( \frac{y_{1,\ell_1}}{t-\theta} + \cdots + \frac{y_{\ell_1,\ell_1}}{(t-\theta)^{\ell_1}},
  \ldots, \frac{y_{1,\ell_m}}{t-\theta} + \cdots + \frac{y_{\ell_m,\ell_m}}{(t-\theta)^{\ell_m}}, 0, \ldots, 0 \biggr).
\end{multline*}
We now combine this with \eqref{E:E0simplifyagain}, and using Proposition~\ref{P:epsiota} to calculate $\cE_0$, together with Lemma~\ref{L:VPhi}, we see that $\cE_0(\bsg_{\bsy} + \bsh_{\bsalpha}) = \bsy$ as desired.
\end{proof}

\begin{example} \label{Ex:Drinfeldmodule2}
\emph{Drinfeld modules.}  Here we recover identities for Drinfeld modules in~\cite[\S 3--4]{CP11}, \cite[\S 3--4]{CP12} (see also~\cite[Ex.~2.5.16]{HartlJuschka20} and~\cite[\S 4.2]{Pellarin08}).  We let $\phi : \bA \to \KK[\tau]$ be a Drinfeld module of rank~$r$, defined by $\phi_t = \theta + b_1 \tau + \cdots + b_r\tau^r$, with $b_r \neq 0$, and we continue with the notation in Example~\ref{Ex:Drinfeldmodule1}.  Moreover, we let $\bsm \assign (1, \tau, \ldots, \tau^{r-1})^{\tr} \in (\cM_{\phi})^r$ and $\bsn \assign (1, \sigma, \ldots, \sigma^{r-1})^{\tr} \in (\cN_{\phi})^r$ be $\KK[t]$-bases, for which multiplication by $\tau$ and $\sigma$ are represented by
\[
  \tPhi = \begin{pmatrix}
    0 & 1 & \cdots & 0 \\
    \vdots & \vdots & \ddots & \vdots \\
    0 & 0 & \cdots & 1 \\
    (t - \theta)/b_r & -b_1/b_r & \cdots & -b_{r-1}/b_r
    \end{pmatrix}
\]
and
\[
  \Phi = \begin{pmatrix}
    0 & 1 & \cdots & 0 \\
    \vdots & \vdots & \ddots & \vdots \\
    0 & 0 & \cdots & 1 \\
    (t - \theta)/b_r^{(-r)} & -b_1^{(-1)}/b_r^{(-r)} & \cdots & -b_{r-1}^{(-r+1)}/b_r^{(-r)}
    \end{pmatrix}
\]
respectively.  The matrices $X$, $Y \in \Mat_{r}(\KK[t])$ in \eqref{E:XYdef} are both simply the identity matrix $\Id_r$, and so by~\eqref{E:Bdef} and Corollary~\ref{C:VXBY}, we have
\begin{equation} \label{E:VDrinfeld}
  V = \begin{pmatrix}
  b_1 & b_2^{(-1)} & \cdots & b_{r-1}^{(-r+2)} & b_{r}^{(-r+1)} \\
  b_2 & b_3^{(-1)} & \cdots & b_{r}^{(-r+2)}& \\
  \vdots & \vdots & \reflectbox{$\ddots$} & & \\
  b_{r-1} & b_{r}^{(-1)} & & &\\
  b_{r} & & & &
  \end{pmatrix}
\end{equation}
and
\begin{equation}
  V^{(-1)}\Phi = \tPhi^{\tr}V = \begin{pmatrix}
  t - \theta & 0 & \cdots & 0 & 0 \\
  0 & b_2^{(-1)} & \cdots & b_{r-1}^{(-r+2)} & b_{r}^{(-r+1)} \\
  0 & b_3^{(-1)} & \cdots & b_{r}^{(-r+2)} & \\
  \vdots & \vdots & \reflectbox{$\ddots$} & & \\
  0 & b_{r}^{(-1)} & & &
  \end{pmatrix},
\end{equation}
which exhibits the form in Lemma~\ref{L:VPhi}.  One should compare the formula for $V$ with~\cite[p.~133]{CP12}, \cite[Eq.~(2.5.11)]{HartlJuschka20} and the one for $V^{(-1)}\Phi$ with~\cite[Eq.~(2.5.12)]{HartlJuschka20}.

We fix an $\bA$-basis $\lambda_1, \dots, \lambda_r$ of $\Lambda_\phi$.  Applying Proposition~\ref{P:PsiUpsilonV}, we see that we can pick a rigid analytic trivialization $\Psi \in \GL_{r}(\TT_{\theta})$ of $\phi$ so that
\begin{equation} \label{E:PsiinvDrinfeld}
  \Psi^{-1} = \Upsilon^{\tr} V = \begin{pmatrix}
  \cG_{\lambda_1}^{(1)} & \cdots & \cG_{\lambda_1}^{(r)} \\
  \vdots & & \vdots \\
  \cG_{\lambda_r}^{(1)} & \cdots & \cG_{\lambda_r}^{(r)}
  \end{pmatrix}
  \cdot V,
\end{equation}
which matches with~\cite[Eq.~(3.4.6)]{CP12}.

Let $y$, $\alpha \in \KK$ satisfy $\Exp_{\phi}(y) = \alpha$.  Then by Lemma~\ref{L:gyha} together with Proposition~\ref{P:AGFprops},
\begin{align} \label{E:gyDrinfeld}
\bsg_y &= -\bigl( \cG_y^{(1)}, \dots, \cG_y^{(r)} \bigr) \cdot V \\
 &= -\Bigl(
 (t-\theta)\cG_y + \alpha,
 b_2^{(-1)} \cG_y^{(1)} + \dots + b_r^{(-1)} \cG_y^{(r-1)},
 \ldots,
 b_r^{(-r+1)} \cG_y^{(1)} \Bigr). \notag
\end{align}
Using the formula for $\tPhi$ above with Lemma~\ref{L:gyha},
\begin{align}
\bsh_{\alpha} &= \left( \begin{pmatrix} 0 & \cdots & 0 \\ \vdots & & \vdots \\ 1/b_r & \cdots & 0 \end{pmatrix}
\begin{pmatrix} 1 \\ \vdots \\ \tau^{r-1} \end{pmatrix} \cdot \alpha \right)^{\tr}\cdot V \\
&= \biggl( 0, \dots, 0, \frac{\alpha}{b_r} \biggr) \cdot V \notag \\
&= (\alpha, 0, \dots, 0),\notag
\end{align}
and we note that these formulas for $\bsg_y$ and $\bsh_{\alpha}$ also agree with \cite[p.~136]{CP12}.  We verify directly from Example~\ref{Ex:Drinfeldmodule1}, Proposition~\ref{P:AGFprops}, and~\eqref{E:gyDrinfeld} that
\[
  \cE_0\bigl( \bsg_{y} + \bsh_{\alpha} \bigr) = y, \quad \cE_1 \bigl( \bsh_{\alpha} \bigr) = \cE_0 \bigl( \bsh_{\alpha} \bigr) = \alpha,
\]
which agrees with Proposition~\ref{P:Expgyha}.

As we know from Gekeler~\cite{Gekeler89a} (see also~\cite{Brownawell93}), we can find $\delta_0$, $\delta_1, \dots, \delta_{r-1} \in \Der(\phi)$ representing a $\KK$-basis of the de Rham module $\rH^1_{\DR}(\phi)$, where
\[
  \big( \delta_0\bigr)_t = \phi_t - \theta = b_1 \tau + \dots + b_r \tau^r,
\]
and for $1 \leqslant i \leqslant r-1$,
\[
  \big( \delta_i \bigr)_t = \tau^i.
\]
Thus $\Der_0(\phi) = \KK \delta_0$ and $\rH^1_{\sr}(\phi) = {\KK}[\delta_1]\oplus \cdots \oplus \KK [\delta_{r-1}] \subseteq \rH^1_{\DR}(\phi)$.  For $1 \leqslant j \leqslant r-1$, it is evident from Proposition~\ref{P:quasiloggy} that
\[
  \rF_{\delta_j}(y) = \cG_{y}^{(j)}\big|_{t=\theta}.
\]
Furthermore, $(\delta_0)_t = (b_1, \dots, b_r)\cdot \tau\bsm$, and by Propositions~\ref{P:AGFprops} and~\ref{P:quasiloggy},
\[
  \rF_{\delta_0}(y) = \bigl(b_1\cG_y^{(1)} + \cdots + b_r\cG_y^{(r)} \bigr)\big|_{t=\theta}
  = \bigl( (t-\theta)\cG_y + \alpha \bigr)\big|_{t=\theta} = \alpha - y,
\]
as expected by~\eqref{E:quasiinner}.  In this way, we can verify Corollary~\ref{C:perquasiperspans} and Theorem~\ref{T:quasispancomplete} by direct evaluation in the case of Drinfeld modules defined over~$\oK$.
\end{example}

\begin{example} \label{Ex:Carlitztensor2}
\emph{Carlitz tensor powers.}
For $n \geqslant 1$, we let $\phi : \bA \to \Mat_n(\KK[\tau])$ be the $n$-th tensor power of the Carlitz module and continue with the notation in Example~\ref{Ex:Carlitztensor1}.  The $t$-motive $\cM_{\phi}$ is rank~$1$ over $\KK[t]$ and is generated by $\bsm = \bss_1 = (1,0, \dots, 0) \in \Mat_{1\times n}(\KK[\tau])$, and likewise the dual $t$-motive is generated by $\bsn = \bss_n = (0,\dots, 0,1) \in \Mat_{1\times n}(\KK[\sigma])$.  It is readily apparent that
\[
  \Phi = \tPhi = (t-\theta)^n, \quad V = 1.
\]
Recall the functions $\Omega \in \TT_{\theta}^{\times}$ and $\omega \in \TT^{\times}$ from \eqref{E:Omegadef} and \eqref{E:omegadef} respectively.  It follows that $\Psi = (-1)^n\Omega^n$ is a rigid analytic trivialization for $\phi$ and that
\begin{equation} \label{E:Pidef}
  \Pi = \cE_0\bigl( (-1)^n\Omega^{-n} \bigr) = \begin{pmatrix}
  (-1)^n\pd_t^{n-1}(\Omega^{-n})|_{t=\theta} \\ \vdots \\
  (-1)^n \pd_t^1(\Omega^{-n})|_{t=\theta} \\ \tpi^n \end{pmatrix}
\end{equation}
generates $\Lambda_{\phi}$ as an $\bA$-module.  Letting
\[
  \cG_{\Pi} \rassign (\gamma_1, \dots, \gamma_n)^{\tr} \in \TT^d,
\]
it follows from Proposition~\ref{P:PsiUpsilonV} and the fact that $V=1$ that
\[
  \twistop{\tau\bss_1}{\cG_{\Pi}} = \gamma_1^{(1)} = \frac{(-1)^n}{\Omega^n} = (-1)^n \bigl(\omega^{(1)} \bigr)^n.
\]
From Proposition~\ref{P:AGFprops} we see that
\begin{equation} \label{E:GPifneq}
  t\cG_{\Pi} - \twistop{\phi_t}{\cG_{\Pi}} = \begin{pmatrix}
  (t-\theta)\gamma_1 -\gamma_2 \\ \vdots \\ (t-\theta) \gamma_{n-1} - \gamma_n \\ (t-\theta)\gamma_n - \gamma_1^{(1)} \end{pmatrix}
  = 0,
\end{equation}
and so for $1 \leqslant i \leqslant n$,
\[
  \gamma_i = (-1)^n (t-\theta)^i \omega^n.
\]
Moreover, these identities coincide with the ones in \cite[\S 2.5]{AndThak90}.

Taking $\bsy$, $\bsalpha \in \KK^d$ that satisfy $\Exp_{\phi}(\bsy) = \bsalpha$, we let $\cG_{\bsy} \rassign (g_1, \dots, g_n)^{\tr} \in \TT^d$, and we see from Lemma~\ref{L:gyha} that
\[
  \bsg_{\bsy} = -\twistop{\tau \bss_1}{\cG_{\bsy}} = -g_1^{(1)}.
\]
If $\bsalpha = (\alpha_1, \dots, \alpha_n)^{\tr}$, then since $\tPhi = (t-\theta)^n$, Lemma~\ref{L:gyha} implies that
\begin{align*}
  \bsh_{\bsalpha} &= \sum_{i=1}^n \sum_{j=0}^{i-1} (-1)^{n-i} \binom{n}{i} \theta^{n-i} t^{i-1-j} \bss_1 \cdot \phi_{t^j}(\bsalpha) \\
  &= \sum_{i=1}^n \sum_{j=0}^{i-1} (-1)^{n-i} \binom{n}{i} \theta^{n-i} t^{i-1-j} \sum_{k=0}^j \binom{j}{k} \theta^{j-k} \alpha_{k+1},
\end{align*}
where the final inner sum is by induction on the first coordinate of $\phi_{t^j}(\bsalpha)$.  By rearranging the sum and  using the binomial expansion for $(t-\theta)^n/(t-\theta)^{k+1} = (t-\theta)^{n-k-1}$, we find that
\[
  \bsh_{\bsalpha} = \alpha_n + (t-\theta)\alpha_{n-1} + \cdots + (t-\theta)^{n-1}\alpha_1.
\]
By the calculations in Example~\ref{Ex:Carlitztensor1}, we see that $\cE_1(\bsh_{\bsalpha}) = \bsalpha$ as predicted.  Furthermore, by Proposition~\ref{P:AGFprops} we have $t \cG_{\bsy} - \twistop{\phi_t}{\cG_{\bsy}} + \bsalpha = 0$, and similar to~\eqref{E:GPifneq} we have
\[
  \Bigl( (t-\theta)g_1 - g_2 + \alpha_1, \ldots, (t-\theta)g_{n-1} - g_n + \alpha_{n-1},(t-\theta)g_n - g_1^{(1)} + \alpha_n \Bigr)^{\tr} = 0.
\]
It follows that for $2 \leqslant i \leqslant n$,
\begin{equation} \label{E:giinterim}
  g_i = (t-\theta)^{i-1} g_1 + \alpha_{i-1} + (t-\theta)\alpha_{i-2} + \cdots + (t-\theta)^{i-2}\alpha_1,
\end{equation}
and moreover that
\begin{equation} \label{E:Ctensorgyha}
  \bsg_{\bsy} + \bsh_{\bsalpha} = -g_1^{(1)} + \alpha_n + (t-\theta)\alpha_{n-1} + \cdots + (t-\theta)^{n-1}\alpha_1 = -(t-\theta)^n g_1.
\end{equation}
By Example~\ref{Ex:Carlitztensor1},
\[
  \cE_0\bigl( (t-\theta)^n g_1 \bigr) = \Bigl( \pd_t^{n-1}\bigl((t-\theta)^n g_1\bigr), \dots, \pd_t^1\bigl( (t-\theta)^n g_1 \bigr), (t-\theta)^n g_1 \Bigr)^{\tr}\Big|_{t=\theta},
\]
and for $1 \leqslant i \leqslant n$, we see from \eqref{E:giprinparts} that
\begin{equation} \label{E:Ctensordiff}
  \pd_t^{n-i}\bigl( (t-\theta)^n g_1 \bigr)\big|_{t=\theta} = -y_i.
\end{equation}
It then follows that
\[
  \cE_0\bigl( \bsg_{\bsy} + \bsh_{\bsalpha} \bigr) = \bsy, \quad \cE_1\bigl( \bsh_{\bsalpha} \bigr) = \cE_0\bigl( \bsh_{\bsalpha} \bigr) = \bsalpha,
\]
where for the second equality we use that $\deg_t \bsh_{\bsalpha} \leqslant n$ in Example~\ref{Ex:Carlitztensor1}, and
which agrees with Proposition~\ref{P:Expgyha}.

Now $\rH_{\DR}^1(\phi) \cong \Der_0(\phi)$ is $1$-dimensional over $\KK$, and since $\rN^{\perp}(\phi) = \KK\bss_n$, it is generated by $\bsdelta \assign \bsdelta^{(\bss_n)}$ so that
\[
  \bsdelta_t = (\tau, 0, \dots, 0) = \tau \bss_1.
\]
In this way if $\Exp_{\phi}(\bsy) = \bsalpha$ as above, then by Proposition~\ref{P:quasiloggy}, \eqref{E:Ctensorgyha}, and~\eqref{E:Ctensordiff},
\[
  \rF_{\bsdelta}(\bsy)  = -\bsg_{\bsy}|_{t=\theta} = g_1^{(1)}\big|_{t=\theta} = \alpha_n - (t-\theta)^n g_1\big|_{t=\theta} = \alpha_n - y_n,
\]
as expected by~\eqref{E:quasiinner}.
\end{example}

\begin{example} \label{Ex:strictlypure2}
\emph{Strictly pure $t$-modules.}
We let $\phi : \bA \to \Mat_d(\KK[\tau])$ be a uniformizable strictly pure $t$-module defined by
\[
\phi_t = \rd \phi_t + B_1 \tau + \dots + B_{\ell}\tau^{\ell}, \quad \det(B_{\ell}) \neq 0,
\]
which has dimension $d$ and rank $r = \ell d$, and we continue with the notation of Example~\ref{Ex:strictlypure1}.  In many respects strictly pure $t$-modules behave like a block version of Drinfeld modules in Example~\ref{Ex:Drinfeldmodule2}, but with a few twists.  We let
\begin{align*}
\bsm &\assign (\tau^j \bss_i : 1 \leqslant i \leqslant d,\ 0 \leqslant j \leqslant \ell-1)^{\tr} \in (\cM_{\phi})^r, \\
\bsn &\assign (\sigma^j \bss_i : 1 \leqslant i \leqslant d, 0 \leqslant j \leqslant \ell-1)^{\tr} \in (\cN_{\phi})^r,
\end{align*}
be $\KK[t]$-bases of $\cM_{\phi}$ and $\cN_{\phi}$ respectively.  Multiplication by $\sigma$ on $\cN_{\phi}$ is represented by the matrix $\Phi \in \Mat_r(\KK[t])$ in \eqref{E:strictlypurePhi}, and multiplication by $\tau$ on $\cM_{\phi}$ is represented by
\begin{equation} \label{E:tPhistrictlypure}
\tPhi =  \begin{pmatrix}
    0 & \Id_d & \cdots & 0 \\
    \vdots & \vdots & \ddots & \vdots \\
    0 & 0 & \cdots & \Id_d \\
    B_{\ell}^{-1}(t\Id_d - \rd\phi_t) & -B_{\ell}^{-1} B_1 & \cdots & -B_{\ell}^{-1} B_{\ell-1}
    \end{pmatrix}.
\end{equation}
Since $X$ and $Y$ in~\eqref{E:XYdef} are by definition both the identity matrix, it follows from Corollary~\ref{C:VXBY} that
\begin{equation}
  V = \begin{pmatrix}
  B_1^{\tr} & \bigl( B_2^{(-1)} \bigr)^{\tr} & \cdots & \bigl( B_{\ell-1}^{(-\ell+2)} \bigr)^{\tr} & \bigl( B_{\ell}^{(-\ell+1)} \bigr)^{\tr} \\
  B_2^{\tr} & \bigl( B_3^{(-1)} \bigr)^{\tr} & \cdots & \bigl( B_{\ell}^{(-\ell+2)} \bigr)^{\tr} & \\
  \vdots & \vdots & \reflectbox{$\ddots$} & & \\
  B_{\ell-1}^{\tr} & \bigl( B_{\ell}^{(-1)} \bigr)^{\tr} & & &\\
  B_{\ell}^{\tr} & & & &
  \end{pmatrix}
\end{equation}
and
\begin{equation} \label{E:VPhistrictlypure}
  V^{(-1)}\Phi = \tPhi^{\tr}V = \begin{pmatrix}
  t\Id_d - \rd\phi_t^{\tr} & 0 & \cdots & 0 & 0 \\
  0 & \bigl( B_2^{(-1)} \bigr)^{\tr} & \cdots & \bigl( B_{\ell-1}^{(-\ell+2)} \bigr)^{\tr} & \bigl( B_{\ell}^{(-\ell+1)} \bigr)^{\tr} \\
  0 & \bigl( B_3^{(-1)} \bigr)^{\tr} & \cdots & \bigl( B_{\ell}^{(-\ell+2)} \bigr)^{\tr} & \\
  \vdots & \vdots & \reflectbox{$\ddots$} & & \\
  0 & \bigl( B_{\ell}^{(-1)} \bigr)^{\tr} & & &
  \end{pmatrix},
\end{equation}
both of which coincide with~\cite[Ex.~2.5.16(b)]{HartlJuschka20}.

Fixing an $\bA$-basis $\bslambda_1, \dots, \bslambda_r \in \Lambda_{\phi}$ also determines Anderson generating functions $\cG_{\bslambda_1}, \dots, \cG_{\bslambda_r} \in \TT^d$, and by Proposition~\ref{P:PsiUpsilonV} we have a rigid analytic trivialization $\Psi \in \GL_r(\TT_{\theta})$ of $\phi$ with
\begin{equation}
  \Psi^{-1} = \Upsilon^{\tr} V = \begin{pmatrix}
  \bigl(\cG_{\bslambda_1}^{(1)}\bigr)^{\tr} & \cdots & \bigl(\cG_{\bslambda_1}^{(\ell)}\bigr)^{\tr} \\
  \vdots & & \vdots \\
  \bigl(\cG_{\bslambda_r}^{(1)}\bigr)^{\tr} & \cdots & \bigl(\cG_{\bslambda_r}^{(\ell)}\bigr)^{\tr}
  \end{pmatrix}
  \cdot V,
\end{equation}
which is simply a block matrix version of~\eqref{E:PsiinvDrinfeld}.

Letting $\bsy$, $\bsalpha \in \KK^d$ satisfy $\Exp_{\phi}(\bsy) = \bsalpha$, it follows from Proposition~\ref{P:AGFprops} and Lemma~\ref{L:gyha} that
\begin{align} \label{E:gystrictlypure}
  \bsg_{\bsy} &= - \Bigl( \bigl( \cG_{\bsy}^{(1)} \bigr)^{\tr}, \dots, \bigl( \cG_{\bsy}^{(\ell)} \bigr)^{\tr} \Bigr) \cdot V  \\
  &= \begin{aligned}[t]
  -\Bigl( \bigl( (t\Id_d - \rd\phi_t)\cG_{\bsy} +\bsalpha \bigr)^{\tr}, \bigl( B_2^{(-1)}\cG_{\bsy}^{(1)} + \cdots + {} &B_{\ell}^{(-1)}\cG_{\bsy}^{(\ell-1)}\bigr)^{\tr},\\
   &\ldots, \bigl( B_{\ell}^{(-\ell+1)} \cG_{\bsy}^{(1)} \bigr)^{\tr} \Bigr).
  \end{aligned} \notag
\end{align}
Moreover, Lemma~\ref{L:gyha} combined with~\eqref{E:tPhistrictlypure} implies that
\begin{align} \label{E:strictlypureha}
  \bsh_{\bsalpha} &= \left( \begin{pmatrix} 0 & \cdots & 0 \\
  \vdots & & \vdots \\
  B_{\ell}^{-1} & \cdots & 0 \end{pmatrix}
  \begin{pmatrix} \bss \\ \vdots \\ \tau^{\ell-1}\bss \end{pmatrix}
  \cdot \bsalpha \right)^{\tr} \cdot V \\
  &= \Bigl( 0, \ldots, 0, B_{\ell}^{-1}\bsalpha \Bigr)^{\tr} \cdot V \notag \\
  &= \bigl( \bsalpha^{\tr}, 0, \dots 0 \bigr), \notag
\end{align}
and it is straightforward to verify, using \eqref{E:strictlypurePhi}, Proposition~\ref{P:AGFprops}, Lemma~\ref{L:gyha}, and \eqref{E:VPhistrictlypure}, that
\[
  \bsg_{\bsy} + \bsh_{\bsalpha} = \bsg_{\bsy}^{(-1)} \Phi.
\]
Now if $\cG_{\bsy} = (g_1, \dots, g_d)^{\tr} \in \TT^d$, then we see that
\begin{multline*}
  \bigl( (t\Id_d - \rd\phi_t)\cG_{\bsy} \bigr)^{\tr} \\ = \Bigl( (t-\theta)g_1 - g_2, (t-\theta)g_2 - g_3, \ldots, (t-\theta)g_{\ell_1}, (t-\theta)g_{\ell_1+1} - g_{\ell_1+2}, \ldots, (t-\theta) g_{\ell_m} \Bigr),
\end{multline*}
and applying~\eqref{E:E0beta}, it follows that
\begin{equation} \label{E:strictlypureE0gyha}
  \cE_0\bigl(\bsg_{\bsy} + \bsh_{\bsalpha} \bigr) = -\begin{pmatrix}
  \pd_{t,\ell_1} \bigl[ (t-\theta)^{\ell_1} g_1 \bigr] \big|_{t=\theta} \\
  \vdots \\
  \pd_{t,\ell_j} \bigl[ (t-\theta)^{\ell_j} g_{d_{j-1}+1} \bigr] \big|_{t=\theta} \\
  \vdots \\
  \pd_{t,\ell_m} \bigl[ (t-\theta)^{\ell_m} g_{d_{m-1}+ 1} \bigr] \big|_{t=\theta}
  \end{pmatrix},
\end{equation}
where we recall $d_j$ from \eqref{E:specialbases}. For each $1 \leqslant j \leqslant m$ and $0 \leqslant i \leqslant \ell_j-1$, it follows from \eqref{E:giprinparts} that
\begin{equation}
  \pd_t^{\ell_j-i} \bigl( (t-\theta)^{\ell_j} g_{d_{j-1}+1} \bigr)\big|_{t=\theta}
   = -y_{d_{j-1}+i}.
\end{equation}
Thus by~\eqref{E:strictlypureE0gyha},
\begin{equation} \label{E:strictlypurefinally}
  \cE_0 \bigl(\bsg_{\bsy} + \bsh_{\bsalpha} \bigr) = \bsy,
\end{equation}
as expected.  Similarly, using~\eqref{E:E0beta} and~\eqref{E:strictlypureha} and taking $\bsalpha \rassign (\alpha_1, \dots, \alpha_d)^{\tr}$,
\[
  \cE_1 \bigl( \bsh_{\bsalpha} \bigr) = \cE_0 \bigl( \bsh_{\bsalpha} \bigr)
  =\begin{pmatrix}
  {\displaystyle \pd_{t,\ell_1} \biggl[ \sum_{i=1}^{\ell_1} (t-\theta)^{\ell_1-i} \alpha_i \biggr]\bigg|_{t=\theta} } \\
  \vdots\\
  {\displaystyle \pd_{t,\ell_m} \biggl[ \sum_{i=1}^{\ell_m} (t-\theta)^{\ell_m-i} \alpha_{d_{m-1}+i} \biggr]\bigg|_{t=\theta} }
  \end{pmatrix}
  =\bsalpha.
\]

For quasi-periods and quasi-logarithms, we assume that $\phi$ is defined over $\oK$ for simplicity. If we consider the de Rham module $\rH_{\DR}^1(\phi,\oK) \cong \Der_0(\phi,\oK) \oplus \rH_{\sr}^1 (\phi,\oK)$, then
\[
  \Der_0(\phi,\oK) = \oK \bsdelta^{(\bss_{d_1})} \oplus \cdots \oplus \oK \bsdelta^{(\bss_{d_m})}
\]
and for $1 \leqslant j \leqslant m$,
\[
 \bigl( \bsdelta^{(\bss_{d_j})} \bigr)_t = \bss_{d_j} B_1 \tau + \cdots + \bss_{d_j} B_{\ell}\tau^\ell.
\]
We note that if we define $\bsdelta_{i,j} \in \Der(\phi,\oK)$ by
\[
  \bigl( \bsdelta_{i,j} \bigr)_t \assign \bss_i \tau^j, \quad 1 \leqslant i \leqslant d,\, 1 \leqslant j \leqslant \ell,
\]
then
\[
\rH_{\DR}^1(\phi,\oK)= \bigoplus_{\substack{1 \leqslant i \leqslant d \\ 1 \leqslant j \leqslant \ell}} \oK \bigl[\bsdelta_{i,j}\bigr].
\]
Moreover, if we define $\bsgamma_i \in \Der(\phi,\oK)$ so that
\[
  \bigl(\bsgamma_{i} \bigr)_t \assign \bss_i B_{\ell} \tau^{\ell}, \quad 1 \leqslant i \leqslant d,
\]
then taking $B_{\ell} = (b_{ik})$,
\[
  \bsgamma_i = \sum_{k=1}^d b_{ik} \bsdelta_{k,\ell},
\]
and in particular
\[
  \rH_{\sr}^1(\phi,\oK) = \bigoplus_{\substack{1 \leqslant i \leqslant d \\ 1 \leqslant j \leqslant \ell-1}} \oK \bigl[\bsdelta_{i,j}\bigr] \oplus \bigoplus_{i \notin \{ d_1, \dots, d_m \}} \oK \bigl[ \bsgamma_{i} \bigr].
\]
For $\bsy \in \KK^d$, $\bsalpha \in \oK^d$, and $\cG_{\bsy} = (g_1, \dots, g_d)^{\tr} \in \TT^d$ as above, it is clear from Proposition~\ref{P:quasigen} that for all $i$, $j$,
\begin{equation}
  \rF_{\bsdelta_{i,j}}(\bsy) = g_i^{(j)}\big|_{t=\theta}.
\end{equation}
By Proposition~\ref{P:AGFprops}, $(t\Id_d -\rd\phi_t) \cG_{\bsy} - B_1 \cG_{\bsy}^{(1)} - \cdots - B_{\ell} \cG_{\bsy}^{(\ell)} + \bsalpha = 0$,
and so for quasi-logarithms associated to $\bsgamma_1, \dots, \bsgamma_d$, this identity together with Proposition~\ref{P:quasigen} implies
\begin{align*}
  \begin{pmatrix} \rF_{\bsgamma_1}(\bsy) \\ \vdots \\ \rF_{\bsgamma_d}(\bsy) \end{pmatrix}
  &= B_{\ell} \cG_{\bsy}^{(\ell)}\big|_{t=\theta} \\
  &= \bigl( (t\Id_d - \rd\phi_t)\cG_y + \bsalpha - B_1 \cG_{\bsy}^{(1)} - \cdots - B_{\ell-1} \cG_{\bsy}^{(\ell-1)} \bigr)\big|_{t=\theta} \\
  &= \bigl( (t\Id_d - \rd\phi_t)\cG_y \bigr)|_{t=\theta} + \bsalpha - B_1
  \begin{pmatrix} \rF_{\bsdelta_{1,1}}(\bsy) \\ \vdots \\ \rF_{\bsdelta_{d,1}}(\bsy) \end{pmatrix}
  - \cdots - B_{\ell-1} \begin{pmatrix} \rF_{\bsdelta_{1,\ell-1}}(\bsy) \\ \vdots \\ \rF_{\bsdelta_{d,\ell-1}}(\bsy) \end{pmatrix}.
\end{align*}
From this we see that for $1 \leqslant i \leqslant d$,
\[
  \rF_{\bsgamma_{i}}(\bsy) \in \begin{cases}
  \Span{\oK} \bigl((t-\theta)g_i|_{t=\theta}, \rF_{\bsdelta_{1,1}}(\bsy), \dots, \rF_{\bsdelta_{d,\ell-1}}(\bsy) \bigr) & \textup{if $i \in \{ d_1, \dots, d_m\}$,} \\
  \Span{\oK} \bigl( ((t-\theta)g_i - g_{i+1})|_{t=\theta}, \rF_{\bsdelta_{1,1}}(\bsy), \dots, \rF_{\bsdelta_{d,\ell-1}}(\bsy) \bigr) & \textup{otherwise.}
  \end{cases}
\]
By considering the expression for $\bsg_{\bsy}$ in~\eqref{E:gystrictlypure}, we see that
\begin{align*}
  \Span{\oK} \bigl( &\{1\} \cup \{ \rF_{\bsdelta}(\bsy) : \ [\bsdelta] \in \rH_{\DR}^1(\phi)\} \bigr) \\
  &= \Span{\oK} \bigl( \{1\} \cup \{ \rF_{\bsdelta_{i,j}}(\bsy) : 1 \leqslant i \leqslant d,\, 1 \leqslant j \leqslant \ell-1 \} \cup \{ \rF_{\bsgamma_1}(\bsy), \dots, \rF_{\bsgamma_d}(\bsy) \} \bigr) \\
  &= \Span{\oK} \bigl( \{1 \} \cup \{ \bsg_{\bsy}|_{t=\theta}\} \bigr),
\end{align*}
which is the conclusion of Theorem~\ref{T:quasispancomplete} in this case.
Furthermore, in the case that $i=d_j$ for some $1\leqslant j \leqslant m$, we have that $(t-\theta)g_i|_{t=\theta} = -y_i$.  On the other hand, if $i \neq d_j$ for any $1\leqslant j \leqslant m$, then by~\eqref{E:giprinparts},
\[
  \bigl( (t-\theta)g_i - g_{i+1} \bigr)\big|_{t=\theta} = \Res_{t=\theta} \biggl( g_i - \frac{g_{i+1}}{t-\theta} \biggr).
\]
Thus for our uniformizable strictly pure $t$-module~$\phi$, if we let
\[
  y_i' \assign \begin{cases}
    -y_i & \textup{if $i \in \{ d_1, \dots, d_m \}$,} \\
    {\displaystyle -y_i - \Res_{t=\theta}\biggl( \frac{g_{i+1}}{t-\theta} \biggr)} & \textup{otherwise,}
  \end{cases}
\]
then
\begin{multline}
  \Span{\oK} \bigl( \{1\} \cup \{ \rF_{\bsdelta}(\bsy) : [\bsdelta] \in \rH_{\DR}^1(\phi) \} \bigr) \\
  = \Span{\oK} \bigl(1, y_1', \dots, y_d', \rF_{\bsdelta_{1,1}}(\bsy), \dots, \rF_{\bsdelta_{d,\ell-1}}(\bsy) \bigr).
\end{multline}
\end{example}

\begin{example} \label{Ex:ASP}
\emph{An almost strictly pure $t$-module.}
In this section we investigate an almost strictly pure $t$-module that is not strictly pure.  In general, the structure of such $t$-modules is more difficult to characterize explicitly.  Tensor powers of the Carlitz module provide well-behaved examples, but as indicated in Corollary~\ref{C:VXBY} and Lemma~\ref{L:VPhi}, there can be much variation depending on the presentation of the matrices $X$ and $Y$ in~\eqref{E:XYblocks}.

Let $\phi : \bA \to \Mat_2(\KK[\tau])$ be the $t$-module defined by
\begin{equation}
  \phi_t = \begin{pmatrix} \theta & 1 \\ 0 & \theta \end{pmatrix}
  + \begin{pmatrix} 1 & 0 \\ 1 & 1 \end{pmatrix} \tau
  + \begin{pmatrix} 0 & 1 \\ 0 & 0 \end{pmatrix} \tau^2.
\end{equation}
We calculate that
\begin{align}
  \phi_{t^2} &= \begin{pmatrix} \theta^2 & 2\theta \\ 0 & \theta^2 \end{pmatrix}
  + \begin{pmatrix} \theta^q + \theta + 1 & 2 \\ \theta^q + \theta & \theta^q + \theta + 1 \end{pmatrix} \tau
  + \begin{pmatrix} 1 & \theta^{q^2} + \theta \\ 2 & 1 \end{pmatrix}\tau^2
  + \begin{pmatrix} 1 & 2 \\ 0 & 1 \end{pmatrix} \tau^3, \\
  &\rassign A_0 + A_1 \tau + A_2 \tau^2 + A_3 \tau^3. \notag
\end{align}
and since $A_3$ is invertible, we see that the induced map $\phi : \FF_q[t^2] \to \Mat_2(\KK[\tau])$ is a strictly pure `$t^2$-module.'  As a $t^2$-module, it has rank $6=2\cdot 3$ (and dimension~$2$).  Thus
\[
  \rank_{\KK[t]}(\cM_{\phi}) = \rank_{\KK[t]}(\cN_{\phi}) = 3,
\]
and $\phi$ is pure of weight $\frac23$ in the sense of~\cite[\S 1.10]{And86} (and weight $-\frac23$ in the sense of \cite[Def.~2.5.27]{HartlJuschka20}).  We do not know whether or not $\phi$ is uniformizable.

As $\KK[t^2]$-modules, $\cM_{\phi}$ has the basis $\bsm' = (\bsm_1', \dots, \bsm_6')^{\tr}$ with
\begin{gather*}
  \bsm_1' = (1,0), \quad \bsm_2' = (0,1), \quad \bsm_3'  = (\tau,0), \\
  \bsm_4' = (0, \tau), \quad \bsm_5' = (\tau^2,0), \quad \bsm_6' = (0, \tau^2),
\end{gather*}
and likewise $\cN_{\phi}$ has the basis $\bsn' = (\bsn_1', \dots, \bsn_6')^{\tr}$ given by
\begin{gather*}
  \bsn_1' = (1,0), \quad \bsn_2' = (0,1), \quad \bsn_3'  = (\sigma,0), \\
  \bsn_4' = (0, \sigma), \quad \bsn_5' = (\sigma^2,0), \quad \bsn_6' = (0, \sigma^2).
\end{gather*}
One checks that $\{\bsm_1', \bsm_2',\tau\bsm_1'\}$ and $\{\bsn_1', \bsn_2',\sigma\bsn_2'\}$ form $\KK[t]$-bases of $\cM_{\phi}$ and $\cN_{\phi}$ respectively, but they are not in the form of~\eqref{E:specialbases} (in this example $m=1$ and $\ell_1=d=2$, but $\bsm_2' \notin \tau\cM_{\phi}$ and similarly for $\bsn$).  We let
\begin{gather*}
  \bsm_1 \assign \bsm_1' = (1,0),\quad \bsm_2 \assign -(t-\theta)\bsm_1' + \bsm_2' = (-\tau,-\tau^2), \quad \bsm_3 \assign \bsm_3' = (\tau, 0), \\
  \bsn_1 \assign \bsn_2' = (0,1), \quad \bsn_2 \assign -(t-\theta)\bsn_2' + \bsn_1' = (-\sigma^2,-\sigma), \quad \bsn_3 \assign \bsn_4' = (0,\sigma),
\end{gather*}
and we check that $\bsm = (\bsm_1, \bsm_2,\bsm_3)^{\tr}$ and $\bsn = (\bsn_1, \bsn_2, \bsn_3)^{\tr}$ form $\KK[t]$-bases of $\cM_{\phi}$ and $\cN_{\phi}$ respectively that conform to~\eqref{E:specialbases}.  With respect to these bases, we have $\tau\bsm = \tPhi \bsm$ and $\sigma \bsn = \Phi \bsn$, where
\[
  \tPhi = \begin{pmatrix}
  0 & 0 & 1 \\
  (t-\theta)^2 & t-\theta & -t+\theta^q-1 \\
  (t-\theta^q)(t-\theta)^2 & (t-\theta)(t-\theta^q)+1 & -t+\theta^q+1
  \end{pmatrix}
\]
and
\[
  \Phi = \begin{pmatrix}
  0 & 0 & 1 \\ (t-\theta)^2 & t-\theta & -t+\theta^{(-1)} - 1 \\
  (t-\theta)^2(t-\theta^{(-1)}) & (t-\theta)(t-\theta^{(-1)}) + 1 &
  -t+\theta^{(-1)}+1
  \end{pmatrix}.
\]
For the matrices $X$ and $Y$ in~\eqref{E:XYdef} and~\eqref{E:XYblocks}, we find
\[
  X = \begin{pmatrix}
  1 & 0 & 0 \\
  t-\theta & 1 & 0 \\
  0 & 0 & 1 \\
  (t-\theta)^2 & t-\theta & -1 \\
  (t-\theta)^2(t-\theta^q) & (t-\theta)(t-\theta^q) +1 & -t+\theta^q + 1 \\
  0 & -1 & -1
  \end{pmatrix}
\]
and
\[
  Y = \begin{pmatrix}
  t-\theta & 1 & 0 \\
  1 & 0 & 0 \\
  (t-\theta)^2 & t-\theta & -1 \\
  0 & 0 & 1\\
  0 & -1 & -1 \\
  (t-\theta)^2(t-\theta^{(-1)}) & (t-\theta)(t-\theta^{(-1)}) + 1 & -t + \theta^{(-1)} + 1
  \end{pmatrix}.
\]
The matrix $B^{\tr}$ from \eqref{E:Bdef} is
\[
  B^{\tr} = \begin{pmatrix}
  1 & 1 & 0 & 0 & 0 & 0 \\
  0 & 1 & 1 & 0 & 0 & 0 \\
  0 & 0 & 0 & 0 & 0 & 0 \\
  1 & 0 & 0 & 0 & 0 & 0 \\
  0 & 0 & 0 & 0 & 0 & 0 \\
  0 & 0 & 0 & 0 & 0 & 0
  \end{pmatrix},
\]
and so Corollary~\ref{C:VXBY} implies that
\begin{align*}
  V &= \bigl(X^{(1)}\bigr)^{\tr} B^{\tr} Y\\
    &= \begin{pmatrix}
  (2t-\theta^q-\theta)((t-\theta)(t-\theta^q)+1)+1 & (2t-\theta^q-\theta)(t-\theta^q)+1 & -(t-\theta^q) \\
  (2t-\theta^q-\theta)(t-\theta)+1 & 2t-\theta^q-\theta & -1 \\
  -(t-\theta) & -1 & 0
  \end{pmatrix}. \notag
\end{align*}
We should remark that in the previous examples considered in this section the matrix $V$ has entries in $\KK$, but examples like this one demonstrate the necessity for entries in $\KK[t]$.  Also the matrix~$C^{\tr}$ in \eqref{E:Cdef} is
\[
  C^{\tr} = \begin{pmatrix}
  t-\theta & 0 & 0 & 0 & 0 & 0 \\
  -1 & t-\theta & 0 & 0 & 0 & 0 \\
  0 & 0 & 0 & 0 & 0 & 0 \\
  0 & 0 & 1 & 0 & 0 & 0 \\
  0 & 0 & 0 & 0 & 0 & 0 \\
  0 & 0 & 0 & 0 & 0 & 0
  \end{pmatrix},
\]
and so from Corollary~\ref{C:VXBY}
\begin{align} \label{E:VPhi23}
  V^{(-1)}\Phi = \tPhi^{\tr} V &= X^{\tr}C^{\tr}Y \\
  &= \begin{pmatrix}
  (t-\theta)^2(1+(t-\theta)^2) & (t-\theta)^3 & -(t-\theta)^2 \\
  (t-\theta)^3 & (t-\theta)^2-1 & -(t-\theta) \\
  -(t-\theta)^2 & -(t-\theta) & 1
  \end{pmatrix}, \notag
\end{align}
which matches the expectations from Lemma~\ref{L:VPhi} and Remark~\ref{R:VPhi}.  If we take $\bsy = (y_1,y_2)^{\tr}$, $\bsalpha = (\alpha_1,\alpha_2)^{\tr} \in \KK^{2}$ with $\Exp_{\phi}(\bsy) = \bsalpha$, and we let $\cG_{\bsy} = (g_1,g_2)^{\tr} \in \TT^2$, then by Lemma~\ref{L:VPhi},
\begin{align*}
  \bsg_{\bsy} + \bsh_{\bsalpha} = \bsg_{\bsy}^{(-1)} \Phi &= - \twistop{\bsm}{\cG_{\bsy}}^{\tr} \cdot V^{(-1)}\Phi \\
  &= -\left\langle \begin{pmatrix} (1,0) \\ (-\tau,-\tau^2) \\ (\tau,0) \end{pmatrix} \Biggm| \begin{pmatrix} g_1 \\ g_2 \end{pmatrix} \right\rangle^{\tr} \cdot V^{(-1)}\Phi \\
  &= -\Bigl( g_1, -g_1^{(1)}-g_2^{(2)}, g_1^{(1)} \Bigr) \cdot V^{(-1)}\Phi.
\end{align*}
By Proposition~\ref{P:epsiota} and~\eqref{E:VPhi23}, we see
\[
  \cE_0 \bigl(\bsg_{\bsy} + \bsh_{\bsalpha} \bigr) = -\pd_{t,2}\Bigl[ g_1 (t-\theta)^2(1+(t-\theta)^2) - \bigl(g_1^{(1)}+g_2^{(2)} \bigr)(t-\theta)^3 - g_1^{(1)}(t-\theta)^2 \Bigr],
\]
but in evaluating these functions and their derivative at $t=\theta$, the extra powers of $t-\theta$ cause much of the expression to vanish.  Finally using \eqref{E:giprinparts} we have
\[
  g_1 = -\frac{y_2}{(t-\theta)^2} - \frac{y_1}{t-\theta} + O(1),
\]
and after a quick calculation the above expression simplifies as
\[
  \cE_0\bigl( \bsg_{\bsy} + \bsh_{\bsalpha} \bigr)  = \bsy
\]
as anticipated.
\end{example}

\newpage
\section{\for{toc}{Hyperderivatives of periods, quasi-periods, logarithms, and quasi-logarithms} \except{toc}{Hyperderivatives of periods, quasi-periods,\texorpdfstring{\\}{} logarithms, and quasi-logarithms}} \label{S:Hyper}

In this section we investigate hyperderivatives of periods, quasi-periods, logarithms and quasi-logarithms of uniformizable abelian and $\bA$-finite Anderson $t$-modules defined over $K_{\infty}^{\sep}$ or $K^{\sep}$, and we explore solutions to Question~\ref{Q:three} in \S\ref{S:Intro}. Hyperdifferential operators have become increasingly important tools for understanding the arithmetic of function fields (e.g., see \cite{AnglesNgoDacTavares20}, \cite{BosserPellarin08}, \cite{BosserPellarin09}, \cite{Brownawell96}--\cite{BrownawellDenis00}, \cite{ChangGreenMishiba21}, \cite{Conrad00}, \cite{Denis93}--\cite{Denis00}, \cite{Jeong00}, \cite{Jeong11}, \cite{Maurischat18}, \cite{Maurischat19a}, \cite{Maurischat21b}--\cite{N21}, \cite{P22}, \cite{PLogAlg}, \cite{PZ17}, \cite{PellarinPerkins18}, \cite{Petrov15}, \cite{US98}, \cite{Vincent15}). Given a Drinfeld module over $K_{\infty}^{\sep}$, Brownawell and Denis~\cite{Brownawell99}, \cite{BrownawellDenis00}, constructed $t$-modules whose exponential functions comprise hyperderivatives of the exponential function and quasi-periodic functions of the original $t$-module.  These investigations grew out of earlier constructions of Denis for the Carlitz module~\cite{Denis93}--\cite{Denis00}.  Most importantly, logarithms of the Brownawell-Denis $t$-modules are themselves hyperderivatives, with respect to $\theta$, of logarithms and quasi-logarithms of the underlying Drinfeld module.

However, the Brownwell-Denis $t$-modules are not abelian or $\bA$-finite, and neither are they quasi-periodic extensions of $\phi$.  Thus these $t$-modules somewhat mysteriously occupy a state in which the machinery of \S\ref{S:Exp}--\S\ref{S:Quasi} does not readily apply.

Ultimately our main tools will be the prolongations of $t$-modules defined by Maurischat~\cite{Maurischat18}.  It turns out that quasi-periodic extensions of Maurischat's prolongations have periods that are expressible in terms of hyperderivatives of logarithms and quasi-logarithms of the underlying $t$-module.  Among other properties that are well-suited to working with the hyperderivatives in question, prolongation $t$-modules of abelian and $\bA$-finite Anderson $t$-modules are themselves abelian and $\bA$-finite, and so the techniques of the previous sections will be a central part of our investigations.

In the next section we review the Brownawell-Denis theory for comparison to our subsequent results.  We will see that the information about logarithms and quasi-logarithms contained in Brown\-a\-well-Denis modules can be interpreted in terms of the quasi-periodic extensions of Maurischat's prolongations.  One additional consideration is that the Brown\-a\-well-Denis theory is inherently defined by taking derivatives \emph{with respect to~$\theta$}, whereas Maurischat's prolongations are defined through hyperderivatives \emph{with respect to~$t$}. One goal of our investigations is to clarify how these two sides are expressly linked (see Theorem~\ref{T:rathyperquasi0}, Corollary~\ref{C:Jordannontract}, and Theorems~\ref{T:hypernontract} and~\ref{T:hypernontractgen}).

\subsection{Hyperderivative \texorpdfstring{$t$}{t}-modules of Brownawell and Denis}\label{subS:BD}
We review the $t$-mod\-ules of Brownawell and Denis \cite{BrownawellDenis00} and their exponential functions, which are constructed from hyperderivatives of the exponential series and quasi-periodic functions of a Drinfeld module.  Throughout we fix a Drinfeld module $\phi: \bA \to K_{\infty}^\sep[\tau]$ of rank $r$ defined by
\[
\phi_t = \theta + \kappa_1 \tau + \dots + \kappa_r\tau^r,
\]
such that $\kappa_r \neq 0$, and we let $\exp_\phi: \KK \to \KK$ be its exponential function.

To simplify notation, for $g = \sum_i \alpha_i z^i \in \laurent{K_{\infty}^{\sep}}{z}$ and $j \geqslant 0$, we write
\begin{equation} \label{E:bracketdef}
  g^{[j]} \assign \pd_{\theta}^j(g) = \sum_i \pd_{\theta}^j(\alpha_i) z^i,
\end{equation}
where we consider $z$ to be an independent variable from $\theta$.  Similarly for $\nu = \sum_i \nu_i \tau^i \in K_{\infty}^{\sep}[\tau]$, we set $\nu^{[j]} = \sum_i \nu_{i}^{[j]} \tau^i$.  We extend this to all $j \in \ZZ$ by setting $g^{[j]}=0$ and $\nu^{[j]}=0$ for $j < 0$.

Brownawell's and Denis' main construction is the following.  Fixing $n \geqslant 1$, for $1 \leqslant i, k \leqslant n+1$, we set
\[
  \cD[i,k] \assign \sum_{j=0}^r \pd_{\theta}^{n-i+1 - q^j(n-k+1)}(\kappa_j) \tau^j \in K_{\infty}^{\sep}[\tau].
\]
We then define a $t$-module $\rho_n : \bA \to \Mat_{n+1}(K_{\infty}^{\sep}[\tau])$ by setting
\begin{equation}
(\rho_n)_t \assign \begin{pmatrix} \cD[1,1] & \cdots & \cD[1,n+1] \\ \vdots & & \vdots \\
  \cD[n+1,1] & \cdots & \cD[n+1,n+1] \end{pmatrix}.
\end{equation}
After some short calculations, we find that $(\rho_n)_t$ has the following upper-triangular form:
\begin{equation} \label{E:BrDerho}
(\rho_n)_t = \begin{pmatrix}
  \theta & 1 & \cdots & \cdots & \cdots & \phi_t^{[n]} \\
  & \theta & \ddots & & \sum_{j=0}^r \pd_{\theta}^{i-q^j k}(\kappa_j) \tau^j  & \vdots \\
  & &  \ddots & \ddots & & \vdots \\
  & & & \theta & 1 & \phi_t^{[2]} \\
  & & & & \theta & \phi_t^{[1]} \\
  & & & & & \phi_t
\end{pmatrix},
\end{equation}
where the general sum appears in row $n-i+1$ and column $n-k+1$.  We note that we have reordered the coordinates from Brownawell's and Denis' original construction so that $(\rho_n)_t$ is upper-triangular (cf.~\cite[\S 2]{BrownawellDenis00}).

\begin{remark}
From this definition, we see that $\rho_n$ is an iterated extension of $\phi$ by the trivial Drinfeld module $\Ga$.  However, since $\rd (\rho_n)_t$ is a full Jordan block of size $n+1$, we see that $\rho_n$ is not isomorphic to any quasi-periodic extension of $\phi$ as in~\eqref{E:QPext}.
\end{remark}

Taking $\exp_\phi(z) = \sum c_h z^{q^h} \in \power{K_{\infty}^{\sep}}{\tau}$, the following fundamental result provides the exponential function of $\rho_n$.

\begin{theorem}[{Brownawell-Denis~\cite[Cor.~2.2]{BrownawellDenis00}}]  \label{T:BrDe}
For $n \geqslant 1$, the unique exponential function for $\rho_n$ is given by
\[
\Exp_{\rho_n} (\bsz) = \begin{pmatrix}
  \exp_\phi^{[n]}(y) + \sum\limits_{q^h k \leqslant n} \pd_{\theta}^{n-q^h k}(c_h)z_k^{q^h}\\
  \vdots\\
   \exp_\phi^{[i]}(y) + \sum\limits_{q^h k \leqslant i} \pd_{\theta}^{i-q^h k}(c_h) z_k^{q^h}\\
  \vdots\\
  \exp_\phi^{[1]}(y) + z_1\\
   \exp_\phi(y)
    \end{pmatrix},
\]
where $\bsz = (z_n, \dots, z_1,y)^\tr$.
\end{theorem}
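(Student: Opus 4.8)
The exponential function $\Exp_{\rho_n}$ is the unique power series in $\power{\Mat_{n+1}(K_\infty^{\sep})}{\tau}$ with identity constant term satisfying $\Exp_{\rho_n} \cdot\, \rd(\rho_n)_t = (\rho_n)_t \cdot \Exp_{\rho_n}$ (Anderson's functional equation, as recalled in \S\ref{subS:modules}). So the strategy is to exhibit the claimed matrix of power series, verify it has identity constant term, verify the intertwining relation with $(\rho_n)_t$, and then invoke uniqueness. The key observation that makes everything work is that the hyperdifferential operators $\pd_\theta^j$, applied coefficient-wise, are compatible with the functional equation for $\exp_\phi$ itself. Concretely, starting from $\exp_\phi(\theta z) = \phi_t(\exp_\phi(z)) = \theta\exp_\phi(z) + \kappa_1\exp_\phi(z)^q + \cdots + \kappa_r\exp_\phi(z)^{q^r}$ and applying $\pd_\theta^i$ using the product rule \eqref{E:productrule} together with the $p$-th power rule in Proposition~\ref{P:Hyperprops}(b) (so that $\pd_\theta^i(\exp_\phi(z)^{q^h})$ decomposes as a sum over $\pd_\theta^k(\exp_\phi(z))^{q^h}$ with $k$ running over $i/q^h$ when $q^h \mid i$), one derives the recursion
\[
\exp_\phi^{[i]}(\theta z) + \sum_{k=1}^{i}\binom{i}{k}\theta^{?}\cdots
\]
(I will not grind the bookkeeping here) relating $\exp_\phi^{[i]}$ to lower-order hyperderivatives $\exp_\phi^{[i']}$ and to the twisted coefficients $\pd_\theta^{i-q^hk}(\kappa_j)$. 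This recursion is \emph{exactly} row $n-i+1$ of the matrix identity $\Exp_{\rho_n}\cdot\rd(\rho_n)_t = (\rho_n)_t\cdot\Exp_{\rho_n}$ once one writes the displayed candidate for $\Exp_{\rho_n}(\bsz)$.

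First I would fix notation and set $E_i(\bsz) \assign \exp_\phi^{[i]}(y) + \sum_{q^h k \leqslant i}\pd_\theta^{i-q^h k}(c_h)\,z_k^{q^h}$ for $0\leqslant i\leqslant n$ (with $y = z_0$ in the ordering of the theorem), so the claimed exponential is $\Exp_{\rho_n}(\bsz) = (E_n(\bsz),\dots,E_1(\bsz),E_0(\bsz))^{\tr}$. I would check that as a power series in $z_0,\dots,z_n,\tau$ this has constant term $\Id_{n+1}$: the linear terms of $E_i$ in $z_k$ come from $\pd_\theta^{i-q^h k}(c_h)z_k^{q^h}$, and since $c_0 = 1$ (the constant coefficient of $\exp_\phi$) the only contribution at $\tau$-degree $0$ is $z_i$ itself appearing in $E_i$, giving the identity matrix. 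Next, and this is the computational heart, I would verify the intertwining relation. Writing out $(\rho_n)_t$ from \eqref{E:BrDerho}, the $(n-i+1)$-th entry of $(\rho_n)_t\cdot\Exp_{\rho_n}(\bsz)$ is $\theta\cdot E_i + E_{i-1} + \sum_{k} \bigl(\sum_{j=0}^r \pd_\theta^{i-q^j k}(\kappa_j)\tau^j\bigr)\bigl(E_k\bigr)$ summed appropriately over $k < i$, while the $(n-i+1)$-th entry of $\Exp_{\rho_n}(\rd(\rho_n)_t\bsz)$ replaces each variable $z_m$ by the $m$-th entry of $\rd(\rho_n)_t\bsz$, which is $\theta z_m + z_{m-1}$; I would expand both sides and match them using the hyperderivative recursion for $\exp_\phi$ derived above. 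The match reduces, after applying the composition rule \eqref{E:comprule} to handle $\pd_\theta^i(\theta z_k) = \theta\pd_\theta^i(z_k)\text{-type}$ reorganizations, to the single-variable identity obtained by hyperdifferentiating $\exp_\phi(\theta z) = \phi_t(\exp_\phi(z))$ $i$ times.

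The main obstacle will be the combinatorial bookkeeping of which $\pd_\theta^{i-q^hk}(c_h)$ and $\pd_\theta^{i-q^jk}(\kappa_j)$ terms survive: one must track carefully that $\pd_\theta^m$ of a $q^h$-th power vanishes unless $q^h \mid m$ (Proposition~\ref{P:Hyperprops}(b)), so that the double sums over $(h,k)$ and $(j,k)$ telescope correctly against the shift $z_m\mapsto \theta z_m + z_{m-1}$, and that the binomial coefficients produced by \eqref{E:productrule} and \eqref{E:comprule} reassemble into the coefficients $\pd_\theta^{i-i'}(\theta^{\,?})$ appearing in the $\theta$-Jordan-block part of $(\rho_n)_t$. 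Once the functional equation $\Exp_{\rho_n}\cdot\rd(\rho_n)_t = (\rho_n)_t\cdot\Exp_{\rho_n}$ is verified, uniqueness of the exponential (Anderson, \S\ref{subS:modules}) forces the displayed formula, and since $\phi$ and the operators $\pd_\theta^j$ preserve $K_\infty^{\sep}$, the coefficients lie in $K_\infty^{\sep}$ and the series is entire by the general theory, completing the proof. Since the statement is quoted from \cite[Cor.~2.2]{BrownawellDenis00}, I would also remark that the only change from their version is the reordering of coordinates making $(\rho_n)_t$ upper-triangular, which does not affect the argument.
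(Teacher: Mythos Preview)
The paper does not supply its own proof of this theorem: it is quoted verbatim from Brownawell--Denis \cite[Cor.~2.2]{BrownawellDenis00} and used as a black box in the subsequent discussion of $\Lambda_{\rho_n}$. Your plan---exhibit the candidate series, check the constant term is $\Id_{n+1}$, verify the functional equation $\Exp_{\rho_n}\cdot\rd(\rho_n)_t = (\rho_n)_t\cdot\Exp_{\rho_n}$ row by row, and invoke Anderson's uniqueness---is exactly the standard strategy and is what Brownawell and Denis do in the cited source. Your identification of the key mechanism (hyperdifferentiate the one-variable identity $\exp_\phi(\theta z) = \phi_t(\exp_\phi(z))$ with respect to $\theta$, using the product rule \eqref{E:productrule} and the $p$-th power rule Proposition~\ref{P:Hyperprops}(b)) is correct and is the whole content of the argument.

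One small caution on your row computation: you wrote the $(n-i+1)$-th entry of $(\rho_n)_t\cdot\Exp_{\rho_n}(\bsz)$ as ``$\theta E_i + E_{i-1} + \sum_k(\cdots)E_k$,'' but the superdiagonal entry of $(\rho_n)_t$ is not literally $1$ in general---it is $\cD[n-i+1,n-i+2] = \sum_j \pd_\theta^{i-q^j(i-1)}(\kappa_j)\tau^j$, which for small $i$ and $q$ can carry nonconstant $\tau$-terms (the displayed matrix in \eqref{E:BrDerho} is slightly schematic on this point). It is cleaner to keep every entry in the uniform form $\cD[n-i+1,n-k+1]$ and match both sides directly, rather than peeling off the diagonal and superdiagonal separately. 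With that adjustment the bookkeeping goes through as you describe.
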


As a result of this theorem, we can quickly analyze the period lattice $\Lambda_{\rho_n}$ of $\rho_n$.  Indeed if $\bslambda = (\lambda_n, \dots, \lambda_1, \lambda)^{\tr} \in \Lambda_{\rho_n}$, then
\[
  \lambda \in \Lambda_{\phi}, \quad \lambda_1 = -\exp_{\phi}^{[1]}(\lambda),
\]
and more generally for $1 \leqslant i \leqslant n$,
\begin{equation} \label{E:lambdasum}
  \lambda_i = -\exp_{\phi}^{[i]}(\lambda) - \sum_{\substack{q^h k \leqslant i \\ k \neq i}}
  \pd_{\theta}^{i-q^h k}(c_h) \lambda_k^{q^h}.
\end{equation}
Thus each $\lambda_i$ is inductively and uniquely determined by $\lambda \in \Lambda_{\phi}$, and so $\Lambda_{\rho_n}$ has rank~$r$ as an $\bA$-module.  These calculations simplify greatly, and we find in the following corollary that in fact $\lambda_i = \pd_{\theta}^i(\lambda)$. Moreover, the above considerations can be extended to the following result on hyperderivatives of logarithms due to Brownawell and Denis.

\begin{corollary}[{Brownawell-Denis~\cite[Pf.~of Thm.~4.2]{BrownawellDenis00}}] \label{C:BrDelogs}
Let $\alpha$, $y \in K_{\infty}^{\sep}$ be chosen so that $\exp_{\phi}(y) = \alpha$.  Then for $n \geqslant 1$,
\[
  \Exp_{\rho_n} \begin{pmatrix} \pd_{\theta}^n(y) \\ \vdots \\ \pd_{\theta}^1(y) \\ y \end{pmatrix}
  = \begin{pmatrix} \pd_{\theta}^n(\alpha) \\ \vdots \\ \pd_{\theta}^1(\alpha) \\ \alpha \end{pmatrix}.
\]
In particular, the period lattice $\Lambda_{\rho_n}$ of $\rho_n$ satisfies
\[
  \Lambda_{\rho_n} = \left\{ \begin{pmatrix} \pd_{\theta}^n(\lambda) \\ \vdots \\ \pd_{\theta}^1(\lambda) \\ \lambda \end{pmatrix} \Biggm| \lambda \in \Lambda_{\rho} \right\}.
\]
\end{corollary}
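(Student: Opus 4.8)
The plan is to deduce Corollary~\ref{C:BrDelogs} directly from Theorem~\ref{T:BrDe} together with the chain rule and $p$-th power rule for hyperderivatives from Proposition~\ref{P:Hyperprops}, rather than re-deriving the recursion~\eqref{E:lambdasum}. First I would observe that by Remark~\ref{R:ExpoverL} the coefficients $c_h$ of $\exp_\phi(z)=\sum_h c_h z^{q^h}$ lie in $K_\infty^{\sep}$ since $\phi$ is defined over $K_\infty^{\sep}$, so the hyperderivatives $\pd_\theta^j(c_h)$ make sense and $\exp_\phi^{[i]}(z)=\sum_h \pd_\theta^i(c_h) z^{q^h}$ is the $i$-th hyperderivative of $\exp_\phi$ viewing $z$ as independent of $\theta$, as in~\eqref{E:bracketdef}.

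The main computation is to apply $\pd_\theta^i$ to the identity $\alpha = \exp_\phi(y) = \sum_h c_h y^{q^h}$. By the product rule Proposition~\ref{P:Hyperprops}(a) applied to each term $c_h \cdot y^{q^h}$, and then the $p$-th power rule Proposition~\ref{P:Hyperprops}(b) applied to $\pd_\theta^a(y^{q^h})$ (which vanishes unless $q^h \mid a$, in which case it equals $(\pd_\theta^{a/q^h}(y))^{q^h}$), I get
\[
  \pd_\theta^i(\alpha) = \sum_h \sum_{\substack{a+b=i \\ q^h \mid b}} \pd_\theta^a(c_h) \bigl( \pd_\theta^{b/q^h}(y) \bigr)^{q^h}
  = \sum_h \sum_{q^h k \leqslant i} \pd_\theta^{i - q^h k}(c_h) \bigl( \pd_\theta^k(y) \bigr)^{q^h},
\]
where in the last step I reindex via $b = q^h k$. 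Separating out the $k=i$ term (which forces $h=0$, contributing $\pd_\theta^0(c_0)\pd_\theta^i(y) = \pd_\theta^i(y)$ since $c_0 = 1$... actually $c_0$ is the leading coefficient of $\exp_\phi$, which is $1$) and matching against the $i$-th coordinate of $\Exp_{\rho_n}$ in Theorem~\ref{T:BrDe} evaluated at $\bsz = (\pd_\theta^n(y), \dots, \pd_\theta^1(y), y)^\tr$, I recognize precisely $\exp_\phi^{[i]}(y) + \sum_{q^h k \leqslant i} \pd_\theta^{i - q^h k}(c_h)(\pd_\theta^k(y))^{q^h}$, which is exactly $\pd_\theta^i(\alpha)$. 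The zeroth coordinate is just $\exp_\phi(y) = \alpha$, so the displayed identity $\Exp_{\rho_n}(\pd_{\theta,n+1}[y]\text{-type vector}) = (\text{vector of }\pd_\theta^i(\alpha))$ follows coordinate by coordinate.

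For the statement on the period lattice, I would set $\alpha = 0$: given $\lambda \in \Lambda_\phi$, i.e. $\exp_\phi(\lambda) = 0$, the computation above with $y = \lambda$ shows $\Exp_{\rho_n}((\pd_\theta^n(\lambda), \dots, \pd_\theta^1(\lambda), \lambda)^\tr) = 0$, so the right-hand set is contained in $\Lambda_{\rho_n}$. Conversely, if $(\lambda_n, \dots, \lambda_1, \lambda)^\tr \in \Lambda_{\rho_n}$, then reading off the bottom coordinate gives $\exp_\phi(\lambda) = 0$, so $\lambda \in \Lambda_\phi$; and reading the remaining coordinates gives the recursion~\eqref{E:lambdasum}, which determines each $\lambda_i$ uniquely from $\lambda$ — but $\pd_\theta^i(\lambda)$ satisfies the same recursion by the first part, so $\lambda_i = \pd_\theta^i(\lambda)$ for all $i$. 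This gives the reverse inclusion and hence equality. I expect the only delicate point to be bookkeeping the reindexing $b = q^h k$ and confirming that the constraint "$q^h k \leqslant i$" in Theorem~\ref{T:BrDe} matches "$q^h \mid b$, $b \leqslant i$" after substitution; the hyperderivative identities themselves are exactly Proposition~\ref{P:Hyperprops}. (Note: since $\phi$ is a rank-$r$ Drinfeld module with $\Lambda_\phi$ free of rank $r$, the uniqueness of each $\lambda_i$ in terms of $\lambda$ immediately gives $\rank_{\bA} \Lambda_{\rho_n} = r$ as well, consistent with the discussion preceding the corollary.)
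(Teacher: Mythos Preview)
Your proposal is correct and follows essentially the same approach as the paper: both compute $\pd_\theta^i(\alpha)$ via the product rule and the $p$-th power rule (Proposition~\ref{P:Hyperprops}) and match the result against the $i$-th coordinate of $\Exp_{\rho_n}$ from Theorem~\ref{T:BrDe}, and both deduce the period lattice statement from the uniqueness in the recursion~\eqref{E:lambdasum}. One small slip in your write-up: to recover the term $\exp_\phi^{[i]}(y)$ from your double sum you should separate out the $k=0$ terms (which give $\sum_h \pd_\theta^i(c_h)\,y^{q^h}$), not the $k=i$ term; the paper makes this split explicitly by noting that in the Theorem's sum $k$ is implicitly at least~$1$, so your full sum over $k\geqslant 0$ splits as $\exp_\phi^{[i]}(y)$ plus the Theorem's sum.
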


\begin{proof}
Let $\bsy \assign (\pd_{\theta}^{n}(y), \dots, \pd_{\theta}^1(y),y)^{\tr}$ and $\bsalpha \assign (\alpha_n, \dots, \alpha_1, \alpha)^{\tr} \in (K_{\infty}^{\sep})^{n+1}$ be given so that $\Exp_{\rho_n}(\bsy) = \bsalpha$.  Then by Theorem~\ref{T:BrDe}, for $1 \leqslant i \leqslant n$,
\begin{align*}
  \alpha_i &= \exp_{\phi}^{[i]}(y) + \sum_{q^h k \leqslant i} \pd_{\theta}^{i-q^h k}(c_h) \bigl( \pd_{\theta}^k(y) \bigr)^{q^h} \\
  &= \exp_{\phi}^{[i]}(y) + \sum_{q^h k \leqslant i} \pd_{\theta}^{i-q^h k}(c_h) \pd_{\theta}^{q^h k} \bigl( y^{q^h} \bigr),
\end{align*}
where the second equality is an application of the $p$-th power rule for hyperderivatives (see Proposition~\ref{P:Hyperprops}(b)).  We note that in these sums, $k$ is implicitly at least~$1$, and so $1 \leqslant q^h k$ in all cases.  We choose $h_0$ so that $q^{h_0} \leqslant i < q^{h_0+1}$ and then continuing the calculation, while using again the $p$-th power rule, we have
\[
  \alpha_i = \exp_{\phi}^{[i]}(y) + \sum_{h=0}^{h_0} \Bigl( \pd_{\theta}^i \bigl( c_h y^{q^h} \bigr) - \pd_{\theta}^i(c_h) y^{q^h} \Bigr)
  = \pd_{\theta}^i \bigl( \exp_{\phi}(y) \bigr)
  = \pd_{\theta}^i(\alpha).
\]
This proves the first part of the corollary, and the second part follows from the uniqueness imparted by~\eqref{E:lambdasum} on the coefficients of the period in~$\Lambda_{\rho_n}$ whose last entry is a given $\lambda \in \Lambda_{\phi}$.
\end{proof}

Brownawell's and Denis' constructions further extend to hyperderivatives of quasi-periodic functions and quasi-logarithms.  For the remainder of the section, we fix $\phi$-biderivations $\delta_1, \dots, \delta_{\ell} \in \Der(\phi,K_{\infty}^{\sep})$, and again fixing $n \geqslant 1$, we assume that for each $j$, we have $\deg_{\tau} (\delta_j)_t > n$.  Brownawell and Denis~\cite[Thm.~2.3]{BrownawellDenis00} define a $t$-module $\trho_n : \bA \to \Mat_{(n+1)(\ell+1)}(K_{\infty}^{\sep}[\tau])$ by setting
\begin{equation} \label{E:BrDepsi}
 (\trho_n)_t \assign \left( \begin{array}{c|c}
   \begin{matrix}
   & & & & & \\
   d_{\theta,n+1}[\theta] & & & & & \\
   & & & & & \\
   & \ddots & & & & \\
   & & \ddots & & \\
   & & & \ddots & \\
   & & & & & \\
   & & & & & d_{\theta,n+1}[\theta] \\
   & & & & &
   \end{matrix}
   & \begin{array}{c|c}
   \textup{\Large{$0$\ }} &
     \begin{matrix}
       (\delta_{\ell})_t^{[n]} \\ \vdots \\ (\delta_{\ell})_t^{[1]} \\ (\delta_{\ell})_t \\
       \vdots \\ (\delta_{1})_t^{[n]} \\ \vdots \\ (\delta_{1})_t^{[1]} \\ (\delta_{1})_t
     \end{matrix}
   \end{array} \\ \hline
   & \\
   \textup{\Large{$0$}} & (\rho_n)_t \\[10pt]
   \end{array}
   \right).
\end{equation}
Similar to $\rho_n$, the $t$-module $\trho_n$ is not abelian or $\bA$-finite, and as an extension of $\rho_n$, it is not quasi-periodic.  However, its exponential map possesses particularly useful properties for hyperdifferentiation with respect to~$\theta$. For each $j$ with $1 \leqslant j \leqslant \ell$, we let $\rF_{j}(z) \assign \rF_{\delta_j}(z)$.

\begin{theorem}[{Brownawell-Denis~\cite[Thm.~2.3]{BrownawellDenis00}}] \label{T:BrDepsi}
For $n \geqslant 1$, the unique exponential function for $\trho_n$ is given by
\[
\Exp_{\trho_n} \begin{pmatrix} \bsx \\ \bsz \end{pmatrix} = \begin{pmatrix}
  \rF_{\ell}^{[n]}(y)+x_{\ell,n}\\
  \vdots\\
   \rF_{\ell}^{[1]}(y)+x_{\ell,1}\\
  \rF_{\ell}(y)+x_{\ell,0}\\
  \vdots\\
  \rF_1^{[n]}(y)+x_{1,n}\\
  \vdots\\
  \rF_1^{[1]}(y) + x_{1,1}\\
  \rF_1(y)+x_{1,0}\\[5pt] \hline \\[-10pt]
  \Exp_{\rho_n}(\bsz)
    \end{pmatrix}
\]
where $\bsx = (x_{\ell,n}, \dots, x_{\ell,0}, \dots, x_{1,n}, \dots, x_{1,0})^{\tr}$ and $\bsz = (z_n, \dots, z_1, y)^\tr$.
\end{theorem}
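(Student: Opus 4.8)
\textbf{Proof plan for Theorem~\ref{T:BrDepsi}.}
The plan is to follow the same strategy that proves Theorem~\ref{T:BrDe}: uniqueness of the exponential function of a $t$-module means it suffices to exhibit \emph{some} entire $\FF_q$-linear power series $E(\bsx,\bsz)$ with $E(\bzero) = $ identity-linear-part that satisfies the functional equation $E((\rd (\trho_n)_t)\cdot(\bsx,\bsz)^{\tr}) = (\trho_n)_t\bigl(E(\bsx,\bsz)\bigr)$, and then conclude $E = \Exp_{\trho_n}$. First I would note that the bottom block of the proposed $E$ is $\Exp_{\rho_n}(\bsz)$, which already satisfies its own functional equation by Theorem~\ref{T:BrDe}, so the content is entirely in the top $(n+1)\ell$ coordinates, which split into $\ell$ independent blocks of size $n+1$ (one for each biderivation $\delta_j$) because $(\trho_n)_t$ is block upper-triangular with the $\delta_j$-columns not interacting with each other. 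So it suffices to treat a single block, i.e.\ to show that the column vector $\bigl(\rF_j^{[n]}(y)+x_{j,n},\dots,\rF_j(y)+x_{j,0}\bigr)^{\tr}$ transforms correctly under $(\trho_n)_t$ acting on that block together with the $\rho_n$-block below it.

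The key computational step is to differentiate the defining functional equation of $\rF_j = \rF_{\delta_j}$ from Proposition~\ref{P:quasi}, namely
\[
  \rF_j(\theta z) = \theta\,\rF_j(z) + (\delta_j)_t\bigl(\exp_\phi(z)\bigr),
\]
by applying $\pd_\theta^i$ to both sides. On the left, using the chain rule / product rule for hyperderivatives (Proposition~\ref{P:Hyperprops}(a), with $z$ treated as independent of $\theta$) one gets $\sum_{k=0}^i \pd_\theta^{i-k}\!\bigl(\rF_j(\theta z)\text{-coefficients}\bigr)$, which after the substitution $z\mapsto$ the appropriate variable produces exactly the off-diagonal $d_{\theta,n+1}[\theta]$-entries plus the shifted terms $\rF_j^{[i-1]}$, etc.; on the right one gets $\theta\,\rF_j^{[i]}(z) + \rF_j^{[i-1]}(z) + \sum_{k}\,(\delta_j)_t^{[i-k]}\bigl(\pd_\theta^k \exp_\phi(z)\bigr)$ via the product rule applied to $(\delta_j)_t$ acting on $\exp_\phi(z)$. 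The term $\pd_\theta^k \exp_\phi(z)$ is precisely $\exp_\phi^{[k]}(z)$, which up to lower-order corrections is the $k$-th coordinate of $\Exp_{\rho_n}(\bsz)$ by Corollary~\ref{C:BrDelogs} and the structure of Theorem~\ref{T:BrDe}. Matching these up with the column $\bigl((\delta_j)_t^{[n]},\dots,(\delta_j)_t\bigr)^{\tr}$ in~\eqref{E:BrDepsi} acting on $\Exp_{\rho_n}(\bsz)$ and with the Jordan-block $d_{\theta,n+1}[\theta]$ acting on the $x_{j,\cdot}$'s confirms the functional equation for $a=t$, and since $\bA$ is generated by $t$ over $\FF_q$, this determines the whole $t$-module action. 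The hypothesis $\deg_\tau (\delta_j)_t > n$ is what makes the hyperdifferentiation of $(\delta_j)_t$ compatible with only finitely many terms surviving and matching the truncation $q^h k \leqslant i$ in Theorem~\ref{T:BrDe} — this is the bookkeeping subtlety, analogous to the role of $h_0$ in the proof of Corollary~\ref{C:BrDelogs}.

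The main obstacle I expect is the index/truncation bookkeeping: reconciling the hyperderivative product-rule expansions (which naively produce sums over all $k \leqslant i$ with no Frobenius-degree constraint) against the explicit entries of $(\rho_n)_t$ and $\Exp_{\rho_n}$ in Theorem~\ref{T:BrDe}, where the sums are cut off by $q^h k \leqslant i$ and indices are shuffled by the coordinate reordering that makes $(\rho_n)_t$ upper-triangular. The degree hypothesis $\deg_\tau(\delta_j)_t > n$ ensures the cross terms $(\delta_j)_t^{[i-k]}$ vanish outside the intended range (since $\pd_\theta^j(\nu^{p^m}) = 0$ when $p^m \nmid j$, Proposition~\ref{P:Hyperprops}(b)), so once the indexing is set up carefully the identities collapse to exactly the displayed form. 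Rather than grind through this, I would cite~\cite[Thm.~2.3]{BrownawellDenis00} for the verification, noting only that the coordinate reordering used here (to put $(\trho_n)_t$ in upper-triangular form) is cosmetic and does not affect the exponential function beyond the corresponding permutation of coordinates, exactly as remarked after~\eqref{E:BrDerho} for $\rho_n$.
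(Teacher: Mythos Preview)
The paper does not give its own proof of this theorem; it is stated with attribution to Brownawell--Denis and followed immediately by a remark, so there is no in-paper argument to compare against. Your overall strategy---verify the functional equation for a candidate $E$ using uniqueness of the exponential---is correct, and in fact the key identity you are aiming for is proved later in the paper in greater generality: see \eqref{E:functionaleqnhyper}--\eqref{eq:Di}, which for a Drinfeld module specialize exactly to
\[
  \rF_j^{[i]}(\theta y) \;=\; \theta\,\rF_j^{[i]}(y) + \rF_j^{[i-1]}(y) + (\delta_j)_t^{[i]}\bigl(\exp_\phi(y)\bigr),
\]
and this single identity is all that is needed for each coordinate of the top block.

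Your intermediate description is more tangled than necessary, and reflects a slight misreading of the matrix \eqref{E:BrDepsi}. The biderivation column $\bigl((\delta_j)_t^{[n]},\dots,(\delta_j)_t\bigr)^{\tr}$ occupies only the \emph{last} column of the upper-right block, so it multiplies only against the last entry of $\Exp_{\rho_n}(\bsz)$, namely $\exp_\phi(y)$; it does not act on the other coordinates $z_1,\dots,z_n$ at all. Consequently there is no sum $\sum_k (\delta_j)_t^{[i-k]}\bigl(\pd_\theta^k \exp_\phi(z)\bigr)$ to reconcile with multiple entries of $\Exp_{\rho_n}$. The degree hypothesis $\deg_\tau(\delta_j)_t > n$ enters exactly once, and more simply than you suggest: when you apply $\pd_\theta^i$ to $(\delta_j)_t(\exp_\phi(y)) = \sum_{m>n} b_m \exp_\phi(y)^{q^m}$, Proposition~\ref{P:Hyperprops}(b) forces $\pd_\theta^i\bigl(\exp_\phi(y)^{q^m}\bigr)=0$ for all $0<i\leqslant n<q^m$, so the derivative lands entirely on the coefficients $b_m$ and one obtains $(\delta_j)_t^{[i]}(\exp_\phi(y))$ directly (this is \eqref{E:functionaleqnhyperbid}). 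There is therefore no interaction with the $q^hk\leqslant i$ truncation from Theorem~\ref{T:BrDe}; that truncation lives entirely inside the $\rho_n$-block, which you have already disposed of by citing Theorem~\ref{T:BrDe}.
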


\begin{remark}
We note that the definition of $\trho_n$ in~\eqref{E:BrDepsi} is slightly different from what is given in~\cite[Thm.~2.3]{BrownawellDenis00}, where more of the entries of the column coming from biderivations are~$0$.  We believe that this is a simple typographical error, as later in the paper the authors reduce to the case that their biderivations do behave in the way that is presented there.  Especially one should compare with~\cite[p.~50--1]{Brownawell99}, where the construction aligns with what is in Theorem~\ref{T:BrDepsi}.
\end{remark}

As in Corollary~\ref{C:BrDelogs}, we obtain the following result on hyperderivatives of quasi-logarithms, also implicit in \cite[Thm.~4.2]{BrownawellDenis00}.

\begin{corollary}[{Brownawell-Denis \cite[Thm.~4.2]{BrownawellDenis00}}] \label{C:BrDequasilogs}
Let $\alpha$, $y \in K_{\infty}^{\sep}$ be chosen so that $\exp_{\phi}(y) = \alpha$.  Then for $n \geqslant 1$,
\[
  \Exp_{\trho_n} \begin{pmatrix}
  -\pd_{\theta}^{n}(\rF_{\ell}(y)) \\ \vdots \\ -\pd_{\theta}^1(\rF_{\ell}(y)) \\ -\rF_{\ell}(y) \\ \vdots \\
  -\pd_{\theta}^{n}(\rF_{1}(y)) \\ \vdots \\ -\pd_{\theta}^1(\rF_{1}(y)) \\ -\rF_{1}(y) \\
  \pd_{\theta}^{n}(y) \\ \vdots \\ \pd_{\theta}^{1}(y) \\ y
  \end{pmatrix}
  =
  \begin{pmatrix}
  0 \\ \vdots \\ \vdots \\ 0 \\ \pd_{\theta}^n(\alpha) \\ \vdots \\ \pd_{\theta^1}(\alpha) \\ \alpha
  \end{pmatrix}.
\]
\end{corollary}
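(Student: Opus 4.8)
The plan is to combine Theorem~\ref{T:BrDepsi}, which gives the exponential function of $\trho_n$ explicitly in terms of the functions $\rF_j^{[i]}$ and $\Exp_{\rho_n}$, with Corollary~\ref{C:BrDelogs}, which already handles the bottom block of coordinates. First I would set $\bsx \assign (-\pd_\theta^n(\rF_\ell(y)), \dots, -\rF_\ell(y), \dots, -\pd_\theta^n(\rF_1(y)), \dots, -\rF_1(y))^\tr$ and $\bsz \assign (\pd_\theta^n(y), \dots, \pd_\theta^1(y), y)^\tr$, and feed $(\bsx,\bsz)^\tr$ into the formula of Theorem~\ref{T:BrDepsi}. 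The bottom $n+1$ coordinates produce $\Exp_{\rho_n}(\bsz)$, which by Corollary~\ref{C:BrDelogs} is exactly $(\pd_\theta^n(\alpha), \dots, \pd_\theta^1(\alpha), \alpha)^\tr$, matching the claimed right-hand side. So the entire content of the corollary is to show that the top $(n+1)\ell$ coordinates all vanish, i.e., that for each $j$ with $1 \leqslant j \leqslant \ell$ and each $i$ with $0 \leqslant i \leqslant n$,
\[
  \rF_j^{[i]}(y) + x_{j,i} = \rF_j^{[i]}(y) - \pd_\theta^i\bigl(\rF_j(y)\bigr) = 0.
\]

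The key step is therefore the identity $\rF_j^{[i]}(y) = \pd_\theta^i(\rF_j(y))$, which I would prove exactly as in the analogous step for $\exp_\phi$ inside the proof of Corollary~\ref{C:BrDelogs}. Write $\rF_j(z) = \sum_{h \geqslant 1} c_{j,h} z^{q^h} \in \power{K_\infty^{\sep}}{z}$ (it is $\FF_q$-linear and entire by Proposition~\ref{P:quasi}, with coefficients in $K_\infty^{\sep}$ since $\phi$ and $\delta_j$ are defined over $K_\infty^{\sep}$, by Lemma~\ref{L:separable} and the discussion around~\eqref{E:deltatoFdelta}). Then by definition~\eqref{E:bracketdef}, $\rF_j^{[i]}(y) = \sum_h \pd_\theta^i(c_{j,h}) y^{q^h}$, whereas by the product rule~\eqref{E:productrule} and the $p$-th power rule (Proposition~\ref{P:Hyperprops}(b)),
\[
  \pd_\theta^i\bigl(\rF_j(y)\bigr) = \sum_h \pd_\theta^i\bigl( c_{j,h} y^{q^h} \bigr) = \sum_h \sum_{a+b=i} \pd_\theta^a(c_{j,h}) \pd_\theta^b\bigl( y^{q^h} \bigr),
\]
and since $\pd_\theta^b(y^{q^h}) = 0$ unless $q^h \mid b$, while $b \leqslant i \leqslant n < q^h$ forces $b = 0$ (here $\deg_\tau(\delta_j)_t > n$, hence $c_{j,h} = 0$ for $q^h \leqslant n$, so only indices $h$ with $q^h > n$ contribute), we get $\pd_\theta^i(\rF_j(y)) = \sum_h \pd_\theta^i(c_{j,h}) y^{q^h} = \rF_j^{[i]}(y)$. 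This is where the hypothesis $\deg_\tau(\delta_j)_t > n$ is essential, and it is the same reason Brownawell and Denis impose it. I expect this to be the only real point requiring care; everything else is bookkeeping with the block structure of~\eqref{E:BrDepsi}.

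Finally I would assemble: with $x_{j,i} = -\pd_\theta^i(\rF_j(y)) = -\rF_j^{[i]}(y)$, every entry in the top $(n+1)\ell$ rows of $\Exp_{\trho_n}((\bsx,\bsz)^\tr)$ is $\rF_j^{[i]}(y) + x_{j,i} = 0$, and the bottom $n+1$ rows are $\Exp_{\rho_n}(\bsz) = (\pd_\theta^n(\alpha), \dots, \pd_\theta^1(\alpha), \alpha)^\tr$ by Corollary~\ref{C:BrDelogs}. This is precisely the asserted equality, completing the proof. The main obstacle, such as it is, is simply making sure the indexing of the biderivation column in~\eqref{E:BrDepsi} is matched correctly against the ordering of $\bsx$ in Theorem~\ref{T:BrDepsi}; once the $p$-th power rule computation above is in hand, no further analytic input is needed, and convergence of all series is guaranteed by the entirety of $\exp_\phi$ and each $\rF_j$ together with the completeness argument already used in Lemma~\ref{L:separable}.
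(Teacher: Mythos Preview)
Your proposal is correct and follows essentially the same approach as the paper's proof: apply Theorem~\ref{T:BrDepsi}, handle the bottom block by Corollary~\ref{C:BrDelogs}, and reduce the vanishing of the top blocks to the identity $\rF_j^{[i]}(y) = \pd_\theta^i(\rF_j(y))$, which the paper also isolates as the key point and derives from the hypothesis $\deg_\tau(\delta_j)_t > n$ via the $p$-th power rule. The paper's version is terser but the argument is the same.
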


\begin{proof}
The proof is essentially immediate from Corollary~\ref{C:BrDelogs}.  The key identity is that, by our choice that $\deg_{\tau} (\delta_j)_t > n$ for each~$j$, it follows that for $1 \leqslant i \leqslant n$ we have $\pd_{\theta}^i(\rF_{j}(y)) = \rF_j^{[i]}(y)$ for each~$j$.
\end{proof}

\subsection{Prolongations of \texorpdfstring{$t$}{t}-motives, dual \texorpdfstring{$t$}{t}-motives, and Anderson \texorpdfstring{$t$}{t}-modules} \label{subS:Prolongations}
In this section, we briefly review the prolongations of $t$-motives, dual $t$-motives, and Anderson $t$-modules due to Maurischat~\cite{Maurischat18}. We then apply the constructions of \S\ref{S:Exp}--\S\ref{S:Quasi} to these new $t$-modules to find connections with hyperderivatives of periods, logarithms, quasi-periods, and quasi-logarithms.

\begin{definition}\label{D:promotive}
For a left $\KK[t, \tau]$-module $\cM$ and $n \geqslant 0$, the \emph{$n$-th prolongation} of $\cM$ is the left $\KK[t, \tau]$-module $\rP_n\cM$ which is generated by the symbols $D_i \bsm$, for $i = 0$, $1, \dots, n$ and $\bsm \in \cM$, subject to the relations
\begin{enumerate}
\item[(a)] $D_i( \bsm_1+ \bsm_2) = D_i \bsm_1+D_i \bsm_2$,
\item[(b)] $D_i(a\cdot \bsm) = \sum_{i_1+i_2=i} \pd_t^{i_1}(a)\cdot D_{i_2}\bsm$,
\item[(c)] $\tau^{k}(a\cdot D_i \bsm)  =a^{(k)}D_i(\tau^{k}\bsm)$,
\end{enumerate}
for all $\bsm$, $\bsm_1$, $\bsm_2 \in \cM$ and $a\in \KK[t]$.
\end{definition}

\begin{definition} \label{D:dualtpro}
For a left $\KK[t, \sigma]$-module $\cN$ and $n \geqslant 0$, the \emph{$n$-th prolongation} of $\cN$ is the left $\KK[t, \sigma]$-module $\rP_n\cN$ which is generated by the symbols $D_i \bsh$, for $i = 0$, $1, \dots, n$ and $\bsh \in \cN$, subject to the relations
\begin{enumerate}
\item[(a)] $D_i(\bsh_1+ \bsh_2) = D_i \bsh_1+D_i \bsh_2$,
\item[(b)] $D_i(a\cdot \bsh) = \sum_{i_1+i_2=i} \pd_t^{i_1}(a)\cdot D_{i_2}\bsh$,
\item[(c)] $\sigma^{k}(a\cdot D_i\bsh)  =a^{(-k)}D_i(\sigma^{k}\bsh)$,
\end{enumerate}
for all $\bsh$, $\bsh_1$, $\bsh_2 \in \cN$ and $a\in \KK[t]$.
\end{definition}

Although we use the same notation `$D_i \bsm$' and `$D_i \bsh$' in both types of prolongations, the reader should be able to distinguish them by the context.  The primary result on prolongations of $t$-motives and dual $t$-motives is due to Maurischat.

\begin{theorem}[{Maurischat \cite[Thm.~3.4, Thm.~3.6, Prop.~4.2]{Maurischat18}}] \label{T:prolongprops} \hfill
\begin{enumerate}
\item[(a)] If $\cM$ is a $t$-motive, then the $n$-th prolongation $\rP_n \cM$ is also a $t$-motive for all $n \geqslant 0$. If $\cM$ is abelian, then so is $\rP_n \cM$. Moreover, if $\cM$ is rigid analytically trivial as a $t$-motive, in the sense of Anderson~\cite[\S 2.3]{And86}, then so is $\rP_n\cM$.
\item[(b)] If $\cN$ is an $\bA$-finite dual $t$-motive, then the $n$-th prolongation $\rP_n \cN$ is also an $\bA$-finite dual $t$-motive for all $n \geqslant 0$.
\item[(c)] If $\bsn = (\bsn_1, \dots, \bsn_r)^\tr$ is a $\KK[t]$-basis of $\cN$ and $\Phi\in \Mat_r(\KK[t])$ is the unique matrix such that $\sigma \bsn = \Phi \bsn$, then
\[
\bsD_n \bsn \assign (D_n\bsn^\tr, D_{n-1}\bsn^\tr, \dots, D_0\bsn^\tr)^\tr \in (\rP_{n}\cN)^{(n+1)r},
\]
with $D_i\bsn \assign (D_i\bsn_1, \dots, D_i\bsn_r)^\tr \in (\rP_{n} \cN)^{r}$ for each $i$, forms a $\KK[t]$-basis of $\rP_n \cN$, and
\[
\sigma \bsD_n \bsn = d_{t,n+1}[\Phi] \cdot \bsD_n \bsn.
\]
Moreover, if $\cN$ is rigid analytically trivial with rigid analytic trivialization $\Psi$, i.e., $\Psi^{(-1)} = \Phi \Psi$, then $\rP_n\cN$ is also rigid analytically trivial with rigid analytic trivialization $d_{t, n+1}[\Psi]$.
\end{enumerate}
\end{theorem}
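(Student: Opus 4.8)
The plan is to give $\rP_n\cM$ and $\rP_n\cN$ concrete models that make every asserted property transparent, and then to reduce the rigid analytic trivialization statements to the fact that $d_{t,n+1}[\,\cdot\,]$ is an $\FF_q$-algebra homomorphism commuting with Frobenius twisting. For the dual $t$-motive in Definition~\ref{D:dualtpro}, I would first identify $\rP_n\cN$ with the base change $\cN \otimes_{\KK[t],\,\iota_n} \KK[t][\epsilon]/(\epsilon^{n+1})$, where $\iota_n : \KK[t]\to\KK[t][\epsilon]/(\epsilon^{n+1})$ is the $\KK$-algebra map $t\mapsto t+\epsilon$, where $D_i\bsh$ corresponds to the $\epsilon^i$-component of $\bsh$, and where $\sigma$ acts as $\sigma_{\cN}\otimes\mathrm{id}$ on $t$ and $\epsilon$ while twisting $\KK$-coefficients. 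Under this identification relation~(b) is exactly the Taylor expansion $a(t+\epsilon)=\sum_j \pd_t^j(a)\,\epsilon^j$ (the defining property of $\pd_t^j$, cf.\ \eqref{E:Taylorseries}), relation~(c) is the compatibility of $\sigma$ with $\iota_n$, and relation~(a) is $\KK$-linearity. Since $\{1,\epsilon,\dots,\epsilon^n\}$ is a free $\iota_n(\KK[t])$-basis of $\KK[t][\epsilon]/(\epsilon^{n+1})$, if $\cN$ is free of rank~$r$ over $\KK[t]$ then $\rP_n\cN$ is free of rank $(n+1)r$ over $\KK[t]$; likewise $\rP_n\cN$ is free over $\KK[\sigma]$ of rank $(n+1)d$ because $\sigma$ respects the $\epsilon$-grading.

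Next, part~(c): compute $\sigma \bsD_n\bsn$ directly from relations~(b) and~(c), namely $\sigma D_i\bsn_j = D_i(\sigma\bsn_j) = \sum_k \sum_{i_1+i_2=i}\pd_t^{i_1}(\Phi_{jk})\,D_{i_2}\bsn_k$, and read this off in the ordering $D_n,\dots,D_0$ of the basis $\bsD_n\bsn$: the resulting matrix is precisely $d_{t,n+1}[\Phi]$. Since $d_{t,n+1}$ is an $\FF_q$-algebra homomorphism (by the product rule for $\pd_t$) and satisfies $d_{t,n+1}[f]^{(m)}=d_{t,n+1}[f^{(m)}]$ (because $\pd_t$ commutes with $f\mapsto f^{(m)}$), a rigid analytic trivialization $\Psi^{(-1)}=\Phi\Psi$ gives
\[
 d_{t,n+1}[\Psi]^{(-1)} = d_{t,n+1}\bigl[\Psi^{(-1)}\bigr] = d_{t,n+1}[\Phi\Psi] = d_{t,n+1}[\Phi]\,d_{t,n+1}[\Psi],
\]
and $\det d_{t,n+1}[\Psi]=(\det\Psi)^{n+1}\in\TT^{\times}$ with entries in $\EE$ (by \cite[Prop.~3.1.3]{ABP04} or the argument of Definition~\ref{D:rat}), so $d_{t,n+1}[\Psi]$ is a rigid analytic trivialization of $\rP_n\cN$. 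Moreover $\det d_{t,n+1}[\Phi] = (\det\Phi)^{n+1} = c^{n+1}(t-\theta)^{(n+1)d}$, so $(t-\theta)^{(n+1)d}\,\rP_n\cN \subseteq \sigma\,\rP_n\cN$; combined with the finite generation over $\KK[\sigma]$ and over $\KK[t]$ above, this shows $\rP_n\cN$ is an $\bA$-finite dual $t$-motive, proving~(b).

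For part~(a) I would run the same tensor model with $\tau$ in place of $\sigma$, giving $\rP_n\cM\cong \cM\otimes_{\KK[t],\,\iota_n}\KK[t][\epsilon]/(\epsilon^{n+1})$, free over $\KK[\tau]$ of rank $(n+1)d$, and free over $\KK[t]$ of rank $(n+1)r$ whenever $\cM$ is abelian. The analogue of the matrix computation gives $\tau\bsD_n\bsm = d_{t,n+1}[\tPhi]\,\bsD_n\bsm$ for $\tau\bsm=\tPhi\bsm$, and then $d_{t,n+1}[\tPsi]$ is a rigid analytic trivialization in Anderson's sense \cite[\S 2.3]{And86} by the identical twist-and-homomorphism argument; the $t$-motive axiom $(t-\theta)^{(n+1)d}\,\rP_n\cM\subseteq\rP_n\cM\,\tau$ follows since the determinant of the matrix of $t-\theta$ acting on $\rP_n\cM$ equals $c'(t-\theta)^{(n+1)d}$, again because $d_{t,n+1}$ is an algebra homomorphism. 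I expect the one genuinely delicate point to be the justification of the tensor-product model itself, i.e.\ showing that the module \emph{presented} by the generators $D_i\bsm$ (resp.\ $D_i\bsh$) and relations~(a)--(c) has no further collapse and coincides with the explicit base change — equivalently that $\bsD_n\bsn$ really is a $\KK[t]$-basis. I would settle this by verifying that the explicit base change satisfies all three relations and carries the required universal mapping property, so that the canonical surjection from the presented module is an isomorphism; the relations are linear and the Taylor identity makes the verification routine, but it is the step that requires care.
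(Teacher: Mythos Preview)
The paper does not prove this theorem; it is quoted from Maurischat~\cite{Maurischat18} without argument, so there is nothing to compare against directly. The nearest related material in the paper is the short proof of Theorem~\ref{T:prolmod} and the content of Remark~\ref{R:basispromotive}, both of which compute the action of $t$ (resp.\ $\tau$) on $D_i\bss_j$ straight from Definitions~\ref{D:promotive}--\ref{D:dualtpro} in exactly the style of your part~(c) calculation of $\sigma D_i\bsn_j$.

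Your plan is sound. The identity $\sigma D_i\bsn_j=\sum_k\sum_{i_1+i_2=i}\pd_t^{i_1}(\Phi_{jk})\,D_{i_2}\bsn_k$ follows immediately from relations~(b) and~(c), and in the ordering $D_n,\dots,D_0$ it is precisely $d_{t,n+1}[\Phi]$; the rigid analytic trivialization then drops out of the fact that $d_{t,n+1}[\cdot]$ is an $\FF_q$-algebra homomorphism commuting with Frobenius twisting, together with $\det d_{t,n+1}[\Psi]=(\det\Psi)^{n+1}\in\TT^\times$. One caution on the tensor model: the literal assignment $D_i\bsh\mapsto \bsh\otimes\epsilon^i$ into $\cN\otimes_{\KK[t],\iota_n}\KK[t][\epsilon]/(\epsilon^{n+1})$ does not respect relation~(b) when the $\KK[t]$-action on the target is taken via the inclusion $\KK[t]\hookrightarrow\KK[t][\epsilon]/(\epsilon^{n+1})$, since $(a\bsh)\otimes\epsilon^i=\bsh\otimes a(t+\epsilon)\epsilon^i$ lands in $\epsilon$-degrees $\geqslant i$ whereas relation~(b) produces degrees $\leqslant i$. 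You will need to keep the two $\KK[t]$-structures on the target straight, or work instead with $\cN\otimes_\KK\KK[\epsilon]/(\epsilon^{n+1})$ equipped with $t\cdot(\bsh\otimes\epsilon^i)=(t\bsh)\otimes\epsilon^i-\bsh\otimes\epsilon^{i-1}$. With that correction your universal-property argument for ``no collapse'' goes through and delivers the freeness statements over $\KK[t]$ and $\KK[\sigma]$ (resp.\ $\KK[\tau]$).
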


\begin{remark}\label{R:basispromotive}
If $\bsm = (\bsm_1, \dots, \bsm_r)^\tr$ is a $\KK[t]$-basis of an abelian $t$-motive $\cM$, then
\begin{equation}\label{E:basisM2}
\tbsD_n \bsm \assign (D_0\bsm^\tr, D_{1}\bsm^\tr, \dots, D_n\bsm^\tr)^\tr \in \cM^{(n+1)r},
\end{equation}
where $D_i\bsm \assign (D_i\bsm_1, \dots, D_i\bsm_r)^\tr \in \cM^r$ for each $i$, forms a $\KK[t]$-basis of $\rP_n \cM$.  Moreover, if $\tau \bsm = \tPhi \bsm$, then
\begin{equation}
\tau \tbsD_n \bsm = \begin{pmatrix}
\tPhi & 0 & \cdots & 0 \\
\pd_t^1(\tPhi) & \tPhi & \ddots & \vdots \\
\vdots & \ddots & \ddots & 0\\
\pd_t^n(\tPhi) & \cdots & \pd_t^1(\tPhi) & \tPhi
\end{pmatrix} \cdot \tbsD_n \bsm.
\end{equation}
Similarly, if $\bss = (\bss_1, \dots, \bss_d)^\tr$ is a $\KK[\tau]$-basis of a $t$-motive $\cM$, then
\begin{equation}\label{E:basisM2std}
\tbsD_n \bss \assign (D_0\bss^\tr, D_{1}\bss^\tr, \dots, D_n\bss^\tr)^\tr \in (\rP_n \cM)^{(n+1)d},
\end{equation}
with $D_i\bss \assign (D_i\bss_1, \dots, D_i\bss_d)^\tr \in \cM^d$ for each $i$, forms a $\KK[\tau]$-basis of $\rP_n \cM$.
\end{remark}

\begin{definition}
Letting $\phi$ be an Anderson $t$-module of dimension $d$ and $\cM_\phi$ its corresponding $t$-motive, we then define the \emph{$n$-th prolongation} $\rP_n \phi$ of $\phi$ to be the Anderson $t$-module associated to the $n$-th prolongation $\rP_n \cM_{\phi}$ of $\cM_\phi$.
\end{definition}

\begin{theorem}[{Maurischat~\cite[Thm.~5.2]{Maurischat18}}] \label{T:prolmod}
Let $\phi: \bA \to \Mat_d(\KK[\tau])$ be an Anderson $t$-module of dimension $d$ such that
\[
\phi_t = \rd \phi_t + B_1\tau + \dots + B_\ell\tau^\ell.
\]
Then the $n$-th prolongation $\rP_n \phi$ of $\phi$ is of dimension $(n+1)d$ and is given by
\[
({\rP_n\phi})_t  = \begin{pmatrix}
    \rd \phi_t &  0    & \cdots    & 0\\
    -\Id_d & \ddots & \ddots&\vdots\\
    \vdots & \ddots&\ddots&0\\
    0& \cdots & -\Id_d & \rd \phi_t
    \end{pmatrix}
    +\diag(B_1)\tau + \dots + \diag(B_\ell) \tau^\ell,
\]
where for each $i$, $\diag(B_i) \assign \diag_{(n+1)d}(B_i, \dots, B_i)$.
\end{theorem}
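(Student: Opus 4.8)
The plan is to compute the $t$-module associated to the prolongation $t$-motive $\rP_n \cM_\phi$ directly, using the explicit basis and Frobenius matrix for $\rP_n \cM_\phi$ furnished by Remark~\ref{R:basispromotive}, and then to read off $({\rP_n\phi})_t$ from the recipe $t\cdot \bsm = \bsm \phi_t$ that defines a $t$-module from its $t$-motive (see \S\ref{subS:motives}). First I would fix the standard $\KK[\tau]$-basis $\bss = (\bss_1,\dots,\bss_d)^\tr$ of $\cM_\phi = \Mat_{1\times d}(\KK[\tau])$, so that $\dim \rP_n\cM_\phi = (n+1)d$ by Remark~\ref{R:basispromotive}: the vectors $D_0\bss,\dots,D_n\bss$ give a $\KK[\tau]$-basis $\tbsD_n\bss$ of $\rP_n\cM_\phi$, and the $t$-module $\rP_n\phi$ is then defined by the matrix representing multiplication by $t$ on this basis. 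Concretely, writing $\bsm = \sum_{u=0}^n D_u\bss$-coordinates, the $t$-module matrix $({\rP_n\phi})_t \in \Mat_{(n+1)d}(\KK[\tau])$ is determined by $t\cdot D_u\bss_j = \sum_{u'} D_{u'}\bss \cdot (({\rP_n\phi})_t)_{\cdots}$, which I must unwind from the defining relations of the prolongation.

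The key computation uses relation (b) of Definition~\ref{D:promotive}. Since $t\cdot \bss_j = \bss_j \phi_t$ inside $\cM_\phi$, applying $D_u$ and relation (b) with $a = t$ (note $\pd_t^0(t) = t$, $\pd_t^1(t) = 1$, and $\pd_t^{i}(t) = 0$ for $i\geqslant 2$) yields
\[
  D_u(t\cdot \bss_j) = t\cdot D_u(\bss_j) + D_{u-1}(\bss_j),
\]
where $D_{-1}(\bss_j) \assign 0$. On the other hand, $t\cdot\bss_j = \bss_j\phi_t = \bss_j(\rd\phi_t) + \bss_j B_1\tau + \cdots + \bss_j B_\ell \tau^\ell$, and since $\rd\phi_t \in \Mat_d(\KK)$ acts $\KK$-linearly while the twisted terms $\bss_j B_k \tau^k$ can be pushed through $D_u$ via relation (c) (as $B_k$ has entries in $\KK$, hence $B_k^{(k)}$ appears, but crucially the index $u$ is unchanged: $D_u(\tau^k \bsm) = \tau^k D_u(\bsm)$ after absorbing scalars), one gets
\[
  D_u(t\cdot\bss_j) = \sum_i (\rd\phi_t)_{ji}\, D_u(\bss_i) + \sum_{k=1}^\ell \sum_i (B_k)_{ji}\,\tau^k D_u(\bss_i).
\]
Equating the two expressions for $D_u(t\cdot\bss_j)$ and solving for $t\cdot D_u(\bss_j)$ shows that on the ordered basis $\tbsD_n\bss$, multiplication by $t$ acts by the block matrix whose diagonal blocks are $\rd\phi_t$, whose block just below the diagonal is $-\Id_d$ (coming from the $-D_{u-1}(\bss_j)$ term moved to the other side), and whose $\tau^k$-part is $\diag_{(n+1)d}(B_k,\dots,B_k)$ — exactly the claimed formula. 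I would then verify that this matrix indeed defines an Anderson $t$-module: its constant term is lower-triangular with $\theta\Id$ on the diagonal (since $\rd\phi_t = \theta\Id_d + N_\phi$ with $N_\phi$ nilpotent), so the whole constant term is $\theta\Id_{(n+1)d} + N'$ with $N'$ nilpotent, and it is an $\FF_q$-algebra homomorphism because it came from a genuine $t$-motive by the equivalence recalled in \S\ref{subS:motives}.

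The main obstacle I anticipate is purely bookkeeping: correctly tracking how relation (c) interacts with the $\KK$-coefficients $B_k$ and making sure no stray Frobenius twist $B_k^{(k)}$ survives in the final matrix — the point being that $D_u$ commutes with $\tau^k$ up to the scalar twist that is already built into how $\tau$ acts on $\Mat_d(\KK)[\tau]$, so the coefficient matrices of the $\tau^k$-terms of $({\rP_n\phi})_t$ are literally $\diag(B_k)$ and not twisted versions. A secondary point requiring care is the \emph{uniqueness} clause: once $\rP_n\phi$ is exhibited as \emph{an} Anderson $t$-module with $t$-motive $\rP_n\cM_\phi$, uniqueness of the associated $t$-module (every $t$-module is determined by its value at $t$, and the $t$-motive determines the $t$-module up to the canonical identification) forces it to equal the displayed formula. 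Since Theorem~\ref{T:prolongprops}(a) already guarantees $\rP_n\cM_\phi$ is an abelian $t$-motive, no extra work is needed to see that the construction is well-defined; the content is entirely in the explicit shape of the matrix, which the computation above delivers.
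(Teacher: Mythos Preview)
Your proposal is correct and follows essentially the same route as the paper's proof: both compute $t\cdot D_u(\bss_j)$ on the $\KK[\tau]$-basis $\tbsD_n\bss$ of $\rP_n\cM_\phi$ by combining the relation $D_u(t\cdot\bss_j) = t\cdot D_u(\bss_j) + D_{u-1}(\bss_j)$ from Definition~\ref{D:promotive}(b) with the identification $t\cdot\bss_j = \bss_j\phi_t$ and relation~(c), and then read off the block matrix. Your anticipated obstacle about stray Frobenius twists is not actually an issue---relation~(c) with $a=1$ gives $D_u(\tau^k\bsm) = \tau^k D_u(\bsm)$ directly, so the $B_k$ pass through untwisted---but your conclusion is right regardless.
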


\begin{proof}
For the convenience of the reader, we include a brief proof.  Let $\{\bss_1, \dots, \bss_d\} \subseteq \Mat_{1 \times d}(\KK[\tau])$ be a $\KK[\tau]$-basis of the $t$-motive $\cM_{\phi}$, and let $\bss \assign (\bss_1, \dots, \bss_d)^{\tr}$. Then by \eqref{E:tactionmotive} we have
\[
t\cdot \bss = (t\cdot \bss_1, \dots, t \cdot \bss_d)^{\tr} = (\bss_1 \phi_t, \dots, \bss_d \phi_t)^{\tr} = \phi_t \in \Mat_d(\KK[\tau]).
\]
Using Definition~\ref{D:promotive}(c), we identify $\rP_n\cM_{\phi}$ with $\Mat_{1 \times (n+1)d}(\KK[\tau])$, and by~\eqref{E:basisM2std} it follows that under this identification
\[
 \tbsD_n \bss = (D_0 \bss^{\tr}, \dots, D_n \bss^{\tr})^{\tr}, \quad D_i\bss^{\tr} = (D_i\bss_1, \dots, D_i\bss_d),
\]
comprise the standard basis vectors of $\Mat_{1 \times (n+1)d}(\KK[\tau])$.  By Definition~\ref{D:promotive}(b), for $1 \leqslant i \leqslant n$, $1 \leqslant j \leqslant d$, we have
\[
t \cdot D_0(\bss_j) = D_0(t\cdot \bss_j) = D_0(\bss_j \phi_t)
\]
and
\[
t\cdot D_i(\bss_j) = D_i(t \cdot \bss_j) - D_{i-1}(\bss_j) = D_i(\bss_j \phi_t) - D_{i-1}(\bss_j).
\]
Using Definition~\ref{D:promotive}(c) and writing in matrix form, we obtain
\[
t \cdot \tbsD_n \bss = \begin{pmatrix}
    \phi_t &  0    & \cdots    & 0\\
    -\Id_d & \ddots & \ddots&\vdots\\
    \vdots & \ddots&\ddots&0\\
    0& \cdots & -\Id_d &\phi_t
    \end{pmatrix} \in \Mat_{(n+1)d}(\KK[\tau]),
\]
from which the desired formula for $(\rP_n \phi)_t$ follows.
\end{proof}

\begin{remark} \label{R:strictlypro}
If $\phi$ is strictly pure (resp.\ almost strictly pure), then Theorem~\ref{T:prolmod} implies immediately that $\rP_n\phi$ is also strictly pure (resp.\ almost strictly pure).
\end{remark}

Given a $t$-module $\phi$, its $n$-th prolongation $\rP_n \phi$ is the Anderson $t$-module associated to the $t$-motive $\rP_n \cM_{\phi}$, as outlined in Theorem~\ref{T:prolmod}.  One might ask how $\rP_n\phi$ is related to the prolongation dual $t$-motive $\rP_n \cN_{\phi}$.  Luckily, as we see in the following proposition, these constructions are compatible and commute: that is, the (dual) $t$-motive of the prolongation of a $t$-module is the same as the prolongation of its associated (dual) $t$-motive.

\begin{proposition}\label{P:promandmofpro}
Let $\phi : \bA \to \Mat_d(\KK[\tau])$ be an Anderson $t$-module, and let $\cM_{\phi}$ and $\cN_{\phi}$ be its associated $t$-motive and dual $t$-motive.  For $n \geqslant 0$, the following hold.
\begin{enumerate}
\item[(a)] $\cM_{\rP_n \phi} = \rP_n \cM_{\phi}$.
\item[(b)] $\cN_{\rP_n \phi} = \rP_n \cN_{\phi}$.
\end{enumerate}
\end{proposition}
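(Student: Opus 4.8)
The statement asserts that the two constructions --- prolongation and passage to the ($t$- or dual $t$-) motive --- commute. Part (a) is essentially already in hand: by Theorem~\ref{T:prolmod} and its proof, the $t$-module $\rP_n\phi$ was \emph{defined} to be the Anderson $t$-module associated to $\rP_n\cM_\phi$, so the equality $\cM_{\rP_n\phi} = \rP_n\cM_\phi$ holds by construction. The only thing to record is that the identification $\cM_{\rP_n\phi} = \Mat_{1\times(n+1)d}(\KK[\tau])$ coming from the $t$-module definition \eqref{E:tactionmotive} agrees with the identification $\rP_n\cM_\phi = \Mat_{1\times(n+1)d}(\KK[\tau])$ coming from Definition~\ref{D:promotive}(c) and the basis $\tbsD_n\bss$ of \eqref{E:basisM2std}; both send $D_i\bss_j$ to the same standard basis vector, and under both the $t$-action is the matrix displayed in the proof of Theorem~\ref{T:prolmod}. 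So (a) is just an unwinding of definitions, which I would state in one or two sentences.

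The substance is part (b). The plan is to compute $\cN_{\rP_n\phi}$ directly from its definition, $\cN_{\rP_n\phi} = \Mat_{1\times(n+1)d}(\KK[\sigma])$ with $t$-action $\bsn \mapsto \bsn\,(\rP_n\phi)_t^*$, and to match it against $\rP_n\cN_\phi$ presented via Definition~\ref{D:dualtpro} with the generators $D_i\bss_j^\vee$ (the standard $\KK[\sigma]$-basis of $\cN_\phi = \Mat_{1\times d}(\KK[\sigma])$). First I would apply the $*$-anti-isomorphism to the explicit formula for $(\rP_n\phi)_t$ in Theorem~\ref{T:prolmod}. Since $*$ reverses order, the block-bidiagonal "connection part" $\bigl(\begin{smallmatrix}\rd\phi_t & & \\ -\Id_d & \ddots & \\ & \ddots & \rd\phi_t\end{smallmatrix}\bigr)$ transposes to put the $-\Id_d$'s on the super-diagonal, while $\diag(B_i)\tau$ becomes $\sigma^i\diag(B_i^{*})$ — i.e.\ one gets exactly $\diag_{(n+1)d}$ of $\phi_t^*$ on the diagonal with $-\Id_d$ blocks one step up. I would then check that this is precisely the matrix by which $\sigma$ (equivalently $t$) acts on $\rP_n\cN_\phi$ in the basis $\tbsD_n\bss^\vee \assign (D_0\bss^{\vee\tr},\dots,D_n\bss^{\vee\tr})^\tr$: using Definition~\ref{D:dualtpro}(b) with $a = t$ one gets $t\cdot D_0\bsn_j = D_0(t\cdot\bsn_j)$ and $t\cdot D_i\bsn_j = D_i(t\cdot\bsn_j) - D_{i-1}(\bsn_j)$ for $i\ge 1$, exactly mirroring the computation in the proof of Theorem~\ref{T:prolmod}; and $t\cdot\bsn_j = \bsn_j\phi_t^*$ by \eqref{E:tactiondual}. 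The two matrices then coincide, and the identity map on $\Mat_{1\times(n+1)d}(\KK[\sigma])$ intertwining the standard basis with $\tbsD_n\bss^\vee$ is the required isomorphism of left $\KK[t,\sigma]$-modules.

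An alternative, perhaps cleaner, route to (b) is functorial: by Theorem~\ref{T:prolongprops}(c), $\rP_n$ on dual $t$-motives is represented in the basis $\bsD_n\bsn$ by $d_{t,n+1}[\Phi]$, where $\sigma\bsn = \Phi\bsn$ and here $\bsn$ is the standard $\KK[\sigma]$-basis of $\cN_\phi$; one checks $d_{t,n+1}[\cdot]$ is compatible with the $*$-operation and with the block structure of $(\rP_n\phi)_t^*$, giving the same matching. Either way, the argument is a bookkeeping comparison of two matrices and two bases.

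The main obstacle I anticipate is purely notational: keeping straight the \emph{ordering conventions} in the two prolongation definitions. Definition~\ref{D:dualtpro} and the basis $\bsD_n\bsn = (D_n\bsn^\tr,\dots,D_0\bsn^\tr)^\tr$ in Theorem~\ref{T:prolongprops}(c) list the $D_i$ in \emph{decreasing} order, whereas the $t$-motive side uses $\tbsD_n\bss$ in \emph{increasing} order (see \eqref{E:basisM2std} and Remark~\ref{R:basispromotive}); and the $*$-anti-isomorphism itself reverses the order of $\tau^i$ into $\sigma^i$. One must choose the basis of $\rP_n\cN_\phi$ (increasing vs.\ decreasing in $i$, and whether to reverse inside each $D_i\bss^\vee$) so that it is literally the standard basis of $\Mat_{1\times(n+1)d}(\KK[\sigma])$ under which $(\rP_n\phi)_t^*$ acts as written. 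Once the indexing is pinned down, each step is a short direct verification and nothing deeper is needed; indeed Maurischat establishes all the hard structural facts (prolongations of abelian/$\bA$-finite motives are again such, bases, rigid analytic trivializations) in Theorem~\ref{T:prolongprops}, which we may invoke freely.
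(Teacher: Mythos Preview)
Your proposal is correct and matches the paper's proof almost exactly: part~(a) is immediate from construction, and part~(b) is the direct verification you describe---computing $t\cdot D_i\bss_j = D_i(\bss_j\phi_t^{*}) - D_{i-1}(\bss_j)$ in $\rP_n\cN_\phi$ and matching against $(\rP_n\phi)_t^{*}$. The ordering concern you flag is resolved just as you anticipate: the paper uses the \emph{decreasing} basis $\bsD_n\bss = (D_n\bss^{\tr},\dots,D_0\bss^{\tr})^{\tr}$ (consistent with Theorem~\ref{T:prolongprops}(c)), under which the $t$-action matrix on $\rP_n\cN_\phi$ is upper-triangular with $-\Id_d$ on the super-diagonal and matches $(\rP_n\phi)_t^{*}$ directly---so your tentative choice of $\tbsD_n\bss^{\vee}$ in increasing order should be swapped for this one.
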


\begin{proof}
Part (a) is immediate from the construction of $\rP_n \phi$ and Theorem~\ref{T:prolmod}.  Part~(b) follows from the argument in Theorem~\ref{T:prolmod} in reverse order.  Indeed as in that proof, if we let $\bss \assign (\bss_1, \dots, \bss_d)^{\tr}$, with $\bss_j \in \Mat_{1\times d} (\KK[\sigma])$ the $j$-th standard basis vector, then
\begin{equation} \label{E:Dsymbols}
\bsD_n \bss = (D_n \bss^{\tr}, \dots, D_0 \bss^{\tr})^{\tr}
\end{equation}
can be identified with the standard basis vectors of $\rP_n \cN_{\phi} = \Mat_{1 \times (n+1)d}(\KK[\sigma])$.  Likewise for $1 \leqslant i \leqslant n$, $1 \leqslant j \leqslant d$ and by the definition of $\rP_n \cN_{\phi}$, we have
\[
  t\cdot D_i(\bss_j) = D_i(t\cdot \bss_j) - D_{i-1}(\bss_j) = D_i(\bss_j \phi_t^{*}) - D_{i-1}(\bss_j)
\]
and
\[
  t\cdot D_0(\bss_j) = D_0(t\cdot \bss_j) = D_0(\bss_j \phi_t^{*}).
\]
But then using Definition~\ref{D:dualtpro}(c) and writing in matrix form, we find
\[
  t\cdot \bsD_n \bss = \begin{pmatrix}
    \phi_t^* &  -\Id_d    & \cdots    & 0\\
    0 & \ddots & \ddots&\vdots\\
    \vdots & \ddots&\ddots& -\Id_d \\
    0& \cdots & 0 &\phi_t^*
    \end{pmatrix} \in \Mat_{(n+1)d}(\KK[\sigma]),
\]
and it follows that the $t$-module associated to $\rP_n \cN_{\phi}$ is $\rP_n \phi$.
\end{proof}

For an Anderson $t$-module $\phi$ and associated $t$-motive $\cM_{\phi}$, the $0$-th prolongation $P_0\cM_{\phi}$ is naturally isomorphic to $\cM_{\phi}$ via the map $D_0 \bsm \mapsto \bsm$.  Additionally, as is shown in \cite[Rem.~3.2]{Maurischat18}, for $0 \leqslant h \leqslant n$ we have an exact sequence of $t$-motives,
\begin{equation}\label{E:motivespro}
0 \to \rP_h\cM_{\phi} \xrightarrow{\bsi} \rP_{n}\cM_{\phi} \xrightarrow{\bspr} \rP_{n-h-1}\cM_{\phi} \to 0,
\end{equation}
where for $\bsm \in \cM_{\phi}$,
\[
\bspr(D_i \bsm) = \begin{cases}
  D_{i-h-1}\bsm & \textup{if $i> h$,} \\
  0 & \textup{if $i \leqslant h$.}
  \end{cases}
\]
In the context of the $\KK[t]$- and $\KK[\tau]$-bases of $\rP_{n} \cM_{\phi}$ in~\eqref{E:basisM2} and~\eqref{E:basisM2std}, we see that $\rP_h \cM_{\phi}$ injects as the first $h+1$ blocks of $\rP_n \cM_{\phi}$ and that $\bspr$ is the projection onto the last $n-h$ blocks of $\rP_n \cM_{\phi}$.  This induces an exact sequence of $t$-modules,
\begin{equation} \label{E:tmodprSES}
0 \to \rP_{n-h-1} \phi \xrightarrow{\bspr^*} \rP_n \phi \xrightarrow{\bsi^*} \rP_h \phi \to 0,
\end{equation}
and relative to the description of $\rP_n$ in Theorem~\ref{T:prolmod}, $\bspr^*$ injects $\rP_{n-h-1}\phi$ into the final $n-h$ blocks of $\rP_n \phi$ and $\bsi^*$ projects onto the first $h+1$ blocks of $\rP_n \phi$.

\begin{remark}
We now turn to the exponential function of $\rP_n \phi$, which turns out to be relatively straightforward. Indeed the exponential function for $\rP_n\phi$ is exactly the same as the exponential function for $\phi^{\oplus (n+1)}$, and so $\rP_n\phi$ and $\phi^{\oplus (n+1)}$ share the same period lattice $\Lambda_{\phi}^{n+1}$.  However, this does not violate the faithfulness of the functor $\phi \mapsto \Lambda_\phi$, as proved by Anderson~\cite[Cor.~2.12.2]{And86}, because the $\bA$-module structures on $\Lambda_{\rP_n\phi}$ and $\Lambda_{\phi}^{n+1}$ are different.
\end{remark}

\begin{proposition} \label{P:Exppro}
The unique exponential function of ${\rP_n\phi}$ is given by
\[
\Exp_{\rP_n\phi}(\bsz) = \begin{pmatrix}
\Exp_\phi(\bsz_0)\\
\vdots \\
\Exp_\phi(\bsz_n)
\end{pmatrix}
\]
where $\bsz = (\bsz_0^{\tr}, \dots, \bsz_n^{\tr})^\tr$ with each $\bsz_i \in \KK^d$. In particular, $\Lambda_{\rP_n\phi} = \Lambda_{\phi}^{n+1}$.
\end{proposition}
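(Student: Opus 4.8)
The plan is to verify directly that the proposed function satisfies the defining functional equation of the exponential of $\rP_n\phi$, and then to invoke the uniqueness statement from \S\ref{subS:modules}. Recall that $\Exp_{\rP_n\phi}$ is characterized as the unique $\FF_q$-linear power series of the form $\sum_{i \geqslant 0} C_i \tau^i$ with $C_0 = \Id_{(n+1)d}$ such that $\Exp_{\rP_n\phi} \cdot \rd(\rP_n\phi)_a = (\rP_n\phi)_a \cdot \Exp_{\rP_n\phi}$ for all $a \in \bA$. So first I would let $E(\bsz)$ denote the block-diagonal-looking candidate $(\Exp_\phi(\bsz_0)^\tr, \dots, \Exp_\phi(\bsz_n)^\tr)^\tr$ and check the three requirements: (i) it is $\FF_q$-linear and entire, which is immediate since each block is; (ii) its linear (degree-zero in $\tau$) coefficient is $\Id_{(n+1)d}$, again clear from $C_0 = \Id_d$ for $\Exp_\phi$; and (iii) the intertwining identity with $(\rP_n\phi)_t$.

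For step (iii) the key is the shape of $(\rP_n\phi)_t$ from Theorem~\ref{T:prolmod}. Write $(\rP_n\phi)_t = \rd(\rP_n\phi)_t + \sum_{i=1}^\ell \diag(B_i)\tau^i$, where $\rd(\rP_n\phi)_t$ is the block lower-bidiagonal matrix with $\rd\phi_t$ on the diagonal and $-\Id_d$ just below. I would compute $E(\rd(\rP_n\phi)_t \cdot \bsz)$ and compare with $(\rP_n\phi)_t(E(\bsz))$ block by block. The $j$-th block of $\rd(\rP_n\phi)_t \cdot \bsz$ is $\rd\phi_t \bsz_j - \bsz_{j-1}$ (with $\bsz_{-1} = \bzero$), so the $j$-th block of $E(\rd(\rP_n\phi)_t\cdot\bsz)$ is $\Exp_\phi(\rd\phi_t\bsz_j - \bsz_{j-1}) = \Exp_\phi(\rd\phi_t \bsz_j) - \Exp_\phi(\bsz_{j-1})$ by $\FF_q$-linearity $= \phi_t(\Exp_\phi(\bsz_j)) - \Exp_\phi(\bsz_{j-1})$ by the functional equation for $\Exp_\phi$. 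On the other side, the $j$-th block of $(\rP_n\phi)_t(E(\bsz))$ is $\rd\phi_t(\text{$j$-th block of }E(\bsz)) - (\text{$(j-1)$-th block of }E(\bsz)) + \sum_i B_i (\text{$j$-th block of }E(\bsz))^{(i)}$, since $\diag(B_i)$ acts blockwise; this equals $\phi_t(\Exp_\phi(\bsz_j)) - \Exp_\phi(\bsz_{j-1})$. The two agree, so $E$ satisfies the $a = t$ case, which suffices since $\bA$ is generated by $t$ over $\FF_q$ and the identity for general $a$ follows formally (or one simply cites that $\Exp$ is determined by the $t$-relation). By the uniqueness in \S\ref{subS:modules}, $E = \Exp_{\rP_n\phi}$.

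Finally, the claim $\Lambda_{\rP_n\phi} = \Lambda_\phi^{n+1}$ is immediate: $\bsz \in \ker \Exp_{\rP_n\phi}$ iff $\Exp_\phi(\bsz_j) = \bzero$ for every $j$, i.e.\ iff each $\bsz_j \in \Lambda_\phi$. I do not anticipate a genuine obstacle here — the only mild subtlety is bookkeeping the off-diagonal $-\Id_d$ terms correctly and making sure the blockwise action of $\diag(B_i)\tau^i$ commutes with selecting blocks, both of which are routine. One may also remark, as the paper's preceding text does, that although $\rP_n\phi$ and $\phi^{\oplus(n+1)}$ share this exponential, they are non-isomorphic because the $\bA$-action (via the $-\Id_d$ subdiagonal) differs.
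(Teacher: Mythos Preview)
Your proposal is correct and follows exactly the same approach as the paper: define the candidate $E(\bsz)$, verify the functional equation $E(\rd(\rP_n\phi)_t\,\bsz) = (\rP_n\phi)_t(E(\bsz))$ from the functional equation for $\Exp_\phi$, and invoke uniqueness. The paper's proof simply declares the verification ``a straightforward consequence of the functional equation $\Exp_{\phi}(\rd\phi_t z) = \phi_t \Exp_\phi(z)$'' without writing out the blockwise computation, which you have supplied.
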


\begin{proof}
If we let $E(\bsz) \assign (\Exp_{\phi}(\bsz_0)^{\tr}, \dots, \Exp_{\phi}(\bsz_n)^{\tr})^{\tr}$, then it is a straightforward consequence of the functional equation $\Exp_{\phi}(\rd\phi_t z) = \phi_t \Exp_\phi(z)$ that
\[
E \bigl( \rd({\rP_n\phi})_t\bsz \bigr) = ({\rP_n\phi})_t E(\bsz).
\]
Thus $E(\bsz) = \Exp_{\rP_n \phi}(\bsz)$.
\end{proof}

For an $\bA$-finite $t$-module $\phi : \bA \to \Mat_d(\KK[\tau])$, we fix a $\KK[t]$-basis $\bsn = (\bsn_1, \dots, \bsn_r)^{\tr}$ of $\cN_{\phi}$ with corresponding $t$-frame $(\iota_{\phi},\Phi)$.  Then as in Theorem~\ref{T:prolongprops}(c),
\[
\bsD_n \bsn \assign (D_n\bsn^\tr, D_{n-1}\bsn^\tr, \dots, D_0\bsn^\tr)^\tr
\]
is a $\KK[t]$-basis of $\rP_n \cN_{\phi}$, and we have the associated $t$-frame $(\iota_{\rP_n \phi},d_{t,n+1}[\Phi])$, where $\iota_{\rP_n \phi} : \Mat_{1\times (n+1)r}(\KK[t]) \to \rP_n \cN_{\phi}$ is defined in the following way.  For $\bsalpha = (\bsalpha_0, \dots, \bsalpha_n)$, where each $\bsalpha_u = (\alpha_{u,0}, \dots, \alpha_{u,n}) \in \Mat_{1\times (n+1)}(\KK[t])$, we have
\begin{equation} \label{E:iotaPnphidef}
\iota_{\rP_n\phi}(\bsalpha) = \sum_{u=0}^n \sum_{i=1}^r \alpha_{u,i} \cdot D_{n-u} \bsn_i.
\end{equation}
If we identify $\rP_n \cN_{\phi}$ with $\Mat_{1 \times (n+1)d}(\KK[\sigma])$, then for each $\bsh \in \cN_{\phi} = \Mat_{1 \times d}(\KK[\sigma])$ and each $0 \leqslant j \leqslant n$,
\begin{equation}\label{E:dualofproandprodual}
  D_{j} (\bsh) = (0, \dots, 0, \bsh, 0, \dots, 0) \in \Mat_{1 \times (n+1)d}(\KK[\sigma]),
\end{equation}
where $\bsh$ appears in block $n-j+1$.  Using this identification, we can extend $\varepsilon_0 \circ \iota_{\rP_n \phi}$ to $\cE_{0,\rP_n} : \Mat_{1 \times (n+1)r}(\TT_{\theta}) \to \KK^{(n+1)d}$ as in \S\ref{subS:tframes}.  Moreover, we have the following relationship between $\cE_{0,\rP_n \phi}$ and $\cE_{0,\phi}$ for $\phi$.

\begin{proposition} \label{P:epsiotapro}
Let $\phi : \bA \to \Mat_d(\KK[\tau])$ be an $\bA$-finite $t$-module with $t$-frame $(\iota_{\phi}, \Phi)$. Then for $\bsalpha = (\bsalpha_0, \dots, \bsalpha_n) \in \Mat_{1\times (n+1)r}(\TT_{\theta})$, we have
\begin{equation}\label{E:e0pro}
  \cE_{0,\rP_n \phi} (\bsalpha) = \sum_{u=0}^{n} \sum_{v=0}^{n-u} D_{n-u-v} \Bigl( \cE_{0,\phi} \bigl( (-1)^{v}\pd_t^v(\bsalpha_u)\bigr)^{\tr} \Bigr)^\tr.
\end{equation}
\end{proposition}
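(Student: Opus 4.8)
The plan is to reduce the identity \eqref{E:e0pro} to the case where $\bsalpha \in \Mat_{1\times(n+1)r}(\KK[t])$ by invoking Lemma~\ref{L:E0}, which characterizes $\cE_{0,\rP_n\phi}$ (respectively $\cE_{0,\phi}$) as the unique bounded $\KK$-linear extension of $\varepsilon_0 \circ \iota_{\rP_n\phi}$ (respectively $\varepsilon_0 \circ \iota_{\phi}$). First I would check that both sides of \eqref{E:e0pro} are bounded $\KK$-linear maps on $\bigl(\Mat_{1\times(n+1)r}(\TT_\theta), \dnorm{\,\cdot\,}_\theta\bigr)$: the left-hand side is bounded by Lemma~\ref{L:E0} applied to $\rP_n\phi$, and the right-hand side is a finite $\KK$-linear combination of compositions of the bounded map $\cE_{0,\phi}$ with the operators $\pd_t^v$ (which are bounded on $\TT_\theta$, since $\dnorm{\pd_t^v(f)}_\theta \leqslant \dnorm{f}_\theta$ up to the usual factor) and with the coordinate projections and block-insertion maps $D_{\bullet}$. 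Once both sides are bounded and $\KK$-linear and $\Mat_{1\times(n+1)r}(\KK[t])$ is dense in $\Mat_{1\times(n+1)r}(\TT_\theta)$, it suffices to verify \eqref{E:e0pro} on polynomial inputs, where both sides are computed by the purely algebraic maps $\varepsilon_0 \circ \iota_{\rP_n\phi}$ and $\varepsilon_0 \circ \iota_{\phi}$.

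For the polynomial case, by $\KK$-linearity I would reduce to a single basis vector: take $\bsalpha$ supported in block $u$ with $\bsalpha_u = f \cdot \be_i$ for $f \in \KK[t]$ and $\be_i$ the $i$-th standard vector, so all other $\bsalpha_{u'} = 0$. Then by the definition \eqref{E:iotaPnphidef} we have $\iota_{\rP_n\phi}(\bsalpha) = f \cdot D_{n-u}\bsn_i$, and using the identification \eqref{E:dualofproandprodual} together with Definition~\ref{D:dualtpro}(b) for the action of $t$ (hence of $f \in \KK[t]$) on $D_{n-u}\bsn_i$, I would expand
\[
  f \cdot D_{n-u}\bsn_i = \sum_{j=0}^{n-u} D_{n-u-j}\bigl( \pd_t^{j}(f) \cdot \bsn_i \bigr),
\]
where the bound $j \leqslant n-u$ arises because $D_k$ with $k < 0$ does not occur (the symbols $D_0,\dots,D_n$ only). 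Applying $\varepsilon_0$ and using \eqref{E:dualofproandprodual} to read off which block each summand lands in, together with the definition of $\varepsilon_0$ on $\cN_\phi$ and on $\rP_n\cN_\phi$, each summand contributes $\varepsilon_0\bigl(\iota_\phi(\pd_t^j(f)\be_i)\bigr) = \cE_{0,\phi}(\pd_t^j(f)\be_i)$ placed into block $n-u-j$ via $D_{n-u-j}$.

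The remaining point is a reindexing check: the right-hand side of \eqref{E:e0pro} reads $\sum_{v=0}^{n-u} D_{n-u-v}\bigl(\cE_{0,\phi}((-1)^v \pd_t^v(\bsalpha_u))^{\tr}\bigr)^{\tr}$, whereas the direct computation above produces $\sum_{j=0}^{n-u} D_{n-u-j}\bigl(\cE_{0,\phi}(\pd_t^j(f)\be_i)\bigr)$ with no sign. I expect this to be the main (and only genuine) obstacle: the sign $(-1)^v$ must be accounted for. I anticipate it comes from the convention relating $\cE_{0,\rP_n\phi}$ to hyperderivatives with respect to $t$ versus $\theta$ — specifically, the blocks of $\cE_{0,\rP_n\phi}$ are indexed so that block $n-u-v$ records a $v$-th $t$-hyperderivative, while the $D$-symbol grading and the identification \eqref{E:dualofproandprodual} run in the opposite order, and comparing with the Taylor/chain-rule identity in Lemma~\ref{L:chainrule}(a) (which carries exactly the factor $(-1)^i$) produces the sign. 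I would therefore trace carefully through the ordering conventions in \eqref{E:basisM2}, \eqref{E:Dsymbols}, \eqref{E:dualofproandprodual}, and \eqref{E:iotaPnphidef}, matching the block index $n-u$ against the symbol $D_{n-u}$ and propagating the hyperderivative shift, to confirm that the two expressions agree including signs. With the sign reconciled, $\KK$-linearity and density then give \eqref{E:e0pro} for all $\bsalpha \in \Mat_{1\times(n+1)r}(\TT_\theta)$, completing the proof.
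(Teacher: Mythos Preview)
Your overall strategy---reduce to polynomial inputs via Lemma~\ref{L:E0}, then compute $\varepsilon_0(\iota_{\rP_n\phi}(f\cdot\be_i))$ block by block---is exactly the paper's approach. But there is a genuine error in the central step, and it is precisely the source of the sign you were unable to account for.

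You write
\[
  f \cdot D_{n-u}\bsn_i = \sum_{j=0}^{n-u} D_{n-u-j}\bigl( \pd_t^{j}(f) \cdot \bsn_i \bigr),
\]
citing Definition~\ref{D:dualtpro}(b). But Definition~\ref{D:dualtpro}(b) says the \emph{opposite}: it expresses $D_{n-u}(f\cdot\bsn_i)$ as $\sum_{j} \pd_t^j(f)\cdot D_{n-u-j}\bsn_i$. The quantity you need is $f\cdot D_{n-u}\bsn_i$ (the $\KK[t]$-module action on a generator), which must be obtained by \emph{inverting} that relation. The inversion is a Pascal-type recursion, and its solution is
\[
  f \cdot D_{j}(\bsh) = \sum_{v=0}^{j} (-1)^{v}\, D_{j-v}\bigl(\pd_t^{v}(f)\cdot \bsh\bigr),
\]
proved by induction on $j$ (this is the paper's identity~\eqref{E:Djrecursion}). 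The $(-1)^v$ arises here, from inverting a unipotent triangular system, not from any ordering convention, block reindexing, or from Lemma~\ref{L:chainrule}. Once you have this corrected formula, applying $\varepsilon_0$ and the block identification~\eqref{E:dualofproandprodual} gives~\eqref{E:e0pro} directly, with no further sign bookkeeping needed.
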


\begin{remark}\label{R:sigmaK}
To ease notation, for the right-hand side of \eqref{E:e0pro} we use the canonical embeddings of $\Mat_{1\times d}(\KK)$ into $\cN_{\phi}=\Mat_{1\times d}(\KK[\sigma])$ and $\Mat_{1\times (n+1)d}(\KK)$ into $\rP_n \cN_{\phi}= \Mat_{1\times (n+1)d}(\KK[\sigma])$, which enables us to apply $D_j$ to elements in $\Mat_{1\times d}(\KK)$ for each $0 \leqslant j \leqslant n$ and obtain elements in $\Mat_{1\times (n+1)d}(\KK)$. 
\end{remark}

\begin{proof}[Proof of Proposition~\ref{P:epsiotapro}]
By Lemma~\ref{L:E0} it suffices to show that $\varepsilon_0(\iota_{\rP_n\phi}(\bsalpha))$ satisfies the desired identity when $\bsalpha \in \Mat_{1\times (n+1)r} (\KK[t])$.  By~\eqref{E:iotaPnphidef}, we have
\begin{equation} \label{E:iotaPnphistart}
\iota_{\rP_n\phi}(\bsalpha) = \sum_{i=1}^r(\alpha_{0,i}\cdot D_{n}\bsn_i + \alpha_{1,i}\cdot D_{n-1}\bsn_i + \dots + \alpha_{n,i}\cdot D_{0}\bsn_i).
\end{equation}
We note that Definition~\ref{D:dualtpro}(b) implies that for any $\alpha \in \KK[t]$, $\bsh \in \cN_{\phi}$, and $j \geqslant 0$,
\begin{equation} \label{E:Djrecursion}
  \alpha \cdot D_j (\bsh) = \sum_{v=0}^j (-1)^v D_{j-v}( \pd_t^{v}(\alpha) \cdot \bsh).
\end{equation}
Indeed, by Definition~\ref{D:dualtpro}(b),
\begin{align*}
  \alpha \cdot D_j(\bsh) &= D_j(\alpha \cdot \bsh) - \sum_{k=1}^j \pd_t^k(\alpha) D_{j-k}(\bsh) \\
  &= D_j(\alpha \cdot \bsh) - \sum_{k=1}^j \sum_{\ell=0}^{j-k} (-1)^{\ell} D_{j-k-\ell} \bigl(
    \pd_t^{\ell}(\pd_t^k(\alpha))\cdot \bsh \bigr), \quad \textup{(induction hypothesis)} \\
  &= D_j(\alpha \cdot \bsh) - \sum_{v=1}^j \sum_{k=1}^v (-1)^{v-k} \binom{v}{k} D_{j-v}( \pd_t^v(\alpha) \cdot \bsh),
  \quad \textup{($v \leftarrow k + \ell$)}
\end{align*}
and \eqref{E:Djrecursion} follows as the inner sum is $(-1)^{v+1}$.  Now for $u=0, \dots, n$, \eqref{E:Djrecursion} then implies
\begin{align}\label{E:iotaproe}
\sum_{i=1}^r \alpha_{u,i}\cdot D_{n-u}\bsn_i &= \sum_{i=1}^r \sum_{v=0}^{n-u} D_{n-u-v} \bigl((-1)^v\pd_t^v(\alpha_{u,i})\cdot \bsn_i \bigr)\\
&=\sum_{v=0}^{n-u} D_{n-u-v}\bigl(\iota_{\phi}((-1)^v\pd_t^v(\bsalpha_u)) \bigr).
\notag
\end{align}
By the embeddings in Remark~\ref{R:sigmaK}, we have $\varepsilon_0(D_j(\bsh)) = D_j(\varepsilon_0(\bsh)^{\tr})^{\tr}$ for each $\bsh \in \Mat_{1\times d}(\KK[\sigma])$. Combining this with~\eqref{E:iotaPnphistart} and \eqref{E:iotaproe}, we find
\[
  \varepsilon_0 \circ \iota_{\rP_n\phi}(\bsalpha) = \sum_{u=0}^{n} \sum_{v=0}^{n-u} D_{n-u-v} \Bigl( \varepsilon_{0}\circ \iota_{\phi} \bigl((-1)^v\pd_t^v(\bsalpha_u) \bigr)^{\tr} \Bigr)^{\tr},
\]
as desired.
\end{proof}

\begin{remark}\label{R:epsiotapro}
Using the calculations in the proof of Proposition~\ref{P:epsiotapro}, we obtain the following similar result for $\bsalpha = (\bsalpha_0, \dots, \bsalpha_n) \in \Mat_{1 \times (n+1)r}(\KK[t])$:
\[
  \cE_{1,\rP_n\phi}(\bsalpha) = \sum_{u=0}^{n} \sum_{v=0}^{n-u} D_{n-u-v}\Bigl( \cE_{1,\phi} \bigl( (-1)^v\pd_t^v(\bsalpha_u) \bigr)^{\tr} \Bigr)^{\tr},
\]
where the map $\cE_{1,\rP_n\phi}$ on the left-hand side corresponds to the one in \S\ref{subS:AndersonThm} for  $\rP_n\cN_{\phi}$, and the one on the right-hand side corresponds to the one for $\cN_{\phi}$.
\end{remark}

For $\bsm \in \Mat_{1\times d}(\power{\KK}{\tau})$ and $0 \leqslant u \leqslant n$, set
\begin{equation} \label{E:Dumotivenotation}
(\bsm)_{u} \assign (0, \dots, 0, \bsm, 0, \dots, 0) \in \Mat_{1\times (n+1)d}(\power{\KK}{\tau}),
\end{equation}
where $\bsm$ occupies entries $du+1$ through $du +d$ and all other entries are $0$. The following proposition exhibits an $L$-basis for the de Rham module $\rH^1_{\DR}(\rP_n \phi,L)$, which is recursively built out of lower level prolongations using~\eqref{E:motivespro}.

\begin{proposition} \label{P:deRhambasispro}
Let $\phi : \bA \to \Mat_d(L[\tau])$ be an abelian Anderson $t$-module of dimension $d$ and rank $r$ defined over a field $L$ with $K \subseteq L \subseteq \KK$, and suppose $\{ \bsdelta_1, \dots, \bsdelta_r\}$ represents an $L$-basis of $\rH_{\DR}^1(\phi, L)$.  For $0 \leqslant u \leqslant n$ and $1 \leqslant v \leqslant r$ define $\bsdelta_{u,v} \in \Der(\rP_n \phi,L)$ by setting
\[
  (\bsdelta_{u,v})_t \assign ((\bsdelta_v)_t)_u \in \Mat_{1\times (n+1)d}(L[\tau]\tau) = \rP_n \cM_{\phi,L} \tau.
\]
Then
\[
\bigcup_{u=0}^{n} \{\bsdelta_{u,1}, \dots, \bsdelta_{u,r}\}
\]
represents an $L$-basis of $\rH_{\DR}^1(\rP_n \phi, L)$.
\end{proposition}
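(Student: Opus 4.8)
The plan is to produce the basis of $\rH_{\DR}^1(\rP_n\phi, L)$ by induction on $n$, using the short exact sequence~\eqref{E:motivespro} together with the contravariant functor $\Der(\,\cdot\,)$ and the dimension count in Proposition~\ref{P:biderprops}(d). The case $n=0$ is simply the hypothesis (identifying $\rP_0\phi$ with $\phi$ via $D_0\bsm\mapsto\bsm$). For the inductive step I would take $h=0$ in~\eqref{E:motivespro}, giving
\[
  0 \to \rP_0\cM_{\phi,L} \xrightarrow{\bsi} \rP_n\cM_{\phi,L} \xrightarrow{\bspr} \rP_{n-1}\cM_{\phi,L} \to 0,
\]
i.e.\ $0 \to \cM_{\phi,L} \to \rP_n\cM_{\phi,L} \to \rP_{n-1}\cM_{\phi,L}\to 0$, where $\bsi$ identifies $\cM_{\phi,L}$ with the block of symbols $D_n\bsm$ (the ``top'' block in the notation of~\eqref{E:basisM2}, which sits at entries $dn+1,\dots,(n+1)d$ under the identification $\rP_n\cM_{\phi,L}=\Mat_{1\times(n+1)d}(L[\tau])$), and $\bspr$ projects onto the first $n$ blocks $D_0\bsm,\dots,D_{n-1}\bsm$. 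Applying $\Der(\,\cdot\,)$ and noting these maps send (strictly) inner biderivations to (strictly) inner ones, I get an induced sequence of de Rham modules; since $\dim_L\rH_{\DR}^1(\rP_n\phi,L)=(n+1)r$ by Proposition~\ref{P:biderprops}(d) (the rank of $\rP_n\phi$ is $(n+1)r$ by Theorem~\ref{T:prolongprops}(c) and Proposition~\ref{P:promandmofpro}(b), its dimension is $(n+1)d$, and its nilpotent part $\rN_{\rP_n\phi}$ has rank $\rank\rN_\phi + nd$ by the shape of $\rd(\rP_n\phi)_t$ in Theorem~\ref{T:prolmod}), and $(n+1)r = r + nr$, it suffices to show the $(n+1)r$ classes $\{\bsdelta_{u,v}\}$ are $L$-linearly independent in $\rH_{\DR}^1(\rP_n\phi,L)$.

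The cleanest route to independence avoids worrying about exactness of the induced de Rham sequence on the right and instead argues directly with Proposition~\ref{P:biderprops}(b). Suppose $\sum_{u,v} c_{u,v}\bsdelta_{u,v}$ is strictly inner, i.e.\ its value at $t$ lies in $(t-\theta)\Mat_{1\times(n+1)d}(L[\tau]\tau)$. Writing $(t-\theta)$ acting on $\rP_n\cM_{\phi,L}$ via~\eqref{E:tactionmotive} and Theorem~\ref{T:prolmod}, the action of $(t-\theta) = t\cdot -\theta$ on a block-decomposed element $(\bsm_0,\dots,\bsm_n)$ is block-lower-triangular: the $u$-th output block is $(t-\theta)\cdot\bsm_u$ (inside $\cM_{\phi,L}$) plus $-\bsm_{u-1}$ coming from the $-\Id_d$ sub-diagonal in $(\rP_n\phi)_t$. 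So $(t-\theta)(t-\theta)^{-1}$-type cancellation propagates from the top block downward. Concretely, the value $\sum_{u,v}c_{u,v}((\bsdelta_v)_t)_u = ((\sum_v c_{0,v}(\bsdelta_v)_t),\dots,(\sum_v c_{n,v}(\bsdelta_v)_t))$; requiring this to be $(t-\theta)\cdot(\text{something in }\rP_n\cM_{\phi,L}\tau)$ forces, block by block starting from $u=n$, that each $\sum_v c_{n,v}(\bsdelta_v)_t \equiv 0$ modulo $(t-\theta)\cM_{\phi,L}\tau$ (the top block of the image of $(t-\theta)$ has no contribution from a lower block), hence $c_{n,v}=0$ for all $v$ by the assumed independence of $\{\bsdelta_v\}$ modulo $\Der_{\si}(\phi,L)$, i.e.\ in $\rH_{\DR}^1(\phi,L)$; then descending inductively, the image of $(t-\theta)$ in block $u$ involves only $(t-\theta)\cM_{\phi,L}\tau$ and a copy of the block-$(u+1)$ coefficient, which is already known to be a strictly inner combination, so again $\sum_v c_{u,v}(\bsdelta_v)_t \equiv 0 \pmod{(t-\theta)\cM_{\phi,L}\tau}$ and $c_{u,v}=0$. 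This shows the classes are linearly independent in $\rH_{\DR}^1(\rP_n\phi,L)$.

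The main obstacle is bookkeeping the block-triangular structure of the $(t-\theta)$-action on $\rP_n\cM_{\phi,L}$ and verifying carefully that the descent in the last paragraph really does isolate each block — in particular that the ``something'' whose $(t-\theta)$-multiple we are matching can be taken block-compatibly, so that the obstruction in block $u$ genuinely reduces to a statement purely about $\rH_{\DR}^1(\phi,L)$. This is essentially the assertion that $\bsi^\dagger\colon \rH_{\DR}^1(\rP_n\phi,L)\to\rH_{\DR}^1(\rP_0\phi,L)=\rH_{\DR}^1(\phi,L)$ is surjective and $\bspr^\dagger$ is injective on de Rham classes (contravariance reverses~\eqref{E:motivespro}), combined with a dimension count; I would in fact organize the write-up around that exact sequence of de Rham modules
\[
  0 \to \rH_{\DR}^1(\rP_{n-1}\phi,L) \xrightarrow{\bspr^\dagger} \rH_{\DR}^1(\rP_n\phi,L) \xrightarrow{\bsi^\dagger} \rH_{\DR}^1(\phi,L) \to 0,
\]
deducing exactness from Proposition~\ref{P:biderprops}(a)--(b) applied to $\phi$ and to $\rP_{n}\phi$, then observing that $\{\bsdelta_{u,v}: 0\le u\le n-1\}$ maps under $\bspr^\dagger$ (with suitable re-indexing of which block is which) to the inductively-assumed basis of $\rH_{\DR}^1(\rP_{n-1}\phi,L)$ while $\{\bsdelta_{n,v}\}$ maps under $\bsi^\dagger$ to the given basis $\{\bsdelta_v\}$ of $\rH_{\DR}^1(\phi,L)$, so that the union is a basis. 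Everything else — identifying $(\bsdelta_{u,v})_t$ as a legitimate element of $\rP_n\cM_{\phi,L}\tau$, checking the functoriality of $\Der$ along $\bsi,\bspr$ — is routine.
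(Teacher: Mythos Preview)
Your strategy---induction on $n$ via a short exact sequence of de Rham modules---is exactly what the paper does, but you have the directions of the maps confused throughout. The identification $\rH_{\DR}^1(\psi,L) \cong \cM_{\psi,L}\tau/(t-\theta)\cM_{\psi,L}\tau$ from Proposition~\ref{P:biderprops}(a)--(b) is \emph{covariant} in the $t$-motive; there is no contravariance to invoke. Hence~\eqref{E:motivespro} with $h=0$ induces
\[
  0 \to \rH_{\DR}^1(\phi,L) \xrightarrow{\,\bsi_*\,} \rH_{\DR}^1(\rP_n\phi,L) \xrightarrow{\,\bspr_*\,} \rH_{\DR}^1(\rP_{n-1}\phi,L) \to 0,
\]
not your reversed sequence. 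Concretely, $\bsi$ embeds $\rP_0\cM_{\phi,L}$ into the $D_0$ block (entries $1,\dots,d$), not $D_n$; one then has $\bsi_*(\bsdelta_v) = \bsdelta_{0,v}$ and $\bspr_*(\bsdelta_{u,v}) = \bsdelta_{u-1,v}$ for $u\geqslant 1$, and the induction runs as you intend. (Exactness on the left follows by dimension count: right-exactness is automatic, and $\dim_L\rH_{\DR}^1(\rP_n\phi,L)=(n+1)r$ follows directly from Proposition~\ref{P:biderprops}(d) and the rank of $\rP_n\phi$. Your formula $\rank\rN_{\rP_n\phi}=\rank\rN_\phi+nd$ is in fact wrong in general---see~\eqref{E:basisNpro1}---but it is not needed.)

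Your direct block-by-block argument has the matching orientation error: for a row vector $\bsx=(\bsx_0,\dots,\bsx_n)$, right-multiplication by the block lower-triangular $(\rP_n\phi)_t$ gives $((t-\theta)\cdot\bsx)_u = (t-\theta)\cdot\bsx_u - \bsx_{u+1}$, not $-\bsx_{u-1}$. So block~$n$ is indeed the pure block and is the correct starting point, but descending to block~$n-1$ picks up $-\bsx_n$, and you need injectivity of $(t-\theta)$ on $\cM_{\phi,L}$ (immediate since $\phi$ is abelian, hence $\cM_{\phi,L}$ is free over $L[t]$) to conclude $\bsx_n=0$ before continuing. Your phrase ``already known to be a strictly inner combination'' does not cover this. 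With that one addition the direct argument also goes through.
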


\begin{proof}
We note from Proposition~\ref{P:biderprops} that
\[
\rH_{\DR}^1(\rP_n \phi, L) = \Der(\rP_n \phi, L)/\Der_{\si}(\rP_n \phi, L) \cong  \rP_n\cM_{\phi,L} \tau/(t-\theta) \rP_n\cM_{\phi,L} \tau,
\]
and so~\eqref{E:motivespro} induces an exact sequence on de Rham modules for $0 \leqslant h \leqslant n-1$,
\begin{equation} \label{E:DRSESpro}
0 \to \rH_{\DR}^1(\rP_h \phi,L) \xrightarrow{\bsi_*} \rH_{\DR}^1(\rP_n \phi,L) \xrightarrow{\bspr_*} \rH_{\DR}^1(\rP_{n-h-1}\phi, L) \to 0,
\end{equation}
where $\bspr_*$ is induced by
\[
(\bspr_* (\bsdelta))_t \assign \bspr(\bsdelta_t).
\]
It follows that an $L$-basis of $\rH_{\DR}^1(\rP_n \phi, L)$ is obtained from the union of an $L$-basis of $\bsi_*(\rH_{\DR}^1(\rP_h \phi, L))$ and a preimage of an $L$-basis of $\rH_{\DR}^1(\rP_{n-1-h} \phi, L)$ under $\bspr_*$.

We now proceed by induction on $n$.  When $n=0$, there is nothing to prove as $\rP_0 \cM_{\phi,L}$ is naturally isomorphic to $\cM_{\phi,L}$.  When $n=1$, we take $h=0$ in \eqref{E:DRSESpro}, and we find for $1 \leqslant v \leqslant r$,
\[
  \bsi_*(\bsdelta_v) = \bsdelta_{0,v}, \quad
  \bspr_*(\bsdelta_{1,v}) = \bsdelta_v.
\]
Thus, $\{\bsdelta_{0,v},\bsdelta_{1,v} \}_{v=1}^r$ consists of the union of an $L$-basis of $\bsi_*(H_{\DR}^1(\rP_0 \phi, L)$ and a preimage of an $L$-basis of $H_{\DR}^1(\rP_0\phi, L)$ under $\bspr_*$, and the result follows.

The general induction step is similar.  Suppose that the result is true for the $n-1$ case, and let $h=0$ in \eqref{E:DRSESpro}.  Then for $1 \leqslant u \leqslant n$, $1 \leqslant v \leqslant r$, the induction hypothesis implies
\[
  \bsi_*(\bsdelta_v) = \bsdelta_{0,v}, \quad \bspr_*(\bsdelta_{u,v}) = \bsdelta_{u-1,v}.
\]
Therefore, $\{ \bsdelta_{u,v} \}_{0 \leqslant u \leqslant n,\, 1 \leqslant v \leqslant r}$ represents an $L$-basis of $\rH^1_{\DR}(\rP_n\phi,L)$.
\end{proof}

\begin{remark} \label{R:innernotpreserved}
A word of caution is warranted here.  For $1 \leqslant u \leqslant n$, the $L$-linear map
\[
  [\bsdelta_v] \mapsto [\bsdelta_{u,v}] : \rH_{\DR}^1(\phi,L) \to \rH_{\DR}^1(\rP_n\phi,L)
\]
defined in Proposition~\ref{P:deRhambasispro} does not arise from well-defined $L$-linear maps $\Der_0(\phi,L) \to \Der_0(\rP_n\phi,L)$ or $\rH_{\sr}^1(\phi,L) \to \rH_{\sr}^1(\rP_n\phi,L)$.  Indeed even if $\bsdelta_v$ is an inner biderivation for $\phi$, then $\bsdelta_{u,v}$ need not be inner for~$\rP_n\phi$.  One reason is that this assignment is not induced by a $t$-module morphism.  When $u=0$, the map does preserve these subspaces, as it is induced by the morphism $\rP_n \phi \to \phi$ which projects onto the first block.
\end{remark}

\subsection{Hyperderivatives of logarithms and quasi-logarithms via prolongations} \label{subS:logquasilog}
In this section, we study how hyperderivatives of entries of periods, quasi-periods, logarithms and quasi-logarithms of abelian and $\bA$-finite Anderson $t$-modules can be obtained as the entries of the period matrix and other solutions of Frobenius difference equations attached to prolongations~\cite{Maurischat18}. Let $\phi : \bA \to \Mat_d(K^{\sep}[\tau])$ be an abelian and $\bA$-finite Anderson $t$-module of dimension $d$ and rank $r$ defined over $K^{\sep}$. We note that if $\bsy \in (K_{\infty}^{\sep})^d$, then $\Exp_{\phi}(\bsy) \in (K_{\infty}^{\sep})^d$, as the coefficients of $\Exp_{\phi}$ and the entries of $\bsy$ all lie in a finite separable extension of $K_{\infty}$, which is necessarily complete.  In particular, if we take the Anderson generating function $\cG_{\bsy} \in \TT^d$ as in~\eqref{E:AGFdef}, we have
\[
  \cG_{\bsy} \in \bigl(\power{K_{\infty}^{\sep}}{t}\bigr)^d \cap \TT^d,
\]
and thus for any $j \geqslant 0$, we can find hyperderivatives $\pd_{t}^j(\cG_{\bsy})$ and $\pd_{\theta}^{j}(\cG_{\bsy})$. Recall further from Lemma~\ref{L:separable} that if $\bsy \in \KK^d$ satisfies $\Exp_{\phi}(\bsy) \in (K_{\infty}^{\sep})^d$, then
\[
  \bsy \in (K_{\infty}^{\sep})^d, \quad \rF_{\bsdelta}(\bsy) \in K_{\infty}^{\sep},\ \forall\,\bsdelta \in \Der(\phi).
\]
We will make use of this throughout so that we can take hyperderivatives with respect to~$\theta$ of all of these quantities associated to~$\phi$.  The first main theorem of this section is the following.  It shows that the $\oK$-linear span of hyperderivatives \emph{with respect to $\theta$} of all quasi-logarithms of $\bsalpha$ associated to~$\bsy$, together with~$1$, is the same as the $\oK$-linear span of prescribed combinations of hyperderivatives \emph{with respect to $t$} of Anderson generating functions, evaluated at $t=\theta$, together with~$1$.

\begin{theorem} \label{T:rathyperquasi0}
Let $\phi$ be a uniformizable, abelian, and $\bA$-finite Anderson $t$-module defined over $K^\sep$ of rank $r$ and dimension $d$. Let $\bsy \in (K_{\infty}^{\sep})^d$ satisfy $\Exp_\phi(\bsy) \in (K^\sep)^d$, and let $\{\bsdelta_1, \dots, \bsdelta_r\}$ represent a $\oK$-basis of $\rH^1_{\DR}(\phi,\oK)$ defined over $K^\sep$.  For $n \geqslant 0$, the following hold.
\begin{enumerate}
\item[(a)] For the Anderson generating function $\cG_{\bsy} \in \TT^d$ for $\phi$ associated to $\bsy$, we have
\[
\Span{\oK} \biggl( \{1\} \cup \bigcup_{u=0}^n \bigcup_{v=1}^r \Bigl\{ \twistop{(\bsdelta_v)_t}{\pd_t^u(\cG_{\bsy})} \big|_{t=\theta} \Bigr\} \biggr)
= \Span{\oK} \biggl( \{1\} \cup \bigcup_{u=0}^n \bigcup_{v=1}^r \Bigl\{ \pd_\theta^u \bigl( \rF_{\bsdelta_v}(\bsy) \bigr) \Bigr\} \biggr).
\]
\item[(b)] Moreover, if $\{\bslambda_1, \dots, \bslambda_r\}$ is an $\bA$-basis of $\Lambda_\phi$, then
\[
\Span{\oK} \Bigl( d_{t, n+1}[\Psi]^{-1}\big|_{t=\theta} \Bigr)
= \Span{\oK} \biggl( \bigcup_{u=0}^n \bigcup_{v=1}^{r} \bigcup_{\ell=1}^r \Bigl\{ \pd_\theta^u \bigl(\rF_{\bsdelta_v}(\bslambda_\ell) \bigr) \Bigr\} \biggr),
\]
where $(\iota, \Phi,\Psi)$ is a rigid analytic trivialization for $\phi$.
\end{enumerate}
\end{theorem}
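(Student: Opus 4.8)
\textbf{Proof proposal for Theorem~\ref{T:rathyperquasi0}.}

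The plan is to route both parts through the prolongation $t$-module $\rP_n\phi$, for which we have explicit control of the relevant objects from \S\ref{subS:Prolongations}. First I would fix the Anderson generating function $\cG_{\bsy} \in \TT^d$ for $\phi$ associated to $\bsy$, and observe that by Proposition~\ref{P:Exppro} the vector $\bigl((\bsy)_0\bigr) \in (K_\infty^\sep)^{(n+1)d}$ with $\bsy$ in its first block and zeros elsewhere satisfies $\Exp_{\rP_n\phi}\bigl((\bsy)_0\bigr) = \bigl((\Exp_\phi(\bsy))_0\bigr) \in (K^\sep)^{(n+1)d}$. The key computation, which I expect to come out of Corollary~\ref{C:AGFtwistedmult} together with the block structure of $(\rP_n\phi)_t$ from Theorem~\ref{T:prolmod}, is that the Anderson generating function of $\rP_n\phi$ associated to $\bigl((\bsy)_0\bigr)$ is built from hyperderivatives \emph{with respect to $t$} of $\cG_{\bsy}$, namely its blocks are (up to signs and a triangular arrangement) $\cG_{\bsy}, \pd_t^1(\cG_{\bsy}), \dots, \pd_t^n(\cG_{\bsy})$; this mirrors the statement for $\Upsilon_{\rP_n\phi}$ in~\S\ref{subS:RATintro} and the formula $\bsg_{(\bsy)_0} = (\bsg_{\bsy},\pd_t^1(\bsg_{\bsy}),\dots,\pd_t^n(\bsg_{\bsy}))$ quoted there. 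Then for the de Rham basis $\{\bsdelta_{u,v}\}$ of $\rH_{\DR}^1(\rP_n\phi,\oK)$ supplied by Proposition~\ref{P:deRhambasispro}, applying Proposition~\ref{P:quasigen}(a) to $\rP_n\phi$ gives $\twistop{(\bsdelta_{u,v})_t}{\cG_{((\bsy)_0)}}|_{t=\theta} = \rF_{\bsdelta_{u,v}}\bigl((\bsy)_0\bigr)$, and unwinding the block structure expresses the left side as a $\oK$-linear combination of the quantities $\twistop{(\bsdelta_v)_t}{\pd_t^{u'}(\cG_{\bsy})}|_{t=\theta}$ for $u' \leqslant u$.

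Next I would compute the right-hand side $\rF_{\bsdelta_{u,v}}\bigl((\bsy)_0\bigr)$ in terms of hyperderivatives \emph{with respect to $\theta$}. The mechanism here is precisely the Brownawell--Denis trick described in~\S\ref{subS:hyperintro}: after replacing $\bsdelta_1,\dots,\bsdelta_r$ by biderivations $\bsdelta_1^s,\dots,\bsdelta_r^s$ in the same de Rham classes with $\deg_\tau (\bsdelta_j^s)_t \geqslant s$ where $q^s > n$, the $p$-th power rule for hyperderivatives (Proposition~\ref{P:Hyperprops}(b)) together with the chain rule (Lemma~\ref{L:chainrule}) shows that $\pd_\theta^u\bigl(\rF_{\bsdelta_v^s}(\bsy)\bigr)$ equals $\sum_{i=0}^u \twistop{(\bsdelta_v^s)_t^{[u-i]}}{\pd_t^i(\cG_{\bsy})}\big|_{t=\theta}$, which is exactly the formula displayed at the end of~\S\ref{subS:hyperintro}. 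Since $\rF_{\bsdelta^{(\bsu)}}$ differs from $\rF_{\bsdelta}$ for biderivations in the same strictly-inner class only by a $\oK$-linear term involving $\bsy$ and $1$ (via~\eqref{E:quasiinner}), and since $\deg_\tau$-shifts preserve de Rham classes, the passage $\bsdelta_v \leftrightarrow \bsdelta_v^s$ changes the relevant $\oK$-span only by at most the span of $\{1\}\cup\{$coordinates of $\bsy\}$; these coordinates in turn lie in the span of the quasi-logarithm quantities already present (a point I would justify using Proposition~\ref{P:quasiperspans} or Theorem~\ref{T:quasispancomplete}, noting that hyperderivatives of $\bsy$ are needed and handled by Theorem~\ref{T:hypernontract}/\ref{T:hypernontractgen} if one wants a sharp statement, though for part~(a) the bare $\{1\}$ suffices to absorb the discrepancy). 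Triangularity of the system $\pd_\theta^u(\rF_{\bsdelta_v}(\bsy)) = \twistop{(\bsdelta_v)_t}{\pd_t^u(\cG_{\bsy})}|_{t=\theta} + (\text{lower }u)$ then gives the equality of spans in~(a) by an induction on $u$.

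For part~(b), the strategy is to specialize $\bsy = \bslambda_\ell$ for an $\bA$-basis $\{\bslambda_1,\dots,\bslambda_r\}$ of $\Lambda_\phi$, so that $\Exp_\phi(\bslambda_\ell) = 0$ and the inhomogeneous terms drop out by Corollary~\ref{C:AGFtwistedmult}(b). By Theorem~\ref{T:prolongprops}(c), $\rP_n\phi$ has rigid analytic trivialization $d_{t,n+1}[\Psi]$, whose inverse is $d_{t,n+1}[\Psi^{-1}]$ since $d_{t,n+1}$ is an algebra homomorphism; by Corollary~\ref{C:perquasiperspans}(b) applied to $\rP_n\phi$ (with the $\oK[t]$-bases $\bsD_n\bsn$ of $\cN_{\rP_n\phi} = \rP_n\cN_\phi$ from Proposition~\ref{P:promandmofpro}(b) and the de Rham basis from Proposition~\ref{P:deRhambasispro}), the $\oK$-span of $d_{t,n+1}[\Psi]^{-1}|_{t=\theta}$ equals $\Span{\oK}\bigl(\rF_{\bsdelta_{u,v}}((\bslambda_\ell)_0) : 0\leqslant u\leqslant n,\ 1\leqslant v\leqslant r,\ 1\leqslant\ell\leqslant r\bigr)$. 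Then the identical computation as in part~(a), now with no inhomogeneous terms, identifies this with $\Span{\oK}\bigl(\pd_\theta^u(\rF_{\bsdelta_v}(\bslambda_\ell))\bigr)$. I expect the main obstacle to be the careful bookkeeping in matching the block-triangular structure of $d_{t,n+1}[\Psi]^{-1}$ and of $\Upsilon_{\rP_n\phi}$ against the index ordering in Proposition~\ref{P:deRhambasispro} and the sign/ordering conventions in Proposition~\ref{P:epsiotapro} and Remark~\ref{R:epsiotapro}; once the dictionary between ``$D_j$ in block $n-j+1$'' and ``$\pd_t^v$ in the $v$-th superdiagonal'' is set up consistently, everything reduces to the chain rule (Lemma~\ref{L:chainrule}) and the $p$-th power rule, plus the triangular linear-algebra induction.
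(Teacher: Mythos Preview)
Your proposal is correct and follows essentially the same route as the paper: route through $\rP_n\phi$, use Proposition~\ref{P:qpfunctionpro} to identify the Anderson generating function of $(\bsy)_0$ with the vector of $t$-hyperderivatives of $\cG_{\bsy}$, apply the Brownawell--Denis degree-raising trick (Lemma~\ref{L:degdeRham}) so the $p$-th power rule and Lemma~\ref{L:chainrule} convert $t$-derivatives at $t=\theta$ into $\theta$-derivatives, and for (b) invoke Corollary~\ref{C:perquasiperspans}(b) for $\rP_n\phi$ with $\Psi_{\rP_n\phi} = d_{t,n+1}[\Psi]$.

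Two small points to tighten. First, the passage $\bsdelta_v \leftrightarrow \bsdelta_v^s$ is cleaner than you indicate: by \eqref{eq:F0} the difference $\rF_{\bsdelta_v}(\bsy) - \rF_{\bsdelta_v^s}(\bsy)$ is a $K^{\sep}$-linear combination of powers of entries of $\Exp_\phi(\bsy) \in (K^{\sep})^d$, hence lies in $\oK$, and the same holds after applying $\pd_\theta^u$; so $\{1\}$ alone absorbs the discrepancy and no appeal to coordinates of $\bsy$ is needed. Second, do not cite Theorems~\ref{T:hypernontract} or~\ref{T:hypernontractgen} even parenthetically here, since both are proved after and rely on Theorem~\ref{T:rathyperquasi0}; as just noted, you do not need them.
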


\begin{remark} \label{R:quasikbarspans}
As we see in Proposition~\ref{P:quasiperspans} and Theorem~\ref{T:quasispancomplete} (see Remarks~\ref{R:tractable} and~\ref{R:tractablequasi}), if $\rd\phi_t$ is in Jordan normal form, then the values in the above theorem account for all derivatives with respect to $\theta$ (up to the $n$-th derivative) of tractable coordinates of~$\bsy$ and strictly reduced quasi-logarithms associated to~$\bsy$.  The reader may wonder if it is possible to also account for the non-tractable coordinates of~$\bsy$.  This will be the subject of the second main theorem of this section (Theorem~\ref{T:hypernontract}), in the case of almost strictly pure $t$-modules, and for general $t$-modules in the next section (Theorem~\ref{T:hypernontractgen}).
\end{remark}

To prove Theorem~\ref{T:rathyperquasi0} we require some preliminary results.  The following lemma, inspired by a result of Brownawell and Denis~\cite[Lem.~5.1]{BrownawellDenis00}, shows that for any $s \in \NN$, every class in the de Rham module $\rH_{\DR}^1(\phi,K^{\sep})$ has a representative $\bsdelta^s$ such that $\deg_\tau(\bsdelta^s_t) \geqslant s$.

\begin{lemma}[{cf.~\cite[Lem.~5.1]{BrownawellDenis00}}] \label{L:degdeRham}
For every $\bsdelta \in \Der(\phi,K^{\sep})$ and $s \in \NN$, there is $\bsdelta^s \in \Der(\phi,K^{\sep})$ such that \textup{(a)} $\deg_\tau(\bsdelta^s_t) \geqslant s$, and \textup{(b)} $\bsdelta$ and $\bsdelta^s$ are in the same class in $\rH_{{\DR}}^1(\phi, K^{\sep})$.
\end{lemma}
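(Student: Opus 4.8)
The plan is to show that any $\phi$-biderivation can be modified by a strictly inner biderivation so that its value $\bsdelta_t$ has $\tau$-degree at least $s$, without changing its de Rham class. Recall from Proposition~\ref{P:biderprops}(b) that, under the isomorphism $\bsm \mapsto \bsdelta_{\bsm}$, the strictly inner biderivations $\Der_{\si}(\phi,K^{\sep})$ correspond precisely to $(t-\theta)\Mat_{1\times d}(K^{\sep}[\tau]\tau)$. So it suffices to prove the following purely algebraic statement: given any $\bsm \in \Mat_{1\times d}(K^{\sep}[\tau]\tau)$ and any $s \in \NN$, there exists $\bsu \in \Mat_{1\times d}(K^{\sep}[\tau]\tau)$ such that $\bsm + (t-\theta)\cdot \bsu = \bsm - \bsu\phi_t + \theta\bsu$ has $\tau$-degree $\geqslant s$. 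Here I am using that the $t$-action on $\cM_{\phi}$ is right multiplication by $\phi_t$, so $(t-\theta)\cdot\bsu = \bsu\phi_t - \theta\bsu$.

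First I would set up an inductive ``degree-raising'' step. Write $\phi_t = \rd\phi_t + B_1\tau + \cdots + B_{\ell}\tau^{\ell}$ with $B_{\ell}\neq 0$; since $\phi$ is abelian and $\bA$-finite, one can additionally arrange (or simply use) that the top coefficient behaves well — but in fact for a Drinfeld module $B_{\ell} = \kappa_r$ is a nonzero scalar, which is the case of interest for Lemma~\ref{L:degdeRham} in \S\ref{subS:BD}, and in that setting the argument is cleanest. Suppose $\bsm$ has $\tau$-degree $e < s$, say $\bsm = \cdots + \bsc_e\tau^e$ with $\bsc_e \in \Mat_{1\times d}(K^{\sep})$, $\bsc_e \neq 0$. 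Choose $\bsu = \bsv\tau^{e-\ell+1}$ for a suitable $\bsv \in \Mat_{1\times d}(K^{\sep})$ (valid since $e \geqslant \ell$ once $e$ is large enough; the small-$e$ cases are handled separately or absorbed into the base case). Then $\bsu\phi_t$ has leading term $\bsv B_{\ell}^{(e-\ell+1)}\tau^{e+1}$ in degree $e+1$ and contributes $\bsv (\rd\phi_t)^{(e-\ell+1)}\tau^{e-\ell+1} + \cdots$ in lower degrees; in particular the coefficient of $\tau^e$ in $(t-\theta)\cdot\bsu = \bsu\phi_t - \theta\bsu$ is $\bsv B_{\ell-1}^{(e-\ell+1)} - \text{(nothing of degree }e\text{ from the }\theta\bsu\text{ part if }e-\ell+1<e)$, plus one must track the $\rd\phi_t$ contribution. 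The key point is that I can choose $\bsv$ to kill the degree-$e$ coefficient $\bsc_e$ of the new $\bsm + (t-\theta)\bsu$, at the cost of introducing a new nonzero top coefficient in degree $e+1$. Iterating this $s - e$ times (each step strictly raising the minimum available degree while the class is unchanged, since we only add elements of $(t-\theta)\Mat_{1\times d}(K^{\sep}[\tau]\tau) = \Der_{\si}$), I reach a representative $\bsdelta^s$ with $\deg_{\tau}(\bsdelta^s_t) \geqslant s$. This proves (a), and (b) holds by construction since every modification lies in $\Der_{\si}(\phi,K^{\sep})$.

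The main obstacle is the bookkeeping in the degree-raising step: the term $\bsu\phi_t$ with $\bsu$ of degree $k$ produces contributions in degrees $k, k+1, \dots, k+\ell$, so zeroing out the coefficient in one degree interacts with neighbouring degrees, and one must verify that the procedure genuinely terminates rather than merely shuffling a nonzero coefficient up and down. The clean resolution is to always eliminate the \emph{lowest} nonzero coefficient below the target degree $s$ using a $\bsu$ whose action raises that coefficient's position, so that the ``support'' of $\bsm$ in degrees $< s$ strictly shrinks from below at each step; invertibility of the relevant twist of the top coefficient $B_{\ell}$ (a nonzero scalar in the Drinfeld case, hence always invertible) guarantees the required $\bsv$ exists. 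I would also remark that the statement and proof are insensitive to the base field beyond its being a field over which $\phi$ and $\bsdelta$ are defined, so $K^{\sep}$ plays no special role here; one only needs that $\bsc_e$-killing linear equations over that field are solvable, which they are. Finally, I note that the hypothesis that $\bsdelta$ is already defined over $K^{\sep}$ ensures $\bsu$ can be taken over $K^{\sep}$ as well, so $\bsdelta^s \in \Der(\phi,K^{\sep})$ as claimed.
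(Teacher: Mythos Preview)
Your overall framework is right --- adjust $\bsdelta$ by a strictly inner biderivation, iterate --- but the mechanics are off in a way that matters. In this lemma (and throughout \S\ref{subS:logquasilog}) the quantity $\deg_\tau(\bsdelta_t)$ denotes the \emph{order}, i.e., the lowest exponent of $\tau$ occurring in $\bsdelta_t$; this is what is needed so that $\rF_{\bsdelta^s}$ begins in degree $\geqslant q^s$ and the $p$-th power rule in Proposition~\ref{P:Hyperprops}(b) applies to hyperderivatives up through order~$n$. Raising the \emph{top} degree, which is what your middle paragraph attempts, is trivial (add any high-degree strictly inner biderivation) and not what is asked.

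The correct degree-raising step, and the one the paper uses, eliminates the lowest term. If $\bsdelta_t = \bsb_\ell\tau^\ell + (\text{higher})$ with $\ell \geqslant 1$, take $\bsU = \bsv\tau^\ell$; then
\[
\bsdelta^{(\bsU)}_t = \bsU\phi_t - \theta\bsU = \bsv\bigl((\rd\phi_t)^{(\ell)} - \theta\Id_d\bigr)\tau^\ell + (\text{higher}),
\]
and one sets $\bsv = \bsb_\ell\bigl((\rd\phi_t)^{(\ell)} - \theta\Id_d\bigr)^{-1}$. The crucial invertibility is that of $(\rd\phi_t)^{(\ell)} - \theta\Id_d = (\theta^{q^\ell}-\theta)\Id_d + \rN_\phi^{(\ell)}$, which holds for every $\ell \geqslant 1$ since it is a nonzero scalar plus a nilpotent. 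This is what makes the argument work uniformly for all abelian $t$-modules, not just Drinfeld modules. Your proposal instead invokes invertibility of a twist of the top coefficient $B_\ell$ of $\phi_t$; that matrix plays no role here, need not be invertible for a general $t$-module, and your choice $\bsu = \bsv\tau^{e-\ell+1}$ introduces terms of degree as low as $e-\ell+1$, so it cannot raise the order. Once you target the bottom coefficient with $\bsU = \bsv\tau^\ell$ and use the invertibility of $(\rd\phi_t)^{(\ell)} - \theta\Id_d$, the induction is immediate and there is no ``shuffling'' to worry about: each step strictly increases the order by at least one.
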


\begin{proof}
Suppose we have $1 \leqslant \ell < s$ so that
\[
\bsdelta_t = \sum_{j \geqslant \ell}\bsb_{j} \tau^j = \sum_{j \geqslant \ell}(b_{j,1}, \dots, b_{j,d}) \tau^j \in \cM_{\phi, K^{\sep}} \tau, \quad \bsb_{\ell} \neq \mathbf{0}.
\]
If we let $\bsU \assign \bsb_{\ell}( (\rd \phi_t)^{(\ell)} - \theta\Id_d)^{-1} \tau^{\ell} \in \Mat_{1\times d}(K^{\sep})\tau^{\ell}$, then \eqref{E:innerdef} implies that $\bsdelta^{(\bsU)}$ is a strictly inner $\phi$-biderivation, and one verifies that
\[
\bsdelta^{(\bsU)}_t = \bsb_{\ell} \tau^{\ell} + \textup{higher degree terms in $\tau$.}
\]
Thus if we set $\bsdelta^{\ell+1} \assign  \bsdelta-\bsdelta^{(\bsU)}$, then
\[
\deg_\tau \bigl(\bsdelta^{\ell+1}_t \bigr) \geqslant \ell+1.
\]
As $\bsdelta^{(\bsU)} \in \Der_{\si}(\phi,K^{\sep})$, it follows that $\bsdelta^{\ell+1}$ and $\bsdelta$ are in the same class in $\rH^1_{\DR}(\phi,K^{\sep})$.  Proceeding by induction on~$\ell$ we are done.
\end{proof}

\begin{remark}\label{R:degreebidhypersep}
If we let $\bsdelta^{s} \in \Der(\phi,K^{\sep})$ be the biderivation obtained in the proof of Lemma~\ref{L:degdeRham}, then Proposition~\ref{P:biderprops}(b) implies that
\begin{equation} \label{E:deltasdecomp}
  \bsdelta_t^s = \bsdelta_t + \sum_{j=\ell}^{s-1} (t-\theta)\cdot \bsa_j \tau^j,
\end{equation}
for some $\bsa_j \in \Mat_{1 \times d}(K^{\sep})$. From the functional equation~\eqref{E:Fdeltafneq} for quasi-periodic functions, it is straightforward to verify that the first non-zero term of $\rF_{\bsdelta^s}(\bsz)$ has degree at least $q^{\deg_\tau(\bsdelta^{s}_{t})}$ in each of the variables $z_1, \dots, z_d$.  Since $\bsdelta_t^s-\bsdelta_t = \sum_{j=\ell}^{s-1} (t-\theta)\cdot \bsa_j\tau^j \in \Der_{\si}(\phi, K^{\sep})$, we see from~\eqref{E:quasiinner} that
\[
\rF_{\bsdelta^s-\bsdelta}(\bsz) = \rF_{\delta^{(\bsU)}}(\bsz) = \sum_{j=\ell}^{s-1}\bsa_j \bigl(\Exp_\phi(\bsz) \bigr)^{(j)}.
\]
Thus as vectors of power series in $\power{K^{\sep}}{z_1, \dots, z_d}^d$, we have
\begin{equation}\label{eq:F0}
\rF_{\bsdelta}(\bsz) = \rF_{\bsdelta^s}(\bsz) + \sum_{j=\ell}^{s-1} \bsa_j \bigl( \Exp_\phi(\bsz) \bigr)^{(j)},
\end{equation}
which will drive much of the proof of Theorem~\ref{T:rathyperquasi0}.
\end{remark}

We now fix $n \geqslant 0$, and consider the $n$-th prolongation $\rP_n\phi$ of $\phi$.  Similar to~\eqref{E:Dumotivenotation}, for $\bsx \in \KK^d$ and $0 \leqslant u \leqslant n$ we set
\[
(\bsx)_{u} \assign (0, \dots, 0, \bsx^{\tr}, 0, \dots, 0)^{\tr} = \begin{pmatrix}
  0\\
  \vdots\\
  0\\
  \bsx\\
  0\\
  \vdots\\
  0
\end{pmatrix}\in \KK^{(n+1)d},
\]
where $\bsx$ occupies entries $du+1$ through $du+d$ and all other entries are $0$.  Let $\{ \bslambda_1, \dots, \bslambda_r \}$ be an $\bA$-basis of the period lattice $\Lambda_{\phi}$.  By Proposition~\ref{P:Exppro} and the form of $\rd(\rP_n\phi)_t$ in Theorem~\ref{T:prolmod}, we see that an $\bA$-basis of $\Lambda_{\rP_n \phi}$ is
\begin{equation}\label{E:latticebasispro}
\bigl\{ (\bslambda_v)_u \mid 1 \leqslant v \leqslant r,\, 0 \leqslant u \leqslant n \bigr\}.
\end{equation}
By \eqref{E:AGFdef}, for $\bsy \in \KK^d$ the Anderson generating function for $\rP_n\phi$ with respect to $(\bsy)_u$ is
\begin{equation} \label{E:AGFpro1}
    \fG_{(\bsy)_u} = \sum_{w=0}^{\infty} \Exp_{n} \bigl( (\rd (\rP_n\phi)_t)^{-w-1} (\bsy)_u \bigr) t^w.
\end{equation}
Suppose that
\begin{equation}\label{E:Jordanpro1}
    \rJ = \begin{pmatrix}
d_{\theta,\ell_1}[\theta] & &  \\
 & \ddots & \\
& &  d_{\theta,\ell_m}[\theta] \end{pmatrix},
\end{equation}
where $\ell_1, \dots, \ell_m > 0$ and $\ell_1+ \dots+ \ell_m = d$, is the Jordan normal form of $\rd \phi_{t}$. We pick $\rQ\in \GL_d(K^{\sep})$ such that $\rd\phi_{t}=\rQ \rJ \rQ^{-1}$. Note that for $h \in \ZZ$  and $\ell \geqslant 1$ we have $d_{\theta, \ell}[\theta]^h = d_{\theta, \ell}[\theta^h]$ and thus, we obtain
\[\rd \phi_t^h = \rQ \rJ^h \rQ^{-1} = \rQ\begin{pmatrix}
d_{\theta,\ell_1}[\theta^h] & &  \\
 & \ddots & \\
& &  d_{\theta,\ell_m}[\theta^h] \end{pmatrix}\rQ^{-1}.
\]
We observe that in Theorem~\ref{T:prolmod}, the subdiagonal $d \times d$ blocks of $\rd(\rP_n\rho)_t$ are $-\Id_d = -\rQ\pd_\theta^1(\rJ)\rQ^{-1}$ and that $0=(-1)^c \rQ\pd_\theta^c(\rJ)\rQ^{-1}$ for $c \geqslant 2$, that is,
\[
\rd(\rP_n\phi)_t= \begin{pmatrix}
\rd\phi_t &&&&&\\
-\rQ\pd_\theta^1(\rJ)\rQ^{-1} & \rd\phi_t &&&&\\
(-1)^2\rQ\pd_\theta^2(\rJ)\rQ^{-1} & -\rQ\pd_\theta^1(\rJ)\rQ^{-1} & \rd\phi_t&&&\\
\vdots&&\ddots&\ddots&&\\
\vdots &&&\ddots&\ddots&\\
(-1)^n\rQ \pd_\theta^{n}(\rJ)\rQ^{-1} &\dots&\dots&\dots&-\rQ\pd_\theta^1(\rJ)\rQ^{-1}&\rd\phi_t
\end{pmatrix}.
\]
By the product rule of hyperderivatives we have for $0\leqslant u \leqslant n$,
\[
  \sum_{\substack{v_1, \dots, v_u \geqslant 0 \\ v_1 + \dots + v_u=u}}(\rQ\pd_{\theta}^{v_1}(\rJ^h)\rQ^{-1})(\rQ\pd_{\theta}^{v_2}(\rJ^h)\rQ^{-1})\cdots (\rQ\pd_{\theta}^{v_u}(\rJ^h)\rQ^{-1})  = \rQ\pd_{\theta}^{u}(\rJ^h)\rQ^{-1}.
\]
Thus for $h\in \ZZ$ we obtain
\[
(\rd(\rP_n\phi)_t)^{h}= \begin{pmatrix}
\rd\phi_t^{h} &&&&&\\
-\rQ\pd_\theta^1(\rJ^{h})\rQ^{-1} & \rd\phi_t^{h} &&&&\\
(-1)^2\rQ\pd_\theta^2(\rJ^{h})\rQ^{-1} & -\rQ\pd_\theta^1(\rJ^{h})\rQ^{-1} & \rd\phi_t^{h}&&&\\
\vdots&&\ddots&\ddots&&\\
\vdots &&&\ddots&\ddots&\\
(-1)^n\rQ \pd_\theta^{n}(\rJ^{h})\rQ^{-1} &\dots&\dots&\dots&-\rQ\pd_\theta^1(\rJ^{h})\rQ^{-1}&\rd\phi_t^{h}
\end{pmatrix}.
\]
Note that for $i$, $w \geqslant 0$, we have $(-1)^i\pd_\theta^i(\theta^{-w-1})=\binom{w+i}{i}\theta^{-w-1-i}$. Moreover, since $d_{\theta, \ell}[\theta]^h = d_{\theta, \ell}[\theta^h]$, a short calculation using \eqref{E:Jordanpro1} shows that
\[
(-1)^i\pd_\theta^i(\rJ^{-w-1}) = \binom{w+i}{i} \begin{pmatrix}
d_{\theta, \ell_1}[\theta^{-w-1-i}] &&\\
&\ddots&\\
&&d_{\theta, \ell_m}[\theta^{-w-1-i}]
\end{pmatrix} = \binom{w+i}{i}\rJ^{-w-1-i}.
\]
Then for $\bsy \in \KK^d$ we can use Proposition~\ref{P:Exppro} to see that the Anderson generating function of $\rP_n\phi$ associated to $(\bsy)_u$ is
\begin{align*}
    \fG_{(\bsy)_u} &= \begin{pmatrix}
    0\\
    \vdots\\
    0\\
    \sum\limits_{w=0}^{\infty}\Exp_{\phi} (\rd\phi_t^{-w-1}\bsy)t^w\\
    \sum\limits_{w=0}^{\infty}\binom{w+1}{1}\Exp_{\phi} (\rd\phi_t^{-w-1-1}\bsy)t^w\\
    \sum\limits_{w=0}^{\infty}\binom{w+2}{2}\Exp_{\phi} (\rd\phi_t^{-w-1-2}\bsy)t^w\\
    \vdots\\
    \sum\limits_{w=0}^{\infty}\binom{w+(n-u)}{n-u}\Exp_{\phi} (\rd\phi_t^{-w-1-(n-u)}\bsy)t^w
    \end{pmatrix}\\
    &= \begin{pmatrix}
    0\\
    \vdots\\
    0\\
    \sum\limits_{w=0}^{\infty}\Exp_{\phi} (\rd\phi_t^{-w-1}\bsy)t^w\\
    \sum\limits_{w=1}^{\infty}\Exp_{\phi} (\rd\phi_t^{-w-1}\bsy) \binom{w}{1}t^{w-1}\\
    \sum\limits_{w=2}^{\infty}\Exp_{\phi} (\rd\phi_t^{-w-1}\bsy \binom{w}{2})t^{w-2}\\
    \vdots\\
    \sum\limits_{w=n-u}^{\infty}\Exp_{\phi} (\rd\phi_t^{-w-1}\bsy) \binom{w}{n-u}t^{w-(n-u)}
    \end{pmatrix}.
\end{align*}
Thus we have proved the following proposition.

\begin{proposition} \label{P:qpfunctionpro}
For $\bsy \in \KK^d$, let $\cG_{\bsy} \in \TT^d$ be the Anderson generating function for $\phi$ with respect to $\bsy$.  Then for $0 \leqslant u \leqslant n$,
\[
\fG_{(\bsy)_u} = \begin{pmatrix}
0 \\ \vdots \\ 0 \\ \cG_{\bsy} \\ \pd_t^1(\cG_{\bsy}) \\ \vdots \\ \pd_t^{n-u}(\cG_{\bsy})
\end{pmatrix} \in \TT^{d(n+1)},
\]
where $\cG_{\bsy}$ occupies entries $du+1$ through $du+d$.
\end{proposition}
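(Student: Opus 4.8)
The statement has essentially already been established in the paragraphs immediately preceding it; the task is to organize that computation into a clean argument. The plan is to start from the definition \eqref{E:AGFpro1} of the Anderson generating function $\fG_{(\bsy)_u}$ for $\rP_n\phi$, then substitute the explicit formula for $\Exp_{\rP_n\phi}$ from Proposition~\ref{P:Exppro} together with the block form of $(\rd(\rP_n\phi)_t)^{-w-1}$ derived above.

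\textbf{Step 1: Reduce to Jordan normal form.} First I would invoke the conjugation $\rd\phi_t = \rQ\rJ\rQ^{-1}$ with $\rJ$ as in \eqref{E:Jordanpro1}, noting $\rQ$ can be chosen in $\GL_d(K^{\sep})$. The key observation, already recorded above, is that the subdiagonal blocks of $\rd(\rP_n\phi)_t$ are exactly $-\Id_d = -\rQ\,\pd_\theta^1(\rJ)\,\rQ^{-1}$ and the lower blocks vanish, matching $(-1)^c\rQ\,\pd_\theta^c(\rJ)\,\rQ^{-1} = 0$ for $c\geqslant 2$. Hence $\rd(\rP_n\phi)_t$ is literally $d_{\theta,n+1}$-like: its $(i,j)$ block is $(-1)^{i-j}\rQ\,\pd_\theta^{i-j}(\rJ)\,\rQ^{-1}$ for $i\geqslant j$. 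Because $d_{\theta,n+1}$ is a ring homomorphism (Definition~\ref{def:dmatrices}), raising to the power $-w-1$ gives the $(i,j)$ block $(-1)^{i-j}\rQ\,\pd_\theta^{i-j}(\rJ^{-w-1})\,\rQ^{-1}$; this uses the product rule for hyperderivatives via the multinomial identity $\sum_{v_1+\cdots+v_u=u}\pd_\theta^{v_1}(\rJ^h)\cdots\pd_\theta^{v_u}(\rJ^h) = \pd_\theta^u(\rJ^h)$ stated above.

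\textbf{Step 2: Identify the binomial coefficients.} Next I would compute $(-1)^i\pd_\theta^i(\theta^{-w-1}) = \binom{w+i}{i}\theta^{-w-1-i}$, and since $d_{\theta,\ell}[\theta]^h = d_{\theta,\ell}[\theta^h]$, deduce $(-1)^i\pd_\theta^i(\rJ^{-w-1}) = \binom{w+i}{i}\rJ^{-w-1-i}$. Feeding this into $\Exp_{\rP_n\phi}\big((\rd(\rP_n\phi)_t)^{-w-1}(\bsy)_u\big)$: since $(\bsy)_u$ has $\bsy$ only in block $u$ (0-indexed), the block $u+a$ of $(\rd(\rP_n\phi)_t)^{-w-1}(\bsy)_u$ is $\binom{w+a}{a}\rd\phi_t^{-w-1-a}\bsy$ for $0\leqslant a\leqslant n-u$ and $0$ for blocks below $u$. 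Then by Proposition~\ref{P:Exppro}, block $u+a$ of $\fG_{(\bsy)_u}$ is $\sum_{w\geqslant 0}\binom{w+a}{a}\Exp_\phi(\rd\phi_t^{-w-1-a}\bsy)\,t^w$.

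\textbf{Step 3: Recognize hyperderivatives of $\cG_{\bsy}$.} Finally I would reindex the sum in block $u+a$ by $w\mapsto w-a$, obtaining $\sum_{w\geqslant a}\binom{w}{a}\Exp_\phi(\rd\phi_t^{-w-1}\bsy)\,t^{w-a}$, which is precisely $\pd_t^a(\cG_{\bsy})$ by the definition \eqref{E:AGFdef} of $\cG_{\bsy}$ and the rule $\pd_t^a(t^w) = \binom{w}{a}t^{w-a}$ applied term-by-term (convergence of the termwise-differentiated series in $\TT^d$ is immediate since $\pd_t^a$ does not increase Gauss norms and $\inorm{(\rd\phi_t)^{-w-1}}\to 0$ by Remark~\ref{R:dphitinverse}). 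Assembling the blocks — zeros in blocks $0,\dots,u-1$, then $\cG_{\bsy},\pd_t^1(\cG_{\bsy}),\dots,\pd_t^{n-u}(\cG_{\bsy})$ in blocks $u,\dots,n$ — yields the claimed formula. There is no real obstacle here; the only care needed is bookkeeping of the block indices and confirming the termwise differentiation is legitimate in the Tate algebra, which is routine.
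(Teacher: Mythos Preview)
Your proposal is correct and follows essentially the same approach as the paper: the computation preceding the proposition statement already carries out exactly your Steps~1--3, using the Jordan form conjugation, the block formula for $(\rd(\rP_n\phi)_t)^{-w-1}$ via the product rule, the binomial identity $(-1)^i\pd_\theta^i(\rJ^{-w-1}) = \binom{w+i}{i}\rJ^{-w-1-i}$, and the reindexing $w\mapsto w-a$ to identify each block with $\pd_t^a(\cG_{\bsy})$. Your write-up is a faithful reorganization of that argument.
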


\begin{proof}[Proof of Theorem~\ref{T:rathyperquasi0}]
Let $\bsm= (\bsm_1, \dots, \bsm_r)^{\tr}$ be a $K^{\sep}[t]$-basis of $\cM_{\phi, K^{\sep}}$, which then extends to a $\oK[t]$-basis of $\cM_{\phi,\oK}$.  Letting $\{\bsdelta_1, \dots, \bsdelta_r\}$ represent a $\oK$-basis of $\rH^1_{\DR}(\phi, \oK)$ defined over $K^\sep$, Proposition~\ref{P:deRhambasispro} shows that
\[
    \bigcup_{u=0}^{n} \{\bsdelta_{u,1}, \dots, \bsdelta_{u,r} \} \hookrightarrow \rH^1_{\DR}(\rP_n \phi,\oK)
\]
represents a $\oK$-basis of $\rH^1_{\DR}(\rP_n \phi,\oK)$.  By assumption (and Lemma~\ref{L:separable} for that matter) $\bsy \in (K_\infty^\sep)^d$, and we set $(\bsy)_0 \assign  (\bsy^{\tr}, 0, \dots, 0)^\tr \in (K_{\infty}^\sep)^{(n+1)d}$. By Proposition~\ref{P:qpfunctionpro},
\begin{equation}\label{E:proquasiperfnc0}
\fG_{(\bsy)_0} = \bigl( \cG_{\bsy}^{\tr}, \pd_t^1(\cG_{\bsy})^{\tr}, \dots, \pd_t^n(\cG_{\bsy})^{\tr} \bigr)^\tr.
\end{equation}
Suppose that $n= wq^{s-1}+\alpha$ for some $s\in \NN$, $0\leqslant w < q$ and $0\leqslant \alpha <q^{s-1}$. That is, $s = \lfloor \log_q(n)\rfloor + 1$ if $n \geqslant 1$ and $s=1$ otherwise. Then for $1 \leqslant v \leqslant r$, let $\bsdelta_v^s$ be taken as in Lemma~\ref{L:degdeRham}. For $0 \leqslant u \leqslant n$ and $1 \leqslant v \leqslant r$ define $\bsdelta_{u,v}^{s} \in \Der(\rP_n \phi,\oK)$ by setting
\[
  (\bsdelta_{u,v}^{s})_t \assign ((\bsdelta_v^{s})_t)_u \in  \rP_n \cM_{\phi,\oK} \tau,
\]
as in Proposition~\ref{P:deRhambasispro}. Since $\{\bsdelta_{1}^{s}, \dots, \bsdelta_{r}^{s}\}$ represents a $\oK$-basis of $\rH_{\DR}^{1}(\phi, \oK)$, by Proposition~\ref{P:deRhambasispro} we obtain that $\bigcup_{u=0}^{n} \{\bsdelta_{u,1}^{s}, \dots, \bsdelta_{u,r}^{s}\}$ is a $\oK$-basis of $\rH_{\DR}^{1}(\rP_n\phi, \oK)$.
Then by Proposition~\ref{P:quasigen}(a) and Theorem~\ref{T:quasispancomplete} we see that
\begin{multline}\label{E:quasiAGFhyper}
\cW \assign \Span{\oK}\Bigl( \{1 \} \cup \Bigl\{ \twistop{\bsepsilon_t}{\fG_{(\bsy)_0}}\big|_{t=\theta} : \bsepsilon \in \Der(\rP_n\phi,\oK) \Bigr\} \Bigr)\\
=\Span{\oK} \biggl( \{1\}\cup \bigcup_{u=0}^n\bigcup_{v=1}^r \Bigl\{ \twistop{(\bsdelta_{u,v}^{s})_t}{\fG_{(\bsy)_0}}\big|_{t=\theta} \Bigr\} \biggr).
\end{multline}
By Proposition~\ref{P:deRhambasispro} and \eqref{E:proquasiperfnc0},
\begin{equation} \label{E:hypertbasisdert}
\twistop{(\bsdelta_{u,v}^{s})_t}{\fG_{(\bsy)_0}}= \twistop{(\bsdelta_{v}^{s})_t}{\pd_t^{u}(\cG_{\bsy})} = \pd_t^u \bigl( \twistop{(\bsdelta_v^s)_t}{\cG_y} \bigr),
\end{equation}
and therefore,
\begin{equation}\label{E:hypertbasisAGF}
\cW = \Span{\oK} \biggl( \{1 \} \cup \bigcup_{u=0}^n\bigcup_{v=1}^r \Bigl\{ \twistop{(\bsdelta_v^{s})_t}{\pd_t^u(\cG_{\bsy})}\big|_{t=\theta} \Bigr\} \biggr).
\end{equation}
We claim that
\begin{equation} \label{E:hypertbasistheta}
\cW = \Span{\oK} \biggl( \{1 \} \cup \bigcup_{u=0}^n \bigcup_{j \geqslant s} \Bigl\{ \pd_{\theta}^u\bigl( \cG_{\bsy}^{(j)} \big|_{t=\theta} \bigr) \Bigr\} \biggr).
\end{equation}
First, for $0 \leqslant u \leqslant n$ and $j \geqslant s$, Lemma~\ref{L:chainrule}(b) implies
\begin{align}\label{E:hyperthetat}
\pd_{\theta}^{u} \bigl( \cG_{\bsy}^{(j)}\big|_{t=\theta} \bigr)
&= \sum_{u_1+u_2=u} \pd_{\theta}^{u_1} \circ \pd_t^{u_2} \bigl(\cG_{\bsy}^{(j)}\bigr) \big|_{t=\theta} \\
&= \sum_{u_1+u_2=u} \pd_{\theta}^{u_1}\bigl( \pd_t^{u_2}(\cG_{\bsy})^{(j)} \bigr) \big|_{t=\theta}
= \pd_{t}^{u} \bigl(\cG_{\bsy} \bigr)^{(j)} \big|_{t=\theta}
= \pd_{t}^{u} \bigl(\cG_{\bsy}^{(j)} \bigr) \big|_{t=\theta}, \notag
\end{align}
where the second and the fourth equalities hold since Frobenius twisting commutes with differentiation with respect to $t$, and the third equality follows from Proposition~\ref{P:Hyperprops}(b) (where we need that $j \geqslant s$). For $1 \leqslant v \leqslant r$ fixed, if we suppose $(\bsdelta_v^s)_t = \sum_{j \geqslant s}\bsb_{j} \tau^j \in \cM_{\phi, \oK} \tau$, then
\begin{equation} \label{E:Fdeltavs}
\twistop{(\bsdelta_v^s)_t}{\cG_{\bsy}}|_{t=\theta} = \sum_{j\geqslant s}\bsb_{j}\cG_{\bsy}^{(j)}\big|_{t=\theta} = \rF_{\bsdelta_v^s}(\bsy),
\end{equation}
where the second equality (which we save for later in the proof) follows from Proposition~\ref{P:quasigen}(a).  Now for $0 \leqslant u \leqslant n$ and $1 \leqslant v \leqslant r$, it follows that
\begin{align*}
  \twistop{(\bsdelta_v^s)_t}{\pd_t^u(\cG_{\bsy})}\big|_{t=\theta}
  &= \pd_t^u \biggl( \biggl\langle \sum_{j \geqslant s} \bsb_j \tau^j \biggm| \cG_{\bsy} \biggr\rangle \biggr) \bigg|_{t=\theta} && \textup{(by \eqref{E:hypertbasisdert})}\\
  &= \sum_{j \geqslant s} \bsb_j \bigl( \pd_t^u \bigl( \cG_{\bsy}^{(j)} \bigr)\bigr)\big|_{t=\theta} &&  \\
  &= \sum_{j \geqslant s} \bsb_j \pd_{\theta}^u \bigl( \cG_{\bsy}^{(j)} \big|_{t=\theta} \bigr) && \textup{(by \eqref{E:hyperthetat}).}
\end{align*}
Thus by \eqref{E:hypertbasisAGF}, we see that the left-hand side of \eqref{E:hypertbasistheta} is contained in the right.  To show the reverse containment, for $0 \leqslant u \leqslant n$, $j \geqslant s$, and $1 \leqslant i \leqslant d$, we choose $\bsepsilon \in \Der(\rP_n \phi,\oK)$ so that $\bsepsilon_t = (\bss_i \tau^j)_u$, where as usual $\bss_i$ is the $i$-th standard basis vector on $\Mat_{1\times d}(\oK[\tau])$.  Then by~\eqref{E:proquasiperfnc0} and~\eqref{E:hyperthetat},
\[
  \twistop{\bsepsilon_t}{\fG_{(\bsy)_0}}\big|_{t=\theta} = \bss_i \cdot \pd_t^u\bigl( \cG_{\bsy}^{(j)} \bigr)\big|_{t=\theta} = \bss_i \cdot \pd_{\theta}^u \bigl( \cG_{\bsy}^{(j)}\big|_{t=\theta} \bigr),
\]
and hence \eqref{E:hypertbasistheta} holds.

With this in hand we further claim that
\begin{multline}\label{E:thetaandthypereq}
\cW
= \Span{\oK} \biggl( \{1 \} \cup \bigcup_{u=0}^n \bigcup_{v=1}^r \Bigl\{ \twistop{(\bsdelta_{v}^{s})_t}{\pd_t^u(\cG_{\bsy})}\big|_{t=\theta} \Bigr\} \biggr) \\
= \Span{\oK} \biggl( \{1 \} \cup \bigcup_{u=0}^n\bigcup_{v=1}^r \Bigl\{ \pd_\theta^u(\rF_{\bsdelta_{v}^{s}}(\bsy)) \Bigr\} \biggr).
\end{multline}
Indeed~\eqref{E:hypertbasistheta} and~\eqref{E:Fdeltavs} imply immediately that the right-hand side (of the second equality) is contained in the left.  For the opposite containment, fix $j \geqslant s$.  Given $1 \leqslant i \leqslant d$, let $\varpi_{i,j} \in \Der(\phi, \oK)$ be chosen so that $(\varpi_{i,j})_t =  \bss_i \tau^j$.  By Proposition~\ref{P:quasigen}(a), $\rF_{\varpi_{i,j}}(\bsy) = \twistop{(\varpi_{i,j})_t}{\cG_{\bsy}}|_{t=\theta} = \bss_i \cG_{\bsy}^{(j)}|_{t=\theta}$, and so \eqref{E:hyperthetat} implies
\begin{align} \label{E:derQuasiAGFstandard}
\pd_{\theta}^{u} \bigl(\cG_{\bsy}^{(j)}\big|_{t=\theta} \bigr) = \pd_{t}^u \bigl( \cG_{\bsy}^{(j)} \bigr) \big|_{t=\theta} &= \bigl( \twistop{(\varpi_{1,j})_t}{\pd_t^u(\cG_{\bsy})}, \dots, \twistop{(\varpi_{d,j})_t}{\pd_t^u(\cG_{\bsy})} \bigr)^{\tr} \big|_{t=\theta} \\
&= \bigl( \pd_{\theta}^u(\rF_{\varpi_{1,j}}(\bsy)), \dots, \pd_{\theta}^u(\rF_{\varpi_{d,j}}(\bsy)) \bigr)^{\tr}.
\notag
\end{align}
Thus~\eqref{E:thetaandthypereq} holds by~\eqref{E:hypertbasistheta}.

We now turn to our original biderivations $\bsdelta_1, \dots, \bsdelta_r$.  By the same arguments as used for~\eqref{E:quasiAGFhyper} and~\eqref{E:hypertbasisAGF}, we obtain
\begin{multline} \label{E:hypertbasisAGFdeg}
\cW  = \Span{\oK} \Bigl( \{1\} \cup \Bigl\{ \twistop{\bsepsilon_t}{\fG_{(\bsy)_0}}\big|_{t=\theta} : \bsepsilon \in \Der(\rP_n\phi,\oK) \Bigr\} \Bigr) \\
= \Span{\oK} \biggl(\{1 \} \cup \bigcup_{u=0}^n\bigcup_{v=1}^r \Bigl\{ \twistop{(\bsdelta_{v})_t}{\pd_t^u(\cG_{\bsy})}\big|_{t=\theta} \Bigr\} \biggr).
\end{multline}
By \eqref{eq:F0}, we have $\pd_{\theta}^{u}(\rF_{\bsdelta^s}(\bsy)) = \pd_{\theta}^{u}(\rF_{\bsdelta}(\bsy)) -\pd_{\theta}^{u}(\sum_{j=1}^{s-1}\bsa_j(\Exp_\phi(\bsy))^{(j)})$. Since $\Exp_{\phi}(\bsy) \in (K^{\sep})^d$ and since each $\bsa_j \in \Mat_{1\times d}(K^{\sep})$ by Remark~\ref{R:degreebidhypersep}, it follows that
\begin{align*}
\Span{\oK} \biggl( \{1\} \cup \bigcup_{u=0}^n\bigcup_{v=1}^r \Bigl\{ \pd_\theta^u(\rF_{\bsdelta_v}(\bsy)) \Bigr\} \biggr)
&= \Span{\oK} \biggl( \{1\} \cup \bigcup_{u=0}^n\bigcup_{v=1}^r \Bigl\{ \pd_\theta^u(\rF_{\bsdelta^s_v}(\bsy)) \Bigr\} \biggr) = \cW,
\end{align*}
where the last equality is simply \eqref{E:thetaandthypereq}.  Part~(a) of the theorem then follows from \eqref{E:hypertbasisAGFdeg}.

To prove part~(b), note that $\Psi_{\rP_n\phi} = d_{t,n+1}[\Psi]$ by Theorem~\ref{T:prolongprops}(c). Then by combining  Corollary~\ref{C:perquasiperspans}(b) with~\eqref{E:thetaandthypereq} and~\eqref{E:hypertbasisAGFdeg}, and taking $\bsy = \bslambda_{\ell}$ for $1 \leqslant \ell \leqslant r$,  we have
\[
\Span{\oK} \Bigl( d_{t, n+1}[\Psi]^{-1}\big|_{t=\theta} \Bigr) = \Span{\oK} \biggl(\bigcup_{u=0}^n \bigcup_{v=1}^{r} \bigcup_{\ell=1}^r \Bigl\{ \pd_\theta^u(\rF_{\bsdelta_v^s}(\bslambda_\ell)) \Bigr\} \biggr).
\]
By \eqref{eq:F0}, we see that $\rF_{\bsdelta_v^s}(\bslambda_{\ell}) = \rF_{\bsdelta_v}(\bslambda_{\ell})$, and part~(b) follows.
\end{proof}

\begin{remark}
In~\cite{BrownawellDenis00}, Brownawell and Denis encountered a similar situation in proving $\oK$-linear independence of hyperderviatives of quasi-periods and quasi-logarithms with respect to~$\theta$ for Drinfeld modules.  Our proof of Theorem~\ref{T:rathyperquasi0} was inspired by their arguments and discussion in~\cite[\S 5]{BrownawellDenis00}.
\end{remark}

The main principle behind Theorem~\ref{T:rathyperquasi0} is to use Corollary~\ref{C:perquasiperspans} and Theorem~\ref{T:quasispancomplete} to analyze $\oK$-linear spans of quasi-periods and quasi-logarithms in aggregate and to identify these spaces as the $\oK$-linear spans of both the values of hyperderivatives of Anderson generating functions $\cG_{\bsy}$ with respect to $t$ at $t=\theta$ as well as the hyperderivatives of qua\-si-log\-a\-rithms with respect to~$\theta$.  As mentioned in Remark~\ref{R:quasikbarspans}, this theorem only immediately accounts for hyperderivatives of tractable coordinates of a given period or logarithm.  It is a natural question to ask how non-tractable coordinates and their hyperderivatives fit into this framework.  After some preliminary results to reflect on the findings in \S\ref{subS:deRhampairing}--\S\ref{subS:Quasi}, we show in Theorem~\ref{T:hypernontract} that these coordinates and their hyperderivatives appear in the same spaces as in Theorem~\ref{T:rathyperquasi0}, though with perhaps an increased value of~$n$.

We continue with our previous situation with (i) a uniformizable, abelian, and $\bA$-finite $t$-module $\phi : \bA \to \Mat_d(K^{\sep}[\tau])$, (ii) $\oK[t]$-bases $\bsm \in (\cM_{\phi,\oK})^r$ of the $t$-motive $\cM_{\phi,\oK}$ of~$\phi$ and $\bsn \in (\cN_{\phi,\oK})^r$ of its dual $t$-motive, both of which are defined over $K^{\sep}$, and (iii) a rigid analytic trivialization $(\iota_{\phi},\Phi_{\phi},\Psi_{\phi})$ as in Definition~\ref{D:rat}.  We refer back to the notation of \S\ref{subS:Prolongations} regarding the $t$-motive $\rP_n \cM_{\phi,\oK}$ and dual $t$-motive $\rP_n \cN_{\phi,\oK}$ associated to the prolongation $\rP_{n} \phi$ via Proposition~\ref{P:promandmofpro}.  In particular we recall
\[
 \tbsD_n \bss = (D_0 \bss^{\tr}, \dots, D_n \bss^{\tr})^{\tr}, \quad D_i\bss^{\tr} = (D_i\bss_1, \dots, D_i\bss_d),
\]
from \eqref{E:basisM2std}, which comprise the standard basis vectors of $\rP_n \cM_{\phi,\oK}= \Mat_{1 \times (n+1)d}(\oK[\tau])$.  We recall from~\eqref{E:basisM2} that $\tbsD_n \bsm$ is a $\oK[t]$-basis of $\rP_n \cM_{\phi,\oK}$ defined over $K^{\sep}$, where
\[
\tbsD_n \bsm = (D_0\bsm^\tr, D_{1}\bsm^\tr, \dots, D_n\bsm^\tr)^\tr \in \bigl(\cM_{\phi, \oK} \bigr)^{(n+1)r}.
\]
Given an $\bA$-basis $\bslambda_1, \dots, \bslambda_r$ of $\Lambda_{\phi}$, recall from \eqref{E:latticebasispro} that
\[
(\bslambda_v)_u= \bigl( 0, \dots, 0, \bslambda_v^{\tr}, 0, \dots, 0\bigr)^{\tr}, \quad 0 \leqslant v \leqslant n,\ 1 \leqslant v \leqslant r,
\]
form an $\bA$-basis of $\Lambda_{\rP_n\phi}$. By Proposition~\ref{P:qpfunctionpro}, the Anderson generating function of $\rP_n \phi$ associated to $(\bslambda_v)_u$ is
\begin{equation}\label{qpfunctionproprd}
\fG_{(\bslambda_v)_u} = \bigl( 0, \dots, 0, \cG_{\bslambda_v}^{\tr}, \pd_t^1(\cG_{\bslambda_v})^{\tr}, \dots, \pd_t^{n-u}(\cG_{\bslambda_v})^{\tr} \bigr)^{\tr} \in \TT^{(n+1)d},
\end{equation}
where $\cG_{\bslambda_v}$ is the Anderson generating function for $\phi$ with respect to $\bslambda_v$, and $\cG_{\bslambda_v}^{\tr}$ occupies entries $du+1$ through $du+d$.

We now calculate the representing matrix $\Upsilon_{\rP_n \phi} \in \GL_r(\TT_{\theta})$ as in \eqref{E:Upsilondef} for the pairing $(\cdot\, ,\cdot ) : \Der(\rP_n \phi) \times \Lambda_{\rP_n \phi} \to \TT_\theta$ as in \eqref{E:predeRhampairing}. By using \eqref{qpfunctionproprd} and \eqref{E:Dumotivenotation}, a short calculation shows that for $0\leqslant i$, $u \leqslant n$ and $1 \leqslant v \leqslant r$, we have the equality in $\TT_{\theta}^r$:
\[
 \left\langle \begin{pmatrix}
  \tau D_i\bsm_1 \\ \vdots \\ \tau D_i\bsm_r
  \end{pmatrix} \Biggm|
  \fG_{(\bslambda_v)_u} \right\rangle = \begin{cases}
      \boldsymbol{0} & \textup{if $i < u$,} \\
      \left\langle \begin{pmatrix}
  \tau \bsm_1 \\ \vdots \\ \tau \bsm_r
  \end{pmatrix} \Biggm|
  \pd_t^{i-u}(\cG_{\bslambda_v}) \right\rangle & \textup{if $i \geqslant u$.}
   \end{cases}
\]
Using the fact that differentiation with respect to~$t$ and Frobenius twisting commute, it follows from~\eqref{E:Upsilondef} that
\[
 \left\langle \begin{pmatrix}
  \tau \bsm_1 \\ \vdots \\ \tau \bsm_r
  \end{pmatrix} \Biggm|
  \pd_t^{i-u}(\cG_{\bslambda_v}) \right\rangle = \begin{pmatrix}
  \pd_t^{i-u}(\twistop{\tau \bsm_1}{\cG_{\bslambda_1}}) & \cdots & \pd_t^{i-u}(\twistop{\tau \bsm_1}{\cG_{\bslambda_r}}) \\
  \vdots & & \vdots \\
  \pd_t^{i-u}(\twistop{\tau \bsm_r}{\cG_{\bslambda_1}}) & \cdots & \pd_t^{i-u}(\twistop{\tau \bsm_r}{\cG_{\bslambda_r}})
  \end{pmatrix} = \pd_t^{i-u}(\Upsilon_{\phi}).
\]
From this we obtain $\Upsilon_{\rP_n \phi}$ in terms of $\Upsilon_{\phi}$ by observing,
\begin{align} \label{E:Upsilonpro}
\Upsilon_{\rP_n \phi} & = \left\langle \tau \tbsD_n \bsm \Biggm|
  \bigl( \fG_{(\bslambda_1)_0}, \dots, \fG_{(\bslambda_r)_0},  \dots, \fG_{(\bslambda_1)_{n}}, \dots, \fG_{(\bslambda_r)_{n}} \bigr) \right\rangle \\
  &= \begin{pmatrix}
\Upsilon_{\phi} & 0 & \cdots & 0 \\
\pd_t^1(\Upsilon_{\phi}) & \Upsilon_{\phi} & \ddots & \vdots \\
\vdots & \ddots & \ddots & 0 \notag \\
\pd_t^n(\Upsilon_{\phi}) & \cdots & \pd_t^1(\Upsilon_{\phi}) & \Upsilon_{\phi}
\end{pmatrix}\\
&= d_{t,n+1}[\Upsilon_{\phi}^{\tr}]^{\tr}. \notag
\end{align}
Furthermore, we return to the topics of \S\ref{subS:RAT} and recall the matrix $V_{\phi} \in \GL_r(\oK[t])$ from \eqref{E:Vdef} giving the isomorphism $\cM_{\phi}^{\wedge} \to \cN_{\phi}$ in Theorem~\ref{T:isodual}.  Since $\Psi_{\rP_n \phi} = d_{t, n+1}[\Psi_{\phi}]$ by Theorem~\ref{T:prolongprops}(c), it follows from Proposition~\ref{P:PsiUpsilonV} and \eqref{E:Upsilonpro} that
\[
\Psi_{\rP_n \phi} = d_{t, n+1}[V]^{-1}d_{t,n+1}[\Upsilon_{\phi}^{\tr}]^{-1} = d_{t, n+1}[V]^{-1}(\Upsilon_{\rP_n \phi}^{\tr})^{-1}.
\]
Combining this calculation with Theorem~\ref{T:isodual} and Proposition~\ref{P:PsiUpsilonV}, we have obtained the following.

\begin{proposition} \label{P:UpsilonVpro}
Let $\phi : \bA \to \Mat_d(K^{\sep}[\tau])$ be a uniformizable, abelian, and $\bA$-finite $t$-module.  Continuing with the notation above, the following hold.
\begin{enumerate}
\item[(a)] $\Upsilon_{\rP_n \phi} = d_{t,n+1}[ \Upsilon_{\phi}^{\tr}]^{\tr}$.
\item[(b)] $V_{\rP_n \phi} = d_{t,n+1}[V_{\phi}]$.
\end{enumerate}
\end{proposition}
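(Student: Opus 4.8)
The plan is to dispose of part~(a) by the computation already laid out in \eqref{E:Upsilonpro}: substituting the block description of $\fG_{(\bslambda_v)_u}$ from Proposition~\ref{P:qpfunctionpro} into the definition \eqref{E:Upsilondef} of the pairing matrix, and using that hyperdifferentiation with respect to~$t$ commutes with Frobenius twisting, one sees that pairing $\tau D_i\bsm$ against $\fG_{(\bslambda_v)_u}$ vanishes for $i<u$ and equals $\pd_t^{\,i-u}(\Upsilon_\phi)$ (read off block by block) for $i\geqslant u$. Thus $\Upsilon_{\rP_n\phi}$ records consecutive $t$-hyperderivatives of $\Upsilon_\phi$ in exactly the (block lower-triangular) $d$-matrix pattern, i.e., $\Upsilon_{\rP_n\phi}=d_{t,n+1}[\Upsilon_\phi^{\tr}]^{\tr}$. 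Nothing beyond this bookkeeping is needed.

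For part~(b), I would first record the purely formal consequence of~(a). Since $d_{t,n+1}(\,\cdot\,)$ is an algebra homomorphism on matrices (Definition~\ref{def:dmatrices}), applying it to the identity $\Psi_\phi=(\Upsilon_\phi^{\tr}V_\phi)^{-1}$ of Proposition~\ref{P:PsiUpsilonV} and invoking~(a) gives
\[
  d_{t,n+1}[\Psi_\phi]=d_{t,n+1}[V_\phi]^{-1}\,d_{t,n+1}[\Upsilon_\phi^{\tr}]^{-1}=\bigl(\Upsilon_{\rP_n\phi}^{\tr}\,d_{t,n+1}[V_\phi]\bigr)^{-1}.
\]
By Theorem~\ref{T:prolongprops}(c), $d_{t,n+1}[\Psi_\phi]$ is a rigid analytic trivialization of $\Phi_{\rP_n\phi}=d_{t,n+1}[\Phi_\phi]$; on the other hand, Proposition~\ref{P:PsiUpsilonV} applied to $\rP_n\phi$ presents the rigid analytic trivialization built from its Anderson generating functions and its Hartl--Juschka isomorphism as $(\Upsilon_{\rP_n\phi}^{\tr}V_{\rP_n\phi})^{-1}$, where $V_{\rP_n\phi}$ is the matrix of $\xi_{\rP_n\phi}$ with respect to the prolonged $\KK[t]$-bases $\tbsD_n\bsmu$ of $\rP_n\cM_\phi^{\wedge}$ (dual to $\tau\tbsD_n\bsm$) and $\bsD_n\bsn$ of $\rP_n\cN_\phi$. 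Once one knows these two rigid analytic trivializations coincide, cancelling the invertible factor $\Upsilon_{\rP_n\phi}^{\tr}$ yields $V_{\rP_n\phi}=d_{t,n+1}[V_\phi]$, which is~(b).

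The only substantive point, then, is to verify that $d_{t,n+1}[\Psi_\phi]$ really is the rigid analytic trivialization of $\rP_n\phi$ supplied by Proposition~\ref{P:PsiUpsilonV} --- a priori the two can differ by right multiplication by an element of $\GL_r(\FF_q(t))$, owing to the nonuniqueness of rigid analytic trivializations --- or, equivalently, to show directly that $\xi_{\rP_n\phi}(\tbsD_n\bsmu)=d_{t,n+1}[V_\phi]\,\bsD_n\bsn$. I would prove this by induction on~$n$ using the naturality of $\xi$ (Theorem~\ref{T:isodual}): applying $\xi$ to the exact sequences \eqref{E:motivespro} and \eqref{E:tmodprSES} with $h=0$ produces a commuting ladder relating $\xi_{\rP_n\phi}$ to $\xi_{\rP_{n-1}\phi}$ on the sub-object and to $\xi_\phi$ on the quotient; the inductive hypothesis identifies the two diagonal blocks of $V_{\rP_n\phi}$ in the induced block decomposition as $d_{t,n}[V_\phi]$ and $V_\phi$, and the remaining off-diagonal block is pinned down by unwinding the residue formula \eqref{E:xidef} for $\xi_{\rP_n\phi}$ together with the prolongation relations of Definitions~\ref{D:promotive} and~\ref{D:dualtpro} (in particular the divided-power identity~\eqref{E:Djrecursion}), giving $\pd_t^{\,n}(V_\phi)$ as dictated by the $d$-matrix pattern. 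The main obstacle is precisely this residue bookkeeping: tracking how the divided-power operators $D_i$ interact with localization at $z=1/t$, with the twisted inverse $\tau^{-1}$ of~\eqref{E:tauauto}, and with $\Res_{\infty}$, and confirming that the resulting combinatorics reproduces hyperderivatives with respect to~$t$. This is the general-$\phi$ analogue of the almost strictly pure computations in Proposition~\ref{P:xiprimemuprime} and Corollary~\ref{C:VXBY}, and is dual to the $\Upsilon$-side computation carried out in~\eqref{E:Upsilonpro}.
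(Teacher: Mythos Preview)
Your treatment of part~(a) is identical to the paper's: the computation recorded in~\eqref{E:Upsilonpro} is the entire argument, and you have reproduced it.

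For part~(b), you begin exactly as the paper does---applying the ring homomorphism $d_{t,n+1}$ to $\Psi_\phi=(\Upsilon_\phi^{\tr}V_\phi)^{-1}$ and invoking Theorem~\ref{T:prolongprops}(c) and part~(a)---but you then go further than the paper. The paper's entire justification is the displayed identity
\[
\Psi_{\rP_n\phi}=d_{t,n+1}[V_\phi]^{-1}d_{t,n+1}[\Upsilon_\phi^{\tr}]^{-1}=d_{t,n+1}[V_\phi]^{-1}(\Upsilon_{\rP_n\phi}^{\tr})^{-1}
\]
together with the sentence ``Combining this calculation with Theorem~\ref{T:isodual} and Proposition~\ref{P:PsiUpsilonV}, we have obtained the following.'' The paper does not explicitly confront the non-uniqueness issue you raise: that $d_{t,n+1}[\Psi_\phi]$ and the $\Psi$ produced by Proposition~\ref{P:PsiUpsilonV} for $\rP_n\phi$ are both rigid analytic trivializations of $d_{t,n+1}[\Phi_\phi]$ and hence a priori differ by a factor in $\GL_{(n+1)r}(\FF_q[t])$, so that one cannot simply cancel $\Upsilon_{\rP_n\phi}^{\tr}$ to conclude $V_{\rP_n\phi}=d_{t,n+1}[V_\phi]$ where $V_{\rP_n\phi}$ is the matrix of $\xi_{\rP_n\phi}$ from~\eqref{E:Vdef}.

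Your observation is well taken, and your proposed remedy---verifying $\xi_{\rP_n\phi}(\tbsD_n\bsmu)=d_{t,n+1}[V_\phi]\,\bsD_n\bsn$ directly, by induction on~$n$ using naturality of $\xi$ along the exact sequences~\eqref{E:motivespro}--\eqref{E:tmodprSES} and the residue formula~\eqref{E:xidef}---is a sound strategy and would make the argument airtight. That said, for every subsequent use of $V_{\rP_n\phi}$ in the paper (notably Lemma~\ref{L:gyhapro}), what is actually needed is only the relation~\eqref{E:Visomorphism}, which $d_{t,n+1}[V_\phi]$ satisfies by applying $d_{t,n+1}$ to $\tPhi_\phi^{\tr}V_\phi=V_\phi^{(-1)}\Phi_\phi$; so in practice the paper's looser reading suffices for its applications even if the literal identification with the matrix of $\xi_{\rP_n\phi}$ is not fully argued.
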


We can now investigate how the theory of prolongations fits into the solutions of associated Frobenius difference equations, as in Lemma~\ref{L:gyha}.

\begin{lemma} \label{L:gyhapro}
Let $\phi : \bA \to \Mat_d(K^{\sep}[\tau])$ be a uniformizable, abelian, and $\bA$-finite $t$-module. Let $\bsalpha$, $\bsy \in \KK^d$ be chosen so that $\Exp_{\phi}(\bsy) = \bsalpha$. Let $\bsg_{\bsy}$ and $\bsh_{\bsalpha}$ be chosen as in Lemma~\ref{L:gyha}. For $1 \leqslant \ell \leqslant n$,  we let $\fG_{(\bsy)_{\ell}}$ be the Anderson generating function of $\rP_n \phi$ associated to $(\bsy)_{\ell}\assign (0, \dots, 0, \bsy, 0, \dots, 0)^{\tr} \in \KK^{(n+1)d}$.  Then applying Lemma~\ref{L:gyha} to $\rP_n \phi$ with associated functions $\fg_{(\bsy)_{\ell}}$ and $\fh_{(\bsalpha)_{\ell}}$, we find
\begin{equation} \label{E:gapro}
\fg_{(\bsy)_{\ell}} = \bigl(0, \dots, 0, \bsg_{\bsy}, \pd_t^1(\bsg_{\bsy}), \dots, \pd_t^{n-\ell}(\bsg_{\bsy}) \bigr) \in \Mat_{1 \times (n+1)r}(\TT_{\theta}),
\end{equation}
and
\begin{equation}\label{E:hapro}
    \fh_{(\bsalpha)_{\ell}} = (0, \dots, 0, \bsh_{\bsalpha}, \pd_t^1(\bsh_{\bsalpha}), \dots, \pd_t^{n-\ell}(\bsh_{\bsalpha})) \in \Mat_{1 \times (n+1)r}(\KK[t]).
\end{equation}
\end{lemma}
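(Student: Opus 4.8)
The plan is to derive both formulas by directly unwinding the definitions in Lemma~\ref{L:gyha} applied to the prolongation $\rP_n\phi$, using the block‑triangular shapes of the three ingredients that have already been computed: the Anderson generating function $\fG_{(\bsy)_{\ell}}$ of $\rP_n\phi$ (Proposition~\ref{P:qpfunctionpro}), the matrix $V_{\rP_n\phi}=d_{t,n+1}[V_{\phi}]$ (Proposition~\ref{P:UpsilonVpro}(b)), and the $\sigma$‑representing matrix $\Phi_{\rP_n\phi}=d_{t,n+1}[\Phi_{\phi}]$ (which follows from $\Psi_{\rP_n\phi}=d_{t,n+1}[\Psi_{\phi}]$ in Theorem~\ref{T:prolongprops}(c) together with $\Psi_{\rP_n\phi}^{(-1)}=\Phi_{\rP_n\phi}\Psi_{\rP_n\phi}$). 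First I would note that $\Exp_{\rP_n\phi}((\bsy)_{\ell})=(\bsalpha)_{\ell}$ by Proposition~\ref{P:Exppro}, so Lemma~\ref{L:gyha} does apply to $\rP_n\phi$ at the point $(\bsy)_{\ell}$, with $\fg_{(\bsy)_{\ell}}=-\twistop{\tau\tbsD_n\bsm}{\fG_{(\bsy)_{\ell}}}^{\tr}V_{\rP_n\phi}$ and with the Frobenius difference equation $\fg_{(\bsy)_{\ell}}^{(-1)}\Phi_{\rP_n\phi}-\fg_{(\bsy)_{\ell}}=\fh_{(\bsalpha)_{\ell}}$ holding automatically; this last point lets me read off $\fh_{(\bsalpha)_{\ell}}$ from $\fg_{(\bsy)_{\ell}}$ once the latter is known, rather than expand formula~\eqref{E:halphadef} by hand.

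The next step is a dictionary between $\rP_n\phi$ and $\phi$. Identifying $\rP_n\cM_{\phi}$ with $\Mat_{1\times(n+1)d}(\KK[\tau])$ via the standard basis $\tbsD_n\bss$ as in the proof of Theorem~\ref{T:prolmod}, Definition~\ref{D:promotive}(c) with $a=1$ shows $D_i$ commutes with $\tau^k$, and Definition~\ref{D:promotive}(b) with constant $a$ kills all but the $i_1=0$ term, so expanding each $\bsm_v$ in the $\KK[\tau]$‑basis $\bss_1,\dots,\bss_d$ yields $D_i\bsm_v=(\bsm_v)_i$ in the notation of~\eqref{E:Dumotivenotation}, i.e.\ $\bsm_v$ placed into the $i$‑th block. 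By Proposition~\ref{P:qpfunctionpro}, the $j$‑th block of $\fG_{(\bsy)_{\ell}}$ (for $0\leqslant j\leqslant n$) is $\boldsymbol{0}$ when $j<\ell$ and $\pd_t^{j-\ell}(\cG_{\bsy})$ when $j\geqslant\ell$. Since the left argument of $\twistop{\cdot}{\cdot}$ has coefficients in $\KK$ (independent of $t$) and $\pd_t$ commutes with Frobenius twisting, one gets $\twistop{\tau D_i\bsm_v}{\fG_{(\bsy)_{\ell}}}=\boldsymbol{0}$ for $i<\ell$ and $\twistop{\tau D_i\bsm_v}{\fG_{(\bsy)_{\ell}}}=\pd_t^{\,i-\ell}\bigl(\twistop{\tau\bsm_v}{\cG_{\bsy}}\bigr)$ for $i\geqslant\ell$; assembling over $v$ and $i$, the block row vector $\twistop{\tau\tbsD_n\bsm}{\fG_{(\bsy)_{\ell}}}^{\tr}$ has $j$‑th block $\boldsymbol{0}$ for $j<\ell$ and $\pd_t^{\,j-\ell}\bigl(\twistop{\tau\bsm}{\cG_{\bsy}}^{\tr}\bigr)$ for $j\geqslant\ell$.

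Now I would finish both identities with the product rule~\eqref{E:productrule}. Right‑multiplying the block row vector above by $V_{\rP_n\phi}=d_{t,n+1}[V_{\phi}]$, which is block upper triangular with $(a,b)$‑block $\pd_t^{\,b-a}(V_{\phi})$, the $b$‑th block of $\fg_{(\bsy)_{\ell}}$ equals $-\sum_{a=\ell}^{b}\pd_t^{\,a-\ell}\bigl(\twistop{\tau\bsm}{\cG_{\bsy}}^{\tr}\bigr)\pd_t^{\,b-a}(V_{\phi})=-\pd_t^{\,b-\ell}\bigl(\twistop{\tau\bsm}{\cG_{\bsy}}^{\tr}V_{\phi}\bigr)=\pd_t^{\,b-\ell}(\bsg_{\bsy})$ for $b\geqslant\ell$ and $\boldsymbol{0}$ otherwise, where I used that $d_{t,n+1}$ is multiplicative (Definition~\ref{def:dmatrices}) and the definition of $\bsg_{\bsy}$ in~\eqref{E:galphadef}; this is exactly~\eqref{E:gapro}. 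For~\eqref{E:hapro} I would twist~\eqref{E:gapro} block by block (again $\pd_t$ commutes with twisting), multiply by $\Phi_{\rP_n\phi}=d_{t,n+1}[\Phi_{\phi}]$, and collapse by the same product‑rule argument to get that the $b$‑th block of $\fg_{(\bsy)_{\ell}}^{(-1)}\Phi_{\rP_n\phi}$ is $\pd_t^{\,b-\ell}(\bsg_{\bsy}^{(-1)}\Phi_{\phi})$ for $b\geqslant\ell$; subtracting~\eqref{E:gapro} and invoking $\bsg_{\bsy}^{(-1)}\Phi_{\phi}-\bsg_{\bsy}=\bsh_{\bsalpha}$ from Lemma~\ref{L:gyha} gives $\fh_{(\bsalpha)_{\ell}}=\fg_{(\bsy)_{\ell}}^{(-1)}\Phi_{\rP_n\phi}-\fg_{(\bsy)_{\ell}}$ with $b$‑th block $\pd_t^{\,b-\ell}(\bsh_{\bsalpha})$, which is~\eqref{E:hapro}; membership in $\Mat_{1\times(n+1)r}(\KK[t])$ is automatic because $\bsh_{\bsalpha}\in\Mat_{1\times r}(\KK[t])$ and $\pd_t$ preserves polynomials.

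The conceptual content here is slight; the main obstacle is purely bookkeeping — keeping the forward/reverse orderings of the prolongation bases ($\tbsD_n\bsm$ versus $\bsD_n\bsn$, as in Remark~\ref{R:basispromotive} and Theorem~\ref{T:prolongprops}(c)) consistent with how $V_{\rP_n\phi}$ and $\Phi_{\rP_n\phi}$ were set up, and carefully verifying the two structural facts $D_i\bsm_v=(\bsm_v)_i$ and the block‑wise compatibility of $\twistop{\cdot}{\cdot}$ with $\pd_t$. Once the conventions are pinned down, every remaining step is a routine application of the product rule for $\pd_t$ and the multiplicativity of $d_{t,n+1}$.
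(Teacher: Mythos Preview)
Your proposal is correct and follows essentially the same route as the paper's proof: compute $\twistop{\tau\tbsD_n\bsm}{\fG_{(\bsy)_\ell}}^{\tr}$ block by block via Proposition~\ref{P:qpfunctionpro}, right-multiply by $V_{\rP_n\phi}=d_{t,n+1}[V_\phi]$ and collapse with the product rule to obtain~\eqref{E:gapro}, then use $\Phi_{\rP_n\phi}=d_{t,n+1}[\Phi_\phi]$ and the Frobenius difference relation from Lemma~\ref{L:gyha} for~\eqref{E:hapro}. Your write-up is slightly more explicit about the identification $D_i\bsm_v=(\bsm_v)_i$ and about why $\twistop{\cdot}{\cdot}$ commutes with $\pd_t$ here, but the argument is the same.
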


\begin{proof}
By Proposition~\ref{P:qpfunctionpro}, we have
\[
\fG_{(\bsy)_{\ell}}= \bigl( 0, \dots, 0, \cG_{\bsy}^{\tr}, \pd_t^1(\cG_{\bsy})^{\tr}, \dots, \pd_t^{n-\ell}(\cG_{\bsy})^{\tr}\bigr)^{\tr}.
\]
Since $V_{\rP_n\phi}= d_{t,n+1}[V_{\phi}]$, it follows from this equation and Lemma~\ref{L:gyha} that
\begin{align*}
\fg_{(\bsy)_{\ell}} &= -\twistop{\tau \bsD_n\bsm}{\fG_{(\bsy)_{\ell}}}^{\tr}\cdot V_{\rP_n \phi} \\
&= -\bigl( 0, \dots, 0, \twistop{\tau \bsm}{\cG_{\bsy}}^\tr, \pd_t^1(\twistop{\tau \bsm}{\cG_{\bsy}})^\tr, \dots, \pd_t^{n-\ell}(\twistop{\tau \bsm}{\cG_{\bsy}}^\tr)\bigr) \cdot d_{t,n+1}[V_{\phi}].
\end{align*}
Thus, using the product rule for hyperderivatives, we obtain
\[
\sum_{j=0}^i\pd_t^{i-j} \bigl(\twistop{\tau \bsm}{\cG_{\bsy}} \bigr)^\tr\cdot \pd_t^{j}(V_{\phi}) = \pd_t^{i}(\twistop{\tau \bsm}{\cG_{\bsy}}^\tr\cdot V_{\phi}),
\]
from which \eqref{E:gapro} follows. Lemma~\ref{L:gyha} then implies that $\fg_{(\bsy)_{\ell}}^{(-1)} \Phi_{\rP_n\phi}-\fg_{(\bsy)_{\ell}} = \fh_{(\bsalpha)_{\ell}}$.  Since $\Phi_{\rP_n \phi}=d_{t,n+1}[\Phi_{\phi}]$, we obtain \eqref{E:hapro} by a similar calculation using the product rule.
\end{proof}
Combining Theorem~\ref{T:quasispancomplete} with \eqref{E:thetaandthypereq} and~\eqref{E:hypertbasisAGFdeg}, we obtain the following corollary.

\begin{corollary} \label{C:rathpqgyhapr0}
Continuing with the notations of Theorem~\ref{T:rathyperquasi0} and Lemma~\ref{L:gyhapro}, we have
\[
\Span{\oK} \bigl( \{1\} \cup \fg_{(\bsy)_0}\big|_{t=\theta} \bigr)= \Span{\oK} \biggl( \{1\} \cup \bigcup_{u=0}^n \bigcup_{v=1}^r \Bigl\{ \pd_\theta^u \bigl( \rF_{\bsdelta_v}(\bsy) \bigr) \Bigr\} \biggr).
\]
\end{corollary}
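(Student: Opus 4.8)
The plan is to deduce Corollary~\ref{C:rathpqgyhapr0} by applying Theorem~\ref{T:quasispancomplete} to the prolongation $t$-module $\rP_n\phi$ and then identifying the resulting spans with the ones already computed in the course of proving Theorem~\ref{T:rathyperquasi0}. First I would recall the setup: $\phi$ is uniformizable, abelian, and $\bA$-finite over $K^{\sep}$ of rank $r$ and dimension $d$, and by Theorem~\ref{T:prolongprops} and Theorem~\ref{T:prolmod} the same holds for $\rP_n\phi$, with rank $(n+1)r$ and dimension $(n+1)d$. Choose $\oK[t]$-bases $\tbsD_n\bsm$ for $\cM_{\rP_n\phi,\oK}=\rP_n\cM_{\phi,\oK}$ and $\bsD_n\bsn$ for $\cN_{\rP_n\phi,\oK}=\rP_n\cN_{\phi,\oK}$, both defined over $K^{\sep}$ (available by Remark~\ref{R:basispromotive} and Theorem~\ref{T:prolongprops}(c), together with Proposition~\ref{P:promandmofpro}). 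Take $\bsy\in(K_{\infty}^{\sep})^d$ with $\Exp_\phi(\bsy)=\bsalpha\in(K^{\sep})^d$, so that $(\bsy)_0\in(K_{\infty}^{\sep})^{(n+1)d}$ satisfies $\Exp_{\rP_n\phi}((\bsy)_0)=(\bsalpha)_0\in(K^{\sep})^{(n+1)d}$ by Proposition~\ref{P:Exppro} (and Lemma~\ref{L:separable} applied to $\rP_n\phi$ guarantees all the relevant values lie in $K_{\infty}^{\sep}$).

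Next I would invoke Theorem~\ref{T:quasispancomplete} verbatim for $\rP_n\phi$ with the point $(\bsy)_0$ and the biderivations $\{\bsdelta_{u,v}\}_{0\leqslant u\leqslant n,\,1\leqslant v\leqslant r}$ that represent a $\oK$-basis of $\rH^1_{\DR}(\rP_n\phi,\oK)$ by Proposition~\ref{P:deRhambasispro}. This yields
\[
\Span{\oK}\bigl(\{1\}\cup\{\rF_{\bsepsilon}((\bsy)_0):\bsepsilon\in\Der(\rP_n\phi,\oK)\}\bigr)
=\Span{\oK}\bigl(\{1\}\cup\{\fg_{(\bsy)_0}|_{t=\theta}\}\bigr),
\]
where $\fg_{(\bsy)_0}\in\Mat_{1\times(n+1)r}(\TT_\theta)$ is the vector of functions from Lemma~\ref{L:gyha} applied to $\rP_n\phi$; by Lemma~\ref{L:gyhapro} (with $\ell=0$) this is $\fg_{(\bsy)_0}=(\bsg_{\bsy},\pd_t^1(\bsg_{\bsy}),\dots,\pd_t^n(\bsg_{\bsy}))$, which explains the notation in the statement. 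It therefore remains to identify the left-hand side with $\Span{\oK}(\{1\}\cup\bigcup_{u=0}^n\bigcup_{v=1}^r\{\pd_\theta^u(\rF_{\bsdelta_v}(\bsy))\})$.

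For that identification I would reuse the chain of equalities established in the proof of Theorem~\ref{T:rathyperquasi0}. Concretely: by Proposition~\ref{P:qpfunctionpro} the Anderson generating function $\fG_{(\bsy)_0}$ for $\rP_n\phi$ has blocks $\cG_{\bsy},\pd_t^1(\cG_{\bsy}),\dots,\pd_t^n(\cG_{\bsy})$, so that $\twistop{(\bsdelta_{u,v})_t}{\fG_{(\bsy)_0}}=\twistop{(\bsdelta_v)_t}{\pd_t^u(\cG_{\bsy})}$ by~\eqref{E:hypertbasisdert}; and Proposition~\ref{P:quasigen}(a) applied to $\rP_n\phi$ gives $\rF_{\bsdelta_{u,v}}((\bsy)_0)=\twistop{(\bsdelta_{u,v})_t}{\fG_{(\bsy)_0}}|_{t=\theta}$. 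These are precisely the quantities appearing in~\eqref{E:hypertbasisAGFdeg}, and the span they generate (together with $1$) was shown there to equal $\cW=\Span{\oK}(\{1\}\cup\bigcup_{u=0}^n\bigcup_{v=1}^r\{\pd_\theta^u(\rF_{\bsdelta_v}(\bsy))\})$. Since $\{\bsdelta_{u,v}\}$ represents a $\oK$-basis of $\rH^1_{\DR}(\rP_n\phi,\oK)$, the $\oK$-linearity of $\bsepsilon\mapsto\rF_{\bsepsilon}$ (from~\eqref{E:deltatoFdelta} applied to $\rP_n\phi$) shows that $\Span{\oK}(\{1\}\cup\{\rF_{\bsepsilon}((\bsy)_0):\bsepsilon\in\Der(\rP_n\phi,\oK)\})$ coincides with $\Span{\oK}(\{1\}\cup\bigcup_{u,v}\{\rF_{\bsdelta_{u,v}}((\bsy)_0)\})=\cW$. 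Stringing these equalities together with the output of Theorem~\ref{T:quasispancomplete} gives the claimed identity.

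The only genuinely delicate point is bookkeeping rather than a conceptual obstacle: one must make sure that the biderivations $\bsdelta_{u,v}$ used when invoking Theorem~\ref{T:quasispancomplete} for $\rP_n\phi$ are the \emph{same} family (or at least span the same de Rham classes over $\oK$) as the ones entering~\eqref{E:hypertbasisAGFdeg} in the proof of Theorem~\ref{T:rathyperquasi0}, and to recall that Remark~\ref{R:innernotpreserved} warns that $\bsdelta_{u,v}$ need not be inner even when $\bsdelta_v$ is — this is why we pass through $\Der(\rP_n\phi,\oK)$ in aggregate and through a chosen $\oK$-basis of $\rH^1_{\DR}(\rP_n\phi,\oK)$ rather than trying to track tractability block by block. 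Aside from that, the proof is essentially a matter of citing Theorem~\ref{T:quasispancomplete}, Lemma~\ref{L:gyhapro}, and the internal steps~\eqref{E:hypertbasisdert}, \eqref{E:thetaandthypereq}, \eqref{E:hypertbasisAGFdeg} of the preceding theorem, so it should be short.
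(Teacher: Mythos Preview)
Your proposal is correct and follows essentially the same approach as the paper: the corollary is stated in the paper as an immediate consequence of combining Theorem~\ref{T:quasispancomplete} (applied to $\rP_n\phi$ at the point $(\bsy)_0$) with the internal identities~\eqref{E:thetaandthypereq} and~\eqref{E:hypertbasisAGFdeg} from the proof of Theorem~\ref{T:rathyperquasi0}, which is precisely what you do. Your write-up is more detailed than the paper's one-line justification, but the logical content is identical.
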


Recalling the identification of $\rP_n \cN_{\phi}$ with $\Mat_{1 \times (n+1)d}(\KK[\sigma])$ from \eqref{E:dualofproandprodual}, the following corollary is a consequence of Proposition~\ref{P:epsiotapro} and Lemma \ref{L:gyhapro}.  Recall also the notation $D_j(\bsh)$ from~\eqref{E:dualofproandprodual} and the embeddings from Remark~\ref{R:sigmaK}.

\begin{corollary}\label{C:prohyperE0gh}
Let $\phi : \bA \to \Mat_d(K^{\sep}[\tau])$ be a uniformizable, abelian, and $\bA$-finite $t$-module. Continuing with the notations of Proposition~\ref{P:epsiotapro} and  Lemma~\ref{L:gyhapro}, we have
\[
\cE_{0, \rP_n\phi} \bigl(\fg_{(\bsy)_{\ell}} + \fh_{(\bsalpha)_{\ell}} \bigr)
= D_{n-(\ell-1)} \bigl( \cE_{0, \phi}(\bsg_{\bsy}+\bsh_{\bsalpha}) \bigr)^{\tr}
= \bigl( 0, \dots, 0, \cE_{0, \phi}(\bsg_{\bsy}+\bsh_{\bsalpha}), 0, \dots, 0 \bigr)^{\tr},
\]
where $\cE_{0, \phi}(\bsg_{\bsy}+\bsh_{\bsalpha})$ occupies entries $d\ell+1$ through $d\ell+d$.
\end{corollary}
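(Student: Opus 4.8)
\textbf{Proof proposal for Corollary~\ref{C:prohyperE0gh}.}
The plan is to compute $\cE_{0,\rP_n\phi}(\fg_{(\bsy)_{\ell}} + \fh_{(\bsalpha)_{\ell}})$ by feeding the explicit block description of $\fg_{(\bsy)_{\ell}}$ and $\fh_{(\bsalpha)_{\ell}}$ from Lemma~\ref{L:gyhapro} into the formula for $\cE_{0,\rP_n\phi}$ furnished by Proposition~\ref{P:epsiotapro}, and then collapsing the resulting double sum using the product/composition rules for hyperderivatives. First I would write $\bsg \assign \bsg_{\bsy} + \bsh_{\bsalpha} \in \Mat_{1\times r}(\TT_\theta)$, so that by~\eqref{E:gapro} and~\eqref{E:hapro} the block decomposition $\fg_{(\bsy)_{\ell}} + \fh_{(\bsalpha)_{\ell}} = (\bsalpha_0,\dots,\bsalpha_n)$ has $\bsalpha_u = \bzero$ for $0\leqslant u < \ell$ and $\bsalpha_{\ell+k} = \pd_t^k(\bsg)$ for $0 \leqslant k \leqslant n-\ell$. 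Plugging into Proposition~\ref{P:epsiotapro}, only the terms with $u \geqslant \ell$ survive, so
\[
  \cE_{0,\rP_n\phi}\bigl(\fg_{(\bsy)_{\ell}}+\fh_{(\bsalpha)_{\ell}}\bigr)
  = \sum_{k=0}^{n-\ell}\ \sum_{v=0}^{n-\ell-k} D_{n-\ell-k-v}\Bigl( \cE_{0,\phi}\bigl( (-1)^v \pd_t^v(\pd_t^k(\bsg)) \bigr)^{\tr} \Bigr)^{\tr}.
\]

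Next I would use the composition rule~\eqref{E:comprule}, $\pd_t^v \circ \pd_t^k = \binom{v+k}{k}\pd_t^{v+k}$, together with the $\KK$-linearity of $\cE_{0,\phi}$, to reindex by $w = v+k$. After the reindexing the coefficient of $D_{n-\ell-w}(\cE_{0,\phi}(\pd_t^w(\bsg))^\tr)^\tr$ becomes $\sum_{k=0}^{w}(-1)^{w-k}\binom{w}{k}$, which is $0$ for $w\geqslant 1$ and $1$ for $w=0$ — this is the same binomial vanishing used in the proof of Lemma~\ref{L:chainrule} and in~\eqref{E:Djrecursion}. Hence only the $w=0$ term remains, giving
\[
  \cE_{0,\rP_n\phi}\bigl(\fg_{(\bsy)_{\ell}}+\fh_{(\bsalpha)_{\ell}}\bigr)
  = D_{n-\ell}\bigl( \cE_{0,\phi}(\bsg)^{\tr} \bigr)^{\tr}
  = D_{n-(\ell-1)-1}\bigl( \cE_{0,\phi}(\bsg_{\bsy}+\bsh_{\bsalpha})^{\tr} \bigr)^{\tr}.
\]
Wait — I must double-check the block index here: the statement asks for $D_{n-(\ell-1)}$ with $\cE_{0,\phi}(\bsg_{\bsy}+\bsh_{\bsalpha})$ occupying entries $d\ell+1$ through $d\ell+d$; under the identification~\eqref{E:dualofproandprodual}, $D_j(\bsh)$ places $\bsh$ in block $n-j+1$, so entries $d\ell+1,\dots,d\ell+d$ sit in block $\ell+1$, which means $n-j+1 = \ell+1$, i.e.\ $j = n-\ell$. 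So the correct index is indeed $D_{n-\ell}$, and the statement's ``$D_{n-(\ell-1)}$'' should be read in the convention where $D_{n-(\ell-1)}$ and the description ``entries $d\ell+1$ through $d\ell+d$'' are consistent only after matching conventions; I would reconcile this by citing~\eqref{E:dualofproandprodual} and~\eqref{E:Dumotivenotation} carefully and then using Proposition~\ref{P:Expgyha} (for the almost strictly pure case) or Theorem~\ref{T:Exponentiation} (in general, for the containment) to identify $\cE_{0,\phi}(\bsg_{\bsy}+\bsh_{\bsalpha})$ with $\bsy$ where applicable.

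The only genuine obstacle is bookkeeping: keeping the indexing conventions for the blocks $D_j$ straight between Definitions~\ref{D:promotive}--\ref{D:dualtpro}, the basis ordering~\eqref{E:Dsymbols}, the identification~\eqref{E:dualofproandprodual}, and the vector notation~\eqref{E:Dumotivenotation}. Once those are pinned down, the algebraic content is entirely routine — the composition rule plus the alternating-binomial collapse, both already invoked elsewhere in the paper. I would therefore present the proof as: (i) recall the block decomposition from Lemma~\ref{L:gyhapro}; (ii) substitute into Proposition~\ref{P:epsiotapro}; (iii) apply~\eqref{E:comprule} and reindex; (iv) invoke the binomial identity $\sum_{k=0}^w (-1)^{w-k}\binom{w}{k} = \delta_{w,0}$ to kill all but one term; (v) translate the surviving $D$-symbol back into block-vector form via~\eqref{E:dualofproandprodual}. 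The last equality in the statement — that this equals $(0,\dots,0,\cE_{0,\phi}(\bsg_{\bsy}+\bsh_{\bsalpha}),0,\dots,0)^\tr$ — is then just unwinding the embedding of Remark~\ref{R:sigmaK}.
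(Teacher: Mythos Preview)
Your proposal is correct and follows essentially the same route as the paper: write out the block decomposition of $\fg_{(\bsy)_\ell}+\fh_{(\bsalpha)_\ell}$ from Lemma~\ref{L:gyhapro}, feed it into Proposition~\ref{P:epsiotapro}, apply the composition rule~\eqref{E:comprule}, and collapse via the alternating-binomial identity. The paper rearranges the double sum first (into the form~\eqref{E:rearrangeE0pro}) and then substitutes, while you substitute first and then reindex by $w=v+k$; these are the same computation in a different order.

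Two minor remarks. First, your invocation of Proposition~\ref{P:Expgyha} and Theorem~\ref{T:Exponentiation} at the end is unnecessary: the corollary is purely a statement relating $\cE_{0,\rP_n\phi}$ to $\cE_{0,\phi}$ and does not require identifying $\cE_{0,\phi}(\bsg_{\bsy}+\bsh_{\bsalpha})$ with $\bsy$. Second, your instinct about the indexing discrepancy is well-founded --- the paper's own proof writes the first nonzero block as $\bsalpha_{\ell-1}$ (so $\ell-1$ leading zeros), while~\eqref{E:gapro} as displayed has $\ell$ leading zeros, and the two pieces of the statement ($D_{n-(\ell-1)}$ versus ``entries $d\ell+1$ through $d\ell+d$'') are in tension under~\eqref{E:dualofproandprodual}. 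This is a bookkeeping inconsistency in the source, not a flaw in your argument; once the convention is fixed consistently, your computation goes through verbatim.
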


\begin{proof}
For any $\bsalpha = (\bsalpha_0, \dots, \bsalpha_n) \in \Mat_{1\times r(n+1)}(\TT_{\theta})$ as in Proposition~\ref{P:epsiotapro}, rearranging the order of summation we have
\begin{multline} \label{E:rearrangeE0pro}
\sum_{u=0}^{n} \sum_{v=0}^{n-u} D_{n-u-v} \bigl(\cE_{0, \phi}((-1)^{v}\pd_t^v(\bsalpha_u))^\tr \bigr)^\tr \\
= \sum_{u=0}^{n} D_{n-u}\biggl(\cE_{0, \phi} \biggl( \sum_{w=0}^{u}(-1)^{w}\pd_t^w(\bsalpha_{u-w}) \biggr)^\tr \biggr)^\tr.
\end{multline}
Note that by Lemma~\ref{L:gyhapro}, we have $\fg_{(\bsy)_u} + \fh_{(\bsalpha)_u} = (\bsalpha_0, \dots, \bsalpha_n)$, where $\bsalpha_0 = \dots = \bsalpha_{\ell-2}=0$ and for $\ell-1 \leqslant j \leqslant n$ we have $\bsalpha_{j}=\pd_t^{j-(\ell-1)}(\bsg_{\bsy} + \bsh_{\bsalpha})$. Thus for $0 \leqslant u \leqslant \ell-2$ we have
\begin{equation}
\label{E:alternatingsum0}
\sum_{w=0}^{u}(-1)^{w}\pd_t^w(\bsalpha_{u-w}) = 0,
\end{equation}
for $u=\ell-1$ we have
\begin{equation}
\label{E:alternatingsum1}
\sum_{w=0}^{\ell-1}(-1)^{w}\pd_t^w(\bsalpha_{u-w})= \bsalpha_{\ell-1}.
\end{equation}
For $\ell \leqslant u \leqslant n$ we have
\begin{align} \label{E:alternatingsum}
\sum_{w=0}^{u}(-1)^{w} \pd_t^w(\bsalpha_{u-w}) &= \sum_{w=0}^{u-(\ell-1)}(-1)^{w}\pd_t^w(\bsalpha_{u-w}) \\
&=\sum_{w=0}^{u-(\ell-1)}(-1)^{w}\pd_t^w \bigl(\pd_t^{u-w-(\ell-1)}(\bsg_{\bsy}+\bsh_{\bsalpha}) \bigr) \notag \\
&=\sum_{w=0}^{u-(\ell-1)}(-1)^{w}\binom{u-(\ell-1)}{w}\pd_t^{u-(\ell-1)}(\bsg_{\bsy}+\bsh_{\bsalpha}), \notag
\end{align}
where the first equality follows from $\bsalpha_0=\dots=\bsalpha_{\ell-2}=0$ and the third from the composition rule~\eqref{E:comprule}. Since alternating sums of binomial coefficients are $0$, we see that \eqref{E:alternatingsum} is equal to $0$. Combining \eqref{E:rearrangeE0pro}--\eqref{E:alternatingsum}, we obtain
\[
\sum_{u=\ell-1}^{n} D_{n-u} \biggl( \cE_0 \biggl( \sum_{w=0}^{u}(-1)^{w} \pd_t^w \bigl( \pd_t^{u-j-w} (\bsg_{\bsy}+\bsh_{\bsalpha}) \bigr) \biggr)^\tr \biggr)^\tr = D_{n-(\ell-1)} \bigl(\cE_{0, \phi}(\bsg_{\bsy}+\bsh_{\bsalpha}) \bigr).
\]
The result then follows from Proposition~\ref{P:epsiotapro} and \eqref{E:rearrangeE0pro}.
\end{proof}

\begin{remark}
By Lemma~\ref{L:gyhapro}, we have $\fh_{(\bsalpha)_{\ell}} = (\bsalpha_0, \dots, \bsalpha_n)$, where $\bsalpha_0 = \dots = \bsalpha_{\ell-2}=0$ and for $j \in \{\ell-1, \dots, n\}$ we have $\bsalpha_{j}=\pd_t^{j-(\ell-1)}(\bsh_{\bsalpha})$. Then, by using Remark~\ref{R:epsiotapro} and the calculations in the proof of Corollary~\ref{C:prohyperE0gh}, we obtain the following similar result:
\[
  \cE_{1,\rP_n\phi}(\fh_{(\bsalpha)_{\ell}}) = D_{n-(\ell-1)} \bigl( \cE_{1,\phi}(\bsh_{\bsalpha}) \bigr)^{\tr} = \bigl(0, \dots, 0, \cE_{1,\phi}(\bsh_{\bsalpha}), 0, \dots, 0 \bigr)^{\tr},
\]
where the map $\cE_{1,\rP_n\phi}$ on the left-hand side corresponds to the one in \S\ref{subS:AndersonThm} for  $\rP_n\cN_{\phi}$ and $\cE_{1, \phi}$ in the first and second equality corresponds to the one for $\cN_{\phi}$, and in the second equality $\cE_{1, \phi}(\bsh_{\bsalpha})$ occupies entries $d\ell+1$ through $d\ell+d$.
\end{remark}

As mentioned in Remark~\ref{R:quasikbarspans}, the following theorem is a companion to Theorem~\ref{T:rathyperquasi0} to account for non-tractable coordinates of periods and logarithms, as well as their hyperderivatives with respect to~$\theta$.  As such, we should note that the right-hand spaces in both parts~(a) and~(b) are the same as those in Theorem~\ref{T:rathyperquasi0}, though we must choose~$n$ appropriately.  When $\rd\phi_t$ is in Jordan normal form, the proof shows that we can be more precise, and this is stated as Corollary~\ref{C:Jordannontract}.

\begin{theorem} \label{T:hypernontract}
Let $\phi$ be a uniformizable, abelian, and $\bA$-finite Anderson $t$-module defined over $K^\sep$ of rank $r$ and dimension $d$. Let $\bsy \in (K_{\infty}^{\sep})^d$ satisfy $\Exp_\phi(\bsy) \in (K^\sep)^d$, and let $\{\bsdelta_1, \dots, \bsdelta_r\}$ represent a $\oK$-basis of $\rH^1_{\DR}(\phi,\oK)$ defined over $K^\sep$.  Let $j \geqslant 0$, and choose $n \geqslant 0$ so that $(\rd\phi_t - \theta\Id_d)^{n-j}=0$.
\begin{enumerate}
\item[(a)] If $\phi$ is almost strictly pure, then
\[
  \Span{\oK} \bigl( \pd_{\theta}^{j} (\bsy) \bigr) \subseteq
  \Span{\oK} \biggl( \{1\} \cup \bigcup_{u=0}^n \bigcup_{v=1}^r \Bigl\{ \pd_\theta^u \bigl( \rF_{\bsdelta_v}(\bsy) \bigr) \Bigr\} \biggr).
\]
\item[(b)] For all $\phi$, i.e., not necessarily almost strictly pure, let $\bslambda_1, \dots, \bslambda_r$ denote an $\bA$-basis of $\Lambda_{\phi}$.  Then for any $\bslambda \in \Lambda_{\phi}$,
\[
  \Span{\oK} \bigl( \pd_{\theta}^{j}(\bslambda) \bigr) \subseteq \Span{\oK}\Bigl( d_{t, n+1}[\Psi_{ \phi}]^{-1}\big|_{t=\theta} \Bigr) = \Span{\oK} \biggl( \bigcup_{u=0}^n \bigcup_{v=1}^{r} \bigcup_{\ell=1}^r \Bigl\{ \pd_\theta^u \bigl(\rF_{\bsdelta_v}(\bslambda_{\ell}) \bigr) \Bigr\} \biggr),
\]
where $\Psi_{\phi}$ is a rigid analytic trivialization for~$\phi$.
\end{enumerate}
\end{theorem}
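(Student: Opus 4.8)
The plan is to realize the hyperderivatives $\pd_\theta^j(\bsy)$ (respectively $\pd_\theta^j(\bslambda)$) as honest coordinates of logarithms (respectively periods) of a sufficiently high prolongation $\rP_n\phi$, and then to invoke Theorem~\ref{T:rathyperquasi0} to identify the relevant $\oK$-span. The central point is that the non-tractable coordinates of $\phi$ become tractable coordinates of $\rP_n\phi$ once $n$ is large enough relative to the nilpotency of $\rd\phi_t - \theta\Id_d$; this is exactly why the hypothesis $(\rd\phi_t - \theta\Id_d)^{n-j}=0$ appears. First I would assume without loss of generality (using a change of coordinates over $K^{\sep}$ as in Remark~\ref{R:dphit}) that $\rd\phi_t$ is in Jordan normal form, with blocks $d_{\theta,\ell_i}[\theta]$; this is the setting in which Proposition~\ref{P:quasiperspans} and Remark~\ref{R:tractable} describe exactly which coordinates are tractable, and it will also let me state the refinement in Corollary~\ref{C:Jordannontract}. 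Then $\rd(\rP_n\phi)_t$ has the block-subdiagonal form from Theorem~\ref{T:prolmod}, and one checks directly that within each Jordan block of $\phi$, the chain of $-\Id_d$'s connecting the $n+1$ prolongation levels makes the top coordinate of a block at level $0$ accessible from the bottom coordinate of the same block at level $n-j$ after $n-j$ applications of $\rd(\rP_n\phi)_t - \theta\Id_{(n+1)d}$; once $(\rd\phi_t - \theta\Id_d)^{n-j}$ kills the intra-block nilpotent part, the remaining shift is purely the prolongation shift, and this is precisely what turns a non-tractable coordinate of $\bsy$ into a tractable coordinate of some logarithm of $\rP_n\phi$.

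For part~(b), which holds for all $\phi$, I would argue as follows. Fix $\bslambda \in \Lambda_\phi$ and the associated $(\bslambda)_0 \in \Lambda_{\rP_n\phi}$ from~\eqref{E:latticebasispro}. By Corollary~\ref{C:BrDelogs}'s analogue in the prolongation setting—more precisely, by combining Proposition~\ref{P:Exppro} with the explicit form of $\rd(\rP_n\phi)_t$ and the $p$-th power rule for hyperderivatives (Proposition~\ref{P:Hyperprops}(b))—one shows that the block vector $\bigl(\bslambda^{\tr}, \pd_\theta^1(\bslambda)^{\tr}, \dots, \pd_\theta^n(\bslambda)^{\tr}\bigr)^{\tr}$ lies in $\Lambda_{\rP_n\phi}$. (This is the natural generalization of Corollary~\ref{C:BrDelogs}, and the hyperderivative bookkeeping is essentially the same; the key cancellation is that the subdiagonal $-\Id_d$ entries of $\rd(\rP_n\phi)_t$ exactly implement the difference operator $\pd_\theta^i(\theta^{-w-1})$ appearing in the expansion of the inverse.) Since $(\rd\phi_t - \theta\Id_d)^{n-j}=0$ guarantees that the $j$-th hyperderivative coordinate $\pd_\theta^j(\bslambda)$ appears as a tractable coordinate of this period of $\rP_n\phi$—i.e., at the bottom of a Jordan block of $\rd(\rP_n\phi)_t$—Proposition~\ref{P:quasiperspans} applied to $\rP_n\phi$ puts $\pd_\theta^j(\bslambda)$ into the span of the quasi-periods $\rF_{\bsdelta}(\bslambda')$ over biderivations $\bsdelta$ of $\rP_n\phi$ and periods $\bslambda'$ of $\rP_n\phi$. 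Finally, Theorem~\ref{T:rathyperquasi0}(b) identifies that span with $\Span{\oK}\bigl(d_{t,n+1}[\Psi_\phi]^{-1}|_{t=\theta}\bigr) = \Span{\oK}\bigl(\bigcup_{u,v,\ell}\{\pd_\theta^u(\rF_{\bsdelta_v}(\bslambda_\ell))\}\bigr)$, which is the desired conclusion.

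For part~(a), I would run the parallel argument but now tracking the logarithm $\bsy$ rather than a period, using Corollary~\ref{C:prohyperE0gh} and Corollary~\ref{C:rathpqgyhapr0} in place of the period statement. The point is that almost strict purity is needed here (but not in part~(b)) because only for almost strictly pure $t$-modules do we know—via Proposition~\ref{P:Expgyha}—that the vectors $\bsg_{(\bsy)_0}$, $\bsh_{(\bsalpha)_0}$ produced by Lemma~\ref{L:gyhapro} actually satisfy $\cE_{0,\rP_n\phi}(\bsg_{(\bsy)_0}+\bsh_{(\bsalpha)_0}) = (\bsy)_0$ exactly, rather than merely yielding \emph{some} point with the same exponential. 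With that exactness in hand, Corollary~\ref{C:prohyperE0gh} shows $\cE_{0,\rP_n\phi}(\fg_{(\bsy)_0}+\fh_{(\bsalpha)_0})$ has $\bsy$ in its level-$n$ block, whose coordinates—after the nilpotent-part vanishing $(\rd\phi_t-\theta\Id_d)^{n-j}=0$—include $\pd_\theta^j(\bsy)$ tractably; then Theorem~\ref{T:quasispancomplete} applied to $\rP_n\phi$ together with Corollary~\ref{C:rathpqgyhapr0} (or directly Theorem~\ref{T:rathyperquasi0}(a)) gives the stated inclusion. The main obstacle I anticipate is the combinatorial verification that the prolongation shift structure of $\rd(\rP_n\phi)_t$ interacts with the Jordan-block nilpotent of $\rd\phi_t$ so as to make precisely the coordinate $\pd_\theta^j$ tractable at level $n-j$ (and not smeared across levels)—this requires carefully iterating $(\rd(\rP_n\phi)_t - \theta\Id_{(n+1)d})$ and separating the two commuting nilpotent contributions, and it is the step where the hypothesis $(\rd\phi_t - \theta\Id_d)^{n-j}=0$ must be used with some care; the hyperderivative identities themselves (product rule, $p$-th power rule, chain rule from Lemma~\ref{L:chainrule}) are by then essentially routine.
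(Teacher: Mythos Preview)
Your proposal contains a fundamental confusion between Maurischat's prolongation $\rP_n\phi$ and the Brownawell--Denis module $\rho_n$ from \S\ref{subS:BD}. In part~(b) you claim that $(\bslambda^{\tr}, \pd_\theta^1(\bslambda)^{\tr}, \dots, \pd_\theta^n(\bslambda)^{\tr})^{\tr}$ lies in $\Lambda_{\rP_n\phi}$, invoking an ``analogue of Corollary~\ref{C:BrDelogs}.'' But by Proposition~\ref{P:Exppro} the exponential of $\rP_n\phi$ is block-diagonal, so $\Lambda_{\rP_n\phi} = \Lambda_\phi^{n+1}$ exactly; your vector would be a period only if every $\pd_\theta^j(\bslambda)$ were itself in $\Lambda_\phi$, which is false. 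Corollary~\ref{C:BrDelogs} concerns $\rho_n$, whose exponential (Theorem~\ref{T:BrDe}) is entirely different from that of $\rP_n\phi$. Similarly, in part~(a) you assert that Corollary~\ref{C:prohyperE0gh} produces a logarithm whose coordinates ``include $\pd_\theta^j(\bsy)$ tractably,'' but that corollary gives $\cE_{0,\rP_n\phi}(\fg_{(\bsy)_0}+\fh_{(\bsalpha)_0}) = (\bsy)_0$, i.e., $\bsy$ sitting in one block with zeros elsewhere---no hyperderivatives of $\bsy$ appear.

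The paper's argument proceeds differently and works at the level of $\phi$ itself, not $\rP_n\phi$. For part~(a) one assumes (after a change of coordinates) that $\phi$ satisfies Proposition~\ref{P:epsiota}. Then Proposition~\ref{P:Expgyha} gives $\bsy = \cE_{0,\phi}(\bsg_{\bsy}+\bsh_{\bsalpha})$, and the explicit formula~\eqref{E:E0} expresses each coordinate $y_k$ as $\pd_t^{e}(\beta_i)|_{t=\theta}$ for some component $\beta_i$ of $\bsg_{\bsy}+\bsh_{\bsalpha}$. Since each $\beta_i$ is a $\oK[t]$-linear combination of $1$ and the $\twistop{(\bsdelta_v)_t}{\cG_{\bsy}}$, its $t$-derivatives lie in $\Span{\oK[t]}\bigl(\{1\}\cup\{\twistop{(\bsdelta_v)_t}{\pd_t^u(\cG_{\bsy})}\}\bigr)$; evaluating at $t=\theta$ and applying Theorem~\ref{T:rathyperquasi0}(a) puts every $y_k$ in the right-hand span (this is~\eqref{E:allentrieslog}). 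Then one simply applies $\pd_\theta^j$ and reindexes $n$. Part~(b) is parallel, replacing Proposition~\ref{P:Expgyha} by Theorem~\ref{T:PeriodLattice}(b)---which is why almost strict purity is not needed there. The prolongation $\rP_n\phi$ enters only implicitly, through Theorem~\ref{T:rathyperquasi0}; the ``tractable-coordinates-of-$\rP_n\phi$'' mechanism you are reaching for is the content of the separate Theorem~\ref{T:hypernontractgen}, and even there it goes through the explicit inner biderivations $\bsepsilon_{w,i}$ of $\rP_n\phi$ rather than through hyperderivative period vectors.
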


\begin{remark}
If $\rd\phi_t - \theta\Id_d=0$, then we can allow $j=n=0$.  However, in this case all coordinates of periods and logarithms are tractable, and the theorem does not contain additional information beyond Theorem~\ref{T:rathyperquasi0}.
\end{remark}

\begin{corollary} \label{C:Jordannontract}
Continuing with the notation of Theorem~\ref{T:hypernontract}, we assume that $\phi$ is almost strictly pure and that it satisfies the conditions of Proposition~\ref{P:epsiota}.  For $\ell_1, \dots, \ell_m$ as in the statement of Proposition~\ref{P:epsiota} and for $1 \leqslant i \leqslant m$, let $d_i \assign \ell_1 + \cdots + \ell_i$.  If we write $\bsy = (y_1, \dots, y_d)^{\tr}$, then for $1 \leqslant i \leqslant m$, $j \geqslant 0$, and $0 \leqslant k \leqslant \ell_i-1$,
\[
\pd_{\theta}^j(y_{d_i-k}) \in \Span{\oK} \biggl( \{1\} \cup \bigcup_{u=0}^{k+j} \bigcup_{v=1}^r \Bigl\{ \pd_\theta^u \bigl( \rF_{\bsdelta_v}(\bsy) \bigr) \Bigr\} \biggr).
\]
\end{corollary}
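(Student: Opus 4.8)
\textbf{Proof plan for Corollary~\ref{C:Jordannontract}.}
The strategy is to run the argument of Theorem~\ref{T:hypernontract}(a) while keeping careful track of \emph{how many} prolongation levels are actually needed to reach a given coordinate $y_{d_i-k}$ of $\bsy$, rather than the crude bound coming from $(\rd\phi_t-\theta\Id_d)^{n-j}=0$. First I would recall that under the hypotheses of Proposition~\ref{P:epsiota}, the matrix $\rd\phi_t$ is in Jordan normal form with $m$ blocks of sizes $\ell_1,\dots,\ell_m$ (Remark~\ref{R:dphit}), so $\bsy$ decomposes into blocks $\bsy^{(i)}=(y_{d_{i-1}+1},\dots,y_{d_i})^{\tr}$, and within each block the entry $y_{d_i-k}$ sits $k$ slots above the bottom. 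The bottom entry $y_{d_i}$ of each block is \emph{tractable} in the sense of Remark~\ref{R:tractable}, hence $\pd_\theta^j(y_{d_i})$ is already handled by Theorem~\ref{TI:rathyperquasi0}/Theorem~\ref{T:rathyperquasi0}, using only $\pd_\theta^0,\dots,\pd_\theta^j$, i.e. $k=0$ gives the $u$-range $0\le u\le j$. So the content is to climb up the Jordan block one step at a time, each step costing one extra prolongation level and one extra hyperderivative index.

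The key mechanism is the relation between non-tractable coordinates of $\phi$ and tractable coordinates of $\rP_n\phi$. Concretely, from Theorem~\ref{T:prolmod} the subdiagonal blocks of $\rd(\rP_n\phi)_t$ are $-\Id_d$, so in the prolongation the coordinate of $\bsz_{w}$ sitting at position $d_i-k$ is linked, via $(\rd(\rP_n\phi)_t-\theta\Id)$, to the coordinate of $\bsz_{w+1}$ at position $d_i-k+1$. Iterating: the entry $y_{d_i-k}$ of $\bsy$ (placed at level $0$) becomes, after applying $(\rd(\rP_{k}\phi)_t-\theta\Id_{(k+1)d})^{k}$-type manipulations, expressible through the \emph{bottom} coordinate $y_{d_i}$ occurring at level $k$ of $\rP_{k}\phi$ — which is tractable there. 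Combined with the calculation of $\fG_{(\bsy)_0}$ in Proposition~\ref{P:qpfunctionpro} (whose blocks are $\cG_{\bsy},\pd_t^1(\cG_{\bsy}),\dots$) and with the residue/principal-part formulas~\eqref{E:giprinparts} for $\cG_{\bsy}$ when $\rd\phi_t$ is in Jordan form, one reads off that $\pd_t^{\ell_i-k}\bigl((t-\theta)^{\ell_i}g_{d_{i-1}+1}\bigr)\big|_{t=\theta}=-y_{d_i-k}$ — exactly the identity already used in the alternate proof of Proposition~\ref{P:Expgyha} and in Example~\ref{Ex:Carlitztensor2}, equation~\eqref{E:Ctensordiff}. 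The plan is then to feed this into the span identity~\eqref{E:thetaandthypereq}/Corollary~\ref{C:rathpqgyhapr0}: taking $n=k+j$ in that machinery, $\pd_\theta^j(y_{d_i-k})$ — which is a hyperderivative of order $j$ of something that appears as an order-$k$ coefficient in $\fG_{(\bsy)_0}$ — lands in $\Span{\oK}$ of $\{1\}$ together with $\pd_\theta^u(\rF_{\bsdelta_v}(\bsy))$ for $0\le u\le k+j$ and $1\le v\le r$.

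More precisely, the steps I would carry out are: (1) reduce to $\rd\phi_t$ in Jordan normal form and fix the block decomposition; (2) apply Corollary~\ref{C:rathpqgyhapr0} with $n=k+j$ to get
\[
\Span{\oK}\bigl(\{1\}\cup \fg_{(\bsy)_0}\big|_{t=\theta}\bigr)=\Span{\oK}\biggl(\{1\}\cup\bigcup_{u=0}^{k+j}\bigcup_{v=1}^{r}\bigl\{\pd_\theta^u(\rF_{\bsdelta_v}(\bsy))\bigr\}\biggr);
\]
(3) show $\pd_\theta^j(y_{d_i-k})$ lies in the left-hand side. For (3) I would use Corollary~\ref{C:prohyperE0gh} together with Proposition~\ref{P:Expgyha}(a) for the almost strictly pure $\phi$ (note $\rP_{k+j}\phi$ is again almost strictly pure by Remark~\ref{R:strictlypro}), which gives $\cE_{0,\rP_{k+j}\phi}(\fg_{(\bsy)_0}+\fh_{(\bsalpha)_0})=(\bsy)$-related block data, and the explicit form of $\cE_0$ from Proposition~\ref{P:epsiota} applied to $\rP_{k+j}\phi$: the hyperderivatives $\pd_t^u$ of the constituent functions at $t=\theta$ produce precisely the coordinate $y_{d_i-k}$ and its $\theta$-hyperderivatives up to order $j$ via the chain rule Lemma~\ref{L:chainrule}, exactly as in the $k=0$ case but with $\pd_t^{k}$ inserted. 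The main obstacle I anticipate is bookkeeping: matching the Jordan-block offset $k$, the prolongation level, the $\pd_t$-order appearing in $\fG_{(\bsy)_0}$, and the $\pd_\theta$-order, and verifying that these combine so that only $\pd_\theta^u$ with $u\le k+j$ are needed (so that, by the $p$-th power rule of Proposition~\ref{P:Hyperprops}(b) and the degree trick of Lemma~\ref{L:degdeRham}, the biderivation-replacement argument of Theorem~\ref{T:rathyperquasi0} still applies). Once the index arithmetic is pinned down, the inclusion follows by assembling the cited results; there is no new analytic input beyond what Theorem~\ref{T:hypernontract} and Corollary~\ref{C:rathpqgyhapr0} already provide.
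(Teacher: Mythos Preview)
Your overall instinct is right—the corollary is exactly the index-by-index refinement of the first half of the proof of Theorem~\ref{T:hypernontract}—but your step~(3) takes an unnecessary detour and has a concrete gap. You propose to apply Proposition~\ref{P:epsiota} to $\rP_{k+j}\phi$ with its natural basis, but that does not work: the matrix $\Phi_{\rP_{k+j}\phi}=d_{t,k+j+1}[\Phi]$ is only block upper-triangular, and $d_{t,k+j+1}[C]\cdot d_{t,k+j+1}[\Phi]=d_{t,k+j+1}[D]$ is not diagonal (the off-diagonal blocks $\pd_t^a(D)$ are nonzero), so condition~(i) fails for this basis. You would need a further change of basis as in Remark~\ref{R:basechangered}, after which the link to $\fg_{(\bsy)_0}$ is no longer transparent. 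Likewise, Corollary~\ref{C:prohyperE0gh} with Proposition~\ref{P:Expgyha} applied to $\rP_{k+j}\phi$ only returns $\cE_{0,\rP_{k+j}\phi}(\fg_{(\bsy)_0}+\fh_{(\bsalpha)_0})=(\bsy)_0$, i.e.\ $\bsy$ in the first block; it does not manufacture $\pd_\theta^j(y_{d_i-k})$ among the output coordinates, so the chain-rule step you sketch does not have a natural entry point there.

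The paper's argument avoids all of this by staying at the level of $\phi$. With the hypotheses of Proposition~\ref{P:epsiota} on $\phi$ itself and $(\bsdelta_v)_t=\tau\bsm_v$, one writes $\bsg_{\bsy}+\bsh_{\bsalpha}=(\beta_1,\dots,\beta_r)$ and reads off from~\eqref{E:ycoords} that $y_{d_i-k}=\pd_t^{k}(\beta_i)|_{t=\theta}$. Since $\bsg_{\bsy}=-\twistop{\tau\bsm}{\cG_{\bsy}}^{\tr}V$ with $V\in\GL_r(\oK[t])$ and $\bsh_{\bsalpha}\in\Mat_{1\times r}(\oK[t])$, the product rule gives~\eqref{E:betaderivatives}, namely $\pd_t^{k}(\beta_i)\in\Span{\oK[t]}(\{1\}\cup\{\twistop{(\bsdelta_v)_t}{\pd_t^u(\cG_{\bsy})}:u\leqslant k\})$. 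Evaluating at $t=\theta$ and invoking Theorem~\ref{T:rathyperquasi0}(a) with $n=k$ puts $y_{d_i-k}$ in $\Span{\oK}(\{1\}\cup\{\pd_\theta^u(\rF_{\bsdelta_v}(\bsy)):u\leqslant k\})$. Finally one applies $\pd_\theta^{j}$ to this $\oK$-linear relation: the product rule together with the composition rule~\eqref{E:comprule} (and the fact that $\pd_\theta$ preserves $\oK$) lands $\pd_\theta^{j}(y_{d_i-k})$ in the span with $u\leqslant k+j$. The prolongation machinery and Lemma~\ref{L:chainrule} are already packaged inside Theorem~\ref{T:rathyperquasi0} and need not be reinvoked. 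Your step~(2) is fine; replacing your step~(3) by this direct argument on $\phi$ completes the proof.
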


\begin{remark}
In the next section we show that the condition in Theorem~\ref{T:hypernontract} and Corollary~\ref{C:Jordannontract} that $\phi$ be almost strictly pure can be removed, and so the conclusions hold for any uniformizable, abelian, and $\bA$-finite $t$-module.  See Theorem~\ref{T:hypernontractgen} for more details.  In the proof below the almost strictly pure condition centers around the use of Proposition~\ref{P:Expgyha}, and one may expect this proposition to hold in generality also, though as yet we have been unable to verify it.
\end{remark}

\begin{proof}[Proof of Theorem~\ref{T:hypernontract}]
Suppose $\phi$ satisfies the conditions of Proposition~\ref{P:epsiota}, whose notations we adopt here.  Furthermore, assume that $\phi$ is almost strictly pure, and assume without loss of generality that we have chosen our $\oK[t]$-basis $\bsm_1, \dots, \bsm_r$ of $\cM_{\phi,\oK}$ so that $(\bsdelta_v)_t = \tau\bsm_v$ for each $v$.  Letting $\bsg_{\bsy}+\bsh_{\bsalpha} = (\beta_1, \dots, \beta_r) \in \Mat_{1\times r}(\TT_{\theta})$, we combine Proposition~\ref{P:epsiota}, Lemma~\ref{L:gyha}, and Proposition~\ref{P:Expgyha}, to find
\begin{multline} \label{E:ycoords}
  \bsy = \cE_{0,\phi} \bigl( \bsg_{\bsy} + \bsh_{\alpha} \bigr) \\
  = \Bigl( \pd_t^{\ell_1-1}(\beta_1), \dots, \pd_t^1(\beta_1), \beta_1, \dots, \pd_t^{\ell_m-1}(\beta_m), \dots, \pd_t^1(\beta_m), \beta_m \Bigr)^{\tr}\Big|_{t=\theta}.
\end{multline}
By the definitions of $\bsg_{\bsy}$, $\bsh_{\bsalpha}$ in Lemma~\ref{L:gyha}, it follows that for each $i$, $k$,
\begin{equation} \label{E:betaderivatives}
  \pd_t^{k}(\beta_i) \in \Span{\oK[t]} \biggl(\{ 1 \} \cup \bigcup_{u=0}^k \bigcup_{v=1}^r \Bigl\{ \twistop{(\bsdelta_v)_t}{\pd_t^u(\cG_{\bsy})} \Bigr\} \biggr).
\end{equation}
Thus if $\bsy = (y_1, \dots, y_d)^{\tr}$, then as long as we take $n \geqslant \ell_i-1$ for each $i$, it follows that
\begin{align} \label{E:allentrieslog}
\{ y_1, \dots, y_d \} &\subseteq \Span{\oK} \biggl( \{ 1 \} \cup \bigcup_{u=0}^n \bigcup_{v=1}^r \Bigl\{ \twistop{(\bsdelta_v)_t}{\pd_t^u(\cG_{\bsy})} \big|_{t=\theta} \Bigr\} \biggr) \\
&= \Span{\oK} \biggl( \{1\} \cup \bigcup_{u=0}^n \bigcup_{v=1}^r \Bigl\{ \pd_\theta^u \bigl( \rF_{\bsdelta_v}(\bsy) \bigr) \Bigr\} \biggr), \notag
\end{align}
where the equality is of course from Theorem~\ref{T:rathyperquasi0}.  Fixing such an $n$, it then follows from the composition rule~\eqref{E:comprule} that for $j \geqslant 0$,
\[
\{ \pd_{\theta}^j(y_1), \dots, \pd_{\theta}^j(y_d) \} \subseteq
\Span{\oK} \biggl( \{1\} \cup \bigcup_{u=0}^{n+j} \bigcup_{v=1}^r \Bigl\{ \pd_\theta^u \bigl( \rF_{\bsdelta_v}(\bsy) \bigr) \Bigr\} \biggr).
\]
Part~(a) then follows by reassigning $n \leftarrow n+j$.  We note that Corollary~\ref{C:Jordannontract} follows from~\eqref{E:betaderivatives} by paying closer attention to the particular coordinates in~\eqref{E:ycoords}.

We now prove part~(b) in the case that $\phi$ satisfies the conditions of Proposition~\ref{P:epsiota}, and in particular we no longer assume that $\phi$ is almost strictly pure.  Assume that $n \geqslant \ell_i-1$ for each $i$. By Theorem~\ref{T:PeriodLattice}(b) and Proposition~\ref{P:epsiota}, we see that for $\bslambda \in \Lambda_{\phi}$.
\begin{align} \label{E:allentriesper}
  \Span{\oK}(\bslambda) &\subseteq \Span{\oK} \Bigl( d_{t, n+1}[\Psi_{ \phi}]^{-1}\big|_{t=\theta} \Bigr) \\
  &= \Span{\oK} \biggl( \bigcup_{u=0}^n \bigcup_{v=1}^{r} \bigcup_{\ell=1}^r \Bigl\{ \pd_\theta^u \bigl(\rF_{\bsdelta_v}(\bslambda_\ell) \bigr) \Bigr\} \biggr), \notag
\end{align}
where again the second equality is from Theorem~\ref{T:rathyperquasi0}.  The rest of part~(b) follows in a similar fashion to the end of part~(a).

We turn to the general case where $\phi$ may not satisfy the conditions of Proposition~\ref{P:epsiota}. Let $\rJ$ be the Jordan normal form  of $\rd \phi_t$ as in \eqref{E:Jordanpro1}, and suppose that $\rQ \in \GL_d(K^{\sep})$ is such that $\rd \phi_t = \rQ \rJ \rQ^{-1}$.  Then if we set
\[
\rho_a \assign \rQ^{-1}\phi_a \rQ, \quad a \in \bA,
\]
we obtain an isomorphism of $t$-modules $\rQ : \rho \to \phi$  (as in the definition of morphism of $t$-modules satisfying \eqref{E:morp}), and $\rd\rho_t = \rJ$.  We can assume as in Remark~\ref{R:basechangered} that a $\oK[t]$-basis for $\cN_{\rho,\oK}$ defined over $K^{\sep}$ has been chosen so that $\rho$ satisfies the conditions of Proposition~\ref{P:epsiota}.  Through the isomorphism of dual $t$-motives,
\[
  \rQ_{\dagger} : \cN_{\rho,\oK} \to \cN_{\phi,\oK} \quad (\bsn \mapsto \bsn \rQ^{\tr}),
\]
the image of the $\oK[t]$-basis of $\cN_{\rho,\oK}$ is a $\oK[t]$-basis of $\cN_{\phi,\oK}$, for which $(\Phi_{\rho},\Psi_{\rho}) = (\Phi_{\phi}, \Psi_{\phi})$.  Furthermore, there is the identity of exponentials,
\[
  \Exp_{\phi}(\bsz) = \rQ \Exp_{\rho} \bigl( \rQ^{-1} \bsz \bigr).
\]
Now by the isomorphism $\rQ^{\dagger} : \Der(\phi) \to \Der(\rho)$ defined by $(\rQ^{\dagger}\bsdelta)_a = \bsdelta_a \rQ$ for all $a \in \bA$, we let $\bsepsilon_1, \dots, \bsepsilon_r \in \Der(\rho)$ be chosen so that $\bsepsilon_v = \rQ^{\dagger}\bsdelta_v$ for each~$v$.  This induces the identity of quasi-periodic functions,
\begin{equation} \label{E:QPFbasechange}
  \rF_{\phi,\bsdelta_v}(\rQ\bsz) = \rF_{\rho,\bsepsilon_v}(\bsz), \quad 1 \leqslant v \leqslant r.
\end{equation}
Using \eqref{E:allentrieslog} for $\rho$ and $\rQ^{-1}\bsy$ together with~\eqref{E:QPFbasechange}, we see that
\begin{align*}
  \Span{\oK}(\bsy) = \Span{\oK} ( \rQ^{-1}\bsy ) &\subseteq
  \Span{\oK} \biggl( \{1\} \cup \bigcup_{u=0}^n \bigcup_{v=1}^r \Bigl\{ \pd_\theta^u \bigl( \rF_{\rho,\bsepsilon_v}(\rQ^{-1}\bsy) \bigr) \Bigr\} \biggr),\\
  &= \Span{\oK} \biggl( \{1\} \cup \bigcup_{u=0}^n \bigcup_{v=1}^r \Bigl\{ \pd_\theta^u \bigl( \rF_{\rho,\bsdelta_v}(\bsy) \bigr) \Bigr\} \biggr).
\end{align*}
Just as in the initial case, by taking derivatives with respect to $\theta$ we arrive at part~(a) in general.  The general case of~(b) follows in a similar manner, and we omit the details.
\end{proof}

\subsection{Quasi-periodic extensions of prolongations of \texorpdfstring{$t$}{t}-modules} \label{subS:QuasiExt}
In \S\ref{subS:logquasilog} we saw how hyperderivatives with respect to~$\theta$ of logarithms and quasi-logarithms associated to a $t$-module $\phi$ are captured in terms of hyperderivatives with respect to~$t$ of coordinates of Anderson generating functions of~$\phi$ evaluated at $t=\theta$. Moreover, these quantities also appear in terms of specializations at $t=\theta$ of Anderson generating functions of prolongations~$\rP_n\phi$.  To close the circle, in this section we investigate quasi-periodic extensions of $\rP_n \phi$ themselves to demonstrate how they are related to all coordinates of logarithms and quasi-logarithms of~$\phi$, and their hyperderivatives with respect to~$\theta$.

Our first goal is to determine a beneficial $\oK$-basis of $\rH^1_{\DR}(\rP_n \phi, \oK)$ defined over $K^{\sep}$, which we will use to construct a strictly quasi-periodic extension of $\rP_n \phi$ whose exponential function consists of hyperderivatives with respect to~$\theta$ of the exponential series and quasi-periodic functions of $\rP_n \phi$, in a manner inspired by the situation of Brownawell and Denis in~\S\ref{subS:BD}.  After some preliminary calculations we exhibit this basis in Proposition~\ref{P:deRhambasispro1}.

We start by examining hyperderivatives with respect to~$\theta$ of quasi-periodic functions in a general way.  Let $\phi : \bA \to \Mat_d(K^{\sep}[\tau])$ be an abelian $t$-module of dimension~$d$ and rank~$r$ defined over $K^{\sep}$.  Let $z \assign (z_1, \dots, z_d)^{\tr}$ (previously we have been using `$\bsz$' instead of `$z$,' but in what follows we will need `$\bsz$' to denote a larger vector of variables).  We consider all $z$-variables to be independent variables from $\theta$ and $t$ over~$\FF_q$.  As in~\eqref{E:bracketdef}, for $\rG(z) = \sum_{h\geqslant 0} (g_{h,1}, \dots g_{h,d})\cdot z^{(h)} = \sum_{h\geqslant0} ( g_{h,1}z_1^{q^h}+\dots +g_{h,d}z_d^{q^h}) \in \power{K^{\sep}}{z_1, \dots, z_d}$, we set for $j \in \ZZ$,
\[
\rG^{[j]}(z) \assign \pd_{\theta}^j(\rG(z)) = \sum_{h\geqslant0} \Bigl( \pd_\theta^j(g_{h,1})z_1^{q^h}+\dots +\pd_\theta^j(g_{h,d})z_d^{q^h} \Bigr).
\]
Notably for $j < 0$ we have $\rG^{[j]}(z) = 0$.

Let $\delta \in \Der(\phi,K^{\sep})$ be a $\phi$-biderivation with $\deg_{\tau}(\delta_t) = s \geqslant 1$, and write
\[
\delta_t= \sum_{j\geqslant s} \bsb_{j} \tau^j =  \sum_{j\geqslant s} (b_{j,1} , \dots, b_{j,d})\tau^j \in \cM_{\phi, K^{\sep}} \tau.
\]
Let $\rF_\delta(z)= \sum_{h\geqslant1}\bsa_h \cdot z^{(h)} =\sum_{h\geqslant1} (a_{h,1}z_1^{q^h} +\dots +a_{h,d} z_d^{q^h}) \in \power{K^{\sep}}{z_1, \dots, z_d}$ be the quasi-periodic function associated to $\delta$. From the functional equation~\eqref{E:Fdeltafneq} for quasi-periodic functions, it is straightforward to verify that the first non-zero term of $\rF_{\delta}(z)$ has degree at least $q^{s}$ in each of the variables $z_1, \dots, z_d$, and thus $\rF_\delta(z)= \sum_{h\geqslant s}\bsa_h \cdot z^{(h)}$.  Now fix $i$ with $0 \leqslant i < q^s$.  Then Proposition~\ref{P:Hyperprops}(b) implies that
\begin{equation}\label{E:functionaleqnhyper}
\pd_{\theta}^i \bigl( \rF_{\delta}(\rd\phi_t z) \bigr) = \sum_{h\geqslant s}\pd_{\theta}^i \bigl(\bsa_h (\rd\phi_t)^{(h)} \bigr)\cdot z^{(h)} = \sum_{h\geqslant s}\pd_{\theta}^i(\bsa_h) (\rd\phi_t)^{(h)}\cdot z^{(h)} = \rF^{[i]}_{\delta}(\rd\phi_t z).
\end{equation}
Furthermore, if  $\Exp_{\phi}(z) = \sum_{h=0}^{\infty} C_h \bsz^{(h)} \in \power{K^{\sep}}{z_1, \dots, z_d}^d$, then Proposition~\ref{P:Hyperprops}(b) again implies
\begin{multline} \label{E:functionaleqnhyperbid}
\pd_{\theta}^i \bigl( \delta_t (\Exp_\phi(z)) \bigr) = \sum_{j\geqslant s} \sum_{h=0}^{\infty}\pd_{\theta}^i \bigl( \bsb_j(C_h)^{(j)} \bigr) \cdot z^{(h+j)} \\
= \sum_{j\geqslant s} \sum_{h=0}^{\infty}\pd_{\theta}^i(\bsb_j)(C_h)^{(j)}\cdot z^{(h+j)}
= \delta_t^{[i]}(\Exp_{\phi}(z)).
\end{multline}
By applying $\pd_\theta^i$ to both sides of the functional equation~\eqref{E:Fdeltafneq}, it follows from~\eqref{E:functionaleqnhyper} and~\eqref{E:functionaleqnhyperbid} that
\begin{equation}\label{eq:Di}
\rF_\delta^{[i]}(\rd\phi_t z) = \theta\rF_\delta^{[i]}(z)+\rF_\delta^{[i-1]}(z) +\delta_t^{[i]}(\Exp_\phi(z)).
\end{equation}
In the preceding identities we have taken the variables $z_1, \dots, z_d$ to be independent from $\theta$ and to have all derivatives be~$0$.  On the other hand, if we take $y = (y_1, \dots, y_d)^{\tr} \in (K_{\infty}^{\sep})^d$, then the derivatives of each $y_i$ with respect to $\theta$ may not vanish.  However, if we assume that $0 \leqslant i < q^s$, then as above Proposition~\ref{P:Hyperprops}(b) implies
\begin{equation} \label{E:Fhypereval}
  \pd_{\theta}^i \bigl( \rF_{\delta}(y) \bigr) = \sum_{h \geqslant s} \pd_\theta^i (\bsa_h) \cdot y^{(h)}
  = \rF_{\delta}^{[i]}(y),
\end{equation}
much as in the proof of Corollary~\ref{C:BrDequasilogs} due to Brownawell and Denis.  This last identity will be useful toward the end of this section.

We now fix $n \geqslant 1$, we let $\bsdelta \in \Der(\rP_n \phi,K^{\sep})$, and we investigate the functional equation~\eqref{E:Fdeltafneq} of the quasi-periodic function $\bFF_{\bsdelta}$ associated to $\bsdelta$.  Recall from~\eqref{E:Dumotivenotation} that for $\bsm \in \Mat_{1\times d}(\power{\KK}{\tau})$ and $0 \leqslant u \leqslant n$, we set
\[
 (\bsm)_{u} \assign (0, \dots, 0, \bsm, 0, \dots, 0) \in \Mat_{1\times (n+1)d}(\power{\KK}{\tau}),
\]
where $\bsm$ occupies entries $du+1$ through $du +d$. Note that by the identification $\rP_n\cM_{\phi,\oK}\tau = \Mat_{1\times(n+1)d}(\oK[\tau])\tau$, we can write
\[
\bsdelta_t = (p_0)_0 + (p_1)_1+\dots +(p_n)_n =(p_0, p_1, \dots, p_n),
\]
where each $p_j \in \Mat_{1\times d}(K^{\sep}[\tau])\tau$.  For each $0 \leqslant u \leqslant n$, we set variables $\bsz_n \assign (z_{u,1}, \dots, z_{u,d})^{\tr}$ and take
\[
\bsz \assign \bigl(\bsz_0^{\tr}, \dots \bsz_n^{\tr}\bigr)^{\tr}.
\]
Moreover, setting $\power{K^{\sep}}{\bsz_0, \dots, \bsz_n} \assign \power{K^{\sep}}{z_{0,1}, \dots, z_{0,d}, \dots, z_{n,1}, \dots, z_{n,d}}$, then $\bFF_{\bsdelta}(\bsz)\in \power{K^{\sep}}{\bsz_0, \dots, \bsz_n}$ is the quasi-periodic function of $\rP_n \phi$ associated to $\bsdelta$. Note that we can also write
\[
 \bFF_{\bsdelta}(\bsz) = \rF_0(\bsz_0)+\rF_1(\bsz_1)+\dots+ \rF_n(\bsz_n),
\]
where each $\rF_j(\bsz_j) \in \power{K^{\sep}}{z_{j,1}, \dots, z_{j,d}}$. From the functional equation $\bFF_{\bsdelta}(\rd(\rP_n\phi)_t \bsz) = \theta \bFF_{\bsdelta}(\bsz) + \bsdelta_t(\Exp_{n}(\bsz))$ and Proposition~\ref{P:Exppro}, we see that
\begin{multline*}
\rF_0(\rd\phi_t\bsz_0) -\rF_1(\bsz_0) + \rF_1(\rd\phi_t\bsz_1) -\rF_2(\bsz_1) + \dots + \rF_{n-1}(\rd\phi_t\bsz_{n-1}) -\rF_n(\bsz_{n-1}) + \rF_n(\rd\phi_t \bsz_n)\\
= \theta \rF_0(\bsz_0) + \dots + \theta \rF_n(\bsz_n) + p_0(\Exp_{\phi}(\bsz_0)) + \dots+ p_n(\Exp_{\phi}(\bsz_n)).
\end{multline*}
Thus for $0\leqslant j \leqslant n-1$,
\begin{equation}\label{eq:quasipro}
 \rF_j(\rd\phi_t \bsz_j)-\theta \rF_j(\bsz_j) = \rF_{j+1}(\bsz_j)+p_j (\Exp_\phi(\bsz_j)),
\end{equation}
and
\begin{equation}\label{eq:quasipro1}
 \rF_n(\rd\phi_t\bsz_n)-\theta \rF_n(\bsz_n) = p_n (\Exp_\phi(\bsz_n)).
\end{equation}
Since $p_n\in \Mat_{1\times d}(K^{\sep}[\tau])\tau$, it represents a $\phi$-biderivation by Proposition~\ref{P:biderprops}, and so by \eqref{eq:quasipro1}, $\rF_n(\bsz)$ is the quasi-periodic function of $\phi$ associated to this $\phi$-biderivation.

Let $s = \lfloor \log_q(n) \rfloor + 1$ (if $n \geqslant 1$), and let $\delta \in \Der(\phi, K^{\sep})$ be chosen so that $\deg_{\tau}(\delta_t) \geqslant s$.  For $0\leqslant u \leqslant n$, consider $\bsdelta \in \Der(\rP_n \phi,K^{\sep})$ defined by
\begin{align}\label{E:hyperbid}
\bsdelta_t
&= \bigl( \delta_t^{[u]} \bigr)_0 + \bigl( \delta_t^{[u-1]} \bigr)_1 + \dots + (\delta_t)_{u}\\
&= \bigl( \delta_t^{[u]}, \delta_t^{[u-1]}, \dots, \delta_t^{[1]}, \delta_t, 0, \dots , 0 \bigr) \in \Mat_{1\times (n+1)d}(\power{\KK}{\tau}\tau). \notag
\end{align}
From~\eqref{eq:Di}--\eqref{eq:quasipro1} we see that
\begin{align}\label{E:quasihyperf}
\bFF_{\bsdelta}(\bsz) &= \Bigl( \bigl( \rF_{\delta}^{[u]} \bigr)_0 + \bigl( \rF_{\delta}^{[u-1]} \bigr)_1 + \dots + \bigl( \rF_{\delta}^{[1]} \bigr)_{u-1} + \bigl( \rF_{\delta} \bigr)_{u}\Bigr)(\bsz)\\
&= \bigl( \rF_{\delta}^{[u]}, \rF_{\delta}^{[u-1]}, \dots , \rF_{\delta}^{[1]},\rF_{\delta}, 0, \dots , 0 \bigr) (\bsz) \notag \\
&= \rF_{\delta}^{[u]}(\bsz_0)+ \rF_{\delta}^{[u-1]}(\bsz_1) + \dots + \rF_{\delta}^{[1]}(\bsz_{u-1})+\rF_{\delta}(\bsz_{u}) \in \power{K^{\sep}}{\bsz_0, \dots, \bsz_n}, \notag
\end{align}
and thus $\bFF_{\bsdelta}(\bsz)$ is the sum of hyperderivatives of $\rF_{\delta}(z)$ with respect to~$\theta$.

Now we determine a specific $\oK$-basis of $\rH_{\DR}^1(\rP_n \phi, \oK)$ defined over $K^{\sep}$ and find representatives that are of the form in~\eqref{E:hyperbid}. Then the corresponding quasi-periodic functions will be of the form in~\eqref{E:quasihyperf}.  By Proposition~\ref{P:strictqpextbasis} a strictly quasi-periodic extension (recall Definition~\ref{D:strictlyquasi}) of $\rP_n \phi$ using the associated basis of $\rH_{\sr}^1(\rP_n\phi,\oK)$ is isomorphic to the strictly quasi-periodic extension determined by any other $\oK$-basis of $\rH_{\sr}^1(\rP_n \phi, \oK)$.

As in the proof of Theorem~\ref{T:hypernontract}, we let $\rJ$ denote the Jordan normal form of $\rd\phi_t$ from~\eqref{E:Jordanpro1}, and we let $\rQ \in \GL_d(K^{\sep})$ be chosen so that $\rd\phi_t = \rQ\rJ\rQ^{-1}$.  We let $d_i \assign \ell_1 + \dots + \ell_i$ for $1 \leqslant i \leqslant m$, and note that $d_m=d$.  Using the definition of $\rN_\phi^\perp$ from \S\ref{subS:biderivations}, we see that $\{\bss_{d_1}\rQ^{-1}, \dots, \bss_{d_m}\rQ^{-1}\}$ is a $\oK$-basis of $\rN_\phi^\perp$ defined over $K^\sep$. Moreover, according to Theorem~\ref{T:prolmod} we have
\[
\rN_{\rP_n\phi} = \begin{pmatrix}
\rN_{\phi} &&&\\
-\Id_d & \rN_{\phi} &&\\
&\ddots &\ddots&\\
&&-\Id_d & \rN_{\phi}
\end{pmatrix},
\]
and so we check directly that the rank of $\rN_{\rP_n\phi}$ is $(n+1)d-\sum_{i=1}^m \min(n+1,\ell_i)$ and that a $\oK$-basis of $\rN_{{\rP_n\phi}}^\perp$ defined over $K^\sep$ is given by
\begin{multline} \label{E:basisNpro1}
\bigcup_{i=1}^m \bigcup_{w=0}^{\min(n,\ell_i-1)} \Bigl\{ \bigl(\bss_{d_i-w}\rQ^{-1} \bigr)_0 + \dots + \bigl( \bss_{d_i-1}\rQ^{-1} \bigr)_{w-1} + \bigl( \bss_{d_i}\rQ^{-1} \bigr)_{w} \Bigr\} \\
= \bigcup_{i=1}^m \bigcup_{w=0}^{\min(n,\ell_i-1)} \Bigl\{ \bigl( \bss_{d_i-w}\rQ^{-1}, \dots, \bss_{d_i-1}\rQ^{-1}, \bss_{d_i}\rQ^{-1}, 0, \dots, 0 \bigr) \Bigr\}.
\end{multline}
If $n \geqslant \ell_i-1$ for each $1 \leqslant i \leqslant m$, then the rank of $\rN_{\rP_n\phi}$ is $nd$.

By Proposition~\ref{P:biderprops}, we have
\begin{gather*}
\dim_{\oK} \rH^1_{\sr}(\phi, \oK) = r - d + \rank \rN_{\phi} = r-m, \\
\dim_{\oK} \Der_0(\phi,\oK) = m, \quad \Der_0(\phi,\oK) = (t-\theta) \cdot \rN_{\phi}^{\perp}(\oK).
\end{gather*}
By our calculations above, for $1 \leqslant i \leqslant m$, we define the inner $\phi$-biderivation $\gamma_{i}$ over $K^{\sep}$ by setting
\begin{equation} \label{E:der0basis}
 ( \gamma_{i} )_t \assign (t-\theta)\cdot \bss_{d_i}\rQ^{-1}=  \bss_{d_i}\rQ^{-1}\phi_t - \theta\bss_{d_i}\rQ^{-1} \quad
 \in \Mat_{1\times d}(\oK[\tau])\tau.
\end{equation}
Therefore, if $\{\delta_1, \dots, \delta_{r-m}\}$ is a $\oK$-basis of $\rH^1_{\sr}(\phi, \oK)$ defined over $K^\sep$, then by Proposition~\ref{P:biderprops}(d) we see that
\begin{equation} \label{E:basisSRphi1}
\rH_{\DR}^1(\phi,\oK) = \underbrace{\oK[\delta_1] \oplus \cdots \oplus \oK[\delta_{r-m}]}_{\rH_{\sr}^1(\phi,\oK)} \oplus \underbrace{\oK[\gamma_{1}] \oplus \cdots \oplus \oK[\gamma_{m}]}_{\Der_0(\phi,\oK)}.
\end{equation}
Now for $1 \leqslant i \leqslant m$ and $0 \leqslant w \leqslant \ell_i-1$, we define $\bsepsilon_{w,i} \in \Der_0(\rP_n\phi,K^{\sep})$ by
\begin{multline} \label{E:epsilonwidef}
(\bsepsilon_{w,i})_t \assign (t-\theta)\cdot \bigl( \bss_{d_i-w}\rQ^{-1}, \dots, \bss_{d_i-1}\rQ^{-1}, \bss_{d_i}\rQ^{-1},0, \dots, 0 \bigr) \\ {}\in \Mat_{1\times (n+1)d}(\oK[\tau])\tau,
\end{multline}
and we see from \eqref{E:basisNpro1} that
\begin{equation} \label{E:Der0Pnphi}
  \Der_0(\rP_n\phi,\oK) = \bigoplus_{i=1}^m \bigoplus_{w=0}^{\min(n,\ell_i-1)} \oK \bsepsilon_{w,i}.
\end{equation}
We continue with the preceding calculations to prove the following proposition to construct a specific $\oK$-basis of $\rH_{\DR}^1(\rP_n\phi,\oK)$.

\begin{proposition} \label{P:deRhambasispro1}
Let $\phi : \bA \to \Mat_d(K^{\sep}[\tau])$ be an abelian Anderson $t$-module of rank $r$, and let $\{ \delta_1, \dots, \delta_{r-m}, \gamma_{1}, \dots, \gamma_{m}\}$ be a $\oK$-basis of $\rH_{\DR}^1(\phi, \oK)$ defined over $K^{\sep}$, chosen as in \eqref{E:basisSRphi1}.  Fix $n \geqslant 0$.  For $0 \leqslant u \leqslant n$ and $1 \leqslant v \leqslant r-m$ define $\bsdelta_{u,v} \in \Der(\rP_n \phi,\oK)$ by setting
\[
  (\bsdelta_{u,v})_t \assign ((\delta_v)_t)_u.
\]
For $1\leqslant i \leqslant m$ and $0 \leqslant w \leqslant \min(n,\ell_i-1)$, define $\bsepsilon_{w,i} \in \Der(\rP_n \phi,\oK)$ by setting
\[
  (\bsepsilon_{w,i})_t \assign (t-\theta)\cdot \bigl( \bss_{d_i-w}\rQ^{-1}, \dots, \bss_{d_i-1}\rQ^{-1}, \bss_{d_i}\rQ^{-1},0, \dots, 0 \bigr).
\]
If $\ell_i \leqslant n$, then for each $\ell_i \leqslant e \leqslant n$ define $\bsgamma_{e,i} \in \Der(\rP_n \phi,\oK)$ by setting
\[
\bigl( \bsgamma_{e,i} \bigr)_t \assign ((\gamma_{i})_t)_e.
\]
Then in the decomposition $\rH_{\DR}^1(\rP_n\phi,\oK) = \rH_{\sr}^1(\rP_n\phi,\oK) \oplus \Der_0(\rP_n\phi,\oK)$, we have
\begin{align}
  \rH_{\sr}^1(\rP_n\phi,\oK) &= \bigoplus_{u=0}^n\bigoplus_{v=1}^{r-m} \oK[\bsdelta_{u,v}] \oplus
  \bigoplus_{i=1}^m \bigoplus_{\substack{e=\ell_i \\ \ell_i \leqslant n}}^n \oK[\bsgamma_{e,i}] \\
\intertext{and} \label{E:Der0deRhambasispro1}
  \Der_0(\rP_n\phi,\oK) &= \bigoplus_{i=1}^m \bigoplus_{w=0}^{\min(n,\ell_i-1)} \oK[\bsepsilon_{w,i}].
\end{align}
\end{proposition}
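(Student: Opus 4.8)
The statement has two parts: the identity \eqref{E:Der0deRhambasispro1} for $\Der_0(\rP_n\phi,\oK)$ and the identity for $\rH_{\sr}^1(\rP_n\phi,\oK)$. The first is exactly \eqref{E:Der0Pnphi}, which in turn follows from the explicit $\oK$-basis \eqref{E:basisNpro1} of $\rN_{\rP_n\phi}^\perp(\oK)$ together with the isomorphism $\Der_0(\rP_n\phi,\oK)\cong\rN_{\rP_n\phi}^\perp(\oK)$ of Proposition~\ref{P:biderprops}; I would simply record this. The substance of the proof lies in the second identity, which I would establish by a dimension count followed by a reorganization of the $\oK$-basis of $\rH_{\DR}^1(\rP_n\phi,\oK)$ produced by Proposition~\ref{P:deRhambasispro}.

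First, by Theorems~\ref{T:prolongprops} and~\ref{T:prolmod}, $\rP_n\phi$ is abelian and $\bA$-finite of dimension $(n+1)d$ and rank $(n+1)r$, so Proposition~\ref{P:biderprops}(c)--(d) together with the rank formula $\rank\rN_{\rP_n\phi}=(n+1)d-\sum_{i=1}^m\min(n+1,\ell_i)$ (read off from the bidiagonal block form of $\rd(\rP_n\phi)_t$ in Theorem~\ref{T:prolmod}) give
\[
  \dim_{\oK}\rH_{\DR}^1(\rP_n\phi,\oK)=(n+1)r,\qquad \dim_{\oK}\Der_0(\rP_n\phi,\oK)=\sum_{i=1}^m\min(n+1,\ell_i),
\]
and hence $\dim_{\oK}\rH_{\sr}^1(\rP_n\phi,\oK)=(n+1)r-\sum_{i=1}^m\min(n+1,\ell_i)$. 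A direct count shows the proposed set $\{[\bsdelta_{u,v}]\}\cup\{[\bsgamma_{e,i}]\}$ has exactly this cardinality: the $\bsdelta_{u,v}$ contribute $(n+1)(r-m)$ classes and the $\bsgamma_{e,i}$ with $\ell_i\leqslant e\leqslant n$ contribute $\sum_i\bigl((n+1)-\min(n+1,\ell_i)\bigr)$ classes. Therefore it suffices to show these classes span $\rH_{\sr}^1(\rP_n\phi,\oK)$.

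Second, apply Proposition~\ref{P:deRhambasispro} to the basis $\{\delta_1,\dots,\delta_{r-m},\gamma_1,\dots,\gamma_m\}$ of $\rH_{\DR}^1(\phi,\oK)$ from \eqref{E:basisSRphi1}: writing $(\bsgamma_{u,i})_t\assign((\gamma_i)_t)_u$ for every $0\leqslant u\leqslant n$ (which agrees with the $\bsgamma_{e,i}$ of the statement when $u\geqslant\ell_i$), the set $\{\bsdelta_{u,v}:0\leqslant u\leqslant n,\,1\leqslant v\leqslant r-m\}\cup\{\bsgamma_{u,i}:0\leqslant u\leqslant n,\,1\leqslant i\leqslant m\}$ is a $\oK$-basis of $\rH_{\DR}^1(\rP_n\phi,\oK)$. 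Since the image of $\Der_0(\rP_n\phi,\oK)$ in $\rH_{\DR}^1(\rP_n\phi,\oK)$ is the kernel of the natural surjection $\rH_{\DR}^1(\rP_n\phi,\oK)\twoheadrightarrow\rH_{\sr}^1(\rP_n\phi,\oK)$ and is spanned by the $\bsepsilon_{w,i}$, it is then enough to prove: for each $i$ and each $0\leqslant w\leqslant\min(n,\ell_i-1)$, the class $[\bsgamma_{w,i}]$ lies in the $\oK$-span of $\{[\bsdelta_{u,v}]\}\cup\{[\bsgamma_{e,j}]:\ell_j\leqslant e\leqslant n\}\cup\{[\bsepsilon_{w',j}]\}$ inside $\rH_{\DR}^1(\rP_n\phi,\oK)$. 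I would prove this by induction on $w$. The case $w=0$ follows from the direct identity $(\bsgamma_{0,i})_t=(\bsepsilon_{0,i})_t$. For the inductive step, one computes, using the bidiagonal form of $(\rP_n\phi)_t$ and the relation $\bss_a\rQ^{-1}\rN_\phi=\bss_{a+1}\rQ^{-1}$ valid whenever $a$ is an interior index of a Jordan block of $\rd\phi_t$, that $(\bsepsilon_{w,i})_t-(\bsgamma_{w,i})_t$ is supported in blocks $0,\dots,w-1$, its block-$q$ component being the $\phi$-biderivation attached to the interior index $d_i-w+q$ (placed in block $q$ of $\rP_n\phi$); one then re-expresses each such component modulo $\Der_{\si}(\rP_n\phi,\oK)$ in terms of the lower-level biderivations $\bsdelta_{u,v}$, $\bsgamma_{u,i}$, $\bsepsilon_{w',i}$ and invokes the induction hypothesis.

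The hard part is this last bookkeeping step. It is genuinely delicate because, as Remark~\ref{R:innernotpreserved} warns, prolongation does not respect the decomposition $\rH_{\DR}^1=\rH_{\sr}^1\oplus\Der_0$: the naive prolongations $\bsgamma_{w,i}$ of the inner biderivations $\gamma_i$ of $\phi$ are in general neither inner nor strictly reduced for $\rP_n\phi$, so one must truly reorganize the basis rather than match the pieces term by term. Moreover, reducing the "interior index" $\phi$-biderivations modulo $\Der_{\si}(\rP_n\phi,\oK)$ produces carry contributions landing in lower blocks, which have to be fed back into the induction, so keeping the induction invariant clean requires care. A possibly cleaner variant of the argument would be to first choose the $\oK$-basis $\{\delta_1,\dots,\delta_{r-m}\}$ of $\rH_{\sr}^1(\phi,\oK)$ in a form adapted to the Jordan blocks of $\rd\phi_t$, as in the strictly pure computations of \S\ref{subS:quasiperexamples}, so that the required trades between the $\bsgamma_{w,i}$ and the $\bsepsilon_{w,i}$ become explicitly block-triangular and the induction collapses to a transparent change of basis.
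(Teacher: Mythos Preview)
Your proposal is correct and follows essentially the same route as the paper: both start from the $\oK$-basis of $\rH_{\DR}^1(\rP_n\phi,\oK)$ given by Proposition~\ref{P:deRhambasispro} applied to \eqref{E:basisSRphi1}, then prove by induction on $w$ (base case $\bsepsilon_{0,i}=\bsgamma_{0,i}$) that $(\bsepsilon_{w,i})_t-(\bsgamma_{w,i})_t$ is supported in blocks $0,\dots,w-1$ and, modulo $\Der_{\si}$, lies in the span of the lower-level $\bsdelta_{u,v}$ and $\bsgamma_{u,i}$, which allows swapping the low-index $\bsgamma_{w,i}$ for the $\bsepsilon_{w,i}$. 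The only cosmetic difference is that the paper phrases the induction as a triangular change-of-basis (introducing auxiliary $\phi$-biderivations $\beta_{w,j}$ for the block-$j$ components and invoking \eqref{E:betajDR}), whereas you frame it as spanning plus a dimension count; both conclude by reading off the $\rH_{\sr}^1\oplus\Der_0$ decomposition from \eqref{E:Der0Pnphi}.
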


\begin{proof}
For $1\leqslant i \leqslant m$ and $0 \leqslant e \leqslant n$, we extend the definition of $\bsgamma_{e,i} \in \Der(\rP_n\phi,\oK)$ by setting $(\bsgamma_{e,i})_t \assign ((\gamma_i)_t)_e$ (for all $e$, not just those in the range $\ell_i \leqslant e \leqslant n$). If we apply Proposition~\ref{P:deRhambasispro} to the de Rham basis in~\eqref{E:basisSRphi1}, then we see that
\begin{equation}\label{E:basisderhamder0}
\biggl\{ \bigcup_{u=0}^{n}\bigcup_{v=1}^{r-m} \{\bsdelta_{u,v}\} \biggr\}
\cup \biggl\{ \bigcup_{i=1}^m\bigcup_{w=0}^{\min(n,\ell_i-1)} \{ \bsgamma_{w,i}\} \biggr\}
\cup \biggl\{ \bigcup_{i=1}^m\bigcup_{\substack{e=\ell_i\\ \ell_i \leqslant n}}^{n} \{\bsgamma_{e,i}\} \biggr\}
\end{equation}
represents a $\oK$-basis of $\rH_{\DR}^1(\rP_n \phi, \oK)$. Now for $1 \leqslant i \leqslant m$ and $0\leqslant w \leqslant \min(n,\ell_i-1)$, we use Theorem~\ref{T:prolmod} to calculate in $\cM_{\rP_n\phi}$ and find
\begin{align} \label{E:basisder0pro1}
(\bsepsilon_{w,i})_t &= (t-\theta)\cdot \bigl( \bss_{d_i-w}\rQ^{-1}, \dots , \bss_{d_i-1}\rQ^{-1}, \bss_{d_i}\rQ^{-1}, 0, \dots, 0 \bigr)\\
&= \begin{aligned}[t]
\bigl( &\bss_{d_i-w}\rQ^{-1}\phi_t - \theta\bss_{d_i-w}\rQ^{-1} - \bss_{d_i-(w-1)}\rQ^{-1},
\ldots, \\
&\,\,\,\bss_{d_i-1}\rQ^{-1}\phi_t - \theta\bss_{d_i-1}\rQ^{-1} - \bss_{d_i}\rQ^{-1},
\bss_{d_i}\rQ^{-1}\phi_t - \theta\bss_{d_i}\rQ^{-1},0, \ldots, 0 \bigr)
\end{aligned}
\notag \\
&= \sum_{j=0}^{w-1} \bigl((t-\theta)\cdot \bss_{d_i-(w-j)}\rQ^{-1} - \bss_{d_i-(w-(j+1))}\rQ^{-1} \bigr)_j + \bigl((t-\theta)\cdot \bss_{d_i}\rQ^{-1} \bigr)_w, \notag
\end{align}
where in the final terms the quantities `$(t-\theta)\cdot \bss_{d_i-(w-j)}\rQ^{-1}$,' etc., are calculated in $\cM_{\phi,\oK}$ and embedded in $\cM_{\rP_n\phi,\oK}$.  Now~\eqref{E:der0basis} and~\eqref{E:epsilonwidef} together imply that when $0 \leqslant j \leqslant w-1$ we have the calculation in $\cM_{\phi,\oK}$,
\[
  (t-\theta)\cdot \bss_{d_i-(w-j)}\rQ^{-1} - \bss_{d_i - (w-(j+1))}\rQ^{-1} \in \Mat_{1\times d}(\oK[\tau]\tau).
\]
By Proposition~\ref{P:biderprops}(a), we can thus choose $\beta_{w,j} \in \Der(\phi,\oK)$ so that
\begin{equation} \label{E:betawjdef}
  (\beta_{w,j})_t = (t-\theta)\cdot \bss_{d_i-(w-j)}\rQ^{-1} - \bss_{d_i - (w-(j+1))}\rQ^{-1},
\end{equation}
and naturally by~\eqref{E:basisSRphi1},
\[
  [\beta_{w,j}] \in \oK[\delta_1] \oplus \cdots \oplus \oK[\delta_{r-m}] \oplus \oK[\gamma_{1}] \oplus \cdots \oplus \oK[\gamma_{m}].
\]
Now by defining $\bsbeta_{w,j} \in \Der(\rP_n\phi,\oK)$ by $(\bsbeta_{w,j})_t \assign ((\beta_{w,j})_t)_j$, we see that $\bsbeta_{w,j}$ lies within the natural image of $\Der(\rP_j\phi,\oK)$ coming from~\eqref{E:motivespro}. Therefore, Proposition~\ref{P:deRhambasispro} implies that in $\rH_{\DR}^1(\rP_n\phi,\oK)$,
\begin{equation} \label{E:betajDR}
  [\bsbeta_{w,j}] \in \bigoplus_{u=0}^j \bigoplus_{v=1}^{r-m} \oK[\bsdelta_{u,v}] \oplus
  \bigoplus_{i=1}^m \bigoplus_{e=0}^j \oK[\bsgamma_{e,i}].
\end{equation}
Pulling all of this information together, we see that for $1 \leqslant i \leqslant m$ and $0\leqslant w \leqslant \min(n,\ell_i-1)$, it follows from \eqref{E:basisder0pro1} together with~\eqref{E:der0basis} that
\[
(\bsepsilon_{w,i})_t = \bigl( (t-\theta)\bss_{d_i} \rQ^{-1}\bigr)_w + \sum_{j=0}^{w-1} (\bsbeta_{w,j})_t
= (\bsgamma_{w,i})_t + \sum_{j=0}^{w-1} (\bsbeta_{w,j})_t.
\]
Therefore \eqref{E:betajDR} implies that in $\rH_{\DR}^1(\rP_n\phi,\oK)$,
\[
  [\bsepsilon_{w,i}] \in [\bsgamma_{w,i}] + \bigoplus_{u=0}^{w-1} \bigoplus_{v=1}^{r-m} \oK[\bsdelta_{u,v}] \oplus \bigoplus_{i=1}^m \bigoplus_{e=0}^{w-1} \oK[\bsgamma_{e,i}].
\]
Taking $w=0$, we see that $\bsepsilon_{0,i} = \bsgamma_{0,i}$ for each $i$.  Letting $w=1$, we have
\[
  [\bsepsilon_{1,i}] \in [\bsgamma_{1,i}] + \bigoplus_{v=1}^{r-m} \oK[\bsdelta_{0,v}] \oplus \bigoplus_{i=1}^m \oK[\bsgamma_{0,i}] = [\bsgamma_{1,i}] + \bigoplus_{v=1}^{r-m} \oK[\bsdelta_{0,v}] \oplus \bigoplus_{i=1}^m \oK[\bsepsilon_{0,i}],
\]
and so
\[
\bigoplus_{v=1}^{r-m} \oK[\bsdelta_{0,v}] \oplus \bigoplus_{i=1}^m (\oK[\bsgamma_{0,i}]\oplus \oK[\bsgamma_{1,i}])
= \bigoplus_{v=1}^{r-m} \oK[\bsdelta_{0,v}] \oplus \bigoplus_{i=1}^m (\oK[\bsepsilon_{0,i}]\oplus \oK[\bsepsilon_{1,i}]).
\]
Proceeding by induction on~$w$ we find that
\begin{multline*}
\bigoplus_{i=1}^m\bigoplus_{w=0}^{\min(n,\ell_i-1)-1} \bigoplus_{v=1}^{r-m} \oK[\bsdelta_{w,v}] \oplus \bigoplus_{i=1}^m\bigoplus_{w=0}^{\min(n,\ell_i-1)} \oK[\bsgamma_{w,i}] \\
= \bigoplus_{i=1}^m\bigoplus_{w=0}^{\min(n,\ell_i-1)-1} \bigoplus_{v=1}^{r-m} \oK[\bsdelta_{w,v}] \oplus \bigoplus_{i=1}^m\bigoplus_{w=0}^{\min(n,\ell_i-1)} \oK[\bsepsilon_{w,i}].
\end{multline*}
Combining this with~\eqref{E:basisderhamder0}, we see that
\[
\biggl\{ \bigcup_{u=0}^{n}\bigcup_{v=1}^{r-m} \{\bsdelta_{u,v}\} \biggr\}
\cup \biggl\{ \bigcup_{i=1}^m\bigcup_{w=0}^{\min(n,\ell_i-1)} \{ \bsepsilon_{w,i}\} \biggr\}
\cup \biggl\{ \bigcup_{i=1}^m\bigcup_{\substack{e=\ell_i\\ \ell_i \leqslant n}}^{n} \{\bsgamma_{e,i}\} \biggr\}
\]
represents a $\oK$-basis of $\rH_{\DR}^1(\rP_n\phi,\oK)$ as desired, and the rest of the proposition follows from~\eqref{E:Der0Pnphi}.
\end{proof}

We recall that in Theorem~\ref{T:hypernontract}(a) we required the $t$-module in question to be almost strictly pure.  It turns out that this condition can be removed by using elements of the proof of Proposition~\ref{P:deRhambasispro1}, and so the conclusion of Theorem~\ref{T:hypernontract}(a) holds for all uniformizable, abelian, and $\bA$-finite $t$-modules.

\begin{theorem} \label{T:hypernontractgen}
Let $\phi$ be a uniformizable, abelian, and $\bA$-finite Anderson $t$-module defined over $K^\sep$ of rank $r$ and dimension $d$. Let $\bsy \in (K_{\infty}^{\sep})^d$ satisfy $\Exp_\phi(\bsy) \in (K^\sep)^d$, and let $\{ \delta_1, \dots, \delta_{r-m}, \gamma_1, \dots, \gamma_m\}$ be chosen to be the $\oK$-basis of $\rH_{\DR}^1(\phi, \oK)$ defined over $K^{\sep}$ as in \eqref{E:basisSRphi1}.  Let $\cG_{\bsy} \in \TT^d$ be the Anderson generating function for $\phi$ associated to~$\bsy$.  Finally, let $j \geqslant 0$, and choose $n \geqslant 0$ so that $(\rd\phi_t - \theta\Id_d)^{n-j}=0$.  Then
\begin{align*}
  \Span{\oK} \bigl( \pd_{\theta}^{j} (\bsy) \bigr) &\subseteq
  \Span{\oK} \biggl( \{1\} \cup \bigcup_{u=0}^n \bigcup_{v=1}^{r-m} \bigcup_{i=1}^m \Bigl\{ \twistop{(\delta_v)_t}{\pd_t^u(\cG_{\bsy})} \big|_{t=\theta}, \twistop{(\gamma_i)_t}{\pd_t^u(\cG_{\bsy})} \big|_{t=\theta} \Bigr\} \biggr) \\
  &=
  \Span{\oK} \biggl( \{1\} \cup \bigcup_{u=0}^n \bigcup_{v=1}^{r-m}\bigcup_{i=1}^{m} \Bigl\{ \pd_\theta^u \bigl( \rF_{\delta_v}(\bsy), \pd_\theta^u \bigl( \rF_{\gamma_i}(\bsy) \bigr) \Bigr\} \biggr).
\end{align*}
\end{theorem}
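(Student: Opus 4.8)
Write $\{\bsdelta_1,\dots,\bsdelta_r\}\assign\{\delta_1,\dots,\delta_{r-m},\gamma_1,\dots,\gamma_m\}$ for the distinguished $\oK$-basis of $\rH^1_{\DR}(\phi,\oK)$ from \eqref{E:basisSRphi1}, which is defined over $K^{\sep}$. For $N\geqslant 0$ let
\[
  \cW_N\assign\Span{\oK}\biggl(\{1\}\cup\bigcup_{u=0}^{N}\bigcup_{v=1}^{r}\Bigl\{\pd_\theta^u\bigl(\rF_{\bsdelta_v}(\bsy)\bigr)\Bigr\}\biggr),
\]
so that $\cW_N\subseteq\cW_{N+1}$. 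Applying Theorem~\ref{T:rathyperquasi0}(a) to the basis $\{\bsdelta_v\}$ immediately gives that the right-hand equality displayed in the statement holds and that $\cW_n$ equals the middle space there; hence it remains only to prove the inclusion $\Span{\oK}(\pd_\theta^j(\bsy))\subseteq\cW_n$.

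The core of the argument is the following auxiliary claim: \emph{if $\ell_1,\dots,\ell_m$ are the Jordan block sizes of $\rd\phi_t$ and $N\geqslant\max_i\ell_i-1$, then every coordinate of $\bsy$ lies in $\cW_N$.} To prove it, fix $\rQ\in\GL_d(K^{\sep})$ with $\rd\phi_t=\rQ\rJ\rQ^{-1}$ and $\rJ$ in Jordan form, and pass to the prolongation $\rP_N\phi$, which is uniformizable, abelian and $\bA$-finite over $K^{\sep}$ by Theorem~\ref{T:prolongprops}. By Proposition~\ref{P:Exppro} the point $(\bsy)_0\in(K_\infty^{\sep})^{(N+1)d}$ satisfies $\Exp_{\rP_N\phi}((\bsy)_0)=(\bsalpha)_0\in(K^{\sep})^{(N+1)d}$, where $\bsalpha\assign\Exp_\phi(\bsy)$. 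For $1\leqslant i\leqslant m$ and $0\leqslant w\leqslant\ell_i-1$, the inner biderivation $\bsepsilon_{w,i}\in\Der_0(\rP_N\phi,K^{\sep})$ of Proposition~\ref{P:deRhambasispro1} has, by \eqref{E:epsilonwidef}, defining vector $\bsu_{w,i}=(\bss_{d_i-w}\rQ^{-1},\dots,\bss_{d_i}\rQ^{-1},0,\dots,0)$ with $\rd\bsu_{w,i}=\bsu_{w,i}\in\rN_{\rP_N\phi}^{\perp}$ (this is \eqref{E:basisNpro1}); so \eqref{E:quasiinner}, evaluated at the zeroth-block-supported point $(\bsy)_0$, yields
\[
  \rF_{\rP_N\phi,\bsepsilon_{w,i}}\bigl((\bsy)_0\bigr)=\bsu_{w,i}\Exp_{\rP_N\phi}\bigl((\bsy)_0\bigr)-\bsu_{w,i}\cdot(\bsy)_0=\bss_{d_i-w}\rQ^{-1}\bsalpha-\bss_{d_i-w}\rQ^{-1}\bsy .
\]
The first term lies in $\oK$, so $(\rQ^{-1}\bsy)_{d_i-w}\in\Span{\oK}(\{1,\rF_{\rP_N\phi,\bsepsilon_{w,i}}((\bsy)_0)\})$. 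As $(i,w)$ ranges over the indicated set the indices $d_i-w$ exhaust $\{1,\dots,d\}$, so all coordinates of $\rQ^{-1}\bsy$, hence (since $\rQ$ has entries in $K^{\sep}$) all coordinates of $\bsy$, lie in $\Span{\oK}(\{1\}\cup\{\rF_{\rP_N\phi,\bsdelta}((\bsy)_0):\bsdelta\in\Der(\rP_N\phi,\oK)\})$. Applying Theorem~\ref{T:quasispancomplete} to $\rP_N\phi$ and $(\bsy)_0$, and then Corollary~\ref{C:rathpqgyhapr0} with the basis $\{\bsdelta_v\}$, identifies this last space with $\cW_N$, which proves the claim.

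To finish, set $N\assign n-j$. The hypothesis $(\rd\phi_t-\theta\Id_d)^{n-j}=\rN_\phi^{\,n-j}=0$ says exactly that $n-j\geqslant\max_i\ell_i$, so $N\geqslant\max_i\ell_i-1$ and the claim gives each coordinate $y_\ell$ of $\bsy$ inside $\cW_{n-j}$. Writing $y_\ell$ as an $\oK$-linear combination of $1$ and hyperderivatives $\pd_\theta^u(\rF_{\bsdelta_v}(\bsy))$ with $0\leqslant u\leqslant n-j$, and applying $\pd_\theta^j$ via the product rule together with the composition rule \eqref{E:comprule}, which turns $\pd_\theta^j\pd_\theta^u$ into a multiple of $\pd_\theta^{j+u}$ with $j+u\leqslant n$ (the coefficients remaining algebraic over $K$ so that the operators apply, exactly as in the proof of Theorem~\ref{T:hypernontract}), gives $\pd_\theta^j(y_\ell)\in\cW_n$ for every $\ell$. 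Hence $\Span{\oK}(\pd_\theta^j(\bsy))\subseteq\cW_n$, as required.

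\textbf{Main obstacle.} The delicate point is the computation in the second paragraph, where a coordinate of $\bsy$ that is \emph{not} tractable for $\phi$ (one with $w\geqslant1$, sitting above the bottom of a Jordan block, hence not reached by Proposition~\ref{P:quasiperspans}) becomes a quasi-logarithm of an inner biderivation of $\rP_N\phi$ once $N$ is large enough: this is possible precisely because the $-\Id_d$ blocks on the subdiagonal of $\rd(\rP_N\phi)_t$ enlarge $\rN_{\rP_N\phi}^{\perp}$ as recorded in \eqref{E:basisNpro1}, so that \eqref{E:quasiinner} applies to $\bsepsilon_{w,i}$. Getting the block bookkeeping and its interplay with $\rQ$ correct, and checking that $\Der_0(\rP_N\phi)$ supplies enough inner biderivations to recover all coordinates, is the substantive step; by contrast, the passage to hyperderivatives with respect to $\theta$ is then routine given Theorem~\ref{T:quasispancomplete}, Corollary~\ref{C:rathpqgyhapr0}, and Proposition~\ref{P:Hyperprops}. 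Note that, unlike Theorem~\ref{T:hypernontract}, this route never invokes Proposition~\ref{P:Expgyha}, which is why the almost-strictly-pure hypothesis can be dropped.
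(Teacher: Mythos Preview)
Your proof is correct and follows essentially the same route as the paper's: reduce to showing that every coordinate of $\bsy$ lies in the span (for an appropriately large prolongation index), accomplish this by evaluating the inner quasi-periodic functions $\bFF_{\bsepsilon_{w,i}}$ at $(\bsy)_0$ via \eqref{E:quasiinner}, and then pass to $\pd_\theta^j(\bsy)$ by differentiating as in the proof of Theorem~\ref{T:hypernontract}. The only substantive difference is in how you place $\rF_{\rP_N\phi,\bsepsilon_{w,i}}((\bsy)_0)$ inside $\cW_N$: the paper expands $\twistop{(\bsepsilon_{w,i})_t}{\fG_{(\bsy)_0}}$ explicitly through the auxiliary biderivations $\beta_{w,j}$ (equations \eqref{E:AGFDerpro}--\eqref{E:AGFDerpro0}) to land directly in the middle span of the statement, whereas you invoke Theorem~\ref{T:quasispancomplete} and Corollary~\ref{C:rathpqgyhapr0} as black boxes. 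Your route is shorter; the paper's yields concrete formulas that are reused later (e.g., in Example~\ref{Ex:Strictlypure3}). One small caution: when you write ``the coefficients remaining algebraic over $K$ so that the operators apply,'' note that $\pd_\theta^j$ is only defined on $K_\infty^{\sep}$, not on all of $\oK$; the argument goes through because $\phi$, $\rQ$, $\bsalpha$, and the biderivations are all defined over $K^{\sep}$, so the linear relations you differentiate can in fact be taken with $K^{\sep}$-coefficients.
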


\begin{remark}
Taken together, Theorem~\ref{T:rathyperquasi0}, Theorem~\ref{T:hypernontract}, Corollary~\ref{C:Jordannontract}, and Theorem~\ref{T:hypernontractgen} can be viewed as answering Question~\ref{Q:three} and the questions raised at the beginning of~\S\ref{S:Hyper}. We see in these cases that hyperderivatives with respect to~$\theta$ of logarithms and quasi-logarithms of a $t$-module $\phi$ arise as specializations of solutions of Frobenius difference equations associated to Maurischat's prolongation $t$-modules of~$\phi$.  Thus the quantities obtained through the Brownawell-Denis hyperderivative $t$-modules from~\S\ref{subS:BD}, especially in Corollaries~\ref{C:BrDelogs} and~\ref{C:BrDequasilogs}, do arise from abelian and $\bA$-finite Anderson $t$-modules.
\end{remark}

\begin{proof}[Proof of Theorem~\ref{T:hypernontractgen}]
We continue with the notation from previous parts of this section.  As before the equality of the second two sets is Theorem~\ref{T:rathyperquasi0}.  Now writing $\bsy = (y_1, \dots, y_d)^{\tr}$, as in the proof of Theorem~\ref{T:hypernontract} we can use the condition on~$n$ and~$j$ to reduce the argument to showing that
\begin{multline} \label{E:bsyreduction}
  \{ y_1, \dots, y_d \} \subseteq \\ \Span{\oK} \biggl( \{ 1 \} \cup \bigcup_{u=0}^n \bigcup_{v=1}^{r-m} \bigcup_{i=1}^{m} \Bigl\{ \twistop{(\delta_v)_t}{\pd_t^u(\cG_{\bsy})} \big|_{t=\theta},  \twistop{(\gamma_{i})_t}{\pd_t^u(\cG_{\bsy})} \big|_{t=\theta}\Bigr\} \biggr).
\end{multline}
For any $\bsz_0, \dots, \bsz_n \in \KK^d$, let $\bsz \assign (\bsz_0^{\tr}, \dots, \bsz_n^{\tr})^{\tr}\in \KK^{(n+1)d}$ and write $\Exp_{\phi}(\bsz_u) = \bsx_u$ for each $0\leqslant u \leqslant n$.  Then $\Exp_{\rP_n \phi}(\bsz) = (\bsx_0^{\tr}, \dots, \bsx_n^{\tr})^{\tr} \rassign \bsx \in \KK^{(n+1)d}$ by Proposition~\ref{P:Exppro}.  Therefore, for $1\leqslant i \leqslant m$ and $0 \leqslant w \leqslant \ell_i-1$, it follows from \eqref{E:quasiinner} and \eqref{E:epsilonwidef} (and recalling Proposition~\ref{P:quasigen}(a)), that
\begin{multline*}
    \bigl\langle (\bsepsilon_{w,i})_t \bigm| \fG_{\bsz} \bigr\rangle \big|_{t=\theta} = \bFF_{\bsepsilon_{w,i}}(\bsz)\\
    = \bss_{d_i-w}\rQ^{-1}(\bsx_0 - \bsz_0)+  \bss_{d_i-(w-1)}\rQ^{-1}(\bsx_1 - \bsz_1)+ \dots + \bss_{d_i}\rQ^{-1}(\bsx_w - \bsz_w).
\end{multline*}
Now consider $(\bsy)_0 = (\bsy^{\tr}, 0, \dots, 0)^{\tr} \in \KK^{(n+1)d}$, and write $\Exp_{\phi}(\bsy) = \bsalpha$. The previous formula then implies that $\bFF_{\bsepsilon_{w,i}}((\bsy)_0) = \bss_{d_i-w}\rQ^{-1}(\bsalpha-\bsy)$, and therefore,
\begin{equation} \label{E:perlogquasipro}
\bsy = \bsalpha - \rQ\cdot \left(\begin{array}{c}
\bFF_{\bsepsilon_{\ell_1-1,1}}((\bsy)_0) \\
\vdots\\
\bFF_{\bsepsilon_{0,1}}((\bsy)_0)\\[5pt] \hline \\[-10pt]
\bFF_{\bsepsilon_{\ell_2-1,2}}((\bsy)_0)\\
\vdots\\
\bFF_{\bsepsilon_{0,2}}((\bsy)_0)\\
\vdots\\[5pt] \hline \\[-10pt]
\bFF_{\bsepsilon_{\ell_m-1,m}}((\bsy)_0)\\
\vdots\\
\bFF_{\bsepsilon_{0,m}}((\bsy)_0)
\end{array}\right)
=\bsalpha - \rQ\cdot \left. \left( \begin{array}{c}
\twistop{(\bsepsilon_{\ell_1-1,1})_t}{\fG_{(\bsy)_0}} \\
\vdots\\
\twistop{(\bsepsilon_{0,1})_t}{\fG_{(\bsy)_0}}\\[5pt] \hline \\[-10pt]
\twistop{(\bsepsilon_{\ell_2-1,2})_t}{\fG_{(\bsy)_0}}\\
\vdots\\
\twistop{(\bsepsilon_{0,2})_t}{\fG_{(\bsy)_0}}\\
\vdots\\[5pt] \hline \\[-10pt]
\twistop{(\bsepsilon_{\ell_m-1,m})_t}{\fG_{(\bsy)_0}}\\
\vdots\\
\twistop{(\bsepsilon_{0,m})_t}{\fG_{(\bsy)_0}}
\end{array} \right) \right|_{t=\theta}.
\end{equation}
Now recall the definitions of the biderivations $\gamma_i$, $\beta_{w,j} \in \Der(\phi,\oK)$ from \eqref{E:der0basis} and \eqref{E:betawjdef}.  By Proposition~\ref{P:qpfunctionpro} and \eqref{E:basisder0pro1}, it follows that for each $1\leqslant i \leqslant m$ and $1 \leqslant w\leqslant \ell_i-1$ we have
\begin{align} \label{E:AGFDerpro}
\bigl\langle (\bsepsilon_{w,i})_t &\bigm| \fG_{(\bsy)_0} \bigr\rangle \big|_{t=\theta} \\
&=\begin{aligned}[t]
    \biggl( \sum_{j=1}^w \bigl\langle (t-\theta) \cdot \bss_{d_i-j}\rQ^{-1} - \bss_{d_i-(j-1)}&\rQ^{-1} \bigm| \pd_t^{w-j}(\cG_{\bsy}) \bigr\rangle \biggr)\bigg|_{t=\theta} \\
    &{}+ \bigl\langle (t-\theta) \cdot \bss_{d_i}\rQ^{-1} \bigm| \pd_t^w(\cG_{\bsy}) \bigr\rangle \big|_{t=\theta}
    \end{aligned} \notag
\\
&= \sum_{j=1}^w \bigl( \twistop{(\beta_{w,w-j})_t}{\pd_t^{w-j}(\cG_{\bsy})} \bigr) \big|_{t=\theta}
    + \twistop{(\gamma_i)_t}{\pd_t^{w} (\cG_{\bsy})}|_{t=\theta}, \notag
\end{align}
and for $w=0$,
\begin{equation} \label{E:AGFDerpro0}
\bigl\langle (\bsepsilon_{0,i})_t \bigm| \fG_{(\bsy)_0} \bigr\rangle \big|_{t=\theta} = \bigl\langle (t-\theta) \cdot \bss_{d_i}\rQ^{-1} \bigm| \cG_{\bsy} \bigr\rangle \big|_{t=\theta} = \twistop{(\gamma_i)_t}{\cG_{\bsy}}|_{t=\theta}.
\end{equation}
Hence for $1\leqslant i \leqslant m$ and $0 \leqslant w \leqslant \ell_i-1$, it follows from~\eqref{E:basisSRphi1} that
\[
\bigl\langle (\bsepsilon_{w,i})_t \bigm| \fG_{(\bsy)_0} \bigr\rangle \big|_{t=\theta} \in \Span{\oK} \biggl( \bigcup_{u=0}^n \bigcup_{v=1}^{r-m} \bigcup_{i=1}^{m} \Bigl\{ \twistop{(\delta_v)_t}{\pd_t^u(\cG_{\bsy})} \big|_{t=\theta},  \twistop{(\gamma_{i})_t}{\pd_t^u(\cG_{\bsy})} \big|_{t=\theta}\Bigr\} \biggr),
\]
and combining this with~\eqref{E:perlogquasipro} we obtain~\eqref{E:bsyreduction}.
\end{proof}

In order to determine representatives of classes in $\rH_{\DR}^1(\rP_n \phi, \oK)$ of the form \eqref{E:hyperbid}, we refine representatives of the $\oK$-basis of $\rH_{\DR}^1(\rP_n \phi, \oK)$ in Proposition~\ref{P:deRhambasispro1} in the following way.  We continue with $\delta_1, \dots, \delta_{r-m}$, $\gamma_1, \dots, \gamma_m \in \Der(\phi,K^{\sep})$ as in~\eqref{E:basisSRphi1}. Let $s = \lfloor \log_q(n) \rfloor + 1$ (if $n \geqslant 1$), and for each $1\leqslant v \leqslant r-m$, $1 \leqslant i \leqslant m$, let $\delta_v^s$, $\gamma_i^s \in \Der(\phi,K^{\sep})$ be given as in Lemma~\ref{L:degdeRham}. That is, $\delta_v^s$ and $\gamma_i^s$ are in the same de Rham classes over~$\oK$ as $\delta_v$ and $\gamma_i$ respectively, and $\deg_{\tau}((\delta_v^s)_t)$, $\deg_{\tau}((\gamma_i^s)_t) \geqslant s$.

For $0 \leqslant u \leqslant n$ and $1 \leqslant v \leqslant r-m$, we define $\bsvartheta_{u,v} \in \Der(\rP_n \phi,K^{\sep})$ by setting
\begin{equation}\label{E:newprosrbasis}
  (\bsvartheta_{u,v})_t \assign  \sum_{j=0}^u \bigl((\delta_v^s)_t^{[u-j]} \bigr)_j  \in \Mat_{1\times (n+1)d}(K^{\sep}[\tau]\tau),
\end{equation}
and for $1\leqslant i \leqslant m$ and $0 \leqslant e \leqslant n$, we define $\bskappa_{e,i} \in \Der(\rP_n \phi,K^{\sep})$ by setting
\begin{equation}\label{E:newprosrbasis1}
(\bskappa_{e,i})_t \assign \sum_{j=0}^e \bigl( (\gamma_i^s)_t^{[e-j]} \bigr)_j \in \Mat_{1\times (n+1)d}(K^{\sep}[\tau]\tau).
\end{equation}
For the short term we consider only $\bskappa_{e,i}$ for $\ell_i \leqslant e \leqslant n$ (if $\ell_i \leqslant n$), but it will be convenient to refer to all $\bskappa_{e,i}$ at the end of the section in Corollary~\ref{C:hyperQPFAGF}. Each $\bsvartheta_{u,v}$ and $\bskappa_{e,i}$ is in the form of~\eqref{E:hyperbid}, and moreover the following holds.

\begin{lemma} \label{L:dersrpro}
Let $n \geqslant 0$.  With notation as above in~\eqref{E:newprosrbasis} and~\eqref{E:newprosrbasis1},
\[
\biggl\{ \bigcup_{u=0}^{n}\bigcup_{v=1}^{r-m}\{\bsvartheta_{u,v}\} \biggr\}
\cup \biggl\{ \bigcup_{i=1}^m \bigcup_{\substack{e=\ell_i\\ \ell_i \leqslant n}}^{n}\{\bskappa_{e,i}\} \biggr\}
\]
represents a $\oK$-basis of $\rH^{1}_{\sr}(\rP_n\phi,\oK)$.
\end{lemma}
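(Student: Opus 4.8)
The plan is to reduce the claim to a linear-independence statement and then run a leading-term argument in the $(t-\theta)$-adic filtration after ``collapsing'' the prolongation. First, by Proposition~\ref{P:deRhambasispro1} the space $\rH^1_{\sr}(\rP_n\phi,\oK)$ has $\oK$-dimension $(n+1)(r-m)+\sum_{i=1}^m\max(0,n+1-\ell_i)$, which is exactly the number of biderivations $\bsvartheta_{u,v}$ ($0\le u\le n$, $1\le v\le r-m$) and $\bskappa_{e,i}$ ($1\le i\le m$, $\ell_i\le e\le n$). So it suffices to show their images in $\rH^1_{\sr}(\rP_n\phi,\oK)=\Der(\rP_n\phi,\oK)/\Der_{\inn}(\rP_n\phi,\oK)$ are $\oK$-linearly independent; spanning then follows automatically. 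Equivalently, if a nontrivial $\oK$-combination of the $\bsvartheta_{u,v}$ and $\bskappa_{e,i}$ lies in $\Der_{\inn}(\rP_n\phi,\oK)$, we must derive a contradiction; passing to $\rH^1_{\DR}(\rP_n\phi,\oK)=\Der(\rP_n\phi,\oK)/\Der_{\si}(\rP_n\phi,\oK)$ and using Proposition~\ref{P:biderprops}(c) (which identifies the image of $\Der_{\inn}$ there with $\Der_0(\rP_n\phi,\oK)$), this amounts to showing that no nontrivial $\oK$-combination of the classes $[\bsvartheta_{u,v}]$, $[\bskappa_{e,i}]$ in $\rH^1_{\DR}(\rP_n\phi,\oK)$ lies in $\Der_0(\rP_n\phi,\oK)$.

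Second, I would build a collapse isomorphism. Writing $(\bsm)_u$ as in~\eqref{E:Dumotivenotation}, Theorem~\ref{T:prolmod} gives the block-shift relation $(t-\theta)\cdot(\bsm)_u=((t-\theta)\cdot\bsm)_u-(\bsm)_{u-1}$ in $\rP_n\cM_{\phi,\oK}$ for $\bsm\in\cM_{\phi,\oK}\tau$ and $1\le u\le n$. Iterating it shows $(\bsm)_j\equiv((t-\theta)^{n-j}\bsm)_n$ modulo $(t-\theta)\rP_n\cM_{\phi,\oK}\tau$, and expanding $(t-\theta)^{n+1}\cdot(\bsm)_n$ by the same relation, together with the identity $\sum_{i=0}^n(-1)^i\binom{n+1}{i}=(-1)^n$ (a unit), gives $((t-\theta)^{n+1}\bsm)_n\equiv 0$. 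Since $\cM_{\phi,\oK}\tau$ is an $\oK[t]$-submodule of the free module $\cM_{\phi,\oK}$ with torsion quotient $\Mat_{1\times d}(\oK)$, it is $\oK[t]$-free of rank $r$; hence these relations yield a surjection $\Theta\colon\cM_{\phi,\oK}\tau/(t-\theta)^{n+1}\cM_{\phi,\oK}\tau\to\rH^1_{\DR}(\rP_n\phi,\oK)$, $\bsm\mapsto[(\bsm)_n]$, between $\oK$-spaces of equal dimension $(n+1)r$ (Proposition~\ref{P:deRhambasispro}), hence an isomorphism. One reads off $\Theta^{-1}[\bsdelta_{u,v}]=(t-\theta)^{n-u}(\delta_v)_t$ and $\Theta^{-1}[\bsgamma_{e,i}]=(t-\theta)^{n-e}(\gamma_i)_t$, and from Proposition~\ref{P:deRhambasispro1} that $\Der_0(\rP_n\phi,\oK)$ corresponds to $S_0\assign\Span{\oK}\{\Theta^{-1}[\bsepsilon_{w,i}]\}$, whose leading term in the graded piece $\mathrm{gr}^{\,n-w}$ of the $(t-\theta)$-adic filtration is $(t-\theta)^{n-w}(\gamma_i)_t$ for $0\le w\le\min(n,\ell_i-1)$.

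Third comes the leading-term computation. Using $(\delta_v^s)_t=(\delta_v)_t+(t-\theta)\bsc_v$ and $(\gamma_i^s)_t=(\gamma_i)_t+(t-\theta)\bsd_i$ with $\bsc_v,\bsd_i\in\cM_{\phi,\oK}\tau$ — valid since $\delta_v^s,\gamma_i^s$ lie in the de Rham classes of $\delta_v,\gamma_i$ (Proposition~\ref{P:biderprops}(b)) — the block-shift relation gives, in $\cM_{\phi,\oK}\tau/(t-\theta)^{n+1}\cM_{\phi,\oK}\tau$,
\[
\Theta^{-1}[\bsvartheta_{u,v}]=\sum_{k=0}^{u}(t-\theta)^{\,n-u+k}(\delta_v^s)_t^{[k]}\equiv(t-\theta)^{\,n-u}(\delta_v)_t \pmod{(t-\theta)^{\,n-u+1}\cM_{\phi,\oK}\tau},
\]
and likewise $\Theta^{-1}[\bskappa_{e,i}]\equiv(t-\theta)^{\,n-e}(\gamma_i)_t\pmod{(t-\theta)^{\,n-e+1}\cM_{\phi,\oK}\tau}$. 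Now let $y\assign\sum c_{u,v}\Theta^{-1}[\bsvartheta_{u,v}]+\sum d_{e,i}\Theta^{-1}[\bskappa_{e,i}]$ be a nontrivial combination lying in $S_0$, and let $\rho$ be the least filtration level occurring among its nonzero terms. By the congruences, the image of $y$ in $\mathrm{gr}^{\,\rho}\cong\cM_{\phi,\oK}\tau/(t-\theta)\cM_{\phi,\oK}\tau$ is the nonzero element $\sum_v c_{n-\rho,v}(\delta_v)_t+\sum_{i:\,\ell_i\le n-\rho}d_{n-\rho,i}(\gamma_i)_t$; on the other hand the part of $S_0$ lying in filtration level $\ge\rho$ is spanned by those $\Theta^{-1}[\bsepsilon_{w,i}]$ with $w\le n-\rho$, whose image in $\mathrm{gr}^{\,\rho}$ lies in the span of $\{(\gamma_i)_t:\ell_i\ge n-\rho+1\}$. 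Since the index sets $\{\ell_i\le n-\rho\}$ and $\{\ell_i\ge n-\rho+1\}$ are disjoint and $\{(\delta_v)_t\bmod(t-\theta)\}\cup\{(\gamma_i)_t\bmod(t-\theta)\}$ is an $\oK$-basis of $\rH^1_{\DR}(\phi,\oK)=\cM_{\phi,\oK}\tau/(t-\theta)\cM_{\phi,\oK}\tau$, this forces all the coefficients at level $\rho$ to vanish, contradicting the choice of $\rho$. Hence the images are linearly independent and the collection is a basis.

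I expect the main obstacle to be making the collapse isomorphism $\Theta$ precise — verifying the block-shift relation and the combinatorial identity behind $((t-\theta)^{n+1}\bsm)_n\equiv 0$ — and then tracking the $(t-\theta)$-adic filtration carefully enough to see that the three families $\bsdelta_{u,v}$, $\bsgamma_{e,i}$, $\bsepsilon_{w,i}$ of Proposition~\ref{P:deRhambasispro1} occupy complementary ranges, so that the $\bskappa_{e,i}$ with $\ell_i\le e\le n$ are precisely the non-$\Der_0$ ones. The hyperderivative bookkeeping is otherwise routine (it uses only the product rule), and the particular value $s=\lfloor\log_q n\rfloor+1$ plays no role in this lemma: only the condition $\deg_{\tau}(\delta_v^s)_t,\deg_{\tau}(\gamma_i^s)_t\ge s$ is needed, and that solely to place $\bsvartheta_{u,v}$, $\bskappa_{e,i}$ in the form~\eqref{E:hyperbid}, which matters only for Corollary~\ref{C:hyperQPFAGF}.
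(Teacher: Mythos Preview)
Your proof is correct and takes a genuinely different route from the paper's argument. The paper proceeds by induction on $n$ via the exact sequence
\[
0 \to \rH_{\DR}^1(\rP_{n-1}\phi,\oK) \xrightarrow{\bsi_*} \rH_{\DR}^1(\rP_n\phi,\oK) \xrightarrow{\bspr_*} \rH_{\DR}^1(\phi,\oK) \to 0
\]
from~\eqref{E:DRSESpro}: it computes that the top-level classes $[\bsvartheta_{n,v}]$, $[\bskappa_{n,i}]$ (for $\ell_i\le n$), and $[\bsepsilon_{n,i}]$ (for $\ell_i>n$) project to a basis of $\rH_{\DR}^1(\phi,\oK)$, while the lower-level ones lie in the kernel and form a basis there by the inductive hypothesis; the result then follows from~\eqref{E:Der0deRhambasispro1}. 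Your argument instead produces in one stroke a structural isomorphism $\Theta\colon\cM_{\phi,\oK}\tau/(t-\theta)^{n+1}\cM_{\phi,\oK}\tau\iso\rH_{\DR}^1(\rP_n\phi,\oK)$ from the block-shift relation, and then runs a leading-term argument in the $(t-\theta)$-adic filtration that separates the $\bsvartheta$, $\bskappa$ contributions from the $\bsepsilon$ contributions by disjointness of the index sets $\{\ell_i\le n-\rho\}$ and $\{\ell_i\ge n-\rho+1\}$. The collapse isomorphism is a pleasant structural observation the paper does not make explicit, and your filtration argument handles all $n$ uniformly without induction; the paper's approach, by contrast, stays closer to the existing de Rham machinery and avoids having to verify the well-definedness of $\Theta$ (your telescoping/binomial computation for $((t-\theta)^{n+1}\bsm)_n\equiv 0$). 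Both are short; yours gives a bit more insight into why the answer looks the way it does. Your closing remark that the specific value of $s$ is irrelevant here is also correct.
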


\begin{proof}
We proceed by induction on $n$.  If $n=0$, the proof is done once we recall that $\delta_1^s, \dots, \delta_{r-m}^s$ represent a $\oK$-basis of $\rH_{\sr}^1(\phi,\oK)$, as $\delta_v$ and $\delta_v^s$ represent the same de Rham class for~$\phi$.  We then assume $n \geqslant 1$ and follow a similar argument to the one for Proposition~\ref{P:deRhambasispro}.  In particular we appeal to the exact sequence~\eqref{E:DRSESpro} with $h=n-1$, namely
\[
  0 \to \rH_{\DR}^1(\rP_{n-1} \phi,\oK) \xrightarrow{\bsi_*} \rH_{\DR}^1(\rP_n \phi,\oK) \xrightarrow{\bspr_*} \rH_{\DR}^1(\phi, \oK) \to 0.
\]
As in this previous proof, a $\oK$-basis of the middle term is obtained from the union of the image of $\oK$-basis of the left under $\bsi_*$ with the preimage of a $\oK$-basis of the right under $\bspr_*$. We observe that for appropriate choices of indices (recall also $\bsepsilon_{w,i}$ from \eqref{E:basisder0pro1}),
\begin{align*}
\bspr_*([\bsvartheta_{u,v}]) &= \begin{cases}
[\delta_v^s] & \textup{if $u=n$,} \\
0 & \textup{if $u < n$,}
\end{cases}
\\
\bspr_*([\bskappa_{e,i}]) &= \begin{cases}
[\gamma_i^s] & \textup{if $e=n$,} \\
0 & \textup{if $e < n$,}
\end{cases}
\\
\bspr_*([\bsepsilon_{w,i}]) &= \begin{cases}
[\gamma_i] & \textup{if $w=n$,} \\
0 & \textup{if $w < n$.}
\end{cases}
\end{align*}
For each fixed $i$, we have two mutually exclusive cases: either $\ell_i \leqslant n$ and then $[\bskappa_{n,i}] \mapsto [\gamma_i^s]$; or $\ell_i > n$ and then $[\bsepsilon_{n,i}] \mapsto [\gamma_i]$.  Therefore,
\[
  \bigcup_{v=1}^{r-m} \{ \bsvartheta_{n,v} \} \cup \bigcup_{\substack{i=1 \\ \ell_i \leqslant n}}^m \{ \bskappa_{n,i} \} \cup \bigcup_{\substack{i=1 \\ \ell_i > n}}^m \{ \bsepsilon_{n,i} \}
\]
projects via $\bspr_*$ to a $\oK$-basis of $\rH_{\DR}^1(\phi,\oK)$.  By induction we find that
\[
\biggl\{ \bigcup_{u=0}^{n}\bigcup_{v=1}^{r-m}\{\bsvartheta_{u,v}\} \biggr\}
\cup \biggl\{ \bigcup_{w=0}^{\ell_i-1} \bigcup_{i=1}^m\{ \bsepsilon_{w,i}\} \biggr\}
 \cup \biggl\{ \bigcup_{i=1}^m  \bigcup_{\substack{e=\ell_i\\ \ell_i \leqslant  n}}^{n}\{\bskappa_{e,i}\} \biggr\}
\]
represents a $\oK$-basis of $\rH_{\DR}^1(\rP_n\phi,\oK)$, and the result follows from Proposition~\ref{P:deRhambasispro1} and in particular~\eqref{E:Der0deRhambasispro1}.
\end{proof}

We now use this basis to analyze the associated strictly quasi-periodic extension of~$\rP_n\phi$ and to determine simplified expressions for its quasi-periods and quasi-logarithms.  Considering the $\oK$-basis of $\rH_{\sr}^1(\phi, \oK)$ in Lemma~\ref{L:dersrpro}, we obtain quasi-periodic functions $\bFF_{\bsvartheta_{u,v}}(\bsz)$ and $\bFF_{\bskappa_{e,i}}(\bsz)$ as in~\eqref{E:quasihyperf}.  Notably for $0 \leqslant u \leqslant n$ and $0 \leqslant v \leqslant r-m$,
\begin{equation} \label{E:Fvarthetauv}
\bFF_{\bsvartheta_{u,v}}(\bsz) = \rF_{\delta_v^s}^{[u]}(\bsz_0) + \rF_{\delta_v^s}^{[u-1]}(\bsz_1) + \cdots + \rF_{\delta_v^s}(\bsz_u),
\end{equation}
and for $1 \leqslant i \leqslant m$ and $\ell_i \leqslant e \leqslant n$ (if $\ell_i \leqslant n$),
\begin{equation} \label{E:Fkappaei}
\bFF_{\bskappa_{e,i}}(\bsz) = \rF_{\gamma_i^s}^{[e]}(\bsz_0) + \rF_{\gamma_i^s}^{[e-1]}(\bsz_1) + \cdots + \rF_{\gamma_i^s}(\bsz_e).
\end{equation}
Moreover, we let $\psi_n : \bA \to \Mat_{\bsd_n}(K^{\sep}[\tau])$ be the strictly quasi-periodic extension of ${\rP_n\phi}$ with respect to the biderivations
\begin{multline*}
  \bigl\{ \bsvartheta_{n,1}, \dots, \bsvartheta_{0,1}; \ldots; \bsvartheta_{n,r-m}, \ldots, \bsvartheta_{0,r-m}; \\ \bskappa_{n,1}, \ldots, \bskappa_{\ell_1,1}; \ldots; \bskappa_{n,m}, \ldots, \bskappa_{\ell_m,m} \bigr\} \subseteq \Der_0(\rP_n\phi,K^{\sep}),
\end{multline*}
which as in~\eqref{E:QPext} is a $t$-module of dimension
\[
  \bsd_n \assign (n+1)(r+d-m) + \sum_{i=1}^{m} \max(n-\ell_i+1,0).
\]
We note that if $n \geqslant \ell_i-1$ for each $i$, then this simplifies to
\[
  \bsd_n = nd + (n+1)r.
\]
We see from~\eqref{E:QPExp} that for 
\[
\bsu = (u_{0,1}, \dots, u_{n,1}, \ldots, u_{0,r-m}, \ldots, u_{n,r-m}, v_{\ell_1,1}, \ldots, v_{n,1}, \ldots, v_{\ell_n,m}, \ldots, v_{n,m})^{\tr},
\]
its exponential function has the form
\begin{align} \label{E:quasihyperpro}
\Exp_{\psi_n} \begin{pmatrix} \bsu \\ \bsz \end{pmatrix}
&= \begin{aligned}[t]
\Bigl( u_{0,1} &{}+\bFF_{\bsvartheta_{n,1}}(\bsz), \dots, u_{n,r-m}+\bFF_{\bsvartheta_{0,r-m}}(\bsz), \\
& v_{\ell_1,1}+\bFF_{\bskappa_{n,1}}(\bsz), \dots, v_{n,m} + \bFF_{\bskappa_{\ell_m,m}}(\bsz), \Exp_{\rP_n \phi}(\bsz)^{\tr} \Bigr)^{\tr}.
\end{aligned} \notag
\end{align}
Pulling this all together we obtain the following proposition that identifies quasi-log\-a\-rithms on the prolongation $\rP_n\phi$ with hyperderivatives with respect to~$\theta$ of quasi-log\-a\-rithms on the original $t$-module~$\phi$ itself.  As such we may view this proposition as an extension of Corollary~\ref{C:BrDequasilogs} of Brownawell and Denis.

\begin{proposition} \label{P:period}
Let $\phi : \bA \to \Mat_d(K^{\sep}[\tau])$ be a uniformizable abelian $t$-module of rank~$r$. We let $n \geqslant 0$, and continuing with the notations of Lemma~\ref{L:dersrpro}, we let $\psi_n : \bA \to \Mat_{\bsd_n}(K^{\sep}[\tau])$ be the associated strictly quasi-periodic extension of $\rP_n \phi$. Let $\bsy \assign (\bsy_0^{\tr}, \dots, \bsy_n^{\tr})^{\tr} \in (K_{\infty}^{\sep})^{(n+1)d}$ be chosen so that $\Exp_{\phi}(\bsy_u) = \bsalpha_u \in (K^{\sep})^d$ for each $0 \leqslant u\leqslant n$. Then
\[
\Exp_{\psi_n} \left(\begin{array}{c}
  \vdots\\[5pt] \hline \\[-10pt]
  -\bFF_{\bsvartheta_{n,v}}(\bsy)\\
  \vdots\\
  -\bFF_{\bsvartheta_{1,v}}(\bsy)\\
  -\bFF_{\bsvartheta_{0,v}}(\bsy)\\[5pt] \hline \\[-10pt]
  \vdots\\[5pt] \hline \\[-10pt]
  -\bFF_{\bskappa_{n,i}}(\bsy)\\
  \vdots\\
  -\bFF_{\bskappa_{\ell_i,i}}(\bsy)\\[5pt] \hline \\[-10pt]
  \vdots\\[5pt] \hline \\[-10pt]
  \bsy_0 \\
  \vdots \\
  \bsy_n
\end{array}\right)
=
\Exp_{\psi_n} \left(\begin{array}{c}
\vdots\\[5pt] \hline \\[-10pt]
  -\sum_{w=0}^n \pd_{\theta}^w (\rF_{\delta_v^s}(\bsy_{n-w})) \\
  \vdots\\
  -\rF_{\delta_v^s}(\bsy_1) - \pd_{\theta}^1(\rF_{\delta_v^s}(\bsy_0))\\
  -\rF_{\delta_v^s}(\bsy_0)\\[5pt] \hline \\[-10pt]
  \vdots\\[5pt] \hline \\[-10pt]
  -\sum_{w=0}^n \pd_{\theta}^w(\rF_{\gamma_i^s}(\bsy_{n-w})) \\
  \vdots\\
  -\sum_{w=0}^{\ell_i} \pd_{\theta}^w(\rF_{\gamma_i^s}(\bsy_{\ell_i-w})) \\[5pt] \hline \\[-10pt]
  \vdots\\[5pt] \hline \\[-10pt]
  \bsy_0 \\
  \vdots \\
  \bsy_n
\end{array}\right)
=  \begin{pmatrix}
  0\\
  \vdots\\
  \vdots\\
  0\\
  \bsalpha_0 \\
  \vdots\\
  \bsalpha_n
\end{pmatrix}.
\]
\end{proposition}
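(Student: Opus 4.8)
The plan is to reduce the proposition to three facts already established above: the explicit shape of the quasi-periodic functions attached to the biderivations $\bsvartheta_{u,v}$ and $\bskappa_{e,i}$ from~\eqref{E:Fvarthetauv}--\eqref{E:Fkappaei}; the conversion of the $[\,\cdot\,]$-hyperderivatives of a quasi-periodic function, evaluated at a point, into $\pd_\theta$-hyperderivatives of its values via~\eqref{E:Fhypereval}; and the exponential formula~\eqref{E:quasihyperpro} for the strictly quasi-periodic extension $\psi_n$. Throughout, since $\Exp_\phi(\bsy_u) = \bsalpha_u \in (K^\sep)^d$ for each $u$, Lemma~\ref{L:separable} guarantees $\bsy_u \in (K_\infty^\sep)^d$ and $\rF_{\delta_v^s}(\bsy_u)$, $\rF_{\gamma_i^s}(\bsy_u) \in K_\infty^\sep$, so every hyperderivative $\pd_\theta^w$ of these quantities is defined through the canonical extension of $\pd_\theta^w$ to $K_\infty^\sep$.

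First I would prove the left-hand equality, which is an equality of the arguments of $\Exp_{\psi_n}$ and does not involve $\Exp_{\psi_n}$ itself. Substituting $\bsz = \bsy$ into~\eqref{E:Fvarthetauv} and reindexing gives $\bFF_{\bsvartheta_{u,v}}(\bsy) = \sum_{w=0}^u \rF_{\delta_v^s}^{[w]}(\bsy_{u-w})$, and likewise~\eqref{E:Fkappaei} gives $\bFF_{\bskappa_{e,i}}(\bsy) = \sum_{w=0}^e \rF_{\gamma_i^s}^{[w]}(\bsy_{e-w})$. Because $s = \lfloor \log_q(n) \rfloor + 1$ (with $s = 1$ when $n = 0$) we have $q^s > n$, and since $\deg_\tau((\delta_v^s)_t) \geqslant s$ and $\deg_\tau((\gamma_i^s)_t) \geqslant s$ by Lemma~\ref{L:degdeRham}, the index $w$ in these sums always satisfies $0 \leqslant w \leqslant n < q^s \leqslant q^{\deg_\tau((\delta_v^s)_t)}$. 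Hence~\eqref{E:Fhypereval} applies and yields $\rF_{\delta_v^s}^{[w]}(\bsy_{u-w}) = \pd_\theta^w(\rF_{\delta_v^s}(\bsy_{u-w}))$, together with the analogous identity for $\gamma_i^s$. Reading these off at $u = n$ for each $\bsvartheta$-block and at each $e$ from $\ell_i$ to $n$ for each $\bskappa$-block produces exactly the middle column vector up to the overall minus signs, which is the left-hand equality.

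For the right-hand equality I would invoke the exponential formula~\eqref{E:quasihyperpro} for $\psi_n$. Writing the argument of $\Exp_{\psi_n}$ as $(\bsu^\tr, \bsz^\tr)^\tr$ with $\bsz = (\bsy_0^\tr, \dots, \bsy_n^\tr)^\tr$ and $\bsu$ indexed by the biderivations $\bsvartheta_{n,1}, \dots, \bsvartheta_{0,r-m}, \bskappa_{n,1}, \dots, \bskappa_{\ell_m,m}$, the formula says that the entry of $\Exp_{\psi_n}$ corresponding to a biderivation $\bsepsilon$ equals (the matching entry of $\bsu$) $+\, \bFF_{\bsepsilon}(\bsz)$, while the final $(n+1)d$ entries form $\Exp_{\rP_n\phi}(\bsz)$. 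Taking $\bsz = \bsy$ and setting the entry of $\bsu$ matching $\bsepsilon$ to be $-\bFF_{\bsepsilon}(\bsy)$ makes every biderivation-entry of $\Exp_{\psi_n}$ vanish, and Proposition~\ref{P:Exppro} gives $\Exp_{\rP_n\phi}(\bsy) = (\Exp_\phi(\bsy_0)^\tr, \dots, \Exp_\phi(\bsy_n)^\tr)^\tr = (\bsalpha_0^\tr, \dots, \bsalpha_n^\tr)^\tr$; this is the displayed right-hand column.

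I expect no essential obstacle: the argument is entirely bookkeeping of the block structure of~\eqref{E:newprosrbasis}--\eqref{E:quasihyperpro} and of the reindexing inside the quasi-periodic functions. The one point demanding care is the inequality $q^s > n$, which is exactly what makes the $p$-th power rule (Proposition~\ref{P:Hyperprops}(b)), and hence~\eqref{E:Fhypereval}, applicable uniformly across all levels $0 \leqslant w \leqslant n$ occurring in the biderivations $\bsvartheta_{u,v}$ and $\bskappa_{e,i}$; this is precisely why $\delta_v^s$ and $\gamma_i^s$ (rather than $\delta_v$ and $\gamma_i$) are used, following the device of Brownawell and Denis.
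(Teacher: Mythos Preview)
Your proposal is correct and follows essentially the same route as the paper's proof: the paper reduces the left-hand equality to the identity $\bFF_{\bsvartheta_{u,v}}(\bsy) = \sum_{w=0}^u \pd_{\theta}^w(\rF_{\delta_v^s}(\bsy_{u-w}))$ (and its analogue for $\bskappa_{e,i}$), which it verifies exactly as you do by combining \eqref{E:Fvarthetauv}--\eqref{E:Fkappaei} with \eqref{E:Fhypereval}, and treats the right-hand equality as immediate from the structure of $\Exp_{\psi_n}$. One small wording slip: in your phrase ``Reading these off at $u = n$ for each $\bsvartheta$-block'' you presumably mean ``at each $u$ from $0$ to $n$,'' matching the range of entries $\bFF_{\bsvartheta_{n,v}}, \dots, \bFF_{\bsvartheta_{0,v}}$ in the displayed column.
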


\begin{proof}
For $0 \leqslant u \leqslant n$ and $1 \leqslant v \leqslant r-m$, we need to verify that
\begin{equation} \label{E:hyperbFF}
  \bFF_{\bsvartheta_{u,v}}(\bsy) = \sum_{w=0}^u \pd_{\theta}^w \bigl( \rF_{\delta_v^s}(\bsy_{u-w}) \bigr).
\end{equation}
By~\eqref{E:Fvarthetauv}, it suffices to check for $0 \leqslant w \leqslant u$ that
\[
  \pd_{\theta}^w \bigl( \rF_{\delta_v^s}(\bsy_{u-w}) \bigr) = \rF_{\delta_v^s}^{[w]}(\bsy_{u-w}),
\]
but since $\deg_{\tau} (\delta_v^s)_t \geqslant s$, this has been shown in~\eqref{E:Fhypereval}.  For $1 \leqslant i \leqslant m$ and $\ell_i \leqslant e \leqslant n$, the desired identity for $\bFF_{\bskappa_{e,i}}(\bsy)$ follows similarly.
\end{proof}

\begin{remark}
What we notice from Proposition~\ref{P:period} is that quasi-logarithms associated to strictly reduced biderivations on $\rP_n\phi$ can be identified with hyperderivatives with respect to~$\theta$ of quasi-logarithms associated to strictly reduced \emph{and some} inner biderivations on $\phi$ itself.  Also these strictly reduced quasi-logarithms on $\rP_n\phi$ are directly constructed from quantities appearing in Theorems~\ref{T:rathyperquasi0} and~\ref{T:hypernontractgen}.

In~\cite{CP11}, \cite{CP12}, Chang and the second author obtained results on algebraic independence of logarithms and quasi-logarithms of a single Drinfeld module by constructing an appropriate $t$-motive that was an extension of the trivial $t$-motive by the $t$-motive of the Drinfeld module and then by calculating the dimension of its motivic Galois group so as to apply results of~\cite{ABP04}, \cite{P08}.  One can imagine, using Proposition~\ref{P:period} together with Theorems~\ref{T:rathyperquasi0} and~\ref{T:hypernontractgen}, that a similar strategy could be used to determine the algebraic independence of hyperderivatives with respect to~$\theta$ of logarithms and quasi-logarithms on a general abelian, $\bA$-finite, and uniformizable $t$-module.  This strategy has been carried out successfully by Maurischat~\cite{Maurischat18}, \cite{Maurischat21b}, for hyperderivatives of the Carlitz period and by the first author~\cite{NPhD}, \cite{N21}, for hyperderivatives of logarithms and quasi-logarithms of a Drinfeld module.
\end{remark}

To these ends we can use Proposition~\ref{P:period} to capture hyperderivatives of logarithms and quasi-logarithms of~$\phi$ in a more simplified way, as we find in the following corollary.

\begin{corollary} \label{C:quasilog}
Let $\phi : \bA \to \Mat_d(K^{\sep}[\tau])$ be a uniformizable, abelian, and $\bA$-finite $t$-module of rank~$r$, and let $\bsy \in (K_{\infty}^{\sep})^d$ be chosen so that $\Exp_{\phi}(\bsy) = \bsalpha \in (K^{\sep})^d$.  For $n \geqslant 0$, let $\psi_n : \bA \to \Mat_{\bsd_n}(K^{\sep}[\tau])$ be the associated strictly quasi-periodic extension of $\rP_n \phi$ arising from Lemma~\ref{L:dersrpro}.  Then
\[
\Exp_{\psi_n} \left(\begin{array}{c}
\vdots\\[5pt] \hline \\[-10pt]
  -\pd_{\theta}^n (\rF_{\delta_v^s}(\bsy)) \\
  \vdots\\
  -\pd_{\theta}^1(\rF_{\delta_v^s}(\bsy))\\
  -\rF_{\delta_v^s}(\bsy)\\[5pt] \hline \\[-10pt]
  \vdots\\[5pt] \hline \\[-10pt]
  -\pd_{\theta}^n(\rF_{\gamma_i^s}(\bsy)) \\
  \vdots\\
  -\pd_{\theta}^{\ell_i}(\rF_{\gamma_i^s}(\bsy)) \\[5pt] \hline \\[-10pt]
  \vdots\\[5pt] \hline \\[-10pt]
  \bsy \\
  0\\
  \vdots \\
  0
\end{array}\right)
=
\left(\begin{array}{c}
  0\\
  \vdots\\
  \vdots\\
  0\\[5pt] \hline \\[-10pt]
  \bsalpha\\
  0\\
  \vdots\\
  0
\end{array}\right).
\]
\end{corollary}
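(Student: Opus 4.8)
The plan is to derive Corollary~\ref{C:quasilog} as the special case of Proposition~\ref{P:period} in which the input block vector has only its first block nonzero. Concretely, I would apply Proposition~\ref{P:period} with $\bsy_0 \assign \bsy$ and $\bsy_1 = \cdots = \bsy_n = \bzero$, so that $\bsalpha_0 = \Exp_{\phi}(\bsy) = \bsalpha$ while $\bsalpha_u = \Exp_{\phi}(\bzero) = \bzero$ for $1 \leqslant u \leqslant n$. The block vector $(\bsy_0^{\tr}, \dots, \bsy_n^{\tr})^{\tr}$ is then exactly $(\bsy)_0 \in (K_{\infty}^{\sep})^{(n+1)d}$, and by Proposition~\ref{P:Exppro} its image under $\Exp_{\rP_n\phi}$ is $(\bsalpha^{\tr}, \bzero, \dots, \bzero)^{\tr}$, which accounts for the lower block of the right-hand side of the asserted identity.

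Next I would simplify the quasi-periodic coordinates of the input appearing on the left in Proposition~\ref{P:period}. For $0 \leqslant u \leqslant n$ and $1 \leqslant v \leqslant r-m$, that coordinate is $-\sum_{w=0}^{u} \pd_{\theta}^{w}\bigl(\rF_{\delta_v^s}(\bsy_{u-w})\bigr)$. Since $\rF_{\delta_v^s}(\bsz)$ is, by Proposition~\ref{P:quasi}, an $\FF_q$-linear power series with no constant term (indeed with lowest-degree terms of degree at least $q^s$ in each variable, by Remark~\ref{R:degreebidhypersep}), we have $\rF_{\delta_v^s}(\bzero) = 0$, so every summand with $u-w > 0$ vanishes and only the $w=u$ term survives, leaving $-\pd_{\theta}^{u}\bigl(\rF_{\delta_v^s}(\bsy)\bigr)$. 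The identical collapse occurs for the coordinates $-\sum_{w=0}^{e} \pd_{\theta}^{w}\bigl(\rF_{\gamma_i^s}(\bsy_{e-w})\bigr)$ attached to $\bskappa_{e,i}$, which reduce to $-\pd_{\theta}^{e}\bigl(\rF_{\gamma_i^s}(\bsy)\bigr)$ for $\ell_i \leqslant e \leqslant n$. Reading off the coordinates in the order fixed in Proposition~\ref{P:period} and Lemma~\ref{L:dersrpro} — the $\bsvartheta_{u,v}$ with $u$ running from $n$ down to $0$, then the $\bskappa_{e,i}$ with $e$ running from $n$ down to $\ell_i$, then the $\rP_n\phi$-block $(\bsy)_0$ — then reproduces precisely the displayed input and output vectors of Corollary~\ref{C:quasilog}.

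Finally I would note that the upper block of the right-hand side is zero for the same reason it is in Proposition~\ref{P:period}: the $\bsvartheta_{u,v}$- and $\bskappa_{e,i}$-coordinates of the input are chosen to cancel $\bFF_{\bsvartheta_{u,v}}(\bsy)$ and $\bFF_{\bskappa_{e,i}}(\bsy)$ in the formula~\eqref{E:quasihyperpro} for $\Exp_{\psi_n}$, and this cancellation, together with the hyperderivative identities in the proof of Proposition~\ref{P:period}, requires no new computation. There is no real obstacle in this argument: the only points needing care are the vanishing $\rF_{\delta_v^s}(\bzero) = \rF_{\gamma_i^s}(\bzero) = 0$ (and hence of all their $\pd_{\theta}$-hyperderivatives evaluated at $\bzero$), which makes the telescoping sums collapse, and the bookkeeping of the coordinate ordering; both are routine. (The added hypothesis that $\phi$ be $\bA$-finite, present in Corollary~\ref{C:quasilog} but not in Proposition~\ref{P:period}, is harmless.)
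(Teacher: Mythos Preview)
Your proposal is correct and follows exactly the approach the paper takes: the paper states immediately before Corollary~\ref{C:quasilog} that it ``is derived by simply substituting $(\bsy)_0 = (\bsy^{\tr}, 0, \dots, 0)^{\tr}$ into Proposition~\ref{P:period},'' and you have carried out precisely this substitution, correctly noting that the vanishing $\rF_{\delta_v^s}(\bzero) = \rF_{\gamma_i^s}(\bzero) = 0$ collapses each sum to its single surviving term.
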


For $\bsy \in (K_{\infty}^{\sep})^d$, this corollary is derived by simply substituting
\[
(\bsy)_0 = (\bsy^{\tr}, 0, \dots, 0)^{\tr} \in (K_{\infty}^{\sep})^{(n+1)d}
\]
into Proposition~\ref{P:period}, but we can also return to the theory of Anderson generating functions for $\rP_n\phi$ and make aspects of Theorem~\ref{T:rathyperquasi0} more explicit.

\begin{corollary} \label{C:hyperQPFAGF}
Let $\phi : \bA \to \Mat_d(K^{\sep}[\tau])$ be a uniformizable, abelian, and $\bA$-finite $t$-module of rank~$r$, and let $\bsy \in (K_{\infty}^{\sep})^d$.  Let $n \geqslant 0$ and set $s = \lfloor \log_q(n) \rfloor + 1$ if $n \geqslant 1$ and $s=0$ otherwise.  Select $\delta_1, \dots, \delta_{r-m}$, $\gamma_1, \dots, \gamma_m \in \Der(\phi,K^{\sep})$ as in \eqref{E:basisSRphi1}, together with corresponding $\delta_v^s$ and $\gamma_i^s$ as in Lemma~\ref{L:degdeRham}. Then the following hold.
\begin{enumerate}
\item[(a)] For $0 \leqslant u \leqslant n$ and $1 \leqslant v \leqslant r-m$,
\[
\pd_{\theta}^u \bigl( \rF_{\delta_v^s}(\bsy) \bigr) =  \Bigl( \bigl\langle (\delta_v^s)_t^{[u]} \bigm| \cG_{\bsy} \bigr\rangle + \bigl\langle (\delta_v^s)_t^{[u-1]} \bigm| \pd_t^1(\cG_{\bsy}) \bigr\rangle + \dots + \bigl\langle (\delta_v^s)_t \bigm| \pd_t^{u}(\cG_{\bsy}) \bigr\rangle \Bigr) \Big|_{t=\theta}.
\]
\item[(b)] For $1 \leqslant i \leqslant m$ and $0 \leqslant e \leqslant n$,
\[
\pd_{\theta}^e \bigl( \rF_{\gamma_i^s}(\bsy) \bigr)=  \Bigl( \bigl\langle (\gamma_i^s)_t^{[e]} \bigm| \cG_{\bsy} \bigr\rangle + \bigl\langle (\gamma_i^s)_t^{[e-1]} \bigm| \pd_t^1(\cG_{\bsy}) \bigr\rangle + \dots + \bigl\langle (\gamma_i^s)_t \bigm| \pd_t^{e}(\cG_{\bsy}) \bigr\rangle \Bigr) \Big|_{t=\theta}.
\]
\end{enumerate}
\end{corollary}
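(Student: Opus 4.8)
The plan is to derive both identities from a single application of Proposition~\ref{P:quasigen}(a) to the prolongation $t$-module $\rP_n\phi$, specialized at the padded point $(\bsy)_0 = (\bsy^{\tr},0,\dots,0)^{\tr} \in (K_{\infty}^{\sep})^{(n+1)d}$, followed by expanding both sides with the explicit formulas established in \S\ref{subS:Prolongations} and \S\ref{subS:QuasiExt}. First I would note that $\rP_n\phi$ is abelian (Theorem~\ref{T:prolongprops}(a)), so that Proposition~\ref{P:quasigen}(a), applied to the biderivations $\bsvartheta_{u,v},\bskappa_{e,i} \in \Der(\rP_n\phi,K^{\sep})$ of~\eqref{E:newprosrbasis}--\eqref{E:newprosrbasis1} and the vector $(\bsy)_0$, yields
\[
  \twistop{(\bsvartheta_{u,v})_t}{\fG_{(\bsy)_0}}\big|_{t=\theta} = \bFF_{\bsvartheta_{u,v}}\bigl((\bsy)_0\bigr), \qquad \twistop{(\bskappa_{e,i})_t}{\fG_{(\bsy)_0}}\big|_{t=\theta} = \bFF_{\bskappa_{e,i}}\bigl((\bsy)_0\bigr),
\]
where $\fG_{(\bsy)_0}$ is the Anderson generating function of $\rP_n\phi$ associated to $(\bsy)_0$.

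Next I would compute the left-hand sides. By Proposition~\ref{P:qpfunctionpro}, $\fG_{(\bsy)_0} = \bigl(\cG_{\bsy}^{\tr},\pd_t^1(\cG_{\bsy})^{\tr},\dots,\pd_t^n(\cG_{\bsy})^{\tr}\bigr)^{\tr}$, so its $j$-th block equals $\pd_t^j(\cG_{\bsy})$. Because $(\bsvartheta_{u,v})_t = \sum_{j=0}^u \bigl((\delta_v^s)_t^{[u-j]}\bigr)_j$ and the notation $(\cdot)_j$ places its argument in the $j$-th block with zeros elsewhere, a direct check shows $\twistop{(\bsm)_j}{\fG} = \twistop{\bsm}{\fG_{[j]}}$ for any $\fG$ with $j$-th block $\fG_{[j]}$, since Frobenius twisting acts blockwise. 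Hence
\[
  \twistop{(\bsvartheta_{u,v})_t}{\fG_{(\bsy)_0}} = \sum_{j=0}^u \twistop{(\delta_v^s)_t^{[u-j]}}{\pd_t^j(\cG_{\bsy})} = \twistop{(\delta_v^s)_t^{[u]}}{\cG_{\bsy}} + \twistop{(\delta_v^s)_t^{[u-1]}}{\pd_t^1(\cG_{\bsy})} + \dots + \twistop{(\delta_v^s)_t}{\pd_t^u(\cG_{\bsy})},
\]
which is the right-hand side of~(a) before specializing at $t=\theta$; the identical computation with $\gamma_i^s$ and $\bskappa_{e,i}$ gives the right-hand side of~(b).

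It then remains to evaluate $\bFF_{\bsvartheta_{u,v}}((\bsy)_0)$ and $\bFF_{\bskappa_{e,i}}((\bsy)_0)$. Each $\bsvartheta_{u,v}$ and $\bskappa_{e,i}$ has the shape of~\eqref{E:hyperbid}, so by~\eqref{E:quasihyperf} (of which~\eqref{E:Fvarthetauv} and~\eqref{E:Fkappaei} are the special cases) the associated quasi-periodic functions are $\bFF_{\bsvartheta_{u,v}}(\bsz) = \rF_{\delta_v^s}^{[u]}(\bsz_0) + \rF_{\delta_v^s}^{[u-1]}(\bsz_1) + \dots + \rF_{\delta_v^s}(\bsz_u)$ and $\bFF_{\bskappa_{e,i}}(\bsz) = \rF_{\gamma_i^s}^{[e]}(\bsz_0) + \dots + \rF_{\gamma_i^s}(\bsz_e)$. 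Evaluating at $(\bsy)_0$ sets $\bsz_0 = \bsy$ and $\bsz_1 = \dots = \bsz_n = 0$, killing all but the first summand and leaving $\rF_{\delta_v^s}^{[u]}(\bsy)$, respectively $\rF_{\gamma_i^s}^{[e]}(\bsy)$. Since $s = \lfloor \log_q n\rfloor + 1$ forces $u,e \leqslant n < q^s \leqslant q^{\deg_{\tau}((\delta_v^s)_t)}$ (and likewise for $\gamma_i^s$), identity~\eqref{E:Fhypereval} --- an application of the $p$-th power rule, Proposition~\ref{P:Hyperprops}(b) --- converts these to $\pd_\theta^u(\rF_{\delta_v^s}(\bsy))$ and $\pd_\theta^e(\rF_{\gamma_i^s}(\bsy))$. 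Combining this with the left-hand computation yields~(a) and~(b); the degenerate case $n=0$, where $s=0$ and $u=e=0$, is immediate from Proposition~\ref{P:quasigen}(a) alone.

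There is no serious obstacle here --- the corollary is essentially a repackaging of the material of \S\ref{subS:QuasiExt}. The only points that need care are (i) verifying the blockwise identity $\twistop{(\bsm)_j}{\fG} = \twistop{\bsm}{\fG_{[j]}}$ so that the twisted pairing against $\fG_{(\bsy)_0}$ extracts precisely $\pd_t^j(\cG_{\bsy})$ in the $j$-th summand, and (ii) checking the inequality $u,e < q^s$, which is exactly what is needed for the coefficient-bracket operator $[\,\cdot\,]$ to agree with the hyperderivative $\pd_\theta^{(\cdot)}$ when applied to the $\theta$-dependent vector $\bsy$ via~\eqref{E:Fhypereval}.
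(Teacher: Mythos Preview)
Your proposal is correct and follows essentially the same route as the paper's proof: apply Proposition~\ref{P:quasigen}(a) to the biderivations $\bsvartheta_{u,v}$, $\bskappa_{e,i}$ on $\rP_n\phi$ at the point $(\bsy)_0$, expand the left-hand side using Proposition~\ref{P:qpfunctionpro} and the block structure of~\eqref{E:newprosrbasis}--\eqref{E:newprosrbasis1}, and identify the right-hand side via~\eqref{E:Fvarthetauv}--\eqref{E:Fkappaei} together with~\eqref{E:Fhypereval}. The paper compresses your steps~3 and~4 into a single citation of~\eqref{E:hyperbFF}, but the underlying argument is identical.
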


\begin{remark}
Combining this corollary with \eqref{E:perlogquasipro}--\eqref{E:AGFDerpro0}, we obtain explicit descriptions of hyperderivatives of quasi-logarithms on~$\phi$ with respect to $\theta$ of all representatives of $\rH_{\DR}^1(\phi,\oK)$ in terms of hyperderivatives with respect to~$t$ of the associated Anderson generating functions of~$\phi$.
\end{remark}

\begin{proof}[Proof of Corollary~\ref{C:hyperQPFAGF}]
Letting $0 \leqslant u \leqslant n$ and $1 \leqslant v \leqslant r-m$, define $\bsvartheta_{u,v}$, $\bskappa_{e,i} \in \Der_0(\rP_n\phi,K^{\sep})$ as in~\eqref{E:newprosrbasis} and~\eqref{E:newprosrbasis1}. By Proposition~\ref{P:quasigen}(a) and \eqref{E:hyperbFF}, we have
\[
\bigl\langle (\bsvartheta_{u,v})_t \bigm| \fG_{(\bsy)_0} \bigr\rangle \big|_{t=\theta} = \bFF_{\bsvartheta_{u,v}} \bigl((\bsy)_0 \bigr) = \pd_{\theta}^u \bigl(\rF_{\bsdelta_v^s}(\bsy) \bigr),
\]
and by Proposition~\ref{P:qpfunctionpro} and \eqref{E:newprosrbasis}, it follows that
\[
\bigl\langle (\bsvartheta_{u,v})_t \bigm| \fG_{(\bsy)_0} \bigr\rangle = \bigl\langle (\bsdelta_v^s)_t^{[u]} \bigm| \cG_{\bsy} \bigr\rangle + \bigl\langle (\bsdelta_v^s)_t^{[u-1]} \bigm| \pd_t^1(\cG_{\bsy}) \bigr\rangle + \dots + \bigl\langle (\bsdelta_v^s)_t \bigm| \pd_t^u(\cG_{\bsy}) \bigr\rangle.
\]
This proves part~(a).  Part~(b) follows from a similar computation using \eqref{E:newprosrbasis1}.
\end{proof}

\subsection{Examples} \label{subS:Examplespro}

\begin{example}\label{Ex:Drinfeldmodule3}
\emph{Drinfeld modules.} Let $\phi:\bA \to K^{\sep}[\tau]$ be a Drinfeld module defined by
\[
\phi_t = \theta+b_1\tau+\dots+b_r\tau^r,
\]
such that $b_r \neq 0$. Since $b_r \neq 0$, it follows that $\rH^1_{\DR}(\phi, \oK)$ can be identified with the $\oK$-vector space with $\{m \in \cM_{\phi,\oK}\tau \mid \deg_\tau m \leqslant r\}$. Moreover, since $\rN_\phi = 0$, a basis of $\rH^1_{\DR}(\phi, \oK)$ is given by $\{\delta_1, \dots, \delta_r\}$, where $(\delta_1)_t = \phi_t-\theta$ and $(\delta_j)_t=\tau^{j-1}$ for $j=2, \dots, r$. Then since $\rF_{\delta_1}(z) = \Exp_\phi(z)-z$ by \eqref{E:quasiinner}, we see from Theorem~\ref{T:rathyperquasi0}(b) that
\[
\Span{\oK}\bigl(d_{t, n+1}[\Psi_{\phi}]^{-1}\big|_{t=\theta}\bigr) = \Span{\oK} \bigl( \pd_\theta^u(\lambda),\, \pd_\theta^u(\rF_{\delta_j}(\lambda)) \mid 0 \leqslant u \leqslant n, \, 2\leqslant j \leqslant r,\, \lambda \in \Lambda_{\phi} \bigr).
\]
Let $y\in \KK$ such that $\Exp_\phi(y)=\alpha\in K^\sep$, and so by Lemma~\ref{L:separable}, $y \in K^\sep_\infty$. Let $\cG_y$ be the Anderson generating function of $\phi$ with respect to $y$. Then, as we saw in Example~\ref{Ex:Drinfeldmodule2}, by Corollary~\ref{C:AGFtwistedmult} we obtain
\[
\twistop{(\delta_1)_t}{\cG_{y}} = (t-\theta)\cG_y + \alpha,
\]
and for $2 \leqslant j \leqslant r$
\[
\twistop{(\delta_j)_t}{\cG_{y}} = \cG_{y}^{(j-1)}.
\]
We let $n \geqslant 0$, and consider the prolongation $\rP_n\phi$, which is a $t$-module of dimension $n+1$.  By Theorem~\ref{T:rathyperquasi0}(a) and Corollary~\ref{C:rathpqgyhapr0} we see that
\begin{align*}
\Span{\oK} \bigl(\{1\} \cup \fg_{(y)_0}\big|_{t=\theta} \big) &= \Span{\oK}\biggl(\{1\} \cup \bigcup_{u=0}^n \bigcup_{j=2}^r \Bigl\{ \pd_t^u\bigl((t-\theta)\cG_y \bigr)\big|_{t=\theta},\, \pd_t^u \bigl( \cG_{y}^{(j-1)} \bigr)\big|_{t=\theta} \Bigr\} \biggr)\\
&= \Span{\oK} \biggl( \{1\} \cup \bigcup_{u=0}^n \bigcup_{j=2}^r \Bigl\{ \pd_\theta^u(y),\, \pd_\theta^u \bigl( \rF_{\delta_j}(y) \bigr) \Bigr\} \biggr).
\end{align*}
Moreover, \eqref{E:perlogquasipro}--\eqref{E:AGFDerpro0} also imply
\[
y = \alpha- \twistop{(\bsepsilon_{0,1})_t}{\fG_{(y)_0}}|_{t=\theta}
= \alpha- \twistop{t-\theta}{\cG_y}|_{t=\theta}
= \alpha- \twistop{(\delta_1)_t}{\cG_{y}}|_{t=\theta}.
\]
Finally, if $\psi_n$ is the strictly quasi-periodic extension of $\rP_n\phi$ defined in~\S\ref{subS:QuasiExt}, then Proposition~\ref{P:period} determines its period lattice $\Lambda_{\psi_n}$.  In particular for $\bslambda = (\lambda_0, \dots, \lambda_n)^{\tr} \in \Lambda_{\rP_n\phi} = (\Lambda_{\phi})^{n}$, we have the general form for the corresponding lattice element of $\bsxi(\bslambda) \in \Lambda_{\psi_n}$, and by Corollary~\ref{C:quasilog} if we take $(\lambda)_0 = (\lambda, 0, \dots, 0)^{\tr} \in \Lambda_{\rP_n \phi}$, we have a simplified expression for $\bsxi((\lambda)_0)$:
\[
\bsxi(\bslambda)= \left(\begin{array}{c}
  -\sum_{w=0}^n \pd_{\theta}^w(\rF_{\tau}(\lambda_{n-w}))\\
  \vdots\\
  -\sum_{w=0}^u \pd_{\theta}^w(\rF_{\tau}(\lambda_{u-w}))\\
  \vdots\\
  -\rF_{\tau}(\lambda_0)\\[5pt]\hline \\[-10pt]
  \vdots\\[5pt]\hline\\[-10pt]
  -\sum_{w=0}^n \pd_{\theta}^w(\rF_{\tau^{r-1}}(\lambda_{n-w}))\\
  \vdots\\
  -\sum_{w=0}^u \pd_{\theta}^w(\rF_{\tau^{r-1}}(\lambda_{u-w}))\\
  \vdots \\
  -\rF_{\tau^{r-1}}(\lambda_0)\\[5pt]\hline \\[-10pt]
  \sum_{w=1}^n \pd_{\theta}^w(\lambda_{n-w})\\
  \vdots\\
  \lambda_2 + \pd_{\theta}^1(\lambda_1) + \pd_{\theta}^2(\lambda_0)  \\
  \lambda_1 + \pd_{\theta}^1(\lambda_0)  \\[5pt]\hline\\[-10pt]
  \bslambda
\end{array} \right),
\quad \bsxi((\lambda)_0) = \left( \begin{array}{c}
  -\pd_{\theta}^n(\rF_{\tau}(\lambda))\\
  \vdots\\
  -\pd_{\theta}^{1}(\rF_{\tau}(\lambda))\\
  -\rF_{\tau}(\lambda)\\[5pt]\hline\\[-10pt]
  \vdots\\[5pt]\hline\\[-10pt]
  -\pd_{\theta}^n(\rF_{\tau^{r-1}}(\lambda))\\
  \vdots\\
  -\pd_{\theta}^1(\rF_{\tau^{r-1}}(\lambda))\\
  -\rF_{\tau^{r-1}}(\lambda)\\[5pt]\hline\\[-10pt]
  \pd_{\theta}^n(\lambda)  \\
  \vdots\\
  \pd_{\theta}^1(\lambda)\\[5pt]\hline\\[-10pt]
  \lambda\\
  0\\
  \vdots\\
  0
\end{array} \right).
\]
\end{example}

\begin{example}\label{Ex:Carlitztensor3}
\emph{Carlitz tensor powers.}
We continue with the considerations of Example~\ref{Ex:Carlitztensor2}, and let $\phi : \bA \to \Mat_{d}(K^\sep[\tau])$ be the $d$-th tensor power of the Carlitz module defined by
\[
  \phi_t = \begin{pmatrix}
   \theta & 1 & \cdots & 0 \\
    & \ddots & \ddots & \vdots \\
    & & \ddots & 1 \\
    & & & \theta
  \end{pmatrix}
  + \begin{pmatrix}
  0 & \cdots & \cdots & 0 \\
  \vdots & & & \vdots \\
  \vdots & & & \vdots \\
  1 & \cdots & \cdots & 0
  \end{pmatrix}
  \tau.
\]
The $t$-module $\phi$ has rank $1$ and $\rH^1_{\DR}(\phi,\oK) \cong \Der_{0}(\phi, \oK)$. Using the definition of $\rN_\phi^\perp$ from \S\ref{subS:biderivations}, we see that $\bss_d$ generates $\rN_\phi^\perp$ and is defined over $K^\sep$.  It follows that, if we take $\bsdelta \in \Der_0(\phi)$ to be defined by $\bsdelta_t \assign  \bss_{d}\phi_{t}-\theta \bss_{d}=(t-\theta)\bss_d$, then $\bsdelta$ represents a $\oK$-basis of $\rH^1_{\DR}(\phi, \oK)\cong \Der_{0}(\phi, \oK) \cong (t-\theta)\rN_{\phi}^\perp$.

We recall from Example~\ref{Ex:Carlitztensor2} that $\Psi = (-1)^d\Omega^d$ is a rigid analytic trivialization of $\phi$, and we let $\Pi = \cE_0((-1)^d\Omega^{-d})$ be the generator of its period lattice as in~\eqref{E:Pidef}. Since $\rF_{\bsdelta}(\bsz) = \bss_d(\Exp_\phi(\bsz)-\bsz)$ by \eqref{E:quasiinner}, we have $\rF_{\bsdelta}(\Pi)=-\tpi^d$. It thus follows from Theorem~\ref{T:rathyperquasi0}(b) that
\begin{align*}
\Span{\oK}\bigl( d_{t, n+1} \bigl[ (-1)^d\Omega^{-d} \bigr]^{-1} \big|_{t=\theta} \bigr) &= \Span{\oK} \bigl( \pd_\theta^u(\bss_d(\Pi)) : 0 \leqslant u \leqslant n \bigr) \\
&= \Span{\oK} \bigl( \pd_\theta^u( \tpi^d) : 0 \leqslant u \leqslant n \bigr).
\end{align*}
These identities for tensor powers of the Carlitz module were observed by Maurischat~\cite[\S 8]{Maurischat18}, but what we note further is that the quantities in Theorem~\ref{T:rathyperquasi0}(b) consist of the $\oK$-span of hyperderivatives of the $d$-th power of the Carlitz period with respect to~$\theta$, as shown in~\cite{Maurischat21b}.

Now let $\bsy\in \KK^d$ be chosen so that $\Exp_\phi(\bsy)=\bsalpha \in (K^\sep)^d$, and by Lemma~\ref{L:separable} we have $\bsy \in (K^\sep_\infty)^d$. Let $\cG_{\bsy}$ be the Anderson generating function of $\phi$ with respect to $\bsy$. By Corollary~\ref{C:AGFtwistedmult}, as in Example~\ref{Ex:Carlitztensor2} we have
\[
\twistop{\bsdelta_t}{\cG_{\bsy}}= (t-\theta)\bss_{d}\cG_{\bsy} + \bss_{d}(\bsalpha),
\]
and
\[
\rF_{\delta}(\bsy) = \bss_{d}(\bsalpha-\bsy).
\]
Thus Theorem~\ref{T:rathyperquasi0}(a) and Corollary~\ref{C:rathpqgyhapr0} imply
\begin{align*}
\Span{\oK} \bigl( \{1\} \cup \fg_{(\bsy)_0}\big|_{t=\theta} \bigr) &= \Span{\oK} \biggl(\{1\} \cup\bigcup_{u=0}^n \Bigl\{ \pd_t^u \bigl( (t-\theta)\bss_{d}\cG_{\bsy} \bigr) \big|_{t=\theta} \Bigr\} \biggr)\\
&= \Span{\oK} \bigl(\{1\} \cup \pd_\theta^u(y_d): 0 \leqslant u \leqslant n \bigr),
\end{align*}
where $\bsy = (y_1, \dots, y_d)^{\tr}$.  Assume that $n \geqslant d-1$. Then it follows from \eqref{E:perlogquasipro}--\eqref{E:AGFDerpro0} that
\[
\bsy = \bsalpha- \left.\begin{pmatrix}
\twistop{(\bsepsilon_{d-1,d})_t}{\fG_{(\bsy)_0}} \\
\vdots\\
\twistop{(\bsepsilon_{1,d})_t}{\fG_{(\bsy)_0}}\\
\twistop{(\bsepsilon_{0,d})_t}{\fG_{(\bsy)_0}}
\end{pmatrix}\right|_{t=\theta} = \bsalpha - \left.\begin{pmatrix}
\pd_t^{d-1}(\twistop{\tau\bss_1}{\cG_{\bsy}}) \\
\vdots\\
\pd_t^{1}(\twistop{\tau\bss_1}{\cG_{\bsy}})\\
\twistop{\tau\bss_1}{\cG_{\bsy}}
\end{pmatrix}\right|_{t=\theta}.
\]
Moreover, if $\psi_n$ is the strictly quasi-periodic extension of $\rP_n\phi$, then by Proposition~\ref{P:period} its period lattice is
\[
\Lambda_{\psi_n} =  \left\{\begin{pmatrix}
  \sum_{w=0}^n \bss_d(\pd_{\theta}^w(\bslambda_{n-w}))\\
  \vdots\\
  \sum_{w=0}^{d+1} \bss_d(\pd_{\theta}^w(\bslambda_{d+1-w})) \\
  \sum_{w=0}^d \bss_d(\pd_{\theta}^w(\bslambda_{d-w}))\\
  \bslambda
\end{pmatrix} \Biggm|
\bslambda = \bigl( \bslambda_0^{\tr}, \dots, \bslambda_n^{\tr} \bigr)^\tr \in \Lambda_{\rP_n \phi} = (\Lambda_\phi)^d
\right\}.
\]
In particular, if $\bslambda_0 = (\bslambda^{\tr}, 0, \dots, 0)^{\tr} \in \Lambda_{\rP_n \phi}$ where $\bslambda = (\lambda_1, \dots, \lambda_d)^{\tr} \in \Lambda_{\phi}$ or if $\Pi_0 = (\Pi^{\tr}, 0, \dots, 0)^{\tr}$, then Corollary~\ref{C:quasilog} implies
\[
\begin{pmatrix}
  \pd_{\theta}^n(\lambda_d)  \\
  \vdots\\
  \pd_{\theta}^d(\lambda_d)\\
  \bslambda_0
\end{pmatrix},
\quad
\begin{pmatrix}
  \pd_{\theta}^n(\tpi^d) \\
  \vdots \\
  \pd_{\theta}^d(\tpi^d) \\
  \Pi_0
\end{pmatrix}
\]
are periods in $\Lambda_{\psi_n}$.
\end{example}

\begin{example}\label{Ex:Strictlypure3}
\emph{Strictly pure $t$-modules.}
Recall notation and results on strictly pure $t$-modules from Examples~\ref{Ex:strictlypure1} and~\ref{Ex:strictlypure2}. Let $\phi : \bA \to \Mat_d(K^\sep[\tau])$ be a $t$-module defined by
\[
  \phi_t = \rd \phi_t + B_1 \tau + \cdots + B_{\ell} \tau^{\ell}, \quad B_i \in \Mat_d(K^{\sep}),\ \det B_{\ell} \neq 0,
\]
where $\rd\phi_t =  \theta I_d +\rN_\phi$ is in Jordan normal form, i.e.,
\begin{equation}\label{E:Jordanpro}
    \rd \phi_t = \begin{pmatrix}
d_{\theta,\ell_1}[\theta] & &  \\
 & \ddots & \\
& &  d_{\theta,\ell_m}[\theta] \end{pmatrix}
\end{equation}
with $\ell_1+ \dots+ \ell_m = d$. Since $\det B_\ell \neq 0$, much like for Drinfeld modules it follows that $\rH^1_{\DR}(\phi, \oK)$ can be identified canonically with
\begin{equation}
\rH_{\DR}^1(\phi,\oK) \cong \{(m_1, \dots, m_d) \in \Mat_{1 \times d}(\oK[\tau]\tau) : \deg_\tau m_i \leqslant \ell,\, 1 \leqslant i \leqslant d\}.
\end{equation}
Recalling $d_k \assign \ell_1+\dots+\ell_k$ with $d_m = d$, the definition of $\rN_\phi^\perp$ from \S\ref{subS:biderivations} implies that $\{\bss_{d_1}, \dots, \bss_{d_m} \}$ is a basis of $\rN_\phi^\perp$ defined over $K^\sep$. For $1\leqslant i \leqslant d$ and $1 \leqslant j \leqslant \ell$, we define $\bsdelta_{i,j} \in \Der(\phi,K^{\sep})$ by
\[
(\bsdelta_{i,j})_t \assign \bss_i \tau^j, \quad 1 \leqslant j \leqslant \ell-1,
\]
and
\[
(\bsdelta_{i,\ell})_t \assign \begin{cases}
  \bss_i \tau^\ell & \textup{if $i \neq d_1, \dots, d_m$,} \\
  \bss_i \phi_t - \theta \bss_i = (t-\theta)\cdot \bss_i & \textup{otherwise.}
\end{cases}
\]
It follows that $\{\bsdelta_{i,j}: 1 \leqslant i \leqslant d,\ 1 \leqslant j \leqslant \ell\}$ represents a $\oK$-basis of $\rH^1_{\DR}(\phi, \oK)$. Note that \eqref{E:quasiinner} implies
\[
\rF_{\bsdelta_{d_k,\ell}}(\bsz) = \bss_{d_k}(\Exp_\phi(\bsz) - \bsz).
\]
For $n \geqslant 0$, Theorem~\ref{T:rathyperquasi0}(b) implies
\[
\Span{\oK} \bigl( d_{t, n+1}[\Psi_{ \phi}]^{-1} \big|_{t=\theta} \bigr) = \Span{\oK} \biggl( \bigcup_{u=0}^n \bigcup_{i=1}^{d}\bigcup_{j=1}^{\ell} \bigl\{ \pd_\theta^u(\rF_{\delta_{i,j}}(\bslambda)) : \bslambda \in \Lambda_\phi \bigr\} \biggr),
\]
and we note that $\rF_{\bsdelta_{d_k,\ell}}(\bslambda) = -\bss_{d_k} \cdot \bslambda$ for $1 \leqslant k \leqslant m$ thus producing the tractable coordinates of~$\bslambda$.

Fix $\bsy\in \KK^d$ such that $\Exp_\phi(\bsy) =\bsalpha \in (K^\sep)^d$, and so by Lemma~\ref{L:separable} we have $\bsy \in (K^\sep_\infty)^d$. Let $\cG_{\bsy}$ be the Anderson generating function of $\phi$ with respect to $\bsy$. By Theorem~\ref{T:rathyperquasi0}(a) and Corollary~\ref{C:rathpqgyhapr0} we obtain
\begin{align} \label{E:strictlypurespan}
\Span{\oK} \bigl(\{1\} \cup \fg_{(\bsy)_0}\big|_{t=\theta} \bigr) &= \Span{\oK} \biggl( \{ 1 \} \cup \bigcup_{u=0}^n \bigcup_{i=1}^d \bigcup_{j=1}^\ell \Bigl\{  \twistop{(\bsdelta_{i,j})_t}{\pd_t^u(\cG_{\bsy})}\big|_{t=\theta} \Bigr\} \biggr) \\
&= \Span{\oK} \biggl( \{1 \} \cup  \bigcup_{u=0}^n\bigcup_{i=1}^d \bigcup_{j=1}^\ell \bigl\{ \pd_\theta^u(\rF_{\bsdelta_{i,j}}(\bsy)) \bigr\} \biggr). \notag
\end{align}
However, we can be more precise.  For $1 \leqslant k \leqslant m$ we note that
\[
\twistop{(\bsdelta_{d_k,\ell})_t}{\cG_{\bsy}} = (t-\theta)\cdot \bss_{d_k}\cdot \cG_{\bsy} + \bss_{d_k} \cdot \bsalpha,
\]
and so if $\cG_{\bsy} = (g_1, \dots, g_d)^{\tr} \in \TT^d$ and $\bsy = (y_1, \dots, y_d)^{\tr}$, $\bsalpha = (\alpha_1, \dots, \alpha_d) \in \KK^d$, it follows from Proposition~\ref{P:AGFres} that
\[
\rF_{\bsdelta_{d_k,\ell}}(\bsy) = \twistop{(\bsdelta_{d_k,\ell})_t}{\cG_{\bsy}}|_{t=\theta} = -y_{d_k} + \alpha_{d_k}.
\]
For the other cases, which are attached to strictly reduced biderivations, we have
\[
\rF_{\bsdelta_{i,j}}(\bsy) = \twistop{(\bsdelta_{i,j})_t}{\cG_{\bsy}}|_{t=\theta} = \bss_i \cG_{\bsy}^{(j)} \big|_{t=\theta} = g_i^{(j)}(\theta).
\]

Finally, choose $n\geqslant 0$ so that $n \geqslant \ell_k-1$ for all $1 \leqslant k \leqslant m$.   For $1 \leqslant k \leqslant m$, we recall $\gamma_k \in \Der(\phi,\oK)$ from~\eqref{E:der0basis} defined by
\[
  (\gamma_k)_t = (t-\theta) \cdot \bss_{d_k} \rQ^{-1} = (t-\theta) \cdot \bss_{d_k} = (\bsdelta_{d_k,\ell})_t,
\]
where the second equality follows from $\rd\phi_t$ being in Jordan normal form ($\rQ = \Id_d$) and the third from the definition of $\bsdelta_{d_k,\ell}$.  For $0 \leqslant w\leqslant \ell_k-1$, we calculate $\twistop{(\bsepsilon_{w,k})_t}{\fG_{(\bsy)_0}}|_{t=\theta}$ as in \eqref{E:perlogquasipro}.  By \eqref{E:AGFDerpro0}, we see as above that
\begin{equation} \label{E:eps0ktest}
  \bigl\langle (\bsepsilon_{0,k})_t \bigm| \fG_{(\bsy)_0} \bigr\rangle\big|_{t=\theta} = \twistop{(\gamma_k)_t}{\cG_{\bsy}}|_{t=\theta}= y_{d_k} - \alpha_{d_k}.
\end{equation}
For $1 \leqslant j \leqslant w \leqslant \ell_k-1$, the calculation in \eqref{E:AGFDerpro} leads us to investigate
\begin{align*}
\bigl\langle (t &{}-\theta) \cdot \bss_{d_k-j} - \bss_{d_k-(j-1)} \bigm| \pd_t^{w-j}(\cG_{\bsy}) \bigr\rangle \\
&= \pd_t^{w-j} \bigl( \bigl\langle (t-\theta) \cdot \bss_{d_k-j} - \bss_{d_k-(j-1)}\bigm| \cG_{\bsy} \bigr\rangle \bigr) \\
&= \pd_t^{w-j} \bigl( (t-\theta) \cdot \bss_{d_k-j}\cdot  \cG_{\bsy} + \bss_{d_k-j}\cdot \bsalpha - \bss_{d_k-(j-1)} \cdot \cG_{\bsy} \bigr), \\
\intertext{where this last equality follows from Corollary~\ref{C:AGFtwistedmult}, and furthermore,}
&= \begin{cases}
(t-\theta) \bss_{d_k-w} \cdot \cG_{\bsy} + \bss_{d_k-w}\cdot \bsalpha - \bss_{d_k-w+1} \cdot \cG_{\bsy}
& \textup{if $j=w$,} \\
(t-\theta) \bss_{d_k-j}\cdot \pd_t^{w-j}(\cG_{\bsy}) + \bss_{d_k-j}\cdot \pd_t^{w-j-1}(\cG_{\bsy}) - \bss_{d_k-j+1} \cdot \pd_t^{w-j}(\cG_{\bsy}) & \textup{if $j < w$.}
\end{cases}
\end{align*}
Likewise,
\[
\bigl\langle (t-\theta) \cdot \bss_{d_k} \bigm| \pd_t^w(\cG_{\bsy}) \bigr\rangle = (t-\theta) \bss_{d_k}\cdot \pd_t^{w}(\cG_{\bsy}) + \bss_{d_k} \cdot \pd_t^{w-1}(\cG_{\bsy}).
\]
Furthermore, for $0 \leqslant j, j' \leqslant w$, the proof of Proposition~\ref{P:AGFres} and \eqref{E:giprinparts} yield
\begin{equation} \label{E:Oofone1}
  \bss_{d_k-j} \cdot \cG_{\bsy} = g_{d_k-j} = -\sum_{h=0}^{j} y_{d_k-h} (t-\theta)^{h-j-1} + O(1),
\end{equation}
where $O(1)$ represents $O((t-\theta)^0)$ in $\laurent{\KK}{t-\theta}$.  Therefore,
\begin{equation} \label{E:Oofone2}
  \bss_{d_k-j} \cdot \pd_t^{w-j'}(\cG_{\bsy}) = -\sum_{h=0}^{j} y_{d_k-h} \binom{h-j-1}{w-j'} (t-\theta)^{h-j-1-w+j'} + O(1),
\end{equation}
and so for $1 \leqslant j \leqslant w-1$,
\begin{align} \label{E:Oofone3}
(t-\theta)\bss_{d_k-j} \cdot {}&{} \pd_t^{w-j}(\cG_{\bsy}) + \bss_{d_k-j} \cdot \pd_t^{w-j-1}(\cG_{\bsy}) - \bss_{d_k-j+1}\cdot \pd_t^{w-j}(\cG_{\bsy}) \\
&= \begin{aligned}[t]
-\sum_{h=0}^j y_{d_k-h} &\biggl( \binom{h-j-1}{w-j} + \binom{h-j-1}{w-j-1} \biggr)(t-\theta)^{h-w} \\
&{} + \sum_{h=0}^{j-1} y_{d_k-h} \binom{h-j}{w-j}(t-\theta)^{h-w} + O(1)
\end{aligned} \notag
\\
&= O(1). \notag
\end{align}
Similarly,
\begin{multline} \label{E:Oofone4}
(t-\theta) \bss_{d_k}\cdot \pd_t^{w}(\cG_{\bsy}) + \bss_{d_k} \cdot \pd_t^{w-1}(\cG_{\bsy}) \\
= -y_{d_k} \biggl( \binom{-1}{w} + \binom{-1}{w-1} \biggr) (t-\theta)^{-w} + O(1)
= O(1),
\end{multline}
and finally,
\begin{equation} \label{E:Oofone5}
(t-\theta) \bss_{d_k-w} \cdot \cG_{\bsy} + \bss_{d_k-w}\cdot \bsalpha - \bss_{d_k-w+1} \cdot \cG_{\bsy}
= \alpha_{d_k-w} - y_{d_k-w} + O(1).
\end{equation}
Now this $O(1)$ constant seems to render this last identity meaningless, but it turns out that each of the $O(1)$ quantities in \eqref{E:Oofone1}--\eqref{E:Oofone5} is actually $O((t-\theta)^1)$. Indeed for $1 \leqslant w\leqslant \ell_k-1$, combining the calculations above with \eqref{E:perlogquasipro} and~\eqref{E:AGFDerpro}, we have
\begin{align*}
\alpha_{d_k-w} &{}- y_{d_k-w} \\
&= \bigl\langle (\bsepsilon_{w,k})_t \bigm| \fG_{(\bsy)_0} \bigr\rangle\big|_{t=\theta} \\
&= \begin{aligned}[t]
\sum_{j=1}^{w-1} &\Bigl( (t-\theta) \bss_{d_k-j} \cdot \pd_t^{w-j}(\cG_{\bsy}) + \bss_{d_k-j}\cdot \pd_t^{w-j-1}(\cG_{\bsy}) - \bss_{d_k-j+1} \cdot \pd_t^{w-j}(\cG_{\bsy}) \Bigr)\Big|_{t=\theta} \\
&{}+ \bigl( (t-\theta) \bss_{d_k}\cdot \pd_t^{w}(\cG_{\bsy}) + \bss_{d_k} \cdot \pd_t^{w-1}(\cG_{\bsy}) \bigr)\big|_{t=\theta} \\
&{}+ \bigl( (t-\theta) \bss_{d_k-w} \cdot \cG_{\bsy} + \bss_{d_k-w}\cdot \bsalpha - \bss_{d_k-w+1} \cdot \cG_{\bsy} \bigr)\big|_{t=\theta}.
\end{aligned}
\end{align*}
Thus this identity together with \eqref{E:eps0ktest} provides a complete description of the coordinates of $\bsy$ and $\bsalpha$ in terms of derivatives of Anderson generating functions with respect to~$t$.
\end{example}

\begin{remark}
It would be interesting to have similar descriptions for these findings for almost strictly pure $t$-modules, but as indicated in Example~\ref{Ex:ASP} their behavior can be more subtle and complicated.
\end{remark}

\newpage

\end{document}